\setlist{  
  listparindent=\parindent,
  parsep=0pt,
}
\theoremstyle{plain}
\newtheorem{thm}{Theorem}[section]
\newtheorem{prop}[thm]{Proposition}
\newtheorem{lemma}[thm]{Lemma}
\newtheorem{cor}[thm]{Corollary}
\theoremstyle{definition}
\newtheorem{mydef}[thm]{Definition}
\newtheorem{remark}[thm]{Remark}
\newtheorem{prob}{Problem}[section]
\numberwithin{equation}{section} 
\DeclarePairedDelimiter\ipp{\langle}{\rangle}
\DeclarePairedDelimiter{\brak}{\lbrack}{\rbrack}
\DeclarePairedDelimiter{\paren}{\lparen}{\rparen}
\DeclarePairedDelimiter{\brac}{\lbrace}{\rbrace}
\DeclarePairedDelimiter{\jp}{\langle}{\rangle}
\DeclareMathOperator{\Rxx}{\frac{\partial_{1}^{2}}{\Delta}}
\DeclareMathOperator{\supp}{supp}
\DeclareMathOperator{\diam}{diam}
\DeclareMathOperator{\dist}{dist}
\DeclareMathOperator{\sgn}{sgn}
\newcommand{\p}{{\partial}}
\newcommand{\R}{{\mathbb{R}}}
\newcommand{\C}{{\mathbb{C}}}
\newcommand{\N}{{\mathbb{N}}}
\newcommand{\Z}{{\mathbb{Z}}}
\newcommand{\K}{{\mathcal{K}}}
\renewcommand{\S}{{\mathbb{S}}}
\newcommand{\ol}{\overline}
\newcommand{\ul}{\underline}
\newcommand{\E}{{\mathcal{E}}}
\renewcommand{\L}{{\mathcal{L}}}
\renewcommand{\P}{{\mathcal{P}}}
\newcommand{\ueta}{\underline{\eta}}
\newcommand{\uD}{\underline{D}}
\def\XXint#1#2#3{{\setbox0=\hbox{$#1{#2#3}{\int}$ }
\vcenter{\hbox{$#2#3$ }}\kern-.6\wd0}}
\begin{document}
\title{Global Well-Posedness and Scattering for the Elliptic-Elliptic Davey-Stewartson System at $L^{2}$-Critical Regularity}
\author[1]{Matthew Rosenzweig}
\affil[1]{University of Texas at Austin}

\maketitle{}

\begin{abstract}
In this paper, we prove global well-posedness and scattering of the Cauchy problem for the elliptic-elliptic Davey-Stewartson system (eeDS) for initial data $u_{0}\in L^{2}(\R^{2})$ in the defocusing case and for $u_{0}\in L^{2}(\R^{2})$ with mass below that of the ground state in the focusing case. This result resolves the large data problem at the scaling-critical regularity left open by Ghidaglia and Saut in their work \cite{Ghidaglia1990}, which initiated the mathematical study of the Cauchy problem for the system. Our proof uses the concentration compactness/rigidity road map of Kenig and Merle together with the long-time Strichartz estimate approach of Dodson. Due to the failure of the endpoint $L_{t}^{2}L_{x}^{\infty}$ Strichartz estimate, we rely heavily on bilinear Strichartz estimates. We overcome the obstruction to applying such estimates caused by the lack of permutation invariance of the eeDS nonlinearity under frequency decomposition by introducing a new frequency cube decomposition of the nonlinearity and proving bilinear estimates suited to this decomposition. In both the defocusing and focusing cases, we overcome the lack of an a priori interaction Morawetz estimate by exploiting the spatial and frequency localization of the minimal counterexamples which we reduce to considering.
\end{abstract}

\tableofcontents

\pagebreak 

\section{Introduction}\label{sec:in}
\subsection{The Davey-Stewartson System}
In this paper, we consider the Cauchy problem for the elliptic-elliptic Davey-Stewartson system (eeDS)
\begin{equation} 
\begin{cases}
(i\partial_{t}+\Delta)u = \mu |u|^{2}u+(\partial_{x_{1}}\varphi)u\\
\alpha\partial_{x_{1}}^{2}\varphi+\partial_{x_{2}}^{2}\varphi = -\gamma\partial_{x_{1}}(|u|^{2})
\end{cases}, \qquad (t,x)\in \R\times\R^{2},\label{eq:intro_eeds}
\end{equation}
where $\mu\in\{\pm 1\}$ and $\alpha,\gamma$ are positive real parameters. The system \eqref{eq:intro_eeds} is a special case of the full Davey-Stewartson system
\begin{equation}
\begin{cases}
i\partial_{t}u+\sigma_{1}\partial_{x_{1}}^{2}u+\partial_{x_{2}}^{2}u=\sigma_{2}|u|^{2}u+(\partial_{x_{1}}\varphi) u\\
\alpha\partial_{x_{1}}^{2}\varphi+\partial_{x_{2}}^{2}\varphi=-\gamma\partial_{x_{1}}(|u|^{2}),
\end{cases}
\qquad (t,x)\in\R\times\R^{2}, \label{eq:full_DS}
\end{equation}
where $(\sigma_{1},\sigma_{2},\alpha,\gamma)\in \{\pm 1\} \times\{\pm 1\} \times\R\times\R_{+}$. Following \cite{Ghidaglia1990}, we classify the system based on the values of $(\sgn(\sigma_{1}),\sgn(\alpha))$ in the table below.
\begin{small}
\begin{center}
\begin{tabular}{|c|c|c|c|}
\hline
Elliptic-Elliptic & Hyperbolic-Elliptic & Elliptic-Hyperbolic & Hyperbolic-Hyperbolic \\
\hline
$(1,1)$ & $(-1,1)$ & $(1,-1$) & $(-1,-1)$\\
\hline
\end{tabular}
\end{center}
\end{small}
Furthermore, when $\sigma_{2}=1$, we say that the system \eqref{eq:full_DS} is \emph{defocusing}, and when $\sigma_{2}=-1$, we say that the system is \emph{focusing}.

The system was first introduced by Davey and Stewartson (\cite{Davey1974}) as a formal multiple scales approximation for the evolution of surface wave packets on an incompressible, irrotational, and inviscid fluid subject to only gravity, which are moreover slowly modulated in both space and time. Subsequently, the system was formally derived in \cite{Djordjevic1977} and \cite{ablowitz_segur_1979} as multiple scales approximation for slowly modulated wave packets subject to both gravity and surface tension (see also the work \cite{Craig1997} for a more rigorous mathematical treatment). The elliptic-elliptic case corresponds to the physical setting of both gravity and surface tension, where surface tension is not too strong relative to gravity. From this perspective of the water waves problem, the Davey-Stewartson system is the natural 2D generalization of the 1D cubic NLS. Forthcoming work by the author will consider the problem of full justification of the Davey-Stewartson system as a modulation approximation to the underlying 3D water waves problem, following earlier work by Totz (\cite{totz2015justification}) on the infinite-depth limiting case.

Following the nomenclature of \cite{sulem1999nonlinear}, for parameters $(\sigma_{1},\sigma_{2},\alpha,\gamma)=(1,\pm 1,-1,\mp 2)$, we refer to the sytem as DSI. For parameters $(\sigma_{1},\sigma_{2},\alpha,\gamma)=(-1,1,1,2)$ and $(\sigma_{1},\sigma_{2},\alpha,\gamma)=(-1,-1,1,-2)$, the system is called defocusing DSII and focusing DSII, respectively. The DSI and DSII systems are known to be integrable by the inverse scattering transform (IST), and in fact these are the only two integrable cases (\cite{Shulman1983}). For more on the integrable nature of the system, we refer the reader to \cite{ablowitz1991solitons}.

For $\lambda>0$, the solution class to the system \eqref{eq:full_DS} is invariant under the scaling transformation
\begin{align}
u(t,x)\mapsto u_{\lambda}(t,x) &\coloneqq \lambda u(\lambda^{2}t,\lambda x)\\
\varphi(t,x) \mapsto \varphi_{\lambda}(t,x) &\coloneqq \lambda\varphi(\lambda^{2}t,\lambda x).
\end{align}
For $\alpha>0$, classical solutions to \eqref{eq:full_DS} conserve \emph{mass}
\begin{equation} 
M(u(t)) \coloneqq \int_{\R^{2}} |u(t,x)|^{2}dx,
\end{equation}
\emph{momentum}
\begin{equation} 
P(u(t)) \coloneqq 2\int_{\R^{2}}\Im{\bar{u}\nabla u}(t,x)dx,
\end{equation}
and \emph{energy}
\begin{equation} 
E(u(t),\varphi(t)) \coloneqq \int_{\R^{2}}\left(\sigma_{1}|\partial_{x_{1}}u(t,x)|^{2}+|\partial_{x_{2}}u(t,x)|^{2}+\frac{1}{2}\left(\sigma_{2}|u(t,x)|^{4}-\frac{\alpha}{\gamma}|\partial_{x_{1}}\varphi(t,x)|^{2}-\frac{1}{\gamma}|\partial_{x_{2}}\varphi(t,x)|^{2}\right)\right)dx.
\end{equation}
The DS scaling leaves the $L_{x}^{2}$ norm of $u(t)$ invariant, which is at the regularity of the mass functional. For this reason, we refer to the Davey-Stewartson system as being $L^{2}$- or mass-critical. In this work, we focus on solutions at this critical regularity.

We now specialize to the simplified elliptic-elliptic Davey-Stewartson system where $(\alpha,\gamma)=(1,1)$:
\begin{equation}
\begin{cases}
(i\partial_{t}+\Delta)u = (\mu|u|^{2}+\partial_{x_{1}}\varphi)u \eqqcolon F(u)\\
\Delta\varphi=-\partial_{x_{1}}(|u|^{2}),
\end{cases}
\qquad (t,x)\in\R\times\R^{2}, \enspace \mu\in\{\pm 1\}. \label{eq:simp_DS}
\end{equation}
Hereafter to, we refer to the system \eqref{eq:simp_DS} simply as eeDS. By solving the Poisson equation for $\p_{x_{1}}\varphi$, we can write the Cauchy problem for \eqref{eq:simp_DS} in terms of a single nonlocal, nonlinear Schr\"{o}dinger equation
\begin{equation}
\begin{cases}
(i\partial_{t}+\Delta) u = \left(\mu|u|^{2}-\frac{\p_{1}^{2}}{\Delta}(|u|^{2})\right)u, & (t,x)\in\R\times\R^{2} \\
u(0)=u_{0}
\end{cases}
,\qquad \mu\in\{\pm 1\},\label{eq:DS}
\end{equation}
where $\frac{\partial_{1}^{2}}{\Delta}$ is the homogeneous of degree zero Fourier multiplier with symbol $\frac{\xi_{1}^{2}}{|\xi|^{2}}$. We introduce the notation $\E=\frac{\p_{1}^{2}}{\Delta}$, so that the $F(u)=-\mathcal{L}_{\mu}(|u|^{2})u$, where $\mathcal{L}_{\mu}=-\mu Id+\E$. In the sequel, we will drop the subscript $\mu$ as its value will be clear from context. Formally, \eqref{eq:DS} resembles the 2D cubic nonlinear Schr\"{o}dinger equation (NLS)
\begin{equation}
(i\p_{t}+\Delta)u = \mu |u|^{2}u, \label{eq:NLS}
\end{equation}
but differs by an additional nonlocal, nonlinear term. Also similarly to the cubic NLS, the eeDS enjoys a number of symmetries, such as spacetime translation, time reversal, Galilean transformation, phase rotation, and pseudoconformal transformation. However, a key difference between the eeDS equation \eqref{eq:DS} and the cubic NLS \eqref{eq:NLS} is that the solutions to the former with spherically symmetric initial data are not necessarily spherically symmetric. This failure to propagate radial symmetry is a consequence of the lack of radial symmetry in the symbol of $\E$. Thus, one cannot hope to study the large-data global theory for \eqref{eq:DS} by applying techniques which exploit radial symmetry, as was first done for the 2D cubic NLS (e.g. \cite{Killip2009}).

To discuss the local Cauchy theory for \eqref{eq:DS}, we must first clarify our notion of a solution. In this paper, we work exclusively with the class of strong solutions, which are defined below. Given this exclusivity, we sometimes abbreviate ``strong solution" by ``solution".

\begin{mydef}[Solution] 
We say a function $u:I\times\R^{2}\rightarrow\mathbb{C}$ on a time interval $I\subset\R$ is a \emph{strong solution} to \eqref{eq:DS} if it belongs to the class $C_{t,loc}^{0}L_{x}^{2}(I\times\R^{2})\cap L_{t,loc}^{4}L_{x}^{4}(I\times\R^{2})$, and we have the Duhamel formula
\begin{equation}
	u(t_{1})=e^{i(t_{1}-t_{0})\Delta}u(t_{0})-i\int_{t_{0}}^{t_{1}}e^{i(t_{1}-t)\Delta}F(u(t))dt, \qquad \forall t_{0},t_{1}\in I.
\end{equation}
Here, the notation $e^{it\Delta}$ denotes the free Schr\"{o}dinger propagator, which is the Fourier multiplier with symbol $e^{-it|\xi|^{2}}$. We refer to the interval $I$ as the \emph{lifespan} of the solution. We say that a solution has \emph{maximal lifespan} if there does not exist a time interval $J$ such that $I\subsetneq J$. We say that a solution is \emph{global} if $I=\R$.
\end{mydef}

Solutions which do not satisfy finite spacetime bounds on their lifespans are said to blow up, which we make precise with the next definition. The specific spacetime norm $L_{t,x}^{4}$ comes from the Strichartz estimate used to prove the local well-posedness of \eqref{eq:DS}, which we review below.
\begin{mydef}[Blowup] 
We say that a solution $u :I\times\R^{2}\rightarrow \mathbb{C}$ to \eqref{eq:DS} \emph{blows up forward in time} if there exists $t_{0}\in I$ such that
\begin{equation}
\int_{t_{0}}^{\sup I}\int_{\R^{2}} |u(t,x)|^{4}dxdt = \infty.
\end{equation}
Analogously, we say that $u$ \emph{blows up backward in time} if there exists $t_{0}\in I$ such that
\begin{equation}
\int_{\inf I}^{t_{0}}\int_{\R^{2}}|u(t,x)|^{4}dxdt.
\end{equation}
\end{mydef}

Global solutions to \eqref{eq:DS} which asymptotically evolve like the free solution are said to scatter.
\begin{mydef}[Scattering] 
Let $u : I \times \R^{2}\rightarrow\mathbb{C}$ be a solution to \eqref{eq:DS}. For an asymptotic state $u_{+}\in L^{2}(\R^{2})$, we say that $u$ \emph{scatters forward in time} to $e^{it\Delta}u_{+}$, if $\sup I = \infty$ and $\lim_{t\rightarrow\infty} \|u(t)-e^{it\Delta}u_{+}\|_{L_{x}^{2}(\R^{2})}=0$. If $u_{-}\in L^{2}(\R^{2})$, we say that $u$ \emph{scatters backward in time} to $e^{it\Delta}u_{-}$ if $\inf I=-\infty$ and $\lim_{t\rightarrow-\infty} \|u(t)-e^{it\Delta}u_{-}\|_{L_{x}^{2}(\R^{2})}=0$.
\end{mydef}

The mathematical study of the Cauchy problem for \eqref{eq:DS} and more generally, \eqref{eq:full_DS}, was initiated by Ghidaglia and Saut in \cite{Ghidaglia1990}. Below we excerpt the local well-posedness and small data global well-posedness results from that work which are specific to initial data in $L^{2}(\R^{2})$. The proof of these results follows from a fixed-point argument in the same spirit as that by Cazenave and Weissler (\cite{Cazenave1990},\cite{cazenave2003semilinear}) for the mass-critical NLS
\begin{equation}
(i\p_{t}+\Delta) u = \mu |u|^{1+\frac{4}{d}}u,
\end{equation}
with the new ingredient that the operator $\E$ is bounded on $L^{p}(\R^{2})$ for $1<p<\infty$.

\begin{restatable}[LWP, \cite{Ghidaglia1990}]{thm}{LWP}
\label{thm:LWP} 
Fix $\mu\in\{\pm 1\}$. For $u_{0}\in L^{2}(\R^{2})$ and $t_{0}\in\R$, there exists a unique maximal-lifespan solution $u:I\times\R^{2}\rightarrow\mathbb{C}$ to \eqref{eq:DS}. Moreover,
\begin{enumerate}
		\item\label{item:LWP_smg}
		There exists $m_{0}>0$ such that if $\|u_{0}\|_{L^{2}(\R^{2})}<m_{0}$, then the solution $u$ is global, $\|u\|_{L_{t,x}^{4}(\R\times\R^{2})}\leq C(\|u_{0}\|_{L^{2}(\R^{2})})$, and $u$ scatters both forward and backward in time. Here, $C(\cdot)$ is a nondecreasing function such that $C(0)=0$.
		\item\label{item:LWP_open}
		$I$ is an open interval containing $t_{0}$.
		\item\label{item:LWP_bup}
		If $\sup(I)<\infty$ (resp. $\inf(I)>-\infty$), then $u$ blows up forward (resp. backward) in time.
		\item\label{item:LWP_cd}
		The solution map $L^{2}(\R^{2})\rightarrow C_{t,loc}^{0}L_{x}^{2}(I\times\R^{2})\cap L_{t,x}^{4}(I\times\R^{2})$, $u_{0}\mapsto u$ is uniformly continuous on compact time intervals for bounded subsets of initial data.
		\item\label{item:LWP_scat}
		If $\sup(I)=\infty$ (resp. $\inf(I)=-\infty$) and $u$ does not blow up forward (resp. backward) in time, then $u$ scatters forward (resp. backward) in time to some asymptotic state $u_{+}\in L^{2}(\R^{2})$ (resp. $u_{-}\in L^{2}(\R^{2})$).
		\item\label{item:LWP_mc}
		$u$ conserves the mass functional: $M(u(t_{1})) = M(u(t_{2}))$ for all $t_{1},t_{2}\in I$.
\end{enumerate}
\end{restatable}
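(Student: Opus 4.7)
The plan is to follow the Cazenave--Weissler contraction-mapping strategy for the mass-critical NLS, with the single new ingredient being the $L^{p}$-boundedness of $\E$ for $1<p<\infty$. The key analytic input is the Strichartz estimate on $\R^{2}$: for any Schr\"odinger-admissible pair $(q,r)$ (i.e.\ $\frac{1}{q}+\frac{1}{r}=\frac{1}{2}$, $2<q\leq\infty$) one has $\|e^{it\Delta}f\|_{L_{t}^{q}L_{x}^{r}}\lesssim \|f\|_{L^{2}}$ together with its dual/retarded version. In particular the diagonal pair $(4,4)$ is admissible, which pairs with its H\"older dual $(4/3,4/3)$. The nonlinear estimate one needs is
\begin{equation}
\|F(u)-F(v)\|_{L_{t,x}^{4/3}(I\times\R^{2})} \lesssim \bigl(\|u\|_{L_{t,x}^{4}}^{2}+\|v\|_{L_{t,x}^{4}}^{2}\bigr)\|u-v\|_{L_{t,x}^{4}(I\times\R^{2})},
\end{equation}
which follows by writing $F(u)-F(v) = -\L(|u|^{2}-|v|^{2})u-\L(|v|^{2})(u-v)$, applying H\"older with exponents $4/3 = 1/2+1/4$ on spacetime, and using that $\L=-\mu\,\mathrm{Id}+\E$ is bounded on $L_{x}^{2}$ (uniformly in time) because $\E$ is a zero-order Fourier multiplier with bounded symbol.

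With this nonlinear estimate in hand I would run a standard Picard iteration in the closed ball
\begin{equation}
X_{I,\eta} \coloneqq \left\{u\in C_{t}^{0}L_{x}^{2}(I\times\R^{2})\cap L_{t,x}^{4}(I\times\R^{2}) : \|u\|_{L_{t,x}^{4}(I\times\R^{2})}\leq 2C\eta,\ \|u\|_{L_{t}^{\infty}L_{x}^{2}}\leq 2C\|u_{0}\|_{L^{2}}\right\}
\end{equation}
for the Duhamel map $\Phi(u)(t) = e^{i(t-t_{0})\Delta}u_{0}-i\int_{t_{0}}^{t}e^{i(t-s)\Delta}F(u(s))\,ds$. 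Choosing the interval $I\ni t_{0}$ so that $\|e^{i(t-t_{0})\Delta}u_{0}\|_{L_{t,x}^{4}(I\times\R^{2})}\leq \eta$ for $\eta$ small compared to the implicit constants, Strichartz combined with the nonlinear estimate shows $\Phi:X_{I,\eta}\to X_{I,\eta}$ is a strict contraction, yielding a unique solution on $I$. By monotone convergence such an $I$ exists for any $u_{0}\in L^{2}$ (giving items \ref{item:LWP_open}, \ref{item:LWP_bup}), and if $\|u_{0}\|_{L^{2}}$ itself is small enough that $\|e^{it\Delta}u_{0}\|_{L_{t,x}^{4}(\R\times\R^{2})}\leq \eta$ (possible by Strichartz with threshold $m_{0}\coloneqq \eta/C$), the same argument runs on all of $\R$, proving item \ref{item:LWP_smg}.

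The remaining items are consequences of the same contraction estimates. Uniform continuous dependence on compact time intervals (item \ref{item:LWP_cd}) follows by subtracting the Duhamel formulas for two solutions and absorbing the small factor coming from the nonlinear estimate over a subdivision of $I$ into pieces on each of which $\|u\|_{L_{t,x}^{4}}$ is small. The scattering statements \ref{item:LWP_smg} and \ref{item:LWP_scat} follow because, once $\|u\|_{L_{t,x}^{4}(I\times\R^{2})}<\infty$ on a forward-infinite interval, the Cauchy sequence $\{e^{-it\Delta}u(t)\}_{t\to\infty}$ converges in $L^{2}$ by the inhomogeneous Strichartz estimate applied to tails of the Duhamel integral, defining $u_{+}$. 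Finally mass conservation (item \ref{item:LWP_mc}) is obtained by approximating $u_{0}$ by $H^{s}$ data for which the solution is smooth enough to justify the formal energy identity $\frac{d}{dt}\|u\|_{L^{2}}^{2}=0$ (the nonlinearity $F(u)\bar u-\overline{F(u)}u$ is real since $\L(|u|^{2})$ is a real-valued multiplier of a real-valued function), then passing to the limit using item \ref{item:LWP_cd}.

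The only genuinely nontrivial point relative to the NLS analogue is verifying that $\E$ maps real-valued $L^{p}$ functions to real-valued $L^{p}$ functions (needed for mass conservation) and has the requisite $L^{p}$ boundedness; both follow from the Mihlin--H\"ormander multiplier theorem applied to the symbol $\xi_{1}^{2}/|\xi|^{2}$, together with the evenness/realness of this symbol. Given this, the whole argument is essentially identical to the Cazenave--Weissler proof for the 2D mass-critical cubic NLS.
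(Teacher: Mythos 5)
Your proposal is correct and matches the approach the paper indicates: the paper does not reprove this theorem but cites Ghidaglia--Saut and describes exactly this Cazenave--Weissler fixed-point argument in $C_{t}^{0}L_{x}^{2}\cap L_{t,x}^{4}$, with the sole new ingredient being the boundedness of $\E$ on $L^{p}(\R^{2})$, $1<p<\infty$ (and, for mass conservation, the fact that its real, even symbol preserves real-valuedness). Your contraction estimate, small-data/scattering, stability, and mass-conservation steps are all the standard ones and are sound.
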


We now proceed to discuss the possible obstructions to global well-posedness and scattering for solutions to \eqref{eq:DS} with initial data in $L^{2}(\R^{2})$ having arbitrarily large mass. So far in the local and small data global theory, we have not distinguished between whether the equation \eqref{eq:DS} is defocusing ($\mu=1$) or focusing ($\mu=-1$); however, we do so now.

From assertion \ref{item:LWP_smg} of theorem \ref{thm:LWP}, we know that for $\mu=\pm 1$, maximal-lifespan solutions with mass below the threshold $m_{0}^{2}$ are global, do not blow up either forward or backward in time, and scatter. However, in the focusing case $\mu=-1$, there is a mass threshold for which finite-time blowup may occur for solutions with mass above that threshold. More precisely, Ghidaglia and Saut showed the \emph{virial identity}
\begin{equation} 
\frac{d^{2}}{dt^{2}}V(t) = 8E(u_{0}) = 8\int_{\R^{2}}\paren*{\frac{1}{2}|\nabla u(t,x)|^{2}+\frac{1}{4}\left(\mu |u(t,x)|^{2}-\E(|u|^{2})(t,x)\right)|u(t,x)|^{2}}dx,
\end{equation}
where $V$ denotes the variance of the solution $u$ defined by
\begin{equation} 
V(t) \coloneqq \int_{\R^{2}} |x|^{2}|u(t,x)|^{2}dx, \qquad \jp{x}u\in L^{2}(\R^{2}).
\end{equation}
When $\mu=1$, the energy functional $E$ is positive definite by Plancherel's theorem, and this property can be used to obtain a global $C_{t}^{0}H_{x}^{1}(\R\times\R^{2})$ solution for initial data in $H^{1}(\R^{2})$. However, when $\mu=-1$, the energy functional is no longer necessarily positive definite, and if the energy is negative, then one necessarily has finite-time blowup by the standard Glassey argument (\cite{Glassey1977}). Moreover, by taking as one's initial data the function
\begin{equation}
u_{0,\lambda}(x_{1},x_{2}) := \lambda\exp\left(-\left(\frac{x_{1}^{2}}{\beta_{1}^{2}}+\frac{x_{2}^{2}}{\beta_{2}^{2}}\right)\right),
\end{equation}
for an appropriate choice of $\lambda,\beta_{1},\beta_{2}>0$, one obtains a solution $u$ with negative energy.

Looking for standing wave solutions of \eqref{eq:DS}, where $\mu=-1$, of the form
\begin{equation}
u(t,x) = e^{i\tau t}Q(x),
\end{equation}
for some temporal frequency $\tau$ and real-valued function $Q$, one finds that $Q$ solves the \emph{ground state equation}
\begin{equation}
\Delta Q -\tau Q+ \mathcal{L}(Q^{2})Q=0, \qquad \mathcal{L}(Q^{2}) \coloneqq Q^{2}+\frac{\partial_{1}^{2}}{\Delta}(Q^{2}).\label{eq:GSE}
\end{equation}
By extending the analysis for standing wave solutions of the NLS, Cipolatti (\cite{Cipolatti1992}, \cite{Cipolatti1993}) proved the existence of solutions to \eqref{eq:GSE}.

\begin{thm}[Existence of ground states, \cite{Cipolatti1992}, \cite{Cipolatti1993}] 
Let $\mathcal{X}$ denote the subset of $H^{1}(\R^{2})$	satisfying \eqref{eq:GSE}, and let $\mathcal{G}$ denote the set of minimizers of the functional
\begin{equation}
S(f) \coloneqq \int_{\R^{2}}\frac{1}{2}|\nabla f|^{2}-\frac{1}{4}|f|^{2}\mathcal{L}(|f|^{2})+\frac{\tau}{2}|f|^{2}dx.
\end{equation}
\begin{enumerate}[(i)]
\item
$\mathcal{G}$ contains a real positive function.
\item
$Q\in \mathcal{G}$ if and only if
\begin{equation}
I(Q) = \min_{f\in H^{1}(\R^{2}); V(f)=0} \int_{\R^{2}} |\nabla f|^{2}dx,
\end{equation}
where
\begin{equation}
V(f) \coloneqq \int_{\R^{2}}\paren*{\frac{1}{2}|f|^{4}-\frac{1}{2}|f|^{2}\E(|f|^{2})-\tau |f|^{2}}dx.
\end{equation}
\item
If $Q$ is a solution to \eqref{eq:GSE}, then $Q\in C^{2}(\R^{2})$ and is rapidly decaying in the sense
\begin{equation}
|Q(x)| + |\nabla Q(x)| \lesssim e^{-\nu x}, \qquad \forall x\in\R^{2},
\end{equation}
for some $\nu>0$.
\end{enumerate}
\end{thm}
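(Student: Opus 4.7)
The approach is to follow Cipolatti's variational argument. First I would observe that \eqref{eq:GSE} is the Euler--Lagrange equation of $S$, so solutions arise as critical points of $S$ on $H^{1}(\R^{2})$. To isolate the least-energy critical point, I would constrain the minimization to the Pohozaev-type manifold $\{V=0\}$ and minimize the Dirichlet energy there; the constraint $V(f)=0$ is the identity satisfied by every nontrivial solution of \eqref{eq:GSE}, obtainable by combining the result of multiplying the equation by $Q$ with the Pohozaev relation produced by the $L^{2}$-preserving scaling $Q\mapsto\lambda Q(\lambda\cdot)$, so this manifold contains the full solution set and provides a natural constraint realizing the variational problem $I(Q)$ of part (ii).

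The heart of the argument will be producing a minimizer of $I$. Given a minimizing sequence $\{f_{n}\}$, I would first establish uniform $H^{1}$-boundedness using the $L^{p}$-boundedness of $\E$ (the key input from \cite{Ghidaglia1990}) together with a Gagliardo--Nirenberg inequality for the quartic terms. I would then apply Lions' concentration-compactness lemma to the mass densities $|f_{n}|^{2}$: \textbf{vanishing} can be excluded because $\|f_{n}\|_{L^{4}}\to 0$ would force the quartic part of $V$ to vanish (again using the $L^{p}$-boundedness of $\E$), whence $V(f_{n})=0$ would force $\|f_{n}\|_{L^{2}}\to 0$, contradicting a suitable normalization; \textbf{dichotomy} can be excluded by a subadditivity argument exploiting the homogeneity of the quartic forms. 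After translation, a subsequence will converge weakly in $H^{1}$ and strongly in $L^{p}_{\mathrm{loc}}$ for all $2\le p<\infty$ to a limit $Q$; weak lower semicontinuity of the Dirichlet energy together with continuity of the quartic terms under this convergence (once more using the $L^{p}$-boundedness of $\E$) will show that $Q$ attains $I$. A Lagrange-multiplier computation then produces $\mu>0$ so that $Q$ satisfies $\Delta Q-\tau Q+\mu\mathcal{L}(Q^{2})Q=0$, and a rescaling $Q\mapsto aQ(b\cdot)$ with appropriate $a,b>0$ normalizes $\mu=1$, simultaneously yielding (i) and the equivalence in (ii). For positivity, I would replace $f$ by $|f|$ (which does not increase $\|\nabla f\|_{L^{2}}$ and leaves the quartic terms invariant, since they depend only on $|f|^{2}$), so the minimizer may be chosen real and nonnegative, and strict positivity then follows from the strong minimum principle applied to the equation.

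For (iii), the regularity claim is a standard elliptic bootstrap: since $H^{1}(\R^{2})\hookrightarrow L^{p}$ for every $p<\infty$, the $L^{p}$-boundedness of $\E$ gives $\mathcal{L}(Q^{2})Q\in L^{p}$, so Calder\'{o}n--Zygmund theory applied to $(-\Delta+\tau)Q=\mathcal{L}(Q^{2})Q$ yields $Q\in W^{2,p}$, hence $Q\in C^{1,\alpha}$ by Morrey, and iteration reaches $C^{2}$. The exponential decay is the step I expect to be the main obstacle, since the nonlocal potential $\mathcal{L}(Q^{2})=Q^{2}+\E(Q^{2})$ does not manifestly inherit decay from $Q$: as $\E$ is a zero-order Calder\'{o}n--Zygmund operator, it need not preserve compact support or exponential tails. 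My plan is to first show $\mathcal{L}(Q^{2})(x)\to 0$ as $|x|\to\infty$ via weighted $L^{p}$-estimates on $\E$ applied to $Q^{2}\in L^{1}\cap L^{\infty}$, reducing the exterior problem to $(-\Delta+\tau-o(1))Q=0$; a standard Agmon/comparison-principle argument with supersolutions $e^{-\nu|x|}$ for any $\nu<\sqrt{\tau}$ then gives $|Q(x)|\lesssim e^{-\nu|x|}$, and differentiating the equation and repeating the argument produces the analogous pointwise bound on $\nabla Q$.
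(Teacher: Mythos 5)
The paper does not prove this theorem at all: it is quoted, with attribution, from \cite{Cipolatti1992} and \cite{Cipolatti1993}, so there is no internal argument to measure your proposal against. Your sketch follows essentially the same variational route as those references: minimize the Dirichlet energy on the Pohozaev/Nehari-type set $\{V=0\}$, run Lions' concentration compactness (radial rearrangement is indeed unavailable here because the quartic form built on $\E$ is anisotropic, and your substitute for positivity --- passing to $|f|$, which leaves the quartic terms unchanged since they depend only on $|f|^{2}$, and then invoking the strong maximum principle --- is the correct one), remove the Lagrange multiplier by scaling $Q\mapsto aQ$, and finish (iii) by elliptic bootstrap plus a comparison argument. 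Two points would need care in a written version. First, as printed the constraint does not vanish on solutions of \eqref{eq:GSE}: multiplying the equation by $Q$ and combining with the $L^{2}$-critical Pohozaev identity yields $\int(\tfrac{1}{2}|Q|^{4}+\tfrac{1}{2}|Q|^{2}\E(|Q|^{2})-\tau|Q|^{2})dx=0$, i.e.\ a \emph{plus} sign on the $\E$ term; this is a transcription issue in the statement rather than a flaw in your plan, but your assertion that $\{V=0\}$ contains the solution set should be read with the corrected sign, and one must also explicitly exclude the trivial competitor $f\equiv 0$ from the constrained problem. Second, your flagged obstacle in the decay step resolves exactly as you suggest, but note that $\E(Q^{2})$ never decays exponentially --- its kernel is only $O(|x|^{-2})$, so $\E(Q^{2})(x)$ decays polynomially even for Schwartz $Q$ --- and this is harmless because the comparison argument needs only $\mathcal{L}(Q^{2})(x)\to 0$ as $|x|\to\infty$ to make $-\Delta+\tau-\mathcal{L}(Q^{2})$ admit the supersolution $e^{-\nu|x|}$, $\nu<\sqrt{\tau}$, outside a large ball; the gradient bound is then most cleanly obtained from interior elliptic estimates on unit balls centered at $x$, rather than by differentiating the nonlocal potential and rerunning the comparison.
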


Following the approach developed by Weinstein in \cite{weinstein1982} for characterizing the ground state associated to the NLS energy functional in terms of the sharp constant constant for the Gagliardo-Nirenberg inequality, Papanicolau, Sulem, Sulem, and Wang \cite{papanicolaou1994focusing} characterized solutions of \eqref{eq:GSE} (for $\tau=1$) in terms of a Gagliardo-Nirenberg-type inequality for the operator $\mathcal{L}$.

\begin{thm}[Gagliardo-Nirenberg inequality for $\L$, \cite{papanicolaou1994focusing}]\label{thm:GN} 
The optimal constant $C_{opt,\L}$ for the inequality
\begin{equation}
|\ipp{\mathcal{L}(|u|^{2}), |u|^{2}}_{L^{2}(\R^{2})}| \leq C_{opt,\L} \|u\|_{L^{2}(\R^{2})}^{2} \|\nabla u\|_{L^{2}(\R^{2})}^{2},
\end{equation}
is $C_{opt,\L} = \frac{2}{\|Q\|_{L^{2}(\R^{2})}^{2}}$, where $Q$ is a positive solution of the equation
\begin{equation}
\Delta Q - Q +\L(Q^{2})Q=0.
\end{equation}
\end{thm}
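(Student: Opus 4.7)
The plan is to adapt Weinstein's variational argument \cite{weinstein1982} to the nonlocal operator $\L$. Introduce the Weinstein quotient
$$W(u) := \frac{\|u\|_{L^{2}}^{2}\,\|\nabla u\|_{L^{2}}^{2}}{\langle \L(|u|^{2}), |u|^{2}\rangle}, \qquad u\in H^{1}(\R^{2})\setminus\{0\},$$
working in the focusing sign convention so the denominator is positive (the defocusing case is identical after replacing $\L$ by $-\L$). Because the symbol of $\L$ is homogeneous of degree zero, $\langle \L(|u|^{2}),|u|^{2}\rangle$ transforms as $\lambda^{4}\mu^{-2}\langle \L(|u|^{2}),|u|^{2}\rangle$ under the two-parameter rescaling $u(x)\mapsto \lambda u(\mu x)$, and one checks directly that $W$ is invariant under it. Since $C_{\mathrm{opt},\L}=1/\inf W$, the theorem reduces to verifying $\inf W = \tfrac{1}{2}\|Q\|_{L^{2}}^{2}$.

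Let $(u_{n})$ be a minimizing sequence. Using the scaling invariance, I normalize $\|u_{n}\|_{L^{2}} = \|\nabla u_{n}\|_{L^{2}}= 1$. I expect this step to be the main obstacle: because the symbol of $\E = \partial_{1}^{2}/\Delta$ is not radially symmetric, Schwarz symmetrization (which Weinstein invokes in the pure-power case to produce a radially decreasing minimizing sequence) is not available here. Instead, I would apply Lions' concentration-compactness principle to the densities $(|u_{n}|^{2})$. Vanishing is excluded because $\langle \L(|u_{n}|^{2}),|u_{n}|^{2}\rangle = 1/W(u_{n})\to C_{\mathrm{opt},\L}>0$; combined with the $L^{2}\to L^{2}$ boundedness of $\L$ and the standard $L^{4}$ Gagliardo-Nirenberg inequality, this forces $\||u_n|^2\|_{L^2}$ to remain bounded below and $|u_{n}|^{2}$ to concentrate on a bounded set after translation. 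Dichotomy is ruled out by the strict subadditivity of $\inf W$ under a mass splitting, inherited from the scaling invariance. Extracting a translate, the weak limit $Q^{*}\in H^{1}(\R^{2})\setminus\{0\}$ is a genuine minimizer of $W$ by weak lower semicontinuity of the numerator together with a Brezis-Lieb-type identity for the denominator driven by the local compactness of $H^{1}\hookrightarrow L^{4}_{\mathrm{loc}}$.

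The Euler-Lagrange equation for $W$ at $Q^{*}$, obtained by setting $\tfrac{d}{d\epsilon}W(Q^{*}+\epsilon v)|_{\epsilon=0}=0$ against real test functions $v$, reads
$$-\|Q^{*}\|_{L^{2}}^{2}\,\Delta Q^{*} + \|\nabla Q^{*}\|_{L^{2}}^{2}\, Q^{*} - 2W(Q^{*})\,\L(|Q^{*}|^{2})Q^{*}=0.$$
After the rescaling $Q^{*}(x) = \alpha Q(\beta x)$ with $\beta = \|\nabla Q^{*}\|_{L^{2}}/\|Q^{*}\|_{L^{2}}$ and $\alpha^{2}=1/(2W(Q^{*}))$, and using the scale invariance of $\L$, the profile $Q$ solves the ground state equation $\Delta Q - Q + \L(Q^{2})Q=0$. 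Pairing this equation with $Q$ yields $\langle \L(Q^{2}),Q^{2}\rangle = \|\nabla Q\|_{L^{2}}^{2} + \|Q\|_{L^{2}}^{2}$. Pairing with $x\cdot\nabla Q$ gives a Pohozaev identity: in two dimensions the standard terms produce $\int \Delta Q\,(x\cdot\nabla Q)\,dx=0$ and $\int Q\,(x\cdot\nabla Q)\,dx=-\|Q\|_{L^{2}}^{2}$, while the nonlocal term is handled by noting that $\L$ is self-adjoint and commutes with the dilation generator $x\cdot\nabla$ (since its symbol is degree-zero homogeneous), giving
$$\int \L(Q^{2})Q\,(x\cdot\nabla Q)\,dx = \tfrac{1}{2}\int \L(Q^{2})\,(x\cdot\nabla Q^{2})\,dx = -\tfrac{1}{2}\langle \L(Q^{2}),Q^{2}\rangle.$$
Assembling yields $\langle \L(Q^{2}),Q^{2}\rangle = 2\|Q\|_{L^{2}}^{2}$ and therefore $\|\nabla Q\|_{L^{2}}^{2} = \|Q\|_{L^{2}}^{2}$, so $W(Q)=\tfrac{1}{2}\|Q\|_{L^{2}}^{2}$ and $C_{\mathrm{opt},\L}=2/\|Q\|_{L^{2}}^{2}$, as claimed.
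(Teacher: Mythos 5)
First, a framing point: the paper does not prove this theorem at all — it is imported directly from \cite{papanicolaou1994focusing}, and that reference proceeds by exactly the Weinstein-type variational scheme you outline, so there is no internal proof to compare against. Your checkable computations are correct: the two-parameter scaling invariance of $W$, the Euler--Lagrange equation at a minimizer, the rescaling to the ground state equation (up to the minor slip that $\alpha^{2}$ should be $\|\nabla Q^{*}\|_{L^{2}}^{2}/(2W(Q^{*}))$ rather than $1/(2W(Q^{*}))$, which is immaterial since $W$ is invariant under the rescaling), and the Pohozaev identity, where your observation that $\L$ commutes with $x\cdot\nabla$ because its symbol is homogeneous of degree zero is exactly the right replacement for the pointwise manipulations of the pure-power case. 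These give $\langle \L(Q^{2}),Q^{2}\rangle_{L^{2}}=2\|Q\|_{L^{2}}^{2}$ and $\|\nabla Q\|_{L^{2}}^{2}=\|Q\|_{L^{2}}^{2}$, hence $C_{opt,\L}=2/\|Q\|_{L^{2}}^{2}$.

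The genuine soft spot is the existence of a minimizer, and the mechanism you invoke to exclude dichotomy is not correct as stated. Since $W$ is invariant under $u\mapsto\lambda u$ and under dilations, the infimum of $W$ over any mass-constrained set is the \emph{same} number, so there is no ``strict subadditivity of $\inf W$ under a mass splitting'' to appeal to; that device belongs to constrained minimization of a non-scale-invariant energy. For the Weinstein quotient the correct mechanism is different: if $u_{n}$ (normalized so $\|u_{n}\|_{L^{2}}=\|\nabla u_{n}\|_{L^{2}}=1$) were to split into two widely separated pieces $v_{n}+w_{n}$ each carrying a nontrivial fraction of the mass, then the numerator is strictly superadditive — the cross terms $\|v_{n}\|_{L^{2}}^{2}\|\nabla w_{n}\|_{L^{2}}^{2}+\|w_{n}\|_{L^{2}}^{2}\|\nabla v_{n}\|_{L^{2}}^{2}$ stay bounded below along a subsequence — while the denominator is only asymptotically additive, and proving that additivity is itself nontrivial here: because $\E$ is nonlocal you must show the cross interactions $\langle \E(|v_{n}|^{2}),|w_{n}|^{2}\rangle$ vanish, using the $|x|^{-2}$ decay of its kernel (or a weak-convergence argument). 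This is precisely the content of Cipolatti's concentration-compactness analysis \cite{Cipolatti1992}, to which your sketch essentially defers without saying so. Finally, to match the statement literally you should pass to $|u_{n}|$ at the outset (legitimate because the denominator depends only on $|u|$ and $\|\nabla|u|\,\|_{L^{2}}\le\|\nabla u\|_{L^{2}}$) and then invoke a separate regularity/positivity argument for the resulting nonnegative minimizer, since $\E$ does not preserve pointwise nonnegativity and positivity of $Q$ is not automatic from the equation; see \cite{Cipolatti1992}, \cite{Cipolatti1993}.
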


By defining the spacetime function $u(t,x) \coloneqq e^{it}Q(x)$, where $Q$ solves the ground state equation \eqref{eq:GSE}, one obtains a solution to \eqref{eq:DS} in the focusing case, which blows up both forward and backward in time. Furthermore, by applying a pseudoconformal transformation to $u$, one obtains a solution which blows up in finite time. For initial data in $H^{1}(\R^{2})$ with mass strictly below that of the ground state, one sees from theorem \ref{thm:GN} that the energy functional is positive definite and controls the $\dot{H}^{1}(\R^{2})$ norm. Therefore, by the same argument as in the defocusing case, the solution to \eqref{eq:DS} is global. However, in neither the defocusing nor focusing case does scattering follow from this $H^{1}$ global result.

\begin{remark} 
To the author's knowledge, uniqueness of solutions to equation \eqref{eq:GSE} is not known; however, the mass of solutions to \eqref{eq:GSE} is unique and therefore our notation $M(Q)$ is unambiguous.
\end{remark}

\subsection{Main result}

In the defocusing case $\mu=1$, there are no obvious obstructions to global well-posedness and scattering, and we therefore expect solutions to \eqref{eq:DS} to be global and scatter for arbitrary initial data in $L^{2}(\R^{2})$. However, in the focusing case $\mu=-1$, the mass of the ground state presents a clear threshold for global existence of solutions with initial data in $L^{2}(\R^{2})$. Since there are no clear obstructions to global well-posedness and scattering strictly below the ground state threshold, we expect solutions to be global and scatter for initial data in $L^{2}(\R^{2})$ with mass strictly below that of the ground state. Thus, we are led to formulate what we call the \emph{$L^{2}$ large data problem (LDP)} not addressed by theorem \ref{thm:LWP}.

\begin{prob}[$L^{2}$ Large Data Problem (LDP)]\label{prob:LDP} 
Show the following:
\begin{itemize}
\item
If $\mu=1$, then solutions to the Cauchy problem \eqref{eq:DS} with initial data $u_{0}\in L^{2}(\R^{2})$ are global and scatter.
\item
If $\mu=-1$, then solutions to the Cauchy problem \eqref{eq:DS} with initial data $u_{0}\in L^{2}(\R^{2})$ satisfying $M(u_{0}) < M(Q)$ are global and scatter.
\end{itemize}
\end{prob}

That Ghidaglia and Saut were unable to address the $L^{2}$ LDP in \cite{Ghidaglia1990} is not so surprising, as little was known in the early 1990s about the global theory for the related cubic NLS with large $L^{2}(\R^{2})$ initial data, presumably for which the analysis should be more straightforward. It is only until the last fifteen to twenty years that great advances have been made on large data critical problems in dispersive PDE, in particular on the model pNLS equation
\begin{equation}
(i\p_{t}+\Delta)u = \mu|u|^{p-1}u, \qquad (t,x)\in\R\times\R^{d}, \enspace \mu\in\{\pm 1\}.
\end{equation}
To our knowledge, the best result for GWP of solutions to the Cauchy problem \eqref{eq:DS} is for initial data $u_{0}\in H^{4/7^{+}}(\R^{2})$, which is due to Shen and Guo in \cite{Shen2006}. This work relies on first-generation almost conservation law techniques in the spirit of the I-team's work \cite{colliander2002almost}. Almost conservation law techniques have benefited from numerous refinements over the years (e.g. resonant decompositions, interaction Morawetz estimates, etc.), yielding improved GWP results, in the context of NLS equations, in particular the 2D cubic NLS. However, it has not been clear how to implement these refinements in the context of the eeDS system for reasons which will be discussed later.

If we instead consider the special hyperbolic-elliptic case of the Davey-Stewartson system which is the defocusing DSII equation
\begin{equation}\label{eq:d_DSII}
i\partial_{t}u+(\partial_{2}^{2}-\partial_{1}^{2})u=\frac{\partial_{2}^{2}-\partial_{1}^{2}}{\Delta}(|u|^{2})u,
\end{equation}
then the $L^{2}$ LDP has been recently solved by Nachman, Regev, and Tataru in \cite{Nachman2017}.
\begin{thm}[Defocusing DSII GWPS, \cite{Nachman2017}]
\label{thm:d_DSII}
Solutions to \eqref{eq:d_DSII} are global and satisfy the uniform the spacetime estimate
\begin{equation}
\|u\|_{L_{t,x}^{4}(\R\times\R^{2})} \leq C\left(\|u_{0}\|_{L^{2}(\R^{2})}\right),
\end{equation}
where $C(\cdot)$ is a nondecreasing function such that $C(0)=0$. Moreover, solutions scatter both forward and backward in time.
\end{thm}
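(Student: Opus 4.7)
The plan is to exploit the integrable structure of the defocusing DSII equation~\eqref{eq:d_DSII}, which admits a Lax pair built from a $\bar\partial$-operator in an auxiliary spectral parameter $k\in\C$. Following the Beals--Coifman/Fokas--Ablowitz paradigm, one introduces Jost-type solutions $m(z,k)$ of the associated $\bar\partial$-problem and reads off a scattering transform $\mathcal{S}\colon u_0\mapsto \tilde{u}_0$, with inverse $\mathcal{S}^{-1}$ obtained by solving a second $\bar\partial$-problem in the spectral variable. I would aim to establish two master facts: (i) $\mathcal{S}$ extends to a bijective $L^2$-isometry $L^2(\R^2)\to L^2(\R^2)$ (the nonlinear analog of Plancherel's theorem), and (ii) under the flow of~\eqref{eq:d_DSII}, the scattering data evolves by a pure phase rotation $\tilde{u}(t,k)=e^{itP(k)}\tilde{u}_0(k)$, where $P$ is the symbol of the linear part of~\eqref{eq:d_DSII}.

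Granted (i) and (ii), global existence in $L^2$ is essentially immediate: the Cauchy problem is conjugated to a linear equation whose solution exists for all time, and one recovers $u(t)$ by inverse scattering. Mass conservation follows from the isometry property, so the $L^2$ norm is preserved and the Ghidaglia--Saut local theory of Theorem~\ref{thm:LWP} extends globally.

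For the critical spacetime bound $\|u\|_{L^4_{t,x}(\R\times\R^2)}\leq C(\|u_0\|_{L^2})$, I would analyze the oscillatory representation $u(t,x)=\mathcal{S}^{-1}[e^{itP(\cdot)}\tilde{u}_0](x)$. The key is to prove that the composition of the inverse scattering transform with the phase $e^{itP(k)}$ satisfies a Strichartz-type $L^4_{t,x}$ bound uniformly in $\|u_0\|_{L^2}$; this can be attacked by bilinear estimates on the resolvent of the spectral $\bar\partial$-operator combined with stationary-phase analysis in $k$, mirroring how the classical Strichartz $L^4_{t,x}$ bound for the 2D free Schr\"odinger flow is extracted from a $TT^\ast$ argument. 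Scattering as $t\to\pm\infty$ would then follow by showing that at high frequency $\tilde{u}_0$ converges to the linear Fourier transform of the asymptotic state $u_\pm$, so that the nonlinear correction to the IST representation decays in $L^2$.

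The main obstacle is step (i): extending $\mathcal{S}$ to all of $L^2$. For small data $\mathcal{S}$ is close to the linear DSII Fourier transform and a perturbative argument around the Fourier transform suffices, but for large data one must solve a $\bar\partial$-equation in the spectral plane with a potentially large potential and obtain uniform quantitative $L^2$ bounds. In the defocusing regime the sign of the nonlinearity rules out poles of the Jost solutions (so-called exceptional points), which is precisely the structural reason global solvability should hold; the technical heart is then an $L^2$-boundedness result for the associated $\bar\partial$-resolvent, which I expect requires a weighted Carleson-type estimate on the spectral plane together with a careful exploitation of positivity. This global spectral analysis is what distinguishes the theorem from the small-data integrable theory, and is where essentially all of the difficulty lies.
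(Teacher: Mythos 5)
This statement is quoted from \cite{Nachman2017}; the paper you are reading gives no proof of it, only a one-sentence description of the method (new estimates for pseudodifferential operators with rough symbols combined with the integrable structure of DSII). Your plan --- conjugate the flow to a linear one via the $\bar\partial$-scattering transform, prove a nonlinear Plancherel theorem making $\mathcal{S}$ a bijection of $L^{2}$, use the absence of exceptional points in the defocusing case, and then extract the $L^{4}_{t,x}$ bound and scattering from the representation $u(t)=\mathcal{S}^{-1}[e^{itP}\mathcal{S}u_{0}]$ --- is essentially the strategy of Nachman--Regev--Tataru, so at the level of architecture you are reconstructing the cited proof rather than proposing an alternative. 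You correctly isolate where the difficulty sits: the large-data $L^{2}$ theory of the scattering transform, which in \cite{Nachman2017} is obtained from new boundedness results for pseudodifferential operators whose symbols are merely $L^{2}$ (your ``weighted Carleson-type estimate'' is at best a placeholder for this), together with quantitative Lipschitz bounds on bounded sets that are what actually let you push the linear Strichartz estimate through $\mathcal{S}^{-1}$ to get $\|u\|_{L^{4}_{t,x}}\leq C(\|u_{0}\|_{L^{2}})$.

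One step in your write-up is genuinely wrong and worth fixing: the claim that ``mass conservation follows from the isometry property, so the $L^{2}$ norm is preserved and the Ghidaglia--Saut local theory of Theorem~\ref{thm:LWP} extends globally.'' Equation \eqref{eq:d_DSII} is $L^{2}$-critical, so the local existence time from Theorem~\ref{thm:LWP} depends on the profile of the data, not merely on its $L^{2}$ norm; conservation of the critical norm alone does not iterate the local theory to a global solution (this failure is precisely the raison d'\^etre of the concentration-compactness machinery in the present paper). In the IST approach, global existence and scattering must instead be deduced from the uniform spacetime bound itself --- i.e.\ from the quantitative control of $\mathcal{S}^{-1}[e^{itP}\mathcal{S}u_{0}]$ and the blowup/scattering criteria of Theorem~\ref{thm:LWP} --- together with a uniqueness argument identifying the inverse-scattering solution with the strong solution of the local theory. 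With that correction your outline matches the cited proof, but as written it remains a roadmap: items (i), (ii), and the $L^{4}$ transfer estimate are exactly the theorems of \cite{Nachman2017}, not consequences of anything you have established.
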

We remind the reader that the DSII equation is one of two cases for which the Davey-Stewartson system is completely integrable by the IST and that neither of these two cases includes the elliptic-elliptic regime.

Having discussed the $L^{2}$ large data problem for the elliptic-elliptic Davey-Stewartson system, we now state our main result, which is the following theorem.
\begin{thm}[Main result]\label{thm:main}
If $\mu=1$, then solutions of the defocusing eeDS equation \eqref{eq:DS} are global and satisfy the uniform spacetime estimate
\begin{equation}
\|u\|_{L_{t,x}^{4}(\R\times\R^{2})} \leq C\left(\|u_{0}\|_{L^{2}(\R^{2})}\right),
\end{equation}
where $C(\cdot):[0,\infty)\rightarrow [0,\infty)$ is a nondecreasing function such that $C(0)=0$. Moreover, solutions scatter both forward and backward in time.	

If $\mu=-1$ and $M(u_{0})<M(Q)$, then solutions of the focusing eeDS equation \eqref{eq:DS} are global and satisfy the uniform spacetime estimate
\begin{equation}
\|u\|_{L_{t,x}^{4}(\R\times\R^{2})} \leq C\left(\|u_{0}\|_{L^{2}(\R^{2})}\right).
\end{equation}
Moreover, solutions scatter both forward and backward in time.
\end{thm}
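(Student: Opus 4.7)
The plan is to follow the Kenig--Merle concentration compactness/rigidity roadmap, combined with Dodson's long-time Strichartz framework, in order to reduce Theorem~\ref{thm:main} to the analysis of a hypothetical minimal-mass blow-up solution and then rule out every such object. Let $m_c$ denote the supremum of masses $m\ge 0$ such that every solution to \eqref{eq:DS} with $M(u_0)<m$ is global and obeys a uniform $L_{t,x}^{4}$ spacetime bound; Theorem~\ref{thm:LWP} guarantees $m_c>0$. Assuming toward contradiction that $m_c<\infty$ (defocusing) or $m_c<M(Q)$ (focusing), the goal is to derive a contradiction by first producing and then precluding a minimal counterexample at mass level $m_c$.

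\textbf{Extraction of a minimal almost periodic counterexample.} First I would prove a linear profile decomposition in $L^{2}(\R^{2})$ for the free propagator $e^{it\Delta}$ that accounts for the noncommuting action of spacetime translations, phase rotations, Galilean boosts, and parabolic scaling. Combined with a stability theory for \eqref{eq:DS} built on the $L_{t,x}^{4}$ Strichartz estimate and on the $L^{p}$ boundedness of $\E$ for $1<p<\infty$, this linear decomposition can be promoted to a nonlinear one. By minimality of $m_c$, at most one nonlinear profile survives, and its evolution produces an almost periodic solution $u$, with parameters $N(t), x(t), \xi(t)$, that blows up in $L_{t,x}^{4}$ on at least one side of its maximal lifespan. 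A standard rescaling and compactness argument then reduces to three scenarios: a self-similar solution with $N(t)\to\infty$, a double high-to-low frequency cascade, and a quasi-soliton with $N(t)\equiv 1$.

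\textbf{Long-time Strichartz estimates via bilinear analysis.} The core of the proof is the preclusion of each of these scenarios, and this is where the novelties enter. Because the endpoint $L_{t}^{2}L_{x}^{\infty}$ Strichartz estimate fails in two dimensions, the analysis must be driven by bilinear Strichartz estimates, in the spirit of Dodson's treatment of the mass-critical NLS. However, the nonlocal term $\E(|u|^{2})u$ lacks permutation symmetry among the three $u$-arguments under a Littlewood--Paley frequency truncation, so the standard bilinear machinery does not apply directly. I would introduce a frequency cube decomposition of $|u|^{2}$ into pairs of dyadic cubes of comparable scale, on which the operator $\E$ with symbol $\xi_{1}^{2}/|\xi|^{2}$ effectively acts as a constant multiplier, and then prove bilinear Strichartz estimates tailored to this decomposition. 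With these in hand, adapt Dodson's long-time Strichartz argument to obtain quantitative control on $\|P_{>N}u\|$ over intervals where $\int N(t)^{3}\,dt$ is large.

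\textbf{Rigidity.} The self-similar scenario can be ruled out by upgrading the regularity of $u$ through the long-time Strichartz bound and then contradicting mass conservation; the double cascade and quasi-soliton scenarios require more. Since no a priori interaction Morawetz estimate is available, I would exploit the spatial and frequency localization of the almost periodic $u$ to design a frequency-localized Morawetz-type inequality adapted to the anisotropy of $\E$, yielding the remaining contradictions. In the focusing case, Theorem~\ref{thm:GN} together with the hypothesis $M(u_{0})<M(Q)$ ensures the energy is coercive and allows the same rigidity arguments to go through as in the defocusing case. The hardest step will be the construction of the frequency cube decomposition and of bilinear Strichartz estimates compatible with it, since compensating for the lack of permutation symmetry of the eeDS nonlinearity requires a genuinely new idea; a closely related difficulty is the design of a useable Morawetz-type quantity that interacts well with the nonlocal, anisotropic term $\E$.
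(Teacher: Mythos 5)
Your proposal follows essentially the same route as the paper: the Kenig--Merle concentration compactness/rigidity roadmap with a stability theory based on the $L^{p}$ boundedness of $\E$, a Dodson-type long-time Strichartz estimate driven by bilinear Strichartz estimates adapted to a frequency-cube (double frequency) decomposition of the nonlocal nonlinearity, and a frequency-localized interaction Morawetz-type functional exploiting the spatial/frequency localization of the minimal counterexample, with the sharp Gagliardo--Nirenberg inequality of Theorem~\ref{thm:GN} handling the focusing threshold $M(u_{0})<M(Q)$. The only organizational difference is in the rigidity bookkeeping: rather than the self-similar/double-cascade/soliton trichotomy you invoke, the paper reduces to admissible blowup solutions on $[0,\infty)$ with $N(t)\leq 1$ and smooth $N,\xi$, and then splits into the dichotomy $\int_{0}^{\infty}N(t)^{3}dt<\infty$ (rapid frequency cascade, excluded by upgrading to $C_{t}^{0}H_{x}^{3}$ regularity and using \emph{energy}, not mass, conservation) versus $\int_{0}^{\infty}N(t)^{3}dt=\infty$ (quasi-soliton, excluded by the Morawetz-type estimate).
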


\begin{remark} 
Although our theorem \ref{thm:main} is formulated for solutions to the simplified eeDS equation \eqref{eq:DS} and in our proof we work with \eqref{eq:DS}, our argument extends to the full elliptic-elliptic system \emph{mutatis mutandis}. Our motivation for restricting attention to \eqref{eq:DS} is to simplify the notation.
\end{remark}

\begin{remark} 
By a standard argument (see subsection 4.4.5 of \cite{Sohinger2011}), one can show that scattering in $H^{s}$, for $s>0$, is a consequence of our theorem \ref{thm:main}. It is worth remarking that to address the global behavior of solutions to \eqref{eq:DS} at high regularities it was necessary for us to the study the equation at the critical regularity.
\end{remark}

\begin{remark} 
Forthcoming work by the author (\cite{RosenzweigHartree}) solves the $L^{2}$ LDP for the 3D mass-critical Hartree equation
\begin{equation}
(i\p_{t}+\Delta)u=\mu (|x|^{-2}\ast |u|^{2})u, \qquad (t,x)\in \R\times\R^{3}, \enspace \mu\in\{\pm 1\},
\end{equation} 
for which one faces the new challenge of an asymmetric natural scattering norm $L_{t}^{6}L_{x}^{18/7}(I\times\R^{3})$.
\end{remark}

Before proceeding to discuss the outline of the proof of our main result, we briefly compare our theorem \ref{thm:main} for the elliptic-elliptic DS to the theorem \ref{thm:d_DSII} of Nachman, Regev, and Tataru for the defocusing DSII, which we remind the reader is of hyperbolic-elliptic type. First, our work covers both the defocusing and focusing cases, whereas \cite{Nachman2017} only covers the defocusing case. Second, the proofs are very different. The main result of \cite{Nachman2017} are new estimates for pseudodifferential operators with rough symbols. From these new estimates together with the integrable structure of the DSII, the authors are able to solve the $L^{2}$ LDP for the defocusing DSII equation and in addition obtain a precise description of the asymptotic behavior of the solution. In our case, we make no use of any integrable structure, as the eeDS is known to \emph{not} be integrable by the IST (\cite{Shulman1983}). Our proof is instead inspired by the work of Dodson on the 2D cubic NLS (\cite{Dodson2016}) and follows very much in the tradition of the concentration compactness/rigidty roadmap first laid out by Kenig and Merle (\cite{Kenig2006}), which we review in the next subsection. We view our main result, theorem \ref{thm:main}, not only as solving the large data problem for a specific equation, but also as making the existing theory for tackling critical large data problems more robust with respect to the local/nonlocal distinction.

\subsection{Outline of the proof}\label{ssec:in_out} 
\subsubsection{Review of global theory for critical pNLS}
To give context to our main result and introduce tools that we use in its proof, we briefly review some of the advances that have been made over the last two decades in the study of the pNLS (hereafter referred to as NLS) equation at critical regularity. The first major advance was by Bourgain, who proved in \cite{Bourgain1999} global spacetime bounds for the defocusing energy-critical NLS in dimensions $d=3,4$ with spherically symmetric initial data. In this work, he introduced the tool of ``induction on energy", which can be seen as the hallmark tool of the old paradigm for studying the critical NLS LDP. Moreover, Bourgain oberved that to prove global spacetime bounds, it suffices to consider solutions which are concentrated in both physical and frequency spaces. The work \cite{Grillakis2000} gave a different proof for the 3D radial defocusing energy-critical NLS, and the work \cite{Tao2005} treated the higher dimensional cases, also under the assumption of radial initial data. Colliander, Keel, Staffilani, Takaoka, and Tao (\cite{Colliander2008}) removed the radial symmetry assumption by significantly refining the induction on energy technique. The authors also introduced the new tool of a frequency-localized interaction Morawetz inequality, which has proved useful for studying the critical NLS LDP outside the energy-critical setting. \cite{ryckman2007global}, \cite{visan2006thesis}, and \cite{visan2007} treated the higher dimensional cases, resolving the conjectured global spacetime bounds for the defocusing energy-critical NLS.

The next advance, which we highlight, comes from Kenig and Merle in \cite{Kenig2006} in the context of the focusing energy-critical NLS dimensions $d=3,4,5$ with spherically symmetric initial data. In that work, the authors used earlier work by Keraani (\cite{Keraani2001}, \cite{Keraani2006}) to prove via a concentration compactness argument that the failure of global spacetime bounds implies that the existence of minimal energy blowup solutions, which enjoy even better spatial and frequency localization properties than the almost minimal blowup solutions previously considered in the induction on energy approach. By means of a rigidity argument, they then proved that such solutions ultimately cannot exist. Kenig and Merle's new approach provided a general framework for studying critical large data dispersive problems called the concentration compactness/rigidity road map, which, due to its efficiency and modularity, has largely supplanted the prior induction on energy approach in subsequent work. \cite{killip2007unpub} extended the result of Kenig and Merle to all dimensions, and \cite{Killip2010} removed the assumption of spherically symmetric initial data for dimensions $d\geq 5$.

Turning to the mass-critical NLS, Tao, Visan, and Zhang made in \cite{Tao2006} the first advance on the critical LDP for the defocusing case with spherically symmetric initial data in dimensions $d\geq 3$. Killip, Tao, and Visan in \cite{killip2009cubic} then solved the 2D defocusing and focusing cases also for spherically symmetric initial data. In particular, that work showed that by further appealing to the concentration compactness, one can prove the existence of minimal mass blowup solutions with even better properties than had been previously observed. \cite{Killip2008} adapted the argument from \cite{killip2009cubic} to cover the focusing case in dimensions $d\geq 3$, again only for spherically symmetric initial data.

The previously cited works on the mass-critical NLS heavily rely on the assumption of spherically symmetric initial data. In the radial case, one has access to improved Strichartz estimates, the in/out decomposition, and other tools, and one also does not have to worry about Galilean invariance, which is an additional noncompact symmetry compared to the energy-critical NLS. The breakthrough in removing the radial assumption came from Dodson in the seminal works \cite{Dodson2012}, \cite{dodson2016global}, \cite{Dodson2016} on the defocusing mass-critical NLS in all dimensions. In these works, Dodson introduced the new tool of long-time Strichartz estimates for special minimal blowup solutions. This tool has since found application outside the mass-critical setting by simplifying proofs of old results for the energy-critical NLS (see \cite{Visan2012}, \cite{killip2012global}). In the work \cite{Dodson2015}, he treated the focusing mass-critical NLS in all dimensions, by introducing a substitute for the frequency-localized interaction Morawetz inequality previously used in the rigidity step of the defocusing case. Dodson's 2D result \cite{Dodson2016} is, in particular, a very deep theorem, as one has the failure of the double endpoint $L_{t}^{2}L_{x}^{\infty}$ Strichartz estimate (\cite{montgomery-smith1998}) and therefore Dodson had to compensate by relying heavily on bilinear estimates and special function spaces. This work is the most relevant to our present setting and is, in part, the inspiration for our interest in the $L^{2}$ LDP for the elliptic-elliptic Davey-Stewartson system.

To prove our main result \ref{thm:main}, we use the concentration compactness/rigidity road map first introduced by Kenig and Merle together with the long-time Strichartz technique introduced by Dodson. The guiding principle throughout the proof is to use as much of the Kenig-Merle/Dodson road map as possible. However, the existing theory does not accommodate the eeDS equation \eqref{eq:DS}. Therefore, we work to extend the theory to handle semilinear nonlocal nonlinearities, such as those of eeDS type.

\subsubsection{Concentration compactness}
For the purposes of solving the $L^{2}$ large data problem, it is convenient to reformulate it in a more quantitative fashion (cf. \cite{Kenig2006}, \cite{Tao2006}, \cite{Tao2008}, and \cite{Killip2009}). To do so, we consider the precise relationship between the mass $M(u)$ of a solution $u$ to \eqref{eq:DS} and the scattering size
\begin{equation}
S_{I}(u) \coloneqq \|u\|_{L_{t,x}^{4}(I\times\R^{2})}^{4}.
\end{equation}
In the defocusing case, $\mu=1$, we define the function $L^{+}:[0,\infty)\rightarrow [0,\infty]$ by
\begin{equation}\label{eq:L+}
L^{+}(M) \coloneqq \sup\{S_{I}(u) : u: I\times\R^{2}\rightarrow\mathbb{C}, \enspace M(u)\leq M\},
\end{equation}
where the supremum is taken over all solutions $u:I\times\R^{2}\rightarrow\mathbb{C}$ to the defocusing equation \eqref{eq:DS}. In the focusing case, $\mu=-1$, we define the function $L^{-}:[0,M(Q)] \rightarrow [0,\infty]$ by
\begin{equation}\label{eq:L-}
L^{-}(M) \coloneqq \sup\{S_{I}(u) : u: I\times\R^{2}\rightarrow\mathbb{C}, \enspace M(u)\leq M\},
\end{equation}
where the supremum is taken over all solutions $u:I\times\R^{2}\rightarrow\mathbb{C}$ to the focusing equation \eqref{eq:DS}. It is tautological that $L^{\pm}$ is nondecreasing. Moreover, from assertions \ref{item:LWP_smg} and \ref{item:LWP_cd} of theorem \ref{thm:LWP}, $L^{\pm}$ is continuous and there exists some $M_{0}\in (0,\infty)$ such that $L^{\pm}(M)<\infty$ for $M\leq M_{0}$. If there exists a maximal-lifespan solution $u:I\times\R^{2}\rightarrow\C$ to the defocusing eeDS such that $S_{I}(u)=\infty$, then there exists a \emph{critical mass} $M_{c}\in (0,\infty)$ such that
\begin{equation}
L^{+}(M) < \infty, \quad \forall M<M_{c} \enspace \text{and} \enspace L^{+}(M)=\infty, \quad \forall M>M_{c}.
\end{equation}
Similarly, if there exists a maximal-lifespan solution $u:I\times\R^{2}\rightarrow\C$ to the focusing eeDS such that $M(u)<M(Q)$ and $S_{I}(u)=\infty$, then there exists a critical mass $M_{c}\in (0,M(Q))$ such that
\begin{equation}
L^{-}(M)<\infty, \quad \forall M<M_{c} \enspace \text{and} \enspace L^{+}(M)=\infty, \quad \forall M>M_{c}.
\end{equation}

\begin{remark}
Hereafter, we omit the superscript $\pm$ in $L$, as the defocusing/focusing nature will be clear from context.
\end{remark}

With the preceding observations, we obtain the following quantitative version of the $L^{2}$ LDP.
\begin{prob}[Quantitative $L^{2}$ LDP]\label{prob:LDP_q}
Show the following:
\begin{itemize}
\item
If $\mu=1$, then $M_{c}=\infty$;
\item
If $\mu=-1$, then $M_{c}=M(Q)$.
\end{itemize}
\end{prob}

To solve problem \ref{prob:LDP_q}, we argue by contradiction. We assume that if $\mu=1$, then the critical mass $M_{c}<\infty$ and if $\mu=-1$, then $M_{c}<M(Q)$. Following the approach of \cite{Keraani2006}, \cite{Tao2006}, \cite{Tao2008}, and \cite{Killip2009}, we prove a stability lemma for the eeDS and use it together the linear profile decomposition for the free propagator $e^{it\Delta}$ due to Merle and Vega (\cite{Merle1998}) to prove the existence of a maximal-lifespan solution $u:I\times\R^{2}\rightarrow\mathbb{C}$ to \eqref{eq:DS} which blows up both forward and backward in time and has mass exactly equal to the critical mass $M_{c}$. We call such a solution to equation \eqref{eq:DS} a \emph{minimal mass blowup solution}. Moreover, we show that such solutions have the special property of being almost periodic modulo the group $G$ generated by the symmetries of phase rotation, spatial translation, $L^{2}$ scaling, and Galilean transformation, which means their orbits are precompact in the quotient of the action $L^{2}(\R^{2}){/} G$. We use the following equivalent definition of almost periodicity (see remark 3 succeeding Definition 5.1 in \cite{KillipClay}), which is more quantitative.

\begin{mydef}[Symmetry group $G$]\label{def:G_grp} 
For a phase $\theta\in \R/2\pi\mathbb{Z}$, position $x_{0}\in\R^{2}$, frequency $\xi_{0}\in\R^{2}$, and scaling parameter $\lambda>0$, we define the unitary transformation $g_{\theta,\xi_{0},x_{0},\lambda} : L^{2}(\R^{2}) \rightarrow L^{2}(\R^{2})$ by
\begin{equation}
g_{\theta,\xi_{0},x_{0},\lambda}f(x) \coloneqq \lambda^{-1}e^{i\theta}e^{ix\cdot\xi_{0}}f\paren*{\frac{x-x_{0}}{\lambda}}.
\end{equation}
We let $G$ denote the collection of such transformations. The reader may check that $G$ is a group with identity $g_{0,0,0,1}$, inverse $g_{\theta,\xi_{0},x_{0},\lambda}^{-1}=g_{-\theta-x_{0}\cdot\xi_{0},-\lambda\xi_{0},-x_{0}/\lambda, \lambda^{-1}}$, and group law
\begin{equation}
	g_{\theta,\xi_{0},x_{0},\lambda}g_{\theta',\xi_{0}',x_{0}',\lambda'}=g_{\theta+\theta'-x_{0}\cdot\xi_{0}'/\lambda, \xi_{0}+\xi_{0}'/\lambda, x_{0}+\lambda x_{0}', \lambda\lambda'}.
\end{equation}
We denote the quotient of the action of $G$ on $L^{2}(\R^{2})$ (i.e. the space of $G$-orbits $Gf:=\{gf : g\in G\}$ for $f\in L^{2}(\R^{2})$) by $L^{2}(\R^{2}){/}G$, which we endow with the quotient (complete) metric topology. For $g_{\theta,\xi_{0},x_{0},\lambda}\in G$, we define the action $T_{g_{\theta,\xi_{0},x_{0},\lambda}}$ on spacetime functions $u:I\times\R^{2}\rightarrow\mathbb{C}$ by
\begin{align}
&T_{g_{\theta,\xi_{0},x_{0},\lambda}}u:\lambda^{2}I\times\R^{2}\rightarrow\mathbb{C}, \qquad \lambda^{2}I \coloneqq \{\lambda^{2}t: t\in I\} \\
&(T_{g_{\theta,\xi_{0},x_{0},\lambda}}u)(t,x) \coloneqq \lambda^{-1}e^{i\theta}e^{ix\cdot\xi_{0}}e^{-it|\xi_{0}|^{2}}u\left(\frac{t}{\lambda^{2}}, \frac{x-x_{0}-2\xi_{0}t}{\lambda}\right),
\end{align}
which may be expressed more succinctly as
\begin{equation}
(T_{g_{\theta,\xi_{0},x_{0},\lambda}}u)(t) = g_{\theta-t|\xi_{0}|^{2},\xi_{0},x_{0}+2\xi_{0}t,\lambda}\left(u\left(\frac{t}{\lambda^{2}}\right)\right).
\end{equation}
\end{mydef}

\begin{mydef}[Almost periodic modulo symmetries (APMS)]
We say that $u\in C_{t,loc}^{0}L_{x}^{2}(I\times\R^{2})$ is \emph{almost periodic modulo $G$} if there exists a spatial center function $x:I\rightarrow\R^{2}$, a frequency center function $\xi:I\rightarrow\R^{2}$, a frequency scale function $N:I\rightarrow [0,\infty)$, and a compactness modulus function $C:(0,\infty)\rightarrow [0,\infty)$, such that for every $\eta>0$, we have the spatial and frequency localization estimates
\begin{equation}
\int_{|x-x(t)| \geq C(\eta)/N(t)} |u(t,x)|^{2}dx + \int_{|\xi-\xi(t)| \geq C(\eta)N(t)} |\hat{u}(t,\xi)|^{2}d\xi \leq \eta, \qquad\forall t\in I.
\end{equation}
\end{mydef}

\begin{restatable}[Reduction to almost periodic solutions]{thm}{APreduc}
\label{thm:AP_reduc}
If $\mu=1$, assume that $M_{c}<\infty$; if $\mu=-1$, assume that $M_{c}<M(Q)$. Then there exists a minimal mass blowup solution $u:I\times\R^{2}\rightarrow\mathbb{C}$ to \eqref{eq:DS} which is almost periodic modulo symmetries.
\end{restatable}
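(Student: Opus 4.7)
The plan is to implement a standard Keraani--Kenig--Merle concentration compactness argument, adapted to the nonlocal nonlinearity of \eqref{eq:DS}. By the definition of the critical mass $M_c$ together with Theorem \ref{thm:LWP}, there exists a sequence of maximal-lifespan solutions $u_n : I_n \times \R^2 \to \C$ with $M(u_n) \to M_c$ and $S_{I_n}(u_n) \to \infty$. After normalizing by symmetries, applying the Merle--Vega linear profile decomposition for $e^{it\Delta}$ to the initial data sequence yields
\[
u_n(0) = \sum_{j=1}^{J} g_n^j e^{it_n^j \Delta} \phi^j + w_n^J, \qquad g_n^j \in G, \enspace \phi^j \in L^2(\R^2),
\]
with asymptotic orthogonality of the group parameters, vanishing dispersive error $\lim_{J\to\infty}\limsup_n \|e^{it\Delta}w_n^J\|_{L_{t,x}^{4}} = 0$, and mass decoupling $\sum_j M(\phi^j) + \limsup_n M(w_n^J) = M_c$. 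To each profile $\phi^j$ we attach a nonlinear profile $v^j$ solving \eqref{eq:DS} with data $\phi^j$ at time zero (when $t_n^j$ is bounded) or with scattering data at $\pm\infty$ (when $t_n^j \to \pm\infty$).

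The next step is to prove a stability lemma for \eqref{eq:DS} by a Strichartz fixed-point argument exploiting the $L^p$-boundedness of $\E$ for $1 < p < \infty$. Combined with asymptotic decoupling of the nonlinear interactions between $T_{g_n^j}v^j$ and $T_{g_n^k}v^k$ along distinct group parameters, this lets us approximate $u_n$ in scattering norm by the superposition $\sum_j T_{g_n^j} v^j$. If two or more profiles carried positive mass, each $M(\phi^j) < M_c$ would be strictly subcritical, each $v^j$ would have bounded scattering size by the definition of $M_c$, and summation via stability would yield a uniform bound on $S_{I_n}(u_n)$, contradicting divergence. Hence exactly one profile $\phi^1$ survives with $M(\phi^1) = M_c$, the error $w_n^J$ tends to zero in $L^2$, and the associated nonlinear profile $u$ is a minimal-mass solution to \eqref{eq:DS} that blows up both forward and backward in time --- otherwise another stability application would yield scattering for $u_n$. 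To upgrade $u$ to APMS, apply this extraction procedure to an arbitrary sequence $\{u(t_n)\}$ with $t_n \in I$; the single-profile conclusion forces the orbit $\{G u(t) : t \in I\}$ to be precompact in $L^2(\R^2)/G$, and the standard equivalence (cf.\ \cite{KillipClay}) then produces the centering functions $x(t),\xi(t),N(t)$ and compactness modulus $C(\eta)$.

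The hard part will be establishing the stability lemma and the asymptotic decoupling of nonlinear profile interactions in a manner compatible with the nonlocal, permutation-noninvariant operator $\E$. Since $F(u) = -\L(|u|^2)u$ fails to localize in physical space, bounding the cross-interactions produced by two profiles separated by scaling or spatial translation cannot be done by support considerations alone; instead one must combine $L^p$-boundedness of $\E$ with rescaling and oscillation/dominated-convergence arguments to drive the cross-terms to zero in Strichartz norms. For Galilean orthogonality, fortunately, the situation is benign --- under a Galilean boost $|u|^2$ acquires no phase, so $\E(|u|^2)$ is manifestly invariant, and a standard frequency-localized analysis closes the decoupling. These analytic estimates are the principal technical input required beyond the NLS framework, but once in hand the rest of the concentration-compactness machinery applies essentially \emph{mutatis mutandis}.
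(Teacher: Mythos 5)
Your proposal follows essentially the same route as the paper: a Palais--Smale/profile-decomposition argument in the style of Tao--Visan--Zhang and Killip--Visan, built on the Merle--Vega linear profile decomposition, a Strichartz-based stability lemma exploiting the $L^{p}$-boundedness of $\E$, nonlinear profiles, reduction to a single profile of mass exactly $M_{c}$, and then precompactness of the orbit modulo $G$. You also correctly isolate the genuinely new ingredient, namely the asymptotic decoupling of the nonlinear cross-interactions in the presence of the nonlocal, non-permutation-invariant operator $\E$ (this is the paper's lemma \ref{lem:as_sol}, proved by a case analysis on the asymptotic orthogonality parameters together with $L^{p}$ and H\"older-type bounds for $\E$), and you correctly observe that Galilean boosts are harmless since $|u|^{2}$ carries no phase.

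One step as written would not go through: your argument that the limiting solution blows up \emph{both} forward and backward in time (``otherwise another stability application would yield scattering for $u_{n}$''), and likewise your implicit preclusion of the surviving profile having $t_{n}^{1}\to\pm\infty$, only produces a contradiction with the divergence of the scattering size \emph{on the side where stability gives control}. From $S_{I_{n}}(u_{n})\to\infty$ alone you cannot rule out, say, that $u$ scatters forward while all the blowup of $u_{n}$ occurs backward in time. The missing idea is the selection of reference times: before applying the profile decomposition one chooses $t_{n}\in I_{n}$ with, e.g., $S_{\geq t_{n}}(u_{n})=\min\{\tfrac{1}{2}S_{I_{n}}(u_{n}),2^{n}\}$, so that $S_{\geq t_{n}}(u_{n})$ and $S_{\leq t_{n}}(u_{n})$ both diverge, and then time-translates so that $t_{n}\equiv 0$. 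With this two-sided divergence in hand, the stability lemma rules out both the case $t_{n}^{1}\to+\infty$ and $t_{n}^{1}\to-\infty$ for the surviving profile and forces the limit solution to blow up in both time directions; this two-sided blowup is also exactly what lets you run the extraction along arbitrary sequences $\{u(t_{n}')\}$ to get precompactness of the orbit in $L^{2}(\R^{2})/G$. Your phrase ``after normalizing by symmetries'' does not substitute for this specific choice, which is the hypothesis under which the Palais--Smale compactness statement is actually formulated and used in the paper.
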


Next, using the further refinements of the concentration compactness obtained in \cite{Killip2009} (see also \cite{KillipClay}) for the $2D$ cubic NLS, we show the existence of a special class of minimal blowup solutions which are almost periodic modulo symmetries. We refer to these as \emph{admissible blowup solutions} and consider them exclusively in this work.

\begin{mydef}[Admissible blowup solution]\label{def:adm_sol}
We say that a maximal-lifespan solution $u:I\times\R^{2}\rightarrow\mathbb{C}$ to \eqref{eq:DS} which has mass $M_{c}$, blows up both forward and backward in time, and is almost periodic modulo symmetries with parameters $x(t),\xi(t),N(t)$ and compactness modulus function $C(\cdot)$ is \emph{admissible} if the following properties are satisfied:
\begin{enumerate}[(i)]
\item
$[0,\infty)\subset I$;
\item
$N(t)\leq 1$ for all $t\in [0,\infty)$ and $x(0)=\xi(0)=0$;
\item
$N,\xi \in C_{loc}^{1}([0,\infty))$ and satisfy the pointwise derivative bounds
\begin{equation}
|N'(t)| + |\xi'(t)| \lesssim_{u} N(t)^{3}, \qquad \forall t\in [0,\infty).
\end{equation}
\end{enumerate}
\end{mydef}

\begin{restatable}[Reduction to admissible blowup solutions]{cor}{admsol}
\label{cor:adm_sol}
If $\mu=1$, assume that $M_{c}<\infty$; if $\mu=-1$, assume that $M_{c}<M(Q)$. Then there exists an admissible blowup solution $u:I\times\R^{2}\rightarrow\C$ to \eqref{eq:DS}.
\end{restatable}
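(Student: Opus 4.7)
The starting point is the APMS minimal mass blowup solution $u : I \times \R^{2} \to \C$ supplied by Theorem~\ref{thm:AP_reduc}, with parameter functions $x(t), \xi(t), N(t)$ and compactness modulus $C(\cdot)$. The plan is to upgrade $u$ to an admissible blowup solution by exploiting the symmetry group $G$ together with a quantitative local constancy of the parameters. Since $u$ blows up both forward and backward in time, we may assume---after possibly applying the time-reversal symmetry $u(t,x)\mapsto \overline{u(-t,x)}$, which preserves eeDS because $\L_{\mu}$ has real symbol---that $u$ blows up forward in time, and after a time translation that $0\in I$.

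First, I would use the $T$-action of $G$ on spacetime functions to normalize the initial-time parameters. Applying $T_{g_{0,-\xi(0),-x(0),1/N_{0}}}$ for a chosen $N_{0}\geq N(0)$ produces a new APMS minimal mass blowup solution with $x(0)=0$, $\xi(0)=0$, and $N(0)\leq 1$. To achieve the stronger condition $N(t)\leq 1$ for every $t\in[0,\infty)$, I would distinguish two cases. When $M\coloneqq \sup_{t\in[0,\sup I)} N(t)<\infty$, a further $G$-scaling by $M$ suffices. When $M=\infty$, I would select a sequence $t_{n}\in[0,\sup I)$ with $N(t_{n})\to \infty$, apply the $G$-symmetry $T_{g_{0,\xi(t_{n}),x(t_{n}),1/N(t_{n})}}$ composed with a time translation to $t_{n}$, and extract a subsequential $L^{2}$ limit of the rescaled initial data using the precompactness of the orbit in $L^{2}(\R^{2})/G$ afforded by almost periodicity. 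Continuous dependence (assertion~\ref{item:LWP_cd} of Theorem~\ref{thm:LWP}) together with the eeDS stability lemma then upgrades this limit to a new APMS minimal mass blowup solution whose frequency scale function is bounded by $1$ on $[0,\infty)$.

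Next, to secure the $C^{1}$ regularity and the pointwise bounds $|N'(t)|+|\xi'(t)|\lesssim N(t)^{3}$, the key tool is a \emph{local constancy} statement: there exists $c=c(u)>0$ such that $|t-s|\leq c N(t_{0})^{-2}$ implies $N(t)\sim N(s)\sim N(t_{0})$ and $|\xi(t)-\xi(s)|\lesssim N(t_{0})$. I would prove this by contradiction, using the Duhamel formula on intervals of length $\sim N(t_{0})^{-2}$, Strichartz estimates, and the $L^{p}$-boundedness of $\E$ for $1<p<\infty$ to show that any violation produces rescalings whose forward evolutions are close in $L^{4}_{t,x}$ to free solutions, contradicting the almost periodicity and blowup of $u$. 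Once local constancy is available, I would replace $(N,\xi)$ by adaptive mollifications of the form
\[
\tilde N(t) \coloneqq N(t)^{2}\int N(s)\,\chi\!\left(N(t)^{2}(t-s)\right)ds, \qquad \tilde\xi(t) \coloneqq N(t)^{2}\int \xi(s)\,\chi\!\left(N(t)^{2}(t-s)\right)ds,
\]
with $\chi$ a smooth nonnegative bump of unit mass supported near the origin. Local constancy forces $\tilde N\sim N$ and $|\tilde\xi-\xi|\lesssim N$, so these smoothed functions continue to serve as compactness parameters after enlarging $C(\cdot)$ by an absolute factor, while differentiating under the integral sign yields the pointwise bounds of Definition~\ref{def:adm_sol}(iii).

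The main obstacle I anticipate is the local constancy statement. Standard NLS arguments at this step implicitly exploit permutation invariance of the cubic nonlinearity under frequency decomposition, whereas the eeDS nonlinearity $\L(|u|^{2})u$ contains the nonlocal operator $\E$, which does not commute with Littlewood--Paley projections. One must therefore combine the Strichartz-based continuity argument with the $L^{p}$-boundedness of $\E$ to run the compactness extraction in $L^{2}/G$; once this is in place, the symmetry reduction and mollification steps proceed as in the NLS setting.
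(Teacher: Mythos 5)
There is a genuine gap at the mollification step. The APMS parameters $N(t)$ and $\xi(t)$ are a priori just functions satisfying the localization inequalities; local constancy (lemma \ref{lem:lc}) gives only the two-sided comparison $N(t)\sim_{u}N(t_{0})$ and $|\xi(t)-\xi(t_{0})|\lesssim_{u}N(t_{0})$ on windows of length $\sim N(t_{0})^{-2}$, not continuity, let alone differentiability. Your smoothed functions
\[
\tilde N(t) = N(t)^{2}\int N(s)\,\chi\bigl(N(t)^{2}(t-s)\bigr)\,ds,\qquad \tilde\xi(t) = N(t)^{2}\int \xi(s)\,\chi\bigl(N(t)^{2}(t-s)\bigr)\,ds
\]
depend pointwise on the rough function $N(t)$ both through the prefactor and through the scale of the kernel, so they inherit every discontinuity of $N$ (up to a bounded multiplicative factor). "Differentiating under the integral sign" would require differentiating $N(t)^{2}$ in $t$, which is exactly what you do not have; the argument is circular, and $\tilde N,\tilde\xi$ need not be $C^{1}$, so property (iii) of Definition \ref{def:adm_sol} is not obtained. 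The fix is to decouple the smoothing scale from the pointwise value of $N$: this is what the paper does. It partitions $[0,\infty)$ into small intervals $J_{k}$ with $\|u\|_{L_{t,x}^{4}(J_{k}\times\R^{2})}=1$, defines $N_{2}$ and $\xi_{2}$ to be \emph{constant} (equal to $N_{1}(J_{k})/\sup N_{1}$ and $\xi_{1}(J_{k})$) on the middle half of each $J_{k}$, and glues across the transition regions with explicit smooth interpolants of the form $e^{-1/(t-t_{k,h})}/(e^{-1/(t_{k+1,l}-t)}+e^{-1/(t-t_{k,h})})$. Smoothness is then true by construction, and the bounds $|N_{2}'(t)|+|\xi_{2}'(t)|\lesssim_{u}N_{2}(t)^{3}$ follow from corollary \ref{cor:lc_int}: $|t_{k,h}-t_{k+1,l}|\sim_{u}N_{1}(J_{k})^{-2}$, $N_{1}(J_{k})\sim_{u}N_{1}(J_{k+1})$, and $|\xi_{1}(J_{k})-\xi_{1}(J_{k+1})|\lesssim_{u}N_{1}(J_{k})$. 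One then checks, as you indicate, that the modified parameters remain APMS parameters after enlarging the compactness modulus by constants depending on $u$.

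A secondary remark: your reduction to $[0,\infty)\subset I$, $N(t)\le 1$, in the case $\sup_{t}N(t)=\infty$ (rescaling at times $t_{n}$ with $N(t_{n})\to\infty$ and extracting a limit via precompactness and stability) is essentially re-proving a piece of the three-scenario classification that the paper imports wholesale as lemma \ref{lem:APMS_class} (theorem 5.24 of \cite{KillipClay}); in particular you would still need to verify that the limiting solution is again a minimal mass blowup solution blowing up in both time directions and that its frequency scale function falls into one of the bounded scenarios. That step is not wrong in spirit, but as written it is far sketchier than the citation route, and it does not substitute for repairing the mollification argument above.
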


To obtain a contradiction, it suffices to show that $u\equiv 0$. Using the dichotomy introduced in \cite{Dodson2012}, \cite{dodson2016global}, and \cite{Dodson2016}, we consider two scenarios for blowup, which we ultimately preclude. The first is the \emph{rapid frequency cascade} scenario
\begin{equation}
\int_{0}^{\infty}N(t)^{3}dt<\infty,
\end{equation}
and the second is the \emph{quasi-soliton} scenario
\begin{equation}
\int_{0}^{\infty}N(t)^{3}dt=\infty.
\end{equation}
The motivation for considering these two scenarios comes from the scaling of the interaction Morawetz estimate for the NLS.

\begin{remark}
The concentration compactness step may also be performed for the hyperbolic-elliptic DS system (heDS). The work \cite{dodson2017profile} established a profile decomposition for the free propagator $e^{it(\p_{2}^{2}-\p_{1}^{2})}$, following the earlier work \cite{Rogers2006} which proved a refined Strichartz estimate. Since the linear Strichartz estimates for $e^{it(\p_{2}^{2}-\p_{1}^{2})}$ are the same as the estimates for $e^{it\Delta}$, one can also prove a stability result for the heDS Cauchy problem, which then provides one with all the necessary ingredients to prove the existence of minimal blowup solutions. 
\end{remark}

\subsubsection{Long-time Strichartz estimate}
After the concentration compactness step, we next prove a long-time Strichartz estimate for admissible blowup solutions to equation \eqref{eq:DS}, which is theorem \ref{thm:LTSE}. This estimate is the workhorse of the overall proof, and we use it to preclude both the rapid frequency cascade and the quasi-soliton scenarios. We will not state the theorem here, as formulating it requires a bit of machinery; but we comment that we use the $U^{p}$ and $V^{p}$ spaces introduced by Koch and Tataru in \cite{Koch2004} and the norms introduced by Dodson in \cite{Dodson2016} to compensate for the failure of the double endpoint $L_{t}^{2}L_{x}^{\infty}$ Strichartz estimate. The proof of theorem \ref{thm:LTSE} relies heavily on Littlewood-Paley theory adapted to the frequency center $\xi(t)$ and scale $N(t)$ together with bilinear Strichartz estimates which give an improvement over the classical estimate obtained by Bourgain (\cite{Bourgain1998}; see proposition \ref{prop:bs_B} below). We prove these bilinear estimates using the interaction Morawetz technique of Planchon and Vega in \cite{Planchon2009}, which was heavily exploited by Dodson in \cite{Dodson2016}. We emphasize that we do not use the long-time Strichartz estimate of \cite{Dodson2016}, as that estimate is specific to the 2D cubic NLS. Moreover, its proof exploits local structure of the cubic NLS not shared by the eeDS.

\subsubsection{Rigidity}
With the long-time Strichartz estimate in hand, we proceed to the rigidity step of precluding the two scenarios for blowup. To preclude the rapid frequency cascade scenario, we follow the work \cite{Dodson2016} by combining our long-time Strichartz estimate together with the additional regularity argument from the earlier works \cite{Tao2008} and \cite{Killip2009} on the mass-critical NLS. More precisely, we use theorem \ref{thm:LTSE} to show that an admissible blowup solution $u:I\times\R^{2}\rightarrow\mathbb{C}$ must possess additional regularity.

\begin{restatable}[$H_{x}^{3}$ regularity]{lemma}{areg}
\label{lem:a_reg}
If $u$ is an admissible blowup solution such that $\int_{0}^{\infty}N(t)^{3}=K<\infty$, then $u\in C_{t}^{0}H_{x}^{3}([0,\infty)\times\R^{2})$ and satisfies the estimate
\begin{equation}
\sup_{0\leq t<\infty} \|u(t)\|_{\dot{H}_{x}^{3}(\R^{2})} \lesssim K^{3}.
\end{equation}
\end{restatable}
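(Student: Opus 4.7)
The plan is to execute the additional-regularity bootstrap first developed by Tao-Visan-Zhang \cite{Tao2008} and Killip-Tao-Visan \cite{Killip2009} and adapted to the 2D mass-critical setting by Dodson \cite{Dodson2016}, modified to handle the nonlocal eeDS nonlinearity $F(u)=-\mathcal{L}(|u|^{2})u$. Because $u$ is admissible and blows up forward in time while being almost periodic modulo symmetries, a standard argument shows that the forward Duhamel formula holds in the weak $L^{2}_{x}$ sense:
\[
\langle u(t),\phi\rangle = -i\int_{t}^{\infty}\langle F(u(s)),e^{i(s-t)\Delta}\phi\rangle\,ds, \qquad \phi\in L^{2}_{x}.
\]
Fixing Littlewood-Paley projections $P^{\xi}_{\geq N}$ adapted to the moving frequency center $\xi(t)$, testing this identity against unit $\phi$, and then using Theorem \ref{thm:LTSE} to control the high-frequency part of $F(u)$ in the appropriate Strichartz-type norm, I would obtain an initial quantitative decay estimate of the form $\|P^{\xi}_{\geq N}u\|_{L^{\infty}_{t}L^{2}_{x}}\lesssim K\,N^{-s_{0}}$ for some small $s_{0}>0$ and all $N$ above a threshold depending only on the compactness modulus and $M_{c}$.

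Next I would iterate this decay. Given a current bound $\|P^{\xi}_{\geq N}u\|_{L^{\infty}_{t}L^{2}_{x}}\lesssim K^{a(s)}\,N^{-s}$, I split $u=u_{\mathrm{lo}}+u_{\mathrm{hi}}$ relative to a threshold frequency, plug this decomposition into $F(u)$, and control each resulting trilinear piece in the weak Duhamel formula using the bilinear Strichartz estimates and frequency-cube decomposition developed earlier in the paper; since every nonlinear output contains at least one factor of the small high-frequency norm, the exponent improves from $s$ to some $s'>s$. After a bounded number of iterations $s$ exceeds $3$, and summing $N^{6}\|P^{\xi}_{N}u\|_{L^{2}_{x}}^{2}$ over a dyadic partition then yields $u(t)\in\dot{H}^{3}_{x}$ uniformly in $t$. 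Bookkeeping the accumulated powers of $K$ across the iteration produces the stated bound $\sup_{t}\|u(t)\|_{\dot{H}^{3}_{x}}\lesssim K^{3}$, one power of $K$ arising at each of three successive derivative gains.

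The main obstacle will be the nonlocal operator $\mathcal{L}=-\mu\,\mathrm{Id}+\E$. Because the symbol of $\mathcal{L}$ is merely bounded and homogeneous of degree zero, Littlewood-Paley projections do not commute with $\mathcal{L}$ in any useful way, so the classical permutation-invariant frequency decomposition of the NLS nonlinearity fails outright. To circumvent this I would rely on the frequency-cube decomposition of $F(u)$ emphasized in the introduction, together with the $L^{p}$ boundedness of $\mathcal{L}$ for $1<p<\infty$, to reduce every nonlinear output to a sum of trilinear forms of type $\mathcal{L}(u_{A}\overline{u_{B}})u_{C}$ on well-separated frequency cubes, each of which can be estimated via the bilinear Strichartz estimates without loss. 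Once these trilinear nonlinear estimates are in place, the bootstrap proceeds essentially as in the local case, and the geometric summation at the end is routine.
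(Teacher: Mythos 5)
Your roadmap is the one the paper itself follows (the Tao--Visan--Zhang/Killip--Tao--Visan additional-regularity bootstrap run through theorem \ref{thm:LTSE}, with the nonlocal nonlinearity handled by the cube decomposition and bilinear estimates), so the architecture is sound; the paper likewise kills the linear part of Duhamel (via $\inf_{t}\|P_{\geq j}u(t)\|_{L_{x}^{2}}=0$, using $N(t)\to 0$ and the frequency localization \eqref{eq:sp_frq_loc}) and re-uses the nonlinear estimates from the proof of theorem \ref{thm:LTSE}. However, two steps of your sketch would not close as written. First, you invoke long-time Strichartz/bilinear control directly on the infinite interval $[t,\infty)$, but theorem \ref{thm:LTSE} is only available for the rescaled solution $u_{\lambda}$, $\lambda=\epsilon_{3}2^{k_{0}}/K$, on the finite interval $[0,\lambda^{-2}T]$ with $\int_{0}^{T}\int|u|^{4}=2^{k_{0}}$. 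The argument must be run there and pushed to $[0,\infty)$ by Fatou/monotone convergence, the crucial point being that after undoing the scaling the frequency threshold above which the estimates act is $\sim K/\epsilon_{3}$, independent of $T$. Moreover, the quantitative smallness that makes the iteration converge is not supplied merely by ``one small high-frequency factor'': in the paper it is the bound $\|u_{\lambda}\|_{\tilde{Y}_{k_{0}}}\leq\epsilon_{2}^{1/2}$ inserted into the analogues of lemmas \ref{lem:LTSE_b_easy} and \ref{lem:LTSE_b_hard}, giving a constant $\epsilon_{2}^{1/3}C_{0}^{5/3}$ which is made $\leq 2^{-100}$ by shrinking $\epsilon_{2}(u)$; iterating
\begin{equation*}
\sum_{i\geq j}\|P_{i}u\|_{U_{\Delta}^{2}([0,\infty)\times\R^{2})}^{2}\leq 2^{-100}\sum_{i\geq j-9}\|P_{i}u\|_{U_{\Delta}^{2}([0,\infty)\times\R^{2})}^{2}
\end{equation*}
then yields super-polynomial decay above the threshold in one stroke, with no exponent-by-exponent upgrade. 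Your scheme can be made to work, but only after being routed through this rescaled, finite-interval formulation.

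Second, your bookkeeping of the constant --- ``one power of $K$ at each of three successive derivative gains'' --- is not where the $K^{3}$ comes from, and the iteration as such accumulates no powers of $K$. The $K^{3}$ arises because the solution genuinely occupies absolute frequencies up to $\sim K$: one has $|\xi(t)|\leq 2^{-20}\epsilon_{1}^{-1/2}K$ while $N(t)\leq 1$, so by \eqref{eq:sp_frq_loc} the mass above frequency $\sim K/\epsilon_{3}$ is negligible (and, after the iteration, super-polynomially decaying), and Bernstein on the remaining low frequencies gives $\|u(t)\|_{\dot{H}_{x}^{3}}\lesssim (K/\epsilon_{3})^{3}\|u\|_{L_{x}^{2}}\lesssim_{u}K^{3}$. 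Equivalently, in your $\xi(t)$-adapted frame the decay threshold is indeed $O_{u}(1)$, but converting $|\xi-\xi(t)|$-derivatives into the absolute derivatives measured by $\dot{H}_{x}^{3}$ costs a factor $|\xi(t)|^{3}\lesssim K^{3}$; omitting this conversion, your outline would appear to deliver a $K$-independent $\dot{H}^{3}$ bound, which is neither true nor what the lemma asserts.
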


This additional regularity implies that the energy of a Galilean transformation of the solution must tend to zero as time tends to $\infty$. Conservation of energy then implies that the solution is identically zero, which is a contradiction.

\begin{restatable}[No rapid frequency cascade]{thm}{norfc}
\label{thm:no_rfc}
There does not exist an admissible blowup solution such that $\int_{0}^{\infty}N(t)^{3}dt=K<\infty$.
\end{restatable}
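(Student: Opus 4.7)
The plan is to argue by contradiction: assume an admissible blowup solution $u$ with $\int_{0}^{\infty} N(t)^{3}\,dt = K < \infty$ exists, use Lemma \ref{lem:a_reg} to obtain $u \in C_{t}^{0}H_{x}^{3}([0,\infty)\times\R^{2})$ with $\sup_{t}\|u(t)\|_{\dot{H}_{x}^{3}(\R^{2})}\lesssim K^{3}$, and then find a Galilean-shifted frame in which the energy of $u$ is forced to vanish while coercivity of the energy forces $u(0) = 0$.

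First I extract a sequence $t_{n}\to\infty$ with $N(t_{n})\to 0$, which is immediate from $\int_{0}^{\infty}N(t)^{3}\,dt<\infty$. Next I show that $\xi(t)$ is uniformly bounded on $[0,\infty)$: almost periodicity places mass at least $M_{c}-\eta$ inside the ball $|\zeta-\xi(t)|\le C(\eta)N(t)\le C(\eta)$ (using $N(t)\le 1$), whereas Chebyshev together with the $\dot{H}^{3}$ bound yields $\int_{|\zeta|\ge R}|\hat{u}(t,\zeta)|^{2}\,d\zeta\lesssim K^{6}/R^{6}$, which is incompatible with $|\xi(t_{n})|\to\infty$. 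Passing to a subsequence, $\xi(t_{n})\to\xi_{\infty}\in\R^{2}$.

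Next, I define the Galilean-transformed solution $u_{n}(t,x):=e^{-ix\cdot\xi(t_{n})}e^{-it|\xi(t_{n})|^{2}}u(t,x+2t\xi(t_{n}))$, again a maximal-lifespan solution to \eqref{eq:DS} with mass $M_{c}$. To show $\|\nabla u_{n}(t_{n})\|_{L^{2}}\to 0$, I write
\begin{equation*}
\|\nabla u_{n}(t_{n})\|_{L^{2}}^{2} = \int_{\R^{2}}|\zeta-\xi(t_{n})|^{2}|\hat{u}(t_{n},\zeta)|^{2}\,d\zeta
\end{equation*}
and split at the threshold $|\zeta-\xi(t_{n})|=C(\epsilon)N(t_{n})$: the inner piece is bounded by $C(\epsilon)^{2}N(t_{n})^{2}M_{c}\to 0$, while for the outer piece H\"older's inequality with exponents $(3/2,3)$, combined with almost periodicity and $|\zeta-\xi(t_{n})|^{6}\lesssim|\zeta|^{6}+|\xi(t_{n})|^{6}$, controls it by $\epsilon^{2/3}(K^{6}+|\xi(t_{n})|^{6}M_{c})^{1/3}$. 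Sending $n\to\infty$ and then $\epsilon\to 0$ gives the claim. Because $|u_{n}|^{2}=|u|^{2}$ pointwise so that $\E(|u_{n}|^{2})=\E(|u|^{2})$, Theorem \ref{thm:GN} and the classical $L^{4}$ Gagliardo-Nirenberg bound then imply $E(u_{n}(t_{n}))\to 0$.

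Finally, since $u_{n}\in C_{t}^{0}H_{x}^{1}$ and $[0,\infty)\subset I$, energy conservation gives $E(u_{n}(0))=E(u_{n}(t_{n}))\to 0$. As $u_{n}(0,x)=e^{-ix\cdot\xi(t_{n})}u(0,x)\to e^{-ix\cdot\xi_{\infty}}u(0,x)$ in $H^{1}$ and $E$ is continuous on $H^{1}$, I obtain $E(e^{-ix\cdot\xi_{\infty}}u(0))=0$. In the defocusing case the energy density equals $|\nabla v|^{2}+\tfrac{1}{2}\langle(\mathrm{Id}-\E)(|v|^{2}),|v|^{2}\rangle$, and the Fourier symbol of $\mathrm{Id}-\E$ is $\xi_{2}^{2}/|\xi|^{2}\ge 0$; in the focusing case Theorem \ref{thm:GN} combined with $M_{c}<M(Q)$ yields $E(v)\ge\tfrac{1}{2}(1-M_{c}/M(Q))\|\nabla v\|_{L^{2}}^{2}$. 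In either case $E=0$ forces $e^{-ix\cdot\xi_{\infty}}u(0)$ to be constant, hence zero (as it lies in $L^{2}(\R^{2})$), so $u(0)=0$, and Theorem \ref{thm:LWP} gives $u\equiv 0$, contradicting $M(u)=M_{c}>0$. The main obstacle is the tail control in the $\|\nabla u_{n}(t_{n})\|_{L^{2}}$ estimate: only the $H^{3}$ regularity from Lemma \ref{lem:a_reg} (and not, e.g., merely $H^{1}$) is strong enough to absorb the high-frequency contribution together with the boundedness of $\xi(t_{n})$, and without it the entire Galilean-kill scheme collapses.
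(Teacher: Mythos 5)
Your proof is correct and follows essentially the same route as the paper's: Lemma \ref{lem:a_reg} supplies the uniform $\dot{H}_{x}^{3}$ bound, a Galilean change of frame kills the kinetic term by interpolating the compactness (frequency-localization) information against $\dot{H}^{3}$, Gagliardo--Nirenberg kills the quartic term, and energy conservation plus coercivity of the energy (Plancherel with the symbol $\xi_{2}^{2}/|\xi|^{2}$ of $\mathrm{Id}-\E$ in the defocusing case, Theorem \ref{thm:GN} with $M_{c}<M(Q)$ in the focusing case) forces $u\equiv 0$, contradicting $M(u)=M_{c}>0$. Two cosmetic remarks: the full limits $\xi(t)\to\xi_{\infty}$ and $N(t)\to 0$ follow at once from the admissibility bound $|\xi'(t)|+|N'(t)|\lesssim_{u}N(t)^{3}$ together with $\int_{0}^{\infty}N(t)^{3}\,dt<\infty$, so your Chebyshev/compactness argument for boundedness of $\xi$ and the passage to subsequences and to a sequence of Galilean frames are unnecessary (the paper uses one fixed frame at $\xi_{\infty}$); and the assertion $|u_{n}|^{2}=|u|^{2}$ is really a spatial translation of $|u|^{2}$, which is harmless for the energy since $\E$ commutes with translations.
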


To preclude the quasi-soliton scenario, we prove a frequency-localized interaction Morawetz ``type" estimate for admissible solutions to equation \eqref{eq:DS} under the specific assumption that $\int_{0}^{\infty}N(t)^{3}dt=\infty$. On an interval $[0,T]$, we show that our Morawetz functional $M(t)$ is bounded from below $\int_{0}^{T}N(t)^{3}dt=K$ and bounded from above by a quantity which is $o(K)$. Since $K$ may be taken arbitrarily large (by taking $T$ arbitrarily large) in the quasi-soliton scenario, we obtain a contradiction. 

\begin{restatable}[No quasi-soliton]{thm}{noqs}
\label{thm:no_qs}
There does not exist an admissible blowup solution such that $\int_{0}^{\infty}N(t)^{3}dt=\infty$.
\end{restatable}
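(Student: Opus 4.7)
The plan is to run a frequency-localized interaction Morawetz argument in the spirit of Planchon--Vega and Dodson, adapted to the nonlocal structure of eeDS. Fix a large time $T > 0$ so that $K := \int_{0}^{T} N(t)^{3}\, dt$ is as large as we like (possible by the quasi-soliton hypothesis), choose a small parameter $\eta > 0$ and a large cutoff $C = C(\eta) \gg 1$, and work with the Galilean-shifted high-frequency piece $u_{hi}(t) := P_{>CN(t)}^{\xi(t)} u(t)$, where $P_{>N}^{\xi}$ denotes the Littlewood--Paley projection adapted to the moving frequency center $\xi(t)$. The Morawetz functional I would use is the Galilean-invariant interaction Morawetz quantity
\[
M(t) := 2 \int_{\R^{2}}\int_{\R^{2}} \frac{x-y}{|x-y|} \cdot \mathrm{Im}\bigl(\bar{u}_{hi} \nabla u_{hi}\bigr)(t,x)\, |u_{hi}(t,y)|^{2} \, dx\, dy.
\]

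The first step is the easy a priori upper bound $\sup_{t \in [0,T]} |M(t)| \lesssim \eta\, M_{c}^{3/2}$, which follows from Cauchy--Schwarz together with the almost periodicity of $u$ modulo $G$: almost periodicity and the definition of $u_{hi}$ guarantee $\|u_{hi}(t)\|_{L_{x}^{2}}^{2} \leq \eta$ for all $t \in [0,T]$, provided $C$ is chosen large relative to $\eta$. In particular $|M(T) - M(0)| \lesssim \eta M_{c}^{3/2}$.

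The second, and main, step is the lower bound $\int_{0}^{T} \frac{d}{dt} M(t)\, dt \geq c_{0} K - o(K)$ for some $c_{0} = c_{0}(u) > 0$. Differentiating $M(t)$ and substituting the eeDS equation for $u_{hi}$ produces: (a) a kinetic commutator coming from $(i\p_t + \Delta) u_{hi}$ not solving a nice equation because $P_{>CN(t)}^{\xi(t)}$ depends on $t$; (b) a main ``cubic diagonal'' quadrilinear term in $u_{hi}$ which, by the defocusing sign (resp.\ the sub-ground-state Gagliardo-Nirenberg inequality of Theorem~\ref{thm:GN} in the focusing case), has the correct sign and, by almost periodicity, integrates to $\gtrsim K$; (c) genuine error terms quadrilinear in $u$ with at least one low-frequency factor; and (d) a new term coming from the nonlocal piece $\E(|u|^{2})$ of the nonlinearity. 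Plugging almost periodicity into (b) and using that $|u_{hi}|$ captures a definite fraction of the $L^{2}$ mass at the spatial scale $1/N(t)$ (once $C$ is fixed and $\eta$ is small enough) yields the claimed $\gtrsim K$ contribution.

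The main obstacle is controlling the error terms (a), (c), and especially the nonlocal term (d), each of which must be shown to be $o(K)$ (i.e., $\ll K$ once $T$, and hence $K$, is taken large). Here I would lean entirely on the long-time Strichartz estimate (Theorem~\ref{thm:LTSE}) together with the bilinear Strichartz estimates suited to the new frequency-cube decomposition introduced in the paper, since these were designed precisely to tame the lack of permutation invariance of the eeDS nonlinearity under Littlewood--Paley decomposition. For the nonlocal term (d), I would write $|u|^{2} = |u_{hi}|^{2} + 2\,\mathrm{Re}(\bar{u}_{hi} u_{lo}) + |u_{lo}|^{2}$ inside $\E$, use the $L^{p}$-boundedness of $\E$ for $1 < p < \infty$ to reduce estimates to mixed-norm bounds on $u_{hi}$ and $u_{lo}$, and then control the resulting pieces via the cube-bilinear estimates exactly as for the local cubic nonlinearity. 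Combining the upper bound $|M(T) - M(0)| \lesssim \eta M_{c}^{3/2}$ with the lower bound $\int_{0}^{T} M'(t)\, dt \geq c_{0} K - o(K)$ gives $c_{0} K \lesssim \eta M_{c}^{3/2} + o(K)$; choosing first $\eta$ and then $K$ appropriately produces a contradiction, thereby ruling out the quasi-soliton scenario and completing the proof.
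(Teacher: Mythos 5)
Your proposal has the frequency truncation backwards, and this breaks the main lower bound. In the mass-critical setting the bulk of the solution lives at frequencies $|\xi-\xi(t)|\lesssim C(\eta)N(t)$, so your $u_{hi}=P^{\xi(t)}_{>CN(t)}u$ carries mass at most $\eta$; by interpolation (exactly as in the paper's lemma \ref{lem:IM_pre_err1}) its $L^{4}_{t,x}$ norm on each small interval is $o_{C}(1)$, so any quadrilinear ``diagonal'' term built purely from $u_{hi}$ integrates to $o(K)$, not $\gtrsim K$. The claim that $u_{hi}$ ``captures a definite fraction of the $L^{2}$ mass'' contradicts its definition. The paper instead truncates to \emph{low} frequencies, $w=P_{\leq K}u$ (after rescaling, $P_{\leq 2^{k_{0}}}u_{\lambda}$), precisely so that $\int_{0}^{T}N(t)\|w(t)\|_{L^{4}_{x}}^{4}dt\gtrsim_{u}K$ (lemma \ref{lem:IM_pre_err2}); the low-frequency cutoff is also what makes the functional itself controllable. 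Indeed your asserted a priori bound $\sup_{t}|M(t)|\lesssim \eta M_{c}^{3/2}$ does not follow from Cauchy--Schwarz: with the weight $\frac{x-y}{|x-y|}$ the functional costs a full derivative on $u_{hi}$, and $\|\nabla u_{hi}(t)\|_{L^{2}}$ is not controlled at $L^{2}$ regularity since $u_{hi}$ has no upper frequency cutoff. The paper's bound on the functional uses the bounded, spatially truncated potential $a_{R,m}$ (with $\|a_{R,m}\|_{\infty}\lesssim R^{2}$), the upper cutoff on $w$, Galilean invariance, and the frequency localization \eqref{eq:sp_frq_loc}, giving $\sup_{t}|M_{R,m}(t)|\lesssim C(\eta_{3}R^{-2})R^{2}+\eta_{3}K=o(K)$; the slowly varying scales $N_{m}(t)$ from the smoothing algorithm are needed to handle the term where $\partial_{t}$ hits the potential.

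The second genuine gap is your appeal to ``the defocusing sign'' for positivity of the nonlinear contribution. For eeDS the momentum flux tensor $T_{jk}^{w}$ is not $\delta_{jk}|w|^{4}$ (it contains products of iterated Riesz transforms of $|w|^{2}$), and the paper emphasizes that the sign of the corresponding term in $\frac{d}{dt}M$ is exactly what is \emph{not} clear -- this is why no a priori interaction Morawetz estimate is available. The paper's route is different: the $\gtrsim K$ lower bound is extracted from the \emph{kinetic} terms, after a pointwise-in-$(t,z)$ Galilean transformation chosen to kill the local momentum, followed by the ordinary Gagliardo--Nirenberg inequality applied to $\vartheta_{R,m,z}w$; the nonlinear term is then shown separately to be nonnegative up to $o_{R}(1)K+O(\eta_{2}K)$ errors via an explicit decomposition of $\partial_{k}a_{R,m,j}T_{jk}^{w}$ and a Plancherel positivity argument (lemma \ref{lem:cheap_LB}), while in the focusing case it is absorbed using the sharp inequality of theorem \ref{thm:GN} together with $M(u)<M(Q)$. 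Finally, the frequency-truncation errors are not handled directly by the cube-bilinear estimates as you suggest, but through a dedicated estimate (proposition \ref{prop:IM_frq_err}) whose input is the long-time Strichartz bound of theorem \ref{thm:LTSE} applied to the rescaled solution. Without replacing your high-frequency truncation by the low-frequency one, supplying a bound for the functional, and replacing the sign assertion by an argument of the above type, the proposed proof does not close.
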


Recall that frequency-localized interaction Morawetz estimates were first introduced in \cite{Colliander2008} in the context of the 3D defocusing energy-critical NLS. There, the solution is truncated to high frequencies, whereas in the mass-critical case, the solution is truncated to low frequencies. We emphasize that this estimate is \emph{not} an a priori estimate for sufficiently regular solutions to the eeDS but only an estimate for a special class of solutions which we ultimately show do not exist. Frequency truncation of the solution introduces error terms which we can control with our long-time Strichartz estimate using an argument of \cite{Dodson2016}, similar in spirit to the ``almost Morawetz" estimates often used in conjunction with the I-method (\cite{Colliander2007}, \cite{dodson2009almost}, \cite{dodson2011improved}). 

\subsection{New difficulties in the DS setting}
Let us now comment on some of the difficulties of the proof and how our work differs from the existing literature, in particular Dodson's work \cite{Dodson2016} on the 2D cubic NLS.

\subsubsection{Difficulty 1: Asymptotically orthogonal group actions}
The concentration compactness step proceeds fairly similarly to the work \cite{Tao2008}; however, dealing with asymptotically orthogonal (see definition \ref{def:as_o}) sequences of symmetry group actions $g_{n}\in G'$ is a bit more delicate compared to the case for the 2D cubic NLS. Since we cannot simply appeal to associativity as with an algebraic nonlinearity, we have to consider cases of the ordering of asymptotically orthogonal symmetry actions in the eeDS nonlinearity: whether the asymptotic orthogonal occurs inside or outside the argument of the nonlocal operator $\E$ (e.g. equation \eqref{eq:aso_cases}). To prove lemma \ref{lem:as_sol} on the asymptotic solvability of equation \eqref{eq:DS} by the approximate solutions constructed from the nonlinear profiles, we must perform a careful case analysis of the definition of asymptotic orthogonality together with using the boundedness of the operator $\E$ on $L^{p}$ and $\dot{C}^{\alpha}$ spaces.

\subsubsection{Difficulty 2: Bilinear estimates}
The proof of our long-time Strichartz estimate (theorem \ref{thm:LTSE}) acquires a new level of difficulty compared to the 2D cubic NLS when we need to apply \emph{bilinear estimates} in order to prove a bootstrap lemma used to close the proof of the inductive step. Here, the issue is that the eeDS nonlinearity is \emph{not} permutation invariant under frequency decompositions, modulo complex conjugates (c.c.). Since the eeDS nonlinearity is not algebraic, we cannot appeal to associativity to group high and low frequency factors as we please. Not only do we need to know the total number of high and low frequency factors present in the decomposed nonlinearity (near and far, if the center of Littlewood-Paley projectors is not the origin), we also have to exercise care about whether they fall inside or outside the argument of the nonlocal operator $\E$.

Let us illustrate this difficulty with a toy example. Let $N_{lo}$ and $N_{hi}$ be two dyadic frequencies with $N_{lo}\ll N_{hi}$, and suppose that we wish to estimate the quantities
\begin{align}
&\|P_{N_{hi}}\brak*{\E\paren*{(P_{N_{lo}}u) (\ol{P_{N_{hi}}u})} (P_{N_{hi}}u)}\|_{L_{t,x}^{4/3}(J\times\R^{2})}\\
&\|P_{N_{hi}}\brak*{\E\paren*{|P_{N_{lo}}u|^{2}} (P_{N_{hi}}u)}\|_{L_{t,x}^{4/3}(J\times\R^{2})}
\end{align}
using bilinear Strichartz estimates. We dualize the problem to consider the quantities
\begin{align}
\mathrm{Case}_{1} &= \|(P_{N_{hi}}v)P_{N_{hi}}\brak*{\E\paren*{(P_{N_{lo}}u)(\ol{P_{N_{hi}}u})} (P_{N_{hi}}u)}\|_{L_{t,x}^{1}(J\times\R^{2})}\\
\mathrm{Case}_{2} &= \|(P_{N_{hi}}v)P_{N_{hi}}\brak*{\E\paren*{|P_{N_{lo}}u|^{2}} (P_{N_{hi}}u)}\|_{L_{t,x}^{1}(J\times\R^{2})}.
\end{align}
To estimate $\mathrm{Case}_{1}$, we can use Cauchy-Schwarz and Plancherel's theorem to obtain that
\begin{equation}
\mathrm{Case}_{1} \leq \|(P_{N_{hi}}v) (P_{N_{lo}}u)\|_{L_{t,x}^{1}(J\times\R^{2})} \| (P_{N_{hi}}u) (P_{N_{lo}}u)\|_{L_{t,x}^{1}(J\times\R^{2})}
\end{equation}
and then close with two applications of the classical bilinear Strichartz estimate of Bourgain. To estimate $\mathrm{Case}_{2}$, we somehow need to pair $P_{N_{lo}}u$ inside the nonlocal operator $\E$ with $P_{N_{hi}}u$ outside $\E$ and $\ol{P_{N_{lo}}u}$ inside $\E$ with $P_{N_{hi}}v$ outside $\E$ without destroying the cancellation in the Schwartz kernel $\mathcal{K}$ of $\E$. Taking absolute values of everything and using Minkowski's inequality is the worst thing to do, as the modulus of the  Schwartz kernel of $\E$ decays like $|x|^{-2}$ at infinity, which is not in $L^{1}$.

To overcome this obstruction, we use an idea of Chae, Cho, and Lee from \cite{Chae2010} and introduce additional frequency decompositions of the nonlinearity $\E\left(|P_{N_{lo}}u|^{2}\right) (P_{N_{hi}}u)$. We call this tool the \emph{double frequency decomposition}. We illustrate the main steps of this procedure for the toy example below.

First, we perform a homogeneous Littlewood-Paley decomposition of the symbol of $\E$
\begin{equation}
\E=\sum_{k\in\mathbb{Z}} \dot{P}_{k}\E =: \sum_{k\in\mathbb{Z}} \E_{k}.
\end{equation}
Since the symbol of $\E$ is $C^{\infty}$ outside the origin and homogeneous of degree zero, the kernel $\mathcal{K}_{k}$ of $\E_{k}$ is Schwartz class and satisfies the $k$-uniform rapid decay estimates
\begin{equation}
|\mathcal{K}_{k}(x)| \lesssim_{M} 2^{2k}\langle{2^{k}x}\rangle^{-M}, \qquad \forall M >0.
\end{equation}
By the triangle inequality, we have that
\begin{equation}
\mathrm{Case}_{2} \leq \sum_{k: 2^{k}\leq 4N_{lo}} \|(P_{N_{hi}}v) \E_{k}\left( |P_{N_{lo}}u|^{2}\right) (P_{N_{hi}}u)\|_{L_{t,x}^{1}(J\times\R^{2})}.
\end{equation}
Provided that we can gain some ``smallness" in $k$ (i.e. a factor of $2^{\delta k}$ for some $\delta>0$) and that our final estimates are $k$-uniform, it suffices to consider each term in the low-frequency summation.

To gain the needed smallness, we perform a second level of frequency decomposition, this time into frequency cubes. More precisely, let $\{Q_{a}^{k}\}_{a\in\mathbb{Z}^{2}}$ denote the collection of dyadic cubes of side length $2^{k}$ which tile Fourier space $\hat{\R}^{2}$, and let $P_{Q_{a}^{k}}$ denote Fourier projection onto the set $Q_{a}^{k}$. For each $k$, we decompose
\begin{equation}
|P_{N_{lo}}u|^{2} = \sum_{a,a'\in\mathbb{Z}^{2}} (P_{N_{lo}}P_{Q_{a}^{k}}u) \ol{(P_{N_{lo}}P_{Q_{a'}^{k}}u)}.
\end{equation}
A priori, this last step seems ill-advised because now we have to consider the interaction of lots of cubes. However, when we consider the expression
\begin{equation}
\sum_{a,a'\in\Z^{2}}\E_{k}\left( (P_{N_{lo}}P_{Q_{a}^{k}}u) \ol{(P_{N_{lo}}P_{Q_{a'}^{k}}u)}\right),
\end{equation}
we observe that the cubes are \emph{almost orthogonal}. Indeed,
\begin{equation}
\E_{k}\left( (P_{N_{lo}}P_{Q_{a}^{k}}u) (\ol{P_{N_{lo}}P_{Q_{a'}^{k}}u})\right)\neq 0 \Longrightarrow \dist\left(Q_{a}^{k}, Q_{a'}^{k}\right) \leq 2^{k+10}.
\end{equation}
Now that we have extracted some cancellation, we do the crudest thing possible by using Minkowski's inequality and Cauchy-Schwarz to estimate
\begin{equation}
\begin{split}
&\sum_{|a+a'|\leq 2^{10}} \|(P_{N_{hi}}v)\E_{k}\left( (P_{N_{lo}}P_{Q_{a}^{k}}u) \ol{(P_{N_{lo}}P_{Q_{a'}^{k}}u)}\right)\|_{L_{t,x}^{1}(J\times\R^{2})}\\
&\phantom{=} \lesssim \int_{\R^{2}}dy 2^{2k}\langle{2^{k}y}\rangle^{-10}\left(\sum_{a\in\Z^{2}} \|(P_{N_{hi}}v) (P_{N_{lo}}P_{Q_{a}^{k}}\tau_{y}u)\|_{L_{t,x}^{2}(J\times\R^{2})}^{2}\right)^{1/2} \left(\sum_{a\in\Z^{2}} \| (P_{N_{hi}}u) (P_{N_{lo}}P_{Q_{a}^{k}}\tau_{y}u)\|_{L_{t,x}^{2}(J\times\R^{2})}^{2}\right)^{1/2}.
\end{split}
\end{equation}
We now use Galilean invariance to apply Bourgain's bilinear Strichartz estimate at the level of each cube to gain two factors of $(2^{k}/N_{hi})^{1/2}$. Thus, the RHS of the preceding inequality is
\begin{equation}
\lesssim \left(\frac{2^{k}}{N_{hi}}\right) \|u\|_{U_{\Delta}^{2}(J\times\R^{2})}\|v\|_{U_{\Delta}^{2}(J\times\R^{2})} \|P_{N_{lo}}u\|_{X_{\Delta}^{2}(J\times\R^{2})}^{2}.
\end{equation}
We can sum over the integers $k$ such that $2^{k}\lesssim 4N_{lo}$ to complete the argument.

The preceding toy example illustrates fairly well the argument needed to apply classical bilinear estimates (e.g. \cite{Bourgain1998}, \cite{Tao2003}). However, we do not know how to close the proof of theorem \ref{thm:LTSE} using the existing bilinear estimates. Instead, we use an idea of Dodson from \cite{Dodson2016} which is to prove new bilinear estimates for admissible blowup solutions to \eqref{eq:DS}, which give a logarithmic improvement over the classical bilinear estimates. We emphasize that we \emph{do not use} Dodson's bilinear estimates, as his estimates are specific to the cubic NLS. Moreover, because we need to use the double frequency decomposition to have any hope of applying whatever bilinear estimates we prove, we need bilinear estimates where the low frequency scale is that of side length of each cube $Q_{a}^{k}$. This was \emph{not} the case in \cite{Dodson2016}, where the estimates were proved at the low frequency scale of the Littlewood-Paley projector $P_{N_{lo}}$.

We prove our bilinear Strichartz estimates using the interaction Morawetz technique of \cite{Planchon2009}, which makes no use of the spacetime Fourier transform, but instead relies on integration by parts and the local conservation laws of an equation. A priori, such integration by parts arguments seem ill-suited for the eeDS equation because the nonlinearity is nonlocal. Moreover, it is not obvious that the eeDS conservation laws can be written in the usual divergence form, as is the case for the NLS. However, the eeDS nonlinearity is just a linear combination of products of Riesz transforms, which have good differential structure. Therefore, we can write the local mass and momentum conservation laws of the equation in the usual divergence form. There are some additional issues of certain remainder terms in the integration by parts arguments not vanishing as they would in the case of an algebraic nonlinearity, but we can handle these issues with careful commutator estimates.

\subsubsection{Difficulty 3: Interaction Morawetz estimate}
The last difficulty which we highlight in the introduction is the lack of an a priori interaction Morawetz estimate for \eqref{eq:DS}. We remind the reader that a frequency-localized interaction Morawetz estimate was used in \cite{Dodson2016} to preclude the quasi-soliton. We recall the 2D interaction Morawetz estimate from \cite{Colliander2009} and \cite{Planchon2009}: for a solution $u$ to the defocusing cubic NLS, we have that
\begin{equation}
\||\nabla|^{1/2}(|u|^{2})\|_{L_{t,x}^{2}(J\times\R^{2})}^{2} \lesssim \sup_{t\in J} \|u(t)\|_{L_{x}^{2}(\R^{2})}^{2}\|u(t)\|_{\dot{H}_{x}^{1/2}(\R^{2})}^{2}.
\end{equation}
To illustrate the difference between the defocusing cubic NLS and defocusing eeDS, let us recall part of the proof from \cite{Planchon2009}. Define a Morawetz action
\begin{equation}
M(t) \coloneqq 2\int_{\mathbb{S}^{1}}\int_{\R^{4}}\frac{(x-y)_{\omega}}{|(x-y)_{\omega}|}|u(t,y)|^{2}\Im{\bar{u}\partial_{\omega} u}(t,x)dxdy d\omega.
\end{equation}
Differentiating in time, using the local mass and momentum conservations
\begin{align}
\partial_{t}T_{00}&=\partial_{j}T_{0j}\\
\partial_{t}T_{jk}&=-\partial_{k}L_{jk}-\partial_{k}T_{jk},
\end{align}
and performing a number of integration by parts, we obtain that
\begin{equation}
\begin{split}
\frac{dM}{dt}(t) &= \int_{\mathbb{S}^{1}}\int_{\{x_{\omega}=y_{\omega}\}} |\partial_{\omega}\left(\Tr_{x_{\omega}=y_{\omega}}(\bar{u}\otimes u)\right)(t,x,y)|^{2}d\mathcal{H}^{3}(x,y)d\omega\\
&\phantom{=}+\int_{\mathbb{S}^{1}}\int_{\{x_{\omega}=y_{\omega}\}} |u(t,y)|^{2} T_{jk}(t,x)\omega_{j}\omega_{k}d\mathcal{H}^{3}(x,y)d\omega,
\end{split}
\end{equation}
where $\Tr_{x_{\omega}=y_{\omega}}$ denotes the restriction to the hypersurface $\{x_{\omega}=y_{\omega}\}\subset\R^{4}$. Now in the cubic NLS setting, $T_{jk}=\delta_{jk}|u|^{4}$ and therefore
\begin{equation}
\frac{dM}{dt}(t)\geq \int_{\mathbb{S}^{1}}\int_{\{x_{\omega}=y_{\omega}\}}|\partial_{\omega}\left(\Tr_{x_{\omega}=y_{\omega}}(\bar{u}\otimes u)\right)(t,x,y)|^{2}d\mathcal{H}^{3}(x,y)d\omega.
\end{equation}
But in the eeDS setting,
\begin{equation}
T_{jk} = \delta_{kj}|u|^{4}-\delta_{jk}|\frac{\nabla\partial_{1}}{\Delta}(|u|^{2})|^{2}-2\delta_{1k}|u|^{2}\frac{\partial_{j}\partial_{k}}{\Delta}(|u|^{2})+2\frac{\partial_{k}\partial_{1}}{\Delta}(|u|^{2})\frac{\partial_{j}\partial_{1}}{\Delta}(|u|^{2}),
\end{equation}
and it is not clear to us that
\begin{equation}
\int_{\mathbb{S}^{1}}\int_{\{x_{\omega}=y_{\omega}\}} |u(t,y)|^{2}T_{jk}(t,x)\omega_{j}\omega_{k}d\mathcal{H}^{3}(x,y) \geq 0,
\end{equation}
which would then allow us to discard this term. Since the strategy of the proof of frequency-localized interaction Morawetz estimates is to mimic the proof of the a priori estimate using the equation satisfied by the frequency-truncated solution $w$, then handle the error terms separately (in the mass-critical case with the long-time Strichartz estimate), it is unclear to us how to implement this strategy in the eeDS case.

\begin{remark}
If our defocusing eeDS equation were instead of the form
\begin{equation}
(i\partial_{t}+\Delta) u = |u|^{2}u-\lambda\frac{\partial_{1}^{2}}{\Delta}(|u|^{2})u,
\end{equation}
then by using weighted singular integral estimates, one can obtain an interaction Morawetz estimate, provided that $\lambda>0$ is sufficiently small (\cite{Tzirakis2017}). However, we do not wish to impose any such restriction, and moreover, this argument would not work in the focusing case, since the sign in front of $|u|^{2}u$ is negative.
\end{remark}

Instead, our approach in both the defocusing and focusing cases is inspired by the work \cite{Dodson2015} on the focusing mass-critical NLS. We do not ask for an a priori interaction Morawetz estimate for solutions to \eqref{eq:DS}, but instead more modestly ask for an estimate satisfied by admissible blowup solutions. In caricature, we construct a time-dependent potential adapted to the spatial and frequency localization properties of admissible solutions which essentially localizes the solution in physical space to a ball of centered at $x(t)$ of radius $\sim \frac{R^{2}}{N(t)}$, where $R\gg 1$. Using this potential, we construct a Morawetz action $M_{R}(t)$. We then proceed via integration by parts arguments exploiting the local conservation laws of equation \eqref{eq:DS}. We can overcome the obstruction to an a priori interaction Morawetz estimate by showing that up to a negligible error, the desired nonnegativity condition is satisfied, where in the focusing case, we have to use the additional condition that $M(u)<M(Q)$ together with theorem \ref{thm:GN}.

\subsection{Organization of the paper} 
Having outlined the proof of our main result theorem, \ref{thm:main}, we now comment on the organization of the paper.

In section \ref{sec:prelim}, we introduce the basic notation used in this work and record some preliminary facts from Harmonic Analysis in addition to the classical linear and bilinear Strichartz estimates for the Schr\"{o}dinger equation. We also introduce the definitions of the $U_{\Delta}^{p}$ and $V_{\Delta}^{p}$ adapted function spaces and record some of their basic properties which will be used extensively in the sequel. Since most of the results stated in section \ref{sec:prelim} are by now standard in the literature, proofs are generally omitted.

In section \ref{sec:MMB}, we complete the concentration compactness step of the proof. The main results are theorem \ref{thm:AP_reduc} and corollary \ref{cor:adm_sol}, which give the existence of admissible blowup solutions to equation \eqref{eq:DS}. Readers familiar with the literature on minimal counterexamples for the critical NLS may wish to focus on the proof of lemma \ref{lem:as_sol}, as there we encounter new difficulties in the eeDS setting.

In section \ref{sec:LTSE_norm}, we introduce the norms from \cite{Dodson2016} which we use to formulate our long-time Strichartz estimate for equation \eqref{eq:DS}. We also prove some basic embeddings into standard Strichartz admissible spacetime Lebesgue spaces satisfied by these norms. As section \ref{sec:LTSE_norm} is quite technical and not very well motivated on its own, the reader may wish to skip over it on first reading and instead consult it as needed during the course of the proof of theorem \ref{thm:LTSE} in section \ref{sec:LTSE}.

In section \ref{sec:LTSE}, we state and prove our long-time Strichartz estimate for admissible blowup solutions to \ref{eq:DS}, which is theorem \ref{thm:LTSE}. We begin the section with an extended overview outlining the main steps of the proof of theorem \ref{thm:LTSE}, and we have organized the remainder of section \ref{sec:LTSE} into subsections corresponding to each step of the proof. We state in section \ref{sec:LTSE} the three improved bilinear Strichartz estimates (propositions \ref{prop:ibs_1}, \ref{prop:ibs_2}, and \ref{prop:ibs_3}), which are used to close the proof, but their proofs are postponed until the succeeding section. 

In section \ref{sec:BSE}, we give the proofs of the three improved bilinear Strichartz estimates (propositions \ref{prop:ibs_1}, \ref{prop:ibs_2}, and \ref{prop:ibs_3}) stated and used in section \ref{sec:LTSE}. Subsection \ref{ssec:BSE_cl} reviews the tensorial formulation of the local mass and momentum conservation laws for \eqref{eq:DS} and their analogues for the frequency-localized solution $w=P_{\leq j}u$. Subsections \ref{ssec:BSE_1} - \ref{ssec:BSE_3} contain the proofs of the respective bilinear Strichartz estimates.

In section \ref{sec:RFC}, we complete the first part of the rigidity step, which is to preclude the rapid frequency cascade scenario. The main result is theorem \ref{thm:no_rfc}.

In section \ref{sec:QS}, we complete the second part of the rigidity step, which is to preclude the quasi-soliton scenario. The main result is theorem \ref{thm:no_qs}. Subsection \ref{ssec:QS_pre} contains some preliminary lemmas which we frequently use to prove lower bounds for the main terms and upper bounds for the errors in the ensuing subsections. In subsection \ref{ssec:QS_con}, we construct our frequency-localized interaction Morawetz type functional, which we use in both the defocusing and focusing cases. In subsection \ref{ssec:QS_fl_err}, we prove the estimate for the error terms arising from truncating the solution to low frequencies. In subsection \ref{ssec:QS_dfoc}, we perform the lower and upper bound analysis in the defocusing case and balance all parameters floating around to obtain a contradiction. Finally, in subsection \ref{ssec:QS_foc}, we prove the lower and upper bound analysis in the focusing case and again balance all the parameters to obtain a contradiction, completing the proof of \ref{thm:no_qs}. Since we have then exhausted the rapid frequency cascade/quasi-soliton dichotomy, the proof of theorem \ref{thm:main} is complete.

\subsection{Acknowledgments}
The author would like to thank his advisor, Nata\v{s}a Pavlovi\'c, for a number of helpful conversations during the course of this project and during the preparation of the manuscript, in addition to her constant encouragement. The author thanks Andrea Nahmod for sharing with him helpful references during the course of the project. The author also thanks Nikos Tzirakis for sharing with him unpublished work on the use of weighted singular integral inequalities to obtain interaction Morawetz estimates at the French-American Conference on Nonlinear Dispersive PDE hosted by CIRM in June 2017. The author gratefully acknowledges financial support from The University of Texas at Austin through a Provost Graduate Excellence Fellowship.

\section{Preliminaries}\label{sec:prelim}
\subsection{Notation}\label{ssec:prelim_not}
We use the Einstein summation convention where a repeated index denotes summation over that index (e.g. $a_{k}b_{k}$.)

We denote the open ball centered at $x\in\R^{2}$ of radius $r>0$ by $B(x,r)$. When the radius is understood to be a dyadic integer, $r=2^{i}$ for some $i\in\mathbb{Z}$, we will just write $B(x,i)$ and refer to the ball as dyadic. Similarly, we denote the annulus centered at $x$ of outer radius $R$ and inner radius $r$ by $A(x,r,R)$. When the $R=2^{i}$ and $r=2^{j}$ are understood to be dyadic integers, we just write $A(x,i,j)$ and refer to the annulus as dyadic.

We use the vector notation $\ul{x}_{n}\coloneqq (x_{1},\ldots,x_{n})\in (\R^{d})^{n}\cong \R^{nd}$. We sometimes will denote the Lebesgue measure on $(\R^{d})^{n}$ by $d\ul{x}_{n}$. Similarly, we use the notation $D \coloneqq -i\nabla$, so that $\ul{D}_{n}\coloneqq (D_{1},\ldots,D_{n})$ is an $n$-tuple of $\R^{d}$-valued Fourier multipliers.

For $k\in\mathbb{Z}$, we denote the collection of dyadic cubes of side length $2^{k}$ which tile $\R^{2}$ by $\{Q_{a}^{k}\}_{a\in\mathbb{Z}^{2}}$, where $Q_{a}^{k} \coloneqq 2^{k}a+[-2^{k-1},2^{k-1}]^{2}$.

We use the notation $X\lesssim Y$ and $Y\gtrsim X$ when there exists some constant $C>0$ such that $X\leq CY$. When $X\lesssim Y$ and $Y\lesssim X$, we write $X\sim Y$. To denote the dependence of the implicit constant on some parameter $p$, we use the subscript notation $X\lesssim_{p}Y$, $X \gtrsim_{p} Y$, and $X \sim_{p} Y$.

We use the Japanese bracket notation $\langle{x}\rangle := (1+|x|^{2})^{1/2}$.

Given a measurable space function $f:\R^{2}\rightarrow\mathbb{C}$ or spacetime function $u:\R^{2}\rightarrow\mathbb{C}$ and a vector $y\in\R^{2}$, we use the notation $\tau_{y}f$ or $\tau_{y}u$ to denote the spatial translates $(\tau_{y}f)(x)\coloneqq f(x-y)$ or $(\tau_{y}u)(t,x) \coloneqq u(t,x-y)$, respectively.

We define the mixed norm Lebesgue space $L_{t}^{p}L_{x}^{q}(I\times\R^{d})$ to be the Banach space equipped with the norm
\begin{equation}
\|f\|_{L_{t}^{p}L_{x}^{q}(I\times\R^{d})} \coloneqq \left(\int_{I}\left(\int_{\R^{d}} |f(t,x)|^{q}dx\right)^{p/q}\right)^{1/q},
\end{equation}
with the usual modifications if $p=\infty$ or $q=\infty$. When $I=\R$, we will usually just write $L_{t}^{p}L_{x}^{q}$ instead of $L_{t}^{p}L_{x}^{q}(\R\times\R^{d})$. Similarly, we will just write $L^{q}$ in lieu of $L^{q}(\R^{d})$.

We denote the $n$-dimensional Hausdorff measure on $\R^{d}$ by $d\mathcal{H}^{n}$.

We denote the Schwartz space on $\R^{d}$ by $\mathcal{S}(\R^{d})$. We denote the space of tempered distributions (i.e. the dual of the Schwartz space) by $\mathcal{S}'(\R^{d})$.

We denote the Fourier transform of a tempered distribution $f$ by $\hat{f}$ or $\mathcal{F}(f)$. For $L^{1}$ functions, we define the Fourier transform via the convention
\begin{equation}
\hat{f}(\xi) \coloneqq \int_{\R^{d}}f(x)e^{-ix\cdot\xi}dx, \qquad \forall\xi\in\R^{d}
\end{equation}
and we define the inverse Fourier transform $\mathcal{F}^{-1}$ by
\begin{equation}
f^{\vee}(x) \coloneqq \frac{1}{(2\pi)^{d}}\int_{\R^{d}}\hat{f}(\xi)e^{i\xi\cdot x}d\xi, \qquad \forall x\in\R^{d}.
\end{equation}
Given a measurable symbol $m:(\R^{d})^{n} \rightarrow\mathbb{C}$, we define the $n$-linear Fourier multiplier $m(\uD_{n})$ by
\begin{equation}
m(\ul{D}_{n})(f_{1},\ldots,f_{n})(x) \coloneqq \frac{1}{(2\pi)^{nd}}\int_{\R^{nd}} m(\ul{\xi}_{n})\hat{f}_{1}(\xi_{1})\cdots \hat{f}_{n}(\xi_{n}) e^{i\sum_{j=1}^{n}\xi_{j}\cdot x}d\ul{\xi}_{n}, \qquad \forall x\in\R^{d}.
\end{equation}

Given two operators $T$ and $S$, we denote their commutator by $\comm{T}{S} \coloneqq TS-ST$. If $S$ is a pointwise multiplication operator (i.e. $(Sf)(x) \coloneqq a(x)f(x)$ for some measurable function $a$), then we write the commutator of $T$ and $S$ as $\comm{T}{a}$; analogously, if $T$ is a pointwise multiplication operator.

\subsection{Basic Harmonic Analysis}\label{ssec:prelim_ha}
In this section, we present some classical results from Harmonic Analysis which will be used throughout the course of the paper. The reader may find proofs of these results in any standard reference on the subject, such as \cite{grafakos2014c}, \cite{grafakos2014m}, \cite{Stein1993}, and \cite{Taylor2007}, in addition to the appendix of \cite{tao2006nonlinear}.

We first recall some facts about homogeneous tempered distributions.

Let $\Omega\in L^{1}(\mathbb{S}^{d-1})$ with integral zero. Define a tempered distribution $W_{\Omega}$ by
\begin{equation}
\ipp{W_{\Omega},f} \coloneqq \lim_{\varepsilon\rightarrow 0^{+}}\int_{|x|\geq\varepsilon}\frac{\Omega(x/|x|)}{|x|^{d}}f(x)dx, \qquad \forall f\in \mathcal{S}(\R^{d}).
\end{equation}
Observe that $W_{\Omega}\in\mathcal{S}'(\R^{d})$ is a homogeneous distribution of degree $-d$.

\begin{prop}\label{prop:dist_class}
Let $m\in C^{\infty}(\R^{d}\setminus\{0\})$ be a tempered distribution which is homogeneous of degree zero. Then there exists $b\in\mathbb{C}$ and $\Omega\in C^{\infty}(\mathbb{S}^{d-1})$ with integral zero, such that
\begin{equation}
m^{\vee}=b\delta_{0}+W_{\Omega}
\end{equation}
in the sense of tempered distributions.
\end{prop}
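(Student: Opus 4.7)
The plan is to pass through spherical harmonics on the sphere, which simultaneously diagonalize the homogeneous degree-zero symbols and the Fourier transforms of the associated principal value kernels. Since $m$ is smooth on $\R^{d}\setminus\{0\}$ and homogeneous of degree $0$, its restriction $m|_{\mathbb{S}^{d-1}}$ is smooth, so I may expand
\begin{equation*}
m|_{\mathbb{S}^{d-1}} = b + \sum_{k \geq 1} Y_{k}, \qquad b := |\mathbb{S}^{d-1}|^{-1}\int_{\mathbb{S}^{d-1}} m \, d\sigma, \quad Y_{k}\in\mathcal{H}_{k},
\end{equation*}
where $\mathcal{H}_{k}$ is the space of spherical harmonics of degree $k$ and the series converges in $C^{\infty}(\mathbb{S}^{d-1})$. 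Since $1^{\vee}=\delta_{0}$ under the Fourier convention used in this section, the constant term $b$ contributes $b\delta_{0}$ to $m^{\vee}$, matching the first summand in the claimed identity.

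The mean-zero remainder $\tilde{m}(\xi) := m(\xi)-b = \sum_{k \geq 1} Y_{k}(\xi/|\xi|)$ is then handled by the classical Bochner identity: for each $k\geq 1$ and each $Y_{k}\in\mathcal{H}_{k}$, the principal value distribution $W_{Y_{k}}$ satisfies
\begin{equation*}
\widehat{W_{Y_{k}}}(\xi) = \gamma_{k}\, Y_{k}(\xi/|\xi|),
\end{equation*}
where $\gamma_{k}\neq 0$ is an explicit constant of the form $\gamma_{k}=(-i)^{k}\pi^{d/2}\,\Gamma(k/2)/\Gamma((k+d)/2)$ (up to the normalization dictated by the Fourier convention). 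Inverting this relation suggests setting
\begin{equation*}
\Omega := \sum_{k \geq 1} \gamma_{k}^{-1}Y_{k} \quad \text{on } \mathbb{S}^{d-1},
\end{equation*}
so that $\tilde{m}^{\vee} = W_{\Omega}$, thereby yielding $m^{\vee}=b\delta_{0}+W_{\Omega}$.

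The hard part will be to show that $\Omega$ is genuinely a smooth function on the sphere and that the term-by-term inversion of the Fourier transform is legitimate. I would argue the smoothness via the Sobolev-type characterization on $\mathbb{S}^{d-1}$: the $C^{\infty}$ regularity of $m|_{\mathbb{S}^{d-1}}$ forces $\|Y_{k}\|_{L^{2}(\mathbb{S}^{d-1})}$ to decay faster than any negative power of the Laplace--Beltrami eigenvalue $k(k+d-2)$, while Stirling's formula gives $|\gamma_{k}|^{-1}=O(k^{d/2})$, so the coefficients of $\Omega$ remain super-polynomially decaying and the series converges in $C^{\infty}(\mathbb{S}^{d-1})$. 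The normalization $\int_{\mathbb{S}^{d-1}}\Omega\, d\sigma = 0$ is then automatic, as every $Y_{k}$ with $k\geq 1$ is $L^{2}$-orthogonal to the constants. Finally, to upgrade the spherical-harmonic identities to an identity in $\mathcal{S}'(\R^{d})$, I would invoke continuity of the Fourier transform on $\mathcal{S}'(\R^{d})$ together with the observation that $C^{\infty}$ convergence on $\mathbb{S}^{d-1}$ lifts to tempered-distributional convergence both of the degree-zero homogeneous extensions of the partial sums and of the associated principal value distributions $W_{\sum_{k=1}^{K}\gamma_{k}^{-1}Y_{k}}\to W_{\Omega}$, closing the argument.
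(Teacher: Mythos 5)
Your proposal is sound, but note that the paper does not actually prove this proposition: it is quoted as a standard fact with the proof deferred to the references (Grafakos, Stein). The argument in those references is different from yours: one first shows that $m^{\vee}$ agrees on $\R^{d}\setminus\{0\}$ with a smooth function $K$ homogeneous of degree $-d$ (homogeneity of the Fourier transform plus a cutoff and integration-by-parts argument for smoothness away from the origin), then checks that $\Omega:=|x|^{d}K$ has mean zero on $\mathbb{S}^{d-1}$ (a nonzero mean would force a logarithmic singularity incompatible with the boundedness of $m$), and finally observes that $m^{\vee}-W_{\Omega}$ is supported at the origin and homogeneous of degree $-d$, hence equal to $b\delta_{0}$ by the structure theorem for distributions supported at a point. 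Your route — expanding $m|_{\mathbb{S}^{d-1}}$ in spherical harmonics and inverting the Bochner--Hecke identity $\widehat{W_{Y_{k}}}=\gamma_{k}\,Y_{k}(\xi/|\xi|)$ term by term — is an equally legitimate classical proof, and the points you flag as needing work are exactly the right ones: rapid decay of $\|Y_{k}\|_{L^{2}(\mathbb{S}^{d-1})}$ from smoothness, the Stirling bound $|\gamma_{k}|^{-1}=O(k^{d/2})$, polynomial control of $C^{j}$ norms of degree-$k$ harmonics by their $L^{2}$ norms (via the eigenfunction property and Sobolev embedding on the sphere) so that $\Omega\in C^{\infty}(\mathbb{S}^{d-1})$, and the continuity of $\Omega\mapsto W_{\Omega}$ into $\mathcal{S}'(\R^{d})$, which holds for mean-zero $\Omega$ by splitting the pairing at $|x|=1$ and using $f(x)-f(0)$ near the origin; since every partial sum is mean zero, this justifies the limit. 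The trade-off is that your argument produces explicit constants $\gamma_{k}$ at the cost of convergence bookkeeping, while the reference argument avoids expansions but leans on the classification of distributions supported at a point; the only remaining care is cosmetic, namely the paper's Fourier convention (the $(2\pi)^{-d}$ factor and the reflection $Y_{k}\mapsto(-1)^{k}Y_{k}$ are absorbed into $b$ and $\Omega$), as you note.
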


\begin{prop}[Hardy's inequality]\label{prop:H}
If $0\leq s<\frac{d}{2}$, then
\begin{equation}
\| |x|^{-s}f\|_{L^{2}(\R^{d})} \lesssim_{s,d} \|f\|_{\dot{H}^{s}(\R^{d})}.
\end{equation}
\end{prop}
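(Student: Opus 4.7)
The plan is to reduce Hardy's inequality to the $L^{2}$-boundedness of a weighted Riesz potential, which is the diagonal case of the Stein-Weiss inequality. The endpoint $s=0$ is immediate from Plancherel, so fix $0<s<d/2$. Setting $g:=(-\Delta)^{s/2}f$, we have $\|g\|_{L^{2}(\R^{d})}\sim_{d}\|f\|_{\dot{H}^{s}(\R^{d})}$ by definition, and the desired inequality is equivalent to
\begin{equation}
\||x|^{-s}(-\Delta)^{-s/2}g\|_{L^{2}(\R^{d})}\lesssim_{s,d}\|g\|_{L^{2}(\R^{d})}.
\end{equation}
The multiplier $(-\Delta)^{-s/2}$ has symbol $|\xi|^{-s}$, which is $C^{\infty}$ away from the origin, homogeneous, and radial; for $0<s<d$ its inverse Fourier transform (as a tempered distribution) is a constant multiple of $|x|^{-(d-s)}$, by the standard Riesz-potential computation (see \cite{Stein1993}). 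Hence
\begin{equation}
((-\Delta)^{-s/2}g)(x)=c_{s,d}\int_{\R^{d}}\frac{g(y)}{|x-y|^{d-s}}dy.
\end{equation}

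Next I would invoke the Stein-Weiss weighted Hardy-Littlewood-Sobolev inequality: for $0<\lambda<d$, $\alpha,\beta\ge 0$ with $\alpha+\beta\ge 0$, $\beta<d/q$, $\alpha<d/p'$, and scaling relation $\tfrac{1}{p}+\tfrac{\lambda+\alpha+\beta}{d}=1+\tfrac{1}{q}$, one has
\begin{equation}
\left\||x|^{-\beta}\int_{\R^{d}}\frac{g(y)}{|x-y|^{\lambda}|y|^{\alpha}}dy\right\|_{L^{q}(\R^{d})}\lesssim\|g\|_{L^{p}(\R^{d})}.
\end{equation}
Taking $p=q=2$, $\alpha=0$, $\beta=s$, and $\lambda=d-s$, the scaling relation is automatic and the admissibility condition $\beta<d/q$ collapses exactly to the hypothesis $s<d/2$. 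This yields the claim with an implicit constant depending only on $s$ and $d$.

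The only subtlety is bookkeeping around the range of $s$: the Riesz-potential representation merely requires $0<s<d$, but the $L^{2}$-boundedness of the $|x|^{-s}$-weighted operator forces the sharper restriction $s<d/2$, which is precisely the hypothesis of the proposition. An alternative, self-contained route is to apply Plancherel to $\int|x|^{-2s}|f|^{2}dx$, identify the Fourier transform of $|x|^{-2s}$ as a constant multiple of $|\xi|^{-(d-2s)}$ (valid exactly when $2s<d$), and conclude via Hardy-Littlewood-Sobolev applied to $|f|^{2}$; both routes give the same constant dependence, but invoking Stein-Weiss directly is the cleanest option.
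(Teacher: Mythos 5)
The paper never proves Proposition \ref{prop:H}: it appears among the standard harmonic-analysis preliminaries of subsection \ref{ssec:prelim_ha}, whose proofs are explicitly deferred to the cited references (Grafakos, Stein, Taylor, and the appendix of Tao's book), so there is no in-paper argument to compare against. Your main argument is correct and complete: after disposing of $s=0$ by Plancherel, writing $f=(-\Delta)^{-s/2}g$, representing $(-\Delta)^{-s/2}$ as the Riesz potential with kernel $c_{s,d}|x|^{-(d-s)}$ (valid for $0<s<d$), and invoking Stein--Weiss with $p=q=2$, $\alpha=0$, $\beta=s$, $\lambda=d-s$ is legitimate, since all hypotheses hold ($\alpha+\beta=s>0$, $\beta=s<d/2$, $\alpha=0<d/2$, the scaling relation is exact, and $0<\lambda<d$). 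This is a heavier black box than the arguments in the cited references, which typically run through a Schur test with power weights or a dyadic decomposition in space and frequency, but it is not circular, as Stein--Weiss is proved independently of Hardy; what it buys you is brevity, at the cost of importing a strictly stronger theorem.

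One caveat on your closing aside: the ``self-contained'' alternative does not close as stated. After Plancherel, $\int|x|^{-2s}|f|^{2}\,dx$ becomes the bilinear form $\iint|\eta-\zeta|^{-(d-2s)}\hat{f}(\eta)\overline{\hat{f}(\zeta)}\,d\eta\,d\zeta$, and plain Hardy--Littlewood--Sobolev bounds this by $\|\hat{f}\|_{L^{2d/(d+2s)}}^{2}$, which is \emph{not} controlled by $\|f\|_{\dot{H}^{s}}^{2}$ (test $\hat{f}(\xi)=|\xi|^{-a}1_{|\xi|\leq 1}$ with $a$ near $s+d/2$). To finish along those lines one needs the Lorentz-refined HLS together with H\"older in Lorentz spaces, using $|\xi|^{-s}\in L^{d/s,\infty}$, or some equivalent extra ingredient. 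Since that route is explicitly an aside and your Stein--Weiss argument stands on its own, this is only a remark, not a gap in the proof.
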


\begin{prop}[Hardy-Littlewood-Sobolev lemma]\label{prop:HLS}
Suppose $1<p<\infty$, $1<q<\infty$, $0<r< d$, and $\frac{r}{d}=1-(\frac{1}{p}-\frac{1}{q})$. Then
\begin{equation}
\|f\ast |\cdot|^{-r}\|_{L^{q}(\R^{d})} \lesssim_{p,q,d}\|f\|_{L^{p}(\R^{d})}.
\end{equation}
\end{prop}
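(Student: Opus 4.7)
The plan is to prove this via the classical Hedberg approach, which reduces the inequality to boundedness of the Hardy--Littlewood maximal function $M$ on $L^p$ for $p>1$. First, I would observe that the hypothesis $r/d = 1 - (1/p - 1/q)$ combined with $1 < q < \infty$ forces both $r < d$ (so the kernel is locally integrable) and $rp' > d$ (so the kernel is integrable at infinity against an $L^p$ function via H\"older), where $p'$ is the H\"older conjugate of $p$.

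Next, for any $R > 0$, I would split the convolution into near and far parts and bound each separately. For the near part, a standard dyadic annulus decomposition together with the definition of $Mf$ yields
\begin{equation}
\left|\int_{|y|\leq R} f(x-y)|y|^{-r}dy\right| \leq \sum_{k=0}^{\infty}\int_{R2^{-k-1}<|y|\leq R2^{-k}} |f(x-y)||y|^{-r}dy \lesssim R^{d-r}Mf(x),
\end{equation}
using $r < d$ to sum the geometric series. For the far part, H\"older's inequality gives
\begin{equation}
\left|\int_{|y|>R} f(x-y)|y|^{-r}dy\right| \leq \|f\|_{L^p}\left(\int_{|y|>R}|y|^{-rp'}dy\right)^{1/p'} \lesssim R^{d/p'-r}\|f\|_{L^p},
\end{equation}
which is finite precisely because $rp' > d$.

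Now I would optimize in $R$ by balancing the two contributions, setting $R^{d-r}Mf(x) \sim R^{d/p'-r}\|f\|_{L^p}$, which yields $R^{d/p} \sim \|f\|_{L^p}/Mf(x)$. Substituting back and using the scaling identity $p(d-r)/d = 1 - p/q$ (a direct computation from $r/d = 1 - 1/p + 1/q$) gives the pointwise Hedberg bound
\begin{equation}
|(f\ast|\cdot|^{-r})(x)| \lesssim Mf(x)^{p/q}\|f\|_{L^p}^{1-p/q}.
\end{equation}

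Finally, I would raise both sides to the $q$-th power, integrate over $\R^d$, and invoke the $L^p$-boundedness of the Hardy--Littlewood maximal function (valid since $p > 1$) to conclude
\begin{equation}
\|f\ast|\cdot|^{-r}\|_{L^q}^q \lesssim \|f\|_{L^p}^{q-p}\|Mf\|_{L^p}^{p} \lesssim \|f\|_{L^p}^q.
\end{equation}
The main technical point is keeping track of the exponent algebra driven by the scaling relation, together with ensuring both boundary conditions $r < d$ and $rp' > d$ are used at the right places; the only nontrivial analytic input beyond elementary inequalities is the $L^p$-boundedness of $M$, which is the standard obstacle and the reason the endpoints $p=1$ and $q=\infty$ are excluded.
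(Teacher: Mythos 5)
Your proof is correct: the exponent algebra checks out ($rp' = d(1+p'/q) > d$ for the far part, and $p(d-r)/d = 1-p/q$ in the Hedberg bound), and the only nontrivial input is indeed the $L^{p}$-boundedness of the maximal function. The paper itself does not prove this proposition — it is stated as a standard preliminary with the proof deferred to the cited references (Grafakos, Stein) — and your near/far splitting with optimization in $R$ is precisely the classical Hedberg argument those references give, so there is nothing to reconcile.
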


We next recall some facts from the Calder\'{o}n-Zygmund theory for singular integral operators. Since our equation \eqref{eq:DS} is translation-invariant, we limit our attention to the translation-invariant case of the theory (i.e. Calder\'{o}n-Zygmund operators of convolution type).
\begin{mydef}[Calder\'{o}n-Zygmund kernels] 
Let $K:\R^{d}\setminus\{0\}\rightarrow\mathbb{C}$ be a measurable function which satisfies
\begin{enumerate}[(a)]
	\item 
	the \emph{size condition}
		\begin{equation}\label{eq:si_con}
			|K(x)|\leq \frac{A_{1}}{|x|^{d}},\qquad \forall x\in\R^{d}\setminus\{0\}
		\end{equation}
		and
	\item 
	the \emph{smoothness condition}
		\begin{equation}\label{eq:sm_con}
			|\nabla K(x)|\leq \frac{A_{2}}{|x|^{d+1}}, \qquad \forall x\in\R^{d}\setminus\{0\}.
		\end{equation}
\end{enumerate}
We say that $K$ is a \emph{Calder\'{o}n-Zygmund kernel (CZK)}.
\end{mydef}

Let $K$ be a CZK, and suppose that there exists a tempered distribution $W$ associated to $K$ by
\begin{equation}\label{eq:CZK_con}
\ipp{W,f} = \lim_{\delta_{j}\rightarrow 0^{+}}\int_{|x|\geq \delta_{j}} K(x)f(x)dx, \qquad \forall f\in\mathcal{S}(\R^{d}),
\end{equation}
for some sequence $\delta_{j}\rightarrow 0^{+}$ as $j\rightarrow\infty$.
\begin{remark}
Note that if the kernel $K$ satisfies $\int_{R_{1}<|x|<R_{2}}K(x)dx=0$ for any $0<R_{1}<R_{2}<\infty$, then the choice of sequence $\delta_{j}$ is immaterial.
\end{remark}

\begin{prop}[Calder\'{o}n-Zygmund theorem]\label{prop:CZ}
Let $K$ be a Calder\'{o}n-Zygmund kernel with constants $A_{1},A_{2}$ in \eqref{eq:si_con} and \eqref{eq:sm_con}, respectively. Let $W\in\mathcal{S}'(\R^{d})$ be a distribution associated to $K$ in the form of \eqref{eq:CZK_con}. If the operator $T$ defined by convolution with $W$ has a bounded extension on $L^{r}(\R^{d})$ with norm $B$ for some $1<r\leq\infty$, then $T$ has an extension mapping $L^{1}(\R^{2})$ to $L^{1,\infty}(\R^{2})$ with norm
\begin{equation}
	\|T\|_{L^{1}\rightarrow L^{1,\infty}}\lesssim_{d} A_{2}+B,
\end{equation}
and $T$ extends to a bounded operator on $L^{p}(\R^{d})$ for $1<p<\infty$ with norm
\begin{equation}
	\|T\|_{L^{p}\rightarrow L^{p}}\lesssim_{p,d} A_{2}+B.
\end{equation}
We call $T$ a \emph{Calder\'{o}n-Zygmund operator (CZO)}.
\end{prop}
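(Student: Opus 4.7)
The plan is the classical Calderón-Zygmund scheme: first derive a weak-type $(1,1)$ bound for $T$, then interpolate with the hypothesized strong $(r,r)$ bound and dualize to cover the full range $1<p<\infty$.

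To set up the weak-type bound, given $f\in L^{1}\cap L^{r}$ and $\alpha>0$, I would perform a Calder\'on-Zygmund decomposition at height $\alpha$: stop-time the dyadic maximal function of $|f|$ to extract a maximal family of pairwise disjoint dyadic cubes $\{Q_{j}\}$ on which $\alpha<|Q_{j}|^{-1}\int_{Q_{j}}|f|\le 2^{d}\alpha$, satisfying $\sum_{j}|Q_{j}|\le\alpha^{-1}\|f\|_{L^{1}}$. Writing $f=g+b$ with $g$ equal to $f$ off $\bigcup_{j}Q_{j}$ and to the average $|Q_{j}|^{-1}\int_{Q_{j}}f$ on each $Q_{j}$, and $b=\sum_{j}b_{j}$ where $b_{j}=(f-|Q_{j}|^{-1}\int_{Q_{j}}f)\chi_{Q_{j}}$, one has $|g|\le 2^{d}\alpha$ a.e., $\|g\|_{L^{1}}\le\|f\|_{L^{1}}$, each $b_{j}$ is mean-zero on $Q_{j}$, and $\|b_{j}\|_{L^{1}}\le 2\alpha|Q_{j}|$.

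I would then split $\{|Tf|>\alpha\}\subset\{|Tg|>\alpha/2\}\cup\{|Tb|>\alpha/2\}$. For the good part, Chebyshev together with the $L^{r}$ hypothesis and the elementary bound $\|g\|_{L^{r}}^{r}\le\|g\|_{L^{\infty}}^{r-1}\|g\|_{L^{1}}$ yield an estimate of order $B^{r}\alpha^{-1}\|f\|_{L^{1}}$. For the bad part, I would set $F=\bigcup_{j}Q_{j}^{*}$ with $Q_{j}^{*}$ the concentric $2\sqrt{d}$-dilate of $Q_{j}$, so $|F|\lesssim\alpha^{-1}\|f\|_{L^{1}}$; off $F$ the mean-zero property of $b_{j}$ rewrites $Tb_{j}(x)=\int_{Q_{j}}\brak{K(x-y)-K(x-y_{j})}b_{j}(y)\,\d y$ with $y_{j}$ the center of $Q_{j}$, and the smoothness condition \eqref{eq:sm_con} produces the H\"ormander-type estimate
\[
\int_{\R^{d}\setminus Q_{j}^{*}}|K(x-y)-K(x-y_{j})|\,\d x\lesssim A_{2},\qquad y\in Q_{j}.
\]
Summing over $j$ yields $\int_{\R^{d}\setminus F}|Tb|\,\d x\lesssim A_{2}\|f\|_{L^{1}}$, and combining with the trivial bound on $F$ via Chebyshev closes the weak-$(1,1)$ estimate with constant of order $A_{2}+B$; a standard density argument then extends it to all of $L^{1}$.

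With the weak-$(1,1)$ estimate in hand, Marcinkiewicz interpolation against the strong $(r,r)$ hypothesis produces strong $L^{p}$ bounds for $1<p\le r$. If $r<\infty$, I would then pass to the formal adjoint $T^{*}$, which is convolution with the distribution associated to the kernel $\overline{K(-\cdot)}$; this adjoint kernel is again a CZK with the same constants $A_{1},A_{2}$, and $T^{*}$ is bounded on $L^{r'}$ with norm $B$ by duality. Rerunning the weak-type argument for $T^{*}$ and dualizing then covers $r<p<\infty$. The main technical obstacle is the H\"ormander cancellation integral on the complement of the dilated cubes; everything else is standard bookkeeping of the constants $A_{2}$ and $B$.
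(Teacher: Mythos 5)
Your argument is the standard Calder\'on--Zygmund proof (dyadic decomposition into good and bad parts, weak $(1,1)$ via the H\"ormander cancellation estimate derived from \eqref{eq:sm_con}, Marcinkiewicz interpolation, and duality through the adjoint kernel), which is exactly the classical argument that the paper does not reproduce but defers to its cited references (Grafakos, Stein), so your proposal is correct and matches the intended approach. The only adjustment worth noting is the endpoint $r=\infty$ allowed in the statement, where the Chebyshev bound $B^{r}\alpha^{-1}\|f\|_{L^{1}}$ for the good part does not literally parse; there one instead uses $\|Tg\|_{L^{\infty}}\leq 2^{d}B\alpha$ and performs the decomposition at a height chosen so that the set $\{|Tg|>\alpha/2\}$ is empty, after which the rest of your argument (and the direct interpolation to all $1<p<\infty$ without the duality step) goes through unchanged.
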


For our purposes, an important example of CZOs are the Riesz transforms. For $j=1,\ldots,d$, we define tempered distributions $W_{j}$ on $\R^{d}$ by
\begin{equation}
\langle{W_{j},f}\rangle \coloneqq \dfrac{\Gamma\left(\frac{d+1}{2}\right)}{\pi^{\frac{d+1}{2}}}\lim_{\varepsilon\rightarrow 0^{+}}\int_{|y|\geq\varepsilon}\frac{y_{j}}{|y|^{d+1}}f(y)dy, \qquad \forall f\in\mathcal{S}(\R^{d}).
\end{equation}
We define the $j^{th}$ Riesz transform $R_{j}$ by convolution with the distribution $W_{j}$, and we denote the $d$-dimensional vector of Riesz transforms by $\vec{R}$. The operator $R_{j}$ is a Fourier multiplier with symbol $-i\frac{\xi_{j}}{|\xi|}$, from which we see that $\E=-R_{1}^{2}$. By Plancherel's theorem, $R_{j}$ is bounded on $L^{2}$, and so by the Calder\'{o}n-Zygmund theorem, $R_{j}$ is indeed a CZO. In particular, the operator $\E$ is bounded on $L^{p}(\R^{2})$ for every $1<p<\infty$.

We also need some results on the $L^{p}$ boundedness of (multilinear) Fourier multipliers.
\begin{prop}[H\"{o}rmander-Mikhlin multiplier theorem]\label{prop:HM}
Let $m:\R^{d}\setminus \{0\}\rightarrow\C$ belong to $C^{\infty}(\R^{d}\setminus\{0\})$ and satisfy the derivative estimates
\begin{equation}
|(\partial^{\alpha}m)(\xi)| \lesssim_{|\alpha|} |\xi|^{-|\alpha|}, \qquad \forall \xi\in\R^{d}\setminus\{0\}, \enspace \forall \alpha \in\N_{0}^{d}.
\end{equation}
Then for any exponent $1<p<\infty$, the operator $m(D):\mathcal{S}(\R^{d})\rightarrow \mathcal{S}'(\R^{d})$ extends to a bounded operator $m(D): L^{p}(\R^{d})\rightarrow L^{p}(\R^{d})$. 
\end{prop}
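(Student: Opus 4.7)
The plan is to reduce the proof to the Calderón-Zygmund theorem, Proposition~\ref{prop:CZ}. Taking $\alpha = 0$ in the hypothesis shows that $m \in L^{\infty}(\R^{d})$ with $\|m\|_{L^{\infty}} \lesssim 1$, so by Plancherel's theorem $m(D)$ extends to a bounded operator on $L^{2}(\R^{d})$ with operator norm $\lesssim 1$. It therefore suffices to exhibit a Calderón-Zygmund kernel $K$ satisfying \eqref{eq:si_con} and \eqref{eq:sm_con} whose associated distribution via \eqref{eq:CZK_con} agrees with the convolution kernel of $m(D)$ up to a bounded multiple of $\delta_{0}$; the latter is harmless because convolution with $\delta_{0}$ is the identity, which is bounded on every $L^{p}$.

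To construct $K$ and obtain the required size and smoothness bounds, I would perform a dyadic Littlewood-Paley decomposition of the symbol $m$. Fix a radial bump $\phi \in C_{c}^{\infty}(\R^{d})$ supported in $\{1/2 \leq |\xi| \leq 2\}$ with $\sum_{j \in \Z} \phi(\xi/2^{j}) = 1$ on $\R^{d} \setminus \{0\}$, and set $m_{j}(\xi) := m(\xi)\phi(\xi/2^{j})$. Each $m_{j}$ is smooth and compactly supported in an annulus at scale $2^{j}$, so $K_{j} := m_{j}^{\vee}$ belongs to $\mathcal{S}(\R^{d})$. After the change of variables $\eta = \xi/2^{j}$, the rescaled symbol $\eta \mapsto m(2^{j}\eta)\phi(\eta)$ satisfies derivative bounds in $\eta$ uniform in $j$, thanks to the Hörmander-Mikhlin hypothesis combined with the compact support of $\phi$. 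Integrating by parts $N$ times via the identity $e^{i 2^{j} x \cdot \eta} = (2^{j}|x|)^{-2N}(-\Delta_{\eta})^{N} e^{i 2^{j} x \cdot \eta}$ then yields, for every $N \in \N$,
\begin{equation*}
|K_{j}(x)| \lesssim_{N} 2^{jd} (1 + 2^{j}|x|)^{-N}, \qquad |\nabla K_{j}(x)| \lesssim_{N} 2^{j(d+1)}(1+2^{j}|x|)^{-N}.
\end{equation*}

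Summing in $j$ and splitting at $2^{j} \sim |x|^{-1}$ — the contribution from $2^{j} \lesssim |x|^{-1}$ is geometric in $2^{jd}$, while the contribution from $2^{j} \gtrsim |x|^{-1}$ is geometric in $2^{j(d-N)}$ (respectively $2^{j(d+1-N)}$ for the gradient) once $N$ is chosen larger than $d+1$ — gives
\begin{equation*}
|K(x)| \lesssim |x|^{-d}, \qquad |\nabla K(x)| \lesssim |x|^{-d-1}, \qquad x \neq 0,
\end{equation*}
where $K := \sum_{j} K_{j}$ converges absolutely on $\R^{d} \setminus \{0\}$. To finish, one identifies $K$ with the distributional convolution kernel of $m(D)$ up to a bounded multiple of $\delta_{0}$: the partial sums $\sum_{|j| \leq J} m_{j}$ are smooth with compact support in $\R^{d} \setminus \{0\}$ and converge to $m$ in $\mathcal{S}'(\R^{d})$, while the corresponding Schwartz kernels are exactly $\sum_{|j| \leq J} K_{j}$; a dominated convergence argument then produces the principal value representation \eqref{eq:CZK_con} for some sequence $\delta_{j} \to 0^{+}$, with any discrepancy absorbed into the harmless $\delta_{0}$ piece arising from the average of $K$ over small annuli. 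Proposition~\ref{prop:CZ} applied with $r = 2$ then yields the claim. The main technical step is the dyadic integration-by-parts bound and its summation; the remaining distributional identification is routine but requires some care to justify passing from the Fourier decomposition to the principal value truncation.
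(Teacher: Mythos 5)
The paper does not prove Proposition \ref{prop:HM} at all: it is recorded in the preliminaries with the remark that proofs of the results in that subsection can be found in the standard references (Grafakos, Stein, Taylor), so there is no in-paper argument to compare against. Judged on its own, your proposal is the standard proof and is correct in outline: the $\alpha=0$ bound plus Plancherel gives $L^{2}$ boundedness; the dyadic pieces $m_{j}(\xi)=m(\xi)\phi(\xi/2^{j})$ rescale to symbols with $j$-uniform $C^{N}$ bounds, so integration by parts yields $|K_{j}(x)|\lesssim_{N}2^{jd}(1+2^{j}|x|)^{-N}$ and $|\nabla K_{j}(x)|\lesssim_{N}2^{j(d+1)}(1+2^{j}|x|)^{-N}$; splitting the sum at $2^{j}\sim|x|^{-1}$ gives the size condition \eqref{eq:si_con} and smoothness condition \eqref{eq:sm_con} for $K=\sum_{j}K_{j}$; and Proposition \ref{prop:CZ} with $r=2$ (applied to $m(D)$ minus the harmless multiple of the identity) finishes. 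Note that the full-strength hypothesis of the paper's statement (the bound for \emph{all} multi-indices $\alpha$) is exactly what licenses integrating by parts $N>d+1$ times, so no issue arises there.

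The one step you compress — and correctly flag as needing care — is the identification $m^{\vee}=W+c\delta_{0}$ with $W$ of the form \eqref{eq:CZK_con}. Two things must be checked: (a) that the truncated integrals $\int_{\delta\leq|x|\leq 1}K(x)\,dx$ are uniformly bounded, so that a limit along some sequence $\delta_{j}\to 0^{+}$ exists (this uses that each $K_{j}$ has mean zero, since $m_{j}$ vanishes near the origin); and (b) that the discrepancy between $m^{\vee}$ and $W$, which a priori is only a distribution supported at the origin and hence a finite combination of derivatives of $\delta_{0}$, is in fact a constant multiple of $\delta_{0}$ — e.g.\ because its Fourier transform is bounded (both $m$ and $\widehat{W}$ are bounded), while a nontrivial polynomial is not. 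With those two observations supplied, your "absorb the discrepancy into $c\delta_{0}$" step is rigorous, and the application of Proposition \ref{prop:CZ} (which only requires $L^{r}$ boundedness of convolution with $W$ for a single $r$, here $r=2$) goes through as you describe.
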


\begin{prop}[Coifman-Meyer multiplier theorem, \cite{coifman1978ast}, \cite{coifman1978commutateurs}, \cite{kenig1999multilinear}, \cite{Grafakos2002}]\label{prop:CM}
Let $N\in\N$, and let $m:\R^{Nd}\setminus\{0\} \rightarrow\C$ belong to $C^{\infty}(\R^{Nd}\setminus\{0\})$ and satisfy the derivative estimates
\begin{equation}
|\partial_{\xi_{1}}^{\alpha_{1}}\cdots\partial_{\xi_{N}}^{\alpha_{N}}m(\ul{\xi}_{N})| \lesssim_{\alpha_{1},\ldots,\alpha_{N}} \paren*{|\xi_{1}|+\cdots+|\xi_{N}|}^{-(|\alpha_{1}|+\cdots+|\alpha_{N}|)}, \qquad \forall \ul{\xi}_{N}\in\R^{Nd}, \enspace \forall \ul{\alpha}_{N} \in \N_{0}^{Nd}.
\end{equation}
Then for any exponents $1<p_{1},\ldots,p_{N}\leq\infty$ and $\frac{1}{N}<p<\infty$ satisfying $\frac{1}{p_{1}}+\cdots+\frac{1}{p_{N}}=\frac{1}{p}$, the operator $m(\ul{D}_{N}):\mathcal{S}(\R^{d})^{N}\rightarrow \mathcal{S}'(\R^{d})$ extends to a bounded operator $L^{p_{1}}(\R^{d})\times\cdots\times L^{p_{N}}(\R^{d})\rightarrow L^{p}(\R^{d})$.
\end{prop}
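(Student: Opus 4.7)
The plan is to prove this classical Coifman-Meyer multiplier theorem via a Littlewood-Paley / paraproduct decomposition of the symbol $m$, reducing $T_m$ to an absolutely convergent sum of tensor products of Littlewood-Paley pieces of the inputs $f_j$, and then estimating the resulting pointwise bound by H\"older's inequality together with the Fefferman-Stein vector-valued maximal and Littlewood-Paley square function inequalities. By density it suffices to treat Schwartz inputs.

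\emph{Dyadic decomposition of $m$.} Fix a radial bump $\phi:\R^{Nd}\rightarrow[0,1]$ supported in $\{1/2\leq|\xi|\leq 2\}$ with $\sum_{k\in\Z}\phi(2^{-k}\xi)=1$ for $\xi\neq 0$, and set $m_{k}:=m\cdot\phi(2^{-k}\cdot)$. The derivative hypothesis on $m$ forces the rescaled symbols $m_{k}(2^{k}\cdot)$ to be smooth, compactly supported in a fixed annulus of $\R^{Nd}$, with derivatives of all orders bounded uniformly in $k$. On $\operatorname{supp}(m_{k})$ at least one $|\xi_{j}|$ is of order $2^{k}$, so I would further split $m_{k}=\sum_{j=1}^{N}m_{k}^{j}$ via a smooth partition of unity in such a way that $m_{k}^{j}$ is supported where $|\xi_{j}|\sim 2^{k}$ and $|\xi_{i}|\lesssim 2^{k}$ for $i\neq j$, preserving the uniform smoothness after rescaling.

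\emph{Fourier series decoupling and pointwise bound.} On each piece $m_{k}^{j}$, whose support lies in a cube of side length $\lesssim 2^{k+2}$, expand in Fourier series
\[
m_{k}^{j}(\xi)=\chi_{k}^{j}(\xi)\sum_{n\in\Z^{Nd}}c_{n,k}^{j}\,e^{i2^{-k-2}n\cdot\xi},
\]
where $\chi_{k}^{j}$ is an enlarged annular cutoff equal to $1$ on $\operatorname{supp}(m_{k}^{j})$ and factoring as a product over the $N$ frequency variables. Integration by parts, together with the uniform smoothness of $m_{k}^{j}(2^{k}\cdot)$, yields the $k$-uniform decay $|c_{n,k}^{j}|\lesssim_{M}\langle n\rangle^{-M}$ for every $M>0$. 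This representation expresses $T_{m_{k}^{j}}$ as an absolutely convergent sum of tensor products of translated Littlewood-Paley-type projections: the ``high'' projector $P_{\sim 2^{k}}$ applied to $f_{j}$ and ``low'' projectors $P_{\lesssim 2^{k}}$ applied to the $f_{i}$ for $i\neq j$, modulated by the rapidly decaying coefficients and translations by $2^{-k-2}n$. Summing in $n$ is trivial by rapid decay, and recombining the paraproduct pieces yields the pointwise bound
\[
|T_{m}(f_{1},\ldots,f_{N})(x)|\lesssim \sum_{j=1}^{N}\paren*{\sum_{k\in\Z}|P_{\sim 2^{k}}f_{j}(x)|^{2}}^{1/2}\prod_{i\neq j}\mathcal{M}f_{i}(x),
\]
where $\mathcal{M}$ is the Hardy-Littlewood maximal operator. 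H\"older's inequality combined with the $L^{p_{j}}$-boundedness of the square function and of $\mathcal{M}$ (valid for $1<p_{j}<\infty$) then gives the desired estimate $\|T_{m}(f_{1},\ldots,f_{N})\|_{L^{p}}\lesssim \prod_{j}\|f_{j}\|_{L^{p_{j}}}$.

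\emph{Main obstacle.} The hard part is the low-exponent regime $p\leq 1$ and the endpoint $p_{j}=\infty$. For $p\leq 1$ the $L^{p}$ Littlewood-Paley characterization fails, so one instead upgrades to an $H^{p_{1}}\times\cdots\times H^{p_{N}}\rightarrow H^{p}$ bound via the atomic/molecular characterization of Hardy spaces, exploiting the cancellation built into each paraproduct piece, and then deduces the $L^{p}$ estimate from $H^{p}\hookrightarrow L^{p}$. For $p_{j}=\infty$ one loses $L^{p_{j}}$-boundedness of the square function and instead invokes $H^{1}$-$\operatorname{BMO}$ duality on the dual formulation, using that the square function maps $L^{\infty}$ into $\operatorname{BMO}$. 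Both refinements are carried out in detail in the cited references \cite{coifman1978ast,coifman1978commutateurs,kenig1999multilinear,Grafakos2002}.
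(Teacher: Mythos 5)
The paper never proves this proposition: it is quoted as a classical black box, with the proof explicitly deferred to the cited references (Coifman--Meyer, Kenig--Stein, Grafakos--Torres), so there is no argument in the text to measure yours against. Your outline does follow the standard route from those references -- dyadic decomposition of the symbol, Fourier-series decoupling on each annulus, reduction to paraproduct pieces, square-function and maximal-function estimates, with Hardy-space and $\mathrm{BMO}$ input reserved for $p\le 1$ and $p_{j}=\infty$ -- and the overall strategy is sound.

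However, one step as written would fail: the displayed pointwise bound $|T_{m}(f_{1},\dots,f_{N})(x)|\lesssim \sum_{j}\bigl(\sum_{k}|P_{\sim 2^{k}}f_{j}(x)|^{2}\bigr)^{1/2}\prod_{i\neq j}\mathcal{M}f_{i}(x)$ is false for the pieces of your decomposition in which \emph{only one} input frequency is of size $2^{k}$ and the others are much smaller. For such a high--low paraproduct $\sum_{k}(\Delta_{k}f_{j})\prod_{i\neq j}(S_{k}f_{i})$ there is nothing to Cauchy--Schwarz against in $k$: the low-frequency factors are only bounded by $\mathcal{M}f_{i}(x)$ uniformly in $k$, and the remaining $\ell^{1}_{k}$ sum $\sum_{k}|\Delta_{k}f_{j}(x)|$ is not controlled pointwise by the square function (arrange $\Delta_{k}f_{j}(x)\approx 1/K$ for $K$ values of $k$ at a fixed $x$ and let $K\to\infty$). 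The standard repair is either (i) to refine the splitting of $m_{k}$ so that on each piece the \emph{two} largest input frequencies are comparable to $2^{k}$, in which case Cauchy--Schwarz between those two factors gives a genuine pointwise bound by a product of two square functions times $N-2$ maximal functions, or (ii) for the pieces with a single dominant frequency, to observe that each summand has output Fourier support in an annulus $|\xi|\sim 2^{k}$ and to estimate the $L^{p}$ norm of the $k$-sum (not its pointwise value) by the square function of the output via the Littlewood--Paley theorem, after which the same H\"older argument applies; for $p\le 1$ this second route is exactly where the Hardy-space machinery you defer to the references becomes unavoidable. With that correction your sketch is the standard proof.
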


We next recall some basic facts from the Littlewood-Paley theory.
\begin{mydef}[Littlewood-Paley decomposition]
Let $\phi\in C_{c}^{\infty}(\R^{d})$ be a radial, nonincreasing function, such that $0\leq \phi\leq 1$ and
\begin{equation}
\phi(x)=
\begin{cases}
1, & {|x|\leq 1}\\
0, & {|x|>2}
\end{cases}.
\end{equation}
Define the dyadic partitions of unity
\begin{align}
1&=\phi(x)+\sum_{j=1}^{\infty}[\phi(2^{-j}x)-\phi(2^{-j+1}x)] \eqqcolon \psi_{0}(x)+\sum_{j=1}^{\infty}\psi_{j}(x), \qquad \forall x\in\R^{d}\\
1&=\sum_{j\in\mathbb{Z}}[\phi(2^{-j}x)-\phi(2^{-j+1}x)] \eqqcolon \sum_{j\in\mathbb{Z}}\dot{\psi}_{j}(x), \qquad \forall x\in\R^{d}\setminus\{0\}.
\end{align}
For any nonnegative integer $j$, define the inhomogeneous Littlewood-Paley projector $P_{j}$ by
\begin{equation}
P_{j}f \coloneqq \psi_{j}(D)f = \int_{\R^{d}}K_{j}(x-y)f(y)dy,
\end{equation}
and for any integer $j$, define the homogeneous Littlewood-Paley projector $\dot{P}_{j}$ by
\begin{equation}
\dot{P}_{j}f \coloneqq	\psi_{j}(D)f = \int_{\R^{d}}\dot{K}_{j}(x-y)f(y)dy.
\end{equation}
Observe that $K_{j},\dot{K}_{j'}\in\mathcal{S}(\R^{d})$ and for any integers $j\in\N_{0}$ and $j'\in \Z$,
\begin{equation}
|K_{j}(x)| \lesssim_{N,d} 2^{dj}\jp{2^{j}x}^{-N}, \enspace |\dot{K}_{j'}(x)| \lesssim_{N,d} 2^{dj'}\jp{2^{j'}x}^{-N}, \qquad \forall x\in\R^{d}, \enspace N\in\N_{0}.
\end{equation}
In particular, $K_{j},\dot{K}_{j'}\in L^{1}(\R^{d})$ with $\sup_{j,j'}\{\|K_{j}\|_{L^{1}(\R^{d})}+\|\dot{K}_{j'}\|_{L^{1}(\R^{d})}\} \lesssim 1$. For an integer $j<0$, we define $P_{j}$ to be the zero operator. Lastly, we define the operators
\begin{equation}
P_{j_{1}\leq\cdot\leq j_{2}} \coloneqq \sum_{j_{1}\leq j\leq j_{2}}P_{j}, \qquad \dot{P}_{j_{1}\leq\cdot\leq j_{2}} \coloneqq \sum_{j_{1}\leq j\leq j_{2}}\dot{P}_{j}.
\end{equation}
For a real number $N>0$ or a set $E\subset\R^{d}$, we define the frequency localization operators
\begin{equation}
P_{\leq N} \coloneqq \phi(N^{-1}D), \quad P_{>N} \coloneqq 1-\phi(N^{-1}D), \quad P_{N} \coloneqq \phi(N^{-1}D)-\phi(2N^{-1}D), \quad P_{E} \coloneqq \mathrm{1}_{E}(D).
\end{equation}
\end{mydef}

\begin{prop}[Bernstein's lemma]
For $s\geq 0$ and $1\leq p\leq q\leq\infty$, we have the inequalities
\begin{align}
\|P_{>N}f\|_{L^{p}(\R^{d})} &\lesssim_{p,s,d} N^{-s}\||\nabla|^{s}P_{> N}f\|_{L^{p}(\R^{d})} \\
\|P_{\leq N}|\nabla|^{s}f\|_{L^{p}(\R^{d})} &\lesssim_{p,s,d} N^{s}\|P_{\leq N}f\|_{L^{p}(\R^{d})}\\
\|P_{N}|\nabla|^{\pm s}f\|_{L^{p}(\R^{d})} &\lesssim_{p,s,d} N^{\pm s}\|P_{N}f\|_{L^{p}(\R^{d})} \\
\|P_{\leq N}f\|_{L^{q}(\R^{d})} &\lesssim_{p,q,d} N^{d(\frac{1}{p}-\frac{1}{q})} \|P_{\leq N}f\|_{L^{p}(\R^{d})} \\
\|P_{N}f\|_{L^{q}(\R^{d})} &\lesssim_{p,q,d} N^{d(\frac{1}{p}-\frac{1}{q})} \|P_{N}f\|_{L^{p}(\R^{d})}.
\end{align}
\end{prop}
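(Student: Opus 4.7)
The plan is to realize each of the listed operators as convolution against a kernel whose appropriate Lebesgue norm scales correctly in $N$, and then invoke Young's convolution inequality. All of the necessary scaling will come from the basic observation that if $m \in \mathcal{S}(\R^d)$ (or, more generally, has Fourier inverse in $L^r$) and $m_N(\xi) := m(\xi/N)$, then $m_N(D)$ is convolution with $N^d (m^{\vee})(N\cdot)$, whose $L^r$ norm equals $N^{d(1-1/r)} \|m^{\vee}\|_{L^r}$.

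First I would handle the two purely differential bounds (the second and third inequalities). For the third, $|\nabla|^{\pm s} P_N = N^{\pm s} m_N^{\pm}(D)$, where $m^{\pm}(\xi) = |\xi|^{\pm s}[\phi(\xi) - \phi(2\xi)]$ is smooth and compactly supported away from the origin, hence $(m^{\pm})^{\vee} \in \mathcal{S}(\R^d) \subset L^1(\R^d)$, and Young gives the claim. For the second inequality, $|\nabla|^s P_{\leq N} = N^s \tilde m_N(D)$ with $\tilde m(\xi) = |\xi|^s \phi(\xi)$; when $s \in \N_0$ this $\tilde m$ is Schwartz and we conclude as before. For noninteger $s$ the origin singularity of $|\xi|^s$ must be dealt with: one dyadically decomposes $P_{\leq N} = P_0 + \sum_{1 \le j \le \log_2 N} P_j$, applies the third inequality to each homogeneous piece to pick up a factor $2^{js}$, and sums the geometric series. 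Finally, for the first inequality, since $1-\phi(\xi/N)$ is supported in $|\xi| \gtrsim N$, pick $\tilde\phi \in C^\infty(\R^d)$ that vanishes near the origin and equals $1$ on the support of $1-\phi$. Then $P_{>N} = [|\nabla|^{-s}\tilde\phi(D/N)]\cdot|\nabla|^s P_{>N}$, and the symbol $|\xi|^{-s}\tilde\phi(\xi/N) = N^{-s} \cdot (|\cdot|^{-s}\tilde\phi)(\xi/N)$ is Schwartz after rescaling by $N$, so its kernel lies in $L^1(\R^d)$ with norm $\lesssim N^{-s}$; Young finishes the proof.

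For the Bernstein-type inequalities (the fourth and fifth), write $P_{\leq N} f = K_N \ast f$ with $K_N(x) = N^d \phi^{\vee}(Nx)$ and note that $\phi^{\vee} \in \mathcal{S}(\R^d)$ belongs to every $L^r(\R^d)$, $1 \le r \le \infty$. Applying Young's inequality with the exponent $r$ determined by $1 + \tfrac{1}{q} = \tfrac{1}{p} + \tfrac{1}{r}$ yields
\begin{equation}
\|P_{\leq N}f\|_{L^q(\R^d)} \leq \|K_N\|_{L^r(\R^d)} \|f\|_{L^p(\R^d)} = N^{d(1-1/r)}\|\phi^{\vee}\|_{L^r(\R^d)}\|f\|_{L^p(\R^d)},
\end{equation}
and $d(1 - 1/r) = d(1/p - 1/q)$. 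The same argument with $\phi$ replaced by $\phi(\cdot) - \phi(2\cdot)$ gives the fifth inequality. The only mildly subtle point in the entire proof is the noninteger-$s$ case of the second inequality, and this is handled by the dyadic decomposition above.
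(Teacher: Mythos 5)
The paper does not actually prove this proposition; it is stated in the preliminaries and delegated to standard references (Grafakos, Stein, Tao's appendix). Your route is the standard one those references use: realize each operator as convolution with a rescaled kernel, compute the $L^r$ norm of the kernel by scaling, and apply Young's inequality, with a dyadic decomposition to tame the singularity of $|\xi|^s$ at the origin. The fourth and fifth inequalities and the third ($P_N|\nabla|^{\pm s}$, annular symbol, hence genuinely Schwartz) are handled correctly, and the dyadic summation you describe for the second inequality is the right fix for noninteger $s$.

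There is, however, a flaw in your argument for the first inequality as written. Since $1-\phi$ is supported on the unbounded set $\{|\xi|\geq 1\}$ and equals $1$ for $|\xi|>2$, the auxiliary cutoff $\tilde\phi$ cannot be compactly supported, so the rescaled symbol $|\eta|^{-s}\tilde\phi(\eta)$ decays only like $|\eta|^{-s}$ at infinity and is \emph{not} Schwartz; the claim "its kernel lies in $L^1$ because the symbol is Schwartz after rescaling" does not stand as stated. The conclusion is still true for $s>0$, but it needs an argument: either decompose the symbol $|\eta|^{-s}\tilde\phi(\eta)$ into homogeneous dyadic pieces, each contributing kernel $L^1$ norm $\lesssim 2^{-js}$, and sum the geometric series (exactly the device you already use for the second inequality), or more simply write $P_{>N}=\sum_{M>N}P_M$, apply your third inequality with exponent $-s$ to each annular piece together with the uniform $L^p$ boundedness of fattened projectors, and sum $\sum_{M>N}M^{-s}\lesssim N^{-s}$; the case $s=0$ is trivial. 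A smaller point of the same nature: in your dyadic treatment of the second inequality, the lowest block $P_0$ is a ball projector containing the origin, so "applying the third inequality" to it is circular for noninteger $s$; that block needs its own homogeneous decomposition near $\xi=0$ (again summable precisely because $s>0$). With these repairs the proof is complete and matches the standard references the paper cites.
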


\begin{prop}[Littlewood-Paley theorem]\label{prop:LP}
For $1<p<\infty$ and $f\in L^{p}(\R^{d})$,
\begin{equation}
\|f\|_{L^{p}(\R^{d})}\sim_{p,d} \|\left(\sum_{j=0}^{\infty}|P_{j}f|^{2}\right)^{1/2}\|_{L^{p}(\R^{d})} \sim_{p,d} \|\left(\sum_{j\in\mathbb{Z}}|\dot{P}_{j}f|^{2}\right)^{1/2}\|_{L^{p}(\R^{d})}.
\end{equation}
\end{prop}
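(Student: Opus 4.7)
The plan is to prove both the inhomogeneous and homogeneous square function estimates via the classical randomization argument using Khintchine's inequality combined with the H\"{o}rmander-Mikhlin multiplier theorem (Proposition \ref{prop:HM}), followed by duality for the reverse inequality. I will focus on the homogeneous case since the inhomogeneous case is essentially identical (and slightly easier since we do not have to worry about summability near the origin).

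For the upper bound $\|(\sum_{j\in\Z}|\dot{P}_{j}f|^{2})^{1/2}\|_{L^{p}}\lesssim_{p,d}\|f\|_{L^{p}}$, let $\{\varepsilon_{j}\}_{j\in\Z}$ be a sequence of i.i.d. Rademacher random variables on some probability space $(\Omega,\mathbb{P})$. Khintchine's inequality gives, for $1<p<\infty$ and each fixed $x\in\R^{d}$,
\begin{equation}
\Bigl(\sum_{j\in\Z}|\dot{P}_{j}f(x)|^{2}\Bigr)^{p/2} \sim_{p} \mathbb{E}_{\varepsilon}\Bigl|\sum_{j\in\Z}\varepsilon_{j}\dot{P}_{j}f(x)\Bigr|^{p}.
\end{equation}
Integrating in $x$ and swapping the order of integration via Fubini yields
\begin{equation}
\Bigl\|\Bigl(\sum_{j\in\Z}|\dot{P}_{j}f|^{2}\Bigr)^{1/2}\Bigr\|_{L^{p}}^{p} \sim_{p} \mathbb{E}_{\varepsilon}\Bigl\|\sum_{j\in\Z}\varepsilon_{j}\dot{P}_{j}f\Bigr\|_{L^{p}}^{p}.
\end{equation}
For each realization $\varepsilon$, the operator $T_{\varepsilon}f \coloneqq \sum_{j}\varepsilon_{j}\dot{P}_{j}f$ is a Fourier multiplier with symbol $m_{\varepsilon}(\xi)=\sum_{j}\varepsilon_{j}\dot{\psi}_{j}(\xi)$. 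Since the supports of the $\dot{\psi}_{j}$ have bounded overlap (at most two nonzero at each $\xi\neq 0$) and are scaled copies of a fixed bump, we have the symbol bound $|m_{\varepsilon}(\xi)|\leq 2$ and, more generally, the derivative bounds $|\partial^{\alpha}m_{\varepsilon}(\xi)|\lesssim_{|\alpha|}|\xi|^{-|\alpha|}$, all uniformly in $\varepsilon$. By the H\"{o}rmander-Mikhlin theorem (Proposition \ref{prop:HM}), $\|T_{\varepsilon}f\|_{L^{p}}\lesssim_{p,d}\|f\|_{L^{p}}$ uniformly in $\varepsilon$, which upon averaging yields the upper bound.

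For the lower bound $\|f\|_{L^{p}}\lesssim_{p,d}\|(\sum_{j}|\dot{P}_{j}f|^{2})^{1/2}\|_{L^{p}}$, I use duality. Choose a fattened partition $\{\widetilde{\dot{\psi}}_{j}\}_{j\in\Z}$ with $\widetilde{\dot{\psi}}_{j}\dot{\psi}_{j}=\dot{\psi}_{j}$ and corresponding projectors $\widetilde{\dot{P}}_{j}$; the family $\{\widetilde{\dot{\psi}}_{j}\}$ still satisfies the uniform Mikhlin-type bounds from the previous paragraph. For $f\in\mathcal{S}(\R^{d})$ with $\hat{f}$ vanishing in a neighborhood of the origin (a dense subclass of $L^{p}$ after we verify the estimate) and $g\in\mathcal{S}(\R^{d})$, we can write
\begin{equation}
\langle f,g\rangle = \Bigl\langle \sum_{j\in\Z}\dot{P}_{j}f,g\Bigr\rangle = \sum_{j\in\Z}\langle \dot{P}_{j}f,\widetilde{\dot{P}}_{j}g\rangle,
\end{equation}
and pointwise Cauchy-Schwarz in $j$ followed by H\"{o}lder in $x$ gives
\begin{equation}
|\langle f,g\rangle| \leq \Bigl\|\Bigl(\sum_{j}|\dot{P}_{j}f|^{2}\Bigr)^{1/2}\Bigr\|_{L^{p}} \Bigl\|\Bigl(\sum_{j}|\widetilde{\dot{P}}_{j}g|^{2}\Bigr)^{1/2}\Bigr\|_{L^{p'}}.
\end{equation}
Applying the upper bound proved above (with $\widetilde{\dot{P}}_{j}$ in place of $\dot{P}_{j}$ and exponent $p'$), the second factor is $\lesssim_{p,d}\|g\|_{L^{p'}}$, and taking the supremum over $g$ with $\|g\|_{L^{p'}}\leq 1$ yields $\|f\|_{L^{p}}\lesssim_{p,d}\|(\sum_{j}|\dot{P}_{j}f|^{2})^{1/2}\|_{L^{p}}$.

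The main technical nuisance, which I expect to be the only real obstacle, is the convergence of $\sum_{j\in\Z}\dot{P}_{j}f$ used in the duality step: in the homogeneous Littlewood-Paley decomposition, this identity holds in $\mathcal{S}'$ modulo polynomials, but converges in $L^{p}$ only for functions whose frequencies avoid a neighborhood of the origin. I would handle this by first establishing the equivalence on the dense subspace of Schwartz functions with $\hat{f}\in C_{c}^{\infty}(\R^{d}\setminus\{0\})$ and then extending by density to all of $L^{p}$, using the upper square-function bound to control the tails. The inhomogeneous version is entirely analogous, replacing $\{\dot{\psi}_{j}\}_{j\in\Z}$ by $\{\psi_{j}\}_{j\geq 0}$; here $\psi_{0}$ is a genuine bump near the origin so no low-frequency density issue arises and $\sum_{j\geq 0}P_{j}=\mathrm{Id}$ on $\mathcal{S}'(\R^{d})$.
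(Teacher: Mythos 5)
Your proof is correct: the randomization argument via Khintchine's inequality and the H\"{o}rmander--Mikhlin theorem for the upper bound, plus duality with fattened projectors for the lower bound, is the standard proof, and your handling of the low-frequency convergence issue in the homogeneous case (working on the dense subclass with $\hat{f}\in C_{c}^{\infty}(\R^{d}\setminus\{0\})$ and extending by density using the already-proved square-function bound) is the right fix. The paper itself does not prove Proposition \ref{prop:LP} but cites standard references, and the argument given there is essentially the one you wrote, so there is nothing further to reconcile.
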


Since we will need to consider solutions to the eeDS equation which are localized in frequency around some time-dependent center $\xi(t)\in\R^{2}$, we generalize the Littlewood-Paley projectors $P_{j},\dot{P}_{j}$ by defining
\begin{equation}
P_{\xi_{0},j}u \coloneqq e^{ix\cdot\xi_{0}}P_{j}(e^{-i(\cdot)\cdot\xi_{0}}u),\quad \dot{P}_{\xi_{0},j}u \coloneqq e^{ix\cdot\xi_{0}}\dot{P}_{j}(e^{-i(\cdot)\cdot\xi_{0}}u), \qquad \xi_{0}\in\R^{d}.
\end{equation}
Observe that
\begin{equation}
(P_{\xi_{0},j}f)(x) =\int_{\R^{2}}K_{j}(x-y)e^{i(x-y)\cdot\xi_{0}}f(y)dy, \qquad \forall x\in \R^{d}
\end{equation}
and $K_{j}(\cdot)e^{i(\cdot)\cdot\xi_{0}}\in L^{1}(\R^{2})$--analogously for $\dot{P}_{\xi_{0},j}$. It is a straightforward exercise to check that Bernstein's lemma and the Littlewood-Paley theorem also hold (with the same implicit constant) when $P_{j}$ and $\dot{P}_{j}$ are replaced by $P_{\xi_{0},j}$ and $\dot{P}_{\xi_{0},j}$, respectively.

We also need a Littlewood-Paley theory adapted to dyadic cube frequency localization. We specialize to the 2D case. We first construct a suitable $C^{\infty}$ partition of unity adapted to dyadic cubes. Let $\chi$ be a $C^{\infty}$ function satisfying $0\leq \chi\leq 1$ and
\begin{equation}
\chi(\xi) =
\begin{cases}
1, & {\forall \xi\in[-1,1]^{2}}\\ 
0, & {\forall \xi\notin (-2,2)^{2}}
\end{cases}.
\end{equation}
For $a\in\mathbb{Z}^{2}$ and $k\in\mathbb{Z}$, define the localizing function
\begin{equation}
\chi_{a,k}(\xi) \coloneqq \dfrac{\chi(\frac{\xi-2^{k}a}{2^{k}})}{\sum_{b\in\mathbb{Z}^{2}} \chi(\frac{\xi-2^{k}b}{2^{k}})}, \qquad \forall \xi\in\R^{2}.
\end{equation}
It is evident that $\supp(\chi_{a,k})\subset Q_{a}^{k+2}$. Moreover, $\chi_{0,0}\in C_{c}^{\infty}(\R^{2})$, and from scaling and translation invariance, it follows that $\chi_{a,k}\in C_{c}^{\infty}(\R^{2})$ with derivative bounds
\begin{equation}
\sup_{a\in\Z^{2}}\|\nabla^{N}\chi_{a,k}\|_{\infty} \lesssim_{N} 2^{-Nk}, \qquad \forall N\in\mathbb{N}_{0}.
\end{equation}
Next, observe that by dilation and translation/modulation invariance
\begin{equation}
\chi_{a,k}^{\vee}(x) = 2^{2k}e^{i2^{k}a\cdot x}\chi_{0,0}^{\vee}(2^{k}x), \qquad \forall x\in\R^{2}.
\end{equation}
Since $\chi_{0,0} \in C_{c}^{\infty}(\R^{2})$, it follows that $\chi_{0,0}^{\vee}$ is a Schwartz function. Hence, for all $k\in\mathbb{Z}$ and $a\in\mathbb{Z}^{2}$,
\begin{equation}
|\chi_{a,k}^{\vee}(x)| \lesssim_{N} 2^{2k}\langle{2^{k}x}\rangle^{-N}, \qquad \forall x\in\R^{2}, \enspace \forall N\in\N_{0}.
\end{equation}
We use the notation $Q_{a}^{k}$ to denote the cube
\begin{equation}
Q_{a}^{k} \coloneqq 2^{k}a + [-2^{k-1},2^{k-1}]^{2},
\end{equation}
and we define the smooth frequency projector onto $Q_{a}^{k}$, denoted by $\mathcal{P}_{Q_{a}^{k}}$, by
\begin{equation}
\widehat{\mathcal{P}_{Q_{a}^{k}}f}(\xi)=\chi_{a,k}(\xi)\hat{f}(\xi), \qquad \forall \xi\in\R^{2}.
\end{equation}

In the proof of the long-time Strichartz estimate, we will use a sparsification trick for frequency decompositions in order to apply bilinear estimates. To state our lemma, we first need to introduce the notion of a \emph{sparse decomposition}.

\begin{mydef}\label{def:sparse} 
For any $k\in\mathbb{Z}$, we say that a subcollection $\mathcal{C}$ of dyadic cubes $Q_{a}^{k}$ of side length $2^{k}$ is \emph{sparse} if any distinct cubes $Q_{a}^{k}, Q_{a'}^{k}\in\mathcal{C}$ satisfy the distance condition $\dist(Q_{a}^{k}, Q_{a'}^{k})\geq 2^{k+4}$.
\end{mydef}

The advantage of working with a sparse cube decomposition is that (up to a harmless spatial translation) we can shift the frequency projector between factors of a product of two functions.

\begin{lemma}(Shifting trick)\label{lem:sparse} 
Let $\mathcal{C}$ be a collection of dyadic cubes of side length $2^{k}$. Then there exists a constant $C=C(\mathcal{C})>0$, such that for all $k\in\mathbb{Z}$ and $v,w \in L_{t,x}^{4}(I\times\R^{2})$,
\begin{equation}
\paren*{\sum_{a\in\mathcal{C}} \|v(\P_{Q_{a}^{k}}w)\|_{L_{t,x}^{2}(I\times\R^{2})}^{2}}^{1/2} \leq C\sup_{y\in\R^{2}} \paren*{\sum_{a\in\Z^{2}} \|(\P_{Q_{a}^{k}}v)(\tau_{y}w)\|_{L_{t,x}^{2}(I\times\R^{2})}^{2}}^{1/2}.
\end{equation}
\end{lemma}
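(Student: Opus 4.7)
The plan is to unfold the Schwartz kernel of the cube projector $\mathcal{P}_{Q_{a}^{k}}$ explicitly and then exploit a Poisson summation identity in the cube index $a\in\Z^2$ in order to convert the projection on $w$ into an average against translates of $w$. Using the kernel formula $\chi_{a,k}^{\vee}(y)=2^{2k}e^{i2^{k}a\cdot y}\phi(2^{k}y)$ with $\phi := \chi_{0,0}^{\vee}\in\mathcal{S}(\R^{2})$ established in the preceding discussion, after the change of variable $\omega = 2^{k}y$,
\begin{equation*}
v(x)(\mathcal{P}_{Q_{a}^{k}}w)(x) = \int_{\R^{2}}\phi(\omega)\,e^{ia\cdot\omega}\,v(x)\,w(x - 2^{-k}\omega)\,d\omega = \widehat{F_{(t,x)}}(-a),
\end{equation*}
where $F_{(t,x)}(\omega) := \phi(\omega)v(t,x)w(t,x - 2^{-k}\omega)$. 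Since every term in the sum over $\mathcal{C}$ is nonnegative, the LHS of the lemma is dominated by $\sum_{a\in\Z^{2}}|\widehat{F_{(t,x)}}(-a)|^{2}$, which is a sum of squared lattice values of a Fourier transform, amenable to Plancherel-Poisson analysis.

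Applying the Poisson summation identity to the autocorrelation $F\ast F^{\sharp}$, where $F^{\sharp}(\omega) := \overline{F(-\omega)}$, followed by integration in $(t,x)$ and reverting to $y = 2^{-k}\omega$, expresses the LHS of the lemma as
\begin{equation*}
(2\pi)^{2}\sum_{n\in\Z^{2}}\int_{\R^{2}}\Phi_{n}(y)\,\langle v\tau_{y}w,\,v\tau_{y-y_{n}}w\rangle_{L_{t,x}^{2}}\,dy,
\end{equation*}
where $\Phi_{n}(y) := 2^{2k}\phi(2^{k}y)\overline{\phi(2^{k}y - 2\pi n)}$ and $y_{n} := 2\pi 2^{-k}n$. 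Since $\phi$ is Schwartz, the family $\{\Phi_{n}\}$ enjoys the uniform-in-$k$ summability $\sum_{n}\|\Phi_{n}\|_{L^{1}(\R^{2})}\lesssim 1$. Combining this with Cauchy-Schwarz on each inner product then yields the intermediate bound
\begin{equation*}
\sum_{a\in\mathcal{C}}\|v(\mathcal{P}_{Q_{a}^{k}}w)\|_{L_{t,x}^{2}}^{2}\lesssim \sup_{y\in\R^{2}}\|v\cdot\tau_{y}w\|_{L_{t,x}^{2}}^{2}.
\end{equation*}

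To complete the proof, we dominate this last supremum by the RHS of the lemma by running the symmetric Poisson-Bessel analysis with $v$ in place of $w$. This yields the pointwise lower bound $\sum_{a\in\Z^{2}}|(\mathcal{P}_{Q_{a}^{k}}v)(t,x)|^{2}\gtrsim (\Psi_{(k)}\ast|v|^{2})(t,x)$ for a nonnegative Schwartz weight $\Psi_{(k)}$ of unit $L^{1}$ mass at scale $2^{-k}$. Multiplying by $|\tau_{y}w|^{2}$, integrating in $x$, and invoking Fubini converts $\|v\tau_{y}w\|_{L_{t,x}^{2}}^{2}$ into an $L^{1}$-normalized integral of $\sum_{a}\|(\mathcal{P}_{Q_{a}^{k}}v)\tau_{y'}w\|_{L_{t,x}^{2}}^{2}$ over $y'$, which is controlled by the supremum appearing on the RHS of the lemma. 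I expect the main technical delicacy to be the careful bookkeeping of the off-diagonal $n\neq 0$ terms in the Poisson summation, but the rapid Schwartz decay of $\phi$ makes these contributions absorbable uniformly in $k$.
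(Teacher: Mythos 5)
Your first step is fine: the Poisson--Parseval (Bessel) identity does give $\sum_{a\in\Z^{2}}\|v\,\P_{Q_{a}^{k}}w\|_{L_{t,x}^{2}}^{2}\lesssim \sup_{y}\|v\,\tau_{y}w\|_{L_{t,x}^{2}}^{2}$ uniformly in $k$. But this reduction is too lossy, and the second half of your plan is where the proof breaks: the intermediate quantity $\sup_{y}\|v\,\tau_{y}w\|_{L_{t,x}^{2}}$ is \emph{not} controlled by the right-hand side of the lemma. Concretely, take $v(x)=\sum_{a\in\{0,\dots,N-1\}^{2}}e^{i2^{k}a\cdot x}g(x)$ with $\hat{g}$ supported in a small ball around the origin, so that $|v(0)|\sim N^{2}\,|g(0)|$ while each $\P_{Q_{a'}^{k}}v$ is a bounded combination of $O(1)$ modulated copies of $g$, whence $\sum_{a}|\P_{Q_{a}^{k}}v(x)|^{2}\lesssim N^{2}|g(x)|^{2}$; and let $|w|^{2}$ be an $L^{1}$-normalized bump of width $\epsilon\ll 2^{-k}/N$. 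Then $\sup_{y}\|v\,\tau_{y}w\|_{L^{2}}^{2}\gtrsim |v(0)|^{2}\sim N^{4}$, whereas $\sup_{y}\sum_{a}\|(\P_{Q_{a}^{k}}v)\tau_{y}w\|_{L^{2}}^{2}\lesssim \|\sum_{a}|\P_{Q_{a}^{k}}v|^{2}\|_{L^{\infty}}\lesssim N^{2}$, so the inequality you need fails by an unbounded factor. (The lemma itself is not contradicted: for such a narrow $w$ the projections $\P_{Q_{a}^{k}}w$ on its left-hand side are tiny, and that is exactly the information your first step throws away.) Relatedly, the pointwise bound $\sum_{a}|(\P_{Q_{a}^{k}}v)(t,x)|^{2}\gtrsim(\Psi_{(k)}\ast|v|^{2})(t,x)$ you invoke is false: if $\hat{v}$ sits well inside a single cube then $\sum_{a}|\P_{Q_{a}^{k}}v(x)|^{2}\sim|v(x)|^{2}$, which vanishes at a zero of $v$ while the local average of $|v|^{2}$ does not; the off-diagonal $n\neq 0$ terms in your Poisson expansion carry genuine oscillation and cannot be absorbed pointwise, nor does averaging in $y'$ recover the single translate $\|v\,\tau_{y}w\|$ you must bound.

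The fix is to transfer the $\ell^{2}$-in-$a$ structure from the projections of $w$ to the projections of $v$ \emph{before} giving up the cube localization of $w$, which is what the paper's argument does: split $\mathcal{C}$ into $O(1)$ sparse subfamilies; write $v\,\P_{Q_{a}^{k}}w=\sum_{a''}\sum_{a'}\P_{Q_{a''}^{k}}\bigl[(\P_{Q_{a'}^{k}}v)(\P_{Q_{a}^{k}}w)\bigr]$; observe that by sparsity, for each fixed pair $(a',a'')$ at most one $a\in\mathcal{C}$ contributes, so the sum over $a\in\mathcal{C}$ can be re-indexed by $(a',a'')$; then use Minkowski's inequality with the rapidly decaying kernel of $\P_{Q_{a}^{k}}$ to replace $\P_{Q_{a}^{k}}w$ by translates $\tau_{y}w$ inside the outer projection, and conclude with Plancherel and almost orthogonality in $a''$, which leaves exactly $\sup_{y}(\sum_{a'}\|(\P_{Q_{a'}^{k}}v)\tau_{y}w\|_{L_{t,x}^{2}}^{2})^{1/2}$. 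At no point does the unlocalized quantity $\sup_{y}\|v\,\tau_{y}w\|_{L^{2}}$ appear, and as the example above shows, it cannot.
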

\begin{proof}
We first partition $\mathcal{C}$ into $O(1)$ sparse subcollections $\mathcal{C}_{1},\ldots,\mathcal{C}_{N}$, where $N$ is independent of $k\in\mathbb{Z}$. To do this, we argue as follows. Partition (up to measure zero overlap) $\R^{2}$ into dyadic cubes $\{Q_{a}^{k+10}\}_{a\in\mathbb{Z}}$ of side length $2^{k+10}$. Now each cube $Q_{a}^{k+10}$ contains exactly $2^{20}$ dyadic children $Q_{a'}^{k}$ of side length $2^{k}$. Moreover,
\begin{equation}
Q_{a'}^{k}\subset Q_{a}^{k+10} \Rightarrow 2^{k}a'= 2^{k+10}a+2^{k}(a'-2^{10}a), \qquad (a'-2^{10}a)\in \{0,\ldots,2^{10}-1\}^{2}.
\end{equation}
We now partition $\mathcal{C}$ into $2^{20}$ subgroups $\mathcal{C}_{i,j}$ for $(i,j)\in\{0,\ldots,2^{10}-1\}^{2}$ by the assignment
\begin{equation}
Q_{a'}^{k}\in \mathcal{C}_{i,j} \Leftrightarrow (a'-2^{10}a)=(i,j).
\end{equation}
We claim that each subcollection $\mathcal{C}_{i,j}$ is sparse. Indeed, if $Q_{a}^{k}, Q_{a'}^{k}\in\mathcal{C}_{i,j}$, where $a\neq a'$, then by construction there exist unique dyadic parents $Q_{b}^{k+10}\supset Q_{a}^{k}$ and $Q_{b'}^{k+10}\supset Q_{a'}^{k}$, where $b\neq b'$ and
\begin{equation}
(a-2^{10}b)=(a'-2^{10}b') \Leftrightarrow a-a'=2^{10}(b-b'),
\end{equation}
which implies that $|a-a'| \geq 2^{10}$.

Now without loss of generality, we may suppose that $\mathcal{C}$ is sparse. We now perform two further frequency decompositions into cubes $\{Q_{a'}^{k}\}_{a'\in\Z^{2}}$ and $\{Q_{a''}^{k}\}_{a''\in\Z^{2}}$ of side length $2^{k}$ as follows:
\begin{align}
\paren*{\sum_{a\in\mathcal{C}} \|v(\P_{Q_{a}^{k}}u)\|_{L_{t,x}(I\times\R^{2})}^{2}}^{1/2} &= \paren*{\sum_{a\in\mathcal{C}}\sum_{a''\in\Z^{2}}\sum_{a'\in\Z^{2}} \|\P_{Q_{a''}^{k}}[(\P_{Q_{a'}^{k}}v)(\P_{Q_{a}^{k}}u)]\|_{L_{t,x}^{2}(I\times\R^{2})}^{2}}^{1/2} \nonumber\\
&= \paren*{\sum_{a''\in\Z^{2}}\sum_{a'\in\Z^{2}}\sum_{a\in\mathcal{C}}\|\P_{Q_{a''}^{k}}[(\P_{Q_{a'}^{k}}v)(\P_{Q_{a}^{k}}u)]\|_{L_{t,x}^{2}(I\times\R^{2})}^{2}}^{1/2}.\label{eq:sparse_sum} 
\end{align}
We claim that for each pair $(a',a'')\in\Z^{2}\times\Z^{2}$ fixed, there is at most one $a=a(a',a'')\in\mathcal{C}$ in the summation such that
\begin{equation}
\P_{Q_{a''}^{k}}[(\P_{Q_{a'}^{k}}v)(\P_{Q_{a}^{k}}u)] \neq 0.
\end{equation}
Indeed, for any $\eta_{1},\eta_{2}\in Q_{a'}^{k}$ and $\xi_{1}\in Q_{a_{1}}^{k}$, $\xi_{2}\in Q_{a_{2}}^{k}$, where $a_{1}\neq a_{2}$, the reverse triangle inequality, together with the fact that $\mathcal{C}$ is sparse, implies that
\begin{equation}
|(\xi_{1}+\eta_{1})-(\xi_{2}+\eta_{2})| \geq |\xi_{1}-\xi_{2}| - |\eta_{1}-\eta_{2}| \geq 2^{k+4}-2^{k+\frac{1}{2}} > 2^{k+\frac{1}{2}} = \diam(Q_{a''}^{k}).
\end{equation}
Now by the Fubini-Tonelli theorem and Minkowski's inequality,
\begin{align}
\|\P_{Q_{a''}^{k}}[(\P_{Q_{a'}^{k}}v)(\P_{Q_{a}^{k}}u)]\|_{L_{t,x}^{2}(I\times\R^{2})}^{2} &= \int_{I}\int_{\R^{2}} \left|\int_{\R^{2}} K_{Q_{a''}^{k}}(z)(\P_{Q_{a'}^{k}}v)(t,x-z)(\P_{Q_{a}^{k}}u)(t,x-z)dz\right|^{2}dxdt \nonumber\\
&=\int_{I}\int_{\R^{2}}\left|\int_{\R^{2}} K_{Q_{a''}^{k}}(z)(\P_{Q_{a'}^{k}}v)(t,x-z)\paren*{\int_{\R^{2}}K_{Q_{a}^{k}}(y)u(t,x-y-z)dy}dz\right|^{2}dxdt \nonumber\\
&\leq\paren*{\int_{\R^{2}} |K_{Q_{a}^{k}}(y)| \paren*{\int_{I}\int_{\R^{2}} \left| \int_{\R^{2}} K_{Q_{a''}^{k}}(z) (\P_{Q_{a'}^{k}}v)(t,x-z)u(t,x-y-z)dz\right|^{2}dxdt}^{1/2}dy}^{2} \nonumber\\
&\lesssim \paren*{\int_{\R^{2}}2^{2k}\jp{2^{k}y}^{-10} \|\P_{Q_{a''}^{k}}[(\P_{Q_{a'}^{k}}v)(\tau_{y}u)]\|_{L_{t,x}^{2}(I\times\R^{2})} dy}^{2}.
\end{align}
Substituting this last estimate into \eqref{eq:sparse_sum}, we obtain that
\begin{align}
\eqref{eq:sparse_sum} &\lesssim \paren*{\sum_{a''\in\Z^{2}}\sum_{a'\in\Z^{2}} \paren*{\int_{\R^{2}} 2^{2k}\jp{2^{k}y}^{-10} \|\P_{Q_{a''}^{k}}[(\P_{Q_{a'}^{k}}v)(\tau_{y}u)]\|_{L_{t,x}^{2}(I\times\R^{2})}dy}^{2}}^{1/2} \nonumber\\
&\leq \int_{\R^{2}}2^{2k}\jp{2^{k}y}^{-10} \paren*{\sum_{a'\in\Z^{2}}\sum_{a''\in\Z^{2}} \|\P_{Q_{a''}^{k}}[(\P_{Q_{a'}^{k}}v)(\tau_{y}u)]\|_{L_{t,x}^{2}(I\times\R^{2})}^{2}}^{1/2}dy,
\end{align}
where we use the embedding $L_{y}^{1}\ell_{a',a''}^{2}\subset \ell_{a',a''}^{2}L_{y}^{1}$ to obtain the ultimate inequality. By Plancherel's theorem and almost orthogonality of the projectors $\P_{Q_{a}^{k}}$, we see that
\begin{align}
 \int_{\R^{2}}2^{2k}\jp{2^{k}y}^{-10} \paren*{\sum_{a'\in\Z^{2}}\sum_{a''\in\Z^{2}} \|\P_{Q_{a''}^{k}}[(\P_{Q_{a'}^{k}}v)(\tau_{y}u)]\|_{L_{t,x}^{2}(I\times\R^{2})}^{2}}^{1/2}dy &\lesssim \int_{\R^{2}}2^{2k}\jp{2^{k}y}^{-10} \paren*{\sum_{a'\in\Z^{2}} \|(\P_{Q_{a'}^{k}}v)(\tau_{y}u)\|_{L_{t,x}^{2}(I\times\R^{2})}^{2}}^{1/2}dy \nonumber\\
&\lesssim \sup_{y\in\R^{2}} \|(\P_{Q_{a}^{k}}v)(\tau_{y}u)\|_{\ell_{a}^{2}L_{t,x}^{2}(\mathbb{Z}^{2}\times I\times\R^{2})}.
\end{align}
\end{proof}

\subsection{Linear and bilinear estimates}\label{ssec:prelim_lbe}
In this subsection, we record some linear and bilinear estimates all of which are by now standard in the dispersive literature. We have restricted to the $d=2$ case, although we remark that the estimates below hold in arbitrary dimension \emph{mutatis mutandis}.

\begin{prop}[Strichartz estimates, \cite{strichartz1977}, \cite{Yajima1987}, \cite{Ginibre1992}, \cite{Keel1998}]\label{prop:ls}
A pair $(p,q)$ of real numbers is \emph{admissible} (for $d=2$) if $2<p\leq\infty$ and $\frac{2}{p}=2(\frac{1}{2}-\frac{1}{q})$. If $u: I\times\R^{2}\rightarrow\mathbb{C}$ is a solution to the Cauchy problem
\begin{equation}
\begin{cases}
(i\partial_{t}+\Delta)u=F(t), & (t,x)\in I\times\R^{2} \\
u(0)=u_{0}, & x\in \R^{2} 
\end{cases},
\end{equation}
then for all admissible pairs $(p,q)$ and $(\tilde{p},\tilde{q})$, we have that
\begin{equation}
	\|u\|_{L_{t}^{p}L_{x}^{q}(I\times\R^{2})} \lesssim_{p,q,\tilde{p},\tilde{q}} \|u_{0}\|_{L^{2}(\R^{2})}+\|F\|_{L_{t}^{\tilde{p}'}L_{x}^{\tilde{q}'}(I\times\R^{2})},
\end{equation}
where $\tilde{p}'$ and $\tilde{q}'$ denote the H\"{o}lder conjugates of $\tilde{p}$ and $\tilde{q}$, respectively.
\end{prop}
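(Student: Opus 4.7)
The plan is to carry out the classical $TT^{*}$ argument of Ginibre--Velo and Keel--Tao, in the non-endpoint range which is the only range required here since the proposition restricts to $2 < p \leq \infty$. This avoids having to confront the failure of the $L_{t}^{2}L_{x}^{\infty}$ endpoint discussed elsewhere in the introduction.

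First, I would establish the kernel-level dispersive estimate
\begin{equation*}
\|e^{it\Delta}f\|_{L_{x}^{\infty}(\R^{2})} \lesssim |t|^{-1}\|f\|_{L_{x}^{1}(\R^{2})}, \qquad t\neq 0,
\end{equation*}
which comes directly from the explicit representation $e^{it\Delta}f(x)=\tfrac{1}{4\pi it}\int_{\R^{2}}e^{i|x-y|^{2}/4t}f(y)\,dy$. Interpolating this with the unitarity $\|e^{it\Delta}f\|_{L_{x}^{2}} = \|f\|_{L_{x}^{2}}$ yields
\begin{equation*}
\|e^{it\Delta}f\|_{L_{x}^{q}(\R^{2})} \lesssim |t|^{-(1-2/q)}\|f\|_{L_{x}^{q'}(\R^{2})}, \qquad 2 \leq q \leq \infty.
\end{equation*}

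Second, I would prove the homogeneous estimate $\|e^{it\Delta}u_{0}\|_{L_{t}^{p}L_{x}^{q}}\lesssim \|u_{0}\|_{L^{2}}$ by the $TT^{*}$ method. Setting $Tu_{0}(t)\coloneqq e^{it\Delta}u_{0}$, the desired bound $T:L_{x}^{2}\to L_{t}^{p}L_{x}^{q}$ is equivalent by duality to the bound
\begin{equation*}
\left\|\int_{\R}e^{i(t-s)\Delta}F(s)\,ds\right\|_{L_{t}^{p}L_{x}^{q}} \lesssim \|F\|_{L_{t}^{p'}L_{x}^{q'}}.
\end{equation*}
Applying the dispersive estimate inside the integral reduces this to a fractional integration bound
\begin{equation*}
\left\|\int_{\R}|t-s|^{-(1-2/q)}\|F(s)\|_{L_{x}^{q'}}\,ds\right\|_{L_{t}^{p}} \lesssim \|F\|_{L_{t}^{p'}L_{x}^{q'}},
\end{equation*}
which is precisely the Hardy--Littlewood--Sobolev inequality (Proposition~\ref{prop:HLS}) provided the scaling identity $\tfrac{1}{p'}-\tfrac{1}{p}=1-(1-\tfrac{2}{q}) = \tfrac{2}{q}$ holds, i.e.\ $\tfrac{2}{p} = 2\paren*{\tfrac{1}{2}-\tfrac{1}{q}}$. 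The HLS hypotheses $1<p'<p<\infty$ together with $0 < 1-2/q < 1$ are exactly what require the restriction $2<p\leq\infty$. The case $p=\infty$, $q=2$ is the trivial $L^{2}$ isometry.

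Third, for the inhomogeneous piece, the energy estimate applied to the Duhamel formula gives
\begin{equation*}
u(t) = e^{it\Delta}u_{0} - i\int_{0}^{t}e^{i(t-s)\Delta}F(s)\,ds,
\end{equation*}
and the homogeneous estimate combined with its dual (obtained by the same $TT^{*}$ argument with $(p,q)$ and $(\tilde{p},\tilde{q})$ playing asymmetric roles) handles the full (i.e.\ non-retarded) integral $\int_{\R}e^{i(t-s)\Delta}F(s)\,ds$ with norm $\|F\|_{L_{t}^{\tilde{p}'}L_{x}^{\tilde{q}'}}$. To convert this into a bound on the retarded integral $\int_{0}^{t}$, I would invoke the Christ--Kiselev lemma, which applies precisely because $p > \tilde{p}'$ whenever both $p,\tilde{p}>2$.

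The main obstacle here is essentially bookkeeping: verifying that every application of HLS and Christ--Kiselev stays strictly inside its hypothesis range, which is why the endpoint $p=2$ is explicitly excluded from the statement. No new ideas are needed; the result is entirely classical and I would cite the works listed in the proposition statement for the sharpest forms of these arguments.
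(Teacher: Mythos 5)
Your proposal is correct: the paper states Proposition~\ref{prop:ls} without proof, deferring to the cited classical references, and your $TT^{*}$ argument (dispersive estimate, Hardy--Littlewood--Sobolev for the homogeneous bound, duality for the untruncated inhomogeneous integral, and Christ--Kiselev with $\tilde{p}'<2<p$ for the retarded one) is exactly the standard proof given there. The exponent bookkeeping is right, and restricting to $2<p\leq\infty$ keeps every step strictly non-endpoint, so nothing further is needed.
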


\begin{prop}[Bilinear Strichartz estimate, \cite{Bourgain1998}]\label{prop:bs_B} 
Let $u_{0},v_{0}\in L^{2}(\R^{2})$. If $u_{0}$ and $v_{0}$ have Fourier supports in the annuli $|\xi|\sim N$ and $|\xi|\sim M$, where $M\ll N$, respectively, then for any $p\in [2,\infty]$,
\begin{equation}
\|(e^{it\Delta}u_{0})(e^{it\Delta}v_{0})\|_{L_{t}^{p}L_{x}^{p'}(\R\times\R^{2})} \lesssim_{p} \left(\frac{M}{N}\right)^{1/p}\|u_{0}\|_{L^{2}(\R^{2})}\|v_{0}\|_{L^{2}(\R^{2})}
\end{equation}
\end{prop}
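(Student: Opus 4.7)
My plan is to reduce the proof to the two endpoints $p=2$ and $p=\infty$ and then interpolate. The $p=\infty$ case (i.e.\ $p'=1$) is immediate: by H\"older's inequality and unitarity of $e^{it\Delta}$ on $L^{2}(\R^{2})$,
\begin{equation}
\|(e^{it\Delta}u_{0})(e^{it\Delta}v_{0})\|_{L_{t}^{\infty}L_{x}^{1}} \leq \sup_{t\in\R}\|e^{it\Delta}u_{0}\|_{L_{x}^{2}}\|e^{it\Delta}v_{0}\|_{L_{x}^{2}} = \|u_{0}\|_{L^{2}}\|v_{0}\|_{L^{2}},
\end{equation}
which matches the claim since $(M/N)^{1/\infty}=1$. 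The substantive content is the $p=2$ endpoint, which I would prove via the spacetime Fourier transform and the co-area formula; the desired bound for intermediate $p$ then follows by Riesz-Thorin complex interpolation applied to the bilinear map $(u_{0},v_{0})\mapsto (e^{it\Delta}u_{0})(e^{it\Delta}v_{0})$ with these two endpoints (with dependence on the frequency supports tracked by treating the restriction $|\xi|\sim N$, $|\eta|\sim M$ as a fixed bilinear restriction).

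For the $p=2$ endpoint, the strategy is as follows. Writing out $e^{it\Delta}u_{0}$ and $e^{it\Delta}v_{0}$ via Fourier inversion and changing variables $\zeta=\xi+\eta$, one sees that the spacetime Fourier transform $\wh{f}(\tau,\zeta)$ of $f(t,x):=(e^{it\Delta}u_{0})(e^{it\Delta}v_{0})$ is, up to a constant, the surface integral
\begin{equation}
\wh{f}(\tau,\zeta) = \int_{\R^{2}} \wh{u}_{0}(\zeta-\eta)\wh{v}_{0}(\eta)\, \delta\bigl(\tau - |\zeta-\eta|^{2}-|\eta|^{2}\bigr)d\eta,
\end{equation}
supported on the circle $\Sigma_{\tau,\zeta}=\{\eta: 2|\eta-\zeta/2|^{2}=\tau-|\zeta|^{2}/2\}$ in $\eta$-space. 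The gradient of the phase is $|\nabla_{\eta}(|\zeta-\eta|^{2}+|\eta|^{2})|=4|\eta-\zeta/2|$, and it is the interaction between the frequency supports and the geometry of $\Sigma_{\tau,\zeta}$ that produces the gain $M/N$.

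The key geometric observation is that when $|\zeta-\eta|\sim N$ and $|\eta|\sim M\ll N$, one has $|\zeta|\sim N$, so $\Sigma_{\tau,\zeta}$ is a circle of radius $\sim N$ centered at $\zeta/2$, and its intersection with the support ball $\{|\eta|\lesssim M\}$ is an arc of length $\lesssim M$. Applying Cauchy-Schwarz on $\Sigma_{\tau,\zeta}$ with weight $|\eta-\zeta/2|^{-1}$ against this restricted arc gives
\begin{equation}
|\wh{f}(\tau,\zeta)|^{2} \lesssim \frac{M}{N}\int_{\Sigma_{\tau,\zeta}} |\wh{u}_{0}(\zeta-\eta)\wh{v}_{0}(\eta)|^{2}\frac{d\mathcal{H}^{1}(\eta)}{|\eta-\zeta/2|}.
\end{equation}
Integrating in $(\tau,\zeta)$ and applying the co-area formula (the circles $\Sigma_{\tau,\zeta}$, as $\tau$ varies, foliate $\R^{2}$ with Jacobian $|\nabla_{\eta}\tau|=4|\eta-\zeta/2|$, which cancels the weight above) converts the surface integral back to a Lebesgue integral on $\R^{2}_{\eta}\times\R^{2}_{\zeta}$. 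Finally, the change of variables $(\zeta,\eta)\mapsto(\zeta-\eta,\eta)$ and Plancherel yield
\begin{equation}
\|f\|_{L_{t,x}^{2}}^{2} = \|\wh{f}\|_{L^{2}_{\tau,\zeta}}^{2} \lesssim \frac{M}{N}\|u_{0}\|_{L^{2}}^{2}\|v_{0}\|_{L^{2}}^{2},
\end{equation}
which is the endpoint bound. The main obstacle is purely technical bookkeeping in this Cauchy-Schwarz step: one must carefully use the frequency supports of $u_{0},v_{0}$ to restrict the surface integral to an arc of length $\sim M$ on a circle of radius $\sim N$, as discarding this restriction and applying Cauchy-Schwarz on all of $\Sigma_{\tau,\zeta}$ loses the crucial factor $M/N$. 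Everything else (the $p=\infty$ endpoint and the interpolation) is routine.
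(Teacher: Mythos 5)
Your proposal is correct, but note that the paper does not actually prove this proposition: it is quoted from \cite{Bourgain1998} and used as a black box (the only proof in the paper touching it is that of the corollary on cube-localized data, which reduces to this statement by Galilean invariance and, like you, disposes of $2<p<\infty$ by interpolating against the trivial $L_{t}^{\infty}L_{x}^{1}$ bound). Your sketch is the standard argument for the cited result, and the substantive steps are sound: on the support of the integrand one has $|\eta-\zeta/2|\sim N$ (so the weight $|\nabla_{\eta}(|\zeta-\eta|^{2}+|\eta|^{2})|^{-1}\sim N^{-1}$), the circle $\Sigma_{\tau,\zeta}$ has radius $\sim N$ and meets the ball $\{|\eta|\lesssim M\}$ in an arc of length $\lesssim M$ (chord length $\lesssim M$ plus bounded curvature), Cauchy--Schwarz then yields the factor $M/N$, and the co-area formula in $\tau$ cancels the remaining weight so that Plancherel closes the $p=2$ endpoint. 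Two small remarks. First, the appeal to Riesz--Thorin for the bilinear map is heavier than needed: since the two endpoint estimates have the same domain $L^{2}\times L^{2}$ and only the target changes, it suffices to apply log-convexity of mixed norms to the fixed function $F=(e^{it\Delta}u_{0})(e^{it\Delta}v_{0})$, namely $\|F\|_{L_{t}^{p}L_{x}^{p'}}\leq \|F\|_{L_{t,x}^{2}}^{2/p}\|F\|_{L_{t}^{\infty}L_{x}^{1}}^{1-2/p}$, which follows from H\"older in $x$ and then in $t$ and gives exactly $(M/N)^{1/p}$. Second, the identity for $\wh{f}(\tau,\zeta)$ as a delta-measure integral should be understood either for Schwartz data with a limiting argument or by duality, but this is routine and does not affect the structure of the proof.
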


By Galilean invariance of the free Schr\"{o}dinger equation, we obtain the following cube-localized version of the preceding bilinear estimate. See also the proof of proposition \ref{prop:ibs_1} for a way of proving the corollary directly.
\begin{cor}\label{prop:cube_BS} 
Let $0<c_{1}<c_{2}$, and let $\delta \in (0,1)$. Let $\xi_{0}\in\R^{2}$, and let $Q_{\xi_{0}}$ be a cube centered at $\xi_{0}$ of side length $l(Q_{\xi_{0}})=M$. Let $u_{0},v_{0}\in L^{2}(\R^{2})$. 
\begin{enumerate}[(i)]
\item\label{item:cube_BS_1}
If $u_{0}$ has Fourier support in the annulus $A(0,c_{1}N,c_{2}N)$ and $v_{0}$ has Fourier support in the cube $Q_{\xi_{0}} \subset B(0,(1-\delta)c_{1}N)$, where $M\ll N$, then for any $p\in [2,\infty]$,
\begin{equation}
\|(e^{it\Delta}u_{0})(e^{it\Delta}v_{0})\|_{L_{t}^{p}L_{x}^{p'}(\R\times\R^{2})}\lesssim_{p,c_{1},c_{2},\delta} \left(\frac{M}{N}\right)^{1/p}\|u_{0}\|_{L^{2}(\R^{2})}\|v_{0}\|_{L^{2}(\R^{2})}
\end{equation}
\item\label{item:cube_BS_2}
If $u_{0}$ has Fourier support in the cube $Q_{\xi_{0}} \subset A(0,c_{1}N,c_{2}N)$ and $v_{0}$ has Fourier support in the ball $B(0,(1-\delta)c_{1}N)$, where $M\ll N$, then for any $p\in [2,\infty]$,
\begin{equation}
\|(e^{it\Delta}u_{0})(e^{it\Delta}v_{0})\|_{L_{t}^{p}L_{x}^{p'}(\R\times\R^{2})} \lesssim_{p,c_{1},c_{2},\delta} \paren*{\frac{M}{N}}^{1/p} \|u_{0}\|_{L^{2}(\R^{2})} \|v_{0}\|_{L^{2}(\R^{2})}.
\end{equation}
\end{enumerate}
\end{cor}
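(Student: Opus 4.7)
The plan is to reduce both assertions to proposition \ref{prop:bs_B} via a Galilean boost that re-centers the cube $Q_{\xi_0}$ at the origin, followed by a short dyadic decomposition to convert ``Fourier support in a ball'' into ``Fourier support in an annulus'' (which is what Bourgain's estimate requires).

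First I would set $w_0(x) := e^{-ix\cdot\xi_0}u_0(x)$ and $\tilde v_0(x) := e^{-ix\cdot\xi_0}v_0(x)$, so that $\hat w_0(\xi) = \hat u_0(\xi+\xi_0)$ and $\hat{\tilde v}_0(\xi) = \hat v_0(\xi+\xi_0)$. The identity
\[
e^{it\Delta}\bigl(e^{i\xi_0\cdot(\cdot)}f\bigr)(x) = e^{ix\cdot\xi_0}e^{-it|\xi_0|^2}(e^{it\Delta}f)(x-2\xi_0 t)
\]
gives $|(e^{it\Delta}u_0)(x)(e^{it\Delta}v_0)(x)| = |(e^{it\Delta}w_0)(x-2\xi_0 t)(e^{it\Delta}\tilde v_0)(x-2\xi_0 t)|$, so the translation invariance of the spatial $L^{p'}$ norm yields
\[
\|(e^{it\Delta}u_0)(e^{it\Delta}v_0)\|_{L_t^pL_x^{p'}} = \|(e^{it\Delta}w_0)(e^{it\Delta}\tilde v_0)\|_{L_t^pL_x^{p'}}.
\]
It therefore suffices to bound the right-hand side in each case.

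For assertion \ref{item:cube_BS_1}, the hypothesis $Q_{\xi_0}\subset B(0,(1-\delta)c_1 N)$ forces $|\xi_0|\leq(1-\delta)c_1N$. The reverse triangle inequality then places $\supp\hat w_0$ inside the annulus $\{\delta c_1 N\leq|\xi|\leq(c_2+(1-\delta)c_1)N\}$, which is covered by $O_{c_1,c_2,\delta}(1)$ dyadic shells $\{|\xi|\sim\tilde N\}$ with each $\tilde N\sim_{c_1,c_2,\delta}N$; meanwhile $\supp\hat{\tilde v}_0$ is the origin-centered cube of side $M$, contained in $B(0,M)$. Decomposing $w_0=\sum_{\tilde N}P_{\tilde N}w_0$ (finitely many terms) and $\tilde v_0=\sum_{\tilde M\lesssim M}P_{\tilde M}\tilde v_0$ (dyadic) and applying proposition \ref{prop:bs_B} pair by pair,
\[
\|(e^{it\Delta}P_{\tilde N}w_0)(e^{it\Delta}P_{\tilde M}\tilde v_0)\|_{L_t^pL_x^{p'}}\lesssim_p (\tilde M/\tilde N)^{1/p}\|P_{\tilde N}w_0\|_{L^2}\|P_{\tilde M}\tilde v_0\|_{L^2}.
\]
I would sum over the $O(1)$-indexed $\tilde N$ by the triangle inequality and over the $\tilde M$ by Cauchy--Schwarz, invoking the geometric bound $\sum_{\tilde M\lesssim M}\tilde M^{2/p}\lesssim_p M^{2/p}$ (for $p<\infty$) and the Littlewood--Paley orthogonality $\sum_{\tilde M}\|P_{\tilde M}\tilde v_0\|_{L^2}^2\lesssim\|v_0\|_{L^2}^2$; the endpoint $p=\infty$ is handled trivially by mass conservation, $\|(e^{it\Delta}u_0)(e^{it\Delta}v_0)\|_{L_t^\infty L_x^1}\leq\|u_0\|_{L^2}\|v_0\|_{L^2}$. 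Combining these gives the claimed $(M/N)^{1/p}\|u_0\|_{L^2}\|v_0\|_{L^2}$.

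Assertion \ref{item:cube_BS_2} follows by the identical template with the roles of the two factors exchanged: after the boost, $\hat w_0$ is supported in the origin-centered cube of side $M$ (now playing the low-frequency role) while $\hat{\tilde v}_0$ is supported in $B(-\xi_0,(1-\delta)c_1 N)$. Since $Q_{\xi_0}\subset A(0,c_1N,c_2N)$ and $M\ll N$ together ensure $|\xi_0|\in[c_1N,c_2N]$, the same reverse-triangle computation places $\supp\hat{\tilde v}_0$ in an annulus of scale $\sim_{c_1,c_2,\delta}N$, and the same dyadic plus Cauchy--Schwarz argument closes the estimate. The only delicate point in either part is tracking the constants so that the high-frequency support remains bounded away from the origin after the shift; this is precisely where the strict separation $\delta>0$ is used to secure the annular lower bound $\delta c_1 N$ and hence the applicability of proposition \ref{prop:bs_B}.
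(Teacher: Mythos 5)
Your proof is correct and follows essentially the same route as the paper: a Galilean boost recentering the cube at the origin, a reverse-triangle-inequality check that the other factor then lives in an annulus of scale $\sim_{c_1,c_2,\delta}N$, and an application of Bourgain's bilinear estimate (proposition \ref{prop:bs_B}). The only difference is cosmetic: the paper interpolates with the trivial $L_t^\infty L_x^1$ bound to reduce to $p=2$ and applies proposition \ref{prop:bs_B} directly to the cube-supported (ball-supported) low-frequency factor, whereas you add a dyadic decomposition of that factor plus a Cauchy--Schwarz summation to fit the annular hypothesis literally — a slightly more careful but equivalent bookkeeping step.
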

\begin{proof}
We first prove \ref{item:cube_BS_1}. By complex interpolation with the trivial $L_{t}^{\infty}L_{x}^{1}$ bilinear estimate, it suffices to prove the desired estimate for $p=2$. Observe that by translation invariance of the Lebesgue measure and Galilean invariance of the free Schr\"{o}dinger equation, we can write
\begin{align}
\|(e^{it\Delta}u_{0})(e^{it\Delta}v_{0})\|_{L_{t,x}^{2}(\R\times\R^{2})} &= \|\paren*{T_{g_{0,-\xi_{0},0,1}}(e^{it\Delta}u_{0})}\paren*{T_{g_{0,-\xi_{0},0,1}}(e^{it\Delta}v_{0})}\|_{L_{t,x}^{2}(\R\times\R^{2})} \nonumber\\
&= \|(e^{it\Delta}u_{0,\xi_{0}})(e^{it\Delta}v_{0,\xi_{0}})\|_{L_{t,x}^{2}(\R\times\R^{2})},
\end{align}
where $u_{0,\xi_{0}} \coloneqq g_{0,-\xi_{0},0,1}u_{0}$ and $v_{0,\xi_{0}} \coloneqq g_{0,-\xi_{0},0,1}v_{0}$. Observe that $v_{0,\xi_{0}}$ has Fourier support in the cube $Q_{0} \coloneqq [-\frac{M}{2},\frac{M}{2}]^{2}$. Now if $\xi\in \supp(\hat{u}_{0,\xi_{0}})$, then $\xi+\xi_{0}\in A(0,c_{1}N,c_{2}N)$. Since $Q_{\xi_{0}} \subset B(0,(1-\delta)c_{1}N)$, in particular, $|\xi_{0}| \leq (1-\delta)c_{1}N$. Hence by the (reverse) triangle inequality,
\begin{equation}
\delta c_{1}N = \paren*{c_{1}-(1-\delta)c_{1}}N \leq |\xi| \leq \paren*{c_{2}+(1-\delta)c_{1}}N,
\end{equation}
which implies that $u_{0,\xi_{0}}$ has Fourier support in the annulus $A(0,\delta c_{1}N, (c_{2}+(1-\delta)c_{1})N)$. Applying proposition \ref{prop:bs_B} with $u_{0,\xi_{0}}$ and $v_{0,\xi_{0}}$, we conclude that
\begin{equation}
\|(e^{it\Delta}u_{0,\xi_{0}})(e^{it\Delta}v_{0,\xi_{0}})\|_{L_{t,x}^{2}(\R\times\R^{2})} \lesssim_{\delta,c_{1},c_{2}} \paren*{\frac{M}{N}}^{1/2} \|u_{0,\xi_{0}}\|_{L^{2}(\R^{2})} \|v_{0,\xi_{0}}\|_{L^{2}(\R^{2})} = \paren*{\frac{M}{N}}^{1/2} \|u_{0}\|_{L^{2}(\R^{2})} \|v_{0}\|_{L^{2}(\R^{2})}.
\end{equation}

We now prove \ref{item:cube_BS_2}. As before, it suffices to prove the desired estimate for $p=2$. Let $u_{0,\xi_{0}}$ and $v_{0,\xi_{0}}$ be defined as before. This time, $u_{0,\xi_{0}}$ has Fourier support in the cube $Q_{0}$. Since $Q_{\xi_{0}}\subset A(0,c_{1}N,c_{2}N)$, in particular, $\xi_{0}\in A(0,c_{1}N,c_{2}N)$. If $\xi \in \supp(\hat{v}_{0,\xi_{0}})$, then $\xi+\xi_{0}\in B(0,(1-\delta)c_{1}N)$. Hence by the (reverse) triangle inequality,
\begin{equation}
\delta c_{1} N = \paren*{-(1-\delta)c_{1}+c_{1}}N \leq |\xi| \leq \paren*{(1-\delta)c_{1}+c_{2}}N,
\end{equation}
which implies that $v_{0,\xi_{0}}$ has Fourier support in the annulus $A(0,\delta c_{1}N, ((1-\delta)c_{1}+c_{2})N)$. Applying proposition \ref{prop:bs_B} with $u_{0,\xi_{0}}$ and $v_{0,\xi_{0}}$, we obtain the desired conclusion.
\end{proof}

While the bilinear estimate of proposition \ref{prop:bs_B} is invariant under the DS scaling, the following bilinear restriction estimate  for the paraboloid due to Tao is subcritical with respect to the scaling symmetry.
\begin{prop}[Bilinear restriction estimate, \cite{Tao2003}]\label{prop:br_T} 
Let $q>\frac{5}{2}$, and let $c>0$. Then there exists a constant $C(c,q)>0$ such that for all $u_{0},v_{0}\in L^{2}(\R^{2})$ satisfying
\begin{equation}
	\supp(\hat{u}_{0}),\supp(\hat{v}_{0})\subset B(0,N), \qquad \dist(\supp(\hat{u}_{0}),\supp(\hat{u}_{0}))\geq cN,
\end{equation}
we have that
\begin{equation}\label{eq:br_T}
	\|(e^{it\Delta}u_{0})(e^{it\Delta}v_{0})\|_{L_{t,x}^{q}(\R\times\R^{2})}\leq C(c,q)N^{2-\frac{4}{q}}\|u_{0}\|_{L^{2}(\R^{2})}\|v_{0}\|_{L^{2}(\R^{2})}.
\end{equation}
\end{prop}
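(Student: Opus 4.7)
The plan is to reduce to the unit-frequency case $N=1$ via parabolic rescaling and then invoke Tao's sharp bilinear restriction theorem for the paraboloid as a black box.

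First I would set $\tilde u_0(x) \coloneqq u_0(x/N)$ and $\tilde v_0(x) \coloneqq v_0(x/N)$, so that $\widehat{\tilde u_0}$ and $\widehat{\tilde v_0}$ are supported in $B(0,1)$ with separation $\geq c$, while $\|\tilde u_0\|_{L^2} = N\|u_0\|_{L^2}$ and $\|\tilde v_0\|_{L^2} = N\|v_0\|_{L^2}$. The identity $(e^{it\Delta}\tilde u_0)(x) = (e^{i(t/N^2)\Delta}u_0)(x/N)$ combined with the change of variables $(s,y) = (t/N^2, x/N)$ yields
\begin{equation}
\|(e^{it\Delta}\tilde u_0)(e^{it\Delta}\tilde v_0)\|_{L^q_{t,x}(\R\times\R^2)} = N^{4/q}\|(e^{is\Delta}u_0)(e^{is\Delta}v_0)\|_{L^q_{s,y}(\R\times\R^2)}.
\end{equation}
Combined with $\|\tilde u_0\|_{L^2}\|\tilde v_0\|_{L^2} = N^2 \|u_0\|_{L^2}\|v_0\|_{L^2}$, this reduces \eqref{eq:br_T} to the unit-scale statement that $\|(e^{it\Delta}\tilde u_0)(e^{it\Delta}\tilde v_0)\|_{L^q_{t,x}} \leq C(c,q)\|\tilde u_0\|_{L^2}\|\tilde v_0\|_{L^2}$; the factor $N^{2-4/q}$ arises precisely from the mismatch between the $L^q_{t,x}$ scaling and the doubled $L^2$ scaling.

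Next I would invoke Tao's sharp bilinear restriction estimate for the paraboloid in $\R^3$ (\cite{Tao2003}), which establishes exactly this unit-scale bound in the optimal range $q > (n+2)/n = 5/3$, where $n = 3$ is the ambient spacetime dimension. Since the hypothesis $q > 5/2$ is well within Tao's range, the claim then follows.

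The main obstacle is Tao's theorem itself, whose proof proceeds via a wave packet decomposition of the extension operators $e^{it\Delta}u_0$ and $e^{it\Delta}v_0$ into tubes of unit spatial width and parabolic length, followed by an induction on scales combined with an $\varepsilon$-removal argument exploiting the transversality encoded in the separation hypothesis $\dist(\supp\widehat{u_0},\supp\widehat{v_0})\geq c$. For the purposes of the present paper, however, the slack between $5/2$ and the sharp threshold $5/3$ is so generous that Tao's machinery can be bypassed entirely: complex interpolation between the $L^2_{t,x}$ bound obtained from Cauchy-Schwarz together with the $L^4_{t,x}$ Strichartz estimate of proposition \ref{prop:ls}, and the $L^\infty_{t,x}$ bound $\|(e^{it\Delta}u_0)(e^{it\Delta}v_0)\|_{L^\infty_{t,x}} \lesssim N^2\|u_0\|_{L^2}\|v_0\|_{L^2}$ obtained from Bernstein's lemma applied to the frequency-localized data, already produces the constant $N^{2-4/q}$ for all $q\in[2,\infty]$, without even using the separation hypothesis. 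This elementary route seems entirely sufficient for the subsequent applications in the paper.
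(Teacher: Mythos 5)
Your proposal is correct for the statement as printed, and its first half is exactly what the paper does: the paper offers no proof of this proposition at all, it simply cites \cite{Tao2003}, and your parabolic rescaling computation (reducing to unit frequency, with the factor $N^{2-4/q}$ coming from the mismatch between the $L^{q}_{t,x}$ and doubled $L^{2}$ scalings) is the standard and correct way to pass from Tao's unit-scale theorem, valid for $q>5/3$ in $\R^{1+2}$, to \eqref{eq:br_T}. Your closing observation is also mathematically sound as far as it goes: for a fixed function $F=(e^{it\Delta}u_{0})(e^{it\Delta}v_{0})$, log-convexity of $L^{p}$ norms between the $L^{2}_{t,x}$ bound (Cauchy--Schwarz plus the $L^{4}_{t,x}$ Strichartz estimate of proposition \ref{prop:ls}) and the $L^{\infty}_{t,x}$ bound from Bernstein gives $\|F\|_{L^{q}_{t,x}}\lesssim N^{2-4/q}\|u_{0}\|_{L^{2}}\|v_{0}\|_{L^{2}}$ for all $q\in[2,\infty]$, with no separation hypothesis. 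So the proposition as literally stated (with $q>5/2$) is indeed elementary.

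However, you should be careful with your final claim that this elementary route suffices "for the subsequent applications in the paper." The threshold $5/2$ in the statement is almost certainly a typo for Tao's sharp exponent $5/3$: the remark immediately following the proposition interpolates \eqref{eq:br_T} against the trivial $L^{\infty}_{t}L^{1}_{x}$ bound to reach $L^{2}_{t}L^{3/2+}_{x}$ with constant $N^{-1/3+}$, and this is only possible if $q$ can be taken near $5/3$ (to land at $L^{2}$ in time one needs interpolation weight $\theta=q/2\leq 1$, i.e.\ $q\leq 2$; with $q=5/3+\varepsilon$ one gets $\theta=5/6$, the space exponent $3/2+$, and $(N^{2-4/q})^{\theta}=N^{-1/3+}$). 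Your interpolation argument cannot reach any $q<2$, gains nothing from the transversality hypothesis, and can never produce a negative power of $N$ --- which is precisely the content that distinguishes the bilinear restriction theorem from Cauchy--Schwarz and is what the paper's derived estimate (and its transfer to $U^{q}_{\Delta}$ in proposition \ref{prop:bs_i}) is after. So: your proof is valid for the proposition as stated, but the genuinely quoted theorem of \cite{Tao2003} cannot be dispensed with for the strength the paper actually intends to use.
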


By interpolating between the estimate \eqref{eq:br_T} with $q=\frac{5}{2}+\varepsilon$ and the trivial $L_{t}^{\infty}L_{x}^{1}$ bilinear estimate, for any $\varepsilon>0$, we obtain the bilinear estimate
\begin{equation}
\|(e^{it\Delta}u_{0})(e^{it\Delta}v_{0})\|_{L_{t}^{2}L_{x}^{\frac{3}{2}+}(\R\times\R^{2})}\lesssim N^{-\frac{1}{3}+}\|u_{0}\|_{L^{2}(\R^{2})}\|v_{0}\|_{L^{2}(\R^{2})}.
\end{equation}

\subsection{$U_{\Delta}^{p}$ and $V_{\Delta}^{p}$ spaces}\label{ssec:prelim_fs}
In this subsection, we introduce the definitions of the $U^{p}$ and $V^{p}$ spaces and their adapted counterparts $U_{\Delta}^{p}$ and $V_{\Delta}^{p}$ and record some basic properties of these spaces. The $U^{p}$ and $V^{p}$ spaces were first applied in the dispersive setting in \cite{Koch2004} and have since proved useful as an alternative to Bourgain's Fourier restriction $X^{s,b}$ spaces when studying critical Cauchy problems. For the omitted proofs of the results in this subsection and further properties of these spaces, we refer the reader to Section 2 of \cite{Hadac2009} (see also the erratum \cite{hadac2010erratum}) and Chapter 4 of \cite{koch2014dispersive}.

Let $\mathcal{Z}$ denote the set of finite partitions $-\infty<t_{1}<\cdots<t_{N}<\infty$. In this subsection, $(H,\ipp{,})$ denotes a generic Hilbert space, unless specified otherwise.

We say that a function $f$ is \emph{ruled} if at every point (including $\pm\infty$) left and right limits exist. The space of ruled functions equipped with the uniform norm is a Banach space, which we denote by $\mathcal{R}$. We let $\mathcal{R}_{rc}$ denote the closed subspace of $\mathcal{R}$ consisting of right-continuous functions with $\lim_{t\rightarrow -\infty} f(t)=0$.

A step function $f:\R\rightarrow H$ is one for which there exists a partition $\{t_{k}\}_{k=1}^{N}\in\mathcal{Z}$ such that $f$ is constant on the intervals $(-\infty,t_{1})$, $(t_{k},t_{k+1})$, and $(t_{N},\infty)$. We denote the set of step functions by $\mathscr{S}$. We denote the subset of right-continuous step functions by $\mathscr{S}_{rc}$.

\begin{mydef}[$U^{p}$ spaces] 
Let $1\leq p<\infty$. We say that $u:\R \rightarrow H$ is a $p$-atom if there exists a partition $\{t_{k}\}_{k=1}^{N}\in\mathcal{Z}$, such that
\begin{equation}
u = \sum_{k=1}^{N}1_{[t_{k},t_{k+1})}(t)u_{k}, \quad \sum_{k=1}^{N}\|u_{k}\|_{H}^{p}\leq 1,
\end{equation}
where $t_{N+1} \coloneqq \infty$. We define the $U^{p}$ norm by
\begin{equation}
\|u\|_{U^{p}(\R\times\R^{2})} \coloneqq \inf \brac*{\sum_{j}|c_{j}| : u=\sum_{j}c_{j}u_{j}, \quad \text{$u_{j}$ are $U^{p}$ atoms}}.
\end{equation}
Given an interval $[a,b]$ and a function $u:[a,b]\rightarrow H$, we define the norm $\|u\|_{U^{p}([a,b])} \coloneqq \|u1_{[a,b)}\|_{U^{p}(\R)}$. We define $U^{p}([a,b])$ to be the normed space of functions $u:[a,b]\rightarrow H$ for which this norm is finite.
\end{mydef}

\begin{mydef}[$V^{p}$ spaces]
Let $1\leq p<\infty$. We define $V^{p}(\R)$ to be the normed space of functions $v:\R\rightarrow H$ such that $\lim_{t\rightarrow \pm\infty}v(t)$ exists, $v(\infty)=0$, and
\begin{equation}
\|v\|_{V^{p}(\R)} \coloneqq \sup_{\{t_{k}\}_{k=1}^{N}\in\mathcal{Z}} \paren*{\sum_{k=1}^{N} \|v(t_{k+1})-v(t_{k})\|_{H}^{p}}^{1/p} <\infty,
\end{equation}
where $t_{N+1}\coloneqq \infty$. We define $V_{-}^{p}(\R)$ to be the closed subspace consisting of functions $v:\R\rightarrow H$ such that $v(-\infty)\coloneqq \lim_{t\rightarrow -\infty}v(t)=0$. Given an interval $[a,b]$ and a function $v:[a,b]\rightarrow H$, we define the norm $\|v\|_{V^{p}([a,b])} \coloneqq \|v1_{[a,b)}\|_{V^{p}(\R)}$. We define $V^{p}([a,b])$ to be the normed space of functions $v:[a,b]\rightarrow H$ for which this norm is finite. We analogously define $V_{-}^{p}([a,b])$.

We denote the subspaces of $V^{p}(\R)$ (resp. $V_{-}^{p}(\R)$) and $V^{p}([a,b])$ (resp. $V_{-}^{p}([a,b])$) consisting of right-continuous functions by $V_{rc}^{p}(\R)$ (resp. $V_{-,rc}^{p}(\R)$) and $V_{rc}^{p}([a,b])$ (resp. $V_{-,rc}^{p}([a,b])$), respectively.
\end{mydef}

When the domain of the $U^{p}$ or $V^{p}$ space under consideration is not important to the formulation of a result, we will generally omit it.

\begin{prop}[$V^{p}$ basic properties]
Let $1\leq p<q<\infty$. Then the following hold.
\begin{enumerate}[(i)]
\item
$\|\cdot\|_{V^{p}}$ defines a norm, and $V^{p}$ (resp. $V_{-}^{p}$) is a Banach subspace of $\mathcal{R}$. $V_{rc}^{p}$ (resp. $V_{-,rc}^{p}$) is a closed subspace of $V^{p}$.
\item
$V^{\infty}\coloneqq \mathcal{R}$ with norm $\|\cdot\|_{V^{\infty}} \coloneqq \|\cdot\|_{\sup}$.
\item
If $1\leq p\leq q\leq \infty$, then $V^{p}\subset V^{q}$ with $\|\cdot\|_{V^{q}} \leq \|\cdot\|_{V^{p}}$.
\end{enumerate}
\end{prop}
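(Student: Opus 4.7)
The plan is to establish the three properties in order, with items (i) and (iii) being the substantive ones and (ii) a notational convention requiring no proof. For item (i), the norm axioms are routine: homogeneity is immediate from pulling scalars out of differences, and the triangle inequality follows from Minkowski's inequality applied to the $\ell^p$-sum of increments across any fixed partition, then taking the supremum over partitions. Positive definiteness uses the built-in condition $v(\infty)=0$: if $\|v\|_{V^p}=0$, then every increment $v(t_{k+1})-v(t_k)$ vanishes, so $v$ is constant, and $v(\infty)=0$ forces $v\equiv 0$.

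The containment $V^p\subset\mathcal{R}$ is the first genuine content. Fix $v\in V^p$ and $t\in\R$; the claim is that the one-sided limits at $t$ exist. For the right-hand limit, I would take any sequence $s_n\downarrow t$ and show $(v(s_n))$ is Cauchy in $H$ by inserting $s_n, s_m$ into a partition and bounding the increment $\|v(s_n)-v(s_m)\|_H$ by a partial $\ell^p$-sum of $\|v\|_{V^p}$, invoking a standard tail argument: the increments over any decreasing sequence of refinements must summably tend to zero since the total $\ell^p$-sum is finite. The same scheme works for left limits and for limits at $\pm\infty$. As a by-product, the two-point partition $\{t,\infty\}$ together with $v(\infty)=0$ yields the uniform bound $\|v\|_{\sup}\leq\|v\|_{V^p}$, which will be used repeatedly below.

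For completeness, let $(v_n)$ be $V^p$-Cauchy. The bound just established shows $(v_n)$ is uniformly Cauchy, hence converges uniformly to some $v\in\mathcal{R}$. To place $v$ in $V^p$ and upgrade the convergence, I would invoke a Fatou-type lower-semicontinuity argument: for each fixed partition $\{t_k\}_{k=1}^N\in\mathcal{Z}$,
\begin{equation}
\sum_{k=1}^N\|v(t_{k+1})-v(t_k)\|_H^p=\lim_{n\to\infty}\sum_{k=1}^N\|v_n(t_{k+1})-v_n(t_k)\|_H^p\leq\sup_n\|v_n\|_{V^p}^p,
\end{equation}
which is finite by the Cauchy property. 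Taking the supremum over partitions gives $\|v\|_{V^p}\leq\liminf_n\|v_n\|_{V^p}$. Applying the same reasoning to $v_n-v_m$ and passing $m\to\infty$ yields $\|v_n-v\|_{V^p}\leq\liminf_m\|v_n-v_m\|_{V^p}\to 0$ as $n\to\infty$. The closed subspace claims are then immediate: $V_{-}^p$ is defined by the pointwise condition $v(-\infty)=0$, which survives the uniform convergence just obtained; right-continuity likewise survives uniform convergence, so $V_{rc}^p$ and $V_{-,rc}^p$ are closed in $V^p$.

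For item (iii), the endpoint $q=\infty$ is exactly the bound $\|v\|_{V^\infty}=\|v\|_{\sup}\leq\|v\|_{V^p}$ observed above from the two-point partition. For $p\leq q<\infty$, I would apply the elementary $\ell^p\hookrightarrow\ell^q$ embedding $\|a\|_{\ell^q}\leq\|a\|_{\ell^p}$ (proved via $\|a\|_{\ell^q}^q\leq\|a\|_{\ell^\infty}^{q-p}\|a\|_{\ell^p}^p\leq\|a\|_{\ell^p}^q$) term-by-term on each fixed partition, then take the supremum. I expect no step to be genuinely difficult; the only point requiring care is the Fatou-type lower-semicontinuity argument for completeness, which must be carried out on a fixed partition before passing to the supremum, so as to justify interchanging the limit in $n$ with the finite inner sum.
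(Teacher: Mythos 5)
Your proposal is correct; note that the paper itself gives no proof of this proposition, deferring to Section 2 of \cite{Hadac2009} and Chapter 4 of \cite{koch2014dispersive}, and your argument is exactly the standard one found there: Minkowski on fixed partitions for the norm axioms, the two-point partition $\{t,\infty\}$ for $\|v\|_{\sup}\leq\|v\|_{V^{p}}$, uniform convergence plus lower semicontinuity of the variation norm on fixed partitions for completeness, and the $\ell^{p}\hookrightarrow\ell^{q}$ embedding for (iii). The only step worth tightening is the existence of one-sided limits: a single increment $\|v(s_{n})-v(s_{m})\|_{H}$ is bounded merely by $\|v\|_{V^{p}}$, so the Cauchy property should be argued by contradiction --- if infinitely many disjoint pairs $s_{m_{k}}<s_{n_{k}}$ had $\|v(s_{n_{k}})-v(s_{m_{k}})\|_{H}\geq\epsilon$, placing $K$ such pairs as consecutive points of one partition would force $\|v\|_{V^{p}}^{p}\geq K\epsilon^{p}$ --- which is the precise form of the ``tail argument'' you gesture at; likewise, in the completeness step one should record that $v(\infty)=\lim_{n}v_{n}(\infty)=0$ survives the uniform convergence, just as you note for $v(-\infty)$ in $V_{-}^{p}$.
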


\begin{prop}[$U^{p}$ basic properties]
Let $1\leq p<q<\infty$. Then the following hold.
\begin{enumerate}[(i)]
\item
Functions in $U^{p}$ are ruled and right-continuous. If $u\in U^{p}$, then $\lim_{t\rightarrow -\infty}u(t)=0$ in $H$.
\item
$\|\cdot\|_{U^{p}}$ defines a norm, and $U^{p}$ equipped with this norm is a Banach space. Moreover, $\|\cdot\|_{\sup} \leq \|\cdot\|_{U^{p}}$.
\item
If $p<q$, then $U^{p}\subset U^{q}$ with $\|\cdot\|_{U^{q}} \leq \|\cdot\|_{U^{p}}$.
\item
If $1\leq p<\infty$, then $U^{p} \subset V_{-,rc}^{p}$ and $\|\cdot\|_{V^{p}} \leq 2^{1/p} \|\cdot\|_{U^{p}}$.
\end{enumerate}
\end{prop}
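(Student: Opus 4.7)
The plan is to bootstrap all four assertions from the atomic definition, leaning on the elementary observation that a $p$-atom $a=\sum_{k=1}^N 1_{[t_k,t_{k+1})}u_k$ with $\sum_k\|u_k\|_H^p\le 1$ automatically has $\|u_k\|_H\le 1$ for every $k$, hence satisfies $\|a\|_{\sup}\le 1$; moreover $a$ is a right-continuous step function (so also ruled) that vanishes identically on $(-\infty,t_1)$.

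I would first establish the sup-bound and norm axioms in (ii): homogeneity and subadditivity of $\|\cdot\|_{U^p}$ follow immediately from the $\ell^1$-infimum structure (concatenating near-optimal atomic decompositions of $u$ and $v$), while any decomposition $u=\sum_j c_j a_j$ gives $\|u(t)\|_H\le\sum_j|c_j|\|a_j\|_{\sup}\le\sum_j|c_j|$, yielding $\|u\|_{\sup}\le\|u\|_{U^p}$ after infimization and, in particular, positive-definiteness. Part (i) is then immediate: the partial sums of any atomic representation of $u\in U^p$ are step functions sharing the three stated properties, and the representation converges uniformly by the sup-bound, so right-continuity, the ruled property, and vanishing at $-\infty$ pass to $u$. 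For Banach completeness I would use the standard trick: given a Cauchy sequence $\{u_n\}\subset U^p$, pass to a subsequence with $\|u_{n_{k+1}}-u_{n_k}\|_{U^p}<2^{-k}$, pick atomic decompositions $u_{n_{k+1}}-u_{n_k}=\sum_j c_{k,j}a_{k,j}$ with $\sum_j|c_{k,j}|<2^{1-k}$, and concatenate these with an atomic decomposition of $u_{n_1}$ to produce a doubly indexed atomic representation of the uniform limit $u$ whose total $\ell^1$-coefficient sum is finite; applying the same construction to tails then gives $u_{n_k}\to u$ in $U^p$.

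Part (iii) is purely algebraic: from $\|u_k\|_H\le 1$ and $p<q$ one has $\|u_k\|_H^q\le\|u_k\|_H^p$, so every $p$-atom is automatically a $q$-atom and any $p$-atomic decomposition of $u$ doubles as a $q$-atomic decomposition with the same coefficients, giving $\|u\|_{U^q}\le\|u\|_{U^p}$. For (iv), since the desired inequality is $\ell^1$-subadditive in atomic representations, it suffices to bound $\|a\|_{V^p}$ for a single $p$-atom. Refining any test partition $\{s_j\}$ to include the breakpoints $\{t_k\}$ only enlarges $\sum_j\|a(s_{j+1})-a(s_j)\|_H^p$ and reduces it to the telescoping sum $\sum_k\|u_{k+1}-u_k\|_H^p$ (with the convention $u_0:=0$); applying $\|u_{k+1}-u_k\|_H^p\le 2^{p-1}(\|u_{k+1}\|_H^p+\|u_k\|_H^p)$ and counting each $\|u_k\|_H^p$ at most twice on the right yields the claimed constant after bookkeeping. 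Right-continuity and vanishing at $-\infty$, needed for membership in $V_{-,rc}^p$, are inherited from (i).

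The main obstacle is the completeness step in (ii): turning uniform convergence of a $U^p$-Cauchy sequence into $U^p$ convergence requires synthesizing a single atomic representation of the limit from decompositions of the Cauchy differences and verifying that the resulting doubly indexed series has finite total $\ell^1$-coefficient sum. The remaining pieces are routine once the atomic machinery is in place.
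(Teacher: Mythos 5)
Your treatment of (i)--(iii) is correct and is the standard argument: the paper itself omits the proof of this proposition, deferring to Section 2 of \cite{Hadac2009} and Chapter 4 of \cite{koch2014dispersive}, where exactly this route is taken (the bound $\|a\|_{\sup}\le 1$ for atoms, the $\ell^{1}$ structure of the norm for the triangle inequality and for completeness via rapidly Cauchy subsequences, and the observation that every $p$-atom is a $q$-atom).

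Part (iv), however, contains a genuine gap. The claim that refining a test partition to include the breakpoints of the atom only enlarges $\sum_j\|a(s_{j+1})-a(s_j)\|_H^{p}$ is false for $p>1$: $p$-variation sums are not monotone under refinement. For instance, if $a$ takes the successive values $0,\,v,\,2v$, the single increment from $0$ to $2v$ contributes $2^{p}\|v\|_H^{p}$, whereas after inserting the intermediate breakpoint it is replaced by $2\|v\|_H^{p}<2^{p}\|v\|_H^{p}$; so the supremum defining $\|a\|_{V^{p}}$ is not controlled by the fully refined (telescoping) partition, and your reduction collapses. Moreover, even granting the reduction, the bookkeeping you describe gives $\sum_k\|u_{k+1}-u_k\|_H^{p}\le 2^{p-1}\cdot 2\sum_k\|u_k\|_H^{p}\le 2^{p}$, i.e. $\|a\|_{V^{p}}\le 2$, not $2^{1/p}$; reaching $2^{1/p}$ would require $\|x-y\|^{p}\le\|x\|^{p}+\|y\|^{p}$, which fails for $p>1$.

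In fact the constant $2^{1/p}$ is unreachable for $p>1$ under the conventions of this subsection: the atom $a=1_{[0,1)}v-1_{[1,\infty)}v$ with $\|v\|_H^{p}=\tfrac12$ has $\|a\|_{U^{p}}\le 1$, while the partition $\{-1,\tfrac12,2\}$ (using the convention $a(\infty)=0$ in the final increment) gives $\|a\|_{V^{p}}\ge\bigl(1+2^{p-1}\bigr)^{1/p}>2^{1/p}$. So the embedding should be proved, and quoted, with the constant $2$, which is all that is ever used later in the paper. The correct argument is the one your second step gestures at, applied directly to an arbitrary partition rather than to its refinement: along any partition the values of $a$ form a monotone selection (with repetitions) from $\{0,u_1,\dots,u_N\}$, terminated by the conventional value $0$ at $\infty$; discard the vanishing increments, bound each remaining increment by $2^{p-1}(\|u_j\|_H^{p}+\|u_k\|_H^{p})$, and note that each $\|u_k\|_H^{p}$ occurs in at most two increments. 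This yields $\|a\|_{V^{p}}\le 2$ for every $p$-atom, hence $\|u\|_{V^{p}}\le 2\|u\|_{U^{p}}$ by the $\ell^{1}$-subadditivity you already set up, and membership in $V_{-,rc}^{p}$ follows from (i) as you say.
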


\begin{prop}[$V_{-,rc}^{p}\subset U^{q}$]
If $1\leq p<q<\infty$, then the embedding $V_{-,rc}^{p} \subset U^{q}$ is continuous.
\end{prop}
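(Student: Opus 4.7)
The approach is to build, for any $v \in V_{-,rc}^p$ with $\|v\|_{V^p} \leq 1$ (achieved by scaling), an explicit decomposition $v = \sum_{k \geq 0} w_k$ in which each $w_k$ is a right-continuous step function with $\|w_k\|_{U^q}$ decaying geometrically in $k$. Since $p < q$, this geometric gain will overcome the polynomial growth in the number of partition intervals produced by a greedy stopping-time scheme adapted to $v$.

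First I would set up dyadic stopping times. For each $k \in \N_{0}$, define $\{t_j^k\}_{j \geq 0}$ inductively by $t_0^k := -\infty$ (with the convention $v(t_0^k) := v(-\infty) = 0$) and
\[
t_{j+1}^k := \inf\{t > t_j^k : \|v(t) - v(t_j^k)\|_H \geq 2^{-k}\},
\]
where the infimum is $+\infty$ if the set is empty. The decay condition $v(-\infty) = 0$ ensures $t_1^k > -\infty$. Right-continuity is crucial at this stage: choosing $s_n \searrow t_{j+1}^k$ with $\|v(s_n) - v(t_j^k)\|_H \geq 2^{-k}$ and passing to the limit shows that $\|v(t_{j+1}^k) - v(t_j^k)\|_H \geq 2^{-k}$ at every finite stopping time. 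Inserting this into $\sum_{j \geq 0} \|v(t_{j+1}^k) - v(t_j^k)\|_H^p \leq \|v\|_{V^p}^p \leq 1$ bounds the number $N_k$ of finite stopping times by $2^{kp}$.

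Next, set $v_k := \sum_{j \geq 0} v(t_j^k)\,\mathbf{1}_{[t_j^k, t_{j+1}^k)}$, a right-continuous step function vanishing to the left of $t_1^k$; by construction $\|v_k - v\|_{\sup} \leq 2^{-k}$, so $v_k \to v$ uniformly, yielding the telescoping expansion $v = \sum_{k \geq 0} w_k$ with $w_0 := v_0$ and $w_k := v_k - v_{k-1}$ for $k \geq 1$. Each $w_k$ is a right-continuous step function supported on the common refinement of $\{t_j^{k-1}\}$ and $\{t_j^k\}$, with at most $M_k \leq N_{k-1} + N_k \lesssim 2^{kp}$ intervals, and with values controlled via the triangle inequality by $\|w_k(t)\|_H \leq \|v(t) - v_{k-1}(t)\|_H + \|v(t) - v_k(t)\|_H \leq 3 \cdot 2^{-(k-1)}$. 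Any such step function $w = \sum_{j=1}^M \phi_j\,\mathbf{1}_{[s_j, s_{j+1})}$ is a scalar multiple of a $q$-atom with scalar $(\sum_j \|\phi_j\|_H^q)^{1/q}$; hence
\[
\|w_k\|_{U^q} \leq M_k^{1/q}\,\max_j \|\phi_j^{(k)}\|_H \lesssim 2^{kp/q}\cdot 2^{-k} \lesssim 2^{-k(1 - p/q)}.
\]
Since $p < q$, summing over $k$ gives $\|v\|_{U^q} \leq \sum_{k \geq 0} \|w_k\|_{U^q} \lesssim_{p,q} 1$, which establishes the continuous embedding.

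The principal technical subtlety is the careful use of right-continuity: it is what guarantees that every finite stopping time $t_{j+1}^k$ is realized by a jump of size at least $2^{-k}$ in $v$ itself (so that the $V^p$ bound genuinely pays for the count $N_k \lesssim 2^{kp}$), and also ensures that the approximants $v_k$ and the differences $w_k$ lie in $\mathcal{R}_{rc}$ and hence are legitimate elements of $U^q$ built out of atoms of the prescribed form. Without the right-continuity hypothesis one would at best obtain a decomposition in the closure of the step-function atoms under a weaker topology, requiring additional approximation arguments.
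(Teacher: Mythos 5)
Your argument is correct, and it is essentially the standard proof of this embedding: the paper omits the proof and refers to Had\v{a}c--Herr--Koch and Koch--Tataru--Vi\c{s}an, where the same greedy stopping-time construction is used to write $v$ as a sum of right-continuous step functions that are multiples of $U^{q}$-atoms with norms $\lesssim 2^{-k(1-p/q)}$, summable precisely because $p<q$. The small points you leave implicit (that right-continuity forces $t_{j+1}^{k}>t_{j}^{k}$ so the partitions are genuine, that the increment from $t_{0}^{k}=-\infty$ is handled by a limiting argument or by counting $N_{k}\leq 2^{kp}+1$, and that the telescoping series converges in $U^{q}$ to $v$ by completeness and the bound $\|\cdot\|_{\sup}\leq\|\cdot\|_{U^{q}}$) are routine and do not affect the argument.
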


\begin{prop}\label{prop:Up_lp}
Let $a\leq b\leq c$, and let $1\leq p<\infty$. Then
\begin{equation}
\|u\|_{U^{p}([a,c])}^{p} \leq \|u\|_{U^{p}([a,b])}^{p} + \|u\|_{U^{p}([b,c])}^{p}
\end{equation}
\end{prop}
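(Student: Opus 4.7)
The inequality is trivial when either $A := \|u\|_{U^{p}([a,b])}$ or $B := \|u\|_{U^{p}([b,c])}$ vanishes (for instance $B = 0$ forces $u\mathbf{1}_{[b,c)} = 0$ since $\|\cdot\|_{U^p}$ is a norm), so I assume both are strictly positive. Fix $\varepsilon > 0$ and pick near-optimal atomic decompositions
\begin{equation*}
u\mathbf{1}_{[a,b)} = \sum_{i} c_i u_i, \qquad u\mathbf{1}_{[b,c)} = \sum_{j} d_j v_j,
\end{equation*}
with $A' := \sum_i |c_i| \leq A+\varepsilon$ and $B' := \sum_j |d_j| \leq B+\varepsilon$. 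Replacing each $u_i$ by $u_i \mathbf{1}_{[a,b)}$ and each $v_j$ by $v_j \mathbf{1}_{[b,c)}$, which remain $p$-atoms because truncating a step function to a subinterval does not duplicate step values (provided one first refines the partition to include the endpoints), I may assume the atoms themselves are supported in the respective subintervals.

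The key observation is that if $u_{\star}$ is a $p$-atom supported in $[a,b)$ and $v_{\star}$ a $p$-atom supported in $[b,c)$, then for any scalars $\lambda,\mu$ the function $\lambda u_{\star} + \mu v_{\star}$ equals $(|\lambda|^p + |\mu|^p)^{1/p}$ times a $p$-atom on $[a,c)$, since the two step decompositions live on disjoint intervals and their concatenation yields a step decomposition of $\lambda u_{\star} + \mu v_{\star}$ whose step values have $\ell^p$-sum at most $|\lambda|^p \cdot 1 + |\mu|^p \cdot 1$. To exploit this, introduce the mixing weights $\alpha_{ij} := |d_j|/B'$ and $\beta_{ij} := |c_i|/A'$, which satisfy $\sum_j \alpha_{ij} = 1$ and $\sum_i \beta_{ij} = 1$, and rewrite
\begin{equation*}
u\mathbf{1}_{[a,c)} = \sum_{i,j}\bigl(\alpha_{ij} c_i u_i + \beta_{ij} d_j v_j\bigr).
\end{equation*}
By the key observation each summand equals $\sigma_{ij}$ times a $p$-atom on $[a,c)$, where $\sigma_{ij} := ((\alpha_{ij}|c_i|)^p + (\beta_{ij}|d_j|)^p)^{1/p} = |c_i||d_j|(A'^{p} + B'^{p})^{1/p}/(A'B')$ after plugging in the weights. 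Hence $\sum_{i,j} \sigma_{ij} = (A'^{p} + B'^{p})^{1/p}$, giving $\|u\|_{U^p([a,c])} \leq ((A+\varepsilon)^p + (B+\varepsilon)^p)^{1/p}$; sending $\varepsilon \to 0$ and taking $p$-th powers completes the proof.

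The only real content is the choice of mixing weights: a naive one-to-one pairing of atoms, or direct concatenation of the two decompositions, would produce only the ordinary triangle inequality $\|u\|_{U^p([a,c])} \leq A + B$, whereas the tensorial weights $\alpha_{ij}\propto|d_j|$, $\beta_{ij}\propto|c_i|$ are precisely what yield the improvement from $A'+B'$ to $(A'^{p}+B'^{p})^{1/p}$ required by the $p$-th power statement.
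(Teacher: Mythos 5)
Your proof is correct. Note that the paper does not actually prove this proposition; it simply cites Lemma B.5 of \cite{koch2016conserved}, so your argument supplies a complete, self-contained proof of what the paper outsources. The substance is exactly where you locate it: truncated atoms $u_i\mathbf{1}_{[a,b)}$, $v_j\mathbf{1}_{[b,c)}$ are still $p$-atoms (refining the partition by $a,b$ only discards or keeps step values, never duplicates them), disjointly supported atoms recombine so that $\lambda u_\star+\mu v_\star$ is $(|\lambda|^p+|\mu|^p)^{1/p}$ times an atom, and the tensorized weights $\alpha_{ij}=|d_j|/B'$, $\beta_{ij}=|c_i|/A'$ turn the naive bound $A'+B'$ into $(A'^p+B'^p)^{1/p}$, since $\sum_{i,j}\sigma_{ij}=(A'^p+B'^p)^{1/p}$. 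Two small points you leave implicit but which are immediate: the termwise multiplication by $\mathbf{1}_{[a,b)}$ and the reindexing of the double sum are legitimate because atomic series with $\sum_j|c_j|<\infty$ converge absolutely in $L^\infty_t H$, so the rearranged double series still converges to $u\mathbf{1}_{[a,c)}$; and when $\sigma_{ij}=0$ the corresponding summand vanishes and can simply be dropped, so no division-by-zero issue arises beyond the $A,B>0$ reduction you already make.
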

\begin{proof}
See Lemma B.5 in \cite{koch2016conserved}.
\end{proof}

\begin{prop}[Duality]
Let $1<p,q<\infty$ satisfy $\frac{1}{p}+\frac{1}{q}=1$. Then there is a unique continuous bilinear map $B: U^{q}\times V^{p}\rightarrow \R$ which satisfies (with $t_{0}=a$ and $u(t_{0})=0$)
\begin{equation}
B(u,v) = \sum_{i=1}^{N}\ipp{u(t_{i})-u(t_{i-1}),v(t_{i})}
\end{equation}
for $v\in V^{p}$ and $u\in\mathscr{S}_{rc}$ with associated partition $\{t_{k}\}_{k=1}^{N}\in\mathcal{Z}$. The map $B$ satisfies
\begin{equation}
|B(u,v)| \leq \|u\|_{U^{q}}\|v\|_{V^{p}}.
\end{equation}
The map
\begin{equation}
V^{p}\rightarrow (U^{q})^{*}, \qquad v\mapsto B(\cdot,v)
\end{equation}
is a surjective isometry if $1\leq q<\infty$.

Define the subspace $V_{c}^{q}(\R)$ by
\begin{equation}
V_{c}^{q}(\R) \coloneqq \{v\in V^{q}(\R)\cap C(\R) : \lim_{t\rightarrow\pm\infty} v(t)=0\}.
\end{equation}
Then the map
\begin{equation}
U^{p} \rightarrow (V_{c}^{q})^{*}, \qquad u\mapsto B(u,\ol{\cdot})
\end{equation}
is a surjective isometry.
\end{prop}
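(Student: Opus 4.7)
First, I would define $B$ on $\mathscr{S}_{rc} \times V^p$ via the given formula. The pairing is invariant under partition refinement (adjoining a point $s \in (t_i, t_{i+1})$ only splits one term into two identical copies, since $u$ is constant on $(t_i, t_{i+1})$), so $B$ is well defined on step functions. Using the convention $v(t_{N+1}) = v(\infty) = 0$ and a discrete Abel summation,
$$B(u,v) = \sum_{k=1}^N \bigl\langle u(t_k),\, v(t_k) - v(t_{k+1})\bigr\rangle,$$
and Hölder's inequality in conjugate exponents $(q,p)$ applied to the sequences $\{u(t_k)\}$ and $\{v(t_k) - v(t_{k+1})\}$ yields $|B(u,v)| \leq \bigl(\sum_k \|u(t_k)\|_H^q\bigr)^{1/q}\|v\|_{V^p}$. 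For a $q$-atom the first factor is at most $1$ by definition, and the atomic decomposition defining $\|\cdot\|_{U^q}$ upgrades this to $|B(u,v)| \leq \|u\|_{U^q}\|v\|_{V^p}$ for all step-function $u$.

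Next I would extend $B$ to $U^q \times V^p$ by density. Right-continuous step functions are dense in $U^q$ because any $u \in U^q$ is right-continuous, vanishes at $-\infty$, and can be approximated in $U^q$-norm by truncating to a finite window and taking values on a refining partition. Bilinearity together with the bound just obtained produces a unique bounded bilinear extension satisfying $|B(u,v)| \leq \|u\|_{U^q}\|v\|_{V^p}$ on all of $U^q \times V^p$, and uniqueness follows from density of step functions.

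To see that $\Phi : V^p \to (U^q)^*$, $\Phi(v) = B(\cdot, v)$, is an isometry, the upper bound is immediate from the estimate above. For the reverse, given $\varepsilon > 0$ pick a partition $\{t_k\}$ nearly saturating $\|v\|_{V^p}$, choose unit vectors $h_k \in H$ with $\langle h_k, v(t_{k+1}) - v(t_k)\rangle \geq (1-\varepsilon)\|v(t_{k+1}) - v(t_k)\|_H$ via the Riesz representation, select scalars $\alpha_k$ extremizing the $\ell^q$-$\ell^p$ Hölder pairing applied to the sequence $\{\|v(t_{k+1}) - v(t_k)\|_H\}$, and assemble a step function $u$ whose jumps are $\alpha_k h_k$. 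A single-atom computation shows $u \in U^q$ with controlled norm, and $B(u,v)/\|u\|_{U^q}$ lies within $\varepsilon$ of $\|v\|_{V^p}$. Surjectivity is the main obstacle: given $L \in (U^q)^*$, since $\mathbf{1}_{[t,\infty)}h$ is a $q$-atom of norm $\|h\|_H$, the map $h \mapsto -L(\mathbf{1}_{[t,\infty)}h)$ is a bounded linear functional on $H$ of norm at most $\|L\|$, so the Riesz representation in $H$ yields a unique $v(t) \in H$ representing it. A partition-by-partition calculation, evaluating $L$ on step functions whose jumps occur at the partition points, shows $\|v\|_{V^p} \leq \|L\|$ and $B(\cdot, v) = L$ on step functions, hence on all of $U^q$ by density.

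The second surjective isometry $u \mapsto B(u, \overline{\,\cdot\,})$ from $U^p$ to $(V_c^q)^*$ is proved in parallel. The continuity of elements of $V_c^q$ together with their vanishing at $\pm\infty$ is what allows the roles of $u$ and $v$ to be swapped without introducing boundary or jump ambiguities when one repeats the argument above: to recover $u$ from a functional on $V_c^q$ one tests against suitably smoothed truncated approximations of indicator-type elements and passes to the limit. The two surjectivity steps are the technical heart of the proposition; in both cases the key is to construct the candidate pre-image out of the abstract functional-analytic data using the Riesz representation in $H$, and then to verify that it actually lies in the prescribed Banach space by exploiting the atomic or variational definition of the relevant norm.
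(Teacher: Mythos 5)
The paper itself gives no proof of this proposition; it is imported from the cited references (Section 2 of \cite{Hadac2009} and Chapter 4 of \cite{koch2014dispersive}), so the benchmark is the standard argument there. Your treatment of the first duality, $V^{p}\cong (U^{q})^{*}$, follows that argument and is essentially correct: the Abel summation against the convention $v(t_{N+1})=v(\infty)=0$, the H\"older bound on atoms, extension by density (though the cleaner justification that step functions are dense is simply that partial sums of an atomic decomposition are step functions converging in $U^{q}$, not a truncation-and-refinement argument), the extremizing step function for the lower bound (take the \emph{values} $u(t_{k})=\alpha_{k}h_{k}$ on $[t_{k},t_{k+1})$, not the jumps, so that $u$ is literally a $q$-atom and the Abel-summed pairing produces the $p$-variation), and the Riesz-representation construction of $v$ from a functional $L$, whose membership in $V^{p}$ with $\|v\|_{V^{p}}\leq\|L\|$ is certified partition by partition because $\|\cdot\|_{V^{p}}$ is a supremum of finite tests; one should also record that bounded $p$-variation gives the one-sided limits at $\pm\infty$ required by the definition.

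The genuine gap is the second statement, $U^{p}\cong (V_{c}^{q})^{*}$, which you dismiss as "proved in parallel." The parallel breaks exactly at surjectivity: the $U^{p}$ norm is an \emph{infimum} over atomic decompositions, not a supremum over partitions, so no family of tests of $L$ against smoothed indicator elements of $V_{c}^{q}$ can certify that your candidate $u$ lies in $U^{p}$, let alone with $\|u\|_{U^{p}}\leq\|L\|$. One must actually exhibit an atomic decomposition of the candidate, or argue through weak-* compactness together with lower semicontinuity of the $U^{p}$ norm with respect to the $V_{c}^{q}$ pairing; this is the real content of the predual statement and is precisely where continuity and vanishing at $\pm\infty$ enter (for instance, the functional $v\mapsto \jp{h,\lim_{s\to t_{0}^{-}}v(s)-v(t_{0})}$ is bounded on $V^{q}$ but vanishes on $V_{c}^{q}$, so it is not of the form $B(u,\ol{\,\cdot\,})$ for any $u\in U^{p}$; this shows the role swap cannot be carried out verbatim on the larger space). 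As written, your sketch asserts the conclusion of this step rather than proving it, so the second half needs the genuinely different argument found in the cited references.
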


\begin{remark}
Test functions are weak-* dense in $V^{p}$, which is a useful fact for our applications of $U^{p}$-$V^{p}$ duality in section \ref{sec:LTSE}.
\end{remark}

\begin{mydef}[$DU^{p}$ space]
We define the normed space $DU^{p}(\R)$ (resp. $DU^{p}([a,b])$) to be the space of distributional derivatives of $U^{p}(\R)$ (resp. $U^{p}([a,b])$) functions equipped with the induced norm. 
\end{mydef}

We now specialize to the case where $H=L^{2}(\R^{2})$.
\begin{mydef}[$U_{\Delta}^{p}$, $V_{\Delta}^{p}$, and $DU_{\Delta}^{p}$ spaces]
For $1\leq p<\infty$, we define $U_{\Delta}^{p}(\R\times\R^{2}) \coloneqq e^{it\Delta}\paren*{U^{p}(\R; L^{2}(\R^{2}))}$ with norm
\begin{equation}
\|u\|_{U_{\Delta}^{p}(\R\times\R^{2})} \coloneqq \|e^{-it\Delta}u\|_{U^{p}(\R)}.
\end{equation}

We define $V_{\Delta}^{p}(\R\times\R^{2}) \coloneqq e^{it\Delta}\paren*{V^{p}(\R; L^{2}(\R^{2}))}$ with norm
\begin{equation}
\|v\|_{V_{\Delta}^{p}(\R\times\R^{2})} \coloneqq \|e^{-it\Delta}v\|_{V^{p}(\R)}.
\end{equation}
We define $V_{-,\Delta}^{p}(\R\times\R^{2})$ and $V_{-,rc,\Delta}^{p}(\R\times\R^{2})$ with respective norms in completely analogous fashion.

We define $DU_{\Delta}^{p}(\R\times\R^{2}) \coloneqq e^{it\Delta}\paren*{DU^{p}(\R; L^{2}(\R^{2}))}$ with norm
\begin{equation}
\|f\|_{DU_{\Delta}^{p}(\R\times\R^{2})} \coloneqq \|e^{-it\Delta}f\|_{DU^{p}(\R)}.
\end{equation}

We define the spaces $U_{\Delta}^{p}([a,b]\times\R^{2})$, $V_{\pm,\Delta}^{p}([a,b]\times\R^{2})$, $V_{\pm, rc,\Delta}^{p}([a,b]\times\R^{2})$, and $DU_{\Delta}^{p}([a,b]\times\R^{2})$ with respect norms in completely analogous fashion.
\end{mydef}
When the choice of time domain for our adapted function (i.e. $[a,b]$ or $\R$) is not important to the formulation of a result, we will often omit it or use the generic notation $I$ or $J$.

We now define function spaces built on the $U_{\Delta}^{p}$ and $V_{\Delta}^{p}$ spaces, which are adapted to frequency cube decompositions.
\begin{mydef}[$X_{\Delta}^{p,k}$ and $Y_{\Delta}^{p,k}$ spaces]
Let $1<p<\infty$ and $k\in\Z$, and let $J\in\{[a,b],\R\}$. We define $X_{\Delta}^{p,k}(J\times\R^{2})$ to be the normed space of functions such that for each $a\in\mathbb{Z}^{2}$, the function $P_{Q_{a}^{k}}u\in U_{\Delta}^{p}(J\times\R^{2})$, and
\begin{equation}
\|u\|_{X_{\Delta}^{p,k}(J\times\R^{2})}^{2} \coloneqq \sum_{a\in\mathbb{Z}^{2}} \|P_{Q_{a}^{k}}u\|_{U_{\Delta}^{p}(J\times\R^{2})}^{2} < \infty.
\end{equation}
Similarly, we define $Y_{\Delta}^{p,k}(J\times\R^{2})$ to be the normed space of functions such that for each $a\in\mathbb{Z}^{2}$, the function $P_{Q_{a}^{k}}v\in V_{\Delta}^{p}(J\times\R^{2})$, and
\begin{equation}
\|v\|_{Y_{\Delta}^{p,k}(J\times\R^{2})}^{2} \coloneqq \sum_{a\in\mathbb{Z}^{2}} \|P_{Q_{a}^{k}}v\|_{V_{\Delta}^{p}(J\times\R^{2})}^{2} <\infty.
\end{equation}
\end{mydef}

\begin{lemma} 
Let $J\in\{[a,b],\R\}$. Let $\{Q\}$ be a collection of cubes in $\R^{2}$ such that $\|\sum_{Q}1_{Q}\|_{L^{\infty}}<\infty$ (i.e. boundedly overlapping). Then
\begin{equation}
\left(\sum_{Q}\|P_{Q}u\|_{U_{\Delta}^{2}(J\times\R^{2})}^{2}\right)^{1/2} \lesssim \|u\|_{U_{\Delta}^{2}(J\times\R^{2})},
\end{equation}
where the implicit constant only depends on $\|\sum_{Q}1_{Q}\|_{L^{\infty}}$.
\end{lemma}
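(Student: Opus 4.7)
The plan is to reduce to the atomic description of $U^2$ and then use Plancherel together with the bounded overlap hypothesis. First, since $P_Q$ is a Fourier multiplier, it commutes with the free Schrödinger propagator, so $e^{-it\Delta}(P_Q u)=P_Q(e^{-it\Delta}u)$, and by definition of $U_{\Delta}^{2}$ it suffices to prove the Hilbert-space version: for any $f\in U^{2}(J;L^{2}(\R^{2}))$,
\begin{equation*}
\left(\sum_{Q}\|P_{Q}f\|_{U^{2}}^{2}\right)^{1/2} \lesssim \|f\|_{U^{2}}.
\end{equation*}

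The main step is to establish this bound on a single $U^{2}$-atom. Given $\varepsilon>0$, decompose $f=\sum_{j}c_{j}f_{j}$ with each $f_{j}$ a $U^{2}$-atom and $\sum_{j}|c_{j}|\leq (1+\varepsilon)\|f\|_{U^{2}}$. Write $f_{j}=\sum_{k}\mathbf{1}_{[t_{k}^{j},t_{k+1}^{j})}(t)\phi_{k}^{j}$ with $\sum_{k}\|\phi_{k}^{j}\|_{L^{2}}^{2}\leq 1$. Applying $P_{Q}$ and observing that $\frac{1}{a_{Q}}P_{Q}f_{j}$ with $a_{Q}^{2}:=\sum_{k}\|P_{Q}\phi_{k}^{j}\|_{L^{2}}^{2}$ is again a $U^{2}$-atom, I get $\|P_{Q}f_{j}\|_{U^{2}}\leq a_{Q}$. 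Then, swapping the (positive) sums by Tonelli and using Plancherel together with the bounded overlap assumption on $\{Q\}$,
\begin{equation*}
\sum_{Q}\|P_{Q}f_{j}\|_{U^{2}}^{2} \leq \sum_{Q}\sum_{k}\|P_{Q}\phi_{k}^{j}\|_{L^{2}}^{2} = \sum_{k}\sum_{Q}\|P_{Q}\phi_{k}^{j}\|_{L^{2}}^{2} \lesssim \sum_{k}\|\phi_{k}^{j}\|_{L^{2}}^{2} \leq 1,
\end{equation*}
with implicit constant controlled by $\|\sum_{Q}\mathbf{1}_{Q}\|_{L^{\infty}}$.

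To pass from atoms to general $f$, I would apply the triangle inequality in $U^{2}$ followed by Minkowski's inequality in $\ell_{Q}^{2}$ with the $\ell_{j}^{1}$ sum on the outside:
\begin{equation*}
\left(\sum_{Q}\|P_{Q}f\|_{U^{2}}^{2}\right)^{1/2} \leq \left(\sum_{Q}\Big(\sum_{j}|c_{j}|\,\|P_{Q}f_{j}\|_{U^{2}}\Big)^{2}\right)^{1/2} \leq \sum_{j}|c_{j}|\left(\sum_{Q}\|P_{Q}f_{j}\|_{U^{2}}^{2}\right)^{1/2} \lesssim (1+\varepsilon)\|f\|_{U^{2}}.
\end{equation*}
Taking the infimum over atomic decompositions (equivalently, sending $\varepsilon\to 0$) yields the result. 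There is no real obstacle here: the argument is a standard almost-orthogonality computation at the level of atoms, with the only subtlety being the correct packaging of the normalization $a_{Q}$ so that $P_{Q}f_{j}/a_{Q}$ remains admissible as a $U^{2}$-atom.
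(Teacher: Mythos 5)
Your proof is correct and follows essentially the same route as the paper's: decompose $u$ into $U^2$ atoms, observe that $P_Q$ applied to an atom and renormalized by $a_Q=(\sum_k\|P_Q\phi_k\|_{L^2}^2)^{1/2}$ is again an atom, then conclude via Minkowski in $\ell_Q^2$ and Plancherel with the bounded-overlap hypothesis. The preliminary reduction from $U_\Delta^2$ to $U^2$ via commuting $P_Q$ with $e^{it\Delta}$ is only a cosmetic difference, since the paper handles the propagator inside the atoms directly using its unitarity on $L^2$.
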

\begin{proof}
Suppose $u=\sum_{j}c_{j}u_{j}$ is a $U_{\Delta}^{2}$ atomic decomposition of $u$, where
\begin{equation}
u_{j} = \sum_{k}1_{[t_{j,k},t_{j,k+1})} e^{it\Delta}u_{j,k}.
\end{equation}
For each cube $Q$, define the function
\begin{equation}
u_{j, Q} \coloneqq \sum_{k}1_{[t_{j,k},t_{j,k+1})}e^{it\Delta}\frac{P_{Q}u_{j,k}}{(\sum_{k}\|P_{Q}u_{j,k}\|_{L^{2}}^{2})^{1/2}},
\end{equation}
which is a $U_{\Delta}^{2}$ atom. Observe that 
\begin{equation}
u_{Q} \coloneqq P_{Q}u = \sum_{j}c_{j,Q}u_{j, Q}, \qquad c_{j, Q} \coloneqq c_{j}\paren*{\sum_{k}\|P_{Q}u_{j, k}\|_{L^{2}}^{2}}^{1/2}
\end{equation}
is an atomic decomposition for $u_{Q}$. Therefore by Minkowski's inequality,
\begin{align}
\left(\sum_{Q}\|u_{Q}\|_{U_{\Delta}^{2}(J\times\R^{2})}^{2}\right)^{1/2} \leq \left(\sum_{Q} \left(\sum_{j}|c_{j}|\left(\sum_{k}\|P_{Q}u_{j, k}\|_{L^{2}}^{2}\right)^{1/2}\right)^{2} \right)^{1/2}	&\leq \sum_{j}|c_{j}| \left(\sum_{Q}\sum_{k}\|P_{Q}u_{j,k}\|_{L^{2}}^{2}\right)^{1/2} \nonumber\\
&= \sum_{j}|c_{j}| \left(\sum_{k}\sum_{Q}\|P_{Q}u_{j, k}\|_{L^{2}}^{2}\right)^{1/2} \nonumber\\
&\lesssim \sum_{j}|c_{j}|\left(\sum_{k}\|u_{j, k}\|_{L^{2}}^{2}\right)^{1/2}\nonumber\\
&= \sum_{j}|c_{j}|,
\end{align}
where we use Plancherel's theorem together with the bounded overlap of the cubes $Q$ to obtain the penultimate inequality. Taking the infimmum of the RHS of the ultimate equality over all $U_{\Delta}^{2}$ atomic decompositions of $u$ completes the proof of the lemma.
\end{proof}

\begin{prop}[$X_{\Delta}^{2,k}$ and $Y_{\Delta}^{2,k}$ embeddings]
Uniformly in $k\in\mathbb{Z}$, we have the continuous embeddings
\begin{equation}
U_{\Delta}^{2} \subset X_{\Delta}^{2,k} \subset Y_{\Delta}^{2,k} \subset V_{\Delta}^{2}.
\end{equation}
\end{prop}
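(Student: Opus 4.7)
The plan is to handle the three embeddings in turn, with uniformity in $k$ coming for free in each case because the relevant constants depend only on the overlap of the cube tiling (which is $1$) and on numerical embedding constants from the abstract $U^p$--$V^p$ theory.

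First, for $U_{\Delta}^{2} \subset X_{\Delta}^{2,k}$, I would simply invoke the preceding lemma with the family $\{Q_{a}^{k}\}_{a\in\mathbb{Z}^{2}}$. This family tiles $\hat{\R}^{2}$ with overlap constant $1$, so the lemma yields
\[
\|u\|_{X_{\Delta}^{2,k}}^{2} = \sum_{a\in\mathbb{Z}^{2}} \|P_{Q_{a}^{k}}u\|_{U_{\Delta}^{2}}^{2} \lesssim \|u\|_{U_{\Delta}^{2}}^{2},
\]
with implicit constant independent of $k$.

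Next, for $X_{\Delta}^{2,k}\subset Y_{\Delta}^{2,k}$, I would apply termwise the abstract embedding $U^{2}\subset V_{-,rc}^{2}$ with $\|\cdot\|_{V^{2}}\leq 2^{1/2}\|\cdot\|_{U^{2}}$ (recorded in the $U^{p}$ basic properties above) to each projected piece $P_{Q_{a}^{k}}u$, conjugated by $e^{it\Delta}$:
\[
\|u\|_{Y_{\Delta}^{2,k}}^{2} = \sum_{a\in\mathbb{Z}^{2}}\|P_{Q_{a}^{k}}u\|_{V_{\Delta}^{2}}^{2} \leq 2\sum_{a\in\mathbb{Z}^{2}}\|P_{Q_{a}^{k}}u\|_{U_{\Delta}^{2}}^{2} = 2\|u\|_{X_{\Delta}^{2,k}}^{2}.
\]
Again the constant is absolute.

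For the third embedding $Y_{\Delta}^{2,k}\subset V_{\Delta}^{2}$ I would exploit the fact that the cube projectors $P_{Q_{a}^{k}}$ commute with $e^{it\Delta}$ (both being Fourier multipliers) and that the family $\{P_{Q_{a}^{k}}\}_{a\in\mathbb{Z}^{2}}$ forms an orthogonal decomposition of the identity on $L^{2}(\R^{2})$. Fix any partition $\{t_{j}\}\in\mathcal{Z}$. By Plancherel,
\[
\|e^{-it_{j+1}\Delta}u(t_{j+1})-e^{-it_{j}\Delta}u(t_{j})\|_{L^{2}}^{2} = \sum_{a\in\mathbb{Z}^{2}} \|e^{-it_{j+1}\Delta}P_{Q_{a}^{k}}u(t_{j+1}) - e^{-it_{j}\Delta}P_{Q_{a}^{k}}u(t_{j})\|_{L^{2}}^{2}.
\]
Summing over $j$ and interchanging the order of summation,
\[
\sum_{j}\|e^{-it_{j+1}\Delta}u(t_{j+1})-e^{-it_{j}\Delta}u(t_{j})\|_{L^{2}}^{2} = \sum_{a\in\mathbb{Z}^{2}} \sum_{j}\|e^{-it_{j+1}\Delta}P_{Q_{a}^{k}}u(t_{j+1}) - e^{-it_{j}\Delta}P_{Q_{a}^{k}}u(t_{j})\|_{L^{2}}^{2} \leq \sum_{a\in\mathbb{Z}^{2}}\|P_{Q_{a}^{k}}u\|_{V_{\Delta}^{2}}^{2} = \|u\|_{Y_{\Delta}^{2,k}}^{2}.
\]
Taking the supremum over partitions on the left gives $\|u\|_{V_{\Delta}^{2}}\leq\|u\|_{Y_{\Delta}^{2,k}}$.

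There is really no hard step here; the only point that requires a touch of care is that one must swap the $j$-sum with the $a$-sum \emph{before} taking the supremum over partitions, since each cube-projected piece typically requires its own optimizing partition. The swap is legitimate because both sums are of nonnegative quantities, and afterwards the inner $j$-sum is dominated by the $V_{\Delta}^{2}$ norm of the individual piece for that one fixed partition. Chaining the three displayed inequalities produces the asserted continuous embeddings with $k$-independent constants.
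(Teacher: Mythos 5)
Your argument is correct, and the first two embeddings follow the paper's route exactly: $U_{\Delta}^{2}\subset X_{\Delta}^{2,k}$ is the preceding lemma applied to the tiling $\{Q_{a}^{k}\}$, and $X_{\Delta}^{2,k}\subset Y_{\Delta}^{2,k}$ is a termwise application of $\|\cdot\|_{V^{2}}\leq 2^{1/2}\|\cdot\|_{U^{2}}$. Where you genuinely diverge is the last embedding: the paper disposes of $Y_{\Delta}^{2,k}\subset V_{\Delta}^{2}$ by duality, i.e.\ it dualizes the already-proved inclusion $U_{\Delta}^{2}\subset X_{\Delta}^{2,k}$ using $(U^{2})^{*}\cong V^{2}$ and the fact that the dual of an $\ell^{2}$-sum of $U^{2}$ pieces is the $\ell^{2}$-sum of $V^{2}$ pieces, whereas you prove it directly from Plancherel: the sharp projectors $P_{Q_{a}^{k}}=1_{Q_{a}^{k}}(D)$ are exactly orthogonal, so for each fixed partition the $j$-sum of increments splits as an $a$-sum, and interchanging the two nonnegative sums before taking the supremum over partitions gives $\|u\|_{V_{\Delta}^{2}}\leq\|u\|_{Y_{\Delta}^{2,k}}$ with constant $1$. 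Your route buys an elementary, self-contained proof that sidesteps the duality pairing (and the asymmetry that $V^{2}$ is the dual of $U^{2}$ but $U^{2}$ is only the predual of $V_{c}^{2}$), at the cost of being slightly longer; the paper's duality argument is the standard one-line device in the $U^{p}$--$V^{p}$ literature. One small point you may wish to record: your computation bounds the $V^{2}$ variation of $e^{-it\Delta}u$, and to conclude membership $u\in V_{\Delta}^{2}$ one should also note that the limits of $e^{-it\Delta}u(t)$ at $\pm\infty$ exist (with value $0$ at $+\infty$), which follows by applying the same orthogonality estimate to tails of the decomposition $u=\sum_{a}P_{Q_{a}^{k}}u$; this is routine but worth a sentence.
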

\begin{proof}
It suffices to prove the embedding $U_{\Delta}^{2}\subset X_{\Delta}^{2,k}$, as the remaining embeddings follow from $U_{\Delta}^{2}\subset V_{\Delta}^{2}$ and duality. But this embedding is an immediate consequence of the preceding lemma.
\end{proof}

\begin{prop}\label{prop:V_dual}
Let $1<p<\infty$. Let $J\subset \R$ be a subinterval, and let $F\in L_{t}^{1}L_{x}^{2}(J\times\R^{2})$. Then
\begin{equation}
\left\|\int_{0}^{t}e^{i(t-\tau)\Delta}F(\tau)d\tau\right\|_{U_{\Delta}^{p}(J\times\R^{2})}\lesssim \sup_{\|v\|_{V_{-,rc,\Delta}^{p'}(J\times\R^{2})=1}} \left|\int_{J}\langle{F(t),v(t)}\rangle dt\right|.
\end{equation}
\end{prop}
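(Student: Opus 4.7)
The plan is to reduce the estimate to the standard $U^{p}$-$V^{p'}$ duality on the half-line and then transfer the pairing back onto $F$ using the unitarity of $e^{it\Delta}$. First I would set
\begin{equation*}
\tilde u(t) \coloneqq e^{-it\Delta}\Big(\int_{0}^{t}e^{i(t-\tau)\Delta}F(\tau)\,d\tau\Big) = \int_{0}^{t}e^{-i\tau\Delta}F(\tau)\,d\tau,
\end{equation*}
which, since $e^{-it\Delta}$ is an $L^{2}$-isometry and $F\in L_{t}^{1}L_{x}^{2}(J\times\R^{2})$, defines an absolutely continuous $L^{2}(\R^{2})$-valued function on $J$ with distributional derivative $\tilde u'(t)=e^{-it\Delta}F(t)$. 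By the very definition of $U_{\Delta}^{p}$, the quantity we wish to estimate is $\|\tilde u\|_{U^{p}(J;L^{2})}$.

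Next I would invoke the $U^{p}$-$V^{p'}$ duality recalled in the excerpt. The stated isometries give $\|\tilde u\|_{U^{p}} \lesssim \sup_{\|w\|_{V^{p'}}=1}|B(\tilde u,w)|$, and using the continuous inclusion $V_{c}^{p'}\subset V_{-,rc}^{p'}\subset V^{p'}$ together with the realization of the predual via $V_{c}^{p'}$, the supremum may be taken over $w\in V_{-,rc}^{p'}(J;L^{2})$ with $\|w\|_{V^{p'}}=1$. Test functions (in particular smooth compactly supported $L^{2}$-valued paths) are weak-$*$ dense in this class, so it suffices to compute $B(\tilde u,w)$ for $w$ sufficiently regular.

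For such $w$, the key computation is to identify the abstract Riemann-Stieltjes pairing $B(\tilde u,w)$ with a concrete integral. Since $\tilde u$ is absolutely continuous with $\tilde u(\inf J)=0$, a telescoping argument on the defining partition sums of $B$, combined with dominated convergence as the partition is refined, yields
\begin{equation*}
B(\tilde u,w) = \int_{J}\jp{\tilde u'(t),w(t)}_{L^{2}}\,dt = \int_{J}\jp{e^{-it\Delta}F(t),w(t)}_{L^{2}}\,dt = \int_{J}\jp{F(t),e^{it\Delta}w(t)}_{L^{2}}\,dt,
\end{equation*}
where the last equality uses that $(e^{-it\Delta})^{*}=e^{it\Delta}$. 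Setting $v(t)\coloneqq e^{it\Delta}w(t)$ defines an isometry $V_{-,rc}^{p'}(J;L^{2})\to V_{-,rc,\Delta}^{p'}(J\times\R^{2})$, so taking the supremum over all admissible $w$ gives precisely the right-hand side of the claimed inequality.

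The main obstacle is the Riemann-Stieltjes identification in the previous paragraph: one must justify passing from the discrete definition of $B$ as a sum indexed by partitions of $\mathcal{Z}$ to a Bochner integral against $\tilde u'$, uniformly for $w$ ranging over a ball in $V_{-,rc}^{p'}$. This is handled by first verifying the identity for $\tilde u\in\mathscr{S}_{rc}$ and piecewise constant $w$, where both sides reduce to the same finite sum, and then extending by density using the bound $|B(u,v)|\le\|u\|_{U^{p}}\|v\|_{V^{p'}}$ and the $L_{t}^{1}L_{x}^{2}$ regularity of $F$ (which guarantees approximability of $\tilde u$ in $U^{p}$ by step functions obtained from partitioning $J$ and averaging $\tilde u'$). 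All remaining steps are algebraic and use only the unitarity of $e^{it\Delta}$ and the definitions.
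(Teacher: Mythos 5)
Your argument is correct. The paper's own proof is essentially a citation: it observes that $F\in L_{t}^{1}L_{x}^{2}$ forces the Duhamel integral to lie in $V_{-,c}^{1}(J\times\R^{2})\subset U_{\Delta}^{p}(J\times\R^{2})$, and then invokes Lemma 4.34 of \cite{koch2014dispersive}, which is exactly the duality statement you establish by hand. What you do differently is reprove that lemma: you pass to the profile $\tilde u(t)=\int_{0}^{t}e^{-i\tau\Delta}F(\tau)\,d\tau$, use the surjective isometry $U^{p}\to (V_{c}^{p'})^{*}$ together with $V_{c}^{p'}\subset V_{-,rc}^{p'}$ (which goes in the right direction, since enlarging the test class only increases the supremum), identify $B(\tilde u,w)$ with $\int_{J}\jp{\tilde u'(t),w(t)}\,dt$ by step-function approximation — using $V_{-,rc}^{1}\subset U^{p}$ and absolute continuity of the integral of $\tilde u'\in L_{t}^{1}L_{x}^{2}$ to make the approximation converge in $U^{p}$, and the bound $|B(u,v)|\le\|u\|_{U^{p}}\|v\|_{V^{p'}}$ to pass to the limit — and finally transfer by unitarity of $e^{it\Delta}$, which is an isometry of $V_{-,rc}^{p'}$ onto $V_{-,rc,\Delta}^{p'}$. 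This buys a self-contained proof at the cost of the Riemann--Stieltjes bookkeeping, which you carry out correctly; the only cosmetic blemish is the claim $\tilde u(\inf J)=0$, which need not hold when $0\neq\inf J$ but is harmless because the $U^{p}([a,b])$ norm is defined through $u1_{[a,b)}$ and the pairing uses the base-point convention $u(t_{0})=0$.
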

\begin{proof}
The hypothesis that $F\in L_{t}^{1}L_{x}^{2}(J\times\R^{2})$ implies that $\int_{0}^{t}e^{i(t-\tau)\Delta}F(\tau)d\tau \in V_{-,c}^{1}(J\times\R^{2})\subset U^{p}(J\times\R^{2})$. The desired conclusion then follows from Lemma 4.34 in \cite{koch2014dispersive}.
\end{proof}

The following proposition was proved in \cite{Dodson2016} and is extremely useful in the estimation of $U_{\Delta}^{2}$ norms.
\begin{prop}[Lemma 3.4 of \cite{Dodson2016}]\label{prop:U2_duh} 
Let $J=\bigcup_{m=1}^{k}J^{m}$ be a union of consecutive intervals $J^{m}=[a_{m},b_{m}]$, where $b_{m}=a_{m+1}$ for $m=1,\ldots,k-1$. Let $F\in L_{t}^{1}L_{x}^{2}(J\times\R^{2})$. Then for any $t_{0}\in J$,
	\begin{equation}
		\begin{split}
			\left\|\int_{t_{0}}^{t}e^{i(t-\tau)\Delta}F(\tau)d\tau\right\|_{U_{\Delta}^{2}(J\times\R^{2})} &\lesssim \sum_{m=1}^{k}\left\|\int_{J^{m}}e^{-i\tau}F(\tau)d\tau\right\|_{L^{2}(\R^{2})}\\
			&\phantom{=} +\left(\sum_{m=1}^{k}\left(\sup_{\|v_{m}\|_{V_{-,rc,\Delta}^{2}(J^{m}\times\R^{2})}=1}\left|\int_{J^{m}}\langle{F(\tau),v_{m}(\tau)}\rangle d\tau\right|\right)^{2}\right)^{1/2}
		\end{split}
	\end{equation}
\end{prop}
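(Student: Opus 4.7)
The plan is to strip off the free evolution by conjugation and reduce to estimating $\|g\|_{U^2(J)}$, where $g(t) \coloneqq \int_{t_0}^{t} f(\tau)\,d\tau$ and $f(\tau) \coloneqq e^{-i\tau\Delta}F(\tau) \in L_t^1 L_x^2(J\times \R^2)$. Using Proposition \ref{prop:Up_lp} to split $J$ at $t_0$ (refining the partition if $t_0$ lies interior to some $J^{m_0}$, which only enlarges the right-hand side by Minkowski's inequality), I may assume without loss of generality that $t_0 = a_1$ and work with $g(t) = \int_{a_1}^{t} f(\tau)\,d\tau$.

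The central idea is a splitting $g = g_1 + g_2$ into a ``staircase'' piece that carries all the mass transferred between consecutive subintervals, and a ``local Duhamel'' piece that starts afresh from zero at the left endpoint of each $J^m$. With $A_m \coloneqq \int_{J^m} f(\tau)\,d\tau$ and $h_m(t) \coloneqq \int_{a_m}^{t} f(\tau)\,d\tau$ for $t \in J^m$, I set
\begin{equation*}
g_1(t) \coloneqq \sum_{m=1}^{k-1} A_m \, 1_{[a_{m+1},\infty)}(t), \qquad g_2(t) \coloneqq \sum_{m=1}^{k} 1_{J^m}(t)\, h_m(t),
\end{equation*}
and verify the identity $g = g_1 + g_2$ on $J$ by induction on $m$.

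For $g_1$, each summand $A_m \, 1_{[a_{m+1},\infty)}$ is a single $U^2$-atom (take $N=1$, $t_1 = a_{m+1}$, $u_1 = A_m/\|A_m\|_{L^2}$) rescaled by $\|A_m\|_{L^2}$, so the triangle inequality gives $\|g_1\|_{U^2} \leq \sum_m \|A_m\|_{L^2}$, which translates back through $e^{it\Delta}$ to the first term on the right-hand side. For $g_2$, iterating Proposition \ref{prop:Up_lp} yields $\|g_2\|_{U^2(J)}^2 \leq \sum_{m=1}^{k} \|h_m\|_{U^2(J^m)}^2$. On each $J^m$, the function $e^{it\Delta}h_m(t) = \int_{a_m}^{t} e^{i(t-\tau)\Delta}F(\tau)\,d\tau$ is exactly the Duhamel parametrix based at $a_m$, so Proposition \ref{prop:V_dual} (shifted from $0$ to $a_m$ by time translation) bounds its $U^2_\Delta(J^m\times\R^2)$ norm by the $V^2_{-,rc,\Delta}$-duality supremum; squaring and summing over $m$ produces the second term on the right-hand side.

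\textbf{Main obstacle.} The crux is the decomposition $g = g_1 + g_2$: without offloading the cumulative averages onto $g_1$, the local pieces would not vanish at the left endpoint of each $J^m$ and Proposition \ref{prop:V_dual} could not be applied subinterval-by-subinterval. The asymmetric right-hand side (an $\ell^1$ sum for $g_1$, an $\ell^2$ sum for $g_2$) is then forced by the atomic structure of $U^2$ on the one hand and the $\ell^2$-subadditivity supplied by Proposition \ref{prop:Up_lp} on the other.
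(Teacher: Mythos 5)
Your proof is correct and follows essentially the argument of the cited source (the paper itself does not reprove this statement, deferring to Dodson's Lemma 3.4): conjugate by the free flow, split the resulting primitive into a staircase of subinterval averages, estimated atom-by-atom in $U^{2}$, plus local Duhamel pieces vanishing at the left endpoint of each $J^{m}$, estimated by $\ell^{2}$-subadditivity (Proposition \ref{prop:Up_lp}) and duality (Proposition \ref{prop:V_dual}). Two small repairs: define $g_{2}$ with half-open indicators $1_{[a_{m},a_{m+1})}$, since with closed intervals $g_{1}+g_{2}$ overcounts $A_{m}$ at the interface points $t=b_{m}$ and the discrepancy $A_{m}1_{\{b_{m}\}}$ is not even right-continuous; and for interior $t_{0}$ the reduction is not just Minkowski's inequality, because refining replaces $\|\int_{J^{m_{0}}}e^{-i\tau\Delta}F\,d\tau\|_{L^{2}}$ by the two partial integrals, which are not dominated by it — instead absorb $\|\int_{a_{m_{0}}}^{t_{0}}e^{-i\tau\Delta}F\,d\tau\|_{L^{2}}$ and $\|\int_{t_{0}}^{b_{m_{0}}}e^{-i\tau\Delta}F\,d\tau\|_{L^{2}}$ into the duality term on $J^{m_{0}}$ by testing against $v(\tau)=e^{i\tau\Delta}\phi\,1_{[a_{m_{0}},t_{0})}(\tau)$, which lies in $V_{-,rc,\Delta}^{2}(J^{m_{0}}\times\R^{2})$ with norm $\lesssim\|\phi\|_{L^{2}}$.
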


\begin{remark}
The assumption $F\in L_{t}^{1}L_{x}^{2}(J\times\R^{2})$ is purely qualitative: the implied constants in the statements of propositons \ref{prop:V_dual} and \ref{prop:U2_duh} do not depend on the quantity $\|F\|_{L_{t}^{1}L_{x}^{2}(J\times\R^{2})}$. Moreover, this condition will always be satisfied by Strichartz estimates.
\end{remark}

Linear and bilinear Strichartz estimates transfer to $U_{\Delta}^{p}$ and $V_{\Delta}^{p}$ spaces, a consequence of a more general transference principle underlying these spaces.
\begin{prop}[Transference principle, Proposition 2.19 of \cite{Hadac2009}]\label{prop:trans} 
Let $I\subset \R$ be an interval, and let
\begin{equation}
T_{0}:L^{2}(\R^{2})\times\cdots\times L^{2}(\R^{2})\rightarrow L_{loc}^{1}(\R^{2})
\end{equation}
be an $n$-linear operator. If for some $1\leq p,q\leq \infty$, we have that
\begin{equation}
		\left\|T_{0}\left(e^{it\Delta}\phi_{1},\ldots,e^{it\Delta}\phi_{n}\right)\right\|_{L_{t}^{p}L_{x}^{q}(I\times\R^{2})}\leq C_{p,q}\prod_{i=1}^{n}\|\phi_{i}\|_{L^{2}(\R^{2})},
\end{equation}
then there exists an extension $T:U_{\Delta}^{p}(I\times\R^{2})\times\cdots\times U_{\Delta}^{p}(I\times\R^{2})\rightarrow L_{t}^{p}L_{x}^{q}(I\times\R^{2})$ such that
\begin{equation}
		\|T(u_{1},\ldots,u_{n})\|_{L_{t}^{p}L_{x}^{q}(I\times\R^{2})}\leq C_{p,q}\prod_{i=1}^{n}\|u_{i}\|_{U_{\Delta}^{p}(I\times\R^{2})}
\end{equation}
and $T(u_{1},\ldots,u_{n})(t)(x)=T_{0}(u_{1}(t),\ldots,u_{n}(t))(x)$ a.e.
\end{prop}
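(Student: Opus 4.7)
The plan is to reduce by multilinearity to the case where each input $u_i$ is a single $U^p_\Delta$-atom, and then exploit the piecewise structure of such atoms together with the hypothesis on $T_0$ applied interval-by-interval.

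First, I would define the extension pointwise in time: for $u_i \in U^p_\Delta(I\times\R^2)$ belonging to a dense class, set $T(u_1,\ldots,u_n)(t,x) := T_0(u_1(t),\ldots,u_n(t))(x)$. When each $u_i$ is a $U^p_\Delta$-atom, $e^{-it\Delta}u_i = \sum_{k_i=1}^{N_i} \mathbf{1}_{[t^i_{k_i},t^i_{k_i+1})}(t)\,\phi^i_{k_i}$ with $\sum_{k_i}\|\phi^i_{k_i}\|_{L^2}^p \leq 1$, so after forming the common refinement $\{s_j\}_{j=1}^{M}$ of the $n$ partitions one has
\begin{equation*}
T(u_1,\ldots,u_n)(t) \;=\; \sum_{j=1}^{M}\mathbf{1}_{[s_j,s_{j+1})}(t)\,T_0\bigl(e^{it\Delta}\phi^1_{k_1(j)},\ldots,e^{it\Delta}\phi^n_{k_n(j)}\bigr),
\end{equation*}
where $k_i(j)$ is the unique index such that $[s_j,s_{j+1})\subset [t^i_{k_i(j)},t^i_{k_i(j)+1})$.

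Next I would bound the $L^p_tL^q_x$ norm of this atomic output. By disjointness of the refining intervals,
\begin{equation*}
\|T(u_1,\ldots,u_n)\|_{L^p_tL^q_x(I\times\R^2)}^{p} \;=\; \sum_{j=1}^{M}\bigl\|\mathbf{1}_{[s_j,s_{j+1})}\,T_0\bigl(e^{it\Delta}\phi^1_{k_1(j)},\ldots,e^{it\Delta}\phi^n_{k_n(j)}\bigr)\bigr\|_{L^p_tL^q_x}^{p},
\end{equation*}
and the hypothesis on $T_0$ bounds the $j$-th summand by $C_{p,q}^{\,p}\prod_{i=1}^{n}\|\phi^i_{k_i(j)}\|_{L^2}^{p}$. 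Since distinct refinement indices $j\neq j'$ produce distinct tuples $(k_1(j),\ldots,k_n(j))\neq (k_1(j'),\ldots,k_n(j'))$, the combinatorial inequality
\begin{equation*}
\sum_{j=1}^{M}\prod_{i=1}^{n}\|\phi^i_{k_i(j)}\|_{L^2}^{p} \;\leq\; \prod_{i=1}^{n}\sum_{k_i=1}^{N_i}\|\phi^i_{k_i}\|_{L^2}^{p} \;\leq\; 1
\end{equation*}
holds, giving $\|T(u_1,\ldots,u_n)\|_{L^p_tL^q_x(I\times\R^2)} \leq C_{p,q}$ whenever each $u_i$ is an atom.

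Finally, for general $u_i\in U^p_\Delta$ I would take atomic decompositions $e^{-it\Delta}u_i = \sum_{j_i}c^i_{j_i}a^i_{j_i}$ with $\sum_{j_i}|c^i_{j_i}|$ arbitrarily close to $\|u_i\|_{U^p_\Delta}$, and use $n$-linearity together with the triangle inequality in $L^p_tL^q_x$ to obtain
\begin{equation*}
\|T(u_1,\ldots,u_n)\|_{L^p_tL^q_x(I\times\R^2)} \;\leq\; \sum_{j_1,\ldots,j_n}|c^1_{j_1}|\cdots|c^n_{j_n}|\,C_{p,q} \;=\; C_{p,q}\prod_{i=1}^{n}\sum_{j_i}|c^i_{j_i}|,
\end{equation*}
and passing to the infimum yields the claimed bound. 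Continuity of the resulting multilinear map then allows unambiguous extension to all of $U^p_\Delta\times\cdots\times U^p_\Delta$, and the identity $T(u_1,\ldots,u_n)(t,x) = T_0(u_1(t),\ldots,u_n(t))(x)$ a.e.\ persists in the limit since it holds for the dense class of finite atomic combinations. There is no serious obstacle: the only mildly subtle point is the combinatorial estimate on the common refinement, which is the reason one only needs $T_0$ to act on pure linear evolutions in the hypothesis.
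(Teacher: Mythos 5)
Your proposal is correct and is essentially the standard argument: the paper itself omits the proof and cites Proposition 2.19 of \cite{Hadac2009}, whose proof is exactly this reduction to $U_{\Delta}^{p}$-atoms, application of the hypothesis on $T_{0}$ on each interval of the common refinement, and the injectivity of the map from refinement intervals to index tuples giving $\sum_{j}\prod_{i}\|\phi^{i}_{k_{i}(j)}\|_{L^{2}}^{p}\leq\prod_{i}\sum_{k_{i}}\|\phi^{i}_{k_{i}}\|_{L^{2}}^{p}\leq 1$, followed by multilinearity and density.
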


In the proof of our long-time Strichartz estimate (theorem \ref{thm:LTSE}), we make use of the following interpolation result for bounded linear operators from $U_{\Delta}^{p}$ into a Banach space. Although this result is not strictly necessary, it is convenient for avoiding the fact that the $V_{\Delta}^{2+}\subset U_{\Delta}^{2}$ but that $U_{\Delta}^{2}\subsetneq V_{\Delta}^{2}$ when applying bilinear estimates.

\begin{prop}[Linear interpolation, Propositon 2.20 of \cite{Hadac2009}]\label{prop:lin_interp}
Let $q>1$, $X$ be a Banach space, and $T:U_{\Delta}^{q}\rightarrow X$ be a bounded linear operator with norm $\|T\|_{U_{\Delta}^{q}\rightarrow X}\leq C_{q}$. Additionally, suppose that there is $C_{p}\in (0,C_{q}]$, for some $1\leq p<q$, such that $T$ is well-defined on $U_{\Delta}^{p}$ and satisfies the estimate $\|Tu\|_{X}\leq C_{p}\|u\|_{U_{\Delta}^{p}}$ for all $u\in U_{\Delta}^{p}$. Then $T$ satisfies the estimate
\begin{equation}
		\|Tu\|_{X} \leq \frac{4C_{p}}{\alpha_{p,q}}\left(\ln\frac{C_{q}}{C_{p}}+2\alpha_{p,q}+1\right)\|u\|_{V_{\Delta}^{p}}, \qquad \forall u\in V_{-,rc,\Delta}^{p},
\end{equation}
where $\alpha_{p,q}\coloneqq (1-\frac{p}{q})\ln(2)$.
\end{prop}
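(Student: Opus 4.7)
\medskip

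\noindent\textbf{Proof proposal.} The plan is to decompose an arbitrary $u\in V_{-,rc,\Delta}^{p}$ into a telescoping sum of step-function approximations at dyadic scales, estimate each piece in both $U_{\Delta}^{p}$ and $U_{\Delta}^{q}$ using the two hypotheses of the proposition, and then sum the resulting bounds optimally. By replacing $u$ by $e^{-it\Delta}u$, it suffices to work with $V^{p}$ functions valued in $H=L^{2}(\R^{2})$; we may also normalize $\|u\|_{V^{p}}=1$, so that in particular $\|u(t)\|_{H}\leq 1$ for all $t$.

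\medskip

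First, for each integer $k\geq 0$, I would construct a step-function approximation $u_{k}$ of $u$ adapted to the scale $2^{-k}$ via stopping times: set $t_{0}^{k}=-\infty$ and inductively
\[
t_{j+1}^{k} \;=\; \inf\bigl\{t>t_{j}^{k}:\ \|u(t)-u(t_{j}^{k})\|_{H}\geq 2^{-k}\bigr\},
\]
taking $t_{j+1}^{k}=\infty$ if the set is empty, and define $u_{k}(t)=\sum_{j}\mathbf{1}_{[t_{j}^{k},t_{j+1}^{k})}(t)\,u(t_{j}^{k})$. Right-continuity ensures $\|u-u_{k}\|_{\infty}\leq 2^{-k}$, and since consecutive jumps contribute at least $(2^{-k})^{p}$ to $\|u\|_{V^{p}}^{p}=1$, the number of jumps satisfies $N_{k}\leq 2^{kp}$. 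The telescoping differences $v_{k}\coloneqq u_{k}-u_{k-1}$ (with $u_{-1}\equiv 0$) are then step functions on a common refinement of $\{t_{j}^{k}\}\cup\{t_{j}^{k-1}\}$, with at most $2N_{k}$ steps, each of $H$-norm at most $3\cdot 2^{-k}$. Regarded as scalar multiples of atoms, they satisfy
\[
\|v_{k}\|_{U^{p}} \;\lesssim\; (2N_{k})^{1/p}\cdot 2^{-k} \;\lesssim\; 1,
\qquad
\|v_{k}\|_{U^{q}} \;\lesssim\; (2N_{k})^{1/q}\cdot 2^{-k} \;\lesssim\; 2^{-k(1-p/q)}.
\]
A routine density/approximation argument, using right-continuity and $u(-\infty)=0$, will give the convergence $u=\sum_{k\geq 0}v_{k}$ in a sense strong enough to allow termwise application of $T$.

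\medskip

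Having constructed the decomposition, the second step is to apply the two hypotheses. For each $k$,
\[
\|Tv_{k}\|_{X} \;\leq\; \min\!\bigl(C_{p}\|v_{k}\|_{U^{p}},\ C_{q}\|v_{k}\|_{U^{q}}\bigr) \;\lesssim\; \min\!\bigl(C_{p},\ C_{q}\,2^{-k(1-p/q)}\bigr).
\]
The two estimates balance when $C_{p}=C_{q}\,2^{-K_{0}(1-p/q)}$, i.e.\ at $K_{0}=\ln(C_{q}/C_{p})/\alpha_{p,q}$. Summing with the $C_{p}$-bound for $k\leq K_{0}$ and the geometric $C_{q}$-bound for $k>K_{0}$, and using the elementary inequality $1-2^{-\beta}\geq \beta\ln 2/2$ on $\beta=1-p/q\in(0,1]$ to control the tail geometric series by $2C_{p}/(\alpha_{p,q}C_{q})$, I obtain
\[
\|Tu\|_{X} \;\leq\; C\,C_{p}\,K_{0} \;+\; \frac{C'\,C_{p}}{\alpha_{p,q}} \;\lesssim\; \frac{C_{p}}{\alpha_{p,q}}\Bigl(\ln\tfrac{C_{q}}{C_{p}}+2\alpha_{p,q}+1\Bigr),
\]
which is the asserted bound once the absolute constants are tracked.

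\medskip

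The main technical obstacle is the first step: verifying cleanly that $v_{k}\in U^{p}\cap U^{q}$ with the claimed norms. The subtlety is that $u_{k}$ and $u_{k-1}$ are built from different stopping times, so one must argue on the common refinement and carefully count that the total number of breakpoints of $v_{k}$ remains $O(N_{k})$; one also needs to confirm that the value of $v_{k}$ on each sub-interval is controlled, using $\|v_{k}\|_{\infty}\leq\|u-u_{k}\|_{\infty}+\|u-u_{k-1}\|_{\infty}\leq 3\cdot 2^{-k}$. The other delicate point is the convergence $\sum_{k}v_{k}=u$, which requires that $\|u-u_{k}\|_{\infty}\to 0$ (immediate from the construction) together with continuity of $T$ on $U_{\Delta}^{p}$ in order to pass $T$ through the sum; right-continuity of $u$ and the hypothesis $u\in V_{-,rc}^{p}$ are precisely what is needed to make this step rigorous.
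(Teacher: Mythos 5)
The paper itself does not prove this proposition; it quotes Proposition 2.20 of \cite{Hadac2009}, and your argument essentially reconstructs the proof given there: the stopping-time approximations $u_{k}$ at scales $2^{-k}$, the jump count $N_{k}\leq 2^{kp}$ coming from $\|u\|_{V^{p}}=1$, the resulting bounds $\|v_{k}\|_{U^{p}}\lesssim 1$ and $\|v_{k}\|_{U^{q}}\lesssim 2^{-k(1-p/q)}$, and the balancing at $K_{0}=\ln(C_{q}/C_{p})/\alpha_{p,q}$ are exactly the ingredients of the cited proof (there the scales $k\leq K_{0}$ are packaged into a single $U^{p}$ piece and the tail into a single $U^{q}$ piece, which differs from your termwise summation only cosmetically).

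Two points need attention. First, your mechanism for applying $T$ termwise is misidentified: the series $\sum_{k}v_{k}$ does \emph{not} converge in $U_{\Delta}^{p}$ (you only have $\|v_{k}\|_{U^{p}}\lesssim 1$, and since the inclusion $U^{p}\subset V_{-,rc}^{p}$ is strict, $u$ need not belong to $U_{\Delta}^{p}$ at all), so ``continuity of $T$ on $U_{\Delta}^{p}$'' cannot justify the interchange. The correct route is through $q$: since $\sum_{k}\|v_{k}\|_{U^{q}}\lesssim\sum_{k}2^{-k(1-p/q)}<\infty$, the partial sums are Cauchy in $U_{\Delta}^{q}$, and because $\|\cdot\|_{\sup}\leq\|\cdot\|_{U^{q}}$ while the partial sums converge uniformly to $u$, the $U^{q}$-limit is $u$; the hypothesis $\|T\|_{U_{\Delta}^{q}\rightarrow X}\leq C_{q}$ then lets you write $Tu=\sum_{k}Tv_{k}$, after which your $\min(C_{p},C_{q}2^{-k(1-p/q)})$ summation closes the proof. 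Second, your bookkeeping (values of size $3\cdot 2^{-k}$ on at most $2N_{k}$ intervals) produces the stated bound only up to an absolute constant somewhat larger than the literal $4$ in the statement; this is harmless for this paper, since every application (e.g. the logarithmically lossy bilinear estimates in proposition \ref{prop:bs_i}) uses only the form $\frac{C_{p}}{\alpha_{p,q}}\left(\ln\frac{C_{q}}{C_{p}}+1\right)$, but recovering the exact constant requires the sharper bookkeeping of the cited proof.
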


Let us now give an application of propositions \ref{prop:trans} and \ref{prop:lin_interp} to proving linear and bilinear estimates at the level of $U_{\Delta}^{p}, V_{\Delta}^{p}$ spaces (cf. Corollary 2.21 of \cite{Hadac2009}).
\begin{prop}\label{prop:bs_i}
Let $I\subset \R$ be an interval.
\begin{enumerate}[(i)]
\item\label{item:bs_i_S}
Let $(p,q)$ be an admissible pair. Then 
\begin{equation}
\|u\|_{L_{t}^{p}L_{x}^{q}(I\times\R^{2})} \lesssim_{p} \|u\|_{U_{\Delta}^{p}(I\times\R^{2})}.
\end{equation}
\item\label{item:bs_i_B}
If $u$ and $v$ have spatial Fourier supports in the annuli $|\xi| \sim N$ and $|\xi|\sim M$, respectively, where $M\ll N$, then for any $p\in [2,\infty]$,
\begin{equation}
\|uv\|_{L_{t}^{p}L_{x}^{p'}(I\times\R^{2})} \lesssim_{p} \left(\frac{M}{N}\right)^{1/p} \|u\|_{U_{\Delta}^{p}(I\times\R^{2})} \|v\|_{U_{\Delta}^{p}(I\times\R^{2})}
\end{equation}
and
\begin{equation}
\|uv\|_{L_{t}^{p}L_{x}^{p'}(I\times\R^{2})} \lesssim_{p} \left(\frac{M}{N}\right)^{1/p}\left(\ln\left(\frac{N}{M}\right)+1 \right)^{2}\|u\|_{V_{\Delta}^{p}(I\times\R^{2})} \|v\|_{V_{\Delta}^{p}(I\times\R^{2})}.
\end{equation}
\item\label{item:bs_i_cube}
Let $0<c_{1}<c_{2}$, and let $\delta\in (0,1)$. Let $\xi_{0}\in\R^{2}$, and let $Q_{\xi_{0}}$ be a cube centered at $\xi_{0}$ of side length $l(Q_{\xi_{0}})=M$.
\begin{enumerate}[a.]
\item
If $u$ has spatial Fourier support in the annulus $A(0,c_{1}N,c_{2}N)$ and $v$ has spatial Fourier support in the cube $Q_{\xi_{0}}\subset B(0,(1-\delta)c_{1}N)$, where $M\ll N$, then for any $p\in [2,\infty]$,
\begin{equation}
\|u v\|_{L_{t}^{p}L_{x}^{p'}(I\times\R^{2})} \lesssim_{p,c_{1},c_{2},\delta}\paren*{\frac{M}{N}}^{1/p} \|u\|_{U_{\Delta}^{p}(I\times\R^{2})} \|v\|_{U_{\Delta}^{p}(I\times\R^{2})}.
\end{equation}
and
\begin{equation}
\|uv\|_{L_{t}^{p}L_{x}^{p'}(I\times\R^{2})} \lesssim_{p,c_{1},c_{2},\delta} \paren*{\frac{M}{N}}^{1/p}\paren*{\ln\paren*{\frac{N}{M}}+1}^{2}\|u\|_{V_{\Delta}^{p}(I\times\R^{2})} \|v\|_{V_{\Delta}^{p}(I\times\R^{2})}.
\end{equation}
\item
If $u$ has spatial Fourier support in the $Q_{\xi_{0}}\subset A(0,c_{1}N,c_{2}N)$ and $v$ has spatial Fourier support in the ball $B(0,(1-\delta)c_{1}N)$, where $M\ll N$, then for any $p\in [2,\infty]$,
\begin{equation}
\|u v\|_{L_{t}^{p}L_{x}^{p'}(I\times\R^{2})} \lesssim_{p,c_{1},c_{2},\delta}\paren*{\frac{M}{N}}^{1/p} \|u\|_{U_{\Delta}^{p}(I\times\R^{2})} \|v\|_{U_{\Delta}^{p}(I\times\R^{2})}
\end{equation}
and
\begin{equation}
\|uv\|_{L_{t}^{p}L_{x}^{p'}(I\times\R^{2})} \lesssim_{p,c_{1},c_{2},\delta} \paren*{\frac{M}{N}}^{1/p}\paren*{\ln\paren*{\frac{N}{M}}+1}^{2}\|u\|_{V_{\Delta}^{p}(I\times\R^{2})} \|v\|_{V_{\Delta}^{p}(I\times\R^{2})}.
\end{equation}
\end{enumerate}
\item\label{item:bs_i_T}
Let $q>5/2$ and $c>0$. Then there exists a constant $C(c,q)>0$, such that for $u$ and $v$ satisfying $\supp(\hat{u}),\supp(\hat{v})\subset B(0,N)$ and $\dist\left(\supp(\hat{u}),\supp(\hat{v})\right) \geq cN$, we have the inequality
\begin{equation}
\|uv\|_{L_{t,x}^{q}(I\times\R^{2})} \leq C(c,q)N^{2-\frac{4}{q}} \|u\|_{U_{\Delta}^{q}(I\times\R^{2})} \|v\|_{U_{\Delta}^{q}(I\times\R^{2})}.
\end{equation}
\end{enumerate}
\end{prop}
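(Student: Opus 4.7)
The plan is to transfer each classical estimate of Subsection~\ref{ssec:prelim_lbe} to the adapted function space setting by combining the transference principle (Proposition~\ref{prop:trans}) with the linear interpolation result (Proposition~\ref{prop:lin_interp}). Part \ref{item:bs_i_S} is immediate: applying Proposition~\ref{prop:trans} to the linear Strichartz estimate of Proposition~\ref{prop:ls} yields the bound on $\|u\|_{L_t^p L_x^q}$ in terms of $\|u\|_{U_\Delta^p}$.

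For \ref{item:bs_i_B}, I would first apply the bilinear version of Proposition~\ref{prop:trans} to Bourgain's bilinear estimate (Proposition~\ref{prop:bs_B}) to obtain
\[
\|uv\|_{L_t^p L_x^{p'}(I\times\R^2)} \lesssim_p (M/N)^{1/p} \|u\|_{U_\Delta^p(I\times\R^2)}\|v\|_{U_\Delta^p(I\times\R^2)}.
\]
To upgrade this $U_\Delta^p \times U_\Delta^p$ bound to $V_\Delta^p \times V_\Delta^p$, I would apply Proposition~\ref{prop:lin_interp} once in each argument. Fix $v$ and consider the linear operator $T_v: u\mapsto uv$. It satisfies $\|T_v u\|_{L_t^p L_x^{p'}} \leq C_p \|u\|_{U_\Delta^p}$ with $C_p \coloneqq (M/N)^{1/p}\|v\|_{U_\Delta^p}$. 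On the other hand, the pair $(2p,2p')$ is Strichartz admissible (since $\tfrac{1}{p}+\tfrac{1}{p'}=1$), so part \ref{item:bs_i_S} together with H\"older's inequality gives
\[
\|uv\|_{L_t^p L_x^{p'}} \leq \|u\|_{L_t^{2p}L_x^{2p'}}\|v\|_{L_t^{2p}L_x^{2p'}} \lesssim_p \|u\|_{U_\Delta^{2p}}\|v\|_{U_\Delta^{2p}},
\]
which shows $\|T_v u\|_{L_t^p L_x^{p'}} \leq C_{2p}\|u\|_{U_\Delta^{2p}}$ with $C_{2p}\lesssim_p \|v\|_{U_\Delta^p}$. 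Since $C_{2p}/C_p \gtrsim (N/M)^{1/p} \geq 1$, Proposition~\ref{prop:lin_interp} applies and produces
\[
\|uv\|_{L_t^pL_x^{p'}} \lesssim_p (M/N)^{1/p}(\ln(N/M)+1)\|u\|_{V_\Delta^p}\|v\|_{U_\Delta^p}.
\]
Repeating the identical argument with the roles of $u$ and $v$ swapped — now applying Proposition~\ref{prop:lin_interp} to $T_u: v\mapsto uv$ viewed as a map $U_\Delta^p\to L_t^pL_x^{p'}$ with norm controlled by the display above, and as a map $U_\Delta^{2p}\to L_t^pL_x^{p'}$ with norm controlled by H\"older — produces a second factor of $(\ln(N/M)+1)$ and yields the claimed $V_\Delta^p\times V_\Delta^p$ estimate.

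Part \ref{item:bs_i_cube} is handled by the exact same two-step procedure, substituting the cube-localized bilinear estimate of Corollary~\ref{prop:cube_BS} for Bourgain's estimate at the first (transference) step; the Galilean-type hypotheses in both cases (a) and (b) are preserved because $U_\Delta^p$ and $V_\Delta^p$ respect the spatial Fourier support of their elements. Part \ref{item:bs_i_T} follows by a direct application of the bilinear transference principle to Tao's bilinear restriction estimate of Proposition~\ref{prop:br_T}; no interpolation step is needed, since we only assert a $U_\Delta^q\times U_\Delta^q$ bound.

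There is no genuine obstacle here: the entire proposition is a mechanical application of the Hadac--Herr--Koch transference and interpolation machinery. The only point requiring care is bookkeeping the logarithmic losses in the double interpolation for \ref{item:bs_i_B} and \ref{item:bs_i_cube}, and verifying that the auxiliary endpoint pair $(2p,2p')$ remains Strichartz admissible for $p\in[2,\infty]$ in dimension $d=2$, which we have done above.
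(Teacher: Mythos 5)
Your proposal is correct and takes essentially the same route as the paper: transference of the classical bilinear estimates to $U_{\Delta}^{p}$, followed by a double application of Proposition~\ref{prop:lin_interp} (once in each factor, using H\"older with the admissible pair $(2p,2p')$ and Strichartz) to upgrade to $V_{\Delta}^{p}\times V_{\Delta}^{p}$, with parts \ref{item:bs_i_cube} and \ref{item:bs_i_T} handled by the identical scheme using Corollary~\ref{prop:cube_BS} and Proposition~\ref{prop:br_T}. The only differences are cosmetic (the order of the two interpolations is swapped), though note that the $p=\infty$ endpoint should be treated separately by the trivial bound $\|uv\|_{L_{t}^{\infty}L_{x}^{1}}\leq \|u\|_{V_{\Delta}^{\infty}}\|v\|_{V_{\Delta}^{\infty}}$, as the paper does, since the auxiliary space $U_{\Delta}^{2p}$ interpolation is not available there.
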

\begin{proof}
We omit the proof of assertion \ref{item:bs_i_S}, as it is a straightforward consequence of proposition \ref{prop:trans}, and the proofs of assertions \ref{item:bs_i_cube} and \ref{item:bs_i_T}, as they follow from the same argument used to prove \ref{item:bs_i_B} together with corollary \ref{prop:cube_BS} and proposition \ref{prop:br_T}, respectively.

By modifying the definition of the Littlewood-Paley projectors if necessary, we may prove the assertion with $u$ and $v$ replaced by $P_{N}u$ and $P_{M}v$, respectively. Let $\tilde{P}_{N}$ and $\tilde{P}_{M}$ denote ``fattened" Littlewood-Paley projectors satisfying $\tilde{P}_{N}P_{N}=P_{N}$ and $\tilde{P}_{M}P_{M}=P_{M}$, respectively. The first inequality in \ref{item:bs_i_B} follows from propositions \ref{prop:bs_B} and \ref{prop:trans} with $T_{0}$ defined by
\begin{equation}
T_{0}(e^{it\Delta}\phi_{1},e^{it\Delta}\phi_{2}) \coloneqq \paren*{\tilde{P}_{N}e^{it\Delta}\phi_{1}}\paren*{\tilde{P}_{M}e^{it\Delta}\phi_{2}}
\end{equation}
and $C_{p,p'}=C_{p}(M/N)^{1/p}$.

For the second inequality in \ref{item:bs_i_B}, first note that for the case $p=\infty$, we have the stronger estimate
\begin{equation}
\|uv\|_{L_{t}^{\infty}L_{x}^{1}(I\times\R^{2})} \leq \|u\|_{V_{\Delta}^{\infty}(I\times\R^{2})} \|v\|_{V_{\Delta}^{\infty}(I\times\R^{2})}
\end{equation}
by H\"{o}lder's inequality and the fact that $V_{\Delta}^{\infty}\subset L_{t}^{\infty}L_{x}^{2}$. Now fix $p\in [2,\infty)$ and let $q\coloneqq 2p$. Let $u\in U_{\Delta}^{p}(I\times\R^{2})$. Let $X \coloneqq L_{t}^{p}L_{x}^{p'}(I\times\R^{2})$, and define the operator $T:U_{\Delta}^{p}(I\times\R^{2})\rightarrow X$ by
\begin{equation}
v\mapsto \paren*{\tilde{P}_{M}v}\paren*{\tilde{P}_{N}u}.
\end{equation}
Observe that $T$ has operator norm satisfying the inequality
\begin{equation}
\|T\|_{U_{\Delta}^{p}(I\times\R^{2})\rightarrow X} \leq \tilde{C}_{p}\left(\frac{M}{N}\right)^{1/p} \|\tilde{P}_{N}u\|_{U_{\Delta}^{p}(I\times\R^{2})} \eqqcolon C_{p}.
\end{equation}
where $\tilde{C}_{p}$ is a multiple (by an absolute constant) of the implicit constant in proposition \ref{prop:bs_B}. Also, observe that $T$ is well-defined on $U_{\Delta}^{q}(I\times\R^{2})$ and satisfies the estimate
\begin{align}
\|Tv\|_{X}=\|(\tilde{P}_{N}u)(\tilde{P}_{M}v)\|_{L_{t}^{p}L_{x}^{p'}(I\times\R^{2})} &\leq \|\tilde{P}_{N}u\|_{L_{t}^{2p}L_{x}^{2p'}(I\times\R^{2})} \|\tilde{P}_{M}v\|_{L_{t}^{2p}L_{x}^{2p'}(I\times\R^{2})} \nonumber\\
&\leq \tilde{C}_{q} \|\tilde{P}_{N}u\|_{U_{\Delta}^{p}(I\times\R^{2})}\|v\|_{U_{\Delta}^{q}(I\times\R^{2})} \nonumber\\
&\eqqcolon C_{q} \|v\|_{U_{\Delta}^{q}(I\times\R^{2})},
\end{align}
where we use H\"{o}lder's inequality to obtain the first inequality and assertion \ref{item:bs_i_S} together with the admissibility of $(2p,2p')$ to obtain the second inequality. Above, $\tilde{C}_{q}$ is a multiple (by an absolute constant) of the implicit constant in the homogeneous Strichartz estimate for $L_{t}^{2p}L_{x}^{2p'}$. Taking $C_{q}$ larger by a fixed absolute factor, we may assume that $C_{q}\geq C_{p}$. Therefore, by proposition \ref{prop:lin_interp}, we obtain that
\begin{align}
\|(\tilde{P}_{N}u)(\tilde{P}_{M}v)\|_{L_{t}^{p}L_{x}^{p'}(I\times\R^{2})} &= \|Tv\|_{X} \nonumber\\
&\lesssim_{p} \left(\frac{M}{N}\right)^{1/p}\left(\ln\left(\frac{N}{M}\right)+1\right) \|u\|_{U_{\Delta}^{p}(I\times\R^{2})}\|v\|_{V_{\Delta}^{p}(I\times\R^{2})} .
\end{align}
Replacing $u$ by $P_{N}u$ and $v$ by $P_{M}v$ in the preceding estimate shows that
\begin{equation}
\|(P_{N}u)(P_{M}v)\|_{L_{t}^{p}L_{x}^{p'}(I\times\R^{2})} \lesssim_{p} \left(\frac{M}{N}\right)^{1/p}\left(\ln\left(\frac{N}{M}\right)+1\right) \|P_{N}u\|_{U_{\Delta}^{p}(I\times\R^{2})}\|P_{M}v\|_{V_{\Delta}^{p}(I\times\R^{2})}.
\end{equation}
Repeating the argument above on $u$, we obtain the desired conclusion.
\end{proof}

The last result which we prove in this subsection concerns the invariance of $U_{\Delta}^{p}$ under the action of the symmetry group $G'$ (see definition \ref{def:G'}).

\begin{prop}[$G'$ invariance of $U_{\Delta}^{p}$]
Let $1\leq p<\infty$, and let $I\subset \R$. Then for $u\in U_{\Delta}^{p}(I\times\R^{2})$, and $g_{\theta,\xi_{0},x_{0},\lambda,t_{0}}\in G'$, we have that
\begin{equation}
\|u\|_{U_{\Delta}^{p}(I\times\R^{2})} = \|T_{g_{\theta,\xi_{0},x_{0},\lambda,t_{0}}}u\|_{U_{\Delta}^{p}(\lambda^{2}I\times\R^{2})}.
\end{equation}
\end{prop}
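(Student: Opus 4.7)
The plan is to unwind the definition $\|u\|_{U_\Delta^p(I\times\R^2)} = \|e^{-is\Delta}u(s)\|_{U^p(I;L^2(\R^2))}$ and reduce the claim to two elementary invariances of the abstract $U^p(I;H)$ norm: invariance under precomposition by a strictly increasing affine bijection of the time axis, and invariance under postcomposition by a fixed unitary on the Hilbert space $H$. Both are immediate from the atomic definition, since such operations map a $p$-atom to a $p$-atom with the same scalar weight: an increasing affine change of variables sends a finite partition to a finite partition of the rescaled interval, and a unitary on $L^2(\R^2)$ preserves the $L^2$ norms of the step values $u_k$ appearing in an atom. Hence the infimum defining the $U^p$ norm is unchanged under either operation, and a density/continuity step extends this to $V_{-,rc}^p$ when needed.

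With those two facts in hand, what remains is to show that for every $g' = g_{\theta,\xi_0,x_0,\lambda,t_0} \in G'$, conjugation of $T_{g'}$ by the free propagator intertwines the spacetime action $T_{g'}$ with an operation of precisely this elementary type on the profile $\phi(s) := e^{-is\Delta}u(s)$. I would verify this generator by generator. For the phase rotation, spatial translation, and time translation, the identity is immediate; for the Galilean boost and the $L^2$ scaling, it follows from the two standard commutation relations
\begin{align*}
e^{it\Delta}\bigl[e^{iy\cdot\xi_0}\psi(y)\bigr](x) &= e^{ix\cdot\xi_0 - it|\xi_0|^2}(e^{it\Delta}\psi)(x-2\xi_0 t), \\
e^{it\Delta}\bigl[\psi(\cdot/\lambda)\bigr](x) &= (e^{i(t/\lambda^2)\Delta}\psi)(x/\lambda),
\end{align*}
which encode the fact that the Galilean boost and the parabolic scaling are symmetries of the free Schr\"odinger equation. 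Chaining these together and substituting into the formula for $T_{g_{\theta,\xi_0,x_0,\lambda}}$ in Definition \ref{def:G_grp} yields, after a routine calculation, the intertwining identity
\begin{equation*}
e^{-it\Delta}(T_{g'}u)(t) = V\bigl(\phi(\sigma(t))\bigr),
\end{equation*}
where $V$ is a single time-independent unitary on $L^2(\R^2)$ assembled from the phase, the Galilean multiplier $e^{iy\cdot\xi_0}$, the scaling and translation $\psi\mapsto \lambda^{-1}\psi((\cdot-x_0)/\lambda)$, and the factor $e^{-it_0\Delta}$ arising from time translation, and $\sigma:\lambda^2 I\to I$ is an affine increasing bijection of the form $\sigma(t) = t/\lambda^2 - t_0$ (with the exact constants dictated by the group law of $G'$).

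Once this identity is in place, applying the two $U^p$ invariances yields the chain
\begin{equation*}
\|T_{g'}u\|_{U_\Delta^p(\lambda^2 I\times\R^2)} = \|V\circ\phi\circ\sigma\|_{U^p(\lambda^2 I;L^2(\R^2))} = \|\phi\|_{U^p(I;L^2(\R^2))} = \|u\|_{U_\Delta^p(I\times\R^2)},
\end{equation*}
which is the claim. The only step with any real content is the intertwining computation, but it is a direct verification using the two commutation relations recalled above; since every generator of $G'$ is a symmetry of the free Schr\"odinger equation, no genuine obstacle arises, and the entire argument is essentially bookkeeping on top of two unitarity checks.
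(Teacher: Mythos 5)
Your proposal is correct and is essentially the paper's argument in different packaging: the paper takes a near-optimal atomic decomposition of $u$ and shows each atom is carried to an atom of $U_{\Delta}^{p}(\lambda^{2}I\times\R^{2})$ via exactly the intertwining you describe (the $G'$ action commutes with the free flow and $g_{\theta,\xi_{0},x_{0},\lambda}e^{it_{0}\Delta}$ is unitary on $L_{x}^{2}$), then gets the reverse inequality by applying $g^{-1}$, which is the same content as your ``fixed unitary plus increasing affine reparametrization'' reduction. One small bookkeeping correction: under the paper's definition of $T_{g_{\theta,\xi_{0},x_{0},\lambda,t_{0}}}$ the time variable is only rescaled, so $\sigma(t)=t/\lambda^{2}$ with no $-t_{0}$ shift, the factor $e^{it_{0}\Delta}$ being entirely absorbed into the fixed unitary $V=g_{\theta,\xi_{0},x_{0},\lambda}e^{it_{0}\Delta}$.
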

\begin{proof}
Given $\varepsilon>0$, let
\begin{equation}
u=\sum_{j}c_{j}u_{j}, \qquad u_{j} = \sum_{k} 1_{[t_{j,k},t_{j,k+1})}(t)e^{it\Delta}u_{j,k}
\end{equation}
be an atomic decomposition for $u$ such that $\sum_{j} |c_{j}| \leq \|u\|_{U_{\Delta}^{p}(I\times\R^{2})} +\varepsilon$. For each $j$, define the function $v_{j} \coloneqq T_{g_{\theta,\xi_{0},x_{0},\lambda,t_{0}}}u_{j}$, and define the function $v\coloneqq T_{g_{\theta,\xi_{0},x_{0},\lambda,t_{0}}}u$. We claim that each $v_{j}$ is a $U_{\Delta}^{p}(\lambda^{2}I\times\R^{2})$ atom. Indeed, observe that
\begin{equation}
v_{j}(t) = \sum_{k} \paren*{T_{g_{\theta,\xi_{0},x_{0},\lambda}}e^{it_{n}\Delta}\paren*{1_{[t_{j,k},t_{j,k+1})}(\cdot)e^{i(\cdot)\Delta}u_{j,k}}}(t) = \sum_{k}1_{[\lambda^{2}t_{j,k},\lambda^{2}t_{j,k+1})}(t)\paren*{T_{g_{\theta,\xi_{0},x_{0},\lambda}}e^{it_{n}\Delta}\paren*{e^{i(\cdot)\Delta}u_{j,k}}}(t).
\end{equation}
Since the free Schr\"{o}dinger equation is invariant under the action of $G'$, we have that
\begin{equation}
\paren*{T_{g_{\theta,\xi_{0},x_{0},\lambda}}e^{it_{n}\Delta}\paren*{e^{i(\cdot)\Delta}u_{j,k}}}(t) = e^{it\Delta}\paren*{g_{\theta,\xi_{0},x_{0},\lambda}e^{it_{n}\Delta}u_{j,k}}.
\end{equation}
Since $g_{\theta,\xi_{0},x_{0},\lambda}e^{it_{n}\Delta}$ is a unitary operator on $L_{x}^{2}$, the claim follows immediately. Therefore, $v=\sum_{j} c_{j}v_{j}$ is an atomic decomposition for $v$, and
\begin{equation}
\|v\|_{U_{\Delta}^{p}(\lambda^{2}I\times\R^{2})}\leq \sum_{j} |c_{j}| \leq \|u\|_{U_{\Delta}^{p}(I\times\R^{2})}+\varepsilon. 
\end{equation}
Sinve $\varepsilon>0$ was arbitrary, we conclude that $\|v\|_{U_{\Delta}^{p}(\lambda^{2}I\times\R^{2})} \leq \|u\|_{U_{\Delta}^{p}(I\times\R^{2})}$. By writing $u=T_{g_{\theta,x_{0},\xi_{0},\lambda,t_{0}}^{-1}}v$ and repeating the argument above, we obtain the reverse inequality, which completes the proof of the proposition.
\end{proof}

\section{Minimal mass blowup solutions}\label{sec:MMB}

\subsection{Overview}\label{ssec:MMB_ov}
In this section, we prove theorem \ref{thm:AP_reduc} and corollary \ref{cor:adm_sol}, which reduce our consideration to admissible blowup solutions to \eqref{eq:DS}. Compared to works on the mass-critical NLS, the primary new ingredient here is the $L^{p}$ and $C^{\alpha}$ boundedness properties of the operator $\E$. As for the NLS, the key convergence result used to prove theorem \ref{thm:AP_reduc} is the following Palais-Smale condition modulo the mass symmetry group $G$ (definition \ref{def:G_grp}).

\begin{prop}[Palais-Smale condition]\label{prop:PS}
Fix $\mu\in\{\pm 1\}$. If $\mu=1$, suppose that the critical mass $M_{c}\in (0,\infty)$; if $\mu=-1$, suppose that the critical mass $M_{c}\in (0,M(Q))$. Given a sequence of solutions $u_{n}:I_{n}\times\R^{2}\rightarrow\mathbb{C}$ to \eqref{eq:DS} and a sequence of times $t_{n}\in I_{n}$ such that $\limsup_{n\rightarrow\infty} M(u_{n})=M_{c}$ and
\begin{equation}
\lim_{n\rightarrow\infty} S_{\geq t_{n}}(u_{n}) = \lim_{n\rightarrow\infty}S_{\leq t_{n}}(u_{n})=\infty,
\end{equation}
the sequence $Gu_{n}(t_{n})$ has a convergent subsequence in $L^{2}(\R^{2}){/}G$.
\end{prop}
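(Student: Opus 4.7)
The plan is to apply the Merle--Vega linear profile decomposition to the sequence $\phi_n \coloneqq u_n(t_n)$, upgrade to nonlinear profiles using the definition of $M_c$, and run an asymptotic-solvability plus stability argument to force exactly one profile of full mass to survive. After passing to a subsequence, I obtain
\begin{equation}
\phi_n = \sum_{j=1}^{J} T_{g_n^j}\bigl(e^{it_n^j\Delta}\phi^j\bigr) + w_n^J
\end{equation}
for each $J\geq 1$, with profiles $\phi^j\in L^2(\R^2)$, parameters $g_n^j\in G$ and $t_n^j\in\R$ that are pairwise asymptotically orthogonal, mass decoupling $\sum_{j=1}^{J}M(\phi^j)+\lim_n M(w_n^J)\leq M_c$, and the smallness $\limsup_n\|e^{it\Delta}w_n^J\|_{L^4_{t,x}(\R\times\R^2)}\to 0$ as $J\to\infty$.

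Assume for contradiction that no profile achieves mass $M_c$. For each $j$ I associate the nonlinear eeDS profile $v^j$ by solving \eqref{eq:DS} with $v^j$ equal to $\phi^j$ at the limit time $t^j_\infty\in\{-\infty,0,+\infty\}$ (interpreting the endpoint cases in the scattering sense, as is permitted by theorem \ref{thm:LWP}). By the definition of $M_c$ and the continuity of $L^{\pm}$, every $v^j$ is global with $S_\R(v^j)\leq L(M(\phi^j))<\infty$; combining this with theorem \ref{thm:LWP} applied to all but finitely many profiles gives $\sum_{j\geq 1}S_\R(v^j)\leq C(M_c)<\infty$. I then form the approximate solution
\begin{equation}
\tilde u_n^J(t) \coloneqq \sum_{j=1}^{J} (T_{g_n^j} v^j)(t-t_n) + e^{i(t-t_n)\Delta}w_n^J,
\end{equation}
with the internal time shifts $t_n^j$ absorbed into the actions $T_{g_n^j}$, and invoke the eeDS stability lemma to transfer a uniform scattering bound from $\tilde u_n^J$ to $u_n$, contradicting $S_{\geq t_n}(u_n),S_{\leq t_n}(u_n)\to\infty$.

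The main obstacle, absent in the cubic NLS setting, is verifying asymptotic solvability: namely, that the Duhamel error $(i\partial_t+\Delta)\tilde u_n^J - F(\tilde u_n^J)$ tends to $0$ in a suitable dual Strichartz norm as $n,J\to\infty$. Expanding $F(\tilde u_n^J) = -\L(|\tilde u_n^J|^2)\tilde u_n^J$ and comparing with $\sum_j T_{g_n^j} F(v^j)$ produces two qualitatively different kinds of cross terms. The local cross terms $(T_{g_n^{j_1}} v^{j_1})(T_{g_n^{j_2}} v^{j_2})\overline{(T_{g_n^{j_3}} v^{j_3})}$ with not all indices equal are handled by H\"older's inequality, associativity, and asymptotic orthogonality exactly as in the 2D cubic NLS. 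The delicate terms are the nonlocal cross pieces $(T_{g_n^{j_1}} v^{j_1})\cdot\E\bigl((T_{g_n^{j_2}} v^{j_2})\overline{(T_{g_n^{j_3}} v^{j_3})}\bigr)$: associativity is lost and the location of the asymptotically orthogonal pair relative to $\E$ genuinely matters, the two substantially different sub-cases being that the orthogonal pair sits inside or outside the argument of $\E$. The key technical inputs, highlighted in the introduction as Difficulty~1, are the boundedness of $\E$ on every $L^p(\R^2)$ for $1<p<\infty$ and on H\"older spaces $\dot{C}^\alpha$; approximating each $v^j$ by Schwartz functions and performing a case analysis on the location of the orthogonal pair, I will show each such nonlocal cross term tends to zero in the dual Strichartz norm.

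With asymptotic solvability in place, the stability lemma yields $\limsup_n S_\R(u_n) \leq C(M_c)<\infty$, the desired contradiction. Consequently there is exactly one profile $\phi^1$ of mass $M_c$, all others are trivial, and $w_n^1\to 0$ in $L^2$. It remains to rule out $t_n^1\to\pm\infty$: if $t_n^1\to+\infty$, then for large $n$ the forward-in-time linear evolution of $\phi_n$ has arbitrarily small $L^4_{t,x}([t_n,\infty)\times\R^2)$ norm (by translating the Strichartz tail of $e^{it\Delta}\phi^1$), so the small-data part of theorem \ref{thm:LWP} forces $S_{\geq t_n}(u_n)$ to be bounded, contradicting the hypothesis; the case $t_n^1\to-\infty$ is symmetric. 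After absorbing the bounded residual time shift into the group element, $\phi_n = g_n\phi^1 + o_{L^2}(1)$ for some $g_n\in G$, which is precisely the desired convergence of $\{G\phi_n\}$ in $L^2(\R^2)/G$.
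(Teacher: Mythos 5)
Your proposal follows essentially the same route as the paper's proof: Merle--Vega linear profile decomposition, nonlinear profiles controlled via $L(M)\leq BM$ strictly below the critical mass, asymptotic solvability with the case analysis on whether the asymptotically orthogonal pair sits inside or outside the argument of $\E$ (the paper's lemma \ref{lem:as_sol}), the stability lemma, reduction to a single full-mass profile, and elimination of the time-translation cases. One correction in the endgame: when $t_n^1\to+\infty$ you cannot invoke the small-data assertion of theorem \ref{thm:LWP}, since $M(u_n(t_n))\to M_c$ need not lie below the threshold $m_0^2$; instead, as in the paper, use the vanishing of $S_{\geq t_n}\paren*{e^{it\Delta}u_n(t_n)}$ together with the stability lemma \ref{lem:stab} applied with $\tilde u\equiv 0$.
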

The primary ingredients for proving proposition \ref{prop:PS} are a quantitative stability lemma for equation \eqref{eq:DS}, which we prove in subsection \ref{ssec:MMB_stab} and the profile decomposition for the Schr\"{o}dinger propagator $e^{it\Delta}$ obtained by Merle and Vega in \cite{Merle1998}, which we recall in subsection \ref{ssec:MMB_cc}. Assuming proposition \ref{prop:PS}, the proof of which we defer to subsection \ref{ssec:MMB_PS}, theorem \ref{thm:AP_reduc} follows by the argument in \cite{Tao2008}, which we quickly sketch.

\begin{proof}
By definition of $M_{c}$, there exists a sequence of maximal-lifespan solutions $u_{n}:I_{n}\times\R^{2}\rightarrow\mathbb{C}$ to \eqref{eq:DS}, such that $M(u_{n})\leq M_{c}$ and $\lim_{n\rightarrow\infty}S_{I_{n}}(u_{n})=\infty$. For each $n\in\N$, the absolute continuity of the Lebesgue integral implies that there exists $t_{n}\in I_{n}$ such that
\begin{equation}
S_{\geq t_{n}}(u_{n}) = \min\left\{\frac{1}{2} S_{I_{n}}(u_{n}), 2^{n}\right\},
\end{equation}
and by time translation symmetry, we may assume that $t_{n}\equiv 0$. Applying proposition \ref{prop:PS}, up to a subsequence, there exists $u_{0}\in L^{2}(\R^{2})$ such that $Gu_{n}(0)\rightarrow Gu_{0}$ in the quotient space $L^{2}(\R^{2}){/}G$. Equivalently, there there exists a sequence $g_{n}\in G$ and $g_{0}\in G$ such that $g_{n}u_{n}(0) \rightarrow g_{0}u_{0}$ in $L^{2}$. So after relabeling, we may assume that $g_{n}\equiv 1$. Hence, $u_{n}(0)\rightarrow u_{0}$ in $L^{2}(\R^{2})$ as $n\rightarrow\infty$, which implies that $M(u_{0}) \leq M_{c}$.

Let $u:I\times\R^{2}\rightarrow\mathbb{C}$ be the maximal-lifespan solution to \eqref{eq:DS} with initial data $u_{0}$. We claim that $u$ blows up both forward and backward in time. Otherwise, by assertion \ref{item:LWP_bup} of theorem \ref{thm:LWP}, we have without loss of generality that $S_{\geq 0}(u)<\infty$. By assertion \ref{item:LWP_cd} of theorem \ref{thm:LWP}, $[0,\infty)\subset I_{n}$ for all $n\gg 1$ and
\begin{equation}
\limsup_{n\rightarrow\infty} S_{\geq 0}(u_{n}) < \infty,
\end{equation}
which contradicts our choice of the sequence $u_{n}$. Hence, $M(u_{0}) \geq M_{c}$, from which we conclude that $M(u_{0})=M_{c}$.

To see that blow up both forward and backward in time implies almost periodicity modulo $G$, let $Gu(t_{n}')$ be any sequence in the orbit $\{Gu(t): t\in I\}$. Since $S_{\geq t_{n}'}(u)=S_{\leq t_{n}'}(u)=\infty$, we can apply proposition \ref{prop:PS} to obtain a convergent subsequence in $L^{2}(\R^{2}){/}G$. One can then show that precompactness in $L^{2}(\R^{2}){/}G$ implies (actually is equivalent to) the existence of parameters $x(t),\xi(t),N(t)$ and compactness modulus function $C(\cdot)$.
\end{proof}

\subsection{Stability}\label{ssec:MMB_stab}
In this subsection, we extend the local Cauchy theory for \eqref{eq:DS} by proving a stability result for solutions (cf. Lemma 3.1 in \cite{Tao2008} and Theorem 3.7 in \cite{KillipClay}).

\begin{lemma}[Stability]\label{lem:stab} 
Fix $\mu\in\{\pm 1\}$. Let $I$ be a compact interval, and let $\tilde{u}$ be an approximate (strong) solution to \eqref{eq:DS} in the sense that
\begin{equation}
(i\p_{t}+\Delta)\tilde{u}=F(\tilde{u})+e, \qquad (t,x)\in I\times\R^{2},
\end{equation}
for some spacetime function $e$. Assume that
\begin{align}
\|\tilde{u}\|_{L_{t}^{\infty}L_{x}^{2}(I\times\R^{2})} &\leq M\\
\|\tilde{u}\|_{L_{t,x}^{4}(I\times\R^{2})} &\leq L,
\end{align}
for some constants $M,L>0$. Let $t_{0}\in I$, and let $u(t_{0})$ satisfy
\begin{equation}
\|u(t_{0})-\tilde{u}(t_{0})\|_{L^{2}(\R^{2})} \leq M'
\end{equation}
for some constant $M'>0$. Assume the smallness conditions
\begin{align}
\|e^{i(t-t_{0})\Delta}\paren*{u(t_{0})-\tilde{u}(t_{0})}\|_{L_{t,x}^{4}(I\times\R^{2})} &\leq \varepsilon \\
\|e\|_{L_{t,x}^{4/3}(I\times\R^{2})} &\leq \varepsilon,
\end{align}
for some $0<\varepsilon\leq \varepsilon_{1}$, where $\varepsilon_{1}(M,M',L)$ is a small constant. Then there exists a (unique) solution to \eqref{eq:DS} on $I\times\R^{2}$ with initial data $u(t_{0})$ at time $t=t_{0}$ satisfying the conditions
\begin{align}
\|u-\tilde{u}\|_{L_{t,x}^{4}(I\times\R^{2})} &\leq C(M,M',L)\varepsilon, \\
\|u-\tilde{u}\|_{L_{t}^{\infty}L_{x}^{2}(I\times\R^{2})} &\leq C(M,M',L)M',\\
\|u\|_{L_{t,x}^{4}(I\times\R^{2})} &\leq C(M,M',L).
\end{align}
\end{lemma}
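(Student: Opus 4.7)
The plan is to adapt the standard Cazenave-Weissler/Tao-Visan-Zhang stability argument for the cubic NLS to the eeDS setting, with the nonlocal operator $\E$ absorbed through its $L^p$-boundedness. The workhorse is the trilinear difference inequality: for spacetime functions $f, g$ on a time interval $J$,
\begin{equation}
\|F(f+g) - F(f)\|_{L^{4/3}_{t,x}(J \times \R^2)} \lesssim \|g\|_{L^4_{t,x}(J \times \R^2)} \bigl(\|f\|_{L^4_{t,x}(J\times\R^2)} + \|g\|_{L^4_{t,x}(J\times\R^2)}\bigr)^{2}.
\end{equation}
This is proved by expanding $F(f+g) - F(f) = -\L(f\bar g + g\bar f + |g|^2) f - \L(|f+g|^2) g$ as a sum of trilinear expressions, each containing at least one factor of $g$ or $\bar g$. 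For each summand, H\"older in space (via $\tfrac{1}{2} + \tfrac{1}{4} = \tfrac{3}{4}$) together with the $L^2 \to L^2$ boundedness of $\L = -\mu\,\mathrm{Id} + \E$ (proposition \ref{prop:CZ}) reduces matters to a product of three $L^4_x$ norms; H\"older in time then closes the bound.

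Given this estimate, I would partition $I = \bigcup_{j=0}^{J-1} I_j$ into $J = J(M, L)$ consecutive closed subintervals on which $\|\tilde u\|_{L^4_{t,x}(I_j)} \leq \eta$, for a small $\eta = \eta(M) > 0$ to be chosen. Setting $w \coloneqq u - \tilde u$, the difference satisfies $(i\p_t + \Delta) w = F(\tilde u + w) - F(\tilde u) - e$ with $w(t_0) = u(t_0) - \tilde u(t_0)$. On each subinterval, Duhamel's formula combined with the Strichartz estimates (proposition \ref{prop:ls}) and the trilinear bound gives
\begin{equation}
\|w\|_{L^4_{t,x}(I_j)} \leq a_j + C_0 \bigl(\eta^2 + \eta \|w\|_{L^4_{t,x}(I_j)} + \|w\|_{L^4_{t,x}(I_j)}^{2}\bigr) \|w\|_{L^4_{t,x}(I_j)} + C_0 \varepsilon,
\end{equation}
where $a_j \coloneqq \|e^{i(t-t_j)\Delta} w(t_j)\|_{L^4_{t,x}(I_j)}$. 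Choosing $\eta$ so that $C_0 \eta^2 < 1/2$ and $\varepsilon_1 = \varepsilon_1(M, M', L)$ small enough for a standard continuity argument to close the bootstrap on $I_j$ produces $\|w\|_{L^4_{t,x}(I_j)} \leq 2(a_j + C_0 \varepsilon)$. Then from
\begin{equation}
e^{i(t-t_{j+1})\Delta} w(t_{j+1}) = e^{i(t-t_j)\Delta} w(t_j) - i \int_{t_j}^{t_{j+1}} e^{i(t-s)\Delta}[F(\tilde u + w) - F(\tilde u) - e](s)\, ds,
\end{equation}
a Strichartz estimate on $I_{j+1}$ and the trilinear bound yield a recursion $a_{j+1} \leq (1+C') a_j + C'' \varepsilon$, whose iteration produces $\max_{0 \leq j \leq J-1} a_j + \|w\|_{L^4_{t,x}(I)} \leq C(M, L) \varepsilon$. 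A parallel Strichartz estimate seeded by $\|w(t_0)\|_{L^2} \leq M'$ yields the $L^\infty_t L^2_x$ bound $\|w\|_{L^\infty_t L^2_x(I)} \leq C(M, M', L) M'$.

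Finally, to produce $u$ on the whole of $I$ I would invoke theorem \ref{thm:LWP}: on the maximal subinterval $I^\ast \subset I$ of existence containing $t_0$, the a priori bound $\|u\|_{L^4_{t,x}(I^\ast)} \leq \|w\|_{L^4_{t,x}(I^\ast)} + L \leq C(M, M', L) \varepsilon + L$, combined with the blow-up criterion (item \ref{item:LWP_bup} of theorem \ref{thm:LWP}), forces $I^\ast = I$. The main obstacle is purely bookkeeping: one must track how $\eta$ and $\varepsilon_1$ depend on $M, M', L$ so that the bootstrap closes on each of the $J(M, L)$ subintervals before the geometric error amplification $(1+C')^J$ becomes prohibitive. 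The nonlocal operator $\E$ appears only inside H\"older products, where its $L^p$-boundedness renders it essentially invisible, so no new analytic difficulty arises relative to the cubic NLS case.
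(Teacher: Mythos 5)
Your proposal is correct and follows essentially the same route as the paper: the trilinear difference estimate via H\"older plus the $L^{2}$-boundedness of $\L$, a bootstrap/continuity argument on subintervals where $\|\tilde{u}\|_{L_{t,x}^{4}}$ is small, iteration of the error across the $J(M,L)$ pieces, and the blow-up criterion of theorem \ref{thm:LWP} to extend $u$ to all of $I$ (the paper phrases this as first proving the case $L\leq\varepsilon_{0}$ and then subdividing, but the content is identical). The only point to tidy is your recursion for $a_{j}$, since $a_{j}$ is measured on $I_{j}$ while your Duhamel step needs the free evolution of $w(t_{j})$ on $I_{j+1}$; the standard fix is to seed every step from $t_{0}$, giving $a_{j+1}\leq \varepsilon + C\sum_{k\leq j}(\text{nonlinear error on }I_{k})$, which yields the same $(1+C')^{J}$-type bound.
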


\begin{proof} 
We first prove the lemma under assumption that $L\leq \varepsilon_{0}$, where $\varepsilon_{0}=\varepsilon_{0}(M,M')>0$ is sufficiently small. By symmetry, we may assume without loss of generality that $t_{0}=\inf I$. Let $u:I'\times\R^{2}\rightarrow\mathbb{C}$ be the maximal-lifespan solution to \eqref{eq:DS} with initial data $u(t_{0})$, and write $u=\tilde{u}+w$ on the time interval $I'' \coloneqq I\cap I'$. Then $w$ is a strong solution to the Cauchy problem
\begin{equation}
\begin{cases}
(i\p_{t}+\Delta)w = F(\tilde{u}+w)-F(\tilde{u})-e, & (t,x)\in I''\times\R^{2} \\
w(t_{0}) = u(t_{0})-\tilde{u}(t_{0}), & x\in\R^{2}
\end{cases}.
\end{equation}
By Duhamel's formula,
\begin{equation}
w(t)=e^{i(t-t_{0})\Delta}w(t_{0})-i\int_{t_{0}}^{t}e^{i(t-\tau)\Delta}\left[F\paren*{\tilde{u}(\tau)+w(\tau)}-F(\tilde{u}(\tau))-e(\tau)\right]d\tau, \qquad t\in I''.
\end{equation}
For $T\in I''$, define the quantities $A(T) \coloneqq \|F(\tilde{u}+w)-F(\tilde{u})\|_{L_{t,x}^{4}([t_{0},T]\times\R^{2})}$ and $X(T) \coloneqq \|w\|_{L_{t,x}^{4}([t_{0},T]\times\R^{2})}$. Using the identity
\begin{equation}
F\paren*{\tilde{u}+w}-F(\tilde{u}) = -\L(|\tilde{u}|^{2})w - \L\paren*{2\Re{\tilde{u}\bar{w}}+|w|^{2}}(\tilde{u}+w),
\end{equation}
we see from H\"{o}lder's and triangle inequalities and Plancherel's theorem that
\begin{align}
A(T) &\lesssim \|w\|_{L_{t,x}^{4}([t_{0},T]\times\R^{2})}\|\tilde{u}\|_{L_{t,x}^{4}([t_{0},T]\times\R^{2})}^{2} \nonumber\\
&\phantom{=}+\paren*{\|\tilde{u}\|_{L_{t,x}^{4}([t_{0},T]\times\R^{2})}\|w\|_{L_{t,x}^{4}([t_{0},T]\times\R^{2})}+\|w\|_{L_{t,x}^{4}([t_{0},T]\times\R^{2})}^{2}} \paren*{\|\tilde{u}\|_{L_{t,x}^{4}([t_{0},T]\times\R^{2})}+\|w\|_{L_{t,x}^{4}([t_{0},T]\times\R^{2})}} \nonumber \\
&\lesssim L^{2}X(T)+LX(T)^{2}+X(T)^{3}.
\end{align}
By Strichartz estimates, the triangle inequality, and the assumptions of the lemma, it follows that
\begin{align}
X(T) &\lesssim \|e^{i(t-t_{0})\Delta}w(t_{0})\|_{L_{t,x}^{4}([t_{0},T]\times \R^{2})}+\paren*{\|(F(\tilde{u}+w)-F(\tilde{u}))\|_{L_{t,x}^{4/3}([t_{0},T]\times \R^{2})}+\|e\|_{L_{t,x}^{4/3}([t_{0},T]\times\R^{2})}} \nonumber\\
&\leq \varepsilon+\paren*{\|F(\tilde{u}+w)-F(\tilde{u})\|_{L_{t,x}^{4/3}([t_{0},T]\times\R^{2})}+\varepsilon} \nonumber\\
&\leq 2\varepsilon+A(T).
\end{align}
Combining the inequalities for $A(T)$ and $X(T)$, we obtain that
\begin{equation}
A(T) \lesssim L^{2}(\varepsilon+A(T)) + (\varepsilon+A(T))^{3} \leq \varepsilon_{0}^{2}(\varepsilon+A(T)) + (\varepsilon+A(T))^{3}.
\end{equation}
Hence, for $\varepsilon_{0}>0$ sufficiently small, we conclude from a standard continuity argument that $A(T)\lesssim \varepsilon$ for all $T\in I''$, which implies that $X(T)\lesssim \varepsilon$ for all $T\in I''$. By the triangle inequality,
\begin{equation}
\|u\|_{L_{t,x}^{4}([t_{0},T]\times\R^{2})} \leq \|\tilde{u}\|_{L_{t,x}^{4}([t_{0},T]\times\R^{2})} + \|w\|_{L_{t,x}^{4}([t_{0},T]\times\R^{2})} \lesssim L + \varepsilon \leq \varepsilon_{0}+\varepsilon, \qquad \forall T\in I'',
\end{equation}
and by Duhamel's formula, triangle inequality, and Strichartz estimates,
\begin{equation}
\|u-\tilde{u}\|_{L_{t}^{\infty}L_{x}^{2}([t_{0},T]\times\R^{2})} \lesssim \|u(t_{0})-\tilde{u}(t_{0})\|_{L^{2}(\R^{2})} + \varepsilon \leq M'+\varepsilon, \qquad \forall T\in I''.
\end{equation}
To see that $I=I''$, we argue as follows. If $\sup I' < \sup I$, so in particular $\sup I'$ is finite, then by assertion \ref{item:LWP_bup} of theorem \ref{thm:LWP},
\begin{equation}
\lim_{T\rightarrow \sup I'^{-}} \|u\|_{L_{t,x}^{4}([t_{0},T]\times\R^{2})} = \infty,
\end{equation}
which contradicts that $\|u\|_{L_{t,x}^{4}([t_{0},T]\times\R^{2})} \lesssim \varepsilon_{0}+\varepsilon$ for all $T\in I''$.

For the general case of arbitrarily large $L$, we use the absolute continuity of the Lebesgue integral to subdivide $I$ into $J\sim (1+\frac{L}{\varepsilon})^{4}$ compact subintervals $I_{j} \coloneqq [t_{j},t_{j+1}]$, for $0\leq j < J$, such that
\begin{equation}
\|\tilde{u}\|_{L_{t,x}^{4}(I_{j}\times\R^{2})} \leq \varepsilon_{0},
\end{equation}
where $\varepsilon_{0}=\varepsilon_{0}(M,2M')$ is the small constant determined above with $M'$ replaced by $2M'$. We then use the result of the preceding paragraph on each subinterval $I_{j}$ and sum the estimates to obtain the desired conclusion. We omit the details.
\end{proof}

\subsection{Concentration compactness}\label{ssec:MMB_cc} 
In this subsection, we collect some standard results from the mass-critical theory of concentration compactness. The material here is not specific to the eeDS, as it pertains to the linear part of the equation, which is of course the same as for nonlinear Schr\"{o}dinger equations.

Since the free Schr\"{o}dinger equation is invariant under the action of $e^{it_{0}\Delta}$ for fixed time $t_{0}$, we first enlarge the symmetry group $G$ by adding the action of the free propagator $e^{it\Delta}$.

\begin{mydef}[Enlarged group $G'$]\label{def:G'}
For a phase $\theta \in\R/2\pi\mathbb{Z}$, position $x_{0}\in\R^{2}$, frequency $\xi_{0}\in\R^{2}$, scaling parameter $\lambda>0$, and time $t_{0}\in\R$, we define the unitary operator $g_{\theta_{0},x_{0},\xi_{0},\lambda,t_{0}}:L^{2}(\R^{2})\rightarrow L^{2}(\R^{2})$ by
\begin{equation}
g_{\theta,\xi_{0},x_{0},\lambda,t_{0}} \coloneqq g_{\theta,\xi_{0},x_{0},\lambda}e^{it_{0}\Delta}.
\end{equation}
We denote the collection of all such transformations by $G'$. We define the $G'$ action on spacetime functions $u:\R\times\R^{2}\rightarrow\mathbb{C}$ by
\begin{equation}
(T_{g_{\theta,\xi_{0},x_{0},\lambda,t_{0}}}u)(t,x) \coloneqq \frac{1}{\lambda}e^{i\theta}e^{ix\cdot\xi_{0}}e^{-it|\xi_{0}|^{2}}(e^{it_{0}\Delta}u)\left(\frac{t}{\lambda^{2}}, \frac{x-x_{0}-2\xi_{0}t}{\lambda}\right),
\end{equation}
which may equivalently be expressed as
\begin{equation}
(T_{g_{\theta,\xi_{0},x_{0},\lambda,t_{0}}}u)(t) = g_{\theta-t|\xi_{0}|^{2},\xi_{0},x_{0}+2\xi_{0}t,\lambda,t_{0}}\paren*{u\paren*{\frac{t}{\lambda^{2}}}}.
\end{equation}
\end{mydef}

One can show that $G'$ is a group and that the action of $G'$ on global spacetime functions preserves both the class and scattering size of solutions of the free Schr\"{o}dinger equation. Furthermore, we may topologize $G'$ by endowing it with the strong operator topology.

To state the linear profile decomposition, we need to define what it means for sequences in $G'$ to be asymptotically orthogonal.

\begin{mydef}[Asymptotic orthogonality]\label{def:as_o}
We say that two sequences $\{g_{n}\}_{n=1}^{\infty}, \{g_{n}'\}_{n=1}^{\infty}\in G'$ are \emph{asymptotically orthgonal} if the sequence $g_{n}^{-1}g_{n}'$ diverges to $\infty$, by which we mean that given any compact subset $K\subset G'$, there exists an $n(K)\in\mathbb{N}$ such that $g_{n}^{-1}g_{n}'\notin K$ for all $n\geq n_{0}$.
\end{mydef}
It is classical that asymptotic orthogonality is equivalent to
\begin{equation}\label{eq:as_o}
	\lim_{n\rightarrow\infty}\frac{\lambda_{n}^{(j)}}{\lambda_{n}^{(k)}}+\frac{\lambda_{n}^{(k)}}{\lambda_{n}^{(j)}}+\lambda_{n}^{(j)}\lambda_{n}^{(k)}|\xi_{n}^{(j)}-\xi_{n}^{(k)}|^{2}+\frac{|t_{n}^{(j)}(\lambda_{n}^{(j)})^{2}-t_{n}^{(k)}(\lambda_{n}^{(k)})^{2}|}{\lambda_{n}^{(j)}\lambda_{n}^{(k)}}+\frac{|x_{n}^{(j)}-x_{n}^{(k)}-2t_{n}^{(j)}(\lambda_{n}^{(j)})^{2}(\xi_{n}^{(j)}-\xi_{n}^{(k)})|^{2}}{\lambda_{n}^{(j)}\lambda_{n}^{(k)}}=\infty,
\end{equation}
for $j\neq k$.

We record some well-known consequences of asymptotic orthogonality (see section 4 of \cite{Tao2008}).
\begin{lemma}\label{lem:as_o}
Let $g_{n},g_{n}'$ be asymptotically orthogonal sequences in $G'$. Then
\begin{equation}
\lim_{n\rightarrow\infty} \ipp{g_{n}f,g_{n}'f'}_{L^{2}(\R^{2})}=0, \qquad \forall f,f'\in L^{2}(\R^{2}).
\end{equation}
If $v,v'\in L_{t,x}^{4}(\R\times\R^{2})$ and $\theta\in (0,1)$, then
\begin{equation}
	\lim_{n\rightarrow\infty} \| |T_{g_{n}}v|^{1-\theta} |T_{g_{n}'}v'|^{\theta}\|_{L_{t,x}^{4}(\R\times\R^{2})}=0.
\end{equation}
\end{lemma}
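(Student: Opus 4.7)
The plan is to prove both parts simultaneously via a density reduction together with a case analysis based on which of the five quantities in \eqref{eq:as_o} diverges along a subsequence. In both cases I first rewrite the expression of interest so that only the composition $h_n := g_n^{-1}g_n' \in G'$ appears: for (i) unitarity immediately gives $\ipp{g_n f, g_n' f'}_{L^2} = \ipp{f, h_n f'}_{L^2}$; for (ii), after reducing $v, v'$ to a dense class where the action $T_{g_n}$ extends cleanly to $L^4_{t,x}$, the change of variable induced by $T_{g_n^{-1}}$ on the integrand rewrites the left-hand side as
\begin{equation*}
\||T_{g_n}v|^{1-\theta}|T_{g_n'}v'|^{\theta}\|_{L^4_{t,x}}^4 = \int |v(t,x)|^{4(1-\theta)}|T_{h_n}v'(t,x)|^{4\theta}\,dt\,dx.
\end{equation*}
By the group law in Definition \ref{def:G'}, $h_n = g_{\tilde\theta_n, \tilde\xi_n, \tilde x_n, \tilde\lambda_n, \tilde t_n}$ for parameters that are continuous functions of those of $g_n, g_n'$, and asymptotic orthogonality forces, along a subsequence, that at least one of $\tilde\lambda_n\to 0$, $\tilde\lambda_n\to\infty$, $|\tilde\xi_n|\to\infty$, $|\tilde x_n|/\tilde\lambda_n\to\infty$, or $|\tilde t_n|\to\infty$ holds.

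For part (i), the unitarity of $g_n, g_n'$ on $L^2$ gives the uniform bilinear bound $|\ipp{g_n f, g_n' f'}_{L^2}| \le \|f\|_{L^2}\|f'\|_{L^2}$, so by density I reduce to $f, f' \in \mathcal{S}(\R^2)$. Passing to the Fourier side and computing
\begin{equation*}
\widehat{h_n f'}(\xi) = \tilde\lambda_n\, e^{i[\tilde\theta_n - (\xi - \tilde\xi_n)\cdot\tilde x_n - \tilde t_n\tilde\lambda_n^2|\xi - \tilde\xi_n|^2]}\,\widehat{f'}(\tilde\lambda_n(\xi - \tilde\xi_n)),
\end{equation*}
I recast $\ipp{f, h_n f'}$ by Plancherel as an oscillatory integral in $\xi$. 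The scale cases $\tilde\lambda_n \to 0$ or $\infty$ vanish after a linear change of variable that leaves a prefactor $\tilde\lambda_n^{\pm 1}$ against the Schwartz tails of $\hat f$ or $\widehat{f'}$. When $\tilde\lambda_n$ remains bounded away from $0$ and $\infty$, the frequency case $|\tilde\xi_n| \to \infty$ follows from Schwartz decay of $\hat f$ (the argument $\tilde\xi_n + \eta/\tilde\lambda_n$ exits every compact set); the spatial case $|\tilde x_n|/\tilde\lambda_n \to \infty$ is handled by the Riemann–Lebesgue lemma (equivalently, integration by parts) applied to the linear phase $e^{-i\xi\cdot\tilde x_n}$; and the time case $|\tilde t_n| \to \infty$ is handled by the dispersive estimate $\|e^{i\tilde t_n\Delta}\phi\|_{L^\infty} \lesssim |\tilde t_n|^{-1}\|\phi\|_{L^1}$ applied to $\phi \in \mathcal{S}$.

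For part (ii), after reducing by density to $v, v' \in C_c^\infty(\R\times\R^2)$ and obtaining uniform $L^4_{t,x}$ control on $T_{g_n}v, T_{g_n'}v'$ via the spatial dispersive estimate (which in turn yields the needed Hölder-based upper bound on the integrand to justify the density reduction), the integrand $|v|^{4(1-\theta)}|T_{h_n}v'|^{4\theta}$ is supported in the compact set $\supp v$. On this set I show $T_{h_n}v'(t,x) \to 0$ pointwise a.e.\ along the subsequence: the $\tilde\lambda_n \to \infty$ case uses the prefactor $\tilde\lambda_n^{-1}$ together with a uniform $L^\infty$ bound on $e^{i\tilde t_n\Delta}v'(s,\cdot)$ (Bernstein plus compact support of $v'$); the $\tilde\lambda_n \to 0$ case uses that for $t \neq 0$ the rescaled time $t/\tilde\lambda_n^2$ eventually exits $\supp_t v'$; for bounded $\tilde\lambda_n$ with $|\tilde\xi_n|$ or $|\tilde x_n|$ diverging, the spatial argument of $e^{i\tilde t_n\Delta}v'$ diverges for $t \neq 0$, and stationary phase applied to the explicit Schrödinger kernel $K_{\tilde t_n}(y) = (4\pi i\tilde t_n)^{-1}e^{i|y|^2/4\tilde t_n}$ yields rapid spatial decay; and the $|\tilde t_n| \to \infty$ case is again handled by the dispersive $L^1$-$L^\infty$ estimate. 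Pointwise convergence combined with an $L^4$-Hölder-type dominating function (treating a measure-$O(\tilde\lambda_n^2)$ neighborhood of $\{t = 0\}$ separately when $\tilde\lambda_n \to 0$) then passes the limit through. The hard part, in both parts, is the time-translation regime $|\tilde t_n| \to \infty$ with the remaining parameters bounded, since $e^{i\tilde t_n\Delta}$ fails to converge strongly and one must essentially rely on the $|\tilde t_n|^{-1}$ dispersive decay of the free propagator to extract vanishing.
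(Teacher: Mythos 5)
The paper never proves this lemma: it is recorded as a known consequence of asymptotic orthogonality with a citation to section 4 of \cite{Tao2008}, so the benchmark is the standard argument from that literature. Your part (i) is essentially that standard argument and is sound: unitarity reduces everything to $\langle f, h_n f'\rangle$ with $h_n = g_n^{-1}g_n'$, density reduces to Schwartz data, and your tools for the four regimes (crude prefactor bounds for the scale cases, Schwartz decay for the frequency case, Riemann--Lebesgue for the spatial case, the $|\tilde t_n|^{-1}$ dispersive bound against a uniformly controlled $L^1$ norm for the time case) are the right ones, modulo the routine bookkeeping of ordering the cases so that, e.g., simultaneous divergence of $\tilde x_n$ and $\tilde t_n$ is settled by the dispersive argument rather than by Riemann--Lebesgue alone.

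Part (ii) has a genuine gap, concentrated in how you treat the time-translation component of the $G'$ action on spacetime functions. Under the literal reading of Definition \ref{def:G'} that you adopt (slice-wise application of $e^{it_0\Delta}$), two of your steps fail. First, the identity $\||T_{g_n}v|^{1-\theta}|T_{g_n'}v'|^{\theta}\|_{L^4_{t,x}}^4 = \int |v|^{4(1-\theta)}|T_{h_n}v'|^{4\theta}\,dt\,dx$ is not valid: the slice-wise propagator is not a change of variables and does not preserve the pointwise modulus, so $|T_{g_n}v|$ cannot be converted into $|v|$ by substitution when $g_n$ carries a nontrivial $t_0$-component; the $L^4_{t,x}$ norm is invariant under $T_g$ only for $g\in G$ and for honest time translations. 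Second, the density reduction needs the uniform bound $\|T_{g_n}(v-v_\varepsilon)\|_{L^4_{t,x}}\lesssim \|v-v_\varepsilon\|_{L^4_{t,x}}$, and the spatial dispersive estimate cannot supply it: $e^{it_0\Delta}$ is unbounded on $L^4_x$, and the error $v-v_\varepsilon$ is only small in $L^4_{t,x}$, not in any space the propagator maps boundedly into $L^4_x$. The intended (and actually used) meaning of the lemma --- see $v_n^{(j)} = T_{h_n^{(j)}}[v^{(j)}(\cdot + t_n^{(j)})]$ with $h_n^{(j)}\in G$ in the proof of Lemma \ref{lem:M_sup} --- realizes the time parameter of a $G'$ element as an honest time translation of the spacetime profile, never as a slice-wise propagator. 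Under that realization every transformation appearing in part (ii) is an isometry of $L^4_{t,x}$, so your change of variables and density step become trivial, and the regime you single out as hard, $|\tilde t_n|\to\infty$, is handled simply by separation of the compact time supports (with a thin-time-slab argument for the Galilean drift when the relative frequency diverges); no dispersive estimate or stationary phase enters part (ii) at all --- dispersive decay is only needed in part (i). The fix is therefore not more oscillatory-integral machinery but the correct formulation of the $G'$ action on $L^4_{t,x}$.
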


Using the preceding lemma, one obtains the following asymptotic ``decoupling'' of the mass and scattering size functionals evaluated on sums of functions.
\begin{prop}\label{prop:as_dcup}
Let $J\in\N$. If $g_{n}^{(1)},\ldots,g_{n}^{(J)}$ are sequences in $G'$ which are asymptotically orthogonal and $f^{(1)},\ldots,f^{(J)}\in L_{x}^{2}(\R^{2})$ and $v^{(1)},\ldots,v^{(J)}\in L_{t,x}^{4}(\R\times\R^{2})$, then
\begin{equation}
\lim_{n\rightarrow\infty} \brac*{M\paren*{\sum_{j=1}^{J}g_{n}^{(j)}f^{(j)}}-\sum_{j=1}^{J}M(f^{(j)})}=0
\end{equation}
and
\begin{equation}
\lim_{n\rightarrow\infty} \brac*{S\paren*{\sum_{j=1}^{J}g_{n}^{(j)}v^{(j)}} - \sum_{j=1}^{J}S(g_{n}^{(j)}v^{(j)})}=0.
\end{equation}
\end{prop}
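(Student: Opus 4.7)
My plan is to establish both identities by expanding the relevant functional, isolating the diagonal contributions, and estimating the off-diagonal contributions via direct application of Lemma \ref{lem:as_o}.

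For the mass identity, I would expand
\begin{equation*}
M\Big(\sum_{j=1}^{J}g_n^{(j)}f^{(j)}\Big)=\sum_{j,k=1}^{J}\big\langle g_n^{(j)}f^{(j)},\,g_n^{(k)}f^{(k)}\big\rangle_{L^2(\R^2)}.
\end{equation*}
The diagonal terms $j=k$ sum to $\sum_{j=1}^{J}M(f^{(j)})$ because each $g_n^{(j)}$ is unitary on $L^2(\R^2)$. For $j\neq k$, the sequences $g_n^{(j)}$ and $g_n^{(k)}$ are asymptotically orthogonal by hypothesis, so the first half of Lemma \ref{lem:as_o} yields $\langle g_n^{(j)}f^{(j)},g_n^{(k)}f^{(k)}\rangle\to 0$ as $n\to\infty$. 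Summing over the finitely many off-diagonal pairs closes the argument.

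For the scattering identity, I would expand the fourth power as
\begin{equation*}
\bigg|\sum_{j=1}^{J}T_{g_n^{(j)}}v^{(j)}\bigg|^{4}=\sum_{j_1,j_2,j_3,j_4=1}^{J}(T_{g_n^{(j_1)}}v^{(j_1)})(T_{g_n^{(j_2)}}v^{(j_2)})\,\overline{(T_{g_n^{(j_3)}}v^{(j_3)})}\,\overline{(T_{g_n^{(j_4)}}v^{(j_4)})}.
\end{equation*}
Integrating, the contributions with $j_1=j_2=j_3=j_4$ produce $\sum_{j=1}^{J}S(T_{g_n^{(j)}}v^{(j)})$, so it remains to show that each of the finitely many cross terms—those for which the four indices are not all equal—has $L_{t,x}^{1}$ norm tending to zero. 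For such a term, pigeonhole produces two factors carrying distinct indices $j_a\neq j_b$. Pairing these two together and applying H\"older's inequality with exponents $(2,4,4)$,
\begin{equation*}
\Big\|\prod_{i=1}^{4}T_{g_n^{(j_i)}}v^{(j_i)}\Big\|_{L_{t,x}^{1}}\leq\big\||T_{g_n^{(j_a)}}v^{(j_a)}|\,|T_{g_n^{(j_b)}}v^{(j_b)}|\big\|_{L_{t,x}^{2}}\prod_{i\notin\{a,b\}}\|v^{(j_i)}\|_{L_{t,x}^{4}},
\end{equation*}
where I use that each $T_{g_n^{(j)}}$ preserves the $L_{t,x}^{4}$ norm. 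The second half of Lemma \ref{lem:as_o} applied with $\theta=1/2$ identifies the first $L_{t,x}^{2}$ factor with $\||T_{g_n^{(j_a)}}v^{(j_a)}|^{1/2}|T_{g_n^{(j_b)}}v^{(j_b)}|^{1/2}\|_{L_{t,x}^{4}}^{2}$, which vanishes as $n\to\infty$. Summing over the finitely many cross terms completes the argument.

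The only subtlety is the pigeonhole-and-pairing step in the scattering argument: across the possible multiplicity patterns $\{a,a,a,b\}$, $\{a,a,b,b\}$, $\{a,a,b,c\}$, and $\{a,b,c,d\}$ of $(j_1,j_2,j_3,j_4)$, one must always be able to select two positions that correspond to distinct indices. This is immediate by inspection in each case, so no essential obstacle arises beyond the routine verification of these four patterns.
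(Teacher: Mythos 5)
Your argument is correct and is precisely the route the paper intends: the paper states Proposition \ref{prop:as_dcup} as a direct consequence of Lemma \ref{lem:as_o}, and your expansion into diagonal terms plus finitely many cross terms, killed by the two parts of that lemma (with the $(2,4,4)$ H\"older pairing and $\theta=1/2$ for the scattering size), is the standard proof being invoked. No gaps; the pigeonhole selection of two distinct-index factors works in every multiplicity pattern exactly as you note.
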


Lastly, we recall the profile decomposition for the 2D linear Schr\"{o}dinger equation due to Merle and Vega (\cite{Merle1998}). We present the formulation from \cite{KillipClay} specialized to the 2D case.

\begin{thm}[Linear profile decompositon, \cite{Merle1998},\cite{KillipClay}]\label{prop:lpd} 
Let $u_{n}$ be a bounded sequence in $L^{2}(\R^{2})$. Passing to a subsequence if necessary, there exists $J^{*}\in\N_{0}\cup\{\infty\}$, functions $\{\phi^{(j)}\}_{j=1}^{J^{*}} \in L^{2}(\R^{2})$, and pairwise asymptotically orthogonal sequences of group elements $\{g_{n}^{(j)}\}_{j=1}^{J^{*}}\in G'$ so that implicitly defining $w_{n}^{(J)}$ by
\begin{equation}
u_{n}=\sum_{j=1}^{J}g_{n}^{(j)}\phi^{(j)}+w_{n}^{(J)}, \qquad \forall J\in \{1,\ldots,J^{*}\},
\end{equation}
the following properties hold:

\begin{equation}\label{eq:as_ss}
\lim_{J\rightarrow J^{*}}\limsup_{n\rightarrow\infty}S\paren*{e^{it\Delta}w_{n}^{(J)}}=0,
\end{equation}

\begin{equation}
(g_{n}^{(j)})^{-1}w_{n}^{(J)} \xrightharpoonup[L^{2}(\R^{2})]{n\rightarrow\infty} 0, \qquad \forall j\in\{1,\ldots,J\}, \label{eq:w_L2}
\end{equation}
and
\begin{equation}\label{eq:m_dcup}
\sup_{J\in\{1,\ldots,J^{*}\}}\lim_{n\rightarrow\infty}\left|M(u_{n})-\sum_{j=1}^{l}M(\phi^{(j)})-M(w_{n}^{(J)})\right|=0.
\end{equation}
We refer to \eqref{eq:as_ss} as asymptotically vanishing scattering size and \eqref{eq:m_dcup} as mass decoupling.
\end{thm}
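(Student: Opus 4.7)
The plan is to prove \ref{prop:lpd} by iteratively extracting bubbles of concentration from $u_n$ using a \emph{refined Strichartz inequality} adapted to the 2D $L^4_{t,x}$ norm. The key analytic input (essentially due to Merle--Vega, and reformulated in several subsequent works) is the inequality
\begin{equation}\label{eq:ref_strich_plan}
\|e^{it\Delta}f\|_{L^4_{t,x}(\R\times\R^2)} \lesssim \|f\|_{L^2(\R^2)}^{1/2}\,\sup_{Q}|Q|^{-1/2}\|\hat f\,\mathbf{1}_Q\|_{L^2(\R^2)}^{1/2},
\end{equation}
where the supremum is taken over all dyadic cubes $Q\subset\widehat\R^2$. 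I will take \eqref{eq:ref_strich_plan} as given (it follows from a standard bilinear/Whitney square-function argument using proposition \ref{prop:bs_B}).

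First I will extract the first profile. Set $\varepsilon_1 \coloneqq \limsup_n \|e^{it\Delta}u_n\|_{L^4_{t,x}}$; if $\varepsilon_1=0$ the decomposition is trivial with $J^*=0$. Otherwise \eqref{eq:ref_strich_plan} together with the $L^2$-bound on $u_n$ gives, after passing to a subsequence, a dyadic cube $Q_n$ of side $\sim (\lambda_n^{(1)})^{-1}$ and center $\xi_n^{(1)}$ satisfying $|Q_n|^{-1/2}\|\widehat{u_n}\mathbf 1_{Q_n}\|_{L^2}\gtrsim \varepsilon_1^{2}/\|u_n\|_{L^2}$. Modulating, translating and rescaling via the group element determined by $(\xi_n^{(1)},\lambda_n^{(1)})$, we may assume the relevant frequency mass of $(g_n^{(1)})^{-1}u_n$ sits in a fixed cube of unit size; Rellich--Kondrachov then yields, after a further spatial translation $x_n^{(1)}$ and a time parameter $t_n^{(1)}$ chosen to saturate the $L^4_{t,x}$-norm, a weak limit $\phi^{(1)}\in L^2$ with $\|\phi^{(1)}\|_{L^2}\gtrsim \varepsilon_1^{2}/\|u_n\|_{L^2}$.

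Next I will iterate. Set $w_n^{(1)}\coloneqq u_n - g_n^{(1)}\phi^{(1)}$. Weak convergence $(g_n^{(1)})^{-1}w_n^{(1)}\rightharpoonup 0$ gives the Pythagorean mass identity for the first step. Suppose profiles and group elements have been extracted up to level $J$; apply the same extraction to $w_n^{(J)}$ to produce $\phi^{(J+1)}$ and $g_n^{(J+1)}$. Asymptotic orthogonality of $g_n^{(J+1)}$ with all previous $g_n^{(j)}$ ($j\le J$) is established by contradiction: if some $(g_n^{(j)})^{-1}g_n^{(J+1)}$ stayed in a compact set $K\subset G'$, one could pass to a subsequence along which it converges to some $g_\infty$, and then the weak convergence \eqref{eq:w_L2} of $w_n^{(J)}$ after the action of $(g_n^{(j)})^{-1}$ would force $\phi^{(J+1)}=0$, contradicting the lower bound on $\|\phi^{(J+1)}\|_{L^2}$ produced in the extraction step.

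Finally I will verify mass decoupling and vanishing scattering size. Iterating the Pythagorean identity and using lemma \ref{lem:as_o} inductively gives
\begin{equation}
M(u_n) = \sum_{j=1}^{J} M(\phi^{(j)}) + M(w_n^{(J)}) + o(1)\quad\text{as }n\to\infty,
\end{equation}
which is \eqref{eq:m_dcup} and also shows $\sum_j M(\phi^{(j)}) \le \limsup_n M(u_n)<\infty$. Setting $\varepsilon_{J+1}\coloneqq \limsup_n \|e^{it\Delta}w_n^{(J)}\|_{L^4_{t,x}}$, the extraction lower bound yields $M(\phi^{(J+1)})\gtrsim \varepsilon_{J+1}^{4}/\|u_n\|_{L^2}^{2}$, so summability of the masses forces $\varepsilon_J\to 0$, which is \eqref{eq:as_ss}. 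The main obstacle is precisely this quantitative ``energy-extraction'' step: one has to produce a uniform lower bound for $\|\phi^{(J+1)}\|_{L^2}$ in terms of $\varepsilon_{J+1}$ that is \emph{independent of $J$}, and to ensure the weak limits survive the group-action normalization; both rely delicately on \eqref{eq:ref_strich_plan} and on the compactness afforded by concentrating in a \emph{fixed} unit frequency cube after applying the correctly chosen $g_n^{(J+1)}\in G'$. A standard diagonal argument then produces the subsequence working for all $J\in\{1,\dots,J^*\}$ simultaneously.
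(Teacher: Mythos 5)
The paper does not prove this theorem: it is imported verbatim from \cite{Merle1998} and \cite{KillipClay}, so there is no internal argument to compare against, and your sketch reproduces the standard inverse-Strichartz/bubble-extraction proof from those references. However, there is a genuine gap in the key quantitative input. The refined Strichartz inequality you take as given, $\|e^{it\Delta}f\|_{L^4_{t,x}}\lesssim \|f\|_{L^2}^{1/2}\bigl(\sup_Q|Q|^{-1/2}\|\hat f\,\mathbf{1}_Q\|_{L^2}\bigr)^{1/2}$, is false as stated: the left-hand side and $\|f\|_{L^2}$ are invariant under the $L^2$-critical scaling $f\mapsto \lambda f(\lambda\cdot)$, while $\sup_Q|Q|^{-1/2}\|\hat f\,\mathbf{1}_Q\|_{L^2}$ scales like $\lambda^{-1}$, so rescaling any nonzero $f$ violates the inequality. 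Equivalently, largeness of your right-hand side on a cube $Q_n$ of side $\ell_n\to 0$ only gives $\|\hat u_n\|_{L^2(Q_n)}\gtrsim \varepsilon_1^{2}\ell_n/\|u_n\|_{L^2}$, which degenerates after renormalizing the cube to unit scale; it therefore does not produce the $J$-independent lower bound on $\|\phi^{(J+1)}\|_{L^2}$ that your iteration, and hence \eqref{eq:as_ss}, requires -- and this is exactly the step you yourself identify as the crux.

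The repair is to use the correct scale-invariant refinement, in which the Fourier-localized norm is subcritical: for instance $\sup_Q|Q|^{\frac12-\frac1p}\|\hat f\|_{L^{p}(Q)}$ with $p<2$ (the Moyua--Vargas--Vega norms underlying \cite{Merle1998}), or, in the $d=2$ formulation of \cite{KillipClay}, an inverse Strichartz estimate proved via Tao's bilinear restriction theorem (proposition \ref{prop:br_T}); Bourgain's bilinear estimate (proposition \ref{prop:bs_B}) alone does not suffice at this two-dimensional endpoint. With that replacement, H\"older on the cube converts largeness of the refined norm into a fixed fraction of $L^2$ mass at unit frequency scale after applying $g_n^{(J+1)}$, and the nontriviality of the weak limit is obtained by weak-$L^2$ compactness plus testing against an explicit bump (not Rellich--Kondrachov, since the sequence is only $L^2$-bounded). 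Granting these corrections, the remainder of your scheme --- the Pythagorean decoupling, the orthogonality-by-contradiction argument, summability of the profile masses forcing $\varepsilon_J\to 0$, and the diagonal extraction --- is the argument of \cite{KillipClay}.
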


\subsection{Proof of Palais-Smale condition}\label{ssec:MMB_PS}
We now turn to proving proposition \ref{prop:PS}. By translating the $u_{n}$ in time and relabeling, we may assume that $t_{n}\equiv 0$, and therefore the assumption of the proposition becomes
\begin{equation}
\lim_{n\rightarrow\infty}S_{\geq 0}(u_{n})=\lim_{n\rightarrow\infty}S_{\leq 0}(u_{n})=\infty
\end{equation}
Applying the linear profile decomposition (proposition \ref{prop:lpd}) to the bounded sequence $u_{n}(0)$ and passing to a subsequence if necessary, we obtain the decomposition
\begin{equation}
u_{n}(0)=\sum_{j=1}^{J}g_{n}^{(j)}\phi^{(j)}+w_{n}^{(J)}, \qquad \forall J\in\{1,\ldots,J^{*}\}.
\end{equation}
By definition of $G'$, we can factorize the group element $g_{n}^{(j)}$ as $g_{n}^{(j)}=h_{n}^{(j)}e^{it_{n}^{(j)}\Delta}$, where $t_{n}^{(j)}\in\R$ and $h_{n}^{(j)}\in G$. By a diagonalization argument, we may assume that for all $j\in\{1,\ldots,J^{*}\}$, the sequence $t_{n}^{(j)}$ converges to some time $t^{(j)}\in [-\infty,+\infty]$. Moreover, if the limit $t^{(j)}\in (-\infty,+\infty)$, then we may write
\begin{equation}
g_{n}^{(j)}\phi^{(j)}=h_{n}^{(j)}e^{i(t_{n}^{(j)}-t^{(j)})\Delta}e^{it^{(j)}\Delta}\phi^{(j)},
\end{equation}
so that after modifying the $\phi^{(j)}$ and relabeling, we may assume that $t_{n}^{(j)}\rightarrow t^{(j)}\in\{0,\pm\infty\}$. Now if $\lim_{n\rightarrow\infty}t_{n}^{(j)}=0$, then by absorbing the error term $h_{n}^{(j)}\paren*{e^{it_{n}^{(j)}\Delta}\phi^{(j)}-\phi^{(j)}}$ into $w_{n}^{(J)}$, we may assume that $t_{n}^{(j)}\equiv 0$.

From the mass decoupling property \eqref{eq:m_dcup}, we obtain that
\begin{align}
\lim_{J\rightarrow J^{*}}\sum_{j=1}^{J}M(\phi^{(j)}) &= \lim_{J\rightarrow J^{*}}\paren*{\sum_{j=1}^{J}M(\phi^{(j)})+\lim_{n\rightarrow\infty}\paren*{M(u_{n})-\sum_{j=1}^{J}M(\phi^{(j)})-M(w_{n}^{(J)})}} \nonumber\\
&\leq \lim_{J\rightarrow J^{*}}\limsup_{n\rightarrow\infty} \paren*{M(u_{n})-M(w_{n}^{(J)})} \nonumber\\
&\leq \limsup_{n\rightarrow\infty} M(u_{n}) \nonumber\\
&\leq M_{c},
\end{align}
where the ultimate inequality follows from our choice of the sequence $u_{n}$. In particular, $\sup_{j\in\{1,\ldots,J^{*}\}}M(\phi^{(j)}) \leq M_{c}$. We next show that the supremum is actually equal to $M_{c}$.

\begin{lemma}\label{lem:M_sup}
$\sup_{j\in\{1,\ldots,J^{*}\}} M(\phi^{(j)})=M_{c}$.
\end{lemma}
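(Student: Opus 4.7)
The plan is to argue by contradiction. I would assume $\sup_{j\in\{1,\ldots,J^{*}\}} M(\phi^{(j)}) < M_{c}$, so that every profile $\phi^{(j)}$ has subcritical mass. For each $j$, I would associate a \emph{nonlinear profile} $v^{(j)}$ via theorem \ref{thm:LWP}: take $v^{(j)}$ to be the maximal-lifespan solution to \eqref{eq:DS} with $v^{(j)}(0)=\phi^{(j)}$ if $t^{(j)}=0$, and use assertion \ref{item:LWP_scat} of theorem \ref{thm:LWP} to select the unique maximal-lifespan solution that scatters to $e^{it\Delta}\phi^{(j)}$ as $t\to t^{(j)}$ if $t^{(j)}=\pm\infty$. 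Since $M(v^{(j)})=M(\phi^{(j)})<M_{c}$, each $v^{(j)}$ is global with $S_{\R}(v^{(j)}) \leq L(M(\phi^{(j)}))<\infty$.

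Next, I would define the approximate solutions
\[
\tilde{u}_{n}^{(J)}(t) \coloneqq \sum_{j=1}^{J} T_{g_{n}^{(j)}}v^{(j)}(t) + e^{it\Delta}w_{n}^{(J)},
\]
where the actions $T_{g_{n}^{(j)}}$ absorb the time shifts $t_{n}^{(j)}$ in the sense of definition \ref{def:G'}, so that by construction $\tilde{u}_{n}^{(J)}(0) = u_{n}(0)+o_{n}(1)$ in $L^{2}(\R^{2})$. I would then verify the hypotheses of the stability lemma \ref{lem:stab}: the uniform $L_{t}^{\infty}L_{x}^{2}$ bound on $\tilde{u}_{n}^{(J)}$ from the mass decoupling \eqref{eq:m_dcup}, the uniform $L_{t,x}^{4}$ bound from the scattering decoupling in proposition \ref{prop:as_dcup} combined with the asymptotically vanishing linear scattering size \eqref{eq:as_ss} of $e^{it\Delta}w_{n}^{(J)}$, and the smallness in $L_{t,x}^{4/3}([0,\infty)\times\R^{2})$ of the error
\[
e_{n}^{(J)} \coloneqq (i\partial_{t}+\Delta)\tilde{u}_{n}^{(J)} - F(\tilde{u}_{n}^{(J)})
\]
as first $n\to\infty$ and then $J\to J^{*}$. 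An application of \ref{lem:stab} would then yield $\limsup_{n\to\infty}S_{\R}(u_{n}) \lesssim 1$, contradicting $\lim_{n\to\infty}S_{\geq 0}(u_{n})=\infty$.

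The main obstacle will be the $L_{t,x}^{4/3}$ smallness of $e_{n}^{(J)}$. Since each $T_{g_{n}^{(j)}}v^{(j)}$ individually solves \eqref{eq:DS}, expanding $F(\tilde{u}_{n}^{(J)}) = -\L(|\tilde{u}_{n}^{(J)}|^{2})\tilde{u}_{n}^{(J)}$ with $\L=-\mu\,\mathrm{Id}+\E$ produces diagonal terms which cancel against $\sum_{j}T_{g_{n}^{(j)}}F(v^{(j)})$, leaving cross terms of the schematic form $\L\bigl(T_{g_{n}^{(j_{1})}}v^{(j_{1})}\,\overline{T_{g_{n}^{(j_{2})}}v^{(j_{2})}}\bigr)\,T_{g_{n}^{(j_{3})}}v^{(j_{3})}$ with $(j_{1},j_{2},j_{3})$ not constant, plus analogous interactions with $e^{it\Delta}w_{n}^{(J)}$. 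For an algebraic nonlinearity one could exploit associativity together with compact spacetime support approximations of the $v^{(j)}$ to make these cross terms vanish for large $n$. Here, because $\E$ is nonlocal, the cases in which the asymptotically orthogonal pair appears \emph{inside} the argument of $\E$ must be treated separately from those in which it appears \emph{outside}, as anticipated in subsection \ref{ssec:in_out}. The plan is therefore to first approximate each $v^{(j)}$ by a compactly supported Schwartz function, then run a careful case analysis on the dominant mode of asymptotic orthogonality in \eqref{eq:as_o}, using the $L^{p}$ and $\dot{C}^{\alpha}$ boundedness of $\E$ to transfer the smallness arising from the orthogonal pair to the full trilinear expression. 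This reduction is exactly what I would encapsulate as the asymptotic solvability lemma \ref{lem:as_sol}, which I would invoke to close the proof.
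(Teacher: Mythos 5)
Your proposal follows essentially the same route as the paper: contradiction from $\sup_{j}M(\phi^{(j)})\leq M_{c}-\varepsilon$, construction of nonlinear profiles via theorem \ref{thm:LWP}, approximate solutions $\sum_{j}v_{n}^{(j)}+e^{it\Delta}w_{n}^{(J)}$, verification of the stability hypotheses, and the case analysis on asymptotic orthogonality inside versus outside $\E$ that is exactly lemma \ref{lem:as_sol}. The one ingredient you should make explicit is the linear bound $L(M)\leq BM$ on $[0,M_{c}-\varepsilon]$ (from continuity and finiteness of $L$ together with the small-data theory), since finiteness of each $S(v^{(j)})$ alone does not give the summability $\sum_{j}S(v^{(j)})\leq BM_{c}$ needed to make the $L_{t,x}^{4}$ bound on $\tilde{u}_{n}^{(J)}$ uniform in $J$ (recall $J^{*}$ may be infinite) and to apply lemma \ref{lem:stab} with constants independent of $n$ and $J$.
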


\begin{proof} 
We prove the lemma by contradiction. Suppose that there exists $\varepsilon>0$ such that $\sup_{j\in\{1,\ldots,J^{*}\}}M(\phi^{(j)})\leq M_{c}-\varepsilon$. Since the function $L(\cdot)$ defined in \eqref{eq:L+} and \eqref{eq:L-} is monotonically nondecreasing, finite, and continuous on the interval $[0,M_{c}-\varepsilon]$, by revisiting theorem \ref{thm:LWP}, one can show that there exists a real number $B=B(\varepsilon,M_{c})>0$ such that $L(M)\leq BM$ for all $M\in [0,M_{c}-\varepsilon]$.

Next, for each $j\in \{1,\ldots,J^{*}\}$, we define the \emph{nonlinear profile} $v^{(j)}:\R\times\R^{2}\rightarrow\mathbb{C}$ associated to $\phi^{(j)}$ and depending on the limiting value of $t_{n}^{(j)}$ as follows.
\begin{itemize}[leftmargin=*]
\item
If $t_{n}^{(j)}\equiv 0$, then define $v^{(j)}$ to be the maximal-lifespan solution to \eqref{eq:DS} with initial datum $v^{(j)}(0)=\phi^{(j)}$.
\item
If $\lim_{n\rightarrow\infty}t_{n}^{(j)}=+\infty$, then define $v^{(j)}$ to be the maximal-lifespan solution to \eqref{eq:DS} which scatters forward in time to $e^{it\Delta}\phi^{(j)}$.
\item
If $\lim_{n\rightarrow\infty}t_{n}^{(j)}=-\infty$, then define $v^{(j)}$ to be the maximal-lifespan solution to \eqref{eq:DS} which scatters backwards in time to $e^{it\Delta}\phi^{(j)}$.
\end{itemize}
By mass conservation,
\begin{equation}
M(v^{(j)})=M(\phi^{(j)})\leq M_{c}-\varepsilon,
\end{equation}
so by definition of the critical mass, we have that $L(M_{c}-\varepsilon)<\infty$, from which it follows that $v^{(j)}$ is indeed defined globally in time, as implicitly claimed above. Moreover, by definition of $L(\cdot)$,
\begin{equation}\label{eq:nlp_bd}
S(v^{(j)})\leq L(M(\phi^{(j)}))\leq BM(\phi^{(j)}).
\end{equation}

Now for each $J\leq \{1,\ldots,J^{*}\}$, we define the sequence of approximants $u_{n}^{(J)}\in C_{t}^{0}L_{x}^{2}(\R\times\R^{2})\cap L_{t,x}^{4}(\R\times\R^{2})$ to $u_{n}$ by
\begin{equation}
u_{n}^{(J)}(t) \coloneqq \sum_{j=1}^{J} v_{n}^{(j)}(t) + e^{it\Delta}w_{n}^{(J)},
\end{equation}
where
\begin{equation}
v_{n}^{(J)}(t) \coloneqq T_{h_{n}^{(j)}}[v^{(j)}(\cdot+t_{n}^{(j)})](t).
\end{equation}
By proposition \ref{prop:as_dcup} (using that $T_{h_{n}^{(j)}}$ preserves the scattering size), we have that
\begin{equation}
\lim_{n\rightarrow\infty} S\paren*{\sum_{j=1}^{J}v_{n}^{(j)}} = \sum_{j=1}^{J}S(v^{(j)}),
\end{equation}
and so we obtain the estimate
\begin{equation}
\lim_{n\rightarrow\infty}S(u_{n}^{(J)}) \lesssim \limsup_{n\rightarrow\infty} S\paren*{\sum_{j=1}^{J}v_{n}^{(j)}}+\limsup_{n\rightarrow\infty}S\paren*{e^{it\Delta}w_{n}^{(J)}} =\sum_{j=1}^{J}S(v^{(j)})+\limsup_{n\rightarrow\infty}S\paren*{e^{it\Delta}w_{n}^{(J)}}.
\end{equation}
Taking the $\lim_{J\rightarrow J^{*}}$ of both sides and using \eqref{eq:as_ss}, we obtain that
\begin{equation}\label{eq:ss_bd}
\lim_{J\rightarrow J^{*}}\limsup_{n\rightarrow\infty} S(u_{n}^{(J)}) \lesssim \lim_{J\rightarrow J^{*}}\sum_{j=1}^{J}S(v^{(j)})\leq \lim_{J\rightarrow J^{*}}\sum_{j=1}^{J}BM(\phi^{(j)})=B\sum_{j=1}^{J^{*}}M(\phi^{(j)})\leq BM_{c}.
\end{equation}

\begin{lemma}[Asymptotic agreement of initial data]\label{lem:as_id} 
For each finite $J\in \{1,\ldots,J^{*}\}$,
\begin{equation}
\lim_{n\rightarrow\infty} M\paren*{u_{n}^{(J)}(0)-u_{n}(0)}=0.
\end{equation}
\end{lemma}
\begin{proof}
We first claim that it suffices to show that
\begin{equation}\label{eq:as_id_red}
\lim_{n\rightarrow\infty}M\paren*{v_{n}^{(j)}(0)-g_{n}^{(j)}\phi^{(j)}}=0, \qquad \forall j\in\{1,\ldots,J\}.
\end{equation}
Indeed, observe that
\begin{equation}
u_{n}^{(J)}(0)-u_{n}(0) = \paren*{\sum_{j=1}^{J}v_{n}^{(j)}(0)+e^{i(0)\Delta}w_{n}^{(J)}}-\paren*{\sum_{j=1}^{J}g_{n}^{(j)}\phi^{(j)}+w_{n}^{(J)}} = \sum_{j=1}^{J}\paren*{v_{n}^{(j)}(0)-g_{n}^{(j)}\phi^{(j)}},
\end{equation}
which by Cauchy-Schwarz implies that
\begin{equation}
M\paren*{u_{n}^{(J)}(0)-u_{n}(0)} \leq J\sum_{j=1}^{J} M\paren*{v_{n}^{(j)}(0)-g_{n}^{(j)}\phi^{(j)}}.
\end{equation}
Since $J\in\mathbb{N}$ is fixed,
\begin{equation}
\limsup_{n\rightarrow\infty} M\paren*{u_{n}^{(J)}(0)-u_{n}(0)} \leq J\sum_{j=1}^{J}\limsup_{n\rightarrow\infty}M\paren*{v_{n}^{(j)}(0)-g_{n}^{(j)}\phi^{(j)}} =0,
\end{equation}
comleting the proof of the claim.

We turn to showing \eqref{eq:as_id_red}. Observe that by $G$-invariance of the mass functional,
\begin{equation}
M\paren*{v_{n}^{(j)}(0)-g_{n}^{(j)}\phi^{(j)}}=M\paren*{h_{n}^{(j)}(v^{(j)}(t_{n}^{(j)}))-h_{n}^{(j)}(e^{it_{n}^{(j)}\Delta}\phi^{(j)})}=M\paren*{v^{(j)}(t_{n}^{(j)})-e^{it_{n}^{(j)}\Delta}\phi^{(j)}}.
\end{equation}
To conclude the proof of \eqref{eq:as_id_red}, we consider the cases for the construction of $v^{(j)}$.
\begin{itemize}[leftmargin=*]
\item
If $t_{n}^{(j)}\equiv 0$, then since $v^{(j)}(0) = \phi^{(j)}$ by definition, so that
\begin{equation}
\lim_{n\rightarrow\infty} M\left(v^{(j)}(t_{n}^{(j)})-e^{it_{n}^{(j)}\Delta}\phi^{(j)}\right)=M\left(v^{(j)}(0)-\phi^{(j)}\right)=0.
\end{equation}
\item
If $\lim_{n\rightarrow\infty}t_{n}^{(j)}=\pm\infty$, then the desired result follows from the definitions of forward scattering and backward scattering, respectively.
\end{itemize}
\end{proof}

With the exception of the proof of the stability lemma, we have so far not had to consider the fact that the eeDS nonlinearity differs from that of NLS by a nonlocal term. However, this difference will cause a nontrivial technical complication in the proof of the subsequent lemma.

\begin{lemma}[Asymptotic solvability of eqn.]\label{lem:as_sol}
\begin{equation}
\lim_{J\rightarrow J^{*}}\limsup_{n\rightarrow\infty} \|(i\partial_{t}+\Delta)u_{n}^{(J)}-F(u_{n}^{(J)})\|_{L_{t,x}^{4}(\R\times\R^{2})}=0.
\end{equation}
\end{lemma}
\begin{proof}
Since $v^{(j)}$ is a solution to the eeDS equation and $e^{it\Delta}w_{n}^{(l)}$ is a solution to the free Schr\"{o}dinger equation, we have that
\begin{equation}
(i\partial_{t}+\Delta)u_{n}^{(J)}=\sum_{j=1}^{J}(i\partial_{t}+\Delta)v_{n}^{(j)}+\underbrace{(i\partial_{t}+\Delta)(e^{it\Delta}w_{n}^{(l)})}_{=0}=\sum_{j=1}^{J}F(v_{n}^{(j)}).
\end{equation}
By the triangle inequality,
\begin{align}
\|(i\partial_{t}+\Delta)u_{n}^{(J)}-F(u_{n}^{(J)})\|_{L_{t,x}^{4/3}(\R\times\R^{2})}&\leq \|(i\partial_{t}+\Delta)u_{n}^{(J)}-F(u_{n}^{(J)}-e^{it\Delta}w_{n}^{(J)})\|_{L_{t,x}^{4/3}(\R\times\R^{2})} \nonumber\\
&\phantom{=}+\|F(u_{n}^{(J)}-e^{it\Delta}w_{n}^{(J)})-F(u_{n}^{(J)})\|_{L_{t,x}^{4/3}(\R\times\R^{2})} \nonumber\\
&=\|\paren*{\sum_{j=1}^{J}F(v_{n}^{(j)})}-F\left(\sum_{j=1}^{J}v_{n}^{(j)}\right)\|_{L_{t,x}^{4/3}(\R\times\R^{2})} \nonumber\\
&\phantom{=}+\|F(u_{n}^{(J)}-e^{it\Delta}w_{n}^{(J)})-F(u_{n}^{(J)})\|_{L_{t,x}^{4/3}(\R\times\R^{2})}.
\end{align}
Therefore, it suffices  to prove
\begin{enumerate}[(i)]
\item\label{item:as_sol_1}
\begin{equation}
\lim_{J\rightarrow J^{*}}\limsup_{n\rightarrow\infty} \|\sum_{j=1}^{J}F(v_{n}^{(j)})-F\left(\sum_{j=1}^{J}v_{n}^{(j)}\right)\|_{L_{t,x}^{4/3}(\R\times\R^{2})} = 0
\end{equation}
and
\item\label{item:as_sol_2}
\begin{equation}
\lim_{J\rightarrow J^{*}} \limsup_{n\rightarrow\infty} \|F(u_{n}^{(J)}-e^{it\Delta}w_{n}^{(J)})-F(u_{n}^{(J)})\|_{L_{t,x}^{4/3}(\R\times\R^{2})} = 0.
\end{equation}
\end{enumerate}

To show \ref{item:as_sol_2}, we recall that $F(u)=-\mathcal{L}(|u|^{2})u$ and use the elementary identity
\begin{equation}
\mathcal{L}(|u-v|^{2})(u-v)-\mathcal{L}(|u|^{2})u=-v\mathcal{L}(|u|^{2})+\mathcal{L}\paren*{-2\Re{u\bar{v}}+|v|^{2}}(u-v),
\end{equation}
H\"{o}lder's inequality, and the $L^{2}$ boundedness of the operator $\mathcal{L}$ to obtain that
\begin{equation}
\|F(u_{n}^{(J)} - e^{it\Delta}w_{n}^{(J)})-F(u_{n}^{(J)})\|_{L_{t,x}^{4/3}(\R\times\R^{2})}\lesssim \|e^{it\Delta}w_{n}^{(J)}\|_{L_{t,x}^{4}(\R\times\R^{2})} \paren*{\|u_{n}^{(J)}\|_{L_{t,x}^{4}(\R\times\R^{2})}^{2}+\|e^{it\Delta}w_{n}^{(J)}\|_{L_{t,x}^{4}(\R\times\R^{2})}^{2}}.
\end{equation}
Taking $\lim_{J\rightarrow J^{*}}\limsup_{n\rightarrow\infty}$ of both sides and using \eqref{eq:as_ss} and \eqref{eq:ss_bd} completes the proof of \ref{item:as_sol_2}.

To show \ref{item:as_sol_1}, we decompose the nonlinearity $F(u)$ into a ``local piece" $F_{1}(u)$ and a ``nonlocal piece" by $F(u)=\mu|u|^{2}u-\E(|u|^{2})u=:F_{1}(u)+F_{2}(u)$. For $F_{1}(u)$,  we use the elementary inequality
\begin{equation}
	\left| F_{1}\paren*{\sum_{j=1}^{J}z_{j}} -\sum_{j=1}^{J}F_{1}(z_{j})\right| \lesssim_{J}\sum_{j\neq j'}|z_{j}||z_{j'}|^{2}, \qquad \forall z_{1},\ldots,z_{l}\in\mathbb{C}
\end{equation}
to obtain
\begin{equation}
\|F_{1}\paren*{\sum_{j=1}^{J}v_{n}^{(j)}}-\sum_{j=1}^{J}F_{1}(v_{n}^{(j)})\|_{L_{t,x}^{4/3}(\R\times\R^{2})} \lesssim_{J}\sum_{j\neq j'}\|v_{n}^{(j)}v_{n}^{(j')}\|_{L_{t,x}^{2}(\R\times\R^{2})}\|v_{n}^{(j')}\|_{L_{t,x}^{4}(\R\times\R^{2})}.
\end{equation}
Now, writing out the definitions of $v_{n}^{(j)}, v_{n}^{(j')}$, using the invariance of the $L_{t,x}^{4}$ norm under the $G'$ spacetime action together with with lemma \ref{lem:as_o}, we obtain that for each $J$ fixed,
\begin{equation}
\limsup_{n\rightarrow\infty} \sum_{j\neq j'}\|v_{n}^{(j)}v_{n}^{(j')}\|_{L_{t,x}^{2}(\R\times\R^{2})}\|v_{n}^{(j')}\|_{L_{t,x}^{4}(\R\times\R^{2})}=0,
\end{equation}
which completes the proof of \ref{item:as_sol_1} for the case of $F_{1}$.

For $F_{2}$, we need to proceed more delicately, as we must not destroy the cancellation in the kernel of the operator $\E$. First, observe that for each $J$ fixed,
\begin{align}
\left|F_{2}\paren*{\sum_{j=1}^{J}v_{n}^{(j)}} - \sum_{j=1}^{J}F_{2}(v_{n}^{(j)})\right| &\leq \left|\E\paren*{\sum_{j_{2}\neq j_{3}}^{J}v_{n}^{(j_{2})}\ol{v_{n}^{(j_{3})}}} \paren*{\sum_{j_{1}=1}^{J}v_{n}^{(j_{1})}} \right|+\left|\sum_{j_{2}=1}^{J}\E\paren*{|v_{n}^{(j_{2})}|^{2}}\paren*{\sum_{j_{1}\neq j_{2}}^{J} v_{n}^{(j_{1})}}\right| \nonumber\\
&=:\mathrm{Term}_{1}+\mathrm{Term}_{2}. \label{eq:aso_cases}
\end{align}

We first consider $\mathrm{Term}_{1}$. Since $T_{h_{n}^{(j)}}$ preserves the scattering size, it follows that $S(v_{n}^{(j)})\leq BM(\phi^{(j)})$. Therefore, we obtain from H\"{o}lder's inequality, Plancherel's theorem, the triangle inequality, and lemma \ref{lem:as_o} that
\begin{align}
\limsup_{n\rightarrow\infty}\|\mathrm{Term}_{1}\|_{L_{t,x}^{4/3}(\R\times\R^{2})} &\leq \limsup_{n\rightarrow\infty}\sum_{j_{2}\neq j_{3}}^{J} \|v_{n}^{(j_{2})}v_{n}^{(j_{3})}\|_{L_{t,x}^{2}(\R\times\R^{2})}\sum_{j_{1}=1}^{J} \|v^{(j_{1})}\|_{L_{t,x}^{4}(\R\times\R^{2})} \nonumber\\
&=0.
\end{align}

We next consider $\mathrm{Term}_{2}$. We first use the triangle inequality to obtain that
\begin{equation}
\|\mathrm{Term}_{2}\|_{L_{t,x}^{4/3}(\R\times\R^{2})}\leq \sum_{j_{1}\neq j_{2}}^{J} \|\E(|v_{n}^{(j_{2})}|^{2})v_{n}^{(j_{1})}\|_{L_{t,x}^{4/3}(\R\times\R^{2})}.
\end{equation}
So it suffices to show that for each $(j_{1},j_{2})\in\{1,\ldots,J\}^{2}$ with $j_{1}\neq j_{2}$,
\begin{equation}
\limsup_{n\rightarrow\infty}\|\E(|v_{n}^{(j_{2})}|^{2})v_{n}^{(j_{1})}\|_{L_{t,x}^{4/3}(\R\times\R^{2})}=0,
\end{equation}
which we do by contradiction. Passing to a subsequence if necessary, we may suppose that there exists a pair $(j_{1},j_{2})$ with $j_{1}\neq j_{2}$ and such that
\begin{equation}
\lim_{n\rightarrow\infty}\|\E(|v_{n}^{(j_{2})}|^{2})v_{n}^{(j_{1})}\|_{L_{t,x}^{4/3}(\R\times\R^{2})}=\delta>0.
\end{equation}
Given $\epsilon>0$, by density, we may find functions $v_{\epsilon}^{(j_{1})}, v_{\epsilon}^{(j_{2})}\in C_{c}^{\infty}(\R\times\R^{2})$ such that $\|v_{\epsilon}^{(j_{i})}-v^{(j_{i})}\|_{L_{t,x}^{4}(\R\times\R^{2})}<\epsilon$ for $i=1,2$. Hence by \eqref{eq:nlp_bd}, H\"{o}lder's and triangle inequalities, and Plancherel's theorem, we see that
\begin{align}
\limsup_{n\rightarrow\infty} \|\E(|v_{n}^{(j_{2})}|^{2})v_{n}^{(j_{1})}\|_{L_{t,x}^{4/3}(\R\times\R^{2})} &= \limsup_{n\rightarrow\infty}\|\E\paren*{|(v_{n}^{(j_{2})}-v_{\epsilon,n}^{(j_{2})}+v_{\epsilon,n}^{(j_{2})}|^{2}}\paren*{(v_{n}^{(j_{1})}-v_{\epsilon,n}^{(j_{1})})+v_{\epsilon,n}^{(j_{1})}}\|_{L_{t,x}^{4/3}(\R\times\R^{2})} \nonumber\\
&\leq \limsup_{n\rightarrow\infty}\brac*{\|\E(|v_{\varepsilon,n}^{(j_{2})}|^{2})v_{\varepsilon,n}^{(j_{1})}\|_{L_{t,x}^{4/3}(\R\times\R^{2})}} + C\epsilon\paren*{\epsilon+(BM_{c})^{1/4}}^{2}, \label{eq:as_sol_c2_red}
\end{align}
for some absolute constant $C>0$. Choosing $\epsilon>0$ sufficiently small so that $C\epsilon(\epsilon+(BM_{c})^{1/4})^{2}<\delta$, we see that it suffices to show that the first term in \eqref{eq:as_sol_c2_red} tends to zero as $n\rightarrow\infty$. So without loss of generality, we may assume that $v^{(j_{1})}, v^{(j_{2})}\in C_{c}^{\infty}(\R\times\R^{2})$.

To obtain a contradiction, we examine the asymptotic orthogonality condition for $j_{1},j_{2}$ and divide into cases.
\begin{itemize}[leftmargin=*]
\item 
If $\lim_{n\rightarrow\infty}\frac{\lambda_{n}^{(j_{1})}}{\lambda_{n}^{(j_{2})}}=0$, then we make the spacetime change of variables
\begin{equation}
s = \frac{t}{(\lambda_{n}^{(j_{1})})^{2}} +t_{n}^{(j_{1})}, \enspace y=\frac{x-x_{n}^{(j_{1})}-2\xi_{n}^{(j_{1})}t}{\lambda_{n}^{(j_{1})}}
\end{equation}
and use the translation and dilation symmetry of $\E$ to write
\begin{equation}
\|\E(|v_{n}^{(j_{1})}|^{2})v_{n}^{(j_{2})}\|_{L_{t,x}^{4/3}} = \paren*{\frac{\lambda_{n}^{(j_{1})}}{\lambda_{n}^{(j_{2})}}} \paren*{\int_{\R}\int_{\R^{2}}ds dy |\E(|v^{(j_{1})}(s)|^{2})(y)|^{4/3} |v^{(j_{2})}(\Phi_{n}(s,y))|^{4/3}}^{3/4},
\end{equation}
where
\begin{equation}
\Phi_{n}(s,y) \coloneqq \paren*{\frac{(\lambda_{n}^{(j_{1})})^{2}(s-t_{n}^{(j_{1})})}{(\lambda_{n}^{(j_{2})})^{2}} + t_{n}^{(j_{2})}, \frac{\lambda_{n}^{(j_{1})}y+x_{n}^{(j_{1})}-x_{n}^{(j_{2})} +2(\lambda_{n}^{(j_{1})})^{2}(t_{n}^{(j_{1})}-s)(\xi_{n}^{(j_{2})}-\xi_{n}^{(j_{1})})}{\lambda_{n}^{(j_{2})}}}.
\end{equation}
Then by H\"{o}lder's inequality and the Calder\'{o}n-Zygmund theorem,
\begin{equation}
\|\E(|v_{n}^{(j_{1})}|^{2})v_{n}^{(j_{2})}\|_{L_{t,x}^{4/3}} \leq \paren*{\frac{\lambda_{n}^{(j_{1})}}{\lambda_{n}^{(j_{2})}}} \||v^{(j_{1})}|^{2}\|_{L_{t,x}^{4/3}} \|v^{(j_{2})}\|_{L_{t,x}^{\infty}},
\end{equation}
which tends to zero as $n\rightarrow\infty$.

\item 
If $\lim_{n\rightarrow\infty}\frac{\lambda_{n}^{(j_{2})}}{\lambda_{n}^{(j_{1})}}=0$, then we make the spacetime change of variables,
\begin{equation}
s=\frac{t}{(\lambda_{n}^{(j_{2})})^{2}} +t_{n}^{(j_{2})}, \enspace y=\frac{x-x_{n}^{(j_{2})}-2\xi_{n}^{(j_{2})}t}{\lambda_{n}^{(j_{2})}}
\end{equation}
and use the translation and dilation symmetry of $\E$ to write
\begin{equation}
\|\E(|v_{n}^{(j_{1})}|^{2})v_{n}^{(j_{2})}\|_{L_{t,x}^{4/3}} = \paren*{\frac{\lambda_{n}^{(j_{2})}}{\lambda_{n}^{(j_{1})}}}^{2}  \paren*{\int_{\R}\int_{\R^{2}}dsdy |\E(|v^{(j_{1})}\circ\Phi_{n}(s,\cdot)|^{2})(y)|^{4/3} |v^{(j_{2})}(s,y)|^{4/3}}^{3/4},
\end{equation}
where
\begin{equation}
\Phi_{n}(s,y) \coloneqq \paren*{\frac{(\lambda_{n}^{(j_{2})})^{2}(s-t_{n}^{(j_{2})})}{(\lambda_{n}^{(j_{1})})^{2}} + t_{n}^{(j_{1})}, \frac{\lambda_{n}^{(j_{2})}y + x_{n}^{(j_{2})} - x_{n}^{(j_{1})} + 2(\lambda_{n}^{(j_{2})})^{2}(t_{n}^{(j_{2})}-s)(\xi_{n}^{(j_{1})}-\xi_{n}^{(j_{2})})}{\lambda_{n}^{(j_{1})}}}.
\end{equation}
Then by H\"{o}lder's inequality,
\begin{align}
\|\E(|v_{n}^{(j_{1})}|^{2})v_{n}^{(j_{2})}\|_{L_{t,x}^{4/3}} &\leq \paren*{\frac{\lambda_{n}^{(j_{2})}}{\lambda_{n}^{(j_{1})}}}^{2} \|\E(|v^{(j_{1})}\circ\Phi_{n}|^{2})\|_{L_{t,x}^{\infty}} \|v^{(j_{2})}\|_{L_{t,x}^{4/3}} \nonumber\\
&\lesssim \paren*{\frac{\lambda_{n}^{(j_{2})}}{\lambda_{n}^{(j_{1})}}}^{2-}\paren*{\|v^{(j_{1})}\|_{L_{t}^{\infty}L_{x}^{\infty-}}^{2}+\|v^{(j_{1})}\|_{L_{t}^{\infty}{C}_{x}^{0+}}^{2}} \|v^{(j_{2})}\|_{L_{t,x}^{4/3}},
\end{align}

\item 
Suppose that we are not in the previous two cases. Then passing to a subsequence if necessary, we may assume that $\lim_{n\rightarrow\infty}\frac{\lambda_{n}^{(j_{2})}}{\lambda_{n}^{(j_{1})}}$ exists and is positive. Let $R=R(j_{1},j_{2})>0$ be sufficiently large so that $\supp_{t,x}(v^{(j_{1})}), \supp_{t,x}(v^{(j_{2})}) \subset B_{t,x}(0,R)$. If $\E(|v_{n}^{(j_{1})}|^{2})(t,x) v_{n}^{(j_{2})}(t,x) \neq 0$, then
\begin{equation}
\max\brac*{\left|\frac{t}{(\lambda_{n}^{(j_{1})})^{2}}+t_{n}^{(j_{1})}\right|, \left|\frac{t}{(\lambda_{n}^{(j_{2})})^{2}}+t_{n}^{(j_{2})}\right|} \leq R.
\end{equation}
By the triangle inequality, the preceding condition implies that
\begin{equation}\label{eq:as_sol_C3}
\frac{|(\lambda_{n}^{(j_{1})})^{2}t_{n}^{(j_{1})}-(\lambda_{n}^{(j_{2})})^{2}t_{n}^{(j_{2})}|}{\lambda_{n}^{(j_{1})}\lambda_{n}^{(j_{2})}} \leq CR,
\end{equation}
where $C=C(j_{1},j_{2})>0$ is some constant depending on the value of $\lim_{n\rightarrow\infty}\frac{\lambda_{n}^{(j_{1})}}{\lambda_{n}^{(j_{2})}}$. If
\begin{equation}
\lim_{n\rightarrow\infty}\frac{|t_{n}^{(j_{1})}(\lambda_{n}^{(j_{1})})^{2}-t_{n}^{(j_{2})}(\lambda_{n}^{(j_{2})})^{2}|}{\lambda_{n}^{(j_{1})}\lambda_{n}^{(j_{2})}}=\infty,
\end{equation}
then the LHS of the inequality \eqref{eq:as_sol_C3} tends to $\infty$ as $n\rightarrow\infty$. Hence, there exists an integer $n_{0}=n_{0}(R)\in\N$ such that for all $n\geq n_{0}$, $\|\E(|v_{n}^{(j_{1})}|^{2})v_{n}^{(j_{2})}\|_{L_{t,x}^{4/3}}=0$.

\item 
Passing to a subsequence if necessary, suppose in addition that
\begin{equation}
\lim_{n\rightarrow\infty}\frac{|t_{n}^{(j_{1})}(\lambda_{n}^{(j_{1})})^{2}-t_{n}^{(j_{2})}(\lambda_{n}^{(j_{2})})^{2}|}{\lambda_{n}^{(j_{1})}\lambda_{n}^{(j_{2})}}
\end{equation}
exists and is finite. If
\begin{equation}
\lim_{n\rightarrow\infty}\lambda_{n}^{(j_{1})}\lambda_{n}^{(j_{2})}|\xi_{n}^{(j_{1})}-\xi_{n}^{(j_{2})}|^{2}=\infty,
\end{equation}
then by using that $\lim_{n\rightarrow\infty} \frac{\lambda_{n}^{(j_{1})}}{\lambda_{n}^{(j_{2})}}$ exists and is finite and positive, we have that
\begin{equation}
\lim_{n\rightarrow\infty}\frac{(\lambda_{n}^{(j_{1})})^{2}|\xi_{n}^{(j_{2})}-\xi_{n}^{(j_{1})}|}{\lambda_{n}^{(j_{2})}}=\infty.
\end{equation}
Since the symbol of $\E$ belongs to $L^{\infty}$ and $\|\mathcal{F}_{t,x}(|v^{(j_{1})}|^{2})\|_{L_{t,x}^{1}}<\infty$ by the qualitative assumption that $v^{(j_{1})}\in C_{c}^{\infty}(\R\times\R^{2})$, the Riemann-Lebesgue lemma yields that, given $\epsilon>0$, there exists $R'=R'(\epsilon)>0$ such that $\sup_{|x|\geq R'}|\E(|v^{(j_{1})}(t)|^{2})(x)|<\epsilon$ for all $t\in \supp_{t}(v^{(j_{1})})$. Therefore by the triangle inequality,
\begin{align}
\|\E(|v_{n}^{(j_{1})}|^{2})v_{n}^{(j_{2})}\|_{L_{t,x}^{4/3}} &\lesssim \epsilon\sup_{n} \paren*{\frac{\lambda_{n}^{(j_{1})}}{\lambda_{n}^{(j_{2})}}} \|v^{(j_{2})}\circ\Phi_{n}\|_{L_{t,x}^{4/3}} \\
&\phantom{=}+\paren*{\int_{|s|\leq R}\int_{|y|\leq R'}dsdy |\E(|v^{(j_{1})}(s)|^{2})(x)|^{4/3} |v^{(j_{2})}\circ\Phi_{n}(s,y)|^{4/3}}^{3/4} \nonumber\\
&\eqqcolon \mathrm{Term}_{1,n}+\mathrm{Term}_{2,n},
\end{align}
where $\Phi_{n}$ is defined as in the first case.

For the sequence $\mathrm{Term}_{1,n}$, we use the assumption that $\frac{\lambda_{n}^{(j_{1})}}{\lambda_{n}^{(j_{2})}} \sim 1$ to obtain the estimate
\begin{equation}
\limsup_{n\rightarrow\infty} \mathrm{Term}_{1,n} \lesssim_{j_{1},j_{2}} \epsilon \|v^{(j_{2})}\|_{L_{t,x}^{4/3}}.
\end{equation}

For the sequence $\mathrm{Term}_{2,n}$, the compact spacetime support of $v^{(j_{2})}$ implies that the integrand vanishes identically unless
\begin{equation}
\frac{|x_{n}^{(j_{1})}-x_{n}^{(j_{2})} + 2(\lambda_{n}^{(j_{1})})^{2}(t_{n}^{(j_{1})}-s)(\xi_{n}^{(j_{2})}-\xi_{n}^{(j_{1})})|}{\lambda_{n}^{(j_{2})}} \leq C(R+R'),
\end{equation}
where $C>0$ is some constant depending on the diameter of $\supp_{t,x}(v^{(j_{2})})$ and the value of $\lim_{n\rightarrow\infty} \frac{\lambda_{n}^{(j_{1})}}{\lambda_{n}^{(j_{2})}}$. Hence, we may write
\begin{equation}
\frac{2s(\xi_{n}^{(j_{1})}-\xi_{n}^{(j_{2})})(\lambda_{n}^{(j_{1})})^{2}}{\lambda_{n}^{(j_{2})}} = \underbrace{-\frac{x_{n}^{(j_{1})}-x_{n}^{(j_{2})}-2(\lambda_{n}^{(j_{1})})^{2}t_{n}^{(j_{1})}(\xi_{n}^{(j_{1})}-\xi_{n}^{(j_{2})})}{\lambda_{n}^{(j_{2})}}}_{\coloneqq y_{n}} + \sigma_{n}, \qquad \sigma_{n} \in \ol{B_{x}}(0, C(R+R')),
\end{equation}
which implies that $s$ in the support of the integrand $1_{|s|\leq R}1_{|y|\leq R'}\E(|v^{(j_{1})}|^{2}) (v^{(j_{2})}\circ\Phi_{n})$ must satisfy
\begin{equation}
\left| |s| - \frac{|y_{n}|}{\frac{2|\xi_{n}^{(j_{1})}-\xi_{n}^{(j_{2})}|(\lambda_{n}^{(j_{1})})^{2}}{\lambda_{n}^{(j_{2})}}}\right| \leq \frac{|\sigma_{n}|}{\frac{2|\xi_{n}^{(j_{1})}-\xi_{n}^{(j_{2})}|(\lambda_{n}^{(j_{1})})^{2}}{\lambda_{n}^{(j_{2})}}} \lesssim \frac{C(R+R')}{\frac{2|\xi_{n}^{(j_{1})}-\xi_{n}^{(j_{2})}|(\lambda_{n}^{(j_{1})})^{2}}{\lambda_{n}^{(j_{2})}}}.
\end{equation}
The RHS of the ultimate inequality tends to zero as $n\rightarrow\infty$, hence the 1D Lebesgue measure of the set $J_{n}$ of times $s\in\R$ such that $1_{|y|\leq R'}\E(|v^{(j_{1})}(s)|^{2}) (v^{(j_{2})}\circ\Phi_{n}(s,\cdot)) \neq 0$ tends to zero as $n\rightarrow\infty$. By the absolute continuity of the Lebesgue integral, we conclude that $\lim_{n\rightarrow\infty}\mathrm{Term}_{2,n}=0$.

Combining our analysis for both sequences, we have shown that
\begin{equation}
\limsup_{n\rightarrow\infty} \|\E(|v_{n}^{(j_{1})}|^{2})v_{n}^{(j_{2})}\|_{L_{t,x}^{4/3}} \lesssim_{j_{1},j_{2}} \epsilon \|v^{(j_{2})}\|_{L_{t,x}^{4/3}}.
\end{equation}
Since $\epsilon>0$ was arbitrary, we conclude that the LHS of the inequality is in fact zero.

\item 
Passing to a subsequence if necessary, suppose in addition that
\begin{equation}
\lim_{n\rightarrow\infty}\lambda_{n}^{(j_{1})}\lambda_{n}^{(j_{2})}|\xi_{n}^{(j_{1})}-\xi_{n}^{(j_{2})}|^{2}
\end{equation}
exists and is finite, possibly zero. Exhausting the possible cases in the definition of asymptotic orthogonality, we must have that
\begin{equation}
\lim_{n\rightarrow\infty} \frac{|x_{n}^{(j_{1})}-x_{n}^{(j_{2})}-2t_{n}^{(j_{1})}(\lambda_{n}^{(j_{1})})^{2}(\xi_{n}^{(j_{1})}-\xi_{n}^{(j_{2})})|^{2}}{\lambda_{n}^{(j_{1})}\lambda_{n}^{(j_{2})}} = \infty,
\end{equation}
which, using the assumption that $\frac{\lambda_{n}^{(j_{1})}}{\lambda_{n}^{(j_{2})}}\sim 1$, implies that
\begin{equation}
\lim_{n\rightarrow\infty} \frac{|x_{n}^{(j_{1})}-x_{n}^{(j_{2})}-2t_{n}^{(j_{1})}(\lambda_{n}^{(j_{1})})^{2}(\xi_{n}^{(j_{1})}-\xi_{n}^{(j_{2})})|}{\lambda_{n}^{(j_{2})}}=\infty.
\end{equation}
We make the same spacetime change of variable as in the previous case to obtain
\begin{equation}
\|\E(|v_{n}^{(j_{1})}|^{2})v_{n}^{(j_{2})}\|_{L_{t,x}^{4/3}} \leq \mathrm{Term}_{1,n}+\mathrm{Term}_{2,n},
\end{equation}
with $\mathrm{Term}_{1,n},\mathrm{Term}_{2,n}$ defined exactly as before. The analysis for the sequence $\mathrm{Term}_{1,n}$ is the same as in the previous case, but now for the sequence $\mathrm{Term}_{2,n}$, we use that $(s,y)$ must satisfy the condition
\begin{equation}
\left|\frac{\lambda_{n}^{(j_{1})}}{\lambda_{n}^{(j_{2})}}y-\frac{2s(\lambda_{n}^{(j_{1})})^{2}(\xi_{n}^{(j_{2})}-\xi_{n}^{(j_{1})})}{\lambda_{n}^{(j_{2})}}+\frac{x_{n}^{(j_{1})}-x_{n}^{(j_{2})} +2(\lambda_{n}^{(j_{1})})^{2}t_{n}^{(j_{1})}(\xi_{n}^{(j_{2})}-\xi_{n}^{(j_{1})})}{\lambda_{n}^{(j_{2})}}\right| \leq R
\end{equation}
in order for $\Phi_{n}(s,y)$ to belong to the spacetime support of $v^{(j_{2})}$. By the reverse triangle inequality,
\begin{equation}
\begin{split}
&\left|\frac{\lambda_{n}^{(j_{1})}}{\lambda_{n}^{(j_{2})}}y-\frac{2s(\lambda_{n}^{(j_{1})})^{2}(\xi_{n}^{(j_{2})}-\xi_{n}^{(j_{1})})}{\lambda_{n}^{(j_{2})}}+\frac{x_{n}^{(j_{1})}-x_{n}^{(j_{2})} +2(\lambda_{n}^{(j_{1})})^{2}t_{n}^{(j_{1})}(\xi_{n}^{(j_{2})}-\xi_{n}^{(j_{1})})}{\lambda_{n}^{(j_{2})}}\right| \\
&\phantom{=} \geq \left|\frac{x_{n}^{(j_{1})}-x_{n}^{(j_{2})} +2(\lambda_{n}^{(j_{1})})^{2}t_{n}^{(j_{1})}(\xi_{n}^{(j_{2})}-\xi_{n}^{(j_{1})})}{\lambda_{n}^{(j_{2})}}\right| -\frac{\lambda_{n}^{(j_{1})}}{\lambda_{n}^{(j_{2})}}R - \frac{2(\lambda_{n}^{(j_{1})})^{2}|\xi_{n}^{(j_{2})}-\xi_{n}^{(j_{1})}|}{\lambda_{n}^{(j_{2})}}R.
\end{split}
\end{equation}
Since the $\lim_{n\rightarrow\infty}$ of the lower bound is $\infty$, we conclude that there exists an $n_{0}=n_{0}(j_{1},j_{2})\in\N$ such that for all $n\geq n_{0}$, $\mathrm{Term}_{2,n}= 0$. Hence, we have shown that for every $\epsilon>0$,
\begin{equation}
\limsup_{n\rightarrow\infty}\|\E(|v_{n}^{(j_{1})}|^{2})v_{n}^{(j_{2})}\|_{L_{t,x}^{4/3}} \lesssim_{j_{1},j_{2}} \epsilon\|v^{(j_{2})}\|_{L_{t,x}^{4/3}},
\end{equation}
which implies that the LHS is in fact zero.
\end{itemize}

By exhaustion of all possible cases, we have obtained the necessary contradiction to complete the proof of \ref{item:as_sol_1} and hence the proof of lemma \ref{lem:as_sol}.
\end{proof}

The remainder of the proof of lemma \ref{lem:M_sup} proceeds as in the NLS case. We know that $\lim_{J\rightarrow J^{*}}\limsup_{n\rightarrow\infty} S(u_{n}^{(J)}) \leq BM_{c}$. Let $J_{0}\in\N$ be sufficiently large so that for every $J\geq J_{0}$, $\limsup_{n\rightarrow\infty} S(u_{n}^{(J)}) \leq 2BM_{c}$. By definition of $\limsup$ and lemmas \ref{lem:as_id} and \ref{lem:as_sol}, taking $J_{0}$ larger if necessary, for each $J\geq J_{0}$, there exists $n_{0}=n_{0}(J)\in\N$ such that for all $n\geq n_{0}$,
\begin{equation}
S(u_{n}^{(J)}) \leq 3BM_{c}; \quad \|(i\p_{t}+\Delta)u_{n}^{(J)}-F(u_{n}^{(J)})\|_{L_{t,x}^{4/3}(\R\times\R^{2})}\leq \delta'; \quad M\paren*{u_{n}^{(J)}(0)-u_{n}(0)} \leq \delta,
\end{equation}
where $\delta',\delta>0$ are parameters to values of which to be subsequently fixed. By hypothesis that $\lim_{n\rightarrow\infty} S(u_{n}) =\infty$, there exists an $n_{0}'\in\N$ such that for all $n\geq n_{0}'$,
\begin{equation}
S(u_{n})^{1/4} \geq (100BM_{c})^{1/4}+1.
\end{equation}
Since $u_{n}^{(J)}$ is global, choosing $\delta',\delta>0$ sufficiently small and by taking the compact interval $I$ arbitrarily large in the statement of lemma \ref{lem:stab}, we see that for fixed $J\geq J_{0}$, there is an integer $n\geq n_{0}$ and a global solution $\tilde{u}_{n}$ to \eqref{eq:DS} with initial data $u_{n}(0)$ such that
\begin{equation}
S(u_{n}^{(J)}-\tilde{u}_{n}) \leq 1.
\end{equation}
By uniqueness of solutions to \eqref{eq:DS}, $\tilde{u}_{n}=u_{n}$. By the triangle inequality,
\begin{equation}
S(u_{n})^{1/4} \leq S(u_{n}^{(J)}-u_{n})^{1/4} + S(u_{n}^{(J)})^{1/4} \leq 1 + (3BM_{c})^{1/4} < S(u_{n})^{1/4}
\end{equation}
which is a contradiction. This last step completes the proof of lemma \ref{lem:M_sup}.
\end{proof}

We next show that there is only one linear profile $\phi^{(j)}$ (i.e. $J^{*}=1$).

\begin{lemma}\label{lem:1_bub}
$J^{*}=1$.
\end{lemma}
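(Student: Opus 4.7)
The plan is to exploit the mass decoupling property \eqref{eq:m_dcup} of the linear profile decomposition in combination with the conclusion $\sup_{j\in\{1,\ldots,J^{*}\}} M(\phi^{(j)}) = M_{c}$ of Lemma \ref{lem:M_sup}. The main point is that the finite-mass envelope $M_{c}$ forces at most one profile to carry positive mass.

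I would first argue that the supremum in Lemma \ref{lem:M_sup} is actually attained. Indeed, the computation preceding Lemma \ref{lem:M_sup} shows that
\begin{equation}
\sum_{j=1}^{J^{*}} M(\phi^{(j)}) \leq M_{c} < \infty,
\end{equation}
so in particular the summands $M(\phi^{(j)})$ tend to zero as $j \to J^{*}$ (which is automatic if $J^{*}$ is finite, and follows from convergence of the series if $J^{*} = \infty$). Hence the supremum of $M(\phi^{(j)})$ over $j \in \{1,\ldots,J^{*}\}$ is attained at some index $j_{0}$, and after relabeling we may assume $j_{0} = 1$, so that $M(\phi^{(1)}) = M_{c}$.

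Next, I would invoke the mass decoupling \eqref{eq:m_dcup} for $J \geq 2$: since all nonnegative terms sum to at most $M_{c}$ and already $M(\phi^{(1)}) = M_{c}$, every remaining mass must vanish, i.e.
\begin{equation}
M(\phi^{(j)}) = 0 \quad \text{for all } j \geq 2, \qquad \text{and} \qquad \lim_{n\to\infty} M(w_{n}^{(J)}) = 0 \quad \text{for all } J \geq 1.
\end{equation}
Since the mass functional is a norm on $L^{2}(\R^{2})$, it follows that $\phi^{(j)} \equiv 0$ for $j \geq 2$, and therefore $J^{*} = 1$, as desired. (The vanishing of $\lim_{n} M(w_{n}^{(1)})$ will in fact be useful in the continuation of the Palais-Smale argument, since it implies that, modulo the symmetry action $g_{n}^{(1)}$, the initial data $u_{n}(0)$ converge strongly in $L^{2}(\R^{2})$ to $\phi^{(1)}$.)

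I expect this step to be essentially a direct bookkeeping argument; the substantive content has already been packaged into Lemma \ref{lem:M_sup}, whose proof crucially required the stability lemma \ref{lem:stab} and the asymptotic solvability Lemma \ref{lem:as_sol}. The only minor point requiring care is to ensure that the supremum really is attained, which is a consequence of the summability of $M(\phi^{(j)})$; no further analytic input from the eeDS structure is needed here.
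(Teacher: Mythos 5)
Your argument is correct and takes essentially the same route as the paper: both proofs combine Lemma \ref{lem:M_sup} with the previously established bound $\sum_{j=1}^{J^{*}}M(\phi^{(j)})\leq M_{c}$ to force a single profile to carry the full critical mass, whence the remaining profiles vanish. The only cosmetic difference is how attainment of the supremum is handled --- you deduce it from summability (the masses tend to zero while the supremum is $M_{c}>0$), whereas the paper rules out non-attainment by producing two distinct indices whose masses would sum to more than $M_{c}$ --- and your parenthetical remark that $\lim_{n}M(w_{n}^{(J)})=0$ is precisely the observation the paper records immediately after the lemma.
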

\begin{proof}
Given any $\varepsilon \in (0,M_{c}/2)$, by definition of supremum, there exists a $j_{1}\in\{1,\ldots,J^{*}\}$ such that $M(\phi^{(j_{1})})\geq M_{c}-\varepsilon$.

If $M(\phi^{(j_{1})})=M_{c}$, then since
\begin{equation}
M_{c} = M(\phi^{(j_{1})}) \leq \sum_{j=1}^{J^{*}}M(\phi^{(j)}) \leq M_{c},
\end{equation}
we see that $\sum_{j\in\{1,\ldots,J^{*}\}\setminus\{j_{1}\}} M(\phi^{(j)}) = 0$.

If $M(\phi^{(j_{1})}) < M_{c}-\varepsilon'$ for some $\varepsilon'>0$, then by definition of supremum, there exists $j_{2}\in \{1,\ldots,J^{*}\}\setminus\{j_{1}\}$ such that $M(\phi^{(j_{1})}) \geq M_{c}-\varepsilon'$. But then
\begin{equation}
M_{c} < 2M_{c}-\varepsilon-\varepsilon' \leq M(\phi^{(j_{1})}) + M(\phi^{(j_{2})}) \leq \sum_{j=1}^{J^{*}} M(\phi^{(j)}) \leq M_{c},
\end{equation}
which is a contradiction.
\end{proof}

With lemma \ref{lem:1_bub}, we may omit the superscripts $(j), (J)$ so that the linear profile decomposition simplifies to
\begin{equation}
u_{n}(0)=h_{n}e^{it_{n}\Delta}\phi+w_{n},
\end{equation}
for some sequences $t_{n}\in\R$ with $t_{n}\equiv 0$ or $\lim_{n\rightarrow\infty}t_{n}\in\{\pm\infty\}$, $h_{n}\in G$, $\phi\in L^{2}(\R^{2})$ with $M(\phi)=M_{c}$, and $w_{n}\in L^{2}(\R^{2})$ with $\lim_{n\rightarrow\infty}M(w_{n})=0$. Note that by Strichartz estimates, the last property implies $\lim_{n\rightarrow\infty}S(e^{it\Delta}w_{n})=0$.

To conclude the proof of proposition \ref{prop:PS}, we consider three cases based on the limiting value of the sequence $t_{n}$.
\begin{itemize}[leftmargin=*]
\item 
If $t_{n}\equiv 0$, then the desired conclusion is immediate from the $G$-invariance of the mass functional and $\lim_{n\rightarrow\infty} M(w_{n})=0$.
\item 
Suppose that $\lim_{n\rightarrow\infty}t_{n}=\infty$. By Strichartz estimates, $S(e^{it\Delta}\phi)\lesssim M_{c}^{2}$, from which dominated convergence and time translation invariance imply that
\begin{equation}
\lim_{n\rightarrow\infty}S_{\geq t_{n}}\left(e^{it\Delta}\phi\right)=\lim_{n\rightarrow\infty}S_{\geq 0}\left(e^{it\Delta}e^{it_{n}\Delta}\phi\right)=0.
\end{equation}
Since the free Schr\"{o}dinger equation is $G$-invariant, we have the operator identity $T_{h_{n}}e^{it\Delta}=e^{it\Delta}h_{n}$. Since $T_{h_{n}}$ preserves the partial scattering size $S_{\geq 0}$, we obtain that
\begin{equation}
\lim_{n\rightarrow\infty}S_{\geq 0}\left(e^{it\Delta}h_{n}e^{it_{n}\Delta}\phi\right)=0,
\end{equation}
which together with the linear profile decomposition for $u_{n}(0)$ and the triangle inequality implies that
\begin{equation}
\lim_{n\rightarrow\infty}S_{\geq 0}\left(e^{it\Delta}u_{n}(0)\right)=0.
\end{equation}
Applying lemma \ref{lem:stab} with $\tilde{u}\equiv 0$, we conclude that
\begin{equation}
\lim_{n\rightarrow\infty}S_{\geq 0}(u_{n})=0,
\end{equation}
which contradicts our blowup assumption for for the sequence $u_{n}$. Therefore, we have precluded this case.
\item 
Suppose that $\lim_{n\rightarrow\infty}t_{n}=-\infty$. The argument to preclude this case is completely analogously to that of the previous case, except now replacing $S_{\geq 0}(\cdot)$ by $S_{\leq 0}(\cdot)$.
\end{itemize}

\subsection{Further refinements}\label{ssec:MMB_ref}
As shown in the work \cite{Killip2009} for the mass-critical NLS, by further appealing to the concentration compactness theory, one can exhibit almost periodic modulo symmetries blowup solutions with additional properties. The arguments to prove these additional properties are quite general and largely do not depend on the exact form of the nonlinearity but only on the concentration compactness principle satisfied by the equation. As the eeDS possesses an analogous concentration compactness theory, these arguments apply mostly with little modification. Therefore we will only sketch below some of the proofs, highlighting any modifications necessary for the eeDS case. We closely follow \cite{KillipClay} in our exposition here. In the sequel, solution will always refer to a solution to \eqref{eq:DS} unless specified otherwise.

\begin{remark}\label{rem:const_dep}
If $u:I\times\R^{2}\rightarrow\mathbb{C}$ is a maximal-lifespan solution which is almost periodic modulo symmetries with parameters $x(t),\xi(t), N(t)$ and compactness modulus function $C$, then $u_{\lambda}:\lambda^{-2}I\times\R^{2}\rightarrow\mathbb{C}$ defined by
\begin{equation}
u_{\lambda}(t,x) \coloneqq \lambda u(\lambda^{2}t,\lambda x), \qquad (t,x) \in \lambda^{-2}I\times\R^{2}
\end{equation}
is a maximal-lifespan solution which is almost periodic modulo symmetries with parameters
\begin{align}
x_{\lambda}(t) &\coloneqq \lambda^{-1}x(\lambda^{2}t),\\
\xi_{\lambda}(t) &\coloneqq \lambda \xi(\lambda^{2}t),\\
N_{\lambda}(t) &\coloneqq \lambda N(\lambda^{2}t)
\end{align}
and compactness modulus function $C$. Since the mass and scattering size are invariant under the DS scaling, the implicit constants in this subsection which depend on a solution $u$ are uniform in all such rescalings of $u$.
\end{remark}

\begin{lemma}[Local constancy of parameters]\label{lem:lc}
Let $u:I\times\R^{2}\rightarrow\C$ be a nonzero maximal lifespan solution which is almost periodic modulo symmetries with parameters $x(t),\xi(t),N(t)$. Then there exists a $0<\delta=\delta(u)\leq 1$, such that for every $t_{0}\in I$,
\begin{equation}
\left[t_{0}-\frac{\delta}{N(t_{0})^{2}}, t_{0}+\frac{\delta}{N(t_{0})^{2}}\right] \subset I
\end{equation}
and
\begin{gather}
N(t)\sim_{u}N(t_{0})\\
|\xi(t)-\xi(t_{0})| \lesssim_{u} N(t_{0})\\
|x(t)-x(t_{0}) - 2(t-t_{0})\xi(t_{0})| \lesssim_{u} N(t_{0})^{-1}
\end{gather}
for all $|t-t_{0}| \leq \delta N(t_{0})^{-2}$.
\end{lemma}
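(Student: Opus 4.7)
The plan is a contradiction argument that combines the precompactness afforded by almost periodicity with the stability lemma (Lemma \ref{lem:stab}) and the scaling symmetries of \eqref{eq:DS}. Suppose the conclusion fails. Taking $\delta = 1/n$, I would extract sequences $t_n \in I$ and $t'_n$ with either $t'_n \notin I$ or $|t'_n - t_n| \leq 1/(nN(t_n)^2)$, for which one of the three estimates fails. Applying the DS scaling with factor $\lambda_n = N(t_n)$, the Galilean boost by $-\xi(t_n)/N(t_n)$, the spatial translation by $-N(t_n)x(t_n)$, and the time translation $t_n \mapsto 0$ via Remark \ref{rem:const_dep}, I obtain a sequence of maximal-lifespan solutions $\tilde u_n$ of \eqref{eq:DS} that are almost periodic modulo symmetries with the same mass and compactness modulus function $C$ as $u$, and whose rescaled parameters $(\tilde x_n(s), \tilde\xi_n(s), \tilde N_n(s))$ are normalized to $(0,0,1)$ at $s=0$. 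With $s_n := (t'_n - t_n)N(t_n)^2$, the failure hypothesis yields $|s_n|\to 0$ together with at least one of: $\tilde N_n(s_n)$ escaping every compact subinterval of $(0,\infty)$; $|\tilde\xi_n(s_n)|\to\infty$; or $|\tilde x_n(s_n)|\to\infty$.

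Next I would extract a convergent subsequence. The functions $\tilde u_n(0)$ are bounded in $L^2$ and uniformly localized,
\begin{equation*}
\int_{|x|\geq C(\eta)}|\tilde u_n(0,x)|^2\,dx + \int_{|\xi|\geq C(\eta)}|\hat{\tilde u}_n(0,\xi)|^2\,d\xi \leq \eta, \qquad \forall n,\enspace \forall \eta>0.
\end{equation*}
By a Riesz--Kolmogorov-type argument combining physical compactness with Plancherel's theorem, such a set is precompact in $L^2(\R^2)$, so along a subsequence $\tilde u_n(0)\to u_\infty$ strongly in $L^2$, and $u_\infty\neq 0$ because $M(u_\infty) = M(u) > 0$. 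Let $v$ be the maximal-lifespan solution of \eqref{eq:DS} with $v(0) = u_\infty$. By Theorem \ref{thm:LWP}, there exists a fixed $\delta_* = \delta_*(u_\infty)>0$ on which $v$ is defined with $L := \|v\|_{L_{t,x}^{4}([-\delta_*,\delta_*]\times\R^2)} < \infty$.

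Applying Lemma \ref{lem:stab} to the approximate solution $v$ with zero error and with $\|\tilde u_n(0) - u_\infty\|_{L^2(\R^2)}\to 0$ (so that $\|e^{it\Delta}(\tilde u_n(0) - u_\infty)\|_{L^{4}_{t,x}([-\delta_*,\delta_*]\times\R^2)}\to 0$ by Strichartz estimates), I conclude that for all sufficiently large $n$ the interval $[-\delta_*,\delta_*]$ lies in the lifespan of $\tilde u_n$ and $\tilde u_n\to v$ in $C_t^0L_x^2([-\delta_*,\delta_*]\times\R^2)$. Unwinding the rescaling, this already yields the containment $[t_0 - \delta_*/N(t_0)^2, t_0+\delta_*/N(t_0)^2]\subset I$ with $\delta := \delta_*$. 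Moreover, since $|s_n|\to 0$, continuity of $v$ in time combined with uniform convergence gives $\tilde u_n(s_n)\to u_\infty$ strongly in $L^2$.

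The contradiction comes from combining this strong convergence with almost periodicity of $\tilde u_n$ at time $s_n$:
\begin{equation*}
\int_{|x-\tilde x_n(s_n)|\geq C(\eta)/\tilde N_n(s_n)}|\tilde u_n(s_n,x)|^2\,dx + \int_{|\xi-\tilde\xi_n(s_n)|\geq C(\eta)\tilde N_n(s_n)}|\hat{\tilde u}_n(s_n,\xi)|^2\,d\xi \leq \eta.
\end{equation*}
In each failure subcase, the $L^2$ limit of $\tilde u_n(s_n)$ would have to vanish: $\tilde N_n(s_n)\to 0$ spreads the mass to arbitrarily large balls; $\tilde N_n(s_n)\to\infty$ concentrates it to a shrinking region (forcing weak $L^2$ convergence to zero against continuous compactly supported test functions); $|\tilde\xi_n(s_n)|\to\infty$ carries the Fourier support to infinity; $|\tilde x_n(s_n)|\to\infty$ translates the profile off to spatial infinity. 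Each possibility contradicts $\tilde u_n(s_n)\to u_\infty\neq 0$, completing the proof. I expect the main obstacle to be the uniform application of Lemma \ref{lem:stab}: the smallness threshold $\varepsilon_1(M,M',L)$ must be attainable once $\|\tilde u_n(0) - u_\infty\|_{L^2(\R^2)}$ is sufficiently small, so I need $L$ and $M$ to be bounded independently of $n$, which holds since $L$ depends only on the fixed limit profile $u_\infty$ and $M = M(u)^{1/2}$ is conserved along the flow.
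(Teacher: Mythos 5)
Your argument is correct and is essentially the proof the paper relies on: for this lemma the paper simply cites Lemma 5.18 of \cite{KillipClay}, whose proof is exactly your scheme of normalizing by the symmetry group, extracting a strong $L^{2}$ limit from almost periodicity, and propagating it on a fixed time interval via the local theory and the stability lemma \ref{lem:stab}. One small repair: the subcase $\tilde N_n(s_n)\to 0$ is ruled out by the \emph{frequency} localization (the Fourier support of $\tilde u_n(s_n)$ collapses to a point, forcing the strong $L^{2}$ limit to vanish), not by mass spreading in physical space, which by itself yields no contradiction.
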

\begin{proof}
See lemma 5.18 in \cite{KillipClay}.
\end{proof}.

\begin{cor}[$N(t)$ at blowup]
Let $u:I\times\R^{2}\rightarrow\mathbb{C}$ be a nonzero maximal lifespan solution which is almost periodic modulo symmetries with frequency scale function $N(t)$. If one of the endpoints $T$ of $I$ is finite, then
\begin{equation}
N(t) \gtrsim_{u} |T-t|^{-1/2}.
\end{equation}
If $I$ is (semi-)infinite, then for any $t_{0}\in I$,
\begin{equation}
N(t) \gtrsim_{u} \min\{N(t_{0}), |t-t_{0}|^{-1/2}\}.
\end{equation}
\end{cor}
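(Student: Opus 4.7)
The plan is to derive both assertions directly from the local constancy of parameters in lemma \ref{lem:lc}; no further ingredients are needed beyond this lemma together with the openness of the maximal lifespan (assertion \ref{item:LWP_open} of theorem \ref{thm:LWP}). In both parts I will apply lemma \ref{lem:lc} not at the reference point but at the point $t$ where I wish to bound $N(t)$ from below, which is the only subtle choice in the argument.

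For the first assertion, suppose $T = \sup I < \infty$, the case of a finite left endpoint being symmetric. Fix any $t \in I$ and apply lemma \ref{lem:lc} at $t$: the closed interval $[t - \delta/N(t)^{2},\, t + \delta/N(t)^{2}]$ is contained in $I$. Since $I$ is open and $T \notin I$, one has $t + \delta/N(t)^{2} < T$, which gives $N(t)^{2} > \delta/(T-t)$, i.e.\ $N(t) \gtrsim_{u} |T-t|^{-1/2}$.

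For the second assertion, I will argue by dichotomy. Fix $t_{0} \in I$ and let $t \in I$ be arbitrary; applying lemma \ref{lem:lc} at $t$ yields the comparability $N(s) \sim_{u} N(t)$ on the interval $[t - \delta/N(t)^{2},\, t + \delta/N(t)^{2}]$. If $t_{0}$ belongs to this interval, then $N(t_{0}) \sim_{u} N(t)$, and in particular $N(t) \gtrsim_{u} N(t_{0})$. If $t_{0}$ does not belong to this interval, then $|t - t_{0}| > \delta/N(t)^{2}$, so $N(t) > \sqrt{\delta}\,|t-t_{0}|^{-1/2}$. Combining the two cases produces the stated bound $N(t) \gtrsim_{u} \min\{N(t_{0}), |t-t_{0}|^{-1/2}\}$.

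There is no real obstacle here: the corollary is essentially a restatement of lemma \ref{lem:lc} read contrapositively. The implicit constants produced by $\sim_{u}$ in that lemma transfer in a controlled way (depending only on $u$ and $\delta$) to the implicit constants in the bounds above, and by remark \ref{rem:const_dep} these are uniform under the Davey--Stewartson scaling.
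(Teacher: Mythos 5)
Your proof is correct and is essentially the same argument as the one the paper invokes (the paper simply cites corollary 5.19 of \cite{KillipClay}, whose proof is exactly this application of lemma \ref{lem:lc} at the point $t$, using openness of the maximal lifespan in the finite-endpoint case and the dichotomy $t_{0}\in[t-\delta N(t)^{-2},t+\delta N(t)^{-2}]$ or not in the (semi-)infinite case). Nothing further is needed.
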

\begin{proof}
See corollary 5.19 in \cite{KillipClay}.
\end{proof}

\begin{lemma}[Local quasi-boundness]\label{lem:lqb}
Let $u:I\times\R^{2}\rightarrow\mathbb{C}$ be a nonzero solution which is almost periodic modulo symmetries with frequency scale function $N(t)$. If $K\subset I$ is compact, then
\begin{equation}
0 < \inf_{t\in K} N(t) \leq \sup_{t\in K} N(t) <\infty.
\end{equation}
\end{lemma}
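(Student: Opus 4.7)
The plan is to deduce this lemma directly from the local constancy of parameters lemma (Lemma \ref{lem:lc}) together with a straightforward compactness argument. Since lifespans $I$ are intervals, and since we only lose generality by enlarging $K$, I would first replace $K$ by its closed convex hull inside $I$, which is a closed interval $[a,b] \subset I$. It then suffices to show that $N$ is bounded above and below by positive constants (depending on $u$) on $[a,b]$.

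Next, for each $t \in [a,b]$, Lemma \ref{lem:lc} produces a neighborhood
\[
J_{t} \coloneqq \paren*{t - \tfrac{\delta}{N(t)^{2}},\, t + \tfrac{\delta}{N(t)^{2}}} \cap I,
\]
contained in $I$, on which $N(s) \sim_{u} N(t)$ with a constant depending only on $u$. The open cover $\{J_{t}\}_{t\in [a,b]}$ of the compact set $[a,b]$ admits a finite subcover $J_{t_{1}}, \ldots, J_{t_{n}}$, which after reordering we may take with $t_{1}<t_{2}<\cdots<t_{n}$. Because $[a,b]$ is connected and is the union of the open intervals $J_{t_{k}}$, consecutive intervals $J_{t_{k}}$ and $J_{t_{k+1}}$ must overlap; otherwise $[a,b]$ would split as a disjoint union of two nonempty relatively open sets.

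The final step is to chain the local comparisons. Pick any $s_{k} \in J_{t_{k}}\cap J_{t_{k+1}}$ for $k=1,\ldots,n-1$. Then $N(t_{k}) \sim_{u} N(s_{k}) \sim_{u} N(t_{k+1})$, so by iterating $n-1$ times we obtain
\[
N(t_{k}) \sim_{u,n} N(t_{1}), \qquad \forall k\in\{1,\ldots,n\}.
\]
For any $t \in [a,b]$ there exists $k$ with $t\in J_{t_{k}}$, and hence
\[
N(t) \sim_{u} N(t_{k}) \sim_{u,n} N(t_{1}).
\]
Consequently $N$ is bounded above and bounded away from zero on $[a,b]$ by positive constants depending on $u$ and the finite cover, which is exactly the assertion $0 < \inf_{t\in K} N(t) \leq \sup_{t\in K} N(t) < \infty$.

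There is essentially no hard step here; the whole argument is a standard compactness-plus-chaining exercise built on top of Lemma \ref{lem:lc}. The only point worth emphasizing is that the implicit constants from the local constancy lemma, together with the (finite but cover-dependent) number of chaining steps, determine the bounds — but since we only need \emph{some} positive finite bounds for fixed compact $K$, this is harmless.
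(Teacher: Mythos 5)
Your strategy is the right one and is essentially the argument behind the result as cited: the paper does not prove this lemma itself but refers to Lemma 5.20 of \cite{KillipClay}, whose proof is exactly this combination of the local constancy lemma (Lemma \ref{lem:lc}) with compactness of $K$. One prerequisite worth recording explicitly is that $N(t)>0$ for every $t\in I$: if $N(t_{0})=0$, the frequency localization in the definition of almost periodicity gives $M(u)\leq \eta$ for every $\eta>0$, i.e. $u\equiv 0$, contradicting the hypothesis; this is needed both to invoke Lemma \ref{lem:lc} at $t_{0}$ and to conclude that the infimum is strictly positive.

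The one step whose justification is wrong as stated is the chaining. Ordering the finitely many centers $t_{1}<\cdots<t_{n}$ does \emph{not} guarantee $J_{t_{k}}\cap J_{t_{k+1}}\neq\emptyset$: an interval with a later center can be wide enough to bridge the gap, so $[a,b]$ need not split into two relatively open pieces. For example, with $[a,b]=[0,6]$ and intervals $(-4,4)$, $(4.5,5.5)$, $(3,9)$ centered at $0$, $5$, $6$, the two consecutive intervals $(-4,4)$ and $(4.5,5.5)$ are disjoint although the union covers $[0,6]$ and is connected. Fortunately the chain is not needed for the conclusion: once you have a finite subcover $J_{t_{1}},\ldots,J_{t_{n}}$ with $N(s)\sim_{u}N(t_{k})$ on $J_{t_{k}}$, every $t\in[a,b]$ lies in some $J_{t_{k}}$ and hence $C(u)^{-1}\min_{k}N(t_{k})\leq N(t)\leq C(u)\max_{k}N(t_{k})$, which already yields $0<\inf_{K}N\leq\sup_{K}N<\infty$ since each $N(t_{k})\in(0,\infty)$. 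If you do want comparability of all values to the single quantity $N(t_{1})$, replace the consecutive-overlap claim by the observation that the intersection graph of the sets $J_{t_{k}}$ is connected (otherwise the union over one component and the union over its complement would disconnect $[a,b]$), or pass to an irredundant subcover of the interval, and then chain along a path in that graph.
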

\begin{proof}
See lemma 5.20 in \cite{KillipClay}.
\end{proof}

\begin{lemma}[Strichartz norms via N(t)]\label{lem:SN}
Let $u:I\times\R^{2}\rightarrow\mathbb{C}$ be a nonzero maximal-lifespan solution that is almost periodic modulo symmetries with parameters $x(t),\xi(t),N(t)$. Then for any compact subinterval $J\subset I$,
\begin{equation}
\int_{J}N(t)^{2}dt \lesssim_{u} \|u\|_{L_{t,x}^{4}(J\times\R^{2})}^{4} \lesssim_{u} 1+\int_{J}N(t)^{2}dt.
\end{equation}
\end{lemma}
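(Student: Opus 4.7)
The plan is to prove the two inequalities separately. For the lower bound, I will exploit the spatial concentration afforded by almost periodicity. Since $u$ is nonzero, mass conservation implies $M(u) > 0$, so choosing $\eta := M(u)/2$ in the almost periodicity inequality yields
\begin{equation*}
\int_{|x-x(t)| \leq C(\eta)/N(t)} |u(t,x)|^{2} \, dx \geq \frac{M(u)}{2}, \qquad \forall t \in I.
\end{equation*}
Applying Cauchy--Schwarz on this ball gives $M(u)/2 \leq \pi^{1/2} C(\eta) N(t)^{-1} \|u(t)\|_{L^{4}_{x}(\R^{2})}^{2}$, so $\|u(t)\|_{L^{4}_{x}(\R^{2})}^{4} \gtrsim_{u} N(t)^{2}$. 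Integrating over $t \in J$ produces the lower bound.

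For the upper bound, my plan is to first establish a uniform local $L^{4}_{t,x}$ estimate for $u$ adapted to the time scale $N(t_{0})^{-2}$, and then cover $J$ by such local windows via a greedy partition. Setting $g(t_{0}) := g_{0,\xi(t_{0}),x(t_{0}),1/N(t_{0})} \in G$, almost periodicity modulo $G$ implies that $\mathcal{K} := \{g(t_{0})^{-1} u(t_{0}) : t_{0} \in I\}$ is precompact in $L^{2}(\R^{2})$. Covering $\mathcal{K}$ by finitely many $L^{2}$-balls of radius $m_{0}/2$ centered at functions $\phi_{1},\ldots,\phi_{n}$ (with $m_{0}$ the small-data threshold from theorem \ref{thm:LWP}), applying dominated convergence inside the Strichartz inequality to select a common $T_{0} > 0$ with $\|e^{it\Delta}\phi_{j}\|_{L^{4}_{t,x}([-T_{0},T_{0}]\times\R^{2})} \leq \varepsilon_{0}$ for each $j$ (where $\varepsilon_{0}$ is the smallness threshold from the stability lemma \ref{lem:stab} applied to $\tilde u \equiv 0$), and then invoking stability, I obtain constants $T_{0} = T_{0}(u)$ and $C_{0} = C_{0}(u) > 0$ such that the rescaled solution $v_{t_{0}} := T_{g(t_{0})^{-1}} u$ satisfies
\begin{equation*}
\|v_{t_{0}}\|_{L^{4}_{t,x}(([-T_{0},T_{0}] \cap N(t_{0})^{2}(I-t_{0})) \times \R^{2})} \leq C_{0}, \qquad \forall t_{0} \in I.
\end{equation*}
Scaling invariance of the $L^{4}_{t,x}$ norm pulls this back to
\begin{equation*}
\|u\|_{L^{4}_{t,x}(([t_{0} - T_{0}/N(t_{0})^{2}, t_{0} + T_{0}/N(t_{0})^{2}] \cap I) \times \R^{2})} \leq C_{0}.
\end{equation*}

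I then partition $J = [a,b]$ greedily: set $s_{0} := a$, and given $s_{k} < b$ define $s_{k+1} := \min(s_{k} + \tau_{u}/N(s_{k})^{2}, b)$ with $\tau_{u} := \min(T_{0}, \delta_{u})$, where $\delta_{u}$ comes from the local constancy lemma \ref{lem:lc}. Lemma \ref{lem:lqb} ensures the partition terminates in finitely many steps; local constancy of $N$ on each $[s_{k}, s_{k+1}]$ gives $\int_{s_{k}}^{s_{k+1}} N(t)^{2} \, dt \sim_{u} \tau_{u}$ for every non-terminal subinterval, while the local bound above yields $\|u\|_{L^{4}_{t,x}([s_{k},s_{k+1}]\times\R^{2})}^{4} \leq C_{0}^{4}$ on every subinterval. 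If $K$ denotes the total number of subintervals produced, then $K - 1 \lesssim_{u} \int_{J} N(t)^{2} \, dt$, and summing over $k$ yields
\begin{equation*}
\|u\|_{L^{4}_{t,x}(J\times\R^{2})}^{4} \leq K C_{0}^{4} \lesssim_{u} 1 + \int_{J} N(t)^{2} \, dt,
\end{equation*}
where the ``$+1$'' accounts precisely for the (possibly short) terminal subinterval.

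The main obstacle is extracting the uniform-in-$t_{0}$ constants $T_{0}$ and $C_{0}$ for the local $L^{4}_{t,x}$ bound; this rests on the $L^{2}$-precompactness of the rescaled orbit $\mathcal{K}$ (equivalent to almost periodicity modulo $G$) combined with small-data Strichartz and stability (lemma \ref{lem:stab}). Once these are in place, the greedy partition using local constancy of $N$ (lemma \ref{lem:lc}) closes the argument routinely.
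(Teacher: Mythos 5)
Your proof is correct and takes essentially the same route as the paper, which simply defers to the argument of Lemma 5.21 in \cite{KillipClay}: the lower bound via spatial concentration and H\"older, and the upper bound via a partition of $J$ into intervals of length $\sim_{u}N^{-2}$ (using lemmas \ref{lem:lc} and \ref{lem:lqb}) combined with a uniform local $L^{4}_{t,x}$ bound obtained from precompactness of the normalized orbit together with the perturbation/small-data theory — your invocation of lemma \ref{lem:stab} with $\tilde{u}\equiv 0$ is the same mechanism as the paper's Duhamel-plus-Strichartz estimate of the nonlinear term, where H\"older and Plancherel handle the operator $\E$. The only cosmetic adjustments are that the covering radius should be taken small relative to the stability threshold $\varepsilon_{1}$ and the Strichartz constant rather than fixed at $m_{0}/2$, and that $T_{g(t_{0})^{-1}}$ must be composed with a time translation so the normalized datum sits at time $0$; neither affects the argument.
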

\begin{proof}
The lemma follows from the argument in the proof of lemma 5.21 in \cite{KillipClay} with two additional steps of H\"{o}lder's inequality and Plancherel's theorem to estimate the nonlinear term in Duhamel's formula.
\end{proof}

\begin{lemma}\label{lem:lc_int}
For any nonzero maximal-lifespan solution $u:I\times\R^{2}\rightarrow\C$ with frequency scale function $N(t)$,
\begin{equation}
\|u\|_{L_{t,x}^{4}([t_{0},t_{0}+\delta N(t_{0})^{-2}]\times\R^{2})} \sim_{u} \|u\|_{L_{t,x}^{4}([t_{0}-\delta N(t_{0})^{-2},t_{0}]\times\R^{2})}\sim_{u} 1, \qquad \forall t_{0}\in I,
\end{equation}
where $\delta$ is as is in the statement of lemma \ref{lem:lc}.
\end{lemma}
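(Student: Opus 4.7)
The plan is to reduce both the forward and backward estimates to direct applications of Lemma \ref{lem:SN} (Strichartz norms via $N(t)$) combined with Lemma \ref{lem:lc} (local constancy of parameters). Write $J_{+} \coloneqq [t_{0}, t_{0} + \delta N(t_{0})^{-2}]$ and $J_{-} \coloneqq [t_{0} - \delta N(t_{0})^{-2}, t_{0}]$, where $\delta = \delta(u) \in (0,1]$ is the constant furnished by Lemma \ref{lem:lc}. By that lemma, $J_{\pm} \subset I$ and there exist constants $0 < c(u) \leq C(u)$ such that $c(u) N(t_{0}) \leq N(t) \leq C(u) N(t_{0})$ for all $t \in J_{+} \cup J_{-}$.

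\emph{Upper bound.} By the right-hand inequality in Lemma \ref{lem:SN} applied to $J_{+}$,
\begin{equation*}
\|u\|_{L_{t,x}^{4}(J_{+}\times\R^{2})}^{4} \lesssim_{u} 1 + \int_{J_{+}} N(t)^{2} dt \leq 1 + C(u)^{2} N(t_{0})^{2} \cdot \frac{\delta}{N(t_{0})^{2}} = 1 + C(u)^{2} \delta \lesssim_{u} 1.
\end{equation*}

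\emph{Lower bound.} By the left-hand inequality in Lemma \ref{lem:SN} applied to $J_{+}$,
\begin{equation*}
\|u\|_{L_{t,x}^{4}(J_{+}\times\R^{2})}^{4} \gtrsim_{u} \int_{J_{+}} N(t)^{2} dt \geq c(u)^{2} N(t_{0})^{2} \cdot \frac{\delta}{N(t_{0})^{2}} = c(u)^{2} \delta \gtrsim_{u} 1.
\end{equation*}

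The identical argument applied to $J_{-}$ gives $\|u\|_{L_{t,x}^{4}(J_{-}\times\R^{2})} \sim_{u} 1$. I do not foresee any genuine obstacle here; the only point requiring minor care is verifying that the implicit constants in the lower bound of Lemma \ref{lem:SN} and in the local constancy of $N(t)$ depend only on $u$ (which by Remark \ref{rem:const_dep} is equivalent to being scale-invariant functions of $u$), so that the resulting lower bound $c(u)^{2}\delta(u)$ is a genuine positive constant determined solely by $u$.
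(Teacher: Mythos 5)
Your proposal is correct and follows essentially the same route as the paper: Lemma \ref{lem:SN} combined with the local constancy of $N(t)$ from Lemma \ref{lem:lc}, with the only cosmetic difference that you apply the two-sided bound on each half-interval $J_{\pm}$ separately while the paper works on the full interval $[t_{0}-\delta N(t_{0})^{-2}, t_{0}+\delta N(t_{0})^{-2}]$ and deduces the half-interval statement from it.
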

\begin{proof}
For $t_{0}\in I$, define $J \coloneqq [t_{0}-\frac{\delta}{N(t_{0})^{2}},t_{0}+\frac{\delta}{N(t_{0})^{2}}]$. By lemma \ref{lem:SN}, we have that
\begin{equation}
\int_{J}N(t)^{2}dt\lesssim_{u} \|u\|_{L_{t,x}^{4}(J\times\R^{2})}^{4} \lesssim_{u} 1+\int_{J}N(t)^{2}dt.
\end{equation}
Since $N(t)\sim_{u} N(J)$ for all $t\in J$, we have that
\begin{equation}
\int_{J}N(t)^{2}dt \sim_{u} N(J)^{2} |J| = \frac{2\delta N(J)^{2}}{N(t_{0})^{2}} \sim_{u} 1,
\end{equation}
from which the statement of the lemma follows readily.
\end{proof}

\begin{cor}\label{cor:lc_int}
Let $u:I\times\R^{2}\rightarrow\mathbb{C}$ be a nonzero maximal-lifespan solution which is almost periodic modulo symmetries with parameters $x(t),\xi(t),N(t)$. For every compact subinterval $J\subset I$ such that $\|u\|_{L_{t,x}^{4}(J\times\R^{2})} \leq 1$, we have that
\begin{gather}
N(t_{1}) \sim_{u} N(t_{2})\\
|\xi(t_{1})-\xi(t_{2})| \lesssim_{u} N(J_{k})\\
|x(t_{1})-x(t_{2})-2(t_{1}-t_{2})\xi(t_{2})| \lesssim_{u} N(J_{k})^{-1}
\end{gather}
for all $t_{1},t_{2}\in J$. Additionally,
\begin{equation}
|J| \sim_{u} \frac{1}{N(J)^{2}}
\end{equation}
and
\begin{equation}
\int_{J}N(t)^{3}dt \sim N(J).
\end{equation}
\end{cor}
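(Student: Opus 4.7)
The plan is to partition $J$ into a uniformly bounded number of pieces on which Lemma~\ref{lem:lc} controls the parameters, and then telescope the pointwise estimates. Throughout, set $N(J) := \sup_{t\in J} N(t)$.

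I would first build a greedy partition $\inf J = s_0 < s_1 < \cdots < s_n = \sup J$ by $s_{k+1} := \min\{s_k + \delta N(s_k)^{-2},\, \sup J\}$, where $\delta=\delta(u)\in(0,1]$ is the local-constancy constant from Lemma~\ref{lem:lc}. Each full piece $[s_k, s_{k+1}]$ of length $\delta N(s_k)^{-2}$ contributes $\sim_u 1$ to $\|u\|_{L^{4}_{t,x}}^{4}$ by Lemma~\ref{lem:lc_int}. Since $\|u\|_{L^{4}_{t,x}(J\times\R^{2})}^{4}\leq 1$ and the fourth power of the $L^{4}_{t,x}$ norm is additive over disjoint time slabs, this forces $n\lesssim_u 1$. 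Applying Lemma~\ref{lem:lc} on each piece and chaining these $O_u(1)$ comparisons then yields $N(t)\sim_u N(J)$ uniformly on $J$ and $|\xi(t_1)-\xi(t_2)|\lesssim_u n\,N(J)\lesssim_u N(J)$ for all $t_1,t_2\in J$.

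For the $x$ estimate I would telescope: up to absorbing the (possibly truncated) boundary pieces into the constants, with $t_1 = s_0$, $t_2 = s_n$,
\begin{equation*}
x(t_2)-x(t_1)-2(t_2-t_1)\xi(t_2) = \sum_k \bigl[x(s_{k+1})-x(s_k)-2(s_{k+1}-s_k)\xi(s_k)\bigr] + 2\sum_k (s_{k+1}-s_k)\bigl(\xi(s_k)-\xi(t_2)\bigr).
\end{equation*}
The first sum is $O_u(n\,N(J)^{-1}) = O_u(N(J)^{-1})$ by Lemma~\ref{lem:lc}. For the second, observe that $|J|=\sum_k (s_{k+1}-s_k) \lesssim_u n\,N(J)^{-2}\lesssim_u N(J)^{-2}$ and $|\xi(s_k)-\xi(t_2)|\lesssim_u N(J)$, which give $O_u(|J|\,N(J)) = O_u(N(J)^{-1})$. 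This simultaneously proves the upper bound $|J|\lesssim_u N(J)^{-2}$; the matching lower bound $|J|\gtrsim_u N(J)^{-2}$ follows from the presence of at least one full piece in the partition, which is the case in the intended application where $\|u\|_{L^{4}_{t,x}(J\times\R^{2})}$ is also bounded below by a fixed positive constant. The last assertion is then immediate: $\int_J N(t)^{3}dt \sim_u N(J)^{3}|J| \sim_u N(J)$ since $N(t)\sim_u N(J)$ throughout $J$.

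The main bookkeeping subtlety is the $x$ telescoping: one must first establish the upper bound $|J|\lesssim_u N(J)^{-2}$ from the partition count in order to absorb the $\xi$-drift across the pieces. No new ingredient beyond Lemmas~\ref{lem:lc} and~\ref{lem:lc_int} is required; the entire corollary is essentially a combinatorial package of those two statements.
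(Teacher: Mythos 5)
Your proposal is correct and is essentially the paper's own argument: a greedy partition of $J$ into $O_u(1)$ pieces of length $\delta N(s_k)^{-2}$, counted via the additivity of $\|u\|_{L^4_{t,x}}^4$ and Lemma~\ref{lem:lc_int}, followed by chaining the estimates of Lemma~\ref{lem:lc} (the paper omits the $\xi$ and $x$ telescoping details that you spell out). Your caveat that the lower bound $|J|\gtrsim_u N(J)^{-2}$ needs the $L^4_{t,x}$ norm bounded below is the same point the paper handles by reducing to the case $\|u\|_{L^4_{t,x}(J\times\R^2)}=1$.
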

\begin{proof}
It suffices to consider the case where $J=[a,b]\subset I$ satisfies $\|u\|_{L_{t,x}^{4}(J\times\R^{2})}=1$. Let $t_{0}=a$, and define
\begin{equation}
t_{1} \coloneqq \min\brac*{b,t_{0}+\frac{\delta}{N(t_{0})^{2}}}.
\end{equation}
If $t_{0},\ldots,t_{j}$ have been defined, then we define $t_{j+1}$ by the formula
\begin{equation}
t_{j+1} \coloneqq \min\brac*{b,t_{j}+\frac{\delta}{N(t_{j})^{2}}}.
\end{equation}
We claim that there exists some $n\in\mathbb{N}$ such that $t_{n}=b$. Indeed, this is immediate from the fact that $\inf_{t\in J} N(t) > 0$. We next claim that it suffices to show that
\begin{equation}
N(t_{j}) \sim_{u} N(t_{k}), \qquad \forall j,k\in\{0,\ldots,n\}.
\end{equation}
Indeed, for any point $t\in J$, there exists a subinterval $[t_{j},t_{j+1}]$, for some $j\in\{0,\ldots,n-1\}$, which contains $t$ and by construction, $N(t') \sim_{u} N(t_{j})$ for all $t'\in [t_{j},t_{j+1}]$. Let $c_{1}=c_{1}(u)$ and $c_{2}=c_{2}(u)$ denote the implicit constants in lemma \ref{lem:lc_int}. Taking $c_{1}$ smaller if necessary, we may assume that $c_{1}\leq \frac{1}{2}$. Then
\begin{equation}
nc_{1}^{4}\leq \sum_{j=0}^{n-1} \|u\|_{L_{t,x}^{4}([t_{j},t_{j+1}]\times\R^{2})}^{4} = \|u\|_{L_{t,x}^{4}(J\times\R^{2})}^{4} = 1,
\end{equation}
which implies that $n \leq c_{1}^{-4}$.  Now, $N(t_{j})\sim_{u} N(t_{k})$ follows from $N(t_{j})\sim_{u} N(t_{j+1})$ and induction on $j$. The remaining assertions for $\xi(t)$ and $x(t)$ follow by similar arguments; we omit the details.

We prove the remaining two assertions of the lemma. Observe that
\begin{equation}
\frac{c_{2}^{-4}\delta}{N(J)^{2}}\leq \frac{n\delta}{N(J)^{2}} \lesssim_{u} |J| \lesssim_{u} \frac{n\delta}{N(J)^{2}} \leq \frac{c_{1}^{-4}\delta}{N(J)^{2}},
\end{equation}
which shows that $|J|\sim_{u} N(J)^{-2}$. Lastly, using the first and penultimate assertions of the lemma, we have that
\begin{equation}
\int_{J}N(t)^{3}dt \sim_{u} N(J)^{3} |J| \sim_{u} N(J).
\end{equation}
\end{proof}

\begin{remark}
By partitioning any compact interval $K\subset I$ into consecutive intervals $J_{k}$ such that $\|u\|_{L_{t,x}^{4}(J_{k}\times\R^{2})}=1$, we see from corollary \ref{cor:lc_int} that
\begin{equation}
\int_{K}N(t)^{3}dt = \sum_{k} \int_{J_{k}}N(t)^{3} \sim_{u} \sum_{k}N(J_{k}).
\end{equation}
\end{remark}

\begin{lemma}\label{lem:APMS_class}
There exists a nonzero maximal-lifespan solution $u:I\times\R^{2}\rightarrow\mathbb{C}$ which is almost periodic modulo symmetries, blows up both forward and backward in time, and in the focusing case satisfies $M(u)<M(Q)$. Furthermore, the lifespan $I$ and the frequency scale function $N(t)$ fall into one of the following three scenarios:
\begin{enumerate}[I.]
\item\label{item:APMS_class_1} 
$I=\R$ and
\begin{equation}
N(t)=1, \qquad  \forall t\in\R;
\end{equation}
\item\label{item:APMS_class_2} 
$I=\R$ and
\begin{equation}
\liminf_{t\rightarrow-\infty} N(t)=\limsup_{t\rightarrow \infty} N(t)=0, \qquad \sup_{t\in\R}N(t)<\infty;
\end{equation}
\item\label{item:APMS_class_3} 
$I=(0,\infty)$ and
\begin{equation}
N(t)=t^{-1/2}, \qquad \forall t\in I.
\end{equation}
\end{enumerate}
\end{lemma}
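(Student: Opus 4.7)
The plan is to start from an admissible blowup solution $u : I \times \R^2 \to \C$ provided by Corollary \ref{cor:adm_sol}, with $[0,\infty) \subset I$ and $N(t) \leq 1$ on $[0,\infty)$, and to massage $(I, N)$ into one of the three shapes using the symmetries of \eqref{eq:DS}, the Palais-Smale condition (Proposition \ref{prop:PS}), and the corollary stating $N(t) \gtrsim_u |T-t|^{-1/2}$ at any finite endpoint $T$ of $I$.

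I would first split cases on the finiteness of the endpoints of $I$. If at least one endpoint is finite, by time reversal I may assume $T_+ := \sup I < \infty$; then the corollary on $N$ at blowup forces $N(t) \to \infty$ as $t \to T_+$. The strategy is to invoke the pseudoconformal transformation, which is a symmetry of the eeDS noted just after \eqref{eq:NLS}. This maps $u$ to a new minimal mass admissible blowup solution whose lifespan is of the form $(0, \infty)$, and the transformation law for the frequency scale combined with a rescaling normalization (Remark \ref{rem:const_dep}) lets me set $N(1) = 1$. To upgrade this to $N(t) = t^{-1/2}$ for all $t$, the plan is to compare $u$ with its rescalings $u_\lambda$: both are minimal mass admissible blowup solutions, and Proposition \ref{prop:PS} together with the uniqueness-modulo-$G$ argument underlying the proof of Theorem \ref{thm:AP_reduc} forces $u_\lambda$ and $u$ to be related by a $G$-action for each $\lambda > 0$. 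Tracking the transformation of $N$ across this action then yields a functional equation pinning $N(t) = t^{-1/2}$, giving Scenario III.

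For $I = \R$, I would first rule out unboundedness of $N$ from above using Palais-Smale: if $N(t_n) \to \infty$ along some sequence, the rescaled solutions (translated and rescaled via Remark \ref{rem:const_dep}) have frequency scale $\sim 1$ near time $0$, so Proposition \ref{prop:PS} produces a limiting minimal mass blowup solution; comparing this limit with $u$ via the mass and scattering-size decoupling from concentration compactness contradicts minimality. Once $\sup_{t \in \R} N(t) < \infty$, if moreover $\inf_{t \in \R} N(t) > 0$, then $N(t) \sim_u 1$, and since the frequency scale in the APMS definition is only determined up to a bounded multiplicative factor (with compactness modulus function adjusted accordingly), I may redefine $N \equiv 1$, giving Scenario I. If instead $\inf_{t \in \R} N(t) = 0$, then after time reversal I can arrange that $\limsup_{t \to +\infty} N(t) = 0$ on one side and $\liminf_{t \to -\infty} N(t) = 0$ on the other; upgrading $\liminf$ to $\limsup$ at $+\infty$ uses the same Palais-Smale/rescaling mechanism, since any subsequence on which $N$ stayed bounded below would after rescaling produce a limiting APMS solution that fails to blow up, contradicting minimality paired with the blow-up hypothesis for $u$. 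This yields Scenario II.

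The main obstacle I anticipate is the self-similar case: nailing down $N(t) = t^{-1/2}$ after the pseudoconformal step requires combining uniqueness of minimal mass APMS solutions modulo $G$ with the transformation laws for both the pseudoconformal and scaling symmetries, and care is needed because the spatial center $x(t)$ and frequency center $\xi(t)$ also transform nontrivially. The remaining two scenarios are comparatively bookkeeping, modulo the delicate Palais-Smale upgrade from $\liminf$ to $\limsup$ at $+\infty$ in Scenario II.
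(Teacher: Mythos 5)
There is a genuine gap, and also a structural problem with your starting point. First, the logic: you begin from an admissible blowup solution ``provided by Corollary \ref{cor:adm_sol}'', but in the paper Corollary \ref{cor:adm_sol} is \emph{proved from} Lemma \ref{lem:APMS_class} (its proof opens by invoking the lemma to get a solution in one of the three scenarios), so your argument is circular as set up. The correct starting point is the minimal mass APMS solution of Theorem \ref{thm:AP_reduc}; the paper's own proof of the present lemma is then simply a citation of Theorem 5.24 of \cite{KillipClay}, whose mechanism is to extract a \emph{new} solution as a limit of group-translated rescalings $g_n u(t_n+\cdot)$ along well-chosen sequences (convergence coming from the Palais--Smale condition), with a case analysis on the oscillation of $N$ deciding which of the three scenarios the limiting solution realizes.

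Second, the mathematical core of your Scenario III argument does not work: you assert that Proposition \ref{prop:PS} plus ``the uniqueness-modulo-$G$ argument underlying Theorem \ref{thm:AP_reduc}'' forces $u_\lambda$ and $u$ to differ by a $G$-action, and you derive $N(t)=t^{-1/2}$ from the resulting functional equation. But there is no uniqueness of minimal mass blowup solutions modulo $G$: the Palais--Smale condition gives precompactness of the orbit $\{Gu(t):t\in I\}$ of a \emph{single} solution, not that any two minimal mass blowup solutions (e.g.\ $u$ and $u_\lambda$) agree up to a symmetry. The pseudoconformal step alone only transports the lifespan and transforms $N$; it does not pin $N(t)=t^{-1/2}$, which in the cited proof is arranged by construction of the limiting solution (and by the freedom to modify $N$ by bounded factors), not by a rigidity statement. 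The same issue undermines your Scenario I/II reductions: when you rescale along a sequence with $N(t_n)\to\infty$ (or $N$ bounded below) and extract a limit via Palais--Smale, that limit is again a minimal mass blowup solution, so ``comparing with $u$ contradicts minimality'' is not a contradiction of anything; the correct move is to \emph{replace} $u$ by the extracted limit and verify, via the oscillation/infimum case analysis of \cite{KillipClay}, that the new solution falls into one of the three scenarios. Your observation that $N\equiv 1$ may be arranged when $N\sim_u 1$ (since $N$ is only determined up to bounded factors with a corresponding change of compactness modulus) is fine, but it is the only step of the proposal that survives as written.
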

\begin{proof}
See theorem 5.24 in \cite{KillipClay}.
\end{proof}

We next refine lemma \ref{lem:APMS_class} to prove corollary \ref{cor:adm_sol}.
\begin{proof}
Let $u:I\times\R^{2}\rightarrow\mathbb{C}$ be a maximal-lifespan solution, which is almost periodic modulo symmetries with parameters $x_{1}(t),\xi_{1}(t),N_{1}(t)$ and compactness modulus function $C_{1}$ and which blows up both forward and backward in time, whose existence is guaranteed by lemma \ref{lem:APMS_class}. By time translating $u$ and relabeling in the event of case \ref{item:APMS_class_3}, we may assume without loss of generality that $[0,\infty)\subset I$. Moreover, by modifying $u$ further and relabeling, we may assume that $N_{1}(t)\leq 1$ and $x(0)=\xi(0)=0$.

We first show that there exists a frequency scale function $N_{2}(t)$ such that
\begin{equation}
N_{2}\in C_{loc}^{1}([0,\infty)) \enspace \text{and} \enspace |N_{2}'(t)| \lesssim N_{2}(t)^{3}, \qquad  \forall t\in[0,\infty).
\end{equation}
Since these two conditions are already both satisfied in the event of either case \ref{item:APMS_class_1} or case \ref{item:APMS_class_3}, it suffices to consider case \ref{item:APMS_class_2}. Partition $[0,\infty)$ into consecutive subintervals $J_{k} \coloneqq [t_{k-1},t_{k})$ such that $\|u\|_{L_{t,x}^{4}(J_{k}\times\R^{2})}^{4}=1$, for $k\in\mathbb{N}$. For each $k\in\mathbb{N}$, define the points
\begin{align}
t_{k,l} &\coloneqq t_{k-1} + \frac{t_{k}-t_{k-1}}{4},\\
t_{k,m} &\coloneqq \frac{t_{k-1}+t_{k}}{2},\\
t_{k,h} &\coloneqq t_{k-1} + \frac{3(t_{k}-t_{k-1})}{4}.
\end{align}
Now define the frequency scale function $N_{2}:I\rightarrow[0,\infty)$ as follows. First, we define
\begin{equation}
N_{2}(t) \coloneqq \frac{N_{1}(t)}{\sup_{t'\in I} N_{1}(t')}, \qquad t\in I\cap (-\infty,0].
\end{equation}
Next, for $k\in\mathbb{N}$, we define
\begin{equation}
N_{2}(t) \coloneqq \frac{N_{1}(J_{k})}{\sup_{t'\in I} N_{1}(t')}, \qquad t_{k,l}\leq t\leq t_{k,h}.
\end{equation}
To extend the definition of $N_{2}$ to the entire lifespan of $u$, we smoothly interpolate between $t_{0}$ and $t_{1,l}$ and between $t_{k,h}$ and $t_{k+1,l}$ for $k\in\mathbb{N}$. We omit the details for the former case and only consider the latter. We define
\begin{equation}
N_{2}(t) \coloneqq \frac{1}{\sup_{t'\in I} N_{1}(t')}\paren*{\dfrac{N_{1}(J_{k+1})e^{-\frac{1}{t-t_{k,h}}}}{e^{-\frac{1}{t_{k+1,l}-t}} + e^{-\frac{1}{t-t_{k,h}}}}+\dfrac{N_{1}(J_{k})e^{-\frac{1}{t_{k+1,l}-t}}}{e^{-\frac{1}{t_{k+1,l}-t}} + e^{-\frac{1}{t-t_{k,h}}}} }, \qquad t_{k,h}<t<t_{k+1,l}.
\end{equation}
It is evident that $N_{2}\in C_{loc}^{\infty}([0,\infty))$ and $N_{2}(t)\leq 1$ for all $t\in [0,\infty)$. Since $|t_{k,h}-t_{k+1,l}| \sim_{u} N_{1}(J_{k})^{-2}\sim_{u} N_{1}(J_{k+1})^{-2}$, it follows that
\begin{equation}
|N_{2}'(t)| \lesssim_{u} N_{2}(J_{k})^{3}\sim_{u}  N_{2}(t)^{3}, \qquad \forall t \in J_{k}
\end{equation}
for all $k\in\N$.

To construct $\xi_{2}$, we argue similarly. First, define
\begin{equation}
\xi_{2}(t) \coloneqq \xi_{1}(t), \qquad t\in I\cap (-\infty,0].
\end{equation}
Next, for $k\in\mathbb{N}$, define
\begin{equation}
\xi_{2}(t) \coloneqq \xi_{1}(J_{k}), \qquad t_{k,l}\leq t\leq t_{k,h}.
\end{equation}
To extend the definition of $\xi_{2}$ to the entire lifespan of $u$, we now define
\begin{equation}
\xi_{2}(t) \coloneqq \paren*{\dfrac{e^{-\frac{1}{t-t_{k,h}}}}{e^{-\frac{1}{t_{k+1,l}-t}} + e^{-\frac{1}{t-t_{k,h}}}}\xi_{1}(J_{k+1}) + \dfrac{e^{-\frac{1}{t_{k+1,l}-t}}}{e^{-\frac{1}{t_{k+1,l}-t}} + e^{-\frac{1}{t-t_{k,h}}}}\xi_{1}(J_{k})}, \qquad t_{k,h}<t<t_{k+1,l}.
\end{equation}
It is evident that $\xi_{2}\in C_{loc}^{\infty}([0,\infty))$. Since $|t_{k,h}-t_{k+1,l}| \sim_{u} N_{1}(J_{k+1})^{-2}\sim_{u} N_{1}(t)^{-2}\sim_{u} N_{1}(J_{k})^{-2}$ for $t\in [t_{k,h},t_{k+1,l}]$ and $|\xi_{1}(t)-\xi_{1}(t_{k,h})| \lesssim_{u} N_{1}(J_{k}) \sim_{u} N_{1}(J_{k+1})$ for $t\in [t_{k,h},t_{k+1,l}]$, it follows that
\begin{equation}
|\xi_{2}'(t)| \lesssim_{u} N_{2}(J_{k})^{3} \sim_{u} N_{2}(t)^{3}, \qquad \forall t\in J_{k}
\end{equation}
for all $k\in\N$.

We define $x_{2}(t) \coloneqq x_{1}(t)$ for all $t\in I$.

Lastly, we check that there is a compactness modulus function $C_{2}$ such that $u$ is almost periodic modulo symmetries with parameters $x_{2}(t),\xi_{2}(t), N_{2}(t)$. It is evident from the definition of $N_{2}$ that there exist constants $c_{1}(u), c_{2}(u)>0$ such that $N_{1}(t)\leq c_{1}(u)N_{2}(t)$ and $N_{2}(t)\leq c_{2}(u)N_{1}(t)$ for all $t\in [0,\infty)$. Additionally, taking $c_{1}(u)$ larger if necessary, we also have $|\xi_{1}(t)-\xi_{2}(t)| \leq c_{1}(u)N_{2}(t)$ for all $t\in [0,\infty)$. Hence, for any $\eta>0$, the condition
\begin{equation}
|\xi-\xi_{2}(t)| \geq c_{1}(u)(1+C_{1}(\eta))N_{2}(t) 
\end{equation}
implies by the reverse triangle inequality that
\begin{equation}
|\xi-\xi_{1}(t)| \geq |\xi-\xi_{2}(t)| -|\xi_{1}(t)-\xi_{2}(t)| \geq |\xi-\xi_{2}(t)| - c_{1}(u)N_{2}(t) \geq C_{1}(\eta)N_{1}(t).
\end{equation}
Since $x_{2}=x_{1}$ by definition, for any $\eta>0$,
\begin{equation}
|x-x_{2}(t)| \geq \frac{c_{2}(u)C_{1}(\eta)}{N_{2}(t)} \Longrightarrow |x-x_{1}(t)| \geq \frac{C_{1}(\eta)}{N_{1}(t)}.
\end{equation}
Thus, we define $C_{2}(\eta) \coloneqq c_{1}(u)+\paren*{c_{1}(u)+c_{2}(u)}C_{1}(\eta)$.
\end{proof}

\section{Long-time Strichartz norms}\label{sec:LTSE_norm}

In this section, we introduce the machinery necessary to state our long-time Strichartz estimate for admissible blowup solutions to the eeDS equation. Specifically, we use the $X, Y$ norms and the associated maximal $\tilde{X}_{k}, \tilde{Y}_{k}$ norms introduced by Dodson in his work \cite{Dodson2016} on the 2D cubic NLS to construct a (stronger) substitute for the long-time Strichartz estimate in the work \cite{Dodson2012} on the mass-critical NLS in dimensions $d\geq 3$. We remind the reader that in \cite{Dodson2012}, the long-time Strichartz estimate was in terms of the endpoint Strichartz estimate norm $L_{t}^{2}L_{x}^{\frac{2d}{d-2}}$, which is well-known to fail in dimension $d=2$ (see \cite{montgomery-smith1998}).

We have tried to stay consistent with the notation and terminology in \cite{Dodson2016}; however, one change we have introduced is the definition of admissible tuple below. This definition is just a repackaging of the various quantities and parameters appearing in the construction of the long-time Strichartz estimate in \cite{Dodson2016}, and we have adopted it so as to make very clear the independence of implicit constants on certain parameters. Such independence is crucial to the overall argument.

\subsection{$\tilde{X}_{k_{0}}$ and $\tilde{Y}_{k_{0}}$ norms} 

\begin{mydef}[Admissible tuple]\label{def:adm_tup} 
Let $u:I\times\R^{2}\rightarrow\mathbb{C}$ be an admissible blowup solution to \eqref{eq:DS} with parameters $x(t),\xi(t),N(t)$ and compactness modulus function $C(\cdot)$. Let $[a,b]\subset [0,\infty)$. Let $0<\epsilon_{3}<\epsilon_{2}<\epsilon_{1}\leq 2^{-100}$ be three small parameters. We say that the tuple $(\epsilon_{1},\epsilon_{2},\epsilon_{3})$ is \emph{admissible} for $u$ if the following conditions are satisfied:
\begin{enumerate}[(i)]
\item\label{item:adm_tup_1}
\begin{equation}
|\xi'(t)| + |N'(t)|\leq 2^{-20}\dfrac{N(t)^{3}}{\epsilon_{1}^{1/2}}, \qquad \forall t\geq 0;
\end{equation}
\item\label{item:adm_tup_2}
\begin{equation}\label{eq:sp_frq_loc}
	\int_{|x-x(t)|\geq \frac{2^{-20}\epsilon_{3}^{-1/4}}{N(t)}}|u(t,x)|^{2}dx + \int_{|\xi-\xi(t)|\geq 2^{-20}\epsilon_{3}^{-1/4}N(t)}|\hat{u}(t,\xi)|^{2}d\xi \leq \epsilon_{2}^{2}, \qquad \forall t\geq 0;
\end{equation}
\item\label{item:adm_tup_3}
$\epsilon_{3}<\epsilon_{2}^{10}<\epsilon_{1}^{10}$.
\end{enumerate}
\end{mydef}

While the quantity $\int_{a}^{b}\int_{\R^{2}}|u(t,x)|^{4}dxdt$ is DS scale-invariant, the quantity $\int_{a}^{b}N(t)^{3}dt$ is not. Hence, given an admissible blowup solution $u:I\times\R^{2}\rightarrow\C$, an interval $[a,b]\subset [0,\infty)$, a nonnegative integer $k_{0}$, and an admissible tuple $(\epsilon_{1},\epsilon_{2},\epsilon_{3})$ for $u$, we can always rescale $u$ by $u_{\lambda}\coloneqq \lambda u(\lambda^{2}\cdot,\lambda\cdot)$, where $\lambda\coloneqq \frac{\epsilon_{3}2^{k_{0}}}{\int_{a}^{b}N(t)^{3}dt}$, to obtain another admissible blowup solution so that properties \ref{item:adm_tup_1}, \ref{item:adm_tup_2}, \ref{item:adm_tup_3} are satisfied with $\xi_{\lambda}, x_{\lambda}, N_{\lambda}$ in addition to
\begin{align}
\int_{\lambda^{-2}a}^{\lambda^{-2}b}\int_{\R^{2}} |u_{\lambda}(t,x)|^{4}dxdt &= 2^{k_{0}}\label{eq:adm_rs_1}\\
\int_{\lambda^{-2}a}^{\lambda^{-2}b}N_{\lambda}(t)^{3}dt &= \epsilon_{3}2^{k_{0}}. \label{eq:adm_rs_2}
\end{align}
Below, we drop the subscript $\lambda$ in $u_{\lambda}$ for notational convenience and assume that $u$ satisfies \eqref{eq:adm_rs_1} and \eqref{eq:adm_rs_2}.

We partition the time interval $[a,b]$ in two different ways: $J_{l}$ intervals, which are called small intervals, and $J^{\alpha}$ intervals. The small intervals arise naturally from the concentration compactness theory, as seen in subsection \ref{ssec:MMB_ref}. The $J^{\alpha}$ intervals give us finer control on the variation of the frequency center $\xi(t)$ and frequency scale $N(t)$.

\begin{mydef}[Small intervals]\label{def:small_int} 
We say that $J_{l}\subset [a,b]$ with $\|u\|_{L_{t,x}^{4}(J_{l}\times\R^{2})}=1$ is a \emph{small interval}.
\end{mydef}

\begin{mydef}[$J^{\alpha}$ intervals]\label{def:Ja_int} 
We define the consecutive subintervals $J^{\alpha}\subset [a,b]$ by
\begin{equation}
\int_{J^{\alpha}}\paren*{N(t)^{3}+\epsilon_{3}\|u(t)\|_{L_{x}^{4}(\R^{2})}^{4}}dt=2\epsilon_{3}.
\end{equation}
\end{mydef}

\begin{remark} 
Observe that Duhamel's formula, Strichartz estimates, duality, and mass conservation imply that for any admissible pair $(p,q)$,
\begin{align}
&\|u\|_{L_{t}^{p}L_{x}^{q}(J_{l}\times\R^{2})} \lesssim_{p,q} \|u\|_{U_{\Delta}^{2}(J_{l}\times\R^{2})}  \lesssim_{u} 1\\
&\|u\|_{L_{t}^{p}L_{x}^{q}(J^{\alpha}\times\R^{2})} \lesssim_{p,q} \|u\|_{U_{\Delta}^{2}(J^{\alpha}\times\R^{2})} \lesssim_{u} 1,
\end{align}
where the implicit constant in the ultimate inequalities only depends on the mass $M(u)$.
\end{remark}

We now construct dyadic groupings of $J^{\alpha}$ intervals. These groupings are the $G_{\kappa}^{j}$ intervals, on which we have simultaneous control on the variation of the frequency center and scale functions $\xi(t)$ and $N(t)$ in addition to the scattering size.
\begin{mydef}
For integers $0\leq j \leq k_{0}$ and $0 \leq \kappa \leq 2^{k_{0}-j}$, set
\begin{equation}
G_{\kappa}^{j} \coloneqq \bigcup_{\alpha=\kappa 2^{j}}^{(\kappa+1)2^{j}-1} J^{\alpha}
\end{equation}
If $[t_{0},t_{1}]=G_{\kappa}^{j}$, $[t_{0}',t_{1}']=J^{\alpha}$ and $[t_{0}'',t_{1}'']=J_{l}$, then we define $\xi(G_{\kappa}^{j}) := \xi(t_{0})$, $\xi(J^{\alpha}) := \xi(t_{0}')$, and $\xi(J_{l}) := \xi(t_{0}'')$.
\end{mydef}
Note that given a $u$-admissible tuple $(\epsilon_{1},\epsilon_{2},\epsilon_{3})$, we have that
\begin{equation}
\int_{0}^{T}\left(N(t)^{3}+\epsilon_{3}\|u(t)\|_{L_{x}^{4}(\R^{2})}^{4}\right)dt = 2^{k_{0}+1}\epsilon_{3},
\end{equation}
which implies that $G_{0}^{k_{0}}=[0,T]$.

Observe that by the fundamental theorem of calculus,
\begin{equation}
	|\xi(t)-\xi(G_{\kappa}^{j})| \leq \int_{G_{\kappa}^{j}}2^{-20}\epsilon_{1}^{-1/2}N(t)^{3}dt = 2^{j-19}\epsilon_{3}\epsilon_{1}^{-1/2}
\end{equation}
Hence, for all $t \in G_{\kappa}^{j}$, we have the inclusions
\begin{equation}
	\{\xi\in\R^{2} : 2^{j-1} \leq |\xi-\xi(t)| \leq 2^{j+1}\} \subset \{\xi\in\R^{2} : 2^{j-2} \leq |\xi-\xi(G_{\kappa}^{j})| \leq 2^{j+2}\} \subset \{\xi \in \R^{2} : 2^{j-3} \leq |\xi-\xi(t)| \leq 2^{j+3}\}
\end{equation}
and
\begin{equation}
	\{\xi\in\R^{2} : |\xi-\xi(t)|\leq 2^{j+1}\} \subset \{\xi \in\R^{2} :|\xi-\xi(G_{\kappa}^{j})|\leq 2^{j+2}\}\subset\{\xi \in\R^{2} : |\xi-\xi(t)|\leq 2^{j+3}\}
\end{equation}
The preceding two observations will be useful to our Littlewood-Paley analysis adapted to the frequency center $\xi(t)$ in section \ref{sec:LTSE}.

We now define the $X,Y,\tilde{X}_{k},\tilde{Y}_{k}$ norms introduced in \cite{Dodson2016}.
\begin{mydef}[$X,\tilde{X}_{k}$ norm] 
For any $G_{\kappa}^{j}\subset [a,b]$, define
\begin{equation}
	\|u\|_{X(G_{\kappa}^{j}\times\R^{2})}^{2} \coloneqq \sum_{0\leq i<j}2^{i-j}\sum_{G_{\alpha}^{i}\subset G_{\kappa}^{j}}\|P_{\xi(G_{\alpha}^{i}), i-2\leq\cdot\leq i+2}u\|_{U_{\Delta}^{2}(G_{\alpha}^{i}\times\R^{2})}^{2}+\sum_{i\geq j}\|P_{\xi(G_{\kappa}^{j}), i-2\leq i+2}u\|_{U_{\Delta}^{2}(G_{\kappa}^{i}\times\R^{2})}^{2}.
\end{equation}
For $k\in\N_{0}$, define the maximal $\tilde{X}_{k}$ norm
\begin{equation}
	\|u\|_{\tilde{X}_{k}([a,b]\times\R^{2})}^{2} \coloneqq \sup_{0\leq j\leq \min\{k,k_{0}\}}\sup_{G_{\kappa}^{j}\subset [a,b]}\|u\|_{X(G_{\kappa}^{j}\times\R^{2})}^{2}.
\end{equation}
\end{mydef}

\begin{mydef}[$Y,\tilde{Y}_{k}$ norms] 
For $G_{\kappa}^{j}\subset [a,b]$, define
\begin{equation}
\begin{split}
	\|u\|_{Y(G_{\kappa}^{j}\times\R^{2})}^{2} &\coloneqq \sum_{0\leq i <j}2^{i-j}\sum_{G_{\alpha}^{i}\subset G_{\kappa}^{j} : N(G_{\alpha}^{i})\leq \epsilon_{3}^{1/2}2^{i-5}}\|P_{\xi(G_{\alpha}^{i}), i-2\leq\cdot\leq i+2}u\|_{U_{\Delta}^{2}(G_{\alpha}\times\R^{2})}^{2}\\
	&\phantom{=}+ \sum_{i \geq j : N(G_{\kappa}^{j})\leq\epsilon_{3}^{1/2}2^{i-5}}\|P_{\xi(G_{\kappa}^{j}), i-2\leq\cdot\leq i+2}u\|_{U_{\Delta}^{2}(G_{\kappa}^{j}\times\R^{2})}^{2}.
\end{split}
\end{equation}
For $k\in\N_{0}$, define the maximal $\tilde{Y}_{k}([a,b]\times\R^{2})$ norm analogously to $\tilde{X}_{k}([a,b]\times\R^{2})$.
\end{mydef}

\begin{lemma}[Cheap inductive estimate]\label{lem:chp_ie}
We have that
\begin{equation}
\|u\|_{\tilde{X}_{j+1}([a,b]\times\R^{2})}^{2}\leq 2\|u\|_{\tilde{X}_{j}([a,b]\times\R^{2})}^{2}
\end{equation}
\begin{equation}
\|u\|_{\tilde{Y}_{j+1}([a,b]\times\R^{2})}^{2}\leq 2\|u\|_{\tilde{Y}_{j}([a,b]\times\R^{2})}^{2}.
\end{equation}
\end{lemma}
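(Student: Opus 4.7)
We prove the $\tilde{X}$ estimate; the $\tilde{Y}$ bound follows by the same argument, noting that $N(G_{\kappa}^{j+1})=N(G_{2\kappa}^{j})$ because the two intervals share a left endpoint, so the frequency-scale side constraint $N(\cdot)\leq\epsilon_{3}^{1/2}2^{i-5}$ carries over unchanged. If $j+1>k_{0}$ then $\tilde{X}_{j+1}=\tilde{X}_{j}$ by the $\min\{k,k_{0}\}$ in the definition and the inequality is trivial, so I may assume $j+1\leq k_{0}$. It then suffices to show $\|u\|_{X(G_{\kappa}^{j+1}\times\R^{2})}^{2}\leq 2\|u\|_{\tilde{X}_{j}([a,b]\times\R^{2})}^{2}$ for each $G_{\kappa}^{j+1}\subset[a,b]$. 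Write $G_{\kappa}^{j+1}=G_{2\kappa}^{j}\cup G_{2\kappa+1}^{j}$.

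Expand $\|u\|_{X(G_{\kappa}^{j+1}\times\R^{2})}^{2}$ via its definition and use the weight identity $2^{i-(j+1)}=\tfrac{1}{2}\cdot 2^{i-j}$ together with the partition of $\{G_{\alpha}^{i}\subset G_{\kappa}^{j+1}\}$ into intersections with the two children (with the $i=j$ level producing exactly the $i=j$ summand of the high-frequency tail of each child). A direct calculation then produces the algebraic identity
\begin{equation*}
\|u\|_{X(G_{\kappa}^{j+1}\times\R^{2})}^{2}=\tfrac{1}{2}\Bigl(\|u\|_{X(G_{2\kappa}^{j}\times\R^{2})}^{2}+\|u\|_{X(G_{2\kappa+1}^{j}\times\R^{2})}^{2}\Bigr)-\tfrac{1}{2}R+\mathcal{T},
\end{equation*}
where I have set $R:=\sum_{i\geq j+1}\bigl(\|P_{\xi(G_{2\kappa}^{j}),i-2\leq\cdot\leq i+2}u\|_{U_{\Delta}^{2}(G_{2\kappa}^{j})}^{2}+\|P_{\xi(G_{2\kappa+1}^{j}),i-2\leq\cdot\leq i+2}u\|_{U_{\Delta}^{2}(G_{2\kappa+1}^{j})}^{2}\bigr)$ for the portion of the children's $X$-norms living at levels $i\geq j+1$, and $\mathcal{T}:=\sum_{i\geq j+1}\|P_{\xi(G_{\kappa}^{j+1}),i-2\leq\cdot\leq i+2}u\|_{U_{\Delta}^{2}(G_{\kappa}^{j+1})}^{2}$ for the new high-frequency tail at level $j+1$.

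The heart of the proof is the bound $\mathcal{T}\leq R$. Apply Proposition~\ref{prop:Up_lp} to split each $U_{\Delta}^{2}$-norm on $G_{\kappa}^{j+1}$ as the sum of its restrictions to the two child intervals. For the left child the identity $\xi(G_{\kappa}^{j+1})=\xi(G_{2\kappa}^{j})$ holds on the nose (same left endpoint), so the contribution is exactly the left half of $R$. For the right child, condition~(i) of Definition~\ref{def:adm_tup} combined with the fundamental theorem of calculus gives
\begin{equation*}
|\xi(G_{\kappa}^{j+1})-\xi(G_{2\kappa+1}^{j})|\leq 2^{-20}\epsilon_{1}^{-1/2}\int_{G_{\kappa}^{j+1}}N(t)^{3}dt\leq 2^{j-18}\epsilon_{1}^{-1/2}\epsilon_{3}\ll 2^{i}, \qquad \forall i\geq j+1,
\end{equation*}
so the Fourier support of $P_{\xi(G_{\kappa}^{j+1}),i-2\leq\cdot\leq i+2}u$ lies inside a slightly fattened annulus around $\xi(G_{2\kappa+1}^{j})$. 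A standard fattening/absorption argument, choosing the Littlewood-Paley cutoffs with enough overlap that a projector of comparable scale centered at $\xi(G_{2\kappa+1}^{j})$ acts as the identity on this Fourier support, lets me replace the center by $\xi(G_{2\kappa+1}^{j})$ via the uniform $U_{\Delta}^{2}$-boundedness of the shifted LP projectors, and the resulting bound is the right half of $R$.

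Substituting $\mathcal{T}\leq R$ into the identity and discarding the negative term $-R/2$ gives
\begin{equation*}
\|u\|_{X(G_{\kappa}^{j+1}\times\R^{2})}^{2}\leq \|u\|_{X(G_{2\kappa}^{j}\times\R^{2})}^{2}+\|u\|_{X(G_{2\kappa+1}^{j}\times\R^{2})}^{2}\leq 2\|u\|_{\tilde{X}_{j}([a,b]\times\R^{2})}^{2},
\end{equation*}
which is the desired estimate. The hardest step is the fattening/absorption argument for the right child, where the drift-control $|\xi'(t)|\lesssim \epsilon_{1}^{-1/2}N(t)^{3}$ coming from admissibility is essential to guarantee that the two nearly-coincident Littlewood-Paley projectors at frequency centers $\xi(G_{\kappa}^{j+1})$ and $\xi(G_{2\kappa+1}^{j})$ are interchangeable up to harmless fattening.
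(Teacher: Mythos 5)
Your overall route is the same as the paper's: write $G_{\kappa}^{j+1}=G_{2\kappa}^{j}\cup G_{2\kappa+1}^{j}$, use Proposition~\ref{prop:Up_lp} to split the $U_{\Delta}^{2}$ norms over the two children, and rebalance the weights $2^{i-(j+1)}=\tfrac12 2^{i-j}$; your algebraic identity with the terms $R$ and $\mathcal{T}$ is a correct expansion of that one-line argument, and the low-frequency bookkeeping is fine.

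The gap is in the step $\mathcal{T}\leq R$, specifically your treatment of the right child. You claim that after inserting a fattened projector centered at $\xi(G_{2\kappa+1}^{j})$ which is the identity on the Fourier support of $P_{\xi(G_{\kappa}^{j+1}),i-2\leq\cdot\leq i+2}u$, and using the $U_{\Delta}^{2}$-boundedness of the shifted projector, ``the resulting bound is the right half of $R$.'' It is not. A projector of the \emph{same} width at the shifted center cannot act as the identity on that support (the bands $i-2\leq\cdot\leq i+2$ only overlap, they do not nest, even for a common center), so the inserted projector must be strictly fatter, say $P_{\xi(G_{2\kappa+1}^{j}),i-c\leq\cdot\leq i+c}$ with $c>2$; but then what you control is $\|P_{\xi(G_{2\kappa+1}^{j}),i-c\leq\cdot\leq i+c}u\|_{U_{\Delta}^{2}(G_{2\kappa+1}^{j})}$, which is \emph{not} dominated by $\|P_{\xi(G_{2\kappa+1}^{j}),i-2\leq\cdot\leq i+2}u\|_{U_{\Delta}^{2}(G_{2\kappa+1}^{j})}$, the term actually appearing in $R$. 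Reducing the fat band to the unit bands via the triangle inequality smears over the nearby levels $l\sim i$ and costs an absolute constant $>1$ after Cauchy--Schwarz, so your argument yields the lemma only with some universal constant in place of $2$ (which, to be fair, is all that the induction in Lemma~\ref{lem:LTSE_ind} needs, since $\epsilon_{2}(u)$ and $C_{0}$ can absorb it). The paper's own proof never attempts this replacement: it simply treats the two centers as interchangeable, since the drift $|\xi(G_{\kappa}^{j+1})-\xi(G_{2\kappa+1}^{j})|\lesssim\epsilon_{3}\epsilon_{1}^{-1/2}2^{j}$ is negligible against the scale $2^{i}$, $i\geq j+1$. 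A parallel remark applies to your parenthetical for $\tilde{Y}_{j+1}$: the identity $N(G_{\kappa}^{j+1})=N(G_{2\kappa}^{j})$ only disposes of the left child; for the right child both the side constraint $N(G_{2\kappa+1}^{j})\leq\epsilon_{3}^{1/2}2^{i-5}$ and the frequency center differ from the parent's by the same small drift, and your write-up does not address this.
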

\begin{proof}
The stated estimates follow readily from the observation that any interval $G_{\kappa}^{j+1}$ is the union of two $G_{\alpha}^{j}$ intervals and proposition \ref{prop:Up_lp}.
\end{proof}
The estimates of lemma \ref{lem:chp_ie}, although quite basic, will be useful in establishing the base case of and closing the inductive step in the proof of our long-time Strichartz estimate (theorem \ref{thm:LTSE}). 

\subsection{Embeddings}
We now record some basic embeddings of $\tilde{X}_{j}$ spaces into Lebesgue spaces $L_{t}^{p}L_{x}^{q}$, which will be used throughout the proof of theorem \ref{thm:LTSE}. Lemma \ref{lem:lohi_embed} is from \cite{Dodson2016}, while lemma \ref{lem:BT_embed} is used implicitly in several instances in that work.

\begin{lemma}\label{lem:lohi_embed} 
Let $(p,q)$ be an admissible pair. Then for all integers $0\leq i<j$ and intervals $G_{\kappa}^{j}\subset [0,T]$, we have the estimates
\begin{equation}
\|P_{\xi(t),i}u\|_{L_{t}^{p}L_{x}^{q}(G_{\kappa}^{j}\times\R^{2})} \lesssim_{p,q} 2^{\frac{j-i}{p}} \|u\|_{\tilde{X}_{j}(G_{\kappa}^{j}\times\R^{2})}
\end{equation}
and
\begin{equation}
\|P_{\xi(t),\geq j}u\|_{L_{t}^{p}L_{x}^{q}(G_{\kappa}^{j}\times\R^{2})} \lesssim_{p,q} \|u\|_{X(G_{\kappa}^{j}\times\R^{2})}.
\end{equation}
\end{lemma}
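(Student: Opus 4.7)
The plan is to reduce both estimates to the classical Strichartz embedding $U_\Delta^2 \hookrightarrow L_t^p L_x^q$ (item \ref{item:bs_i_S} of proposition \ref{prop:bs_i}) after replacing the time-varying projector $P_{\xi(t), i}$ with the frozen projectors $P_{\xi(G_\alpha^i), i-2 \leq \cdot \leq i+2}$ appearing in the definition of the $X$ norm. The crucial ingredient is a quantitative smallness estimate on the drift of $\xi(t)$ on a $G_\alpha^i$ interval: combining admissibility property \ref{item:adm_tup_1} with the elementary bound $\int_{G_\alpha^i} N(t)^3\,dt \leq 2^{i+1}\epsilon_3$ gives
\begin{equation*}
|\xi(t) - \xi(G_\alpha^i)| \leq 2^{i-19}\epsilon_3\epsilon_1^{-1/2}, \qquad \forall t \in G_\alpha^i,
\end{equation*}
and since $\epsilon_3 < \epsilon_1^{10}$ with $\epsilon_1 \leq 2^{-100}$, this drift is vastly smaller than $2^{i-2}$. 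Consequently the Fourier support of $P_{\xi(t),i}$ is contained in the region where the symbol of $P_{\xi(G_\alpha^i), i-2\leq\cdot\leq i+2}$ is identically $1$, so pointwise in time on $G_\alpha^i$ one has the operator identity $P_{\xi(t),i}u(t) = P_{\xi(t),i} P_{\xi(G_\alpha^i), i-2\leq\cdot\leq i+2} u(t)$, and $P_{\xi(t),i}$ is bounded on $L_x^q$ uniformly in $t$ by modulation invariance of Littlewood-Paley multiplier norms.

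For the low-high estimate ($0 \leq i < j$), I will partition $G_\kappa^j$ into its $2^{j-i}$ constituent sub-intervals $G_\alpha^i$, pass to the frozen projector on each, and apply the Strichartz $U_\Delta^2$ embedding to obtain
\begin{equation*}
\|P_{\xi(t),i}u\|_{L_t^p L_x^q(G_\kappa^j \times \R^2)}^p = \sum_{G_\alpha^i \subset G_\kappa^j} \|P_{\xi(t),i}u\|_{L_t^p L_x^q(G_\alpha^i \times \R^2)}^p \lesssim_{p,q} \sum_{\alpha} \|P_{\xi(G_\alpha^i), i-2\leq\cdot\leq i+2}u\|_{U_\Delta^2(G_\alpha^i \times \R^2)}^p.
\end{equation*}
The definition of $\tilde{X}_j$ then supplies both the pointwise bound $\max_\alpha \|P_{\xi(G_\alpha^i), i-2\leq\cdot\leq i+2}u\|_{U_\Delta^2(G_\alpha^i)} \leq \|u\|_{\tilde{X}_j(G_\kappa^j)}$ (by isolating the $l = i$ summand of $\|u\|_{X(G_\alpha^i)}^2$) and the aggregate bound $\sum_\alpha \|P_{\xi(G_\alpha^i), i-2\leq\cdot\leq i+2}u\|_{U_\Delta^2(G_\alpha^i)}^2 \leq 2^{j-i}\|u\|_{\tilde{X}_j(G_\kappa^j)}^2$ (from the scale-$i$ block of $\|u\|_{X(G_\kappa^j)}^2$). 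Interpolating via $a_\alpha^p \leq (\max_\alpha a_\alpha)^{p-2} a_\alpha^2$ (for $p \in (2,\infty)$, with $p = \infty$ handled by a direct supremum) yields $2^{j-i}\|u\|_{\tilde{X}_j}^p$, and extracting the $p$-th root produces the claimed $2^{(j-i)/p}$.

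For the high-frequency estimate, I will expand $P_{\xi(t), \geq j}u = \sum_{i \geq j} P_{\xi(t), i} u$ with a fattened partition of unity and invoke the Littlewood-Paley square-function characterization of $L_x^q$ (applicable since $q \in [2,\infty)$ for every $d = 2$ admissible pair). Two successive applications of Minkowski's inequality, legitimate because $p/2, q/2 \geq 1$, yield
\begin{equation*}
\|P_{\xi(t), \geq j}u\|_{L_t^p L_x^q(G_\kappa^j \times \R^2)}^2 \lesssim_{p,q} \sum_{i \geq j} \|P_{\xi(t), i} u\|_{L_t^p L_x^q(G_\kappa^j \times \R^2)}^2,
\end{equation*}
with the endpoint $p = \infty$ following directly from frequency orthogonality. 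On each summand the same frozen-projector reduction (now anchored at $\xi(G_\kappa^j)$, which remains valid for $i \geq j$ because $|\xi(t)-\xi(G_\kappa^j)| \leq 2^{j-19}\epsilon_3\epsilon_1^{-1/2} \ll 2^i$) combined with the $U_\Delta^2$ Strichartz embedding bounds the right-hand side by $\sum_{i \geq j} \|P_{\xi(G_\kappa^j), i-2\leq\cdot\leq i+2}u\|_{U_\Delta^2(G_\kappa^j)}^2$, which is exactly the high-frequency half of $\|u\|_{X(G_\kappa^j)}^2$. The only delicate point throughout is the transfer between the moving and frozen Littlewood-Paley decompositions, but the enormous slack $\epsilon_3 \epsilon_1^{-1/2} \ll 1$ afforded by admissibility makes this essentially automatic rather than a genuine obstacle.
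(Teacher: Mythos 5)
Your proposal is correct and takes essentially the same route as the paper's proof: for the low-frequency bound, partition $G_{\kappa}^{j}$ into the intervals $G_{\alpha}^{i}$, replace $P_{\xi(t),i}$ by the frozen projectors $P_{\xi(G_{\alpha}^{i}),i-2\leq\cdot\leq i+2}$ using the smallness of the drift of $\xi(t)$, apply the $U_{\Delta}^{2}\subset L_{t}^{p}L_{x}^{q}$ Strichartz embedding, and interpolate the sum over $\alpha$ between the $\ell^{2}$ block of $X(G_{\kappa}^{j})$ (giving the factor $2^{(j-i)/p}$) and the $\ell^{\infty}$ bound from $\tilde{X}_{j}$, exactly as in the paper's H\"older-splitting argument. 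The high-frequency estimate via the Littlewood--Paley square function, Minkowski (using $p,q\geq 2$), the frozen projector anchored at $\xi(G_{\kappa}^{j})$, and recognition of the $\sum_{i\geq j}$ half of $\|u\|_{X(G_{\kappa}^{j}\times\R^{2})}^{2}$ is likewise the paper's argument, differing only in that the paper peels off the $P_{\xi(t),j}$ and $P_{\xi(t),j+1}$ pieces before applying the square function.
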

\begin{proof}
We first prove the first estimate. By H\"{o}lder's inequality, Strichartz estimates, and the definition of the $X,\tilde{X}_{j}$ norms, we have that
\begin{align} 
\|P_{\xi(t),i}u\|_{L_{t}^{p}L_{x}^{q}(G_{\kappa}^{j}\times\R^{2})} &=\left(\sum_{G_{\alpha}^{i}\subset G_{\kappa}^{j}}\|P_{\xi(t),i}\|_{L_{t}^{p}L_{x}^{q}(G_{\kappa}^{j}\times\R^{2})}^{p}\right)^{1/p} \nonumber\\
&\leq \left(\sum_{G_{\alpha}^{i}\subset G_{\kappa}^{j}}\|P_{\xi(t),i}u\|_{L_{t}^{p}L_{x}^{q}(G_{\alpha}^{i}\times\R^{2})}^{2}\right)^{1/p}\left(\sup_{G_{\alpha}^{i}\subset G_{\kappa}^{j}}\|P_{\xi(t),i}u\|_{L_{t}^{p}L_{x}^{q}(G_{\alpha}^{i}\times\R^{2})}\right)^{1-\frac{2}{p}} \nonumber\\
&\lesssim_{p,q}\left(\sum_{G_{\alpha}^{i}\subset G_{\kappa}^{j}}\|P_{\xi(G_{\alpha}^{i}),i-2\leq\cdot\leq i+2}u\|_{U_{\Delta}^{2}(G_{\alpha}^{i}\times\R^{2})}^{2}\right)^{1/p}\left(\sup_{G_{\alpha}^{i}\subset G_{\kappa}^{j}}\|P_{\xi(G_{\alpha}^{i}),i-2\leq\cdot\leq i+2}u\|_{U_{\Delta}^{2}(G_{\alpha}^{i}\times\R^{2})}\right)^{1-\frac{2}{p}} \nonumber\\
&\leq 2^{\frac{j-i}{p}}\|u\|_{X(G_{\kappa}^{j}\times\R^{2})}^{2/p}\|u\|_{\tilde{X}_{j}(G_{\kappa}^{j}\times\R^{2})}^{1-\frac{2}{p}} \nonumber\\
&\leq 2^{\frac{j-i}{p}}\|u\|_{\tilde{X}_{j}(G_{\kappa}^{j}\times\R^{2})}.
\end{align}

We next prove the second estimate. By the triangle inequality, Littlewood-Paley inequality, Minkowski's inequality, Strichartz estimates, and the definition of the $X(G_{\kappa}^{j}\times\R^{2})$ norm,
\begin{align}
\|P_{\xi(t),\geq j}u\|_{L_{t}^{p}L_{x}^{q}(G_{\kappa}^{j}\times\R^{2})} &\lesssim_{q} \|P_{\xi(t),j}u\|_{L_{t}^{p}L_{x}^{q}(G_{\kappa}^{j}\times\R^{2})} + \|P_{\xi(t),j+1}u\|_{L_{t}^{p}L_{x}^{q}(G_{\kappa}^{j}\times\R^{2})}+\|\paren*{\sum_{l\geq j} |P_{\xi(t),l}P_{\xi(t),\geq j+2}u|^{2}}^{1/2}\|_{L_{t}^{p}L_{x}^{q}(G_{\kappa}^{j}\times\R^{2})} \nonumber\\
&\lesssim_{p,q} \|P_{\xi(G_{\kappa}^{j}),j-2\leq\cdot\leq j+2}u\|_{U_{\Delta}^{2}(G_{\kappa}^{j}\times\R^{2})} + \|P_{\xi(G_{\kappa}^{j}),j-1\leq\cdot\leq j+3}u\|_{U_{\Delta}^{2}(G_{\kappa}^{j}\times\R^{2})} \nonumber\\
&\phantom{=} + \paren*{\sum_{l\geq j} \|P_{\xi(G_{\kappa}^{j}),l-2\leq\cdot\leq l+2}u\|_{U_{\Delta}^{2}(G_{\kappa}^{j}\times\R^{2})}^{2}}^{1/2} \nonumber\\
&\lesssim \|u\|_{X(G_{\kappa}^{j}\times\R^{2})}.
\end{align}
\end{proof}

\begin{lemma}[Bernstein-type lemma]\label{lem:BT_embed} 
Let $(p,q)$ be an admissible pair and let $q<r\leq\infty$ satisfy $\frac{2}{q}-\frac{2}{r}>\frac{1}{p}$. Then for all integers $0\leq i\leq j$ and intervals $G_{\kappa}^{j}\subset [0,T]$, we have the estimate
\begin{equation}
\|P_{\xi(t),\leq i}u\|_{L_{t}^{p}L_{x}^{r}(G_{\kappa}^{j}\times\R^{2})} \lesssim_{p,q,r} 2^{2j(\frac{1}{q}-\frac{1}{r})} \|u\|_{\tilde{X}_{j}(G_{\kappa}^{j}\times\R^{2})}.
\end{equation}
Additionally, for any $s>\frac{1}{p}$, we have the estimate
\begin{equation}
\||\nabla-i\xi(t)|^{s} P_{\xi(t), \leq i}u\|_{L_{t}^{p}L_{x}^{q}(G_{\kappa}^{j}\times\R^{2})} \lesssim_{p,q,s} 2^{js} \|u\|_{\tilde{X}_{j}(G_{\kappa}^{j}\times\R^{2})}.
\end{equation}
In particular, by the boundedness of the Riesz transforms, we have the estimate
\begin{equation}
\| (\nabla-i\xi(t)) P_{\xi(t),\leq i}u\|_{L_{t}^{p}L_{x}^{q}(G_{\kappa}^{j}\times\R^{2})} \lesssim_{p,q} 2^{j}  \|u\|_{\tilde{X}_{j}(G_{\kappa}^{j}\times\R^{2})}.
\end{equation}
\end{lemma}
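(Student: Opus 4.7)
The plan is to decompose $P_{\xi(t),\leq i}u$ into modulated Littlewood-Paley shells and reduce everything to Bernstein's inequality in the shifted frame combined with Lemma~\ref{lem:lohi_embed}. The key analytic observations are (a) since multiplication by $e^{ix\cdot\xi(t)}$ is an isometry on every $L^q(\R^2)$, the modulated projector $P_{\xi(t),l}$ inherits the full strength of Bernstein-type inequalities from the ordinary Littlewood-Paley projector $P_l$; and (b) the two hypotheses $\frac{2}{q}-\frac{2}{r}>\frac{1}{p}$ and $s>\frac{1}{p}$ are precisely the thresholds that let the relevant geometric series be summed favorably.

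For the first estimate, write $P_{\xi(t),\leq i}u=\sum_{l=0}^{i}P_{\xi(t),l}u$ (absorbing the very low-frequency piece into the $l=0$ term). By the modulation argument above, Bernstein gives $\|P_{\xi(t),l}f\|_{L_x^r}\lesssim 2^{2l(1/q-1/r)}\|P_{\xi(t),l}f\|_{L_x^q}$ for each $l\geq 0$. Lemma~\ref{lem:lohi_embed} then controls each $L_t^p L_x^q$ piece by $2^{(j-l)/p}\|u\|_{\tilde{X}_j(G_{\kappa}^j\times\R^2)}$ when $l<j$, so Minkowski in the summation index yields
\begin{equation*}
\|P_{\xi(t),\leq i}u\|_{L_t^p L_x^r(G_{\kappa}^j\times\R^2)}\lesssim 2^{j/p}\Bigl(\sum_{l=0}^{i}2^{l(2/q-2/r-1/p)}\Bigr)\|u\|_{\tilde{X}_j(G_{\kappa}^j\times\R^2)}.
\end{equation*}
The hypothesis $\frac{2}{q}-\frac{2}{r}>\frac{1}{p}$ makes the exponent strictly positive, so the geometric series is dominated by its top term $2^{i(2/q-2/r-1/p)}$; using $i\leq j$ collapses everything to $2^{2j(1/q-1/r)}\|u\|_{\tilde{X}_j}$, as desired. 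The corner case $l=i=j$ (where the first estimate of Lemma~\ref{lem:lohi_embed} does not apply) is handled by invoking its second estimate, which controls $P_{\xi(t),\geq j}u$ in $U_{\Delta}^2$ and hence in $L_t^p L_x^q$, followed by Bernstein at scale $2^j$.

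For the second estimate, the same scheme works after replacing Bernstein's $L^q\to L^r$ gain with the fractional-derivative gain $\||\nabla-i\xi(t)|^s P_{\xi(t),l}f\|_{L_x^q}\lesssim 2^{ls}\|P_{\xi(t),l}f\|_{L_x^q}$. This is again inherited from the unmodulated case, because the operator $|\nabla-i\xi(t)|^s P_{\xi(t),l}$ conjugates to $|\nabla|^s P_l$ under the modulation $f\mapsto e^{-i(\cdot)\cdot\xi(t)}f$. Summing the analogous geometric series
\begin{equation*}
2^{j/p}\sum_{l=0}^{i}2^{l(s-1/p)},
\end{equation*}
which converges at the top since $s>1/p$, produces $2^{js}\|u\|_{\tilde{X}_j(G_{\kappa}^j\times\R^2)}$ after using $i\leq j$. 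The third estimate is then immediate from the second with $s=1$ (valid since admissibility forces $p>2$, hence $1>1/p$) together with the factorization $\nabla-i\xi(t)=\mathscr{R}_{\xi(t)}\,|\nabla-i\xi(t)|$, where $\mathscr{R}_{\xi(t)}$ is the modulated vector-valued Riesz transform; by Proposition~\ref{prop:HM} and modulation invariance, $\mathscr{R}_{\xi(t)}$ is bounded on $L^q(\R^2)$ with a constant independent of $\xi(t)$.

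There is no genuine obstacle in this argument; the only mild care needed is checking that the hypotheses on $p,q,r,s$ produce geometric sums with the correct sign in the exponent, which is essentially tautological. The observation underlying everything is that replacing the frequency origin by a moving center $\xi(t)$ costs nothing at the level of linear Harmonic Analysis estimates, so all sharpness of Bernstein's inequality and of Lemma~\ref{lem:lohi_embed} transfers unchanged to the shifted frame.
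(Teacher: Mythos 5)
Your proposal is correct and follows essentially the same route as the paper: decompose into modulated Littlewood--Paley shells, apply Bernstein (resp.\ the fractional derivative gain) at each scale, control each shell in $L_t^pL_x^q$ via Lemma~\ref{lem:lohi_embed}, and sum the geometric series, whose favorable sign comes exactly from the hypotheses $\frac{2}{q}-\frac{2}{r}>\frac{1}{p}$ and $s>\frac{1}{p}$. Your explicit handling of the endpoint shell $l=i=j$ via the second estimate of Lemma~\ref{lem:lohi_embed}, and the modulated Riesz-transform factorization for the third estimate, are minor elaborations of what the paper leaves implicit.
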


\begin{proof}
We prove the first assertion of the lemma. By the triangle inequality, Bernstein's lemma, followed by lemma \ref{lem:lohi_embed}, we have that
\begin{align}
\|P_{\xi(t),\leq i}u\|_{L_{t}^{p}L_{x}^{r}(G_{k}^{j}\times\R^{2})} &\lesssim\sum_{0\leq l\leq i}2^{l(\frac{2}{q}-\frac{2}{r})}\|P_{\xi(t),l}u\|_{L_{t}^{p}L_{x}^{q}(G_{k}^{j}\times\R^{2})} \nonumber\\
&\lesssim\sum_{0\leq l\leq i}2^{l(\frac{2}{q}-\frac{2}{r})}2^{\frac{j-l}{p}} \|u\|_{\tilde{X}_{j}(G_{\kappa}^{j}\times\R^{2})} \nonumber\\
&\lesssim 2^{\frac{j}{p} + (\frac{2}{q}-\frac{2}{r}-\frac{1}{p})i} \|u\|_{\tilde{X}_{j}(G_{\kappa}^{j}\times\R^{2})} \nonumber\\
&\lesssim 2^{2j(\frac{1}{q}-\frac{1}{r})} \|u\|_{\tilde{X}_{j}(G_{\kappa}^{j}\times\R^{2})}.
\end{align}

We now prove the second assertion of the lemma. By the triangle inequality, Bernstein's lemma, followed by lemma \ref{lem:lohi_embed}, we have that 
\begin{align}
\| |\nabla-i\xi(t)|^{s} P_{\xi(t),\leq i}u\|_{L_{t}^{p}L_{x}^{q}(G_{\kappa}^{j}\times\R^{2})} &\lesssim \sum_{0\leq l\leq i}2^{ls}\|P_{\xi(t),l}u\|_{L_{t}^{p}L_{x}^{q}(G_{\kappa}^{j}\times\R^{2})} \nonumber\\
&\lesssim\sum_{0\leq l\leq i}2^{ls}2^{\frac{j-l}{p}}\|u\|_{\tilde{X}_{j}(G_{\kappa}^{j}\times\R^{2})} \nonumber\\
&\lesssim 2^{js}\|u\|_{\tilde{X}_{j}(G_{\kappa}^{j}\times\R^{2})}.
\end{align}
\end{proof}

\section{Long-time Strichartz estimate}\label{sec:LTSE}

\subsection{Overview}\label{ssec:LTSE_ov}
We now have the necessary machinery to state our long-time Strichartz estimate for admissible blowup solutions to \eqref{eq:DS} (cf. Theorem 4.1 in \cite{Dodson2016}).

\begin{thm}[Long-time Strichartz estimate]\label{thm:LTSE}
Let $u$ be an admissible blowup solution to \eqref{eq:DS}. Then there exists constants $C(u)>0$ and $\epsilon_{1}(u)\gg \epsilon_{2}(u)\gg\epsilon_{3}(u)>0$, such that the following holds: for all admissible tuples $(\epsilon_{1},\epsilon_{2},\epsilon_{3})$ with $\epsilon_{j}\leq \epsilon_{j}(u)$ for $j=1,2,3$, integers $k_{0}\geq 20$, intervals $[0,T]$ with $\int_{0}^{T}\int_{\R^{2}}|u(t,x)|^{4}dxdt=2^{k_{0}}$ and $\int_{0}^{T}N(t)^{3}dt=K$, we have the inequalities
\begin{align}
\|u_{\lambda}\|_{\tilde{X}_{k_{0}}([0,\lambda^{-2}T]\times\R^{2})} &\leq C(u)\\
\|u_{\lambda}\|_{\tilde{Y}_{k_{0}}([0,\lambda^{-2}T]\times\R^{2})} &\leq \epsilon_{2}^{1/2},
\end{align}
where $\lambda = \frac{\epsilon_{3}2^{k_{0}}}{K}$ and $u_{\lambda} \coloneqq \lambda u(\lambda^{2}\cdot,\lambda\cdot)$.
\end{thm}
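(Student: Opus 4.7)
The plan is to prove theorem \ref{thm:LTSE} by induction on $j \in \{0,1,\ldots,k_0\}$, establishing a uniform bound of the form $\|u_\lambda\|_{\tilde{X}_j([0,\lambda^{-2}T]\times\R^2)} \leq C(u)$ together with the companion smallness bound $\|u_\lambda\|_{\tilde{Y}_j([0,\lambda^{-2}T]\times\R^2)}^2 \leq \epsilon_2$. For the base case $j=0$ (or some fixed small $j_*$ depending only on $M(u)$), the estimate is a direct consequence of mass conservation and Strichartz estimates on each $G_\kappa^0 = J^\alpha$, where the normalization $\int_{J^\alpha}(N(t)^3 + \epsilon_3\|u(t)\|_{L_x^4}^4)dt = 2\epsilon_3$ gives uniform $U_\Delta^2$ control with constants depending only on $M(u)$ and $\epsilon_3$. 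The cheap inductive estimate (lemma \ref{lem:chp_ie}) will serve as a safety valve: it prevents blowup of the constant at any fixed $j$, and reduces the problem to showing that between levels where the main bootstrap succeeds, the constant does not accumulate faster than the final bound.

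For the inductive step, I would fix an arbitrary $G_\kappa^j \subset [0,\lambda^{-2}T]$ and a scale $i$ with either $0 \leq i < j$ (in which case a child interval $G_\alpha^i \subset G_\kappa^j$ is chosen) or $i \geq j$, and estimate $\|P_{\xi(G_\kappa^j),i-2\leq\cdot\leq i+2}u_\lambda\|_{U_\Delta^2(G_\alpha^i\times\R^2)}$ via proposition \ref{prop:U2_duh}. The linear term at the left endpoint is dominated by $M(u)^{1/2}$ using $U_\Delta^2 \hookrightarrow L_t^\infty L_x^2$, while the nonlinear term is estimated by duality against $v \in V_{-,rc,\Delta}^2$ of norm one. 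Inserting the Littlewood-Paley decomposition adapted to $\xi(t)$ into both $v$ and each factor of $F(u_\lambda) = \mu|u_\lambda|^2 u_\lambda - \E(|u_\lambda|^2)u_\lambda$, I would split the resulting multilinear forms into high-high, high-low, and low-low frequency interactions, using the sparse cube shifting trick of lemma \ref{lem:sparse} to implement the double frequency decomposition outlined in the introduction for the nonlocal piece $\E(|u_\lambda|^2)u_\lambda$.

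The main obstacle, and the heart of the proof, is handling the interactions where at least one factor has frequency $\gtrsim 2^i$ inside the argument of $\E$ and one factor has comparable high frequency outside, since classical bilinear Strichartz estimates are insufficient to close the bootstrap once the Coifman-Meyer type $\log$ losses from interpolation between $U_\Delta^2$ and $V_\Delta^2$ (proposition \ref{prop:lin_interp}) enter. Here I would invoke the three improved bilinear estimates, propositions \ref{prop:ibs_1}-\ref{prop:ibs_3}, to be proved in the succeeding section \ref{sec:BSE}, which provide the logarithmic improvement over the estimates in proposition \ref{prop:bs_i} that Dodson exploited for the cubic NLS. The cube-localized form of proposition \ref{prop:ibs_1} will be used on the $\E_k$-piece of the dyadically decomposed symbol of $\E$ (tiling frequency space by cubes of side length $2^k$ with $2^k \lesssim N_{lo}$), so that after Minkowski in the translation variable $y$ produced by the rapidly decaying kernel $\mathcal{K}_k$ and Cauchy-Schwarz in the cube index, we pick up a geometric gain $2^{k}/N_{hi}$ summable in $k$ and a gain in $\epsilon_1, \epsilon_2$ or $\epsilon_3$ wherever a $Y$-norm factor can be substituted for an $X$-norm factor using the condition $N(G)\leq \epsilon_3^{1/2}2^{i-5}$.

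Once all frequency interactions are controlled by linear combinations of $\|u_\lambda\|_{\tilde{X}_j}^{\alpha}\|u_\lambda\|_{\tilde{Y}_j}^{3-\alpha}$ times a constant depending polynomially on $M(u)$ and on negative powers of the $\epsilon_i$, the bootstrap closes by choosing $\epsilon_1 \gg \epsilon_2 \gg \epsilon_3$ sufficiently small relative to $M(u)$ so that the resulting recursive inequality $\|u_\lambda\|_{\tilde{X}_{j+1}}^2 \leq C_0(u) + o_{\epsilon}(1)\|u_\lambda\|_{\tilde{X}_{j+1}}^2$ can be absorbed, yielding the uniform bound $\|u_\lambda\|_{\tilde{X}_{k_0}} \leq C(u)$. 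The $\tilde{Y}_{k_0}$ bound will be obtained simultaneously by observing that on intervals $G$ satisfying $N(G)\leq \epsilon_3^{1/2}2^{i-5}$, the linear Strichartz contribution of $P_{\xi(G),i-2\leq\cdot\leq i+2}u_\lambda$ is itself controlled by $\epsilon_2$ (via the spatial-frequency localization \eqref{eq:sp_frq_loc} combined with Bernstein's lemma), so that the above multilinear estimates deliver the sharper constant $\epsilon_2^{1/2}$ in place of $C(u)$.
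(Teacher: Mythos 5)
Your outline follows the paper's route in broad strokes (induction on scales, Duhamel splitting, double frequency decomposition with the sparse shifting trick, the improved bilinear estimates of propositions \ref{prop:ibs_1}--\ref{prop:ibs_3}, bootstrap closure), but as written it has two concrete gaps that would prevent the induction from closing. First, the linear part: bounding the contribution of $e^{i(t-t_{\alpha}^{i})\Delta}P_{\xi(G_{\alpha}^{i}),i-2\leq\cdot\leq i+2}u_{\lambda}(t_{\alpha}^{i})$ at the \emph{left endpoint} by $M(u)^{1/2}$ is not enough, because the $\tilde{X}_{j}$ norm sums $2^{i-j}\sum_{G_{\alpha}^{i}\subset G_{\kappa}^{j}}\|\cdot\|_{U_{\Delta}^{2}}^{2}$ over the $2^{j-i}$ children $G_{\alpha}^{i}$ of $G_{\kappa}^{j}$; the per-term mass bound then yields $\sum_{20\leq i<j}2^{i-j}\cdot 2^{j-i}M(u)\sim j\,M(u)\sim k_{0}M(u)$, which grows with $k_{0}$ and destroys exactly the uniformity the theorem asserts. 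The paper's fix (lemma \ref{lem:lin_bnd}) is to choose $t_{\alpha}^{i}$ to \emph{minimize} $\|P_{\xi(G_{\alpha}^{i}),i-2\leq\cdot\leq i+2}u(t)\|_{L_{x}^{2}}$ on $G_{\alpha}^{i}$, bound that infimum by the average against the measure $(N(t)^{3}+\epsilon_{3}\|u(t)\|_{L_{x}^{4}}^{4})dt$ whose total over $G_{\alpha}^{i}$ is $2^{i+1}\epsilon_{3}$ by construction of the $J^{\alpha}$ intervals, and then sum in $i$ using almost orthogonality of the annuli $|\xi-\xi(t)|\sim 2^{i}$; this is also precisely the mechanism (together with \eqref{eq:sp_frq_loc}, not Bernstein) that produces the $\epsilon_{2}^{2}$ bound for the linear part of the $\tilde{Y}$ norm, so your claimed simultaneous $\tilde{Y}_{k_{0}}\leq\epsilon_{2}^{1/2}$ bound rests on an argument you have not supplied.

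Second, your scheme sends every nonlinear interaction either into the bilinear machinery or into smallness absorbed by choosing $\epsilon_{1}\gg\epsilon_{2}\gg\epsilon_{3}$, but this cannot work on the intervals $G_{\alpha}^{i}$ (and indices $i\geq j$) with $N(G_{\alpha}^{i})>\epsilon_{3}^{1/2}2^{i-10}$: there the frequencies $\sim 2^{i}$ sit inside the concentration window $|\xi-\xi(t)|\lesssim\epsilon_{3}^{-1/2}N(t)$, so \eqref{eq:sp_frq_loc} gives no smallness, no $\tilde{Y}_{j}$ factor can be substituted (the $\tilde{Y}$ norm only records intervals with $N(G_{\alpha}^{i})\leq\epsilon_{3}^{1/2}2^{i-5}$), and there is no frequency separation for the bilinear estimates to exploit; each such interval genuinely contributes $O(1)$. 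Closing this range requires the separate counting argument of the paper (lemma \ref{lem:LTSE_NI_est}): reduce to $G$'s that are unions of small intervals, observe that any small interval meeting such a $G_{\alpha}^{i}$ has $N(J_{l})\gtrsim\epsilon_{3}^{1/2}2^{i}$, and use $\sum_{l}N(J_{l})\sim_{u}\int_{G_{\kappa}^{j}}N(t)^{3}dt\leq\epsilon_{3}2^{j}$ to control how many there are, yielding $O(1)$ for the $\tilde{X}$ sum and $O(\epsilon_{2}^{3/2})$ in the intermediate range for $\tilde{Y}$. Relatedly, your closing inequality should carry the smallness attached to $\|u_{\lambda}\|_{\tilde{Y}_{j}}^{2}$ (re-established at each step of the induction) rather than to bare ``$o_{\epsilon}(1)$'' constants with negative powers of the $\epsilon_{i}$ --- a factor like $\epsilon_{3}^{-1}$ cannot be absorbed by making $\epsilon_{3}$ small, and the consistency condition $\epsilon_{3}<\epsilon_{2}^{10}$ constrains how the parameters may be traded against each other.
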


\begin{remark}
In the remainder of this section, we drop the subscript $\lambda$ in the notation $u_{\lambda}$ for the rescaled solution and instead assume that $u$ satisfies $\int_{0}^{T}\int_{\R^{2}}|u(t,x)|^{4}dxdt=2^{k_{0}}$ and $\int_{0}^{T}N(t)^{3}dt=\epsilon_{3}2^{k_{0}}$ (i.e. $\lambda=1$). We use the notation $\sim_{u}, \lesssim_{u}, \gtrsim_{u}$ to denote implicit constants which depend on $u$ through its mass and almost periodicity parameters but not on the data $(\epsilon_{1},\epsilon_{2},\epsilon_{3},k_{0},[0,T],K)$.
\end{remark}

We now comment on the strategy of the proof. Inspired by the breakthrough ideas introduced in \cite{Dodson2016}, the proof is an induction on scales argument, which is set up in terms of the index $k_{0}$ in the $\tilde{X}_{k_{0}}, \tilde{Y}_{k_{0}}$ norms. Unpackaging the definitions of the $\tilde{X}_{k_{0}}, \tilde{Y}_{k_{0}}$ norms, our goal is to show that
\begin{equation}
\sum_{0\leq i< j}2^{i-j}\sum_{G_{\alpha}^{i}\subset G_{\kappa}^{j}} \|P_{\xi(G_{\alpha}^{i}),i-2\leq\cdot\leq i+2}u\|_{U_{\Delta}^{2}(G_{\alpha}^{i}\times\R^{2})}^{2}+\sum_{i\geq j} \|P_{\xi(G_{\kappa}^{j}), i-2\leq\cdot\leq i+2}u\|_{U_{\Delta}^{2}(G_{\kappa}^{j}\times\R^{2})}^{2} \leq C(u)^{2}
\end{equation}
and
\begin{equation}
\begin{split}
&\sum_{0\leq i <j}2^{i-j}\sum_{G_{\alpha}^{i}\subset G_{\kappa}^{j} : N(G_{\alpha}^{i})\leq \epsilon_{3}^{1/2}2^{i-5}}\|P_{\xi(G_{\alpha}^{i}), i-2\leq\cdot\leq i+2}u\|_{U_{\Delta}^{2}(G_{\alpha}\times\R^{2})}^{2}\\
&\phantom{=}\qquad + \sum_{i \geq j : N(G_{\kappa}^{j})\leq\epsilon_{3}^{1/2}2^{i-5}}\|P_{\xi(G_{\kappa}^{j}), i-2\leq\cdot\leq i+2}u\|_{U_{\Delta}^{2}(G_{\kappa}^{j}\times\R^{2})}^{2} \\
&\phantom{=}\leq \epsilon_{2}
\end{split}
\end{equation}
for all $0\leq j\leq k_{0}$ and $G_{\kappa}^{j}\subset [0,T]$. To achieve this goal, we proceed in several steps.

In Step 1, we prove the base case of the induction. More precisely, we show that there exists some large constant $C_{1}(u)>0$, depending only on the mass $M(u)$, such that
\begin{align}
\|u\|_{\tilde{X}_{20}([0,T]\times\R^{2})}^{2} &\leq C_{1}(u)\\
\|u\|_{\tilde{Y}_{20}([0,T]\times\R^{2})}^{2} &\leq C_{1}(u)\epsilon_{2}^{3/2}.
\end{align}
The precise choice of the subscript $20$ is immaterial; it is just technically convenient to have some large integer. In this step, we proceed similarly to \cite{Dodson2016} we prove these estimates using \emph{linear} Strichartz estimates, some basic Littlewood-Paley theory, lemma \ref{lem:chp_ie}, and in the case of the $\tilde{Y}_{20}$ estimate, the frequency localization property \eqref{eq:sp_frq_loc}. The primary new ingredient in this step is extensive use of the $L^{p}$ boundedness of the operator $\E$ for $1<p<\infty$, which is a consequence of the Calder\'{o}n-Zygmund theorem.

In Step 2, we proceed to the indices $i>20$, which we consider throughout the remainder of the proof. We use Duhamel's formula to write the solution $u$ in terms of its linear and nonlinear parts,
\begin{equation}
P_{\xi(G_{\alpha}^{i}), i-2\leq\cdot\leq i+2}u(t) = e^{i(t-t_{\alpha}^{i})\Delta}P_{\xi(G_{\alpha}^{i}),i-2\leq\cdot\leq i+2}u(t_{\alpha}^{i})-i\int_{t_{\alpha}^{i}}^{t}e^{i(t-\tau)\Delta}P_{\xi(G_{\alpha}^{i}),i-2\leq\cdot\leq i+2}F(u(\tau))d\tau, \qquad \forall t\in G_{\alpha}^{i},
\end{equation}
for a judicious choice of $t_{\alpha}^{i}\in G_{\alpha}^{i}$; similarly for $G_{\alpha}^{i}$ replaced by $G_{\kappa}^{j}$. We can control the $U_{\Delta}^{2}(G_{\alpha}^{i}\times\R^{2})$ and $U_{\Delta}^{2}(G_{\kappa}^{j}\times\R^{2})$ norms of the linear part by
\begin{equation}
\|P_{\xi(G_{\alpha}^{i}), i-2\leq\cdot\leq i+2}u(t_{\alpha}^{i})\|_{L^{2}(\R^{2})} \enspace \text{and} \enspace \|P_{\xi(G_{\kappa}^{j}),i-2\leq\cdot\leq i+2}u(t_{\kappa}^{j})\|_{L^{2}(\R^{2})},
\end{equation}
respectively. Therefore we expect and show that the mass of the solution controls the total linear contribution
\begin{equation}
\sum_{0\leq i<j}2^{i-j}\sum_{G_{\alpha}^{i}\subset G_{\kappa}^{j}} \|P_{\xi(G_{\alpha}^{i}), i-2\leq\cdot\leq i+2}u(t_{\alpha}^{i})\|_{L^{2}(\R^{2})}^{2} + \sum_{i\geq j} \|P_{\xi(G_{\kappa}^{j}),i-2\leq\cdot\leq i+2}u(t_{\kappa}^{j})\|_{L^{2}(\R^{2})}^{2}.
\end{equation}
Indeed, we prove an $O(1)$ bound. When we are localizing $u(t_{\alpha}^{i})$ or $u(t_{\kappa}^{j})$ to frequencies which are far from the center $\xi(t)$ relative to the scale $N(t)$ on an interval $G_{\alpha}^{i}$ or $G_{\kappa}^{j}$, we expect from \eqref{eq:sp_frq_loc} for this contribution
\begin{equation}
\begin{split}
&\sum_{0\leq i<j} 2^{i-j}\sum_{G_{\alpha}^{i}\subset G_{\kappa}^{j}; N(G_{\alpha}^{i}) \leq \epsilon_{3}^{1/2}2^{i-5}} \|P_{\xi(G_{\alpha}^{i}), i-2\leq\cdot\leq i+2}u(t_{\alpha}^{i})\|_{L^{2}(\R^{2})}^{2} \\
&\phantom{=} + \sum_{i\geq j; N(G_{\kappa}^{j}) \leq \epsilon_{3}^{1/2}2^{i-5}} \|P_{\xi(G_{\kappa}^{j}),i-2\leq\cdot\leq i+2}u(t_{\kappa}^{j})\|_{L^{2}(\R^{2})}^{2}
\end{split}
\end{equation}
to be small. Indeed, we prove an $O(\epsilon_{2}^{3/2})$ bound.

In Step 3, we begin to estimate the contribution of the nonlinear part of the solution in Duhamel's formula. We consider the indices $20\leq i<j$ for which there are intervals $G_{\alpha}^{i}\subset G_{\kappa}^{j}$ on which $N(G_{\alpha}^{i}) > \epsilon_{3}^{1/2}2^{i-10}$ and the indices $i\geq j$ such that $N(G_{\kappa}^{j}) > \epsilon_{3}^{1/2}2^{i-10}$. In the quantities
\begin{equation}
\|\int_{t_{\alpha}^{i}}^{t} P_{\xi(G_{\alpha}^{i}),i-2\leq\cdot\leq i+2}F(u(\tau))d\tau\|_{U_{\Delta}^{2}(G_{\alpha}^{i}\times\R^{2})}^{2} \enspace \text{and} \enspace \|\int_{t_{\kappa}^{j}}^{t} P_{\xi(G_{\kappa}^{j}),i-2\leq\cdot\leq i+2}F(u(\tau))d\tau\|_{U_{\Delta}^{2}(G_{\kappa}^{j}\times\R^{2})}^{2},
\end{equation}
we are measuring the nonlinear part at frequencies which are \emph{near}, relatively speaking, the frequency center $\xi(t)$ on $G_{\alpha}^{i}$ and $G_{\kappa}^{j}$, respectively. By the frequency localization property \eqref{eq:sp_frq_loc}, the solution is mass concentrated on this region. Hence, to prove an $O(1)$ estimate for the total contribution
\begin{equation}
\begin{split}
&\sum_{20\leq i<j} 2^{i-j} \sum_{G_{\alpha}^{i}\subset G_{\kappa}^{j}; N(G_{\alpha}^{i})>\epsilon_{3}^{1/2}2^{i-10}} \|\int_{t_{\alpha}^{i}}^{t} P_{\xi(G_{\alpha}^{i}),i-2\leq\cdot\leq i+2}F(u(\tau))d\tau\|_{U_{\Delta}^{2}(G_{\alpha}^{i}\times\R^{2})}^{2} \\
&\phantom{=} + \sum_{i\geq j; N(G_{\kappa}^{j})>\epsilon_{3}^{1/2}2^{i-10}} \|\int_{t_{\kappa}^{j}}^{t} P_{\xi(G_{\kappa}^{j}),i-2\leq\cdot\leq i+2}F(u(\tau))d\tau\|_{U_{\Delta}^{2}(G_{\kappa}^{j}\times\R^{2})}^{2},
\end{split}
\end{equation}
we show that there not too many of these intervals $G_{\alpha}^{i}$. Similarly, we consider those indices $20\leq i<j$ for which there are intervals $G_{\alpha}^{i}\subset G_{\kappa}^{j}$ satisfying 
\begin{equation}
\epsilon_{3}^{1/2}2^{i-10}< N(G_{\alpha}^{i}) \leq \epsilon_{3}^{1/2}2^{i-5}
\end{equation}
and those indices $i\geq j$ satisfying $\epsilon_{3}^{1/2}2^{i-10} <N(G_{\kappa}^{j}) \leq \epsilon_{3}^{1/2}2^{i-5}$. Since we are now localizing the nonlinear part to frequencies with distance from $\xi(t)$ comparable to $\epsilon_{3}^{-1/2}N(G_{\alpha}^{i})$ and $\epsilon_{3}^{-1/2}N(G_{\kappa}^{j})$, respectively, we still expect from \eqref{eq:sp_frq_loc} and  indeed show an $O(\epsilon_{2}^{3/2})$ bound for the total contribution
\begin{equation}
\begin{split}
&\sum_{20\leq i<j} 2^{i-j} \sum_{G_{\alpha}^{i}\subset G_{\kappa}^{j}; \epsilon_{3}^{1/2}2^{i-10} < N(G_{\alpha}^{i}) \leq \epsilon_{3}^{1/2}2^{i-5}} \|\int_{t_{\alpha}^{i}}^{t} P_{\xi(G_{\alpha}^{i}),i-2\leq\cdot\leq i+2}F(u(\tau))d\tau\|_{U_{\Delta}^{2}(G_{\alpha}^{i}\times\R^{2})}^{2} \\
&\phantom{=} + \sum_{i\geq j; \epsilon_{3}^{1/2}2^{i-10} < N(G_{\kappa}^{j}) \leq \epsilon_{3}^{1/2}2^{i-5}} \|\int_{t_{\kappa}^{j}}^{t} P_{\xi(G_{\kappa}^{j}),i-2\leq\cdot\leq i+2}F(u(\tau))d\tau\|_{U_{\Delta}^{2}(G_{\kappa}^{j}\times\R^{2})}^{2}.
\end{split}
\end{equation}

In Step 4, we estimate the remaining contribution of the nonlinear part of the solution at frequencies which are \emph{far} away from the frequency center function $\xi(t)$ relative to the scale $N(t)$, or more precisely, the quantity
\begin{equation}
\begin{split}
&\sum_{20\leq i<j}2^{i-j}\sum_{G_{\alpha}^{i}\subset G_{\kappa}^{j}; N(G_{\alpha}^{i})\leq \epsilon_{3}^{1/2}2^{i-10}}\|\int_{t_{\alpha}^{i}}^{t}P_{\xi(G_{\alpha}^{i}), i-2\leq\cdot\leq i+2}F(u(\tau))d\tau\|_{U_{\Delta}^{2}(G_{\alpha}^{i}\times\R^{2})}^{2}\\
&\phantom{=} + \sum_{i\geq j; N(G_{\kappa}^{j})\leq\epsilon_{3}^{1/2}2^{i-10}}\|\int_{t_{\kappa}^{j}}^{t}e^{i(t-\tau)\Delta}P_{\xi(G_{\kappa}^{j}), i-2\leq\cdot\leq i+2}F(u(\tau))d\tau\|_{U_{\Delta}^{2}(G_{\kappa}^{j}\times\R^{2})}^{2}.
\end{split}
\end{equation}
As was the case in \cite{Dodson2016}, this is the most difficult step of the proof. Given the frequency localization property \eqref{eq:sp_frq_loc}, morally we expect this quantity too be small (i.e. $O(\epsilon_{2}^{\delta})$ for some $\delta>0$), but proving this far from obvious. The strategy is to use a bootstrap argument which exploits both the strong inductive nature of the $\tilde{X}_{j}$ and $\tilde{Y}_{j}$ norms and the ``smallness" provided by the $\tilde{Y}_{j}$ norm. Indeed, this latter point is precisely the reason why we have been estimating the $\tilde{Y}_{j}$ norm so far. To prove such a bootstrap proposition, we use Littlewood-Paley theory to split estimating
\begin{equation}
P_{\xi(G_{\alpha}^{i}), i-2\leq\cdot\leq i+2}F(u)
\end{equation}
into two model cases:
\begin{enumerate}[(i)]
\item\label{item:LTSE_mc_1}
\begin{equation}
P_{\xi(G_{\alpha}^{i}), i-2\leq\cdot\leq i+2}[\E\paren*{(P_{\xi(t), <i-10}u)(\ol{P_{\xi(t), \geq i-10}u})}(P_{\xi(G_{\alpha}^{i}), \geq i-5}u)],
\end{equation}
\item\label{item:LTSE_mc_2}
\begin{equation}
P_{\xi(G_{\alpha}^{i}), i-2\leq\cdot\leq i+2}[\E\paren*{|P_{\xi(t), <i-10}u|^{2}}(P_{\xi(G_{\alpha}^{i}), \geq i-5}u)].
\end{equation}
\end{enumerate}
Case \ref{item:LTSE_mc_1} is the easy one as we have two far frequency factors in the nonlinearity. Classical linear and bilinear estimates suffice, and we do not yet need the double frequency decomposition. Case \ref{item:LTSE_mc_2} is the hard one as we only have one far frequency factor in the nonlinearity. Moreover, both near frequency factors are inside the argument of the nonlocal operator $\E$. The classical linear and bilinear estimates do not suffice, and we cannot apply bilinear estimates to the nonlinearity without using the double frequency decomposition. It is as this step that we rely on three new bilinear Strichartz estimates adapted to the double frequency decomposition, the proofs of which we defer to section \ref{sec:BSE}.

Before proceeding to the details of the proof, we lastly remark that the proof of theorem \ref{thm:LTSE} is agnostic to the signs and magnitudes of the parameters $\mu_{1}$ and $\mu_{2}$ in the eeDS equation: the signs only become relevant in the rigidity step to preclude the quasi-soliton scenario. Therefore, we will simplify the notation by setting $\mu_{1}=\mu_{2}=1$.

\subsection{Step 1: Base case}\label{ssec:LTSE_S1} 
In this subsection, we prove the base case of the induction argument.

\begin{lemma} [Base case] \label{lem:LTSE_base}
There exists a constant $C(u)>0$ such that
\begin{gather}
	\|u\|_{\tilde{X}_{20}([0,T]\times\R^{2})}^{2} \leq 2^{20}C(u)\\
	\|u\|_{\tilde{Y}_{20}([0,T]\times\R^{2})}^{2} \leq 2^{20}C(u)\epsilon_{2}^{3/2}
\end{gather}
\end{lemma}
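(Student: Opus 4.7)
The plan is to iteratively apply Lemma~\ref{lem:chp_ie} to reduce both estimates to the base scale $j=0$:
\begin{equation}
\|u\|_{\tilde X_{20}([0,T]\times\R^{2})}^{2}\le 2^{20}\|u\|_{\tilde X_{0}([0,T]\times\R^{2})}^{2},\qquad \|u\|_{\tilde Y_{20}([0,T]\times\R^{2})}^{2}\le 2^{20}\|u\|_{\tilde Y_{0}([0,T]\times\R^{2})}^{2}.
\end{equation}
Because $G_{\kappa}^{0}=J^{\kappa}$, matters reduce to the $J^{\alpha}$-uniform bounds $\|u\|_{X(J^{\alpha}\times\R^{2})}^{2}\lesssim_{u}1$ and $\|u\|_{Y(J^{\alpha}\times\R^{2})}^{2}\lesssim_{u}\epsilon_{2}^{3/2}$, each an $\ell^{2}$-sum over $i\ge 0$ of squared $U^{2}_{\Delta}(J^{\alpha}\times\R^{2})$-norms of the fattened annular Littlewood-Paley blocks $P_{\xi(J^{\alpha}),i-2\le\cdot\le i+2}u$.

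For the $X$-estimate I first establish the unlocalized Strichartz bound $\|u\|_{U^{2}_{\Delta}(J^{\alpha}\times\R^{2})}\lesssim_{u}1$. By Duhamel's formula combined with Proposition~\ref{prop:U2_duh} and the dual Strichartz embedding $L^{4/3}_{t,x}\hookrightarrow DU^{2}_{\Delta}$ (Proposition~\ref{prop:bs_i}\ref{item:bs_i_S}), it suffices to control $\|u(t_{\alpha})\|_{L^{2}_{x}}\le M(u)^{1/2}$ together with
\begin{equation}
\|F(u)\|_{L^{4/3}_{t,x}(J^{\alpha})}\le \|\mathcal{L}(|u|^{2})\|_{L^{2}_{t,x}(J^{\alpha})}\|u\|_{L^{4}_{t,x}(J^{\alpha})}\lesssim \|u\|_{L^{4}_{t,x}(J^{\alpha})}^{3}\lesssim 1,
\end{equation}
via H\"older, the $L^{2}\to L^{2}$ boundedness of $\mathcal{L}$ (Plancherel), and the defining bound $\|u\|_{L^{4}_{t,x}(J^{\alpha})}^{4}\le 2$. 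Since the fattened annuli $\{2^{i-2}\le|\xi-\xi(J^{\alpha})|\le 2^{i+2}\}_{i\ge 0}$ boundedly overlap in $\R^{2}$, the $\ell^{2}$-summation lemma of Section~\ref{ssec:prelim_fs} then yields $\|u\|_{X(J^{\alpha}\times\R^{2})}^{2}\lesssim \|u\|_{U^{2}_{\Delta}(J^{\alpha}\times\R^{2})}^{2}\lesssim_{u}1$.

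For the $Y$-estimate the summation is restricted to $i$ with $2^{i}\ge 2^{5}\epsilon_{3}^{-1/2}N(J^{\alpha})$. Integrating $|\xi'|\le 2^{-20}\epsilon_{1}^{-1/2}N(t)^{3}$ over $J^{\alpha}$ (using $\int_{J^{\alpha}}N^{3}\le 2\epsilon_{3}$) yields $|\xi(t)-\xi(J^{\alpha})|\le 2^{-19}\epsilon_{1}^{-1/2}\epsilon_{3}\ll N(J^{\alpha})\le 2^{-5}\epsilon_{3}^{1/2}\cdot 2^{i}$ for $t\in J^{\alpha}$; combined with $\epsilon_{3}<\epsilon_{2}^{10}<\epsilon_{1}^{10}$ this places the annulus $\{|\xi-\xi(J^{\alpha})|\sim 2^{i}\}$ inside the tail region $\{|\xi-\xi(t)|\ge 2^{-20}\epsilon_{3}^{-1/4}N(t)\}$ uniformly on $J^{\alpha}$. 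Hence~\eqref{eq:sp_frq_loc}, Plancherel, and bounded overlap give the linear Duhamel bound $\sum_{i}\|P_{\xi(J^{\alpha}),i-2\le\cdot\le i+2}u(t_{\alpha})\|_{L^{2}_{x}}^{2}\lesssim \epsilon_{2}^{2}$. For the nonlinear contribution I decompose the singled-out frequency factors of $F(u)=\mathcal{L}(|u|^{2})u$ relative to $\xi(t)$ and note that the pure low-frequency part $\mathcal{L}(|u_{\flat}|^{2})u_{\flat}$, where $u_{\flat}=P_{\xi(t),\le i-10}u$, has Fourier support in $\{|\xi-\xi(J^{\alpha})|\lesssim 2^{i-9}\}$ and so is annihilated by the projector $P_{\xi(J^{\alpha}),i-2\le\cdot\le i+2}$. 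Each surviving cross term carries at least one factor with frequency $\gtrsim 2^{i-10}$ from $\xi(t)$, and by~\eqref{eq:sp_frq_loc} the tail Littlewood-Paley shells at scale $2^{i'}$ have $L^{\infty}_{t}L^{2}_{x}$-masses $\eta_{i'}$ obeying $\sum_{i'}\eta_{i'}^{2}\lesssim \epsilon_{2}^{2}$. Interpolating admissible Strichartz norms against $L^{\infty}_{t}L^{2}_{x}$ trades this mass-smallness for an $L^{4}_{t,x}$-type gain of $\eta_{i'}^{1-\theta}$ per tail factor; combining with H\"older, the $L^{p}$-boundedness of $\mathcal{L}$, and a Schur-type reordering of the resulting double sum in $(i,i')$ produces a total nonlinear contribution $\lesssim \epsilon_{2}^{3/2}$. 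The main technical hurdle is the absence of the $d=2$ endpoint $L^{2}_{t}L^{\infty}_{x}$ Strichartz bound, which forces working with admissible pairs $(p,q)$ having $p<\infty$ and accepting a small $\epsilon_{2}^{-\delta}$ loss; this is harmlessly absorbed by the slack between the target exponent $3/2$ and the naive $2$.
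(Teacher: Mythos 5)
Your overall skeleton matches the paper's: twenty applications of Lemma~\ref{lem:chp_ie}, the $\tilde X_{0}$ bound from $\|u\|_{U_{\Delta}^{2}(J^{\alpha}\times\R^{2})}\lesssim_{u}1$ plus almost orthogonality of the recentered annular blocks, and the linear part of the $\tilde Y_{0}$ bound from \eqref{eq:sp_frq_loc} (your intermediate claim ``$2^{-19}\epsilon_{1}^{-1/2}\epsilon_{3}\ll N(J^{\alpha})$'' is not justified, since $N(J^{\alpha})$ can be arbitrarily small, but the containment of the relevant annuli in the far region only needs $2^{-19}\epsilon_{1}^{-1/2}\epsilon_{3}\ll \epsilon_{3}^{1/2}2^{i-5}$, which does hold, so that slip is harmless). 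The genuine gap is in your treatment of the nonlinear contribution to the $Y$-norm. You decompose per output shell $i$, isolate at least one far factor, and then invoke ``tail Littlewood--Paley shells at scale $2^{i'}$ have $L^{\infty}_{t}L^{2}_{x}$-masses $\eta_{i'}$ obeying $\sum_{i'}\eta_{i'}^{2}\lesssim\epsilon_{2}^{2}$'' to run a Schur-type summation in $(i,i')$. But \eqref{eq:sp_frq_loc} only gives $\sup_{t}\sum_{i'}\|P_{\xi(t),i'}u(t)\|_{L^{2}_{x}}^{2}\lesssim\epsilon_{2}^{2}$; it does \emph{not} give $\sum_{i'}\sup_{t}\|P_{\xi(t),i'}u(t)\|_{L^{2}_{x}}^{2}\lesssim\epsilon_{2}^{2}$ (at different times the $\epsilon_{2}$-mass may sit in different shells, making the sum of sups as large as you like). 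Without that square-summability your per-$i$ estimates are merely uniformly $O(\epsilon_{2}^{3/4})$, and since the output frequency is pinned to the single far factor's shell (the all-low term being annihilated), there is no off-diagonal decay to exploit: the $\ell^{2}$-sum over the infinitely many output scales $i$ does not close.

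The way to repair this is the paper's: do not estimate shell by shell. First collapse the $\ell^{2}_{i}$-sum of the Duhamel contributions using Minkowski's inequality and Plancherel (almost orthogonality of the projections $P_{\xi(J^{\alpha}),i-2\leq\cdot\leq i+2}$, whose union of supports lies in $\{|\xi-\xi(t)|\geq 2\epsilon_{3}^{-1/2}N(t)\}$ for all $t\in J^{\alpha}$), reducing the whole nonlinear contribution to the single quantity $\|P_{\xi(t),\geq 2\epsilon_{3}^{-1/2}N(t)}F(u)\|_{L^{1}_{t}L^{2}_{x}(J^{\alpha}\times\R^{2})}$. Only then perform one near/far split of $u$ at threshold $\epsilon_{3}^{-1/2}2^{-2}N(t)$, so that each surviving term carries a far factor, and estimate by H\"older, the Calder\'on--Zygmund bound for $\E$, the interpolation $\|f\|_{L^{9}_{t}L^{18/7}_{x}}\leq\|f\|_{L^{\infty}_{t}L^{2}_{x}}^{3/4}\|f\|_{L^{9/4}_{t}L^{18}_{x}}^{1/4}$, Strichartz on $J^{\alpha}$, and \eqref{eq:sp_frq_loc}, obtaining $\lesssim_{u}\epsilon_{2}^{3/4}$ and hence $\epsilon_{2}^{3/2}$ for the square. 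In this scheme only one fixed far region is needed, the problematic interchange of $\sup_{t}$ and $\sum_{i'}$ never arises, and no $\epsilon_{2}^{-\delta}$ losses need to be absorbed.
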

\begin{proof}
We first claim that
\begin{align}
\|u\|_{\tilde{X}_{0}([0,T]\times\R^{2})}^{2}&\leq C(u)\\
\|u\|_{\tilde{Y}_{0}([0,T]\times\R^{2})}^{2}&\leq \epsilon_{2}^{3/2}.
\end{align}
Indeed, the first inequality follows from the fact that $\|u\|_{U_{\Delta}^{2}(J^{\alpha}\times\R^{2})}\lesssim_{u} 1$ for every $J^{\alpha}\subset G_{\kappa}^{j}$. For the second inequality, we observe from Duhamel's principle and duality that
\begin{equation}
\begin{split}
\paren*{\sum_{i\geq 0 : N(J^{\alpha}) \leq\epsilon_{3}^{1/2}2^{i-5}} \|P_{\xi(J^{\alpha}), i-2\leq \cdot\leq i+2}u\|_{U_{\Delta}^{2}(J^{\alpha}\times\R^{2})}^{2}}^{1/2} &\lesssim\|P_{\xi(t),\geq 2\epsilon_{3}^{-1/2}N(t)}u\|_{L_{t}^{\infty}L_{x}^{2}(J^{\alpha}\times\R^{2})}\\
&\phantom{=}+\|P_{\xi(t), \geq 2\epsilon_{3}^{-1/2}N(t)}F(u)\|_{L_{t}^{1}L_{x}^{2}(J^{\alpha}\times\R^{2})}.
\end{split}
\end{equation}
By the frequency localization property \eqref{eq:sp_frq_loc},
\begin{equation}
\|P_{\xi(t),\geq 2\epsilon_{3}^{-1/2}N(t)}u\|_{L_{t}^{\infty}L_{x}^{2}(J^{\alpha}\times\R^{2})} \leq \epsilon_{2}.
\end{equation}
Performing a near-far frequency decomposition
\begin{equation}
u=P_{\xi(t),<\epsilon_{3}^{-1/2}2^{-2}N(t)}u + P_{\xi(t),\geq \epsilon_{3}^{-1/2}2^{-2}N(t)}u
\end{equation}
in the expression $P_{\xi(t),\geq 2\epsilon_{3}^{-1/2}N(t)}F(u)$, we obtain a sum of terms where each term consists of three factors, at least one of which is supported on frequencies $\xi$ satisfying $|\xi-\xi(t)|\geq \epsilon_{3}^{-1/2}2^{-2}N(t)$. Then by H\"{o}lder's inequality, Calder\'{o}n-Zygmund theorem, Strichartz estimates, and the interpolation estimate
\begin{equation}
\|f\|_{L_{t}^{9}L_{x}^{18/7}}\leq\|f\|_{L_{t}^{\infty}L_{x}^{2}}^{3/4} \|f\|_{L_{t}^{9/4}L_{x}^{18}}^{1/4},
\end{equation}
it follows that
\begin{equation}
\|P_{\xi(t), \geq 2\epsilon_{3}^{-1/2}N(t)}F(u)\|_{L_{t}^{1}L_{x}^{2}(J^{\alpha}\times\R^{2})} \lesssim \|P_{\xi(t), \geq\epsilon_{3}^{-1/2}2^{-2}N(t)}u\|_{L_{t}^{\infty}L_{x}^{2}(J^{\alpha}\times\R^{2})}^{3/4} \|u\|_{L_{t}^{9/4}L_{x}^{18}(J^{\alpha}\times\R^{2})}^{9/4} \lesssim_{u} \epsilon_{2}^{3/4},
\end{equation}
where we use \eqref{eq:sp_frq_loc} to obtain the ultimate line.

To conclude the proof, we now use 20 applications of lemma \ref{lem:chp_ie}.
\end{proof}

\subsection{Step 2: Linear contribution estimate}\label{ssec:LTSE_S2}
Unpackaging the definitions of the $\tilde{X}_{k_{0}}$ and $\tilde{Y}_{k_{0}}$ norms, it remains for us to show that there exists a constant $C'(u)>0$, possibly larger by a fixed absolute factor than the constant $C(u)$ obtained in lemma \ref{lem:LTSE_base}, such that
\begin{equation}
\begin{split}
&\sum_{20\leq i<j} 2^{i-j}\sum_{G_{\alpha}^{i}\subset G_{\kappa}^{j}} \|P_{\xi(G_{\alpha}^{i}), i-2\leq \cdot\leq i+2}u\|_{U_{\Delta}^{2}(G_{\alpha}^{i}\times\R^{2})}^{2} +\sum_{i\geq j} \|P_{\xi(G_{\kappa}^{j}),i-2\leq\cdot\leq i+2}u\|_{U_{\Delta}^{2}(G_{\kappa}^{j}\times\R^{2})}^{2} \leq C'(u)
\end{split}
\end{equation}
and
\begin{equation}
\begin{split}
&\sum_{20\leq i<j} 2^{i-j}\sum_{G_{\alpha}^{i}\subset G_{\kappa}^{j}; N(G_{\alpha}^{i}) \leq \epsilon_{3}^{1/2}2^{i-5}} \|P_{\xi(G_{\alpha}^{i}), i-2\leq\cdot i+2}u\|_{U_{\Delta}^{2}(G_{\alpha}^{i}\times\R^{2})}^{2} \\
&\phantom{=}\qquad +\sum_{i\geq j; N(G_{\kappa}^{j})\leq \epsilon_{3}^{1/2}2^{i-5}} \|P_{\xi(G_{\kappa}^{j}),i-2\leq\cdot\leq i+2}u\|_{U_{\Delta}^{2}(G_{\kappa}^{j}\times\R^{2})}^{2}\\
&\phantom{=} \leq C'(u)\epsilon_{2}^{3/2},
\end{split}
\end{equation}
for all indices $20\leq j\leq k_{0}$ and intervals $G_{\kappa}^{j}\subset [0,T]$. Let $G_{\kappa}^{j}\subset [0,T]$ with $20\leq j\leq k_{0}$. By Duhamel's formula, for any $20\leq i<j$ and $t_{\alpha}^{i}\in G_{\alpha}^{i}\subset G_{\kappa}^{j}$,
\begin{equation}
\begin{split}
\|P_{\xi(G_{\alpha}^{i}),i-2\leq\cdot\leq i+2}u\|_{U_{\Delta}^{2}(G_{\alpha}^{i}\times\R^{2})}^{2} &\lesssim \|P_{\xi(G_{\alpha}^{i}),i-2\leq\cdot\leq i+2}u(t_{\alpha}^{i})\|_{L^{2}(\R^{2})}^{2}\\
&\phantom{=}+\|\int_{t_{\alpha}^{i}}^{t}e^{i(t-\tau)\Delta}P_{\xi(G_{\alpha}^{i}),i-2\leq\cdot\leq i+2}F(u(\tau))d\tau\|_{U_{\Delta}^{2}(G_{\alpha}^{i}\times\R^{2})}^{2}
\end{split}
\end{equation}
and for $t_{\kappa}^{j}\in G_{\kappa}^{j}$ and $i\geq j$,
\begin{equation}
\begin{split}
\|P_{\xi(G_{\kappa}^{j}),i-2\leq\cdot\leq i+2}u\|_{U_{\Delta}^{2}(G_{\kappa}^{j}\times\R^{2})}^{2} &\lesssim \|P_{\xi(G_{\kappa}^{j}),i-2\leq\cdot\leq i+2}u(t_{\kappa}^{j})\|_{L^{2}(\R^{2})}^{2} \\
&\phantom{=}+\|\int_{t_{\kappa}^{j}}^{t}e^{i(t-\tau)\Delta}P_{\xi(G_{\kappa}^{j}),i-2\leq\cdot\leq i+2}F(u(\tau))d\tau\|_{U_{\Delta}^{2}(G_{\kappa}^{j}\times\R^{2})}^{2}.
\end{split}
\end{equation}
For $i<j$, since $u$ belongs to $C_{t}^{0}L_{x}^{2}$ and $G_{\alpha}^{i}$ is compact, we may choose $t_{\alpha}^{i}$ to satisfy
\begin{equation}
\|P_{\xi(G_{\alpha}^{i}), i-2\leq\cdot\leq i+2}u(t_{\alpha}^{i})\|_{L^{2}(\R^{2})} = \inf_{t\in G_{\alpha}^{i}} \|P_{\xi(G_{\alpha}^{i}),i-2\leq\cdot\leq i+2}u(t)\|_{L_{x}^{2}(\R^{2})}.
\end{equation}
We choose $t_{\kappa}^{j}=t(G_{\kappa}^{j})$.

The goal now is to prove the following lemma.

\begin{lemma}[Linear part estimate]\label{lem:lin_bnd} 
The following estimates hold uniformly in $20\leq j\leq k_{0}$ and $G_{\kappa}^{j}\subset [0,T]$:
\begin{equation}
\begin{split}
\sum_{20\leq i<j} 2^{i-j}\sum_{G_{\alpha}^{i}\subset G_{\kappa}^{j}} \|P_{\xi(G_{\alpha}^{i}),i-2\leq\cdot\leq i+2}u(t_{\alpha}^{i})\|_{L^{2}(\R^{2})}^{2}+\sum_{i\geq j} \|P_{\xi(G_{\kappa}^{j}),i-2\leq\cdot\leq i+2}u(t_{\kappa}^{j})\|_{L^{2}(\R^{2})} \lesssim_{u} 1
\end{split}
\end{equation}
and
\begin{equation}
\begin{split}
&\sum_{20\leq i<j} 2^{i-j}\sum_{G_{\alpha}^{i}\subset G_{\kappa}^{j}; N(G_{\alpha}^{i})\leq \epsilon_{3}^{1/2}2^{i-5}} \|P_{\xi(G_{\alpha}^{i}), i-2\leq\cdot\leq i+2}u(t_{\alpha}^{i})\|_{L^{2}(\R^{2})}^{2} + \sum_{i\geq j; N(G_{\kappa}^{j})\leq\epsilon_{3}^{1/2}2^{i-5}} \|P_{\xi(G_{\kappa}^{j}), i-2\leq\cdot\leq i+2}u(t_{\kappa}^{j})\|_{L^{2}(\R^{2})}^{2} \\
&\phantom{=} \lesssim_{u} \epsilon_{2}^{2}.
\end{split}
\end{equation}
\end{lemma}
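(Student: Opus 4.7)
The plan is to combine the minimizing choice of $t_\alpha^i$ and $t_\kappa^j$ with the almost orthogonality of Littlewood--Paley projectors at a single time slice, together with the frequency localization \eqref{eq:sp_frq_loc} for the second estimate.

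The tail $\sum_{i\geq j}$ is the easier half: every projector $P_{\xi(G_\kappa^j),i-2\leq\cdot\leq i+2}$ is evaluated at the common time $t_\kappa^j$ and has Fourier support in an annulus of width $\sim 2^i$ about $\xi(G_\kappa^j)$, so Plancherel yields $\sum_{i\geq j}\|P_{\xi(G_\kappa^j),i-2\leq\cdot\leq i+2}u(t_\kappa^j)\|_{L^2}^2\lesssim\|u(t_\kappa^j)\|_{L^2}^2=M(u)$. Under the constraint $N(G_\kappa^j)\leq\epsilon_3^{1/2}2^{i-5}$, admissibility forces $N(t_\kappa^j)\sim_u N(G_\kappa^j)$ and $|\xi(t_\kappa^j)-\xi(G_\kappa^j)|\ll 2^i$, placing each annulus inside the far-frequency region $\{|\xi-\xi(t_\kappa^j)|\geq 2^{-20}\epsilon_3^{-1/4}N(t_\kappa^j)\}$; \eqref{eq:sp_frq_loc} then bounds the total mass there by $\epsilon_2^2$.

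For the main block $\sum_{20\leq i<j}$, I first fix any choice of sample points $t_\beta\in J^\beta$, one for each $J^\beta\subset[0,T]$. The minimality of $t_\alpha^i$ on $G_\alpha^i$ gives $\|P_{\xi(G_\alpha^i),i-2\leq\cdot\leq i+2}u(t_\alpha^i)\|_{L^2}^2\leq \|P_{\xi(G_\alpha^i),i-2\leq\cdot\leq i+2}u(t_\beta)\|_{L^2}^2$ for every $J^\beta\subset G_\alpha^i$, so averaging over the $2^i$ such subintervals, summing in $G_\alpha^i\subset G_\kappa^j$, and interchanging orders produces
\begin{equation*}
\sum_{20\leq i<j}2^{i-j}\sum_{G_\alpha^i\subset G_\kappa^j}\|P_{\xi(G_\alpha^i),i-2\leq\cdot\leq i+2}u(t_\alpha^i)\|_{L^2}^2 \leq 2^{-j}\sum_{J^\beta\subset G_\kappa^j}\sum_{20\leq i<j}\|P_{\xi(G_{\alpha(\beta,i)}^i),i-2\leq\cdot\leq i+2}u(t_\beta)\|_{L^2}^2,
\end{equation*}
where $\alpha(\beta,i)$ denotes the unique index with $J^\beta\subset G_{\alpha(\beta,i)}^i$.

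For fixed $\beta$, property \ref{item:adm_tup_1} of definition \ref{def:adm_tup} yields $|\xi(t_\beta)-\xi(G_{\alpha(\beta,i)}^i)|\lesssim \epsilon_1^{-1/2}\int_{G_{\alpha(\beta,i)}^i}N^3\,dt\ll 2^i$, so each projector's Fourier support lies in a slightly fattened annulus of radius $\sim 2^i$ about $\xi(t_\beta)$. Almost orthogonality in $i$ at the fixed time $t_\beta$ then gives $\sum_{20\leq i<j}\|P_{\xi(G_{\alpha(\beta,i)}^i),i-2\leq\cdot\leq i+2}u(t_\beta)\|_{L^2}^2\lesssim M(u)$, and summing over the $2^j$ intervals $J^\beta\subset G_\kappa^j$ cancels the $2^{-j}$ prefactor, closing the first estimate. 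For the second estimate, the hypothesis $N(G_\alpha^i)\leq\epsilon_3^{1/2}2^{i-5}$ combined with $N(t_\beta)\sim_u N(G_{\alpha(\beta,i)}^i)$ pushes every frequency in the projector annulus into the far region of \eqref{eq:sp_frq_loc}; almost orthogonality collapses the $i$-sum at fixed $t_\beta$ to the single tail mass $\int_{|\xi-\xi(t_\beta)|\geq 2^{-20}\epsilon_3^{-1/4}N(t_\beta)}|\hat u(t_\beta,\xi)|^2\,d\xi\leq\epsilon_2^2$. The only mildly delicate point is tracking the mismatch between the fixed centers $\xi(G_{\alpha(\beta,i)}^i)$ and the time-dependent $\xi(t_\beta)$ uniformly as $i$ varies, but the displacement bound above shows this mismatch is negligible on the scale $2^i$ of each annulus, so no almost orthogonality is lost.
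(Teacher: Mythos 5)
Your proposal is correct and follows essentially the same route as the paper's proof: minimality of $t_\alpha^i$ to replace the sampled time by an average, the displacement bound $|\xi(t)-\xi(G_\alpha^i)|\ll 2^i$ from admissibility, almost orthogonality of the resulting $\xi$-centered annuli at a fixed time, and \eqref{eq:sp_frq_loc} for the $\epsilon_2^2$ bound; the only cosmetic difference is that you convert the weight $2^{i-j}$ by averaging over the $2^i$ sample points $t_\beta\in J^\beta$, whereas the paper integrates against the measure $(N(t)^3+\epsilon_3\|u(t)\|_{L_x^4}^4)\,dt$, whose total mass on $G_\alpha^i$ is $2^{i+1}\epsilon_3$. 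One small caveat: the claims $N(t_\beta)\sim_u N(G_{\alpha(\beta,i)}^i)$ and $N(t_\kappa^j)\sim_u N(G_\kappa^j)$ are not two-sided comparabilities over such long intervals, but only the one-sided bound $N(t)\lesssim \epsilon_3^{1/2}2^{i-5}$ is needed, and that follows from the derivative bound $|N'(t)|\leq 2^{-20}\epsilon_1^{-1/2}N(t)^3$ exactly as in the paper.
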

\begin{proof}
We first show the first assertion of the lemma. By the definition of $t_{\alpha}^{i}$ and almost orthogonality of the Littlewood-Paley projectors, we have that
\begin{align}
&\sum_{20\leq i<j}2^{i-j}\sum_{G_{\alpha}^{i}\subset G_{\kappa}^{j}}\|P_{\xi(G_{\alpha}^{i}), i-2\leq \cdot\leq i+2}u(t_{\alpha}^{i})\|_{L^{2}(\R^{2})}^{2} \nonumber\\
&\phantom{=}=\sum_{20\leq i<j}2^{-j-1}\epsilon_{3}^{-1}\sum_{G_{\alpha}^{i}\subset G_{\kappa}^{j}}\int_{G_{\alpha}^{i}}\paren*{N(t)^{3}+\epsilon_{3}\|u(t)\|_{L_{x}^{4}(\R^{2})}^{4}}\|P_{\xi(G_{\alpha}^{i}), i-2\leq \cdot\leq i+2}u(t_{\alpha}^{i})\|_{L^{2}(\R^{2})}^{2}dt \nonumber\\
&\phantom{=}\lesssim\sum_{20\leq i<j}2^{-j}\epsilon_{3}^{-1}\sum_{G_{\alpha}^{i}\subset G_{\kappa}^{j}}\int_{G_{\alpha}^{i}}\paren*{N(t)^{3}+\epsilon_{3}\|u(t)\|_{L_{x}^{4}(\R^{2})}^{4}}\|P_{\xi(t), i-4\leq \cdot\leq i+4}u(t)\|_{L_{x}^{2}(\R^{2})}^{2}dt \nonumber\\
&\phantom{=}=\sum_{20\leq i<j}2^{-j}\epsilon_{3}^{-1}\int_{G_{\kappa}^{j}}\paren*{N(t)^{3}+\epsilon_{3}\|u(t)\|_{L_{x}^{4}(\R^{2})}^{4}}\|P_{\xi(t), i-4\leq \cdot\leq i+4}u(t)\|_{L_{x}^{2}(\R^{2})}^{2}dt \nonumber\\
&\phantom{=}\lesssim 2^{-j}\epsilon_{3}^{-1}\int_{G_{\kappa}^{j}}\paren*{N(t)^{3}+\epsilon_{3}\|u(t)\|_{L_{x}^{4}(\R^{2})}^{4}}\|u\|_{L_{t}^{\infty}L_{x}^{2}}^{2}dt \nonumber\\
&\phantom{=}\lesssim_{u} 1.
\end{align}
Now by Plancherel's theorem and almost orthogonality,
\begin{equation}
\sum_{i\geq j} \|P_{\xi(G_{\kappa}^{j}), i-2\leq\cdot\leq i+2}u(t_{\kappa}^{j})\|_{L^{2}(\R^{2})}^{2} \lesssim \|u(t_{\kappa}^{j})\|_{L^{2}(\R^{2})}^{2} \lesssim_{u} 1,
\end{equation}
which completes the proof of the first assertion of the lemma.

We now show the second assertion of the lemma. We use Plancherel's theorem to obtain
\begin{align}
&\sum_{20\leq i<j}2^{i-j}\sum_{G_{\alpha}^{i} \subset G_{\kappa}^{j}; N(G_{\alpha}^{i}) \leq \epsilon_{3}^{1/2}2^{i-5}} \|P_{\xi(G_{\alpha}^{i}),i-2\leq\cdot\leq i+2}u(t_{\alpha}^{i})\|_{L^{2}(\R^{2})}^{2} \nonumber\\
&\phantom{=}\lesssim\sum_{20\leq i<j}\epsilon_{3}^{-1}2^{-j}\sum_{G_{\alpha}^{i} \subset G_{\kappa}^{j}; N(G_{\alpha}^{i}) \leq \epsilon_{3}^{1/2}2^{i-5}} \int_{G_{\alpha}^{i}}\paren*{N(t)^{3}+\epsilon_{3}\|u(t)\|_{L_{x}^{4}(\R^{2})}^{4}} \|P_{\xi(t), i-4\leq\cdot\leq i+4}u(t)\|_{L_{x}^{2}(\R^{2})}^{2}dt. \label{eq:lin_bnd_f}
\end{align}
For each $20\leq i<j$ fixed and $G_{\alpha}^{i}\subset G_{\kappa}^{j}$, the condition $N(G_{\alpha}^{i})\leq \epsilon_{3}^{1/2}2^{i-5}$ implies that $N(t)\leq\epsilon_{3}^{1/2}2^{i-5}$ for all $t\in G_{\alpha}^{i}$. Since for fixed $i$, the sets $G_{\alpha}^{i}$ are disjoint (ignoring a measure zero overlap), we have the inclusion
\begin{equation}
\bigcup_{G_{\alpha}^{i}\subset G_{\kappa}^{j}: N(G_{\alpha}^{i})\leq \epsilon^{1/2}2^{i-5}} G_{\alpha}^{i} \subset \{t\in G_{\kappa}^{j}:N(t)\leq \epsilon_{3}^{1/2}2^{i-5}\}.
\end{equation}
Hence,
\begin{align}
\eqref{eq:lin_bnd_f} &= \sum_{20\leq i<j} \epsilon_{3}^{-1}2^{-j}\int_{\{t'\in G_{\kappa}^{j}: N(t')\leq \epsilon_{3}^{1/2}2^{i-5}\}} \paren*{N(t)^{3}+\epsilon_{3}\|u(t)\|_{L_{x}^{4}(\R^{2})}^{4}} \|P_{\xi(t),i-4\leq \cdot\leq i+4}u(t)\|_{L_{x}^{2}(\R^{2})}^{2}dt \nonumber\\
&\lesssim \epsilon_{3}^{-1}2^{-j}\int_{G_{\kappa}^{j}}\paren*{N(t)^{3}+\epsilon_{3}\|u(t)\|_{L_{x}^{4}(\R^{2})}^{4}} \|P_{\xi(t), \geq \epsilon_{3}^{-1/2}N(t)}u(t)\|_{L_{x}^{2}(\R^{2})}^{2}dt,
\end{align}
where the ultimate inequality follows from the Fubini-Tonelli theorem, followed by Plancherel's theorem. Using the frequency localization property \eqref{eq:sp_frq_loc}, we conclude that
\begin{equation}
\epsilon_{3}^{-1}2^{-j}\int_{G_{\kappa}^{j}}\paren*{N(t)^{3}+\epsilon_{3}\|u(t)\|_{L_{x}^{4}(\R^{2})}^{4}} \|P_{\xi(t), \geq \epsilon_{3}^{-1/2}N(t)}u(t)\|_{L_{x}^{2}(\R^{2})}^{2}dt \leq \epsilon_{2}^{2}.
\end{equation}
Lastly, by almost orthogonality and \eqref{eq:sp_frq_loc},
\begin{equation}
\sum_{i\geq j; N(G_{\kappa}^{j})\leq \epsilon_{3}^{1/2}2^{i-5}} \|P_{\xi(G_{\kappa}^{j}), i-2\leq\cdot\leq i+2}u(t_{\kappa}^{j})\|_{L^{2}(\R^{2})}^{2} \lesssim \sup_{t\geq 0}\|P_{\xi(t), \geq\epsilon_{3}^{-1/2}N(t)}u(t)\|_{L_{x}^{2}(\R^{2})}^{2} \leq \epsilon_{2}^{2},
\end{equation}
completing the proof of the second assertion of the lemma.
\end{proof}

\subsection{Step 3: Near frequency nonlinear estimate}\label{ssec:LTSE_S3}
Applying lemma \ref{lem:lin_bnd}, we have shown that
\begin{equation}\label{eq:Xlin_red}
\begin{split}
\|u\|_{X(G_{\kappa}^{j}\times\R^{2})}^{2} &\lesssim_{u} 1 + \sum_{20\leq i <j}2^{i-j}\sum_{G_{\alpha}^{i}\subset G_{\kappa}^{j}}\|\int_{t_{\alpha}^{i}}^{t}e^{i(t-\tau)\Delta}P_{\xi(G_{\alpha}^{i}), i-2\leq\cdot\leq i+2}F(u(\tau))d\tau\|_{U_{\Delta}^{2}(G_{\alpha}^{i}\times\R^{2})}^{2}\\
&\phantom{=} + \sum_{i\geq j}\|\int_{t_{\kappa}^{j}}^{t}e^{i(t-\tau)\Delta}P_{\xi(G_{\kappa}^{j}), i-2\leq\cdot\leq i+2}F(u(\tau))d\tau\|_{U_{\Delta}^{2}(G_{\kappa}^{j}\times\R^{2})}^{2}
\end{split}
\end{equation}
and
\begin{equation}\label{eq:Ylin_red}
\begin{split}
\|u\|_{Y(G_{\kappa}^{j}\times\R^{2})}^{2} &\lesssim_{u} \epsilon_{2}^{3/2} + \sum_{20\leq i <j}\sum_{G_{\alpha}^{i}\subset G_{\kappa}^{j}; N(G_{\alpha}^{i})\leq\epsilon_{3}^{1/2}2^{i-5}}\|\int_{t_{\alpha}^{i}}^{t}e^{i(t-\tau)\Delta}P_{\xi(G_{\alpha}^{i}), i-2\leq\cdot\leq i+2}F(u(\tau))d\tau\|_{U_{\Delta}^{2}(G_{\alpha}^{i}\times\R^{2})}^{2}\\
&\phantom{=}+\sum_{i\geq j ; N(G_{\kappa}^{j})\leq \epsilon_{3}^{1/2}2^{i-5}} \|\int_{t_{\kappa}^{j}}^{t}e^{i(t-\tau)\Delta}P_{\xi(G_{\kappa}^{j}), i-2\leq\cdot\leq i+2}F(u(\tau))d\tau\|_{U_{\Delta}^{2}(G_{\kappa}^{j}\times\R^{2})}^{2},
\end{split}
\end{equation}
so it remains for us to the estimate the nonlinear contributions in the RHSs of both estimates. We first observe that the range of summation over the intervals $G_{\alpha}^{i}\subset G_{\kappa}^{j}$ in \eqref{eq:Xlin_red} does not match the range of summation over the intervals $G_{\alpha}^{i}\subset G_{\kappa}^{j}$ in \eqref{eq:Ylin_red}. The former includes intervals $G_{\alpha}^{i}$ on which the frequency scale function $N(t)$ is much larger than $\epsilon_{3}^{1/2}2^{i}$. Since we ultimately want to use the $\tilde{Y}_{k}$ norms to close the inductive estimate for $\|u\|_{\tilde{X}_{k+1}([0,T]\times\R^{2})}$, we want to match the $G_{\alpha}^{i}$ range of summation between \eqref{eq:Xlin_red} and \eqref{eq:Ylin_red}. Doing so requires us to estimate the nonlinear part of the solution at frequency scales smaller than $\epsilon_{3}^{-1/2 }N(G_{\alpha}^{i})$ and $\epsilon_{3}^{-1/2}N(G_{\kappa}^{j})$. Presumably by \eqref{eq:sp_frq_loc}, $\hat{u}$ is mass concentrated on this region, and therefore each term
\begin{align}
&\|\int_{t_{\alpha}^{i}}^{t}e^{i(t-\tau)\Delta}P_{\xi(G_{\alpha}^{i}), i-2\leq\cdot\leq i+2}F(u(\tau))d\tau\|_{U_{\Delta}^{2}(G_{\alpha}^{i}\times\R^{2})}^{2}\\
&\|\int_{t_{\kappa}^{j}}^{t}e^{i(t-\tau)\Delta}P_{\xi(G_{\kappa}^{j}),i-2\leq\cdot\leq i+2}F(u(\tau))d\tau\|_{U_{\Delta}^{2}(G_{\kappa}^{j}\times\R^{2})}^{2}
\end{align}
should be ``large" over these intervals. Therefore if we have any hope of proving theorem \ref{thm:LTSE}, we should show that there cannot be too many such terms, so that the total contribution $\lesssim 1$. Thus, the goal of this subsection is prove the following lemma.

\begin{lemma}[Near/Intermediate frequency estimate]\label{lem:LTSE_NI_est} 
The following inequalities hold uniformly in $20\leq j\leq k_{0}$ and $G_{\kappa}^{j}\subset [0,T]$:
\begin{equation}\label{eq:LTSE_NI_1}
\begin{split}
&\sum_{20\leq i< j} 2^{i-j}\sum_{G_{\alpha}^{i}\subset G_{\kappa}^{j}; N(G_{\alpha}^{i}) >\epsilon_{3}^{1/2} 2^{i-10}} \|\int_{t_{\alpha}^{i}}^{t}e^{i(t-\tau)\Delta} P_{\xi(G_{\alpha}^{i}),i-2\leq\cdot\leq i+2}F(u(\tau))d\tau \|_{U_{\Delta}^{2}(G_{\alpha}^{i}\times\R^{2})}^{2} \\
&\phantom{=}\qquad +\sum_{i\geq j; N(G_{\kappa}^{j})> \epsilon_{3}^{1/2}2^{i-10}} \|\int_{t_{\kappa}^{j}}^{t}e^{i(t-\tau)\Delta}P_{\xi(G_{\kappa}^{j}),i-2\leq\cdot\leq i+2}F(u(\tau))d\tau\|_{U_{\Delta}^{2}(G_{\kappa}^{j}\times\R^{2})}^{2} \\
&\phantom{=} \lesssim_{u} 1
\end{split}
\end{equation}
and
\begin{equation}\label{eq:LTSE_NI_2}
\begin{split}
&\sum_{20\leq i <j} 2^{i-j}\sum_{G_{\alpha}^{i}\subset G_{\kappa}^{j}; \epsilon_{3}^{1/2}2^{i-10}<N(G_{\alpha}^{i})\leq \epsilon_{3}^{1/2}2^{i-5}} \|\int_{t_{\alpha}^{i}}^{t}e^{i(t-\tau)\Delta}P_{\xi(G_{\alpha}^{i}),i-2\leq\cdot\leq i+2}F(u(\tau))d\tau\|_{U_{\Delta}^{2}(G_{\alpha}^{i}\times\R^{2})}^{2} \\
&\phantom{=}\qquad +\sum_{i\geq j; \epsilon_{3}^{1/2}2^{i-10}<N(G_{\kappa}^{j}) \leq\epsilon_{3}^{1/2}2^{i-5}} \|\int_{t_{\kappa}^{j}}^{t}e^{i(t-\tau)\Delta}P_{\xi(G_{\kappa}^{j}),i-2\leq\cdot\leq i+2}F(u(\tau))d\tau\|_{U_{\Delta}^{2}(G_{\kappa}^{j}\times\R^{2})}^{2} \\
&\lesssim_{u} \epsilon_{2}^{3/2}.
\end{split}
\end{equation}
\end{lemma}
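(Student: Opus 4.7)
The plan is to apply Duhamel's formula on each $G_{\alpha}^i$ (respectively, $G_{\kappa}^j$) together with the $U_\Delta^2$--$V_\Delta^2$ duality (Propositions \ref{prop:V_dual} and \ref{prop:U2_duh}), which converts the $U_\Delta^2$ norm of each Duhamel integral into a Strichartz-dual norm (for instance $L_{t,x}^{4/3}$) of the frequency-localized nonlinearity $P_{\xi(\cdot),\,i-2\leq\cdot\leq i+2}F(u)$. The nonlocal operator $\E$ appearing in $F(u)$ does not cause any trouble at this stage, since by Calder\'on--Zygmund theory (Proposition \ref{prop:CZ}) it is bounded on every $L^p(\R^2)$ with $1<p<\infty$; thus $F(u)$ can effectively be treated as the local cubic $|u|^2 u$ for Strichartz bookkeeping. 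The two bounds \eqref{eq:LTSE_NI_1} and \eqref{eq:LTSE_NI_2} then require two quite different mechanisms.

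The argument for \eqref{eq:LTSE_NI_1} rests on \emph{interval sparsity}. The hypothesis $N(G_\alpha^i)>\epsilon_3^{1/2}2^{i-10}$ combined with admissibility condition \ref{item:adm_tup_1} (which, via the fundamental theorem of calculus applied to $N'$, yields $N(t)\sim N(G_\alpha^i)$ on all of $G_\alpha^i$ once $\epsilon_3$ is sufficiently small in terms of $\epsilon_1$) forces $\int_{G_\alpha^i}N(t)^3dt\gtrsim \epsilon_3 2^i$. Pigeonholing against the integral identity $\int_{G_\kappa^j}N(t)^3dt\leq 2^{j+1}\epsilon_3$ then bounds the number of such near intervals by $\lesssim_u 2^{j-i}$, while the a priori bound $N(t)\leq 1$ caps the relevant range of $i$ by $i\lesssim \log_2(\epsilon_3^{-1/2})$. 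Applying Proposition \ref{prop:U2_duh} after partitioning each $G_\alpha^i$ into small intervals on which $\|u\|_{L_{t,x}^4}=1$ furnishes a per-interval bound that is $\lesssim_u 1$ via the dual Strichartz pair $(4,4)$, H\"older, and mass conservation. Combining the weight $2^{i-j}$, the count $\lesssim 2^{j-i}$, and the bounded range of $i$ gives the claimed $O(1)$ estimate; the $i\geq j$ summand is handled analogously via the same sparsity mechanism restricted to indices $i$ satisfying $N(G_\kappa^j)>\epsilon_3^{1/2}2^{i-10}$.

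For \eqref{eq:LTSE_NI_2}, the smallness comes instead from the \emph{frequency localization property} \eqref{eq:sp_frq_loc}. The hypothesis $\epsilon_3^{1/2}2^{i-10}<N(G_\alpha^i)\leq\epsilon_3^{1/2}2^{i-5}$ places the projector scale $2^i$ at $\sim\epsilon_3^{-1/2}N(t)$, a scale at which \eqref{eq:sp_frq_loc} guarantees that the tail mass of $\hat u(t)$ is at most $\epsilon_2$. Performing a near--far frequency decomposition $u=P_{\xi(t),<\epsilon_3^{-1/2}2^{-2}N(t)}u+P_{\xi(t),\geq\epsilon_3^{-1/2}2^{-2}N(t)}u$ in each factor of the trilinear expression $P_{\xi(G_\alpha^i),\,i-2\leq\cdot\leq i+2}F(u)$, and noting that any term whose output frequency lies in the band $\sim 2^i$ must contain at least one ``far'' factor (since three low-frequency inputs to the cubic nonlinearity, even after composition with the zeroth-order operator $\E$, cannot assemble such a large output frequency), one extracts an $\epsilon_2$-small factor. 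Reusing the Strichartz interpolation $\|f\|_{L_t^9 L_x^{18/7}}\leq\|f\|_{L_t^\infty L_x^2}^{3/4}\|f\|_{L_t^{9/4}L_x^{18}}^{1/4}$ from the proof of Lemma \ref{lem:LTSE_base} distributes this small mass to yield a per-interval gain of $\epsilon_2^{3/4}$, hence $\epsilon_2^{3/2}$ upon squaring and summing against the interval count from the previous paragraph.

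The principal technical obstacle is the bookkeeping required to ensure that the near--far frequency decomposition is compatible with the nonlocal operator $\E$ inside the cubic nonlinearity; this is resolved cleanly by the $L^p$-boundedness of $\E$, which allows H\"older distribution of norms without loss of frequency information. It is worth emphasizing that this is precisely the regime in which classical linear Strichartz estimates suffice: the double frequency decomposition and the new bilinear Strichartz estimates of Step 4 become necessary only for the far-frequency nonlinear contribution to \eqref{eq:Xlin_red} and \eqref{eq:Ylin_red}, where the frequency of the nonlinearity lies well below the scale $\epsilon_3^{-1/2}N(t)$ and hence can no longer be absorbed by sparsity or by \eqref{eq:sp_frq_loc} alone.
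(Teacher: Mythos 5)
Your high-level reductions are fine (Duhamel plus $U_\Delta^2$--$V_\Delta^2$ duality, harmlessness of $\E$ via Calder\'on--Zygmund, and frequency localization \eqref{eq:sp_frq_loc} as the source of $\epsilon_2$-smallness in the intermediate regime, which is indeed how the paper treats the two boundary small intervals $J_1\cup J_2$), but the counting that is supposed to close the sums over $i$ and over intervals has a genuine gap. Your bound ``the number of near intervals $G_\alpha^i\subset G_\kappa^j$ is $\lesssim_u 2^{j-i}$'' extracts no smallness at all: $G_\kappa^j$ contains exactly $2^{j-i}$ subintervals $G_\alpha^i$, and your lower bound $\int_{G_\alpha^i}N(t)^3dt\gtrsim_u\epsilon_3 2^i$ (which is true, but only because the $G_\alpha^i$ are carved out by the combined density $N^3+\epsilon_3\|u\|_{L_x^4}^4$) merely reproduces this trivial count. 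With an $O(1)$ per-interval bound, each fixed $i$ then contributes $2^{i-j}\cdot 2^{j-i}\cdot O(1)=O(1)$, so you must truncate the $i$-range; but the cap ``$i\lesssim\log_2(\epsilon_3^{-1/2})$ from $N(t)\leq 1$'' is not available in the setting of Theorem \ref{thm:LTSE}, because the solution has been rescaled by $\lambda=\epsilon_32^{k_0}/K$ and the rescaled frequency scale is only bounded by $\lambda$, which can greatly exceed $1$ (e.g.\ when $K\ll 2^{k_0}$). Even granting the cap, summing $O(1)$ (resp.\ $O(\epsilon_2^{3/2})$ in \eqref{eq:LTSE_NI_2}) over a range of length $\sim\log\epsilon_3^{-1}$ gives a constant depending on $\epsilon_3$, which is forbidden: the implied constants in Lemma \ref{lem:LTSE_NI_est} must be uniform in $(\epsilon_1,\epsilon_2,\epsilon_3,k_0,[0,T],K)$, since they feed into $\tilde C(u)$ in Step 5 and $\epsilon_2,\epsilon_3$ are chosen \emph{after} $\tilde C(u)$.

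The actual mechanism in the paper is different and is the crux of the lemma: after discarding the two boundary small intervals (handled with \eqref{eq:sp_frq_loc} exactly as you propose), one dualizes on each \emph{small} interval, uses the per-small-interval bound $\|F(u)\|_{L_t^1L_x^2(J_l\times\R^2)}\lesssim 1$, and counts small intervals with $N(J_l)\geq\epsilon_3^{1/2}2^{i-6}$: since $\int_{J_l}N(t)^3dt\sim_u N(J_l)$ and $\int_{G_\kappa^j}N(t)^3dt\lesssim\epsilon_32^{j}$, there are at most $\lesssim_u\epsilon_3^{1/2}2^{j-i}$ of them --- note the extra factor $\epsilon_3^{1/2}$ compared with your count. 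Even this is not summed over $i$ naively; the order of summation is interchanged, and for each fixed $J_l$ the weight $2^{i-j}$ is summed geometrically over the admissible scales $2^i\lesssim\epsilon_3^{-1/2}N(J_l)$, giving a total $\lesssim_u 2^{-j}\epsilon_3^{-1/2}\sum_{J_l}N(J_l)\lesssim\epsilon_3^{1/2}$; for the block $i\geq j$ one first square-sums in $i$ via almost orthogonality of the projections $P_{\xi(G_\kappa^j),i-2\leq\cdot\leq i+2}$ and then squares the small-interval count, getting $\lesssim\epsilon_3$. The same counting (not frequency localization) handles the interior part of the intermediate regime, with $\epsilon_3^{1/2}\leq\epsilon_2^5\ll\epsilon_2^{3/2}$ by admissibility. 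Without the $\epsilon_3^{1/2}$ gain in the count and the interchange of the $i$- and $J_l$-sums (or some substitute, such as a per-interval bound decaying in $i$, which your Strichartz estimate does not provide), the estimates \eqref{eq:LTSE_NI_1} and \eqref{eq:LTSE_NI_2} do not follow from your argument.
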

\begin{proof}
We first prove the estimate \eqref{eq:LTSE_NI_1}. To make precise that there are not too many intervals $G_{\alpha}^{i}\subset G_{\kappa}^{j}$ such that $N(G_{\alpha}^{i}) \gtrsim \epsilon_{3}^{1/2}2^{i}$, it is convenient first to reduce to the case where $G_{\kappa}^{j}$ is a union of small $J_{l}$ intervals (cf. pg. 3462-3463 in \cite{Dodson2016}). This technical simplification allows us to use the discretization
\begin{equation}
\int_{\bigcup J_{l}}N(t)^{3}dt \sim_{u} \sum_{l}N(J_{l}).
\end{equation}
To make the reduction, we first observe that we can write
\begin{equation}
G_{\kappa}^{j} = ((J_{1}\cup J_{2})\cap G_{\kappa}^{j}) \cup \tilde{G}_{\kappa}^{j},
\end{equation}
where $J_{1}, J_{2}$ are the two (possibly empty) small intervals which intersect $G_{\kappa}^{j}$ but are not contained in $G_{\kappa}^{j}$. Hence by proposition \ref{prop:Up_lp},
\begin{equation}
\begin{split}
&\sum_{20\leq i<j} 2^{i-j}\sum_{G_{\alpha}^{i}\subset G_{\kappa}^{j}; N(G_{\alpha}^{i})>\epsilon_{3}^{1/2}2^{i-10}} \|\int_{t_{\alpha}^{i}}^{t}e^{i(t-\tau)\Delta}P_{\xi(G_{\alpha}^{i}), i-2\leq\cdot\leq i+2}F(u(\tau))d\tau\|_{U_{\Delta}^{2}(G_{\alpha}^{i}\times\R^{2})}^{2}\\
&\phantom{=}\lesssim \sum_{20\leq i<j} 2^{i-j}\sum_{G_{\alpha}^{i}\subset G_{\kappa}^{j}; N(G_{\alpha}^{i})>\epsilon_{3}^{1/2}2^{i-10}} \|\int_{t_{\alpha}^{i}}^{t}e^{i(t-\tau)\Delta}P_{\xi(G_{\alpha}^{i}), i-2\leq\cdot\leq i+2}F(u(\tau))d\tau\|_{U_{\Delta}^{2}((J_{1}\cup J_{2})\cap G_{\alpha}^{i}\times\R^{2})}^{2} \\
&\phantom{=}\qquad +\sum_{20\leq i<j} 2^{i-j}\sum_{G_{\alpha}^{i}\subset G_{\kappa}^{j}; N(G_{\alpha}^{i})>\epsilon_{3}^{1/2}2^{i-10}} \|\int_{t_{\alpha}^{i}}^{t}e^{i(t-\tau)\Delta}P_{\xi(G_{\alpha}^{i}), i-2\leq\cdot\leq i+2}F(u(\tau))d\tau\|_{U_{\Delta}^{2}(\tilde{G}_{\alpha}^{i}\times\R^{2})}^{2}.
\end{split}
\end{equation}
By duality, the embedding $\ell_{\alpha}^{1}\subset \ell_{\alpha}^{2}$, H\"{o}lder's inequality, and the Calder\'{o}n-Zygmund theorem,
\begin{align}
&\sum_{20\leq i<j} 2^{i-j}\sum_{G_{\alpha}^{i}\subset G_{\kappa}^{j}} \|\int_{t_{\alpha}^{i}}^{t}e^{i(t-\tau)\Delta} P_{\xi(G_{\alpha}^{i}),i-2\leq\cdot\leq i+2}F(u(\tau))d\tau \|_{U_{\Delta}^{2}(G_{\alpha}^{i}\cap (J_{1}\cup J_{2})\times\R^{2})}^{2} \nonumber\\
&\phantom{=}\lesssim \sum_{20\leq i<j} 2^{i-j}\sum_{G_{\alpha}^{i}\subset G_{\kappa}^{j}} \|F(u)\|_{L_{t}^{1}L_{x}^{2}(G_{\alpha}^{i}\cap (J_{1}\cup J_{2})\times \R^{2})}^{2} \nonumber\\
&\phantom{=}\lesssim \|u\|_{L_{t}^{3}L_{x}^{6}(J_{1}\cup J_{2}\times\R^{2})}^{6} \nonumber\\
&\phantom{=}\lesssim 1.
\end{align}
Similarly,
\begin{equation}
\begin{split}
&\sum_{i\geq j; N(G_{\kappa}^{j})>\epsilon_{3}^{1/2}2^{i-10}} \|\int_{t_{\kappa}^{j}}^{t} e^{i(t-\tau)\Delta} P_{\xi(G_{\kappa}^{j}), i-2\leq\cdot\leq i+2}F(u(\tau))d\tau\|_{U_{\Delta}^{2}(G_{\kappa}^{j}\times\R^{2})}^{2}\\
&\phantom{=}\lesssim \sum_{i\geq j; N(G_{\kappa}^{j})>\epsilon_{3}^{1/2}2^{i-10}} \|\int_{t_{\kappa}^{j}}^{t} e^{i(t-\tau)\Delta}P_{\xi(G_{\kappa}^{j}), i-2\leq\cdot\leq i+2}F(u(\tau))d\tau\|_{U_{\Delta}^{2}((J_{1}\cup J_{2})\cap G_{\kappa}^{j}\times\R^{2})}^{2} \\
&\phantom{=}\qquad +\sum_{i\geq j; N(G_{\kappa}^{j})>\epsilon_{3}^{1/2}2^{i-10}} \|\int_{t_{\kappa}^{j}}^{t} e^{i(t-\tau)\Delta}P_{\xi(G_{\kappa}^{j}), i-2\leq\cdot\leq i+2}F(u(\tau))d\tau\|_{U_{\Delta}^{2}(\tilde{G}_{\kappa}^{j}\times\R^{2})}^{2}.
\end{split}
\end{equation}
Arguing similarly as before together with almost orthogonality, we have that
\begin{align}
\sum_{i\geq j} \|\int_{t_{\kappa}^{j}}^{t}e^{i(t-\tau)\Delta}P_{\xi(G_{\kappa}^{j}),i-2\leq\cdot\leq i+2}F(u(\tau))d\tau\|_{U_{\Delta}^{2}((J_{1}\cup J_{2})\cap G_{\kappa}^{j}\times\R^{2})}^{2} &\lesssim \sum_{i\geq j} \|P_{\xi(G_{\kappa}^{j}), i-2\leq\cdot\leq i+2}F(u)\|_{L_{t}^{1}L_{x}^{2}(J_{1}\cup J_{2}\times\R^{2})}^{2} \nonumber\\
&\lesssim 1.
\end{align}

Now to prove \eqref{eq:LTSE_NI_1}, it suffices to consider, for $i$ fixed, the intervals $G_{\alpha}^{i}\subset G_{\kappa}^{j}$ satisfying $N(G_{\alpha}^{i}) \geq \epsilon_{3}^{1/2}2^{i-5}$ because the contribution of the intervals satisfying $\epsilon_{3}^{1/2}2^{i-10}\leq N(G_{\alpha}^{i}) < \epsilon_{3}^{1/2}2^{i-5}$ is estimated by \eqref{eq:LTSE_NI_2}. By proposition \ref{prop:Up_lp},
\begin{equation}
\|\int_{t_{\alpha}^{i}}^{t}e^{i(t-\tau)\Delta}P_{\xi(G_{\alpha}^{i}),i-2\leq\cdot\leq i+2}F(u(\tau))d\tau\|_{U_{\Delta}^{2}(\tilde{G}_{\alpha}^{i}\times\R^{2})}^{2} \leq \sum_{J_{l}\subset G_{\kappa}^{j}} \|\int_{t_{\alpha}^{i}}^{t}e^{i(t-\tau)\Delta}P_{\xi(G_{\alpha}^{i}),i-2\leq\cdot\leq i+2}F(u(\tau))d\tau\|_{U_{\Delta}^{2}(\tilde{G}_{\alpha}^{i}\cap J_{l}\times\R^{2})}^{2}.
\end{equation}
Now observe that if $N(G_{\alpha}^{i})\geq \epsilon_{3}^{1/2}2^{i-5}$, then by the fundamental theorem of calculus and the estimate $|N'(t)| \leq 2^{-20}\epsilon_{1}^{-1/2}N(t)^{3}$, we have that $N(t)\geq \epsilon_{3}^{1/2}2^{i-6}$ for all $t\in G_{\alpha}^{i}$. Also observe that if $J_{l}\subset G_{\kappa}^{j}$ is a small interval such that $J_{l}\cap G_{\alpha}^{i}\neq \emptyset$, where $G_{\alpha}^{i} \subset G_{\kappa}^{j}$ satisfying $N(G_{\alpha}^{i}) \geq \epsilon_{3}^{1/2}2^{i-5}$, then $N(J_{l})\geq \epsilon_{3}^{1/2}2^{i-6}$. Therefore,
\begin{equation}
\begin{split}
&\sum_{J_{l}\subset G_{\kappa}^{j}} \|\int_{t_{\alpha}^{i}}^{t}e^{i(t-\tau)\Delta}P_{\xi(G_{\alpha}^{i}),i-2\leq\cdot\leq i+2}F(u(\tau))d\tau\|_{U_{\Delta}^{2}(\tilde{G}_{\alpha}^{i}\cap J_{l}\times\R^{2})}^{2} \\
&\phantom{=}= \sum_{J_{l}\subset G_{\kappa}^{j}; N(J_{l})\geq \epsilon_{3}^{1/2}2^{i-6}}\|\int_{t_{\alpha}^{i}}^{t}e^{i(t-\tau)\Delta}P_{\xi(G_{\alpha}^{i}),i-2\leq\cdot\leq i+2}F(u(\tau))d\tau\|_{U_{\Delta}^{2}(\tilde{G}_{\alpha}^{i}\cap J_{l}\times\R^{2})}^{2}.
\end{split}
\end{equation}
By duality and Plancherel's theorem,
\begin{equation}
\|\int_{t_{\alpha}^{i}}^{t}e^{i(t-\tau)\Delta}P_{\xi(G_{\alpha}^{i}),i-2\leq\cdot\leq i+2}F(u(\tau))d\tau\|_{U_{\Delta}^{2}(\tilde{G}_{\alpha}^{i}\cap J_{l}\times\R^{2})}^{2} \leq \|F(u)\|_{L_{t}^{1}L_{x}^{2}(\tilde{G}_{\alpha}^{i}\cap J_{l}\times\R^{2})}^{2}.
\end{equation}
Therefore by the embedding $\ell_{\alpha}^{1}\subset \ell_{\alpha}^{2}$, H\"{o}lder's inequality, followed by Calder\'{o}n-Zygmund theorem together with the fact that $J_{l}$ is small,
\begin{align}
&\sum_{20\leq i<j}2^{i-j}\sum_{G_{\alpha}^{i}\subset G_{\kappa}^{j}; N(G_{\alpha}^{i)}\geq\epsilon_{3}^{1/2}2^{i-5}}\sum_{J_{l}\subset G_{\kappa}^{j}; N(J_{l})\geq\epsilon_{3}^{1/2}2^{i-6}} \|F(u)\|_{L_{t}^{1}L_{x}^{2}(J_{l}\cap\tilde{G}_{\alpha}^{i}\times\R^{2})}^{2} \nonumber\\
&\phantom{=}\leq \sum_{20\leq i<j} 2^{i-j}\sum_{J_{l}\subset G_{\kappa}^{j}; N(J_{l})\geq\epsilon_{3}^{1/2}2^{i-6}}\|F(u)\|_{L_{t}^{1}L_{x}^{2}(J_{l}\times\R^{2})}^{2} \nonumber\\
&\phantom{=}\lesssim \sum_{20\leq i<j} \sum_{J_{l}\subset G_{\kappa}^{j}; N(J_{l})\geq\epsilon_{3}^{1/2}2^{i-6}} 2^{i-j}\nonumber\\
&\phantom{=} \lesssim 2^{-j}\sum_{J_{l}\subset G_{\kappa}^{j}} \epsilon_{3}^{-1/2}N(J_{l})
\end{align}
where the ultimate inequality follows from interchanging the order of summation. Since $N(J)\sim_{u}\int_{J}N(t)^{3}dt$ for all small intervals $J$, we conclude that
\begin{equation}
2^{-j}\sum_{J_{l}\subset G_{\kappa}^{j}} \epsilon_{3}^{-1/2}N(J_{l}) \lesssim_{u} 2^{-j}\epsilon_{3}^{-1/2}\int_{G_{\kappa}^{j}}N(t)^{3}dt \lesssim \epsilon_{3}^{1/2}.
\end{equation}

Again by duality,
\begin{equation}
\|\int_{t_{\kappa}^{j}}^{t}e^{i(t-\tau)\Delta}P_{\xi(G_{\kappa}^{j}),i-2\leq\cdot\leq i+2}F(u(\tau))d\tau\|_{U_{\Delta}^{2}(\tilde{G}_{\kappa}^{j}\times\R^{2})}^{2} \lesssim \|P_{\xi(G_{\kappa}^{j}),i-2\leq\cdot\leq i+2}F(u)\|_{L_{t}^{1}L_{x}^{2}(\tilde{G}_{\kappa}^{j}\times\R^{2})}^{2}.
\end{equation}
Next, observe that if $N(G_{\kappa}^{j}) \geq \epsilon_{3}^{1/2}2^{i-5}$, where $i\geq j$, then $N(t)\geq \epsilon_{3}^{1/2}2^{j-6}$ for all $t\in G_{\kappa}^{j}$. Also observe that if $J_{l}$ is a small interval such that $J_{l}\cap G_{\kappa}^{j}\neq\emptyset$, where $N(G_{\kappa}^{j}) \geq\epsilon_{3}^{1/2}2^{j-5}$, then $N(J_{l})\geq \epsilon_{3}^{1/2}2^{j-6}$. Therefore by Minkowski's and H\"{o}lder's inequalities, Plancherel's and Calder\'{o}n-Zygmund theorems, and the fact that $J_{l}$ is small,
\begin{align}
&\sum_{i\geq j; N(G_{\kappa}^{j})\geq \epsilon_{3}^{1/2}2^{i-5}} \|P_{\xi(G_{\kappa}^{j}), i-2\leq\cdot\leq i+2}F(u)\|_{L_{t}^{1}L_{x}^{2}(\tilde{G}_{\kappa}^{j}\times\R^{2})}^{2} \nonumber\\
&\phantom{=}\lesssim \paren*{\sum_{J_{l}\subset G_{\kappa}^{j}; N(J_{l})\geq \epsilon_{3}^{1/2}2^{j-6}} \int_{J_{l}}\paren*{\sum_{i\geq j; N(G_{\kappa}^{j}) > \epsilon_{3}^{1/2}2^{i-5}}\|P_{\xi(G_{\kappa}^{j}), i-2\leq\cdot\leq i+2}F(u(t))\|_{L_{x}^{2}(\R^{2})}^{2}}^{1/2}dt}^{2} \nonumber\\
&\phantom{=}\lesssim |\{J_{l} : J_{l}\subset G_{\kappa}^{j}; N(J_{l})\geq\epsilon_{3}^{1/2}2^{j-6}\}|^{2}.
\end{align}
Since
\begin{align}
|\{J_{l} : J_{l}\subset G_{\kappa}^{j}; N(J_{l})\geq\epsilon_{3}^{1/2}2^{j-6}\}| &= \sum_{J_{l}\subset G_{\kappa}^{j}; N(J_{l})\geq \epsilon_{3}^{1/2}2^{j-6}} 1 \nonumber\\
&\leq \sum_{J_{l}\subset G_{\kappa}^{j}; N(J_{l})\geq\epsilon_{3}^{1/2}2^{j-6}}\epsilon_{3}^{-1/2}2^{6-j} N(J_{l}) \nonumber\\
&\sim_{u} \epsilon_{3}^{-1/2}2^{-j}\sum_{J_{l}\subset G_{\kappa}^{j}} \int_{J_{l}}N(t)^{3}dt \nonumber\\
&\lesssim \epsilon_{3}^{1/2},
\end{align}
it follows that
\begin{equation}
\sum_{i\geq j; N(G_{\kappa}^{j}) \geq \epsilon_{3}^{1/2}2^{i-5}} \|P_{\xi(G_{\kappa}^{j}), i-2\leq\cdot\leq i+2}F(u)\|_{L_{t}^{1}L_{x}^{2}(\tilde{G}_{\kappa}^{j}\times\R^{2})}^{2} \lesssim \epsilon_{3},
\end{equation}
which completes the proof of the estimate \eqref{eq:LTSE_NI_1}.

We now prove the estimate \eqref{eq:LTSE_NI_2}. As before, we first reduce to the case where $G_{\kappa}^{j}$ is a union of small intervals. With $J_{1}, J_{2}$ defined as above, we have that
\begin{equation}
\begin{split}
&\sum_{20\leq i<j} 2^{i-j}\sum_{G_{\alpha}^{i}\subset G_{\kappa}^{j}; \epsilon_{3}^{1/2}2^{i-10}< N(G_{\alpha}^{i})\leq \epsilon_{3}^{1/2}2^{i-5}} \|\int_{t_{\alpha}^{i}}^{t}e^{i(t-\tau)\Delta} P_{\xi(G_{\alpha}^{i}), i-2\leq\cdot\leq i+2}F(u(\tau))d\tau\|_{U_{\Delta}^{2}(G_{\alpha}^{i}\times\R^{2})}^{2}\\
&\phantom{=}\lesssim \sum_{20\leq i<j} 2^{i-j}\sum_{G_{\alpha}^{i}\subset G_{\kappa}^{j}; \epsilon_{3}^{1/2}2^{i-10}< N(G_{\alpha}^{i})\leq \epsilon_{3}^{1/2}2^{i-5}} \|\int_{t_{\alpha}^{i}}^{t} e^{i(t-\tau)\Delta}P_{\xi(G_{\alpha}^{i}), i-2\leq\cdot\leq i+2}F(u(\tau))d\tau\|_{U_{\Delta}^{2}((J_{1}\cup J_{2})\cap G_{\alpha}^{i}\times\R^{2})}^{2} \\
&\phantom{=}\qquad + \sum_{20\leq i<j} 2^{i-j}\sum_{G_{\alpha}^{i}\subset G_{\kappa}^{j}; \epsilon_{3}^{1/2}2^{i-10}< N(G_{\alpha}^{i})\leq \epsilon_{3}^{1/2}2^{i-5}} \|\int_{t_{\alpha}^{i}}^{t}e^{i(t-\tau)\Delta} P_{\xi(G_{\alpha}^{i}), i-2\leq\cdot\leq i+2}F(u(\tau))d\tau\|_{U_{\Delta}^{2}(\tilde{G}_{\alpha}^{i}\times\R^{2})}^{2}.
\end{split}
\end{equation}
By duality and Strichartz estimates,
\begin{equation}
\|\int_{t_{\alpha}^{i}}^{t}e^{i(t-\tau)\Delta} P_{\xi(G_{\alpha}^{i}), i-2\leq\cdot\leq i+2}F(u(\tau))d\tau\|_{U_{\Delta}^{2}((J_{1}\cup J_{2})\cap G_{\alpha}^{i}\times\R^{2})} \lesssim \|P_{\xi(G_{\alpha}^{i}), i-2\leq\cdot\leq i+2}F(u)\|_{L_{t,x}^{4/3}((J_{1}\cup J_{2})\cap G_{\alpha}^{i}\times\R^{2})}.
\end{equation}
Hence, using the embedding $\ell_{\alpha}^{4/3}\subset \ell_{\alpha}^{2}$, we have that
\begin{align}
&\sum_{20\leq i<j} 2^{i-j}\sum_{G_{\alpha}^{i}\subset G_{\kappa}^{j};N(G_{\alpha}^{i})\leq \epsilon_{3}^{1/2}2^{i-5}} \|P_{\xi(G_{\alpha}^{i}), i-2\leq\cdot\leq i+2}F(u)\|_{L_{t,x}^{4/3}(G_{\alpha}^{i}\cap (J_{1}\cup J_{2})\times\R^{2})}^{2} \nonumber\\
&\phantom{=}\lesssim \sum_{20\leq i<j} 2^{i-j} \paren*{\sum_{G_{\alpha}^{i}\subset G_{\kappa}^{j}; N(G_{\alpha}^{i}) \leq \epsilon_{3}^{1/2}2^{i-5}} \|P_{\xi(G_{\alpha}^{i}), i-2\leq \cdot\leq i+2}F(u)\|_{L_{t,x}^{4/3}(G_{\alpha}^{i}\cap (J_{1}\cup J_{2})\times\R^{2})}^{4/3}}^{3/2} \nonumber\\
&\phantom{=}\lesssim \sum_{20 \leq i<j} 2^{i-j}\|P_{\xi(t), \geq \epsilon_{3}^{-1/2}N(t)}F(u)\|_{L_{t,x}^{4/3}(J_{1}\cup J_{2}\times\R^{2})}^{2} \nonumber\\
&\phantom{=}\lesssim \|P_{\xi(t), \geq \epsilon_{3}^{-1/2}N(t)}F(u)\|_{L_{t,x}^{4/3}(J_{1}\cup J_{2}\times\R^{2})}^{2}.
\end{align}
Now performing a near-far frequency decomposition of $u$ in $P_{\xi(t), \geq \epsilon_{3}^{-1/2}N(t}F(u(t))$, we obtain a sum of terms, each term containing at least one factor with Fourier support in the region $\{|\xi-\xi(t)|\geq 2^{-3}\epsilon_{3}^{-1/2}N(t)\}$. Therefore by H\"{o}lder's inequality and the Calder\'{o}n-Zygmund theorem
\begin{align}
\|P_{\xi(t), \geq \epsilon_{3}^{-1/2}N(t)}F(u)\|_{L_{t,x}^{4/3}(J_{1}\cup J_{2}\times\R^{2})}^{2} &\lesssim \|P_{\xi(t),\geq 2^{-3}\epsilon_{3}^{-1/2}N(t)}u\|_{L_{t}^{\infty}L_{x}^{2}(J_{1}\cup J_{2}\times\R^{2})}^{2} \|v_{1}v_{2}\|_{L_{t}^{4/3}L_{x}^{4}(J_{1}\cup J_{2}\times\R^{2})}^{2} \nonumber\\
&\lesssim \epsilon_{2}^{2}\|u\|_{L_{t}^{8/3}L_{x}^{8}(J_{1}\cup J_{2}\times\R^{2})}^{4} \nonumber\\
&\lesssim \epsilon_{2}^{2},
\end{align}
where we use that $(8/3,8)$ is an admissible pair together with the fact that $J_{1},J_{2}$ are small to obtain the ultimate inequality. Above, we have used the notation $v_{j}$ to denote a Littlewood-Paley projection of $u$ or $\bar{u}$.

Similarly,
\begin{equation}
\begin{split}
&\sum_{i\geq j; \epsilon_{3}^{1/2}2^{i-10}<N(G_{\kappa}^{j})\leq \epsilon_{3}^{1/2}2^{i-5}} \|\int_{t_{\kappa}^{j}}^{t}e^{i(t-\tau)\Delta}P_{\xi(G_{\kappa}^{j}), i-2\leq\cdot\leq i+2}F(u(\tau))d\tau\|_{U_{\Delta}^{2}(G_{\kappa}^{j}\times\R^{2})}^{2} \\
&\phantom{=}\leq \sum_{i\geq j; \epsilon_{3}^{1/2}2^{i-10}< N(G_{\kappa}^{j})\leq \epsilon_{3}^{1/2}2^{i-5}} \|\int_{t_{\kappa}^{j}}^{t}e^{i(t-\tau)\Delta}P_{\xi(G_{\kappa}^{j}), i-2\leq\cdot\leq i+2}F(u(\tau))d\tau\|_{U_{\Delta}^{2}((J_{1}\cup J_{2})\cap G_{\kappa}^{j}\times\R^{2})}^{2} \\
&\phantom{=}\qquad +\sum_{i\geq j; \epsilon_{3}^{1/2}2^{i-10} < N(G_{\kappa}^{j})\leq \epsilon_{3}^{1/2}2^{i-5}} \|\int_{t_{\kappa}^{j}}^{t}e^{i(t-\tau)\Delta}P_{\xi(G_{\kappa}^{j}), i-2\leq\cdot\leq i+2}F(u(\tau))d\tau\|_{U_{\Delta}^{2}(\tilde{G}_{\kappa}^{j}\times\R^{2})}^{2}.
\end{split}
\end{equation}
By duality,
\begin{equation}
\|\int_{t_{\kappa}^{j}}^{t}e^{i(t-\tau)\Delta}P_{\xi(G_{\kappa}^{j}), i-2\leq\cdot\leq i+2}F(u(\tau))d\tau\|_{U_{\Delta}^{2}((J_{1}\cup J_{2})\cap G_{\kappa}^{j}\times\R^{2})} \leq \|P_{\xi(G_{\kappa}^{j}),i-2\leq\cdot\leq i+2}F(u)\|_{L_{t}^{1}L_{x}^{2}((J_{1}\cup J_{2})\cap G_{\kappa}^{j}\times\R^{2})}.
\end{equation}
By Minkowski's inequality, Plancherel's theorem, and almost orthogonality, we have that
\begin{align}
&\sum_{i\geq j; \epsilon_{3}^{1/2}2^{i-10}<N(G_{\kappa}^{j})\leq \epsilon_{3}^{1/2}2^{i-5}} \|P_{\xi(G_{\kappa}^{j}),i-2\leq\cdot\leq i+2}F(u)\|_{L_{t}^{1}L_{x}^{2}((J_{1}\cup J_{2})\cap G_{\kappa}^{j}\times\R^{2})}^{2} \nonumber\\
&\phantom{=}\lesssim \paren*{\int_{(J_{1}\cup J_{2})\cap G_{\kappa}^{j}} \paren*{\sum_{i\geq j; \epsilon_{3}^{1/2}2^{i-10}<N(G_{\kappa}^{j})\leq \epsilon_{3}^{1/2}2^{i-5}} \|P_{\xi(t),i-4\leq\cdot\leq i+4}F(u(t))\|_{L_{x}^{2}(\R^{2})}^{2}}^{1/2}dt}^{2} \nonumber\\
&\phantom{=}\lesssim \|P_{\xi(t), \geq \epsilon_{3}^{-1/2}N(t)}F(u)\|_{L_{t}^{1}L_{x}^{2}((J_{1}\cup J_{2})\cap G_{\kappa}^{j}\times\R^{2}}^{2}.
\end{align}
Now by performing a near-far decomposition of $u$ in the expression $P_{\xi(t), \geq \epsilon_{3}^{-1/2}N(t)}F(u(t))$ and arguing similarly to as above, we obtain the estimate
\begin{align}
\|P_{\xi(t), \geq \epsilon_{3}^{-1/2}N(t)}F(u)\|_{L_{t}^{1}L_{x}^{2}((J_{1}\cup J_{2})\cap G_{\kappa}^{j}\times\R^{2}}^{2} &\lesssim \|P_{\xi(t), \geq 2^{-3}\epsilon_{3}^{-1/2}N(t)}u\|_{L_{t}^{9}L_{x}^{18/7}(J_{1}\cup J_{2}\times\R^{2})}^{2} \|v_{1}\|_{L_{t}^{9/4}L_{x}^{18}(J_{1}\cup J_{2}\times\R^{2})}^{2} \|v_{2}\|_{L_{t}^{9/4}L_{x}^{18}(J_{1}\cup J_{2}\times\R^{2})}^{2} \nonumber\\
&\lesssim \|P_{\xi(t), \geq 2^{-3}\epsilon_{3}^{-1/2}N(t)}u\|_{L_{t}^{9}L_{x}^{18/7}(J_{1}\cup J_{2}\times\R^{2})}^{2}.
\end{align}
So by interpolating between the admissible pairs $(\infty,2)$ and $(9/4,18)$ to get $(9,18/7)$, we obtain that
\begin{align}
\|P_{\xi(t), \geq 2^{-3}\epsilon_{3}^{-1/2}N(t)}u\|_{L_{t}^{9}L_{x}^{18/7}(J_{1}\cup J_{2}\times\R^{2})}^{2} &\leq \|P_{\xi(t),\geq 2^{-3}\epsilon_{3}^{-1/2}N(t)}u\|_{L_{t}^{\infty}L_{x}^{2}(J_{1}\cup J_{2}\times\R^{2})}^{3/2} \|P_{\xi(t), \geq 2^{-3}\epsilon_{3}^{-1/2}N(t)}u\|_{L_{t}^{9/4}L_{x}^{18}(J_{1}\cup J_{2}\times\R^{2})}^{1/2} \nonumber\\
&\lesssim \epsilon_{2}^{3/2},
\end{align}
where the ultimate inequality follows from the frequency localization property \eqref{eq:sp_frq_loc} applied to the first factor and Bernstein's lemma, together with the fact that $J_{1}, J_{2}$ are small, applied to the second factor. Hence, we have shown that
\begin{equation}
\sum_{i\geq j; \epsilon_{3}^{1/2}2^{i-10} < N(G_{\kappa}^{j})\leq \epsilon_{3}^{1/2}2^{i-5}} \|\int_{t_{\kappa}^{j}}^{t}e^{i(t-\tau)\Delta}P_{\xi(G_{\kappa}^{j}), i-2\leq\cdot\leq i+2}F(u(\tau))d\tau\|_{U_{\Delta}^{2}((J_{1}\cup J_{2})\cap G_{\kappa}^{j}\times\R^{2})}^{2} \lesssim \epsilon_{2}^{3/2}.
\end{equation}

Next, we observe that $N(G_{\alpha}^{i}) >\epsilon_{3}^{1/2}2^{i-10}$ implies that $N(t)\geq\epsilon_{3}^{1/2}2^{i-11}$ for all $t\in G_{\alpha}^{i}$, and if $J_{l}\cap G_{\alpha}^{i}\neq\emptyset$, where $G_{\alpha}^{i}\subset G_{\kappa}^{j}$ with $N(G_{\alpha}^{i})>\epsilon_{3}^{1/2}2^{i-10}$, then $N(J_{l})\geq \epsilon_{3}^{1/2}2^{i-11}$. Hence by repeating the argument  above used to obtain the estimate
\begin{equation}
\sum_{20\leq i<j} 2^{i-j}\sum_{G_{\alpha}^{i}\subset G_{\kappa}^{j}; N(G_{\alpha}^{i})\geq \epsilon_{3}^{1/2}2^{i-5}} \|\int_{t_{\alpha}^{i}}^{t}e^{i(t-\tau)\Delta}P_{\xi(G_{\alpha}^{i}), i-2\leq\cdot\leq i+2}F(u(\tau))d\tau\|_{U_{\Delta}^{2}(\tilde{G}_{\alpha}^{i}\times\R^{2})}^{2} \lesssim \epsilon_{3}^{1/2},
\end{equation}
we obtain the estimate
\begin{equation}
\sum_{20\leq i<j} 2^{i-j}\sum_{G_{\alpha}^{i}\subset G_{\kappa}^{j}; \epsilon_{3}^{1/2}2^{i-10}<N(G_{\alpha}^{i})\leq \epsilon_{3}^{1/2}2^{i-5}} \|\int_{t_{\alpha}^{i}}^{t}e^{i(t-\tau)\Delta}P_{\xi(G_{\alpha}^{i}), i-2\leq\cdot\leq i+2}F(u(\tau))d\tau\|_{U_{\Delta}^{2}(\tilde{G}_{\alpha}^{i}\times\R^{2})}^{2} \lesssim \epsilon_{3}^{1/2}.
\end{equation}

Lastly, we observe that $N(G_{\kappa}^{j}) \geq \epsilon_{3}^{1/2}2^{j-10}$ implies that $N(t) \geq \epsilon_{3}^{1/2}2^{j-11}$ for all $t\in G_{\kappa}^{j}$, and if $J_{l}$ is a small interval such that $J_{l}\cap G_{\kappa}^{j}\neq\emptyset$, where $N(G_{\kappa}^{j})\geq \epsilon_{3}^{1/2}2^{j-10}$, then $N(J_{l})\geq \epsilon_{3}^{1/2}2^{j-11}$. Hence by repeating the argument above used to obtain the estimate
\begin{equation}
\sum_{i\geq j; N(G_{\kappa}^{j})\geq \epsilon_{3}^{1/2}2^{i-5}} \|\int_{t_{\kappa}^{j}}^{t}e^{i(t-\tau)\Delta}P_{\xi(G_{\kappa}^{j}),i-2\leq\cdot\leq i+2}F(u(\tau))d\tau\|_{U_{\Delta}^{2}(\tilde{G}_{\kappa}^{j}\times\R^{2})}^{2} \lesssim \epsilon_{3},
\end{equation}
we obtain the estimate
\begin{equation}
\sum_{i\geq j; N(G_{\kappa}^{j})\geq \epsilon_{3}^{1/2}2^{i-10}} \|\int_{t_{\kappa}^{j}}^{t}e^{i(t-\tau)\Delta}P_{\xi(G_{\kappa}^{j}),i-2\leq\cdot\leq i+2}F(u(\tau))d\tau\|_{U_{\Delta}^{2}(\tilde{G}_{\kappa}^{j}\times\R^{2})}^{2} \lesssim \epsilon_{3},
\end{equation}
which completes the proof of the estimate \ref{eq:LTSE_NI_2} and therefore the proof of the lemma.
\end{proof}

\subsection{Step 4: Far frequency nonlinear estimate}\label{ssec:LTSE_S4}
Applying lemma \ref{lem:LTSE_NI_est}, we have shown that
\begin{equation}
\begin{split}
\|u\|_{X(G_{\kappa}^{j}\times\R^{2})}^{2} &\lesssim_{u} 1+\sum_{20\leq i<j}2^{i-j}\sum_{G_{\alpha}^{i}\subset G_{\kappa}^{j}; N(G_{\alpha}^{i})\leq \epsilon_{3}^{1/2}2^{i-10}}\|\int_{t_{\alpha}^{i}}^{t}P_{\xi(G_{\alpha}^{i}), i-2\leq\cdot\leq i+2}F(u(\tau))d\tau\|_{U_{\Delta}^{2}(G_{\alpha}^{i}\times\R^{2})}^{2} \\
&\phantom{=}\qquad + \sum_{i\geq j; N(G_{\kappa}^{j})\leq\epsilon_{3}^{1/2}2^{i-10}}\|\int_{t_{\kappa}^{j}}^{t}e^{i(t-\tau)\Delta}P_{\xi(G_{\kappa}^{j}), i-2\leq\cdot\leq i+2}F(u(\tau))d\tau\|_{U_{\Delta}^{2}(G_{\kappa}^{j}\times\R^{2})}^{2}
\end{split}
\end{equation}
and
\begin{equation}
\begin{split}
\|u\|_{Y(G_{\kappa}^{j}\times\R^{2})}^{2} &\lesssim_{u} \epsilon_{2}^{3/2} + \sum_{20\leq i\leq j}2^{i-j}\sum_{G_{\alpha}^{i}\subset G_{\kappa}^{j}; N(G_{\alpha}^{i})\leq \epsilon_{3}^{1/2}2^{i-10}} \|\int_{t_{\alpha}^{i}}^{t}e^{i(t-\tau)\Delta}P_{\xi(G_{\alpha}^{i}), i-2\leq\cdot\leq i+2}F(u(\tau))d\tau\|_{U_{\Delta}^{2}(G_{\alpha}^{i}\times\R^{2})}^{2} \\
&\phantom{=}\qquad+\sum_{i\geq j; N(G_{\kappa}^{j})\leq \epsilon_{3}^{1/2}2^{i-10}} \|\int_{t_{\kappa}^{j}}^{t}e^{i(t-\tau)\Delta}P_{\xi(G_{\kappa}^{j}), i-2\leq\cdot\leq i+2}F(u(\tau))d\tau\|_{U_{\Delta}^{2}(G_{\kappa}^{j}\times\R^{2})}^{2}
\end{split}
\end{equation}
for all integers $20\leq j\leq k_{0}$ and intervals $G_{\kappa}^{j} \subset [0,T]$. Our aim now is to show using the induction hypothesis that the remaining nonlinear contributions in the RHSs of the inequalities are ``small" as measured by the parameters $\epsilon_{1},\epsilon_{2},\epsilon_{3}$. We should expect this smallness because we have extracted and estimated the piece of the solution localized to the frequency ball $|\xi-\xi(t)|\lesssim N(t)$ and are now considering the pieces of the solution at frequencies much larger than $N(t)$. According to frequency localization property \eqref{eq:sp_frq_loc}, the contributions of these pieces should be small. To make this heuristic rigorous, we use an idea of \cite{Dodson2016}, which is to proceed by a bootstrap argument to close the proof of the inductive step. Thus, the goal of this subsection is to prove the following lemma.

\begin{lemma} [Bootstrap]\label{lem:LTSE_b}
The following estimate holds uniformly in $20\leq j\leq k_{0}$ and $G_{\kappa}^{j}\subset [0,T]$:
\begin{equation}
\begin{split}
&\sum_{0\leq i<j} 2^{i-j}\sum_{G_{\alpha}^{i}\subset G_{\kappa}^{j} ; N(G_{\alpha}^{i})\leq \epsilon_{3}^{1/2}2^{i-10}} \|\int_{t_{\alpha}^{i}}^{t}e^{i(t-\tau)\Delta}P_{\xi(G_{\alpha}^{i}), i-2\leq\cdot\leq i+2}F(u(\tau))d\tau\|_{U_{\Delta}^{2}(G_{\alpha}^{i}\times\R^{2})}^{2}\\
&\phantom{=}\qquad +\sum_{i \geq j; N(G_{\kappa}^{j}) \leq\epsilon_{3}^{1/2}2^{i-10}} \|\int_{t_{\kappa}^{j}}^{t}e^{i(t-\tau)\Delta}P_{\xi(G_{\kappa}^{j}), i-2\leq\cdot\leq i+2}F(u(\tau))d\tau\|_{U_{\Delta}^{2}(G_{\kappa}^{j}\times\R^{2})}^{2} \\
&\phantom{=}\lesssim _{u} \|u\|_{\tilde{Y}_{j}([0,T]\times\R^{2})}^{2}\paren*{\epsilon_{2}\|u\|_{\tilde{X}_{j}([0,T]]\times\R^{2})}^{3} + \epsilon_{2}^{1/3}\|u\|_{\tilde{X}_{j}([0,T]\times\R^{2})}^{5/3} + \paren*{\epsilon_{2}+\|u\|_{\tilde{Y}_{j}([0,T]\times\R^{2})}\paren*{1+\|u\|_{\tilde{X}_{j}([0,T]\times\R^{2})}^{4}}}^{2}}.
\end{split}
\end{equation}
\end{lemma}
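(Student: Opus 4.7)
The plan is to reduce each Duhamel term to a pairing $\int \langle F(u(\tau)), v(\tau)\rangle\, d\tau$ against a test function $v \in V^{2}_{-,rc,\Delta}$ with Fourier support in the annulus $\{|\xi - \xi(G_\alpha^{i})| \sim 2^{i}\}$ (resp.\ $\{|\xi - \xi(G_\kappa^{j})|\sim 2^{i}\}$), via Proposition \ref{prop:U2_duh} applied on a decomposition into small intervals. Almost-orthogonality and the $\ell^{2}$ summability built into the $\tilde X_{j}$ and $\tilde Y_{j}$ norms will allow us to process the sum over $G_\alpha^{i}$ by controlling the pairing on a single interval and taking the square-sum. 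The global $\|u\|_{\tilde Y_{j}}^{2}$ factor on the right-hand side comes from absorbing \emph{two} factors in the resulting quadrilinear pairing into $\tilde Y_{j}$ norms; this is possible precisely because we restrict to intervals with $N(G_\alpha^{i})\leq \epsilon_{3}^{1/2}2^{i-10}$, so that a $P_{\xi(G_\alpha^{i}),i-2\leq\cdot\leq i+2}$ projection of $u$ lives on frequencies which are \emph{far} from $\xi(t)$ relative to $N(t)$, which is exactly what the $Y$-norm measures.

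Second, I would split each of the three $u$ factors in $F(u) = -\mathcal{L}(|u|^{2})u$ into a \emph{near} piece $P_{\xi(t),<i-10}u$ and a \emph{far} piece $P_{\xi(t),\geq i-10}u$. Since the output is projected onto frequencies with $|\xi-\xi(G_\alpha^{i})|\sim 2^{i}$ while $\xi(t)$ varies slowly on $G_\alpha^{i}$ by property \ref{item:adm_tup_1} of admissibility, standard paraproduct bookkeeping forces at least one factor to be far. This gives two model cases: \emph{Case A}, where at least one of the two factors inside $\mathcal{L}$ is far, and \emph{Case B}, where both factors inside $\mathcal{L}$ are near and the external factor is consequently forced to be far.

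Case A is the easier one. We may pull absolute values through $\mathcal{L}$ using its $L^{p}$-boundedness from Proposition \ref{prop:CZ} and pair the two far factors directly with $v$ via the first two improved bilinear Strichartz estimates (Propositions \ref{prop:ibs_1} and \ref{prop:ibs_2}), deploying the sparsification/shifting trick of Lemma \ref{lem:sparse} to move cube projectors between factors of a product when necessary. The $\tilde Y_{j}$ smallness will be extracted from the far factors, which by \eqref{eq:sp_frq_loc} carry small mass under the assumption $N(G_\alpha^{i})\leq \epsilon_{3}^{1/2}2^{i-10}$; this produces the first two summands $\|u\|_{\tilde Y_{j}}^{2}(\epsilon_{2}\|u\|_{\tilde X_{j}}^{3} + \epsilon_{2}^{1/3}\|u\|_{\tilde X_{j}}^{5/3})$ on the right-hand side, the second one arising when one is forced into a subcritical configuration handled by Tao's bilinear restriction estimate (Proposition \ref{prop:br_T}).

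Case B is the main obstacle and is where the double frequency decomposition outlined in Section \ref{ssec:in_out} becomes essential. Since both inside-$\mathcal{L}$ factors are near, we cannot simply take absolute values through $\mathcal{L}$: the kernel decays only like $|x|^{-2}$, which fails to be integrable at infinity, and any such estimate would destroy the needed cancellation. Instead, I would first perform a homogeneous Littlewood-Paley decomposition $\mathcal{L} = \sum_{k} \mathcal{L}_{k}$, where the output frequency constraint forces $2^{k}\lesssim 2^{i-8}$, each piece $\mathcal{L}_{k}$ having a Schwartz-class kernel with $k$-uniform decay $|\mathcal{K}_{k}(x)|\lesssim_{M} 2^{2k}\langle 2^{k}x\rangle^{-M}$. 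For each $k$, I then cube-decompose $|P_{\xi(t),<i-10}u|^{2} = \sum_{a,a'}(\mathcal{P}_{Q_{a}^{k}}P_{\xi(t),<i-10}u)\overline{(\mathcal{P}_{Q_{a'}^{k}}P_{\xi(t),<i-10}u)}$; the Fourier support of $\mathcal{L}_{k}$ enforces the almost-orthogonality condition $\mathrm{dist}(Q_{a}^{k},Q_{a'}^{k})\leq 2^{k+10}$, leaving only $O(1)$ nontrivial cube pairs for each $a$. Applying Minkowski and Cauchy-Schwarz against the $k$-uniform kernel decay then reduces matters to quantities of the form $\|(\mathcal{P}_{Q_{a}^{k}}P_{\xi,<i-10}u)(\tau_{y}P_{\xi(G_\alpha^{i}),\geq i-5}u)\|_{L^{2}_{t,x}}$, which I would bound by the third improved bilinear Strichartz estimate (Proposition \ref{prop:ibs_3}). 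The sum over $k\leq i-8$ converges because each bilinear estimate provides a smallness factor of $(2^{k}/2^{i})^{\delta}$, and the final step is to sum over $i$, extracting the remaining and dominant summand $\|u\|_{\tilde Y_{j}}^{2}(\epsilon_{2}+\|u\|_{\tilde Y_{j}}(1+\|u\|_{\tilde X_{j}}^{4}))^{2}$; the inner $\epsilon_{2}$ reflects the mass tail from \eqref{eq:sp_frq_loc}, while $\|u\|_{\tilde Y_{j}}(1+\|u\|_{\tilde X_{j}}^{4})$ captures the polynomial $\tilde X_{j}$ dependence picked up through the logarithmic $V^{2}_{\Delta}$ upgrades available in Proposition \ref{prop:bs_i}.
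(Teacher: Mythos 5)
Your high-level skeleton (near/far splitting, an easy/hard dichotomy, duality via Proposition \ref{prop:U2_duh}, the double frequency decomposition with the shifting trick of Lemma \ref{lem:sparse}, and the improved bilinear estimates) matches the paper's announced strategy, but as a proof it has genuine gaps. First, your case division is misaligned with the actual structure of the bound. The paper splits by the \emph{total} number of far factors: the Easy terms have two far factors and produce the summands $\epsilon_{2}\|u\|_{\tilde X_{j}}^{3}+\epsilon_{2}^{1/3}\|u\|_{\tilde X_{j}}^{5/3}$, while the Hard terms have exactly one far factor and produce the third summand. Your Case A (``at least one factor inside $\mathcal{L}$ is far'') absorbs the term $P_{\xi(G_{\alpha}^{i}),i-2\leq\cdot\leq i+2}[\E(\Re\{(P_{\xi(t),\geq i-10}u)(\overline{P_{\xi(t),<i-10}u})\})(P_{\xi(G_{\alpha}^{i}),<i-5}u)]$, which has only \emph{one} far factor, so your proposed Case A argument (``pair the two far factors directly with $v$'') literally cannot be run on it, and it cannot be bounded by the first two summands with only one source of $\epsilon_{2}$-smallness; in the paper it is classified as Hard and contributes to the third summand. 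Moreover, for $\E(|P_{\xi(t),\geq i-10}u|^{2})(P_{\xi(G_{\alpha}^{i}),<i-5}u)$ you cannot simply ``pull absolute values through $\mathcal{L}$'' and still extract the structural $\|u\|_{\tilde Y_{j}}^{2}$ factor: since the outside factor is near, the $\tilde Y$-summable piece must come from a Littlewood--Paley block of a far factor \emph{inside} $\E$, which is why the paper pairs $v\overline{(P_{\xi(G_{\alpha}^{i}),<i-5}u)}$ against the band-limited operator $P_{i-4\leq\cdot\leq i+4}\E$ (whose kernel is in $L^{1}$ uniformly) and further decomposes $|P_{\xi(t),>i+10}u|^{2}$ into comparable-frequency blocks. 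Note also that the Easy estimates in the paper use only the classical transferred bilinear estimates of Proposition \ref{prop:bs_i} (not Propositions \ref{prop:ibs_1}--\ref{prop:ibs_2}, and not Tao's restriction estimate), while the improved estimates are reserved for the Hard terms.

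Second, for your Case B (the genuinely hard term $\E(|P_{\xi(t),<i-10}u|^{2})(P_{\xi(G_{\alpha}^{i}),\geq i-5}u)$), the double frequency decomposition plus a single application of Proposition \ref{prop:ibs_3} is not enough, and the claim that the $k$-sum closes because ``each bilinear estimate provides a factor $(2^{k}/2^{i})^{\delta}$'' is not where the smallness actually comes from. The paper must first decompose $\E(|P_{\xi(t),\leq i-10}u|^{2})$ in the frequency $l_{2}$ of the larger near factor, and then split time into small intervals with $N(J_{l})\gtrsim\epsilon_{3}^{1/2}2^{l_{2}}$ versus intervals $G_{\beta}^{l_{2}}$ with $N(G_{\beta}^{l_{2}})\leq\epsilon_{3}^{1/2}2^{l_{2}-5}$; Proposition \ref{prop:U2_duh} then produces four distinct pieces, estimated respectively by cube-localized classical bilinear estimates, by Proposition \ref{prop:ibs_1} (whose $\xi'(t)$ correction term is controlled using $|\xi'(t)|\lesssim\epsilon_{1}^{-1/2}N(t)^{3}$ and $\int_{0}^{T}N(t)^{3}dt=\epsilon_{3}2^{k_{0}}$), by Proposition \ref{prop:ibs_2}, and by Proposition \ref{prop:ibs_3} (whose $\ell^{2}$-in-$l_{2}$, sup-in-$k$ formulation exists precisely to match this $l_{2}$-summation). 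In particular, on the large-$N$ small intervals the smallness comes from counting intervals via $\int N(t)^{3}dt$ (admissibility), and on the $G_{\beta}^{l_{2}}$ intervals it comes from placing the $l_{2}$-blocks into the $\tilde Y$ norm; neither mechanism appears in your sketch, and without them the claimed right-hand side, in particular the precise factor $\paren*{\epsilon_{2}+\|u\|_{\tilde Y_{j}}(1+\|u\|_{\tilde X_{j}}^{4})}^{2}$ and the closing of the $l_{2}$ and interval sums, is not established.
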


We remark that it is at the stage now of proving lemma \ref{lem:LTSE_b} where our proof acquires a substantial new level of difficulty compared to Dodson's work \cite{Dodson2016}. The reason is the reliance on bilinear Strichartz estimates, which are a priori ill-suited to nonlocal nonlinearities such as the eeDS nonlinearity, as they are not permutation-invariant under frequency decomposition.

We now to proceed to the details of proving lemma \ref{lem:LTSE_b}. To avoid distinguishing between the local and nonlocal cases, and since the arguments used to treat the part of the nonlinearity which is $\E(|u|^{2})u$ are strictly more difficult than those needed to treat the part of the nonlinearity which is $\mu |u|^{2}u$, we will assume that $F(u)= \E(|u|^{2})u$ for the remainder of this subsection. We begin by performing a near-far frequency decomposition of $u$ to obtain, for indices $20\leq i<j$,
\begin{equation}\label{eq:LTSE_b_fd}
\begin{split}
P_{\xi(G_{\alpha}^{i}), i-2\leq\cdot\leq i+2}F(u(\tau)) &= P_{\xi(G_{\alpha}^{i}), i-2\leq\cdot\leq i+2}[\E(|P_{\xi(\tau), <i-10}u(\tau)|^{2})(P_{\xi(G_{\alpha}^{i}), < i-5}u(\tau))]\\
&\phantom{=}+ 2P_{\xi(G_{\alpha}^{i}), i-2\leq\cdot\leq i+2}[\E\paren*{\Re{(P_{\xi(\tau), \geq i-10}u(\tau))(\ol{P_{\xi(\tau), <i-10}u(\tau)})}}(P_{\xi(G_{\alpha}^{i}), < i-5}u(\tau))]\\
&\phantom{=}+ P_{\xi(G_{\alpha}^{i}), i-2\leq\cdot\leq i+2}[\E(|P_{\xi(\tau), \geq i-10}u(\tau)|^{2}))(P_{\xi(G_{\alpha}^{i}), <i-5}u(\tau))]\\
&\phantom{=}+ P_{\xi(G_{\alpha}^{i}), i-2\leq\cdot\leq i+2}[\E(|P_{\xi(\tau), < i-10}u(\tau)|^{2})(P_{\xi(G_{\alpha}^{i}), \geq i-5}u(\tau))]\\
&\phantom{=}+ 2P_{\xi(G_{\alpha}^{i}), i-2\leq\cdot\leq i+2}[\E\paren*{\Re{(P_{\xi(\tau), <i-10}u(\tau))(\ol{P_{\xi(\tau), \geq i-10}u})}}(P_{\xi(G_{\alpha}^{i}), \geq i-5}u(\tau))]\\
&\phantom{=}+ P_{\xi(G_{\alpha}^{i}), i-2\leq\cdot\leq i+2}[\E(|P_{\xi(\tau),\geq i-10}u(\tau)|^{2})(P_{\xi(G_{\alpha}^{i}), \geq i-5} u(\tau))]
\end{split}
\end{equation}
with an analogous decomposition for $P_{\xi(G_{\kappa}^{j}), i-2\leq\cdot\leq i+2}F(u(\tau))$, for $i\geq j$. We need to be more careful in our grouping of terms based on the number and ordering of near and far frequency factors than in \cite{Dodson2016}, as the eeDS nonlinearity is not permutation-invariant modulo complex conjugates. The most difficult case occurs when there are two near frequency factors and one far factor in the nonlinear expression, with both near frequency factors occurring inside the argument of the operator $\E$. We split the RHS of \eqref{eq:LTSE_b_fd} into two groups of terms: $\mathrm{Easy}$ and $\mathrm{Hard}$. Terms in $\mathrm{Easy}$ contain two far frequency factors, and we can estimate them with H\"{o}lder's inequality, linear Strichartz estimates, and the frequency localization property \eqref{eq:sp_frq_loc}. Terms in $\mathrm{Hard}$ contain one far frequency factor and two near frequency factors, and we will have to invest significantly more effort, in particular, relying on three improved bilinear Strichartz estimates specific to our setting, in order to estimate them.

For integers $20\leq i\leq j$ and intervals $G_{\alpha}^{i}\subset G_{\kappa}^{j}\subset [0,T]$, define
\begin{align} 
	\mathrm{Easy}_{G_{\alpha}^{i}} &\coloneqq P_{\xi(G_{\alpha}^{i}), i-2\leq\cdot\leq i+2}[\E(|P_{\xi(\tau), >i-10}u|^{2})(P_{\xi(G_{\alpha}^{i}), <i-5}u)] \nonumber\\
	&\phantom{=}+ 2P_{\xi(G_{\alpha}^{i}), i-2\leq\cdot\leq i+2}[\E\paren*{\Re{(P_{\xi(\tau), <i-10}u)(\ol{P_{\xi(\tau), \geq i-10}u})}}(P_{\xi(G_{\alpha}^{i}), \geq i-5}u)] \nonumber\\
	&\phantom{=}+ P_{\xi(G_{\alpha}^{i}), i-2\leq\cdot\leq i+2}[\E(|P_{\xi(\tau),\geq i-10}u|^{2})P_{\xi(G_{\alpha}^{i}), \geq i-5} u] \nonumber\\
	&\eqqcolon \mathrm{Easy}_{G_{\alpha}^{i},1} + \mathrm{Easy}_{G_{\alpha}^{i},2}+\mathrm{Easy}_{G_{\alpha}^{i},3}
\end{align}
and
\begin{align} 
	\mathrm{Hard}_{G_{\alpha}^{i}} &\coloneqq P_{\xi(G_{\alpha}^{i}), i-2\leq\cdot\leq i+2}[\E(|P_{\xi(\tau), <i-10}u|^{2})(P_{\xi(G_{\alpha}^{i}), \geq i-5}u)] \nonumber\\
	&\phantom{=}+ 2P_{\xi(G_{\alpha}^{i}), i-2\leq\cdot\leq i+2}[\E\paren*{\Re{(P_{\xi(\tau), \geq i-10}u)(\ol{P_{\xi(\tau), <i-10}u})}}(P_{\xi(G_{\alpha}^{i}), < i-5}u)]\nonumber\\
	&\eqqcolon \mathrm{Hard}_{G_{\alpha}^{i},1}+\mathrm{Hard}_{G_{\alpha}^{i},2}.
\end{align}
For $i\geq j$, we define $\mathrm{Easy}_{i}$ and $\mathrm{Hard}_{i}$ analogously, replacing $G_{\alpha}^{i}$ with $G_{\kappa}^{j}$ above. We first dispense with the easy case.

\begin{lemma}[Easy estimate]\label{lem:LTSE_b_easy}
The following estimate holds uniformly in $20\leq j\leq k_{0}$ and $G_{\kappa}^{j}\subset [0,T]$:
\begin{equation}
\begin{split}
&\sum_{20\leq i < j}2^{i-j}\sum_{G_{\alpha}^{i}\subset G_{\kappa}^{j}; N(G_{\alpha}^{i})\leq \epsilon_{3}^{1/2}2^{i-5}} \|\int_{t_{\alpha}^{i}}^{t}e^{i(t-\tau)\Delta}\mathrm{Easy}_{G_{\alpha}^{i}}(\tau)d\tau\|_{U_{\Delta}^{2}(G_{\alpha}^{i}\times\R^{2})}^{2} \\
&\phantom{=}\qquad + \sum_{i\geq j; N(G_{\kappa}^{j})\leq \epsilon_{3}^{1/2}2^{i-10}} \|\int_{t_{\kappa}^{j}}^{t}e^{i(t-\tau)\Delta}\mathrm{Easy}_{i}(\tau)d\tau\|_{U_{\Delta}^{2}(G_{\kappa}^{j}\times\R^{2})}^{2}\\
&\phantom{=}\lesssim \paren*{\epsilon_{2}\|u\|_{\tilde{X}_{j}([0,T]\times\R^{2})}^{3} + \epsilon_{2}^{1/3} \|u\|_{\tilde{X}_{j}([0,T]\times\R^{2})}^{5/3} }\|u\|_{\tilde{Y}_{j}([0,T]\times\R^{2})}^{2}.
\end{split}
\end{equation}
\end{lemma}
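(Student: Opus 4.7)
The plan is to combine three ingredients: duality for the $U_\Delta^2$ norm, H\"older's inequality in space and time, and the frequency localization property \eqref{eq:sp_frq_loc}. For each interval $G_\alpha^i$ with $20\leq i<j$, $G_\alpha^i\subset G_\kappa^j$, and $N(G_\alpha^i)\leq \epsilon_3^{1/2}2^{i-5}$, I will apply proposition \ref{prop:V_dual} to reduce $\|\int_{t_\alpha^i}^t e^{i(t-\tau)\Delta}\mathrm{Easy}_{G_\alpha^i}(\tau)d\tau\|_{U_\Delta^2(G_\alpha^i\times\R^2)}$ to the supremum, over unit-norm $v\in V_{-,rc,\Delta}^2(G_\alpha^i\times\R^2)$, of the pairing $\bigl|\int_{G_\alpha^i}\ipp{\mathrm{Easy}_{G_\alpha^i}(\tau),v(\tau)}d\tau\bigr|$. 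Using that $\E$ is self-adjoint I will move $\E$ between the two factors of the nonlinearity as convenient, then apply H\"older in $(t,x)$ together with the $L^p$-boundedness of $\E$ (proposition \ref{prop:CZ}) to factor each pairing into products of mixed-norm Lebesgue quantities.

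The key observation, already exploited in the proof of lemma \ref{lem:LTSE_NI_est}, is that on any $G_\alpha^i$ with $N(G_\alpha^i)\leq \epsilon_3^{1/2}2^{i-5}$ one has $2^{i-11}\gg \epsilon_3^{-1/4}N(t)$ for all $t\in G_\alpha^i$, so that \eqref{eq:sp_frq_loc} yields $\|P_{\xi(\tau),\geq i-10}u\|_{L_t^\infty L_x^2(G_\alpha^i\times\R^2)}\lesssim \epsilon_2$. Each of the three summands $\mathrm{Easy}_{G_\alpha^i,k}$ ($k=1,2,3$) contains at least two far-frequency factors $P_{\xi(\tau),\geq i-10}u$ (whether inside or outside the argument of $\E$), so I will place one such factor in $L_t^\infty L_x^2$ to extract an $\epsilon_2$. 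The two remaining factors will be distributed among admissible Strichartz norms such as $L_{t,x}^4$, $L_t^{8/3}L_x^8$, or $L_t^{9/4}L_x^{18}$, which are in turn controlled via the embeddings of lemma \ref{lem:lohi_embed} and lemma \ref{lem:BT_embed} by $\|u\|_{\tilde X_j([0,T]\times\R^2)}$.

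To produce the $\|u\|_{\tilde Y_j([0,T]\times\R^2)}^2$ factor in the target inequality, I will exploit the fact that the outer summation, restricted to $G_\alpha^i\subset G_\kappa^j$ with $N(G_\alpha^i)\leq \epsilon_3^{1/2}2^{i-5}$, is precisely the sum defining (a piece of) $\|u\|_{\tilde Y_j}^2$. After squaring my pairing estimate on each $G_\alpha^i$ and summing in $(i,\alpha)$, I will arrange the H\"older factorization so that the square-summed $U_\Delta^2$-norms of a single near-frequency factor $P_{\xi(G_\alpha^i),i-2\leq\cdot\leq i+2}u$ collapse to $\|u\|_{\tilde Y_j}^2$, while the remaining factors are bounded by $\epsilon_2$ and powers of $\|u\|_{\tilde X_j}$ uniformly in $(i,\alpha)$. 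The two terms $\epsilon_2\|u\|_{\tilde X_j}^3$ and $\epsilon_2^{1/3}\|u\|_{\tilde X_j}^{5/3}$ on the right-hand side will then arise from two different interpolation configurations: in one I use $L_t^\infty L_x^2$ on a far factor once and keep the other two factors in admissible Strichartz spaces (single-$\epsilon_2$ branch), while in the other I interpolate the $L_{t,x}^4$ norm of the far factor between $L_t^\infty L_x^2$ and a higher admissible pair, trading a fractional $\epsilon_2$-power for a smaller $\tilde X_j$-exponent. The sum over $i\geq j$ is handled identically, with $G_\kappa^j$ replacing $G_\alpha^i$ and almost orthogonality of the projectors $P_{\xi(G_\kappa^j),i-2\leq\cdot\leq i+2}$ in $i$ replacing the sum over $\alpha$ in the definition of $\tilde Y_j$.

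The main technical obstacle will be bookkeeping: one must ensure that every H\"older pair $(L_t^p,L_x^q)$ either is Strichartz-admissible in two dimensions or lies in the range $1<q<\infty$ where $\E$ is bounded, correctly track whether each far/near factor sits inside or outside $\E$ (most delicate for $\mathrm{Easy}_{G_\alpha^i,2}$, which is asymmetric in this regard), and balance the interpolation so that the final product of $\epsilon_2$-powers, $\|u\|_{\tilde X_j}$-powers, and a single $\|u\|_{\tilde Y_j}^2$ matches the stated right-hand side. Because at least two far-frequency factors are always present, no bilinear estimates are required here and the argument should close using only linear Strichartz, lemma \ref{lem:lohi_embed}, lemma \ref{lem:BT_embed}, and the $L^p$-boundedness of $\E$; the genuinely new difficulty—dealing with one far factor and two near factors inside $\E$—is deferred to the Hard case.
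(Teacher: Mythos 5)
Your overall framing (duality via proposition \ref{prop:V_dual}, extracting a factor $\epsilon_{2}$ or $\epsilon_{2}^{1/3}$ from a far-frequency factor through \eqref{eq:sp_frq_loc} and interpolation, and treating the $i\geq j$ sum in parallel) matches the paper, but your central claim that \emph{no bilinear estimates are required} and that H\"older plus linear Strichartz, lemma \ref{lem:lohi_embed}, lemma \ref{lem:BT_embed}, and $L^{p}$-boundedness of $\E$ suffice is a genuine gap. With those tools alone, after dualizing you can only bound each pairing on $G_{\alpha}^{i}$ by quantities of the form $\epsilon_{2}^{a}\|u\|_{\tilde{X}_{j}}^{b}$, uniformly in $(i,\alpha)$: the far factor $P_{\xi(G_{\alpha}^{i}),\geq i-5}u$ (or the far factors inside $\E$) only gets placed in Strichartz norms, which the embedding lemmas control by $\tilde{X}_{j}$, not $\tilde{Y}_{j}$, and such bounds are not square-summable. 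Since $G_{\kappa}^{j}$ contains $2^{j-i}$ intervals $G_{\alpha}^{i}$, the sum $\sum_{20\leq i<j}2^{i-j}\sum_{G_{\alpha}^{i}\subset G_{\kappa}^{j}}(\text{const})$ produces a factor of order $j$, so the estimate cannot close this way. What the paper actually does is keep the dual function $v$ (Fourier-supported at scale $2^{i}$) paired against an individual Littlewood--Paley piece $P_{\xi(G_{\alpha}^{i}),l}u$ (or $P_{\xi(t),l}\tau_{z}u$ after the adjoint/kernel-translation step in the case of $\mathrm{Easy}_{G_{\alpha}^{i},1}$) and applies the \emph{classical} bilinear Strichartz estimate transferred to $U_{\Delta}^{2}/V_{\Delta}^{2}$ spaces (proposition \ref{prop:bs_i}), which yields both the decay $2^{(i-l)c}$ needed to sum over $l\geq i-5$ and, crucially, a $U_{\Delta}^{2}(G_{\alpha}^{i})$ norm of an LP piece of $u$ rather than a Strichartz norm; squaring and re-summing these $U_{\Delta}^{2}$ norms over $(l,\alpha,i)$, using the parent-interval structure and the fact that the constraint $N(G_{\alpha}^{i})\leq\epsilon_{3}^{1/2}2^{i-5}$ propagates to $N(G_{\beta}^{l})\leq\epsilon_{3}^{1/2}2^{l-5}$, is exactly what produces $\|u\|_{\tilde{Y}_{j}}^{2}$. (Only Bourgain's estimate in transferred form is needed here; the new improved bilinear estimates and the double frequency decomposition are indeed deferred to the Hard case, so your "no \emph{new} bilinear input" intuition is right, but "no bilinear input at all" is not.)

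A second, related error: you propose that the $\|u\|_{\tilde{Y}_{j}}^{2}$ factor comes from square-summing $U_{\Delta}^{2}$ norms of "a single near-frequency factor $P_{\xi(G_{\alpha}^{i}),i-2\leq\cdot\leq i+2}u$." No such factor exists in the Easy terms: that projection acts on the output of the nonlinearity (equivalently, on the dual test function $v$), while the actual factors of $u$ are $P_{\xi(\tau),\geq i-10}u$, $P_{\xi(\tau),<i-10}u$, $P_{\xi(G_{\alpha}^{i}),\geq i-5}u$, $P_{\xi(G_{\alpha}^{i}),<i-5}u$. The $\tilde{Y}_{j}$ structure must instead be harvested from the LP decomposition $\sum_{l\geq i-5}P_{\xi(G_{\alpha}^{i}),l}u$ of a far factor, weighted by the bilinear gain in $l$; without identifying this mechanism the bookkeeping you describe cannot be arranged to match the right-hand side.
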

\begin{proof}
For $20\leq i<j$, we first decompose $\mathrm{Easy}_{G_{\alpha}^{i}}$ by
\begin{equation}
\mathrm{Easy}_{G_{\alpha}^{i}} = \mathrm{Easy}_{G_{\alpha}^{i},1} + \mathrm{Easy}_{G_{\alpha}^{i},2} + \mathrm{Easy}_{G_{\alpha}^{i},3},
\end{equation}
and for $i\geq j$, we decompose $\mathrm{Easy}_{i}$ by
\begin{equation}
\mathrm{Easy}_{i} = \mathrm{Easy}_{i,1} + \mathrm{Easy}_{i,2} + \mathrm{Easy}_{i,3}.
\end{equation}
We only present the details for the contributions of $\mathrm{Easy}_{G_{\alpha}^{i},1}, \mathrm{Easy}_{i,1}, \mathrm{Easy}_{G_{\alpha}^{i},3}, \mathrm{Easy}_{i,3}$, as one can treat the contributions of the remaining terms by similar arguments.

\begin{description}[leftmargin=*]
\item[Estimate for $\mathrm{Easy}_{G_{\alpha}^{i},3}$:]
For $G_{\alpha}^{i}\subset G_{\kappa}^{j}$ fixed, we have by duality that
\begin{align}
\|\int_{t_{\alpha}^{i}}^{t}e^{i(t-\tau)\Delta}\mathrm{Easy}_{G_{\alpha}^{i},3}(\tau)d\tau\|_{U_{\Delta}^{2}(G_{\alpha}^{i}\times\R^{2})} &= \sup_{\|v\|_{V_{\Delta}^{2}(G_{\alpha}^{i}\times\R^{2})}=1} \int_{G_{\alpha}^{i}} \ipp{v(t),P_{\xi(G_{\alpha}^{i}), i-2\leq\cdot\leq i+2}[\E(|P_{\xi(t),>i-10}u(t)|^{2}) (P_{\xi(G_{\alpha}^{i}), \geq i-5}u(t))]} dt \nonumber\\
&= \sup_{\|v\|_{V_{\Delta}^{2}(G_{\alpha}^{i}\times\R^{2})}=1} \int_{G_{\alpha}^{i}} \ipp{P_{\xi(G_{\alpha}^{i}), i-2\leq\cdot\leq i+2}v(t),\E(|P_{\xi(t),>i-10}u(t)|^{2}) (P_{\xi(G_{\alpha}^{i}), \geq i-5}u(t))} dt
\end{align}
Since $\|v\|_{V_{\Delta}^{2}(G_{\alpha}^{i}\times\R^{2})}=1$ implies that $\|P_{\xi(G_{\alpha}^{i}),i-2\leq\cdot\leq i+2}v\|_{V_{\Delta}^{2}(G_{\alpha}^{i}\times\R^{2})} \leq 1$ by Plancherel's theorem, we may assume without loss of generality that $v$ has Fourier support in the dyadic annulus $A(\xi(G_{\alpha}^{i}), i-3, i+3)$ and satisfies $\|v\|_{V_{\Delta}^{2}(G_{\alpha}^{i}\times\R^{2})}\leq 1$. By Cauchy-Schwarz, triangle inequality, Plancherel's theorem, followed by the bilinear Strichartz estimate of proposition \ref{prop:bs_i}, we have that
\begin{align}
\int_{G_{\alpha}^{i}} \langle{v(t),\E(|P_{\xi(t),>i-10}u(t)|^{2})P_{\xi(G_{\alpha}^{i}), \geq i-5}u(t)}\rangle dt &\lesssim \|vP_{\xi(G_{\alpha}^{i}), \geq i-5}u\|_{L_{t,x}^{2}(G_{\alpha}^{i}\times\R^{2})} \|\E(|P_{\xi(t),>i-10}u|^{2})\|_{L_{t,x}^{2}(G_{\alpha}^{i}\times\R^{2})} \nonumber\\
&\leq \sum_{l\geq i-5}\|vP_{\xi(G_{\alpha}^{i}), l}u\|_{L_{t,x}^{2}(G_{\alpha}^{i}\times\R^{2})} \|P_{\xi(t),>i-10}u\|_{L_{t,x}^{4}(G_{\alpha}^{i}\times\R^{2})}^{2} \nonumber\\
&\lesssim \sum_{l\geq i-5}2^{(i-l){\frac{1}{2}}^{-}}\|P_{\xi(G_{\alpha}^{i}), l}u\|_{U_{\Delta}^{2}(G_{\alpha}^{i}\times\R^{2})}  \|P_{\xi(t),>i-10}u\|_{L_{t,x}^{4}(G_{\alpha}^{i}\times\R^{2})}^{2}.
\end{align}
Now interpolating between the admissible pairs $(\infty,2)$ and $(3,6)$ to get $(4,4)$, we have the estimate
\begin{align}
\|P_{\xi(t),>i-10}u\|_{L_{t,x}^{4}(G_{\alpha}^{i}\times\R^{2})}^{2} \leq \|P_{\xi(t),>i-10}u\|_{L_{t}^{\infty}L_{x}^{2}(G_{\alpha}^{i}\times\R^{2})}^{1/2} \|P_{\xi(t),>i-10}u\|_{L_{t}^{3}L_{x}^{6}(G_{\alpha}^{i}\times\R^{2})}^{3/2} \lesssim \epsilon_{2}^{1/2} \|u\|_{X(G_{\alpha}^{i}\times\R^{2})}^{3/2},
\end{align}
where we use the condition $N(G_{\alpha}^{i}) \leq \epsilon_{3}^{1/2}2^{i-5}$ together with the frequency localization property \eqref{eq:sp_frq_loc} and lemma \ref{lem:lohi_embed} to obtain the ultimate inequality. So by H\"{o}lder's inequality in $l$, we have that
\begin{equation}
\sum_{l\geq i-5}2^{(i-l){\frac{1}{2}}^{-}}\|P_{\xi(G_{\alpha}^{i}), l}u\|_{U_{\Delta}^{2}(G_{\alpha}^{i}\times\R^{2})}  \|P_{\xi(t),>i-10}u\|_{L_{t,x}^{4}(G_{\alpha}^{i}\times\R^{2})}^{2} \lesssim \epsilon_{2}^{1/2} \|u\|_{X(G_{\alpha}^{i}\times\R^{2})}^{3/2}\paren*{\sum_{l\geq i-5}2^{(i-l)\frac{1}{2}^{-}} \|P_{\xi(G_{\alpha}^{i}),l}u\|_{U_{\Delta}^{2}(G_{\alpha}^{i}\times\R^{2})}^{2}}^{1/2}.
\end{equation}

We proceed to sum over the intervals $G_{\alpha}^{i}$ and integers $20\leq i<j$. Observe from the dyadic structure that for each $0\leq l\leq j$, the interval $G_{\kappa}^{j}$ has $2^{j-l}$ children $G_{\beta}^{l}$. Similarly, for each $0\leq i\leq l$, each interval $G_{\beta}^{l}$ has $2^{l-i}$ children $G_{\alpha}^{i}$, and each interval $G_{\alpha}^{i}$ has a unique parent $G_{\beta(\alpha)}^{l}$. Additionally, if an interval $G_{\alpha}^{i}$ with $N(G_{\alpha}^{i})\leq \epsilon_{3}^{1/2}2^{i-5}$ intersects an interval $G_{\beta}^{l}$, then by the fundamental theorem of calculus and the estimate $|N'(t)|\leq 2^{-20}\epsilon_{1}^{-1/2}N(t)^{3}$, we have that $N(G_{\beta}^{l})\leq \epsilon_{3}^{1/2}2^{l-5}$. Similarly, if $G_{\alpha}^{i} \subset G_{\kappa}^{j}$ where $N(G_{\alpha}^{i})\leq \epsilon_{3}^{1/2}2^{i-5}$, then $N(G_{\kappa}^{j})\leq \epsilon_{3}^{1/2}2^{j-5}$. Hence, we have that
\begin{align}
&\sum_{20\leq i<j}2^{i-j}\sum_{G_{\alpha}^{i}\subset G_{\kappa}^{j}; N(G_{\alpha}^{i})\leq \epsilon_{3}^{1/2}2^{i-5}} \sum_{l\geq i-5}2^{(i-l){\frac{1}{2}}^{-}} \|P_{\xi(G_{\alpha}^{i}), l}u\|_{U_{\Delta}^{2}(G_{\alpha}^{i}\times\R^{2})}^{2}\nonumber\\
&\phantom{=}\lesssim \sum_{20\leq i<j}2^{i-j}\sum_{G_{\alpha}^{i}\subset G_{\kappa}^{j}; N(G_{\alpha}^{i})\leq \epsilon_{3}^{1/2}2^{i-5}} \sum_{i-5\leq l \leq i} \|P_{\xi(G_{\alpha}^{i}), l}u\|_{U_{\Delta}^{2}(G_{\alpha}^{i}\times\R^{2})}^{2} + \sum_{l>i}2^{(i-l){\frac{1}{2}}^{-}}\|P_{\xi(G_{\alpha}^{i}), l}u\|_{U_{\Delta}^{2}(G_{\alpha}^{i}\times\R^{2})}^{2} \nonumber\\
&\phantom{=}\lesssim \sum_{20\leq i<j}2^{i-j} \sum_{i-5\leq l \leq i} \sum_{G_{\beta}^{l}\subset G_{\kappa}^{j}; N(G_{\beta}^{l}) \leq \epsilon_{3}^{1/2}2 ^{l-5}} \|P_{\xi(G_{\beta}^{l}), l-2\leq\cdot\leq l+2}u\|_{U_{\Delta}^{2}(G_{\beta}^{l}\times\R^{2})}^{2} \nonumber\\
&\phantom{=}\qquad +\sum_{20\leq i<j}2^{i-j}\sum_{i<l<j}\sum_{G_{\beta}^{l}\subset G_{\kappa}^{j}; N(G_{\beta}^{l}) \leq \epsilon_{3}^{1/2}2^{l-5}}2^{(i-l){\frac{1}{2}}^{-}+(l-i)}  \|P_{\xi(G_{\beta}^{l}), l-2\leq\cdot\leq l+2}u\|_{U_{\Delta}^{2}(G_{\beta}^{l}\times\R^{2})}^{2} \nonumber\\
&\phantom{=}\qquad +\sum_{20\leq i<j}\sum_{l\geq j; N(G_{\kappa}^{j})\leq\epsilon_{3}^{1/2}2^{l-5}} 2^{(i-l){\frac{1}{2}}^{-}}  \|P_{\xi(G_{\kappa}^{j}), l-2\leq\cdot\leq l+2}u\|_{U_{\Delta}^{2}(G_{\kappa}^{j}\times\R^{2})}^{2} \nonumber\\
&\phantom{=}\eqqcolon \mathrm{Term}_{1} + \mathrm{Term}_{2} + \mathrm{Term}_{3}.
\end{align}
We consider each of the $\mathrm{Term}_{j}$. For $\mathrm{Term}_{1}$, we interchange the order of the $i$ and $l$ summations to obtain
\begin{align}
\mathrm{Term}_{1} &= \sum_{15\leq l<j}\sum_{l\leq i\leq \min\{l+5,j-1\}}2^{i-j}\sum_{G_{\beta}^{l}\subset G_{\kappa}^{j}; N(G_{\beta}^{l})\leq \epsilon_{3}^{1/2}2^{l-5}}\|P_{\xi(G_{\beta}^{l}), l-2\leq\cdot\leq l+2}u\|_{U_{\Delta}^{2}(G_{\beta}^{l}\times\R^{2})}^{2} \nonumber\\
&\lesssim \sum_{0\leq l<j} 2^{l-j}\sum_{G_{\beta}^{l}\subset G_{\kappa}^{j}; N(G_{\beta}^{l})\leq \epsilon_{3}^{1/2}2^{l-5}}\|P_{\xi(G_{\beta}^{l}), l-2\leq\cdot\leq l+2}u\|_{U_{\Delta}^{2}(G_{\beta}^{l}\times\R^{2})}^{2} \nonumber\\
&\leq \|u\|_{\tilde{Y}_{j}([0,T]\times\R^{2})}^{2},
\end{align}
where the ultimate inequality follows from the definition of the $\tilde{Y}_{j}$ norm. Similarly, for $\mathrm{Term}_{2}$, we have that
\begin{align}
\mathrm{Term}_{2} &= \sum_{20<l<j}2^{l-j} \sum_{20\leq i<\min\{j,l\}} 2^{(i-l){\frac{1}{2}}^{-}} \sum_{G_{\beta}^{l}\subset G_{\kappa}^{j}; N(G_{\beta}^{l}) \leq \epsilon_{3}^{1/2}2^{l-5}} \|P_{\xi(G_{\beta}^{l}), l-2\leq\cdot\leq l+2}u\|_{U_{\Delta}^{2}(G_{\beta}^{l}\times\R^{2})}^{2} \nonumber\\
&\lesssim \sum_{20<l<j}2^{l-j} \sum_{G_{\beta}^{l}\subset G_{\kappa}^{j}; N(G_{\beta}^{l}) \leq \epsilon_{3}^{1/2}2^{l-5}} \|P_{\xi(G_{\beta}^{l}), l-2\leq\cdot\leq l+2}u\|_{U_{\Delta}^{2}(G_{\beta}^{l}\times\R^{2})}^{2} \nonumber\\
&\leq \|u\|_{\tilde{Y}_{j}([0,T]\times\R^{2})}^{2},
\end{align}
and for $\mathrm{Term}_{3}$,
\begin{align}
\mathrm{Term}_{3} =\sum_{l\geq j; N(G_{\kappa}^{j})\leq \epsilon_{3}^{1/2}2^{l-5}} \sum_{20\leq i<j} 2^{(i-l){\frac{1}{2}}^{-}}  \|P_{\xi(G_{\kappa}^{j}), l-2\leq\cdot\leq l+2}u\|_{U_{\Delta}^{2}(G_{\kappa}^{j}\times\R^{2})}^{2} \lesssim \|u\|_{\tilde{Y}_{j}([0,T]\times\R^{2})}^{2}.
\end{align}
This last estimate completes the analysis for the contribution of $\mathrm{Easy}_{G_{\alpha}^{i},3}$, and we have shown that
\begin{equation}
\sum_{20\leq i<j}\sum_{G_{\alpha}^{i}\subset G_{\kappa}^{j}; N(G_{\alpha}^{i})\leq\epsilon_{3}^{1/2}2^{i-5}}\|\int_{t_{\alpha}^{i}}^{t}e^{i(t-\tau)\Delta}\mathrm{Easy}_{G_{\alpha}^{i},3}(\tau)d\tau\|_{U_{\Delta}^{2}(G_{\alpha}^{i}\times\R^{2})}^{2} \lesssim \epsilon_{2}\|u\|_{\tilde{X}_{j}([0,T]\times\R^{2})}^{3} \|u\|_{\tilde{Y}_{j}([0,T]\times\R^{2})}^{2}.
\end{equation}

\item[Estimate for $\mathrm{Easy}_{i,3}$:]
By repeating the arguments above, we have that for $i\geq j$,
\begin{equation}
 \|\int_{t_{\kappa}^{j}}^{t}e^{i(t-\tau)\Delta}\mathrm{Easy}_{i,3}(\tau)d\tau\|_{U_{\Delta}^{2}(G_{\kappa}^{j}\times\R^{2})}^{2} \lesssim \epsilon_{2} \|u\|_{\tilde{X}_{j}([0,T]\times\R^{2})}^{3}\sum_{l\geq i-5} 2^{(i-l){\frac{1}{2}}^{-}} \|P_{\xi(G_{\kappa}^{j}), l}u\|_{U_{\Delta}^{2}(G_{\kappa}^{j}\times\R^{2})}^{2}.
\end{equation}
Proceeding as before,
\begin{align}
\sum_{i\geq j; N(G_{\kappa}^{j}) \leq \epsilon_{3}^{1/2}2^{i-10}} \sum_{l\geq i-5}2^{(i-l){\frac{1}{2}}^{-}} \|P_{\xi(G_{\kappa}^{j}), l}u\|_{U_{\Delta}^{2}(G_{\kappa}^{j}\times\R^{2})}^{2} &= \sum_{i\geq j; N(G_{\kappa}^{j}) \leq \epsilon_{3}^{1/2}2^{i-10}}\sum_{i-5\leq l<j} 2^{(i-l){\frac{1}{2}}^{-}} \|P_{\xi(G_{\kappa}^{j}), l}u\|_{U_{\Delta}^{2}(G_{\kappa}^{j}\times\R^{2})}^{2} \nonumber\\
&\phantom{=} +\sum_{i\geq j; N(G_{\kappa}^{j}) \leq \epsilon_{3}^{1/2}2^{i-10}}\sum_{l\geq \max\{j,i-5\}} 2^{(i-l){\frac{1}{2}}^{-}} \|P_{\xi(G_{\kappa}^{j}), l}u\|_{U_{\Delta}^{2}(G_{\kappa}^{j}\times\R^{2})}^{2} \nonumber\\
&\eqqcolon \mathrm{Term}_{1}+\mathrm{Term}_{2}.
\end{align}
Interchanging the order of the $i$ and $l$ summations, we obtain that
\begin{align}
\mathrm{Term}_{1} &\lesssim \sum_{j-5\leq l<j}\sum_{j<i\leq l+5; N(G_{\kappa}^{j}) \leq \epsilon_{3}^{1/2}2^{i-10}}2^{(i-l){\frac{1}{2}}^{-}} \|P_{\xi(G_{\kappa}^{j}), l}u\|_{U_{\Delta}^{2}(G_{\kappa}^{j}\times\R^{2})}^{2} \nonumber\\
&\lesssim \sum_{l\geq j-5} 2^{l-j} \sum_{G_{\beta}^{l}\subset G_{\kappa}^{j}; N(G_{\beta}^{l}) \leq \epsilon_{3}^{1/2}2^{l-5}} \|P_{\xi(G_{\beta}^{l}), l-2\leq\cdot\leq l+2}u\|_{U_{\Delta}^{2}(G_{\beta}^{l}\times\R^{2})}^{2} \nonumber\\
&\lesssim \|u\|_{\tilde{Y}_{j}([0,T]\times\R^{2})}^{2}
\end{align}
and
\begin{align}
\mathrm{Term}_{2} &\lesssim \sum_{l\geq j}\sum_{j\leq i\leq l+5; N(G_{\kappa}^{j}) \leq \epsilon_{3}^{1/2}2^{i-10}} 2^{(i-l){\frac{1}{2}}^{-}}\|P_{\xi(G_{\kappa}^{j}), l-2\leq\cdot\leq l+2}u\|_{U_{\Delta}^{2}(G_{\kappa}^{j}\times\R^{2})}^{2} \nonumber\\
&\lesssim \sum_{l\geq j; N(G_{\kappa}^{j})\leq \epsilon_{3}^{1/2}2^{l-5}} \|P_{\xi(G_{\kappa}^{j}), l-2\leq\cdot\leq l+2}u\|_{U_{\Delta}^{2}(G_{\kappa}^{j}\times\R^{2})}^{2} \nonumber\\
&\leq \|u\|_{\tilde{Y}_{j}([0,T]\times\R^{2})}^{2}.
\end{align}
This last estimate completes the proof of the estimate for the contribution of $\mathrm{Easy}_{i,3}$ for $i\geq j$, and we have shown that
\begin{equation}
\sum_{i\geq j; N(G_{\kappa}^{j})\leq \epsilon_{3}^{1/2}2^{i-10}} \|\int_{t_{\kappa}^{j}}^{t}e^{i(t-\tau)\Delta}\mathrm{Easy}_{i}(\tau)d\tau\|_{U_{\Delta}^{2}(G_{\kappa}^{j}\times\R^{2})}^{2} \lesssim \epsilon_{2}\|u\|_{\tilde{X}_{j}([0,T]\times\R^{2})}^{3} \|u\|_{\tilde{Y}_{j}([0,T]\times\R^{2})}^{2}.
\end{equation}

\item[Estimate for $\mathrm{Easy}_{G_{\alpha}^{i},1}$:]
The argument here is similar to as before, but we need to be careful about pairing near and far frequency factors since both far frequency factors are inside the argument of $\E$. Fortunately though, Plancherel's theorem shows that for $v\in V_{\Delta}^{2}(G_{\alpha}^{i}\times\R^{2})$ with spatial Fourier support in the dyadic annulus $A(\xi(G_{\alpha}^{i}), i-3, i+3)$, we have that
\begin{equation}
\ipp{v, \E(|P_{\xi(t), >i-10}u|^{2})(P_{\xi(G_{\alpha}^{i}), < i-5}u)} = \ipp{v(\ol{P_{\xi(G_{\alpha}^{i}), < i-5}u}),  P_{i-4\leq\cdot\leq i+4}\E(|P_{\xi(t),>i-10}u|^{2})}.
\end{equation}
We frequency decompose the expression $P_{i-4\leq\cdot\leq i+4}\E(|P_{\xi(t),>i-10}u|^{2})$ by
\begin{equation}
\begin{split}
P_{i-4\leq\cdot\leq i+4}\E(|P_{\xi(t),>i-10}u|^{2}) &= P_{i-4\leq\cdot\leq i+4}\E\paren*{|P_{\xi(t),i-10<\cdot\leq i+10}u|^{2}} \\
&\phantom{=}+2P_{i-4\leq\cdot\leq i+4}\E\paren*{\Re{(P_{\xi(t),i-10<\cdot\leq i+10)}u)(\ol{P_{\xi(t),>i+10}u)}}} \\
&\phantom{=}+P_{i-4\leq\cdot\leq i+4}\E\paren*{|P_{\xi(t),>i+10}u|^{2}}.
\end{split}
\end{equation}
We only consider the last term in the RHS, as it is the most difficult case. Fourier support analysis shows that the two factors inside of $\E$ must be supported at comparable frequencies. Hence, we may write
\begin{equation}
P_{i-4\leq\cdot\leq i+4}\E\paren*{|P_{\xi(t),>i+10}u|^{2}} = \sum_{l>i+10}\sum_{|l-l'| \leq 2} P_{i-4\leq\cdot\leq i+4}\E\paren*{(P_{\xi(t), l}u)(\ol{P_{\xi(t),l'}u})}.
\end{equation}
Since the symbol of $\E$ belongs to $C^{\infty}(\R^{2}\setminus\{0\})$ and is homogeneous of degree zero, the kernel $\mathcal{K}_{i-4\leq \cdot\leq i+4}$ of the operator $P_{i-4\leq\cdot\leq i+4}\E$ is Schwartz and $\sup_{i\geq 20}\|K_{i-4\leq\cdot\leq i+4}\|_{L^{1}(\R^{2})} \lesssim 1$. Therefore, we can use Minkowski's inequality to write
\begin{equation}
\begin{split}
&\int_{G_{\alpha}^{i}} |\ipp{v(t)(\ol{P_{\xi(G_{\alpha}^{i}), < i-5}u(t)}),  P_{i-4\leq\cdot\leq i+4}\E\paren*{(P_{\xi(t),l}u(t))(\ol{P_{\xi(t),l'}u(t)})}}|dt \\
&\phantom{=}\leq \int_{\R^{2}} |\mathcal{K}_{i-4\leq\cdot\leq i+4}(z)|\|v(\ol{P_{\xi(G_{\alpha}^{i}), < l-5}u}) (P_{\xi(t),l}\tau_{z}u)(\ol{P_{\xi(t),l'}\tau_{z}u})\|_{L_{t,x}^{1}(G_{\alpha}^{i}\times\R^{2})}dz.
\end{split}
\end{equation}
By H\"{o}lder's inequality and mass conservation,
\begin{align}
\|v(\ol{P_{\xi(G_{\alpha}^{i}), < l-5}u}) (P_{\xi(t),l}\tau_{z}u)(\ol{P_{\xi(t),l'}\tau_{z}u})\|_{L_{t,x}^{1}(G_{\alpha}^{i}\times\R^{2})} &\lesssim \|v(P_{\xi(t),l}\tau_{z}u)(\ol{P_{\xi(t),l'}\tau_{z}u})\|_{L_{t}^{1}L_{x}^{2}(G_{\alpha}^{i}\times\R^{2})} \nonumber\\
&\leq \|vP_{\xi(t),l}\tau_{z}u\|_{L_{t}^{3/2}L_{x}^{3}(G_{\alpha}^{i}\times\R^{2})} \|P_{\xi(t),l'}\tau_{z}u\|_{L_{t}^{3}L_{x}^{6}(G_{\alpha}^{i}\times\R^{2})}.
\end{align}
By interpolating between the admissible pairs $(\infty,2)$ and $(5/2,10)$ to get $(3,6)$, then using the frequency localization property \eqref{eq:sp_frq_loc}, we see that
\begin{equation}
\|P_{\xi(t),l'}\tau_{z}u\|_{L_{t}^{3}L_{x}^{6}(G_{\alpha}^{i}\times\R^{2})} \leq \epsilon_{2}^{1/6} \|P_{\xi(t),l'}u\|_{L_{t}^{5/2}L_{x}^{10}(G_{\alpha}^{i}\times\R^{2})}^{5/6} \lesssim \epsilon_{2}^{1/6} \|u\|_{\tilde{X}_{j}([0,T]\times\R^{2})}^{5/6},
\end{equation}
where the ultimate inequality follows from lemma \ref{lem:lohi_embed}. Next, interpolating between $(2,2)$ and $(5/4,5)$ to get $(3/2,3)$, followed by applying the bilinear Strichartz estimate of proposition \ref{prop:bs_i}, we see that
\begin{align}
\|vP_{\xi(t),l}\tau_{z}u\|_{L_{t}^{3/2}L_{x}^{3}(G_{\alpha}^{i}\times\R^{2})} &\leq \|vP_{\xi(t),l}\tau_{z}u\|_{L_{t,x}^{2}(G_{\alpha}^{i}\times\R^{2})}^{4/9} \|vP_{\xi(t),l}\tau_{z}u\|_{L_{t}^{5/4}L_{x}^{5}(G_{\alpha}^{i}\times\R^{2})}^{5/9} \nonumber\\
&\lesssim 2^{(i-l)\frac{2}{9}^{-}} \|P_{\xi(G_{\alpha}^{i}),l-2\leq\cdot l+2}u\|_{U_{\Delta}^{2}(G_{\alpha}^{i}\times\R^{2})},
\end{align}
where we also use H\"{o}lder's and Minkowski's inequalities, Strichartz estimates, and the embedding $V_{\Delta}^{2}\subset U_{\Delta}^{5/2}$ to obtain the ultimate inequality. Since our final estimates are uniform in the translation parameter $z$, we see that
\begin{equation}
\begin{split}
&\int_{\R^{2}} |\mathcal{K}_{i-4\leq\cdot\leq i+4}(z)|\|v(\ol{P_{\xi(G_{\alpha}^{i}), < l-5}u}) (P_{\xi(t),l}\tau_{z}u)(\ol{P_{\xi(t),l'}\tau_{z}u})\|_{L_{t,x}^{1}(G_{\alpha}^{i}\times\R^{2})}dz \\
&\phantom{=}\lesssim \epsilon_{2}^{1/6}\|u\|_{\tilde{X}_{j}([0,T]\times\R^{2})}^{5/6}2^{(i-l)\frac{2}{9}-}\|P_{\xi(G_{\alpha}^{i}),l-2\leq\cdot\leq l+2}u\|_{U_{\Delta}^{2}(G_{\alpha}^{i}\times\R^{2})}.
\end{split}
\end{equation}
Now summing over $l,l'$, we obtain that
\begin{align}
&\sum_{l>i+10}\sum_{|l-l'|\leq 2} \epsilon_{2}^{1/6}\|u\|_{\tilde{X}_{j}([0,T]\times\R^{2})}^{5/6}2^{(i-l)\frac{2}{9}-}\|P_{\xi(G_{\alpha}^{i}),l-2\leq\cdot\leq l+2}u\|_{U_{\Delta}^{2}(G_{\alpha}^{i}\times\R^{2})} \nonumber\\
&\phantom{=}\lesssim \sum_{l\geq i+10} 2^{(i-l)\frac{2}{9}^{-}} \epsilon_{2}^{1/6}\|u\|_{\tilde{X}_{j}([0,T]\times\R^{2})}^{5/6}\|P_{\xi(G_{\alpha}^{i}),l-2\leq\cdot\leq l+2}u\|_{U_{\Delta}^{2}(G_{\alpha}^{i}\times\R^{2})} \nonumber\\
&\phantom{=} \lesssim \epsilon_{2}^{1/6}\|u\|_{\tilde{X}_{j}([0,T]\times\R^{2})}^{5/6}\paren*{\sum_{l\geq i+10} 2^{\frac{(i-l)}{3}} \|P_{\xi(G_{\alpha}^{i}),l-2\leq\cdot l+2}u\|_{U_{\Delta}^{2}(G_{\alpha}^{i}\times\R^{2})}^{2}}^{1/2},
\end{align}
where the ultimate inequality follows from Cauchy-Schwarz in $l$. Taking the sum $\sum_{20\leq i<j}2^{i-j}\sum_{G_{\alpha}^{i}\subset G_{\kappa}^{j}; N(G_{\alpha}^{i})\leq \epsilon_{3}^{1/2}2^{i-5}}(\cdot)^{2}$ of the last expression and proceeding as before in the case of $\mathrm{Easy}_{G_{\alpha}^{i}, 3}$, we obtain the final estimate
\begin{equation}
\sum_{20\leq i<j}2^{i-j} \sum_{G_{\alpha}^{i}\subset G_{\kappa}^{j}; N(G_{\alpha}^{i})\leq\epsilon_{3}^{1/2}2^{i-5}} \|\int_{t_{\alpha}^{i}}^{t}e^{i(t-\tau)\Delta}\mathrm{Easy}_{G_{\alpha}^{i},1}(\tau)d\tau\|_{U_{\Delta}^{2}(G_{\alpha}^{i}\times\R^{2})}^{2} \lesssim \epsilon_{2}^{1/3} \|u\|_{\tilde{X}_{j}([0,T]\times\R^{2})}^{5/3} \|u\|_{\tilde{Y}_{j}([0,T]\times\R^{2})}^{2}.
\end{equation}

\item[Estimate for $\mathrm{Easy}_{i,1}$:]
By repeating the argument in the preceding case, then proceeding as in the case of the contribution of $\mathrm{Easy}_{i,3}$, we obtain the final estimate
\begin{equation}
\sum_{i\geq j; N(G_{\kappa}^{j})\leq\epsilon_{3}^{1/2}2^{i-10}} \|\int_{t_{\kappa}^{j}}^{t}\mathrm{Easy}_{i,1}(\tau)d\tau\|_{U_{\Delta}^{2}(G_{\kappa}^{j}\times\R^{2})}^{2} \lesssim \epsilon_{2}^{1/3} \|u\|_{\tilde{X}_{j}([0,T]\times\R^{2})}^{5/3} \|u\|_{\tilde{Y}_{j}([0,T]\times\R^{2})}^{2}.
\end{equation}
\end{description}
\end{proof}

We next establish the hard estimate, which will occupy our attention for the remainder of this subsection.
\begin{lemma}[Preliminary hard estimate]\label{lem:LTSE_b_hard}
The following estimate holds uniformly in $20\leq j\leq k_{0}$, $20\leq i<j$, and $G_{\alpha}^{i}\subset G_{\kappa}^{j}\subset [0,T]$ satisfying $N(G_{\alpha}^{i})\leq \epsilon_{3}^{1/2}2^{i-5}$:
\begin{equation}
\begin{split}
&\|\int_{t_{\alpha}^{i}}^{t}e^{i(t-\tau)\Delta}\mathrm{Hard}_{G_{\alpha}^{i}}(\tau) d\tau\|_{U_{\Delta}^{2}(G_{\alpha}^{i}\times\R^{2})}\\
&\phantom{=}\lesssim_{u} \|P_{\xi(G_{\alpha}^{i}), i-5\leq\cdot\leq i+5}u\|_{U_{\Delta}^{2}(G_{\alpha}^{i}\times\R^{2})}\paren*{\epsilon_{2} + \|u\|_{\tilde{Y}_{i}(G_{\alpha}^{i}\times\R^{2})}\paren*{1+\|u\|_{\tilde{X}_{i}(G_{\alpha}^{i}\times\R^{2})}^{4}}}.
\end{split}
\end{equation}
The following estimate holds uniformly in $20\leq j\leq k_{0}$, $i \geq j$, and $G_{\kappa}^{j}\subset [0,T]$ satisfying $N(G_{\kappa}^{j}) \leq \epsilon_{3}^{1/2}2^{i-10}$:
\begin{equation}
\begin{split}
&\|\int_{t_{\kappa}^{j}}^{t}e^{i(t-\tau)\Delta}\mathrm{Hard}_{i}(\tau)d\tau\|_{U_{\Delta}^{2}(G_{\kappa}^{j}\times\R^{2})}\\
&\phantom{=}\lesssim_{u} \|P_{\xi(G_{\kappa}^{j}), i-5 \leq\cdot\leq i+5}u\|_{U_{\Delta}^{2}(G_{\kappa}^{j}\times\R^{2})} \paren*{\epsilon_{2} + \|u\|_{\tilde{Y}_{j}(G_{\kappa}^{j}\times\R^{2})} \paren*{1+\|u\|_{\tilde{X}_{j}(G_{\kappa}^{j}\times\R^{2})}^{4}}}.
\end{split}
\end{equation}
\end{lemma}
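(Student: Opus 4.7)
\textbf{Proof plan for Lemma \ref{lem:LTSE_b_hard}.} The two inequalities have identical structure with $G_\alpha^i$ (when $i<j$) or $G_\kappa^j$ (when $i\ge j$) playing interchangeable roles, so I shall describe the first one and indicate at the end why the second follows with no new ideas. The first move is to invoke $U^2_\Delta$--$V^2_\Delta$ duality (Propositions \ref{prop:V_dual} and \ref{prop:U2_duh}): I reduce the $U^2_\Delta(G_\alpha^i\times\R^2)$ norm of the Duhamel integral to estimating pairings
\[
\Big|\int_{G_\alpha^i} \big\langle P_{\xi(G_\alpha^i),\,i-2\le\cdot\le i+2} v(t),\,\mathrm{Hard}_{G_\alpha^i}(t)\big\rangle\,dt\Big|
\]
against test functions $v\in V^2_{-,rc,\Delta}(G_\alpha^i\times\R^2)$ of norm $\le 1$. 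A Fourier-support computation (using that $\E(|P_{\xi(\tau),<i-10}u|^2)$ is supported at frequencies of modulus $\lesssim 2^{i-9}$ and that $|\xi(\tau)-\xi(G_\alpha^i)|\lesssim 2^{-20}\epsilon_1^{-1/2}N(G_\alpha^i)^3|G_\alpha^i|\ll 2^i$ by admissibility and the hypothesis $N(G_\alpha^i)\le\epsilon_3^{1/2}2^{i-5}$) lets me replace the projector $P_{\xi(G_\alpha^i),\ge i-5}$ appearing in $\mathrm{Hard}_{G_\alpha^i}$ by the annular projector $P_{\xi(G_\alpha^i),\,i-5\le\cdot\le i+5}$; this is what produces the advertised high-frequency factor on the right-hand side.

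The second step treats $\mathrm{Hard}_{G_\alpha^i,2}$, in which one of the two factors inside $\E$ is already high-frequency. I Littlewood-Paley decompose this inside-high-frequency factor at dyadic scales $2^l$ with $l\ge i-10$, and similarly decompose the test function. The operator $\E$ is disposable because it is $L^p$-bounded (Calder\'on-Zygmund, Proposition \ref{prop:CZ}); I may then pair the outside near-frequency factor $P_{\xi(G_\alpha^i),<i-5}u$ with the inside high-frequency factor by the bilinear Strichartz estimate of Proposition \ref{prop:bs_i}\ref{item:bs_i_B}, gaining a factor $(2^{i-5}/2^l)^{1/p}$ with a harmless log loss. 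The remaining inside low-frequency factor and test function are bounded by Strichartz/Bernstein via Lemmas \ref{lem:lohi_embed}--\ref{lem:BT_embed}, and the geometric sum in $l$ converges. Shelving the summation in $l$ against $\|P_{\xi(G_\alpha^i),\,i-5\le\cdot\le i+5}u\|_{U^2_\Delta(G_\alpha^i\times\R^2)}$, one obtains the stated bound, with the inside low-frequency factor contributing either $\epsilon_2$ (from the portion of frequencies $\ge \epsilon_3^{-1/2}N(t)$ via the localization property \eqref{eq:sp_frq_loc}) or $\|u\|_{\tilde Y_i}$ (from the portion of frequencies between $2^{i-10}$ and $\epsilon_3^{-1/2}N(t)$, which is tracked by the $\tilde Y$ norm).

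The heart of the argument is the third step, the treatment of $\mathrm{Hard}_{G_\alpha^i,1}$, in which both near-frequency factors sit \emph{inside} $\E$. Here the toy-model strategy of Section 1.4.2 of the introduction applies. I decompose $\E = \sum_{k\,:\,2^k\le 2^{i-8}}\E_k$ via a homogeneous Littlewood-Paley decomposition of its symbol, so that each $\E_k$ has a Schwartz kernel $\mathcal K_k$ with $|\mathcal K_k(x)|\lesssim_M 2^{2k}\langle 2^k x\rangle^{-M}$ uniformly in $k$. For each $k$, I further decompose each copy of $P_{\xi(\tau),<i-10}u$ inside $\E_k$ into frequency cubes $\P_{Q_a^k}$ of side $2^k$; the almost-orthogonality under $\E_k$ restricts the double cube sum to pairs $(a,a')$ with $\dist(Q_a^k, Q_{a'}^k)\lesssim 2^{k+10}$, leaving only an $O(1)$ sum. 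Next, I apply the sparse-decomposition shifting trick of Lemma \ref{lem:sparse} to move a cube projector from inside $\E_k$ to the outside factor $P_{\xi(G_\alpha^i),\,i-5\le\cdot\le i+5}u$ (and, after a second duality move, to the test function $v$). At this point the three new improved bilinear Strichartz estimates (Propositions \ref{prop:ibs_1}, \ref{prop:ibs_2}, \ref{prop:ibs_3}), which are tuned precisely to the scale $2^k$ of a single cube, are invoked to obtain a gain of $(2^k/2^i)^{1/2}$ with at most logarithmic losses, weighted by suitable powers of $\|u\|_{\tilde X_i}$ and $\|u\|_{\tilde Y_i}$. Summing over the $O(1)$ cube pairs via $\ell^2$ almost orthogonality and then over $k\le i-8$ gives a convergent geometric sum, producing the factor $\|u\|_{\tilde Y_i}\bigl(1+\|u\|_{\tilde X_i}^4\bigr)$ in the advertised bound; the $\epsilon_2$ contribution again arises from the portion of low frequencies that nonetheless exceeds $\epsilon_3^{-1/2}N(t)$ and is therefore controlled by \eqref{eq:sp_frq_loc}.

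The main obstacle is the third step: one must avoid paying any power loss in the double frequency decomposition, since the inner sum over cubes is only recovered by almost orthogonality and the outer sum over $k$ demands a strict geometric gain. This is exactly the reason the cube-scale improved bilinear estimates \ref{prop:ibs_1}--\ref{prop:ibs_3} are needed in place of the classical estimate of Bourgain, and why they must be stated at the scale of the side of each cube $Q_a^k$ rather than at the Littlewood-Paley scale of $P_{\xi(\tau),<i-10}u$. Once these are in hand, the argument for the $\mathrm{Hard}_i$ contribution on $G_\kappa^j$ is mechanically the same after replacing $G_\alpha^i$ by $G_\kappa^j$ throughout, with the stronger threshold $N(G_\kappa^j)\le\epsilon_3^{1/2}2^{i-10}$ giving the analogous Fourier-support reductions.
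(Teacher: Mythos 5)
Your skeleton — the duality reduction, the Fourier-support argument replacing $P_{\xi(G_\alpha^i),\ge i-5}$ by the annulus $i-5\le\cdot\le i+5$, the homogeneous decomposition $\E=\sum_k\E_k$, the cube decomposition with almost orthogonality, the shifting trick of Lemma \ref{lem:sparse}, and the appeal to Propositions \ref{prop:ibs_1}--\ref{prop:ibs_3} — is the same as the paper's. But there is a genuine gap in where the smallness factor $\epsilon_2+\|u\|_{\tilde Y_i}(1+\|u\|_{\tilde X_i}^4)$ is supposed to come from. You attribute the $\tilde Y_i$ contribution to "frequencies between $2^{i-10}$ and $\epsilon_3^{-1/2}N(t)$," but by its definition the $\tilde Y$ norm only records projections $P_{\xi(G_\beta^{l}),l-2\le\cdot\le l+2}u$ over intervals with $N(G_\beta^{l})\le\epsilon_3^{1/2}2^{l-5}$, i.e. frequencies \emph{far above} $\epsilon_3^{-1/2}N$; and you attribute the $\epsilon_2$ to \eqref{eq:sp_frq_loc} applied to the far portion, which is precisely the portion the paper pays for with $\tilde Y$. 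The bulk portion of the inside low-frequency factor, at frequencies $\lesssim\epsilon_3^{-1/2}N(t)$, is controlled by neither \eqref{eq:sp_frq_loc} nor $\tilde Y$, and your plan has no mechanism for it: gaining $(2^k/2^i)^{1/2}$ per cube scale and summing over $k\le i-8$ gives a convergent sum but only an $O_u(1)$ multiple of $\|P_{\xi(G_\alpha^i),i-5\le\cdot\le i+5}u\|_{U_\Delta^2}$, which is useless for closing the bootstrap of Lemma \ref{lem:LTSE_ind}. Note also that the improved bilinear estimates themselves carry no $\tilde Y$ or $\epsilon_2$ factor (e.g. Proposition \ref{prop:ibs_2} yields $2^{k-l_2}(1+\|u\|_{\tilde X_i}^4)$), so the smallness cannot be "read off" from them.

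The missing mechanism is the per-scale time-interval splitting and counting. After writing $\E(|P_{\xi(t),<i-10}u|^2)$ as a sum over $0\le l_2\le i-10$ of $\E\bigl((P_{\xi(t),l_2}u)(\ol{P_{\xi(t),\le l_2}u})\bigr)$ (plus symmetric terms), one must, for each fixed $l_2$, split $G_\alpha^i$ (reduced, up to an acceptable boundary error handled with the cube-scale bilinear estimate and \eqref{eq:sp_frq_loc}, to a union of small intervals) into the small intervals $J_l$ with $N(J_l)\ge\epsilon_3^{1/2}2^{l_2-6}$ and the intervals $G_\beta^{l_2}$ with $N(G_\beta^{l_2})\le\epsilon_3^{1/2}2^{l_2-5}$, gluing the pieces with Proposition \ref{prop:U2_duh}. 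On the first family the gain $2^{l_2-i}$ from Proposition \ref{prop:ibs_1} and the cube-scale classical estimate is summed against the counting bound $\sum_{l_2}\sum_{J_l:\,N(J_l)\ge\epsilon_3^{1/2}2^{l_2-6}}2^{l_2-i}\lesssim_u 2^{-i}\epsilon_3^{-1/2}\int_{G_\alpha^i}N(t)^3dt\lesssim\epsilon_3^{1/2}$, and it is this counting (together with $\epsilon_3\le\epsilon_2^{10}$), not the localization property, that produces the $\epsilon_2$. On the second family the scale-$l_2$ projection of $u$ over $G_\beta^{l_2}$ is literally a $\tilde Y$ component, and the $\ell^2$ sum over $G_\beta^{l_2}$ and over $l_2$ weighted by $2^{l_2-i}$ reproduces the $Y(G_\alpha^i)$ norm, with Propositions \ref{prop:ibs_2} and \ref{prop:ibs_3} supplying the accompanying $(1+\|u\|_{\tilde X_i}^4)$ (an $\ell^2$-in-$l_2$ version being exactly why Proposition \ref{prop:ibs_3} is needed). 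Without this dichotomy in $N(t)$ versus $\epsilon_3^{1/2}2^{l_2}$ your argument cannot produce the stated right-hand side.
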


By some straightforward Littlewood-Paley analysis and proposition \ref{prop:Up_lp}, we obtain the following estimate for the total contribution of the $\mathrm{Hard}_{G_{\alpha},i}$ and $\mathrm{Hard}_{i}$. We omit the details.

\begin{cor}[Hard estimate]\label{cor:LTSE_b_hard} 
The following estimate holds uniformly in $20\leq j\leq k_{0}$ and $G_{\kappa}^{j}\subset [0,T]$:
\begin{equation}
\begin{split}
&\sum_{20\leq i<j}2^{i-j} \sum_{G_{\alpha}^{i}\subset G_{\kappa}^{j}; N(G_{\alpha}^{i})\leq \epsilon_{3}^{1/2}2^{i-5}} \|\int_{t_{\alpha}^{i}}^{t}e^{i(t-\tau)\Delta}\mathrm{Hard}_{G_{\alpha}^{i}}(\tau) d\tau\|_{U_{\Delta}^{2}(G_{\alpha}^{i}\times\R^{2})}^{2} \\
&\phantom{=}\qquad + \sum_{i\geq j; N(G_{\kappa}^{j})\leq \epsilon_{3}^{1/2}2^{i-10}}\|\int_{t_{\kappa}^{j}}^{t}e^{i(t-\tau)\Delta}\mathrm{Hard}_{i}(\tau)d\tau\|_{U_{\Delta}^{2}(G_{\kappa}^{j}\times\R^{2})}^{2}\\
&\phantom{=}\lesssim_{u} \|u\|_{\tilde{Y}_{j}(G_{\kappa}^{j}\times\R^{2})}^{2}\paren*{\epsilon_{2}+\|u\|_{\tilde{Y}_{j}(G_{\kappa}^{j}\times\R^{2})}\paren*{1+\|u\|_{\tilde{X}_{j}(G_{\kappa}^{j}\times\R^{2})}^{4}}}^{2}.
\end{split}
\end{equation}
\end{cor}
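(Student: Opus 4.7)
The plan is to deduce the corollary from Lemma \ref{lem:LTSE_b_hard} by squaring the per-interval bounds, pulling out the ``error factor'' through monotonicity of the maximal norms, and recognizing the remaining quadratic sum as (essentially) the square of the $\tilde Y_j$ norm. Concretely, I would square the two estimates in Lemma \ref{lem:LTSE_b_hard} to obtain, for each $G_\alpha^i\subset G_\kappa^j$ with $20\leq i<j$ and $N(G_\alpha^i)\leq \epsilon_3^{1/2}2^{i-5}$,
\begin{equation*}
\|{\textstyle\int_{t_\alpha^i}^t}e^{i(t-\tau)\Delta}\mathrm{Hard}_{G_\alpha^i}(\tau)d\tau\|_{U_\Delta^2(G_\alpha^i\times\R^2)}^2 \lesssim_u \|P_{\xi(G_\alpha^i),i-5\leq\cdot\leq i+5}u\|_{U_\Delta^2(G_\alpha^i\times\R^2)}^2\paren*{\epsilon_2+\|u\|_{\tilde Y_i(G_\alpha^i\times\R^2)}(1+\|u\|_{\tilde X_i(G_\alpha^i\times\R^2)}^4)}^2,
\end{equation*}
and analogously for $\mathrm{Hard}_i$ on $G_\kappa^j$, $i\geq j$.

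Next, I would exploit monotonicity of the maximal norms. Since $\tilde X_l$ and $\tilde Y_l$ are defined as suprema of $X$- and $Y$-norms over all subintervals $G_{\kappa'}^{j'}\subset [a,b]$ with $0\leq j'\leq\min\{l,k_0\}$, and since $G_\alpha^i\subset G_\kappa^j$ with $i\leq j$, we have
\begin{equation*}
\|u\|_{\tilde X_i(G_\alpha^i\times\R^2)} \leq \|u\|_{\tilde X_j(G_\kappa^j\times\R^2)}, \qquad \|u\|_{\tilde Y_i(G_\alpha^i\times\R^2)} \leq \|u\|_{\tilde Y_j(G_\kappa^j\times\R^2)}.
\end{equation*}
Consequently the error factor $(\epsilon_2+\|u\|_{\tilde Y_i}(1+\|u\|_{\tilde X_i}^4))^2$ is bounded uniformly in $i,\alpha$ by the $j$-indexed, $G_\kappa^j$-restricted error factor, which may then be pulled outside the double sum.

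What remains is to show that the quadratic sum
\begin{equation*}
\sum_{20\leq i<j}2^{i-j}\sum_{G_\alpha^i\subset G_\kappa^j;\, N(G_\alpha^i)\leq\epsilon_3^{1/2}2^{i-5}}\|P_{\xi(G_\alpha^i),i-5\leq\cdot\leq i+5}u\|_{U_\Delta^2(G_\alpha^i\times\R^2)}^2 + \sum_{i\geq j;\, N(G_\kappa^j)\leq\epsilon_3^{1/2}2^{i-10}}\|P_{\xi(G_\kappa^j),i-5\leq\cdot\leq i+5}u\|_{U_\Delta^2(G_\kappa^j\times\R^2)}^2
\end{equation*}
is controlled by $\|u\|_{\tilde Y_j(G_\kappa^j\times\R^2)}^2$. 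Writing $P_{\xi(\cdot),i-5\leq\cdot\leq i+5}=\sum_{|k-i|\leq 5}P_{\xi(\cdot),k}$, the triangle inequality in $U_\Delta^2$ together with Cauchy--Schwarz in $k$ yields
\begin{equation*}
\|P_{\xi(G_\alpha^i),i-5\leq\cdot\leq i+5}u\|_{U_\Delta^2(G_\alpha^i\times\R^2)}^2 \lesssim \sum_{|k-i|\leq 5}\|P_{\xi(G_\alpha^i),k-2\leq\cdot\leq k+2}u\|_{U_\Delta^2(G_\alpha^i\times\R^2)}^2.
\end{equation*}
The $k=i$ term is exactly a summand of $\|u\|_{Y(G_\kappa^j\times\R^2)}^2$. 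For $k<i$, proposition \ref{prop:Up_lp} lets me decompose the $U_\Delta^2(G_\alpha^i)$-norm into the $\leq 2^5$ children $G_\beta^k\subset G_\alpha^i$, after which admissibility property \ref{item:adm_tup_1} guarantees $|\xi(G_\alpha^i)-\xi(G_\beta^k)|\lesssim 2^{i-19}\epsilon_3\epsilon_1^{-1/2}\ll 2^k$, so the projector can be re-centered at $\xi(G_\beta^k)$ at the cost of an $O(1)$ further widening; the constraint $N(G_\alpha^i)\leq \epsilon_3^{1/2}2^{i-5}$ propagates by the fundamental theorem of calculus to $N(G_\beta^k)\leq \epsilon_3^{1/2}2^{k-5}$ (adjusting absolute constants if necessary), placing the result inside the $Y$-norm summation range. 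For $k>i$, the analogous argument runs via the unique parent $G_\beta^k\supset G_\alpha^i$ together with the trivial monotonicity $\|\cdot\|_{U_\Delta^2(G_\alpha^i)}\leq\|\cdot\|_{U_\Delta^2(G_\beta^k)}$. Summing in $k$ contributes only an absolute constant, and the second sum (over $i\geq j$) is handled identically on $G_\kappa^j$ itself, noting that $N(G_\kappa^j)\leq \epsilon_3^{1/2}2^{i-10}$ is strictly stronger than the $Y$-norm threshold $N(G_\kappa^j)\leq \epsilon_3^{1/2}2^{i-5}$.

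The only real obstacle is the bookkeeping in the last step --- verifying that the slight widening from $i-2\leq\cdot\leq i+2$ to $i-5\leq\cdot\leq i+5$, together with the displacement of the frequency centers between a $G_\alpha^i$ and its children/parent, can be absorbed at the cost of absolute constants without leaving the range of summation that defines the $Y$-norm. Once this is established, combining Steps 1--3 gives the bound claimed in the corollary.
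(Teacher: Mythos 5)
Your route — square the two estimates of Lemma \ref{lem:LTSE_b_hard}, pull the factor $(\epsilon_{2}+\|u\|_{\tilde{Y}_{i}}(1+\|u\|_{\tilde{X}_{i}}^{4}))^{2}$ out by monotonicity of the maximal norms, and then convert the remaining weighted sum of $\|P_{\xi(G_{\alpha}^{i}),i-5\leq\cdot\leq i+5}u\|_{U_{\Delta}^{2}}^{2}$ into $\|u\|_{\tilde{Y}_{j}(G_{\kappa}^{j}\times\R^{2})}^{2}$ by splitting the band into dyadic levels, using proposition \ref{prop:Up_lp} to pass to parents/children and re-centering the projector (legitimate since $|\xi(G_{\alpha}^{i})-\xi(G_{\beta}^{k})|\lesssim\epsilon_{3}\epsilon_{1}^{-1/2}2^{k}\ll 2^{k}$) — is exactly the ``straightforward Littlewood--Paley analysis'' the paper leaves to the reader; it is the same maneuver written out in the summation step of Lemma \ref{lem:LTSE_b_easy}. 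Your handling of the levels $k\geq i$ (parents, or $G_{\kappa}^{j}$ itself) and of the whole second sum over $i\geq j$ is correct: there the $N$-constraint propagates \emph{upward} by the fundamental theorem of calculus, and for $i\geq j$ the hypothesis $N(G_{\kappa}^{j})\leq\epsilon_{3}^{1/2}2^{i-10}$ supplies exactly the extra $2^{-5}$ of slack needed when you drop to level-$k$ subintervals with $k\geq i-5$.

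The one step that does not hold as you state it is the downward propagation in the first sum. From $N(G_{\alpha}^{i})\leq\epsilon_{3}^{1/2}2^{i-5}$ and $G_{\beta}^{k}\subset G_{\alpha}^{i}$ you only get $N(G_{\beta}^{k})\leq N(G_{\alpha}^{i})\leq\epsilon_{3}^{1/2}2^{i-5}$, which for the low edge of the band ($k$ between $i-5$ and roughly $i-2$) can exceed the threshold $\epsilon_{3}^{1/2}2^{k-5}$ of the $Y$-norm by a factor up to $2^{5}$; the FTC argument shrinks the threshold only when you pass to \emph{larger} intervals, never when you descend to children (take $N(t)\equiv\epsilon_{3}^{1/2}2^{i-5}$ on $G_{\alpha}^{i}$ to see the failure). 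Nor is ``adjusting absolute constants'' available: the constant $2^{\cdot-5}$ is hard-wired into the definition of the $\tilde{Y}_{j}$ norm that sits on the right-hand side of the corollary, so children violating it simply fall outside the summation range you are trying to land in. This liberty is also taken in the paper's own written-out summation in Lemma \ref{lem:LTSE_b_easy}, and it is harmless in substance: the clean repair is to carry along the constraint that actually arrives from the reduction in Lemma \ref{lem:LTSE_b}, namely $N(G_{\alpha}^{i})\leq\epsilon_{3}^{1/2}2^{i-10}$, which restores the slack ($N(G_{\beta}^{k})\leq\epsilon_{3}^{1/2}2^{i-10}\leq\epsilon_{3}^{1/2}2^{k-5}$ for all $k\geq i-5$) and makes your descent argument go through verbatim; alternatively one can retune the thresholds in the statements so that the hypothesis on $G_{\alpha}^{i}$ is five dyadic levels stronger than the $Y$-norm cutoff. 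As written, though, the claim that the constraint ``propagates by the fundamental theorem of calculus to $N(G_{\beta}^{k})\leq\epsilon_{3}^{1/2}2^{k-5}$'' is the one genuine hole in your proof.
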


We now prove lemma \ref{lem:LTSE_b_hard}.
\begin{proof}
We only present the details for the contribution of
\begin{equation}
	\mathrm{Hard}_{G_{\alpha}^{i},1} = P_{\xi(G_{\alpha}^{i}), i-2\leq\cdot\leq i+2}[\E(|P_{\xi(\tau), <i-10}u|^{2})P_{\xi(G_{\alpha}^{i}), \geq i-5}u]
\end{equation}
for $20\leq i<j$. The contribution of $\mathrm{Hard}_{i,1}$ for $i\geq j$ follows by analogous arguments \emph{mutatis mutandis}, while the contributions of $\mathrm{Hard}_{G_{\alpha}^{i},2}$ and $\mathrm{Hard}_{i,2}$ are strictly easier to estimate as one near and one far frequency factor fall inside the argument of the operator $\E$.

As in the proof of lemma \ref{lem:LTSE_NI_est}, we first reduce to the case where $G_{\alpha}^{i}$ is the union of small intervals $J_{l}$. Let $J_{1},J_{2}$ be the two (possibly empty) small intervals which intersect $G_{\alpha}^{i}$ but are not contained in $G_{\alpha}^{i}$, and define the interval $\tilde{G}_{\alpha}^{i}:= G_{\alpha}^{i}\setminus (J_{1}\cup J_{2})$. By duality, triangle inequality, and the embedding $V_{\Delta}^{2} \subset L_{t,x}^{4}$, we have that
\begin{equation}
\|\int_{t_{\alpha}^{i}}^{t}e^{i(t-\tau)\Delta}\mathrm{Hard}_{G_{\alpha}^{i},1}(\tau)d\tau\|_{U_{\Delta}^{2}(G_{\alpha}^{i}\times\R^{2})} \lesssim \|\int_{t_{\alpha}^{i}}^{t}\mathrm{Hard}_{G_{\alpha}^{i},1}(\tau)d\tau\|_{U_{\Delta}^{2}(\tilde{G}_{\alpha}^{i}\times\R^{2})} + \|\mathrm{Hard}_{G_{\alpha}^{i},1}\|_{L_{t,x}^{4/3}(G_{\alpha}^{i}\cap (J_{1}\cup J_{2}) \times\R^{2})}.
\end{equation}
We claim that up to an acceptable error (i.e. one which we can absorb into the RHS of lemma \ref{lem:LTSE_b_hard}), we may assume that $t_{\alpha}^{i}\in \tilde{G}_{\alpha}^{i}$. Indeed,  otherwise suppose that $t_{\alpha}^{i}\in J_{1}\cup J_{2}$, and without loss of generality assume that $t_{\alpha}^{i}\in J_{1}$. Let $\tilde{t}_{\alpha}^{i}$ denote the left endpoint of of the interval $\tilde{G}_{\alpha}^{i}$. Then
\begin{align}
\|\int_{t_{\alpha}^{i}}^{t}e^{i(t-\tau)\Delta}\mathrm{Hard}_{G_{\alpha}^{i},1}(\tau)d\tau-\int_{\tilde{t}_{\alpha}^{i}}^{t}e^{i(t-\tau)\Delta}\mathrm{Hard}_{G_{\alpha}^{i},1}(\tau)d\tau\|_{U_{\Delta}^{2}(G_{\alpha}^{i}\times\R^{2})} &= \|\int_{\tilde{t}_{\alpha}^{i}}^{t_{\alpha}^{i}}e^{i(t-\tau)\Delta}\mathrm{Hard}_{G_{\alpha}^{i},1}(\tau)d\tau\|_{U_{\Delta}^{2}(G_{\alpha}^{i}\times\R^{2})} \nonumber\\
&\lesssim \|\mathrm{Hard}_{G_{\alpha}^{i},1}\|_{L_{t,x}^{4/3}(J_{1}\cap G_{\alpha}^{i}\times\R^{2})},
\end{align}
where the ultimate line follows from the definition of the $U_{\Delta}^{2}$ norm and the dual homogeneous Strichartz estimate. To show that the error is acceptable, it remains for us to estimate the quantity
\begin{equation}
\sum_{l=1,2}\|P_{\xi(G_{\alpha}^{i}), i-2\leq\cdot\leq i+2}[\E(|P_{\xi(t), <i-10}u|^{2})(P_{\xi(G_{\alpha}^{i}), \geq i-5}u)]\|_{L_{t,x}^{4/3}(J_{l}\cap G_{\alpha}^{i}\times\R^{2})},
\end{equation}
which we do now.

By symmetry of argument, it suffices to consider the case of $J_{1}$ (i.e. $l=1$ in the preceding equation). We first perform another near-far frequency decomposition and use triangle inequality to obtain
\begin{align}
&\|(P_{\xi(G_{\alpha}^{i}), i-5\leq\cdot\leq i+5}u)\E\paren*{|P_{\xi(t),<i-10}u|^{2}}\|_{L_{t,x}^{4/3}(G_{\alpha}^{i}\cap J_{1}\times\R^{2})} \nonumber\\
&\phantom{=}\lesssim\|(P_{\xi(G_{\alpha}^{i}), i-5\leq\cdot\leq i+5}u)\E\paren*{|P_{\xi(t),<i-10}P_{\xi(J_{1}\cap G_{\alpha}^{i}), \leq 2\epsilon_{3}^{-1/4}N(J_{1}\cap G_{\alpha}^{i})}u|^{2}}\|_{L_{t,x}^{4/3}(G_{\alpha}^{i}\cap J_{1}\times\R^{2})} \nonumber\\
&\phantom{=}\quad+ \|(P_{\xi(G_{\alpha}^{i}),i-5\leq\cdot\leq i+5}u)\E\paren*{(P_{\xi(t),<i-10}P_{\xi(J_{l}\cap G_{\alpha}^{i}), \leq 2\epsilon_{3}^{-1/4}N(J_{1}\cap G_{\alpha}^{i})}u) (\ol{P_{\xi(t),<i-10}P_{\xi(J_{1}\cap G_{\alpha}^{i}),>2\epsilon_{3}^{-1/4}N(J_{1}\cap G_{\alpha}^{i})}u})}\|_{L_{t,x}^{4/3}(G_{\alpha}^{i}\cap J_{1}\times\R^{2})} \nonumber\\
&\phantom{=}\quad+\|(P_{\xi(G_{\alpha}^{i}), i-5\leq\cdot\leq i+5}u)\E\paren*{|P_{\xi(t),<i-10}P_{\xi(J_{1}\cap G_{\alpha}^{i}), >2\epsilon_{3}^{-1/4}N(J_{1}\cap G_{\alpha}^{i})}u|^{2}}\|_{L_{t,x}^{4/3}(G_{\alpha}^{i}\cap J_{1}\times\R^{2})} \nonumber\\
&\phantom{=}\eqqcolon \mathrm{Term}_{1}+\mathrm{Term}_{2} + \mathrm{Term}_{3}.
\end{align}
To estimate $\mathrm{Term}_{2}+\mathrm{Term}_{3}$, we use H\"{o}lder's inequality, Bernstein's lemma, Strichartz estimates, and the frequency localization property \eqref{eq:sp_frq_loc} to obtain that
\begin{align}
\mathrm{Term}_{2} +\mathrm{Term}_{3} &\lesssim  \|P_{\xi(J_{1}\cap G_{\alpha}^{i}),>2\epsilon_{3}^{-1/4}N(J_{1}\cap G_{\alpha}^{i})}u\|_{L_{t}^{\infty}L_{x}^{2}(G_{\alpha}^{i}\cap J_{1}\times\R^{2})} \|P_{\xi(G_{\alpha}^{i}), i-5\leq\cdot\leq i+5}u\|_{L_{t}^{8/3}L_{x}^{8}(G_{\alpha}^{i}\cap J_{1}\times\R^{2})}\times \nonumber\\
&\phantom{=}\qquad\|P_{\xi(t),>i-10}u\|_{L_{t}^{8/3}L_{x}^{8}(G_{\alpha}^{i}\cap J_{1}\times\R^{2})} \nonumber\\
&\lesssim  \epsilon_{2}\|P_{\xi(G_{\alpha}^{i}),i-5\leq\cdot\leq i+5}u\|_{U_{\Delta}^{2}(G_{\alpha}^{i}\cap J_{1}\times\R^{2})}.
\end{align}

To estimate $\mathrm{Term}_{1}$, we want to use a bilinear Strichartz estimate to exploit the frequency separation between the far frequency factor and one of the near frequency factors. However, the reader will notice that $\mathrm{Term}_{1}$ is of the worst case since both of the near frequency factors fall inside the argument of $\E$; therefore, it is not a priori clear how to apply bilinear estimates without destroying the cancellation in the kernel of $\E$. We address this difficulty here and throughout this work with a tool which we call the \emph{double frequency decomposition}, which appeared in a simpler form in \cite{Chae2010} in which the authors considered global well-posedness for the 3D mass-critical Hartree equation at subcritical regularities below $H^{1}$. Although the present case is the easiest of the applications of the double frequency decomposition which we will make in this paper, it is nevertheless illustrative of the idea of the argument without being overly technical. Moreover, we will not be so detailed in the routine steps of the decomposition in the sequel.

We first perform a homogeneous Littlewood-Paley decomposition of the symbol of $\E$,
\begin{equation}
\E = \sum_{k\in\mathbb{Z}} \E\dot{P}_{k} \eqqcolon \sum_{k\in\mathbb{Z}}\E_{k},
\end{equation}
and observe that
\begin{equation}
\begin{split}
&\E\paren*{|P_{\xi(t),<i-10}P_{\xi(J_{1}\cap G_{\alpha}^{i}), \leq 2\epsilon_{3}^{-1/4}N(J_{1}\cap G_{\alpha}^{i})}u|^{2}} \\
&\phantom{=} = \sum_{k\leq \min\{\log_{2}(8\epsilon_{3}^{-1/4}N(J_{1})\cap G_{\alpha}^{i}),i-8\}}\E_{k}\paren*{|P_{\xi(t),<i-10}P_{\xi(J_{1}\cap G_{\alpha}^{i}), \leq 2\epsilon_{3}^{-1/4}N(J_{1}\cap G_{\alpha}^{i})}u|^{2}}.
\end{split}
\end{equation}
Since the symbol of $\E$ belongs to $C^{\infty}(\R^{2}\setminus\{0\})$ and is homogeneous of degree zero, it follows from the usual scaling and integration by parts argument that the kernel $\mathcal{K}_{k}$ of $\E_{k}$ is Schwartz class and satisfies the $k$-uniform rapid decay estimate
\begin{equation}
|\mathcal{K}_{k}(x)| \lesssim_{N} 2^{2k}\jp{2^{k}x}^{-N}, \qquad \forall x\in\R^{2}, \enspace \forall N\geq 0.
\end{equation}
Now for each $k\leq i-8$, we decompose Fourier space $\hat{\R}^{2}$ into dyadic cubes $Q_{a}^{k}$ of side length $2^{k}$ so that
\begin{equation}
\begin{split}
&\E_{k}\paren*{|P_{\xi(t),<i-10}P_{\xi(J_{1}\cap G_{\alpha}^{i}), \leq 2\epsilon_{3}^{-1/4}N(J_{1}\cap G_{\alpha}^{i})}u|^{2}} \\
&\phantom{=}= \sum_{a,a'\in\mathbb{Z}^{2}} \E_{k}\paren*{(P_{\xi(t),<i-10}P_{\xi(J_{1}\cap G_{\alpha}^{i}), \leq 2\epsilon_{3}^{-1/4}N(J_{1}\cap G_{\alpha}^{i})}P_{Q_{a}^{k}}u)(\ol{P_{\xi(t),<i-10}P_{\xi(J_{1}\cap G_{\alpha}^{i}), \leq 2\epsilon_{3}^{-1/4}N(J_{1}\cap G_{\alpha}^{i})}P_{Q_{a'}^{k}}u})}.
\end{split}
\end{equation}
We claim that for each $a\in\mathbb{Z}^{2}$ fixed, there are $O(1)$ cubes $Q_{a'}^{k}$ such that the summand in the RHS of the preceding equality is nonzero or equivalently, $(Q_{a}^{k}-Q_{a'}^{k})\cap \{2^{k-2}\leq |\xi| \leq 2^{k+2}\}\neq \emptyset$. Indeed, fix $a\in \mathbb{Z}^{2}$ and suppose that $|a-a'| > 2^{5}$. Then for any $\xi\in Q_{a}^{k}$ and $\xi' \in -Q_{a'}^{k}$, we have by the reverse triangle inequality that
\begin{equation}
|\xi+\xi'| = |(\xi-2^{k}a) + 2^{k}(a-a') + (\xi'+2^{k}a')| > 2^{5+k} - 2^{k+\frac{1}{2}} - 2^{k+\frac{1}{2}} \geq 2^{4+k},
\end{equation}
which shows that $\xi+\xi'$ does not belong to the support of the symbol of $\E_{k}$. Since there are at most $2^{20}$ pairs $a,a'\in\Z^{2}$ satisfying the condition $|a-a'|\leq 2^{5}$, we obtain the claim.

Now by Minkowski's inequality,
\begin{align}
&\|(P_{\xi(G_{\alpha}^{i}), i-5\leq\cdot\leq i+5}u)\E_{k}\paren*{|P_{\xi(t),<i-10}P_{\xi(J_{1}\cap G_{\alpha}^{i}), \leq 2\epsilon_{3}^{-1/4}N(J_{1}\cap G_{\alpha}^{i})}u|^{2}}\|_{L_{t,x}^{4/3}(J_{1}\cap G_{\alpha}^{i}\times\R^{2})} \nonumber\\
&\phantom{=}\lesssim \int_{\R^{2}}dy |\mathcal{K}_{k}(y)|\sum_{|a-a'|\leq 2^{5}} \|(P_{\xi(G_{\alpha}^{i}), i-5\leq\cdot\leq i+5}u) (P_{\xi(t), <i-10}P_{\xi(J_{1}\cap G_{\alpha}^{i}), \leq 2\epsilon_{3}^{-1/4}N(J_{1}\cap G_{\alpha}^{i})}P_{Q_{a}^{k}}\tau_{y}u) \nonumber\\
&\phantom{=}\qquad (\ol{P_{\xi(t),<i-10}P_{\xi(J_{1}\cap G_{\alpha}^{i}), \leq 2\epsilon_{3}^{-1/4}N(J_{1}\cap G_{\alpha}^{i})}P_{Q_{a'}^{k}}\tau_{y}u})\|_{L_{t,x}^{4/3}(J_{1}\cap G_{\alpha}^{i}\times\R^{2})} \nonumber\\
&\phantom{=}\lesssim \sup_{y\in\R^{2}}\sum_{|a-a'|\leq 2^{5}} \|(P_{\xi(G_{\alpha}^{i}), i-5\leq\cdot\leq i+5}u) (P_{\xi(t), <i-10}P_{\xi(J_{1}\cap G_{\alpha}^{i}), \leq 2\epsilon_{3}^{-1/4}N(J_{1}\cap G_{\alpha}^{i})}P_{Q_{a}^{k}}\tau_{y}u) \nonumber\\
&\phantom{=}\qquad\ol{(P_{\xi(t),<i-10}P_{\xi(J_{1}\cap G_{\alpha}^{i}), \leq 2\epsilon_{3}^{-1/4}N(J_{1}\cap G_{\alpha}^{i})}P_{Q_{a'}^{k}}\tau_{y}u)}\|_{L_{t,x}^{4/3}(J_{1}\cap G_{\alpha}^{i}\times\R^{2})} \nonumber\\
&\phantom{=}\lesssim \sup_{y\in\R^{2}}\sum_{|a-a'|\leq 2^{5}} \|(P_{\xi(G_{\alpha}^{i}), i-5\leq\cdot\leq i+5}u) (P_{\xi(J_{1}\cap G_{\alpha}^{i}), < 2\epsilon_{3}^{-1/4}N(J_{1}\cap G_{\alpha}^{i})}	P_{Q_{a}^{k}}\tau_{y}u)\|_{L_{t,x}^{2}(G_{\alpha}^{i}\cap J_{1}\times\R^{2})} \|P_{Q_{a'}^{k}}u\|_{L_{t,x}^{4}(J_{1}\times\R^{2})} \nonumber\\
&\phantom{=}\lesssim \sup_{y\in\R^{2}} \paren*{\sum_{a\in\Z^{2}} \|(P_{\xi(G_{\alpha}^{i}), i-5\leq\cdot\leq i+5}u) (P_{\xi(J_{1}\cap G_{\alpha}^{i}), <2\epsilon_{3}^{-1/4}N(J_{1}\cap G_{\alpha}^{i})}P_{Q_{a}^{k}}\tau_{y}u)\|_{L_{t,x}^{2}(G_{\alpha}^{i}\cap J_{1}\times\R^{2})}^{2}}^{1/2} \|u\|_{X_{\Delta}^{2,k}(J_{1}\times\R^{2})} \nonumber \\
&\phantom{=}\lesssim \sup_{y\in\R^{2}}  \paren*{\sum_{a\in\Z^{2}} \|(P_{\xi(G_{\alpha}^{i}), i-5\leq\cdot\leq i+5}u) (P_{\xi(J_{1}\cap G_{\alpha}^{i}), < 2\epsilon_{3}^{-1/4}N(J_{1}\cap G_{\alpha}^{i})}P_{Q_{a}^{k}}\tau_{y}u)\|_{L_{t,x}^{2}(G_{\alpha}^{i}\cap J_{1}\times\R^{2})}^{2}}^{1/2}, \label{eq:LTSE_SD_ex_bl}
\end{align}
where we use H\"{o}lder's inequality to obtain the antepenultimate inequality; Cauchy-Schwarz, almost orthogonality, and Strichartz estimates to obtain the penultimate inequality; and the embedding $U_{\Delta}^{2}\subset X_{\Delta}^{2,k}$ together with $J_{1}$ is small to obtain the ultimate inequality. To estimate \eqref{eq:LTSE_SD_ex_bl}, we the assumption that $N(G_{\alpha}^{i})\leq \epsilon_{3}^{1/2}2^{i-5}$ in order to use Galilean invariance to apply the bilinear Strichartz estimate of proposition \ref{prop:bs_i} at the level of each cube $Q_{a}^{k}$, obtaining that
\begin{align}
\eqref{eq:LTSE_SD_ex_bl} &\lesssim \paren*{\sum_{a; Q_{a}^{k}\cap B(\xi(J_{1}\cap G_{\alpha}^{i}), 4\epsilon_{3}^{-1/4}N(J_{1}\cap G_{\alpha}^{i}))\neq \emptyset} \paren*{2^{\frac{k-i}{2}}\|P_{\xi(G_{\alpha}^{i}), i-5\leq\cdot\leq i+5}u\|_{U_{\Delta}^{2}(G_{\alpha}^{i}\times\R^{2})} \|P_{Q_{a}^{k}}u\|_{U_{\Delta}^{2}(J_{1}\times\R^{2})}}^{2}}^{1/2} \nonumber\\
&\lesssim 2^{\frac{k-i}{2}}\|P_{\xi(G_{\alpha}^{i}), i-5\leq\cdot\leq i+5}u\|_{U_{\Delta}^{2}(G_{\alpha}^{i}\times\R^{2})},
\end{align}
where we use the spatial translation invariance of the $U_{\Delta}^{2}$ norm, the embedding $U_{\Delta}^{2}\subset X_{\Delta}^{2,k}$, and that $J_{1}$ is small to obtain the ultimate inequality. Now summing over $k\leq \log_{2}(8\epsilon_{3}^{-1/4}N(J_{1}\cap G_{\alpha}^{i}))$, we obtain that
\begin{align}
&\sum_{k \leq \log_{2}(8\epsilon_{3}^{-1/4}N(J_{1}\cap G_{\alpha}^{i}))} \|\E_{k}\paren*{|P_{\xi(t),<i-10}P_{\xi(J_{1}\cap G_{\alpha}^{i}), \leq 2\epsilon_{3}^{-1/4}N(J_{1}\cap G_{\alpha}^{i})}u|^{2}}\|_{L_{t,x}^{4/3}(G_{\alpha}^{i}\cap J_{1}\times\R^{2})} \nonumber\\
&\phantom{=}\lesssim \paren*{2^{-i}\epsilon_{3}^{-1/4}N(J_{1}\cap G_{\alpha}^{i})}^{1/2} \|P_{\xi(G_{\alpha}^{i}), i-5\leq \cdot\leq i+5}u\|_{U_{\Delta}^{2}(G_{\alpha}^{i}\times\R^{2})} \nonumber\\
&\phantom{=}\lesssim \epsilon_{2} \|P_{\xi(G_{\alpha}^{i}), i-5\leq\cdot\leq i+5}u\|_{U_{\Delta}^{2}(G_{\alpha}^{i}\times\R^{2})},
\end{align}
where we use that $N(J_{1}\cap G_{\alpha}^{i})\leq \epsilon_{3}^{1/2}2^{i-5}$ and that $\epsilon_{3}\leq \epsilon_{2}^{10}$ to obtain the ultimate inequality. Thus, we have shown that
\begin{equation}
\mathrm{Term}_{1} \lesssim \epsilon_{2}\|P_{\xi(G_{\alpha}^{i}), i-5\leq\cdot\leq i+5}u\|_{U_{\Delta}^{2}(G_{\alpha}^{i}\times\R^{2})},
\end{equation}
completing the proof of the claim that
\begin{equation}
\|\mathrm{Hard}_{G_{\alpha}^{i},1}\|_{L_{t,x}^{4/3}(G_{\alpha}^{i}\cap (J_{1}\cup J_{2})\times\R^{2})} \lesssim \epsilon_{2} \|P_{\xi(G_{\alpha}^{i}),i-5\leq\cdot\leq i+5}u\|_{U_{\Delta}^{2}(G_{\alpha}^{i}\times\R^{2})}.
\end{equation}

We now estimate the quantity
\begin{equation}
\|\int_{t_{\alpha}^{i}}^{t}\mathrm{Hard}_{G_{\alpha}^{i},1}(\tau)d\tau\|_{U_{\Delta}^{2}(G_{\alpha}^{i}\times\R^{2})}
\end{equation}
under the assumptions that $G_{\alpha}^{i}$ is a union of small intervals $J_{l}$ and that $t_{\alpha}^{i} \in G_{\alpha}^{i}$. We decompose $\E(|P_{\xi(t), \leq i-10}u|^{2})$ by
\begin{align}
\E\paren*{|P_{\xi(t), \leq i-10}u|^{2}} &= \sum_{0\leq l_{2} \leq i-10} \E\paren*{(P_{\xi(t), l_{2}}u)(\ol{P_{\xi(t), \leq l_{2}}u})}+\sum_{0<l_{2}\leq i-10} \E\paren*{(P_{\xi(t),<l_{2}}u)(\ol{P_{\xi(t),l_{2}}u})}.
\end{align}
By symmetry and triangle inequality, it suffices to estimate
\begin{equation}
\sum_{0\leq l_{2} \leq i-10} \|\int_{t_{\alpha}^{i}}^{t}e^{i(t-\tau)\Delta}P_{\xi(G_{\alpha}^{i}), i-2\leq\cdot\leq i+2}\brak*{(P_{\xi(G_{\alpha}^{i}), i-5\leq\cdot\leq i+5}u(\tau)))\E\paren*{(P_{\xi(t), l_{2}}u(\tau)) (\ol{P_{\xi(t), \leq l_{2}}u(\tau)}}}d\tau\|_{U_{\Delta}^{2}(G_{\alpha}^{i}\times\R^{2})}.
\end{equation}
For each integer $0\leq l_{2}\leq i-10$, we split the small intervals constituting the time interval $G_{\alpha}^{i}$ into two different groupings based on the size of their maximum frequency scale relative to $2^{l_{2}}$:
\begin{equation}
G_{\alpha}^{i} = \paren*{\bigcup_{J_{l}\subset G_{\alpha}^{i}; N(J_{l}) \geq \epsilon_{3}^{1/2}2^{l_{2}-6}} J_{l}} \cup \paren*{\bigcup_{J_{l}\subset G_{\alpha}^{i}; N(J_{l})< \epsilon_{3}^{1/2}2^{l_{2}-6}} J_{l}} \eqqcolon G_{\alpha,l_{2},\geq}^{i} \cup G_{\alpha,l_{2},<}^{i}.
\end{equation}
Using proposition \ref{prop:U2_duh}, we have that
\begin{equation}
\begin{split}
&\sum_{0\leq l_{2} \leq i-10} \|\int_{t_{\alpha}^{i}}^{t}e^{i(t-\tau)\Delta}P_{\xi(G_{\alpha}^{i}), i-2\leq\cdot\leq i+2}\brak*{(P_{\xi(G_{\alpha}^{i}), i-5\leq\cdot\leq i+5}u(\tau))\E\paren*{(P_{\xi(t), l_{2}}u(\tau))(\ol{P_{\xi(t), \leq l_{2}}u(\tau)})}}d\tau\|_{U_{\Delta}^{2}(G_{\alpha,l_{2},\geq}^{i}\times\R^{2})} \\
&\phantom{=}\lesssim \sum_{0\leq l_{2}\leq i-10} \paren*{\sum_{J_{l}\subset G_{\alpha}^{i}; N(J_{l})\geq\epsilon_{3}^{1/2}2^{l_{2}-6}}\|P_{\xi(G_{\alpha}^{i}), i-2\leq\cdot\leq i+2}\brak*{(P_{\xi(G_{\alpha}^{i}), i-5\leq\cdot\leq i+5}u)\E\paren*{(P_{\xi(\tau), l_{2}u})(\ol{P_{\xi(\tau), \leq l_{2}}u})}}\|_{V_{\Delta}^{2}(J_{l}\times\R^{2})^{*}}^{2}}^{1/2} \\
&\phantom{=}\quad +\sum_{0\leq l_{2}\leq i-10}\sum_{J_{l}\subset G_{\alpha}^{i} ; N(J_{l})\geq\epsilon_{3}^{1/2}2^{l_{2}-6}}\|\int_{J_{l}}e^{-it\Delta}P_{\xi(G_{\alpha}^{i}), i-2\leq\cdot\leq i+2}\brak*{(P_{\xi(G_{\alpha}^{i}), i-5\leq\cdot\leq i+5}u(t))\E\paren*{(P_{\xi(t), l_{2}}u(t))(\ol{P_{\xi(t), \leq l_{2}}u(t)})}}dt\|_{L_{x}^{2}(\R^{2})}.
\end{split}
\end{equation}
Rather repeat the step as before for $G_{\alpha,l_{2},<}^{i}$, it is more convenient to re-express this set in terms of subintervals $G_{\beta}^{l_{2}}\subset G_{\alpha}^{i}$. To see this convenience, observe that if $G_{\beta}^{l_{2}}\cap G_{\alpha,l_{2},<}^{i}\neq\emptyset$, then there exists some $l$ such that $N(J_{l})<\epsilon_{3}^{1/2}2^{l_{2}-6}$ and $G_{\beta}^{l_{2}}\cap J_{l}\neq\emptyset$. If $t_{J_{l}}\in G_{\beta}^{l_{2}}\cap J_{l}$, then
\begin{equation}
N(t) \leq |N(t)-N(t_{J_{l}})| + N(t_{J_{l}}) < \int_{G_{\beta}^{l_{2}}} 2^{-20}\epsilon_{1}^{-1/2} N(\tau)^{3}d\tau + \epsilon_{3}^{1/2}2^{l_{2}-6} \leq \epsilon_{3}^{1/2}2^{l_{2}-5}, \qquad \forall t\in G_{\beta}^{l_{2}},
\end{equation}
which implies that $N(G_{\beta}^{l_{2}}) \leq \epsilon_{3}^{1/2}2^{l_{2}-5}$. Therefore we are estimating the near frequency piece of $\mathrm{Hard}_{G_{\alpha}^{i}}$ at frequencies which are at distance from $\xi(t)$ much larger than $N(t)$, allowing us to put certain factors in a $\tilde{Y}_{j}$ norm. Now applying proposition \ref{prop:U2_duh} as before, we obtain that
\begin{equation}
\begin{split}
&\sum_{0\leq l_{2} \leq i-10} \|\int_{t_{\alpha}^{i}}^{t}e^{i(t-\tau)\Delta}P_{\xi(G_{\alpha}^{i}), i-2\leq\cdot\leq i+2}\brak*{(P_{\xi(G_{\alpha}^{i}), i-5\leq\cdot\leq i+5}u(\tau))\E\paren*{(P_{\xi(t), l_{2}}u(\tau))(\ol{P_{\xi(t), \leq l_{2}}u(\tau)})}}d\tau\|_{U_{\Delta}^{2}(G_{\alpha,l_{2},<}^{i}\times\R^{2})} \\
&\phantom{=}\lesssim \sum_{0\leq l_{2}\leq i-10} \paren*{\sum_{G_{\beta}^{l_{2}}\subset G_{\alpha}^{i} ; N(G_{\beta}^{l_{2}})\leq\epsilon_{3}^{1/2}2^{l_{2}-5}}\|P_{\xi(G_{\alpha}^{i}), i-2\leq\cdot\leq i+2}\brak*{(P_{\xi(G_{\alpha}^{i}), i-5\leq\cdot\leq i+5}u)\E\paren*{(P_{\xi(\tau), l_{2}u})(\ol{P_{\xi(\tau), \leq l_{2}}u})}}\|_{V_{\Delta}^{2}(G_{\beta}^{l_{2}}\times\R^{2})^{*}}^{2}}^{1/2}\\
&\phantom{=}\enspace +\sum_{0\leq l_{2}\leq i-10}\sum_{G_{\beta}^{l_{2}}\subset G_{\alpha}^{i} ; N(G_{\beta}^{l_{2}})\leq\epsilon_{3}^{1/2}2^{l_{2}-5}}\|\int_{G_{\beta}^{l_{2}}}e^{-it\Delta}P_{\xi(G_{\alpha}^{i}), i-2\leq\cdot\leq i+2}\brak*{(P_{\xi(G_{\alpha}^{i}), i-5\leq\cdot\leq i+5}u(t))\E\paren*{(P_{\xi(t), l_{2}}u(t))(\ol{P_{\xi(t), \leq l_{2}}u(t)})}}dt\|_{L_{x}^{2}(\R^{2})}. 
\end{split}
\end{equation}
Combining our estimates, we have that
\begin{align}
&\sum_{0\leq l_{2} \leq i-10} \|\int_{t_{\alpha}^{i}}^{t}e^{i(t-\tau)\Delta}P_{\xi(G_{\alpha}^{i}), i-2\leq\cdot\leq i+2}\brak*{(P_{\xi(G_{\alpha}^{i}), i-5\leq\cdot\leq i+5}u(\tau))\E\paren*{(P_{\xi(t), l_{2}}u(\tau))(\ol{P_{\xi(t), \leq l_{2}}u(\tau)})}}d\tau\|_{U_{\Delta}^{2}(G_{\alpha}^{i}\times\R^{2})} \nonumber\\
&\phantom{=}\lesssim \sum_{0\leq l_{2}\leq i-10} \paren*{\sum_{J_{l}\subset G_{\alpha}^{i} ; N(J_{l})\geq\epsilon_{3}^{1/2}2^{l_{2}-6}} \|P_{\xi(G_{\alpha}^{i}), i-2\leq\cdot\leq i+2}\brak*{(P_{\xi(G_{\alpha}^{i}), i-5\leq\cdot\leq i+5}u)\E\paren*{(P_{\xi(\tau), l_{2}u})(\ol{P_{\xi(\tau), \leq l_{2}}u})}}\|_{V_{\Delta}^{2}(J_{l}\times\R^{2})^{*}}^{2}}^{1/2} \label{eq:LTSE_b_h1}\\
&\phantom{=}\enspace +\sum_{0\leq l_{2}\leq i-10}\sum_{J_{l}\subset G_{\alpha}^{i} ; N(J_{l})\geq\epsilon_{3}^{1/2}2^{l_{2}-6}}\|\int_{J_{l}}e^{-it\Delta}P_{\xi(G_{\alpha}^{i}), i-2\leq\cdot\leq i+2}\brak*{(P_{\xi(G_{\alpha}^{i}), i-5\leq\cdot\leq i+5}u(t))\E\paren*{(P_{\xi(t), l_{2}}u(t))(\ol{P_{\xi(t), \leq l_{2}}u(t)})}}dt\|_{L_{x}^{2}(\R^{2})} \label{eq:LTSE_b_h2}\\
&\phantom{=}\enspace +\sum_{0\leq l_{2}\leq i-10} \paren*{\sum_{G_{\beta}^{l_{2}}\subset G_{\alpha}^{i} ; N(G_{\beta}^{l_{2}})\leq\epsilon_{3}^{1/2}2^{l_{2}-5}}\|P_{\xi(G_{\alpha}^{i}), i-2\leq\cdot\leq i+2}\brak*{(P_{\xi(G_{\alpha}^{i}), i-5\leq\cdot\leq i+5}u)\E\paren*{(P_{\xi(\tau), l_{2}u})(\ol{P_{\xi(\tau), \leq l_{2}}u})}}\|_{V_{\Delta}^{2}(G_{\beta}^{l_{2}}\times\R^{2})^{*}}^{2}}^{1/2} \label{eq:LTSE_b_h3}\\
&\phantom{=}\enspace +\sum_{0\leq l_{2}\leq i-10}\sum_{G_{\beta}^{l_{2}}\subset G_{\alpha}^{i} ; N(G_{\beta}^{l_{2}})\leq\epsilon_{3}^{1/2}2^{l_{2}-5}}\|\int_{G_{\beta}^{l_{2}}}e^{-it\Delta}P_{\xi(G_{\alpha}^{i}), i-2\leq\cdot\leq i+2}\brak*{(P_{\xi(G_{\alpha}^{i}), i-5\leq\cdot\leq i+5}u(t))\E\paren*{(P_{\xi(t), l_{2}}u(t))(\ol{P_{\xi(t), \leq l_{2}}u(t)})}}dt\|_{L_{x}^{2}(\R^{2})}. \label{eq:LTSE_b_h4}
\end{align}
We proceed to estimate each of the terms \eqref{eq:LTSE_b_h1} - \eqref{eq:LTSE_b_h4}.

\begin{description}[leftmargin=*]
\item[Estimate for \eqref{eq:LTSE_b_h1}:]
Our basic strategy is to use duality and bilinear estimates which exploit the frequency separation between the various factors that are respectively near and far from the frequency center $\xi(t)$. Since we hope to use bilinear estimates, we need to use the double frequency decomposition to handle the nonlocal operator $\E$. We now turn to the details.

Let $v\in V_{\Delta}^{2}$ have spatial Fourier support in the dyadic annulus $A(\xi(G_{\alpha}^{i}), i-3,i+3)$ and satisfy $\|v\|_{V_{\Delta}^{2}(J_{l}\times\R^{2})}\leq 1$. Then by the double frequency decomposition, Minkowski's inequality, and Cauchy-Schwarz, we obtain that
\begin{align}
&\left|\int_{J_{l}} \ipp{v(t),(P_{\xi(G_{\alpha}^{i}), i-5\leq\cdot\leq i+5}u(t))\E\paren*{(P_{\xi(t),l_{2}}u(t))(\ol{P_{\xi(t),\leq l_{2}}u(t)})}}dt\right| \nonumber\\
&\phantom{=}\lesssim \sum_{k\leq l_{2}+2}\sup_{y\in\R^{2}}\left\{\paren*{\sum_{a; Q_{a}^{k}\cap B(\xi(J_{l}), i-7)\neq\emptyset} \|(P_{\xi(G_{\alpha}^{i}),i-5\leq\cdot\leq i+5}u)(P_{\xi(t), l_{2}}P_{Q_{a}^{k}}\tau_{y}u)\|_{L_{t,x}^{2}(J_{l}\times\R^{2})}^{2}}^{1/2} \times \right. \nonumber\\
&\phantom{=}\qquad \left.\paren*{\sum_{a'; Q_{a'}^{k}\cap B(\xi(J_{l}), i-7)\neq\emptyset} \|v(P_{\xi(t),\leq l_{2}}P_{Q_{a'}^{k}}\tau_{y}u)\|_{L_{t,x}^{2}(J_{l}\times\R^{2})}^{2}}^{1/2}\right\}.
\end{align}
Using Galilean invariance to apply the bilinear Strichartz estimate of proposition \ref{prop:bs_i}, we have that
\begin{align}
&\sup_{y\in\R^{2}} \paren*{\sum_{a; Q_{a}^{k} \cap B(\xi(J_{l}), i-7)\neq\emptyset} \|(P_{\xi(G_{\alpha}^{i}), i-5\leq\cdot\leq i+5}u)(P_{\xi(t),l_{2}}P_{Q_{a}^{k}}\tau_{y}u)\|_{L_{t,x}^{2}(J_{l}\times\R^{2})}^{2}}^{1/2} \nonumber\\
&\phantom{=}\lesssim 2^{\frac{k-i}{2}}\|P_{\xi(G_{\alpha}^{i}), i-5\leq\cdot\leq i+5}u\|_{U_{\Delta}^{2}(J_{l}\times\R^{2})} \|u\|_{X_{\Delta}^{2,k}(J_{l}\times\R^{2})}
\end{align}
and
\begin{align}
\sup_{y\in\R^{2}} \paren*{\sum_{a'; Q_{a'}^{k}\cap B(\xi(J_{l}),i-7)\neq\emptyset}\|v(P_{\xi(t),\leq l_{2}}P_{Q_{a}^{k}}\tau_{y}u)\|_{L_{t,x}^{2}(J_{l}\times\R^{2})}^{2}}^{1/2} &\lesssim 2^{(k-i)\frac{1}{2}^{-}} \|v\|_{V_{\Delta}^{2}(J_{l}\times\R^{2})} \|u\|_{X_{\Delta}^{2,k}(J_{l}\times\R^{2})} \nonumber \\
&\leq 2^{(k-i)\frac{1}{2}^{-}}\|u\|_{X_{\Delta}^{2,k}(J_{l}\times\R^{2})}.
\end{align}
Using that $\sup_{k} \|u\|_{X_{\Delta}^{2,k}(J_{l}\times\R^{2})}\lesssim 1$ and $N(J_{l})\geq \epsilon_{3}^{1/2}2^{l_{2}-5}$, we see that
\begin{equation}
\begin{split}
&\sum_{k\leq l_{2}+2} 2^{(k-i)1^{-}}\|P_{\xi(G_{\alpha}^{i}), i-5\leq\cdot\leq i+5}u\|_{U_{\Delta}^{2}(J_{l}\times\R^{2})} \|u\|_{X_{\Delta}^{2,k}(J_{l}\times\R^{2})}^{2} \\
&\phantom{=} \lesssim_{\delta,\delta'} 2^{(l_{2}-i)(1-\delta)} \paren*{\epsilon_{3}^{-1/2}2^{-i}N(J_{l})}^{\delta'} \|P_{\xi(G_{\alpha}^{i}), i-5\leq\cdot\leq i+5}u\|_{U_{\Delta}^{2}(J_{l}\times\R^{2})}
\end{split}
\end{equation}
for any $0<\delta',\delta<1$ satisfying $\delta'+\delta<1$. We choose $\delta'=\frac{1}{2}$.

Now summing over $J_{l}$ and then over $l_{2}$, we obtain that
\begin{align}
&\sum_{0\leq l_{2}\leq i-10} \paren*{\sum_{J_{l}\subset G_{\alpha}^{i}; N(J_{l})\geq\epsilon_{3}^{1/2}2^{l_{2}-6}} \paren*{2^{(l_{2}-i)(1-\delta)} \paren*{\epsilon_{3}^{-1/2}2^{-i}N(J_{l})}^{1/2} \|P_{\xi(G_{\alpha}^{i}), i-5\leq\cdot\leq i+5}u\|_{U_{\Delta}^{2}(J_{l}\times\R^{2})}}^{2}}^{1/2} \nonumber\\
&\phantom{=}\leq \|P_{\xi(G_{\alpha}^{i}),i-5\leq\cdot\leq i+5}u\|_{U_{\Delta}^{2}(G_{\alpha}^{i}\times\R^{2})}\sum_{0\leq l_{2}\leq i-10}2^{(l_{2}-i)(1-\delta)}\paren*{\sum_{J_{l}\subset G_{\alpha}^{i}; N(J_{l})\geq\epsilon_{3}^{1/2}2^{l_{2}-6}} \epsilon_{3}^{-1/2}2^{-i}N(J_{l})}^{1/2} \nonumber\\
&\phantom{=}\lesssim_{u} \|P_{\xi(G_{\alpha}^{i}),i-5\leq\cdot\leq i+5}u\|_{U_{\Delta}^{2}(G_{\alpha}^{i}\times\R^{2})}\sum_{0\leq l_{2}\leq i-10}2^{(l_{2}-i)(1-\delta)}\paren*{\epsilon_{3}^{-1/2}2^{-i}\int_{G_{\alpha}^{i}}N(t)^{3}dt}^{1/2} \nonumber\\
&\phantom{=}\lesssim \epsilon_{3}^{1/4}\|P_{\xi(G_{\alpha}^{i}),i-5\leq\cdot\leq i+5}u\|_{U_{\Delta}^{2}(G_{\alpha}^{i}\times\R^{2})} \nonumber\\
&\phantom{=}\leq \epsilon_{2}^{2}\|P_{\xi(G_{\alpha}^{i}),i-5\leq\cdot\leq i+5}u\|_{U_{\Delta}^{2}(G_{\alpha}^{i}\times\R^{2})},
\end{align}
since $\epsilon_{3} \leq\epsilon_{2}^{10}$ by assumption.

Thus, we have shown that
\begin{equation}
\eqref{eq:LTSE_b_h1} \lesssim \epsilon_{2}^{2}\|P_{\xi(G_{\alpha}^{i}),i-5\leq\cdot\leq i+5}u\|_{U_{\Delta}^{2}(G_{\alpha}^{i}\times\R^{2})}.
\end{equation}

\item[Estimate for \eqref{eq:LTSE_b_h2}:]
To estimate this term, we use an idea of \cite{Dodson2016}, which is to establish an improved bilinear Strichartz estimate, specialized to our setting and which takes into account the variation of $\xi(t)$, using the interaction Morawetz technique of \cite{Planchon2009}. 

\begin{restatable}[Improved BSE 1]{prop}{ibsI}
\label{prop:ibs_1}
There exists a constant $C(u)>0$ such that the following holds: for any integers $0\leq l_{2}\leq i-10$, $20\leq i<j$, $20\leq j\leq k_{0}$; any intervals $J_{l}\subset G_{\alpha}^{i}\subset G_{\kappa}^{j}\subset [0,T]$ with $J_{l}$ small; any $v_{0}\in\mathcal{S}(\R^{2})$ with Fourier support in a cube of side length $2^{k}$, where $k\leq l_{2}+2$, intersecting the dyadic annulus $A(\xi(G_{\alpha}^{i}), i-6,i+6)$, we have the estimate
\begin{equation}
\begin{split}
\|(e^{it\Delta}v_{0}) (P_{\xi(t), \leq l_{2}}u)\|_{L_{t,x}^{2}(J_{l}\times\R^{2})}^{2} &\leq C(u) 2^{k-i} \|v_{0}\|_{L^{2}(\R^{2})}^{2} \\
&\phantom{=}+C(u)2^{k-l_{2}-i} \|v_{0}\|_{L^{2}(\R^{2})}^{2} \paren*{\int_{J_{l}} |\xi'(t)| \sum_{l_{1}\leq l_{2}} 2^{\frac{l_{1}-l_{2}}{2}} \|P_{\xi(t),l_{1}}u(t)\|_{L_{x}^{2}(\R^{2})} \|P_{\xi(t),l_{2}}u(t)\|_{L_{x}^{2}(\R^{2})}dt}.
\end{split}
\end{equation}
The same estimate holds with $P_{\xi(t), \leq l_{2}}$ replaced by $P_{\xi(t),l_{2}}$.
\end{restatable}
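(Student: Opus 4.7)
\medskip

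\noindent\emph{Plan of proof.} Set $v(t,x) \coloneqq (e^{it\Delta}v_0)(x)$ and $w(t,x) \coloneqq (P_{\xi(t),\leq l_2}u)(t,x)$, so the target quantity is $\int_{J_l}\int_{\R^2}|v|^2|w|^2\,dxdt$. The function $v$ satisfies the free Schr\"odinger equation, while differentiating the symbol $\phi(D-\xi(t))$ of $P_{\xi(t),\leq l_2}$ in $t$ yields
\[
(i\partial_t+\Delta)w = P_{\xi(t),\leq l_2}F(u) \;-\; i\dot\xi(t)\cdot (\nabla_\eta\phi)(D-\xi(t))\,u,
\]
where the second term is supported in the annulus $|\eta-\xi(t)|\sim 2^{l_2}$. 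Because $v_0$ has Fourier support in a cube of side $2^k$ intersecting $A(\xi(G_\alpha^i),i-6,i+6)$ and $|\xi(t)-\xi(G_\alpha^i)|\lesssim_u \epsilon_3^{1/2}2^{i-5}$ for $t\in G_\alpha^i$, the Fourier supports of $v$ and $w$ are separated by a distance $\sim 2^i$, so the classical cube-localized bilinear Strichartz estimate (corollary \ref{prop:cube_BS}) would yield the first term $C(u)2^{k-i}\|v_0\|_{L^2}^2$ if $w$ were free; the content of the proposition is to quantify the correction coming from the non-free evolution of $w$.

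\medskip

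\noindent The proof will run through the interaction Morawetz machinery of Planchon--Vega (\cite{Planchon2009}), applied bilinearly to the pair $(v,w)$. I will form a Morawetz action of the type
\[
M(t) \coloneqq 2\int_{\mathbb{S}^1}\!\!\int_{\R^2\times\R^2}\sgn((x-y)\!\cdot\!\omega)|v(t,x)|^2 \Im[\bar w\,\omega\!\cdot\!\nabla w](t,y)\,dxdyd\omega \;+\; (v \leftrightarrow w),
\]
and differentiate, substituting the free equation for $v$ and the modified equation above for $w$. Using the tensorial form of the local mass and momentum conservation laws for eeDS developed in subsection \ref{ssec:BSE_cl} (which are available in divergence form since $\E=-R_1^2$ is a product of Riesz transforms), the usual integration by parts on the hypersurfaces $\{(x-y)\cdot\omega=0\}$ produces a positive main term of the form $\int_{J_l}\!\!\int_{\mathbb{S}^1}\!\!\int_{\{x_\omega=y_\omega\}}|v|^2|\partial_\omega w|^2\,d\mathcal{H}^3d\omega dt$ plus a symmetric counterpart. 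Because $v_0$ is Fourier-supported in a cube of side $2^k$ whose center is separated from $\xi(t)$ by $\sim 2^i$, a Galilean conjugation together with a trace/restriction argument on the hyperplane (as in \cite{Planchon2009}) identifies this positive main term, up to a factor $\sim 2^{i-k}$, with $\int_{J_l}\int|v|^2|w|^2\,dxdt$, so that controlling $M$ and its error terms will control the target quantity divided by the $2^{i-k}$ factor, matching the claimed scaling $2^{k-i}$.

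\medskip

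\noindent The boundary contribution satisfies $\sup_{t\in J_l}|M(t)|\lesssim \|v\|_{L^\infty_tL^2_x}^2\|w\|_{L^\infty_tL^2_x}\|w\|_{L^\infty_t\dot H^{1/2}_x}+(v\leftrightarrow w)$, and Bernstein applied to the cube-localized $v_0$ and ball-localized $w$, together with mass conservation, turns this into exactly the first term on the right-hand side of the claim. The commutator error from $-i\dot\xi(t)\cdot(\nabla_\eta\phi)(D-\xi(t))u$ will generate an error term proportional to
\[
\int_{J_l}|\dot\xi(t)|\,\Bigl(\text{bilinear interaction between }\tilde P_{\xi(t),l_2}u \text{ and } w\Bigr)dt;
\]
decomposing $u=\sum_{l_1\leq l_2}P_{\xi(t),l_1}u+P_{\xi(t),>l_2}u$ and applying the same Morawetz-type analysis at the cube scale $2^k$ (which produces a factor $2^{(l_1-l_2)/2}$ per low-frequency slice and the overall $2^{k-l_2-i}$ gain from cube localization and frequency separation), one obtains the second term in the stated bound. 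The variant with $P_{\xi(t),l_2}$ replacing $P_{\xi(t),\leq l_2}$ is proved identically, using the sharper projection in the definition of $w$.

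\medskip

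\noindent\emph{Main obstacle.} The principal difficulty, as flagged in subsection \ref{ssec:in_out}, is the nonlinear error arising from $P_{\xi(t),\leq l_2}F(u)$ in the equation for $w$. In the local cubic NLS setting of \cite{Planchon2009} the defocusing nonlinearity contributes a manifestly nonnegative momentum flux, but in the eeDS case the stress-energy tensor acquires four terms involving Riesz transforms of $|u|^2$ and is not visibly signed. The plan is to use the divergence-form expressions for these fluxes developed in subsection \ref{ssec:BSE_cl} to integrate them against $|v|^2$ by parts, and to handle the commutators $[P_{\xi(t),\leq l_2},\E]$ and $[\E,|u|^2]$ through Coifman--Meyer estimates (proposition \ref{prop:CM}) on the multipliers obtained from $\E$. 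The resulting contributions will be shown to be either absorbable into the boundary term or dominated by the $|\dot\xi(t)|$ error, so that no additional $F(u)$ term appears in the final bound.
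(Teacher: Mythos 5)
Your overall route is the paper's: both arguments run the Planchon--Vega interaction Morawetz machinery on the pair $(v,w)=(e^{it\Delta}v_{0},\,P_{\xi(t),\leq l_{2}}u)$, use the frequency-localized local conservation laws for $w$ (with the same splitting of the forcing into $P_{\xi(t),\leq l_{2}}F(u)-F(w)$ plus the $\xi'(t)$-commutator $\mathcal{N}_{2}$), extract a positive main term on the hyperplanes $\{x_{\omega}=y_{\omega}\}$, convert it into $\|vw\|_{L_{t,x}^{2}}^{2}$ via the cube localization of $\hat v_{0}$ and the frequency separation, and read off the two terms in the statement from the boundary term and the $\xi'(t)$ error. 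So the strategy is not in question; two specific points in your sketch, however, would not survive as written.

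First, the scaling. The main Morawetz term controls $\|vw\|_{L_{t,x}^{2}(J_{l}\times\R^{2})}^{2}$ only after a gain of $2^{k-2i}$, not $2^{k-i}$: one factor $2^{k}$ comes from Bernstein in the transverse Fourier variable (the cube support of $\hat v_{0}$ of side $2^{k}$), and a factor $2^{-2i}$ from Bernstein applied to $\bar w v$, whose Fourier support lies in $\{|\xi|\gtrsim 2^{i}\}$ because $\hat{\bar w}$ is concentrated in a ball of radius $\sim 2^{l_{2}}$ about $-\xi(t)$ while $\hat v$ sits at distance $\sim 2^{i}$ from $\xi(G_{\alpha}^{i})$. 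Correspondingly the boundary term must be bounded by $2^{i}\|v_{0}\|_{L^{2}}^{2}$, and this requires measuring the derivative in the Morawetz action relative to the moving center, i.e.\ replacing $\nabla$ by $\nabla-i\xi(t)$, which is legitimate precisely because the kernel is odd; your bound $\|v\|_{L_{t}^{\infty}L_{x}^{2}}^{2}\|w\|_{L_{t}^{\infty}L_{x}^{2}}\|w\|_{L_{t}^{\infty}\dot H_{x}^{1/2}}$ with an absolute $\dot H^{1/2}$ norm does not see the Galilean correction (the frequencies of $w$ sit near $\xi(t)$, which can be large) and, combined with your $2^{i-k}$ factor, does not reproduce the claimed first term. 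Second, the nonlinear flux $\int |v|^{2}T_{jk}^{w}\omega_{j}\omega_{k}$ needs neither a sign, nor absorption into the boundary term, nor Coifman--Meyer: since $T_{jk}^{w}$ is a sum of products of Riesz transforms of $|w|^{2}$, on the small interval $J_{l}$ it is estimated directly by Cauchy--Schwarz, Hardy--Littlewood--Sobolev, the Calder\'on--Zygmund theorem, Strichartz and Bernstein, giving a contribution $\lesssim 2^{l_{2}}\|v_{0}\|_{L^{2}}^{2}$, which after the $2^{k-2i}$ prefactor is $\leq 2^{k-i}\|v_{0}\|_{L^{2}}^{2}$ because $l_{2}\leq i-10$; in particular it is not dominated by the $|\xi'(t)|$ error (it has nothing to do with $\xi'$), and the worry about positivity is a red herring at this stage. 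Likewise $\mathcal{N}_{1}=P_{\xi(t),\leq l_{2}}F(u)-F(w)$ is handled crudely here, its commutator structure being needed only later, in propositions \ref{prop:ibs_2} and \ref{prop:ibs_3}.
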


\begin{remark}
The constant $C(u)$ in the statement of proposition \ref{prop:ibs_1}, and also for propositions \ref{prop:ibs_2} and \ref{prop:ibs_3} in the sequel, only depends on a given admissible blowup solution $u$ through its mass (it is polynomial in $M(u)$), not the particular rescaling $u_{\lambda}$ in the statement of theorem \ref{thm:LTSE}. Recall that we made a decision to drop the subscript $\lambda$ in $u_{\lambda}$ at the beginning of the proof of theorem \ref{thm:LTSE} for notational convenience.
\end{remark}

We postpone the proof of proposition \ref{prop:ibs_1} until subsection \ref{ssec:BSE_1} to instead show how to use \ref{prop:ibs_1} to obtain an acceptable estimate for \eqref{eq:LTSE_b_h2}.

By duality, Plancherel's theorem, and an approximation argument, to estimate
\begin{equation}
\|\int_{J_{l}}e^{-it\Delta}P_{\xi(G_{\alpha}^{i}), i-2\leq\cdot\leq i+2}\brak*{(P_{\xi(G_{\alpha}^{i}), i-5\leq\cdot\leq i+5}u(t))\E\paren*{(P_{\xi(t), l_{2}}u(t))(\ol{P_{\xi(t), \leq l_{2}}u(t)})}}dt\|_{L_{x}^{2}(\R^{2})},
\end{equation}
it suffices to estimate
\begin{equation}
\left|\ipp{e^{it\Delta}v_{0},(P_{\xi(G_{\alpha}^{i}), i-5\leq\cdot\leq i+5}u)\E\paren*{(P_{\xi(t), l_{2}}u)(\ol{P_{\xi(t), \leq l_{2}}u})}}_{J_{l}}\right|
\end{equation}
for $v_{0}\in\mathcal{S}(\R^{2})$ with Fourier support in the dyadic annulus $A(\xi(G_{\alpha}^{i}), i-3, i+3)$ and $\|v_{0}\|_{L^{2}(\R^{2})}\leq 1$. Now by the double frequency decomposition (with the smooth projectors $\P_{Q_{a}^{k}}$), Minkowski's inequality, and Cauchy-Schwarz,
\begin{align}
&\|(e^{it\Delta}v_{0}) (P_{\xi(G_{\alpha}^{i}),i-5\leq\cdot\leq i+5}u)\E\paren*{(P_{\xi(t),l_{2}}u)(\ol{P_{\xi(t),\leq l_{2}}u})}\|_{L_{t,x}^{1}(J_{l}\times\R^{2})} \nonumber\\
&\phantom{=} \lesssim \sum_{k\leq l_{2}+2} \sup_{y\in\R^{2}} \|(e^{it\Delta}v_{0})(P_{\xi(t),l_{2}}\P_{Q_{a}^{k}}\tau_{y}u)\|_{\ell_{a}^{2}L_{t,x}^{2}(\Z^{2}\times J_{l}\times\R^{2})} \|(P_{\xi(G_{\alpha}^{i}),i-5\leq\cdot\leq i+5}u)(\ol{P_{\xi(t),\leq l_{2}}\P_{Q_{a}^{k}}\tau_{y}u})\|_{\ell_{a}^{2}L_{t,x}^{2}(\Z^{2}\times J_{l}\times\R^{2})}.
\end{align}
Take the first factor. We want to apply proposition \ref{prop:ibs_1} to it, but the reader will observe that the cube localization is with respect to $P_{\xi(t),l_{2}}u$, while in the statement of proposition \ref{prop:ibs_1}, the localization is with respect to $v_{0}$. Therefore we need a lemma to shift the cube localization between the factors in the integrand. Applying lemma \ref{lem:sparse} with $v=e^{it\Delta}v_{0}$ and $w=P_{\xi(t), l_{2}}u$ and then applying proposition \ref{prop:ibs_1}, we obtain that
\begin{align}
&\sup_{y\in\R^{2}} \|(e^{it\Delta}v_{0})(\P_{Q_{a}^{k}}P_{\xi(t),l_{2}}\tau_{y}u)\|_{\ell_{a}^{2}L_{t,x}^{2}(\Z^{2}\times J_{l}\times\R^{2})} \nonumber\\
&\phantom{=}\lesssim \sup_{y\in\R^{2}} \|(e^{it\Delta}P_{Q_{a}^{k}}v_{0}) (P_{\xi(t),l_{2}}\tau_{y}u)\|_{\ell_{a}^{2}L_{t,x}^{2}(\Z^{2}\times J_{l}\times\R^{2})} \nonumber\\
&\phantom{=}\lesssim_{u} 2^{\frac{k-i}{2}}\|P_{Q_{a}^{k}}v_{0}\|_{\ell_{a}^{2}L_{x}^{2}(\Z^{2}\times\R^{2})} \nonumber\\
&\phantom{=}\qquad + 2^{\frac{k-l_{2}-i}{2}} \|P_{Q_{a}^{k}}v_{0}\|_{\ell_{a}^{2}L_{x}^{2}(\Z^{2}\times\R^{2})} \paren*{\int_{J_{l}} |\xi'(t)| \sum_{l_{1}\leq l_{2}} 2^{\frac{l_{1}-l_{2}}{2}} \|P_{\xi(t),l_{1}}u(t)\|_{L_{x}^{2}(\R^{2})} \|P_{\xi(t),l_{2}}u(t)\|_{L_{x}^{2}(\R^{2})}dt}^{1/2}  \nonumber\\
&\phantom{=}\lesssim 2^{\frac{k-i}{2}} + 2^{\frac{k-l_{2}-i}{2}}\paren*{\int_{J_{l}}|\xi'(t)| \sum_{l_{1}\leq l_{2}} 2^{\frac{l_{1}-l_{2}}{2}} \|P_{\xi(t),l_{1}}u(t)\|_{L_{x}^{2}(\R^{2})} \|P_{\xi(t),l_{2}}u(t)\|_{L_{x}^{2}(\R^{2})}dt}^{1/2},
\end{align}
where we use that Plancherel's theorem and $\|v_{0}\|_{L^{2}(\R^{2})}\leq 1$ to obtain the ultimate inequality. Now take the second factor
\begin{equation}
\|(P_{\xi(G_{\alpha}^{i}),i-5\leq\cdot\leq i+5}u)(\ol{P_{\xi(t),\leq l_{2}}\P_{Q_{a}^{k}}\tau_{y}u})\|_{\ell_{a}^{2}L_{t,x}^{2}(\Z^{2}\times J_{l}\times\R^{2})}.
\end{equation}
We use the atomic decomposition of $P_{\xi(G_{\alpha}^{i}),i-5\leq\cdot\leq i+5}u$ and apply lemma \ref{lem:sparse} followed by proposition \ref{prop:ibs_1} to each atom of $P_{\xi(G_{\alpha}^{i}),i-5\leq\cdot\leq i+5}u$ to obtain
\begin{equation}
\begin{split}
&\sup_{y\in\R^{2}} \|(P_{\xi(G_{\alpha}^{i}),i-5\leq\cdot\leq i+5}u)(\ol{P_{\xi(t),\leq l_{2}}\P_{Q_{a}^{k}}\tau_{y}u})\|_{\ell_{a}^{2}L_{t,x}^{2}(\Z^{2}\times J_{l}\times\R^{2})} \\
&\phantom{=}\lesssim_{u} 2^{\frac{k-i}{2}} \|P_{\xi(G_{\alpha}^{i}),i-5\leq\cdot\leq i+5}u\|_{U_{\Delta}^{2}(J_{l}\times\R^{2})} \\
&\phantom{=} \qquad + 2^{\frac{k-l_{2}-i}{2}} \|P_{\xi(G_{\alpha}^{i}),i-5\leq\cdot\leq i+5}u\|_{U_{\Delta}^{2}(J_{l}\times\R^{2})} \paren*{\int_{J_{l}} |\xi'(t)|  \sum_{l_{1}\leq l_{2}} 2^{\frac{l_{1}-l_{2}}{2}} \|P_{\xi(t),l_{1}}u(t)\|_{L_{x}^{2}(\R^{2})} \|P_{\xi(t),l_{2}}u(t)\|_{L_{x}^{2}(\R^{2})}dt}^{1/2}.
\end{split}
\end{equation}
Combining these estimates for the first and second factors and then using Cauchy-Schwarz, we obtain that
\begin{equation}
\begin{split}
&\sum_{k\leq l_{2}+2}  \sup_{y\in\R^{2}} \|(e^{it\Delta}v_{0})(P_{\xi(t),l_{2}}\P_{Q_{a}^{k}}\tau_{y}u)\|_{\ell_{a}^{2}L_{t,x}^{2}(\Z^{2}\times J_{l}\times\R^{2})} \|(P_{\xi(G_{\alpha}^{i}),i-5\leq\cdot\leq i+5}u)(\ol{P_{\xi(t),\leq l_{2}}\P_{Q_{a}^{k}}\tau_{y}u})\|_{\ell_{a}^{2}L_{t,x}^{2}(\Z^{2}\times J_{l}\times\R^{2})} \\
&\phantom{=}\lesssim_{u} 2^{l_{2}-i} \|P_{\xi(G_{\alpha}^{i}),i-5\leq\cdot\leq i+5}u\|_{U_{\Delta}^{2}(J_{l}\times\R^{2})} \\
&\phantom{=}\qquad + 2^{-i} \|P_{\xi(G_{\alpha}^{i}),i-5\leq\cdot\leq i+5}u\|_{U_{\Delta}^{2}(J_{l}\times\R^{2})} \int_{J_{l}} |\xi'(t)|  \sum_{l_{1}\leq l_{2}} 2^{\frac{l_{1}-l_{2}}{2}} \|P_{\xi(t),l_{1}}u(t)\|_{L_{x}^{2}(\R^{2})} \|P_{\xi(t),l_{2}}u(t)\|_{L_{x}^{2}(\R^{2})}dt.
\end{split}
\end{equation}
Now summing over $J_{l}$, then over $l_{2}$, we obtain that
\begin{align}
&\sum_{0\leq l_{2}\leq i-10}\sum_{J_{l}\subset G_{\alpha}^{i}; N(J_{l})\geq \epsilon_{3}^{1/2}2^{l_{2}-6}} 2^{l_{2}-i} \|P_{\xi(G_{\alpha}^{i}),i-5\leq\cdot\leq i+5}u\|_{U_{\Delta}^{2}(J_{l}\times\R^{2})} \nonumber\\
&\phantom{=}\lesssim \|P_{\xi(G_{\alpha}^{i}),i-5\leq\cdot\leq i+5}u\|_{U_{\Delta}^{2}(G_{\alpha}^{i}\times\R^{2})} \sum_{J_{l}\subset G_{\alpha}^{i}} \sum_{0\leq l_{2}\leq \min\{i-10,\log_{2}(2^{6}\epsilon_{3}^{-1/2}N(J_{l}))\}} 2^{l_{2}-i} \nonumber\\
&\phantom{=}\lesssim 2^{-i} \|P_{\xi(G_{\alpha}^{i}),i-5\leq\cdot\leq i+5}u\|_{U_{\Delta}^{2}(G_{\alpha}^{i}\times\R^{2})}\sum_{J_{l}\subset G_{\alpha}^{i}} \epsilon_{3}^{-1/2} N(J_{l}) \nonumber\\
&\phantom{=}\lesssim_{u} 2^{-i}  \|P_{\xi(G_{\alpha}^{i}),i-5\leq\cdot\leq i+5}u\|_{U_{\Delta}^{2}(G_{\alpha}^{i}\times\R^{2})} \epsilon_{3}^{-1/2} \int_{G_{\alpha}^{i}}N(t)^{3}dt \nonumber\\
&\phantom{=}\lesssim \epsilon_{3}^{1/2}  \|P_{\xi(G_{\alpha}^{i}),i-5\leq\cdot\leq i+5}u\|_{U_{\Delta}^{2}(G_{\alpha}^{i}\times\R^{2})}.
\end{align}
Also,
\begin{align}
&\sum_{0\leq l_{2}\leq i-10}\sum_{J_{l}\subset G_{\alpha}^{i}; N(J_{l})\geq \epsilon_{3}^{1/2}2^{l_{2}-6}} 2^{-i/2} \|P_{\xi(G_{\alpha}^{i}),i-5\leq\cdot\leq i+5}u\|_{U_{\Delta}^{2}(J_{l}\times\R^{2})} \int_{J_{l}} |\xi'(t)|  \sum_{l_{1}\leq l_{2}} 2^{\frac{l_{1}-l_{2}}{2}} \|P_{\xi(t),l_{1}}u(t)\|_{L_{x}^{2}(\R^{2})} \|P_{\xi(t),l_{2}}u(t)\|_{L_{x}^{2}(\R^{2})}dt  \nonumber\\
&\phantom{=}\lesssim 2^{-i}\|P_{\xi(G_{\alpha}^{i}),i-5\leq\cdot\leq i+5}u\|_{U_{\Delta}^{2}(J_{l}\times\R^{2})}\sum_{J_{l}\subset G_{\alpha}^{i}} \sum_{0\leq l_{2}\leq i-10}\int_{J_{l}} |\xi'(t)|\sum_{l_{1}\leq l_{2}} 2^{\frac{l_{1}-l_{2}}{2}} \|P_{\xi(t),l_{1}}u(t)\|_{L_{x}^{2}(\R^{2})} \|P_{\xi(t), l_{2}}u(t)\|_{L_{x}^{2}(\R^{2})}dt& \nonumber \\
&\phantom{=}\lesssim 2^{-i}\|P_{\xi(G_{\alpha}^{i}),i-5\leq\cdot\leq i+5}u\|_{U_{\Delta}^{2}(G_{\alpha}^{i}\times\R^{2})} \epsilon_{1}^{-1/2}\int_{G_{\alpha}^{i}} N(t)^{3}dt \nonumber\\
&\phantom{=}\lesssim \epsilon_{3}^{1/2} \|P_{\xi(G_{\alpha}^{i}),i-5\leq\cdot\leq i+5}u\|_{U_{\Delta}^{2}(G_{\alpha}^{i}\times\R^{2})},
\end{align}
where we use Schur's test, Plancherel's theorem and mass conservation to obtain the penultimate inequality. Bookkeeping our estimates, we have shown that
\begin{equation}
\eqref{eq:LTSE_b_h2} \lesssim_{u} \epsilon_{3}^{1/2}\|P_{\xi(G_{\alpha}^{i}), i-5\leq\cdot\leq i+5}u\|_{U_{\Delta}^{2}(G_{\alpha}^{i}\times\R^{2})} \leq \epsilon_{2}^{5}\|P_{\xi(G_{\alpha}^{i}), i-5\leq\cdot\leq i+5}u\|_{U_{\Delta}^{2}(G_{\alpha}^{i}\times\R^{2})}.
\end{equation}

\begin{remark}
The reader might wonder why we did not just prove proposition \ref{prop:ibs_1} with the cube localization applied to $P_{\xi(t),\leq l_{2}}u$. The reason is that we would then have to consider commutators of the form $P_{\xi(t),Q_{a}^{k}}F(u)-F(P_{\xi(t),Q_{a}^{k}}u)$ as opposed to commutators of the form $P_{\xi(t), \leq l_{2}}F(u)-F(P_{\xi(t),\leq l_{2}}u)$. The latter are easier to estimate.
\end{remark}

\item[Estimate for \eqref{eq:LTSE_b_h3}:]
As before, we want to use bilinear estimates exploiting the frequency separation between the various factors. However, since we are considering frequencies which are larger than $N(t)$, we will also need to use the $\tilde{Y}_{j}$ norm to obtain a source of smallness. As before, we need an improved bilinear Strichartz estimate, which is contained in the next proposition, the proof of which we defer to subsection \ref{ssec:BSE_2}.

\begin{restatable}[Improved BSE 2]{prop}{ibsII}\label{prop:ibs_2} 
There exists a constant $C(u)>0$ such that the following holds: for any integers $20\leq i\leq j\leq k_{0}$, $0\leq l_{2}\leq i-10$, and $k \leq l_{2}+2$; any intervals $G_{\beta}^{l_{2}}\subset G_{\alpha}^{i}\subset G_{\kappa}^{j}\subset [0,T]$; any $v_{0}\in\mathcal{S}(\R^{2})$ with Fourier support in a cube of side length $2^{k}$ intersecting the dyadic annulus $A(\xi(G_{\alpha}^{i}), i-6, i+6)$, we have the estimate
\begin{equation}
\|(e^{it\Delta}v_{0})(P_{\xi(t),\leq l_{2}}u)\|_{L_{t,x}^{2}(G_{\beta}^{l_{2}}\times\R^{2})}^{2} \leq C(u)2^{k-l_{2}}\|v_{0}\|_{L^{2}(\R^{2})}^{2}\paren*{1+\|u\|_{\tilde{X}_{i}(G_{\alpha}^{i}\times\R^{2})}^{4}}.
\end{equation}
The same estimate holds with $P_{\xi(t),\leq l_{2}}$ replaced by $P_{\xi(t),l_{2}}$.
\end{restatable}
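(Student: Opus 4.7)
The proof is an interaction Morawetz argument in the spirit of Planchon--Vega \cite{Planchon2009} and Dodson \cite{Dodson2016}. Set $w(t):=e^{it\Delta}v_{0}$ and $\tilde u(t):=P_{\xi(t),\le l_{2}}u(t)$. The time-dependence of the projector produces a perturbed equation
\[
(i\partial_{t}+\Delta)\tilde u = P_{\xi(t),\le l_{2}}F(u) + \mathcal E(t), \qquad \|\mathcal E(t)\|_{L^{2}_{x}}\lesssim_{u} |\xi'(t)|,
\]
where the commutator $\mathcal E$ comes from $[\partial_{t},P_{\xi(t),\le l_{2}}]$. The strategy is to construct a bilinear interaction Morawetz action $\mathscr M(t)$ adapted to the pair $(w,\tilde u)$ with weight truncated at the physical scale $2^{-l_{2}}$ dual to the low-frequency factor $\tilde u$, arranged so that (i) the uniform bound
\[
\sup_{t\in G_{\beta}^{l_{2}}}|\mathscr M(t)|\lesssim_{u} 2^{k-l_{2}}\|v_{0}\|_{L^{2}(\R^{2})}^{2}
\]
holds, exploiting that $\widehat v_{0}$ lives in a single cube of side $2^{k}\le 2^{l_{2}+2}$ centered in the annulus $A(\xi(G_{\alpha}^{i}),i-6,i+6)$; and (ii) after differentiating in time using the local mass/momentum conservation laws derived in subsection \ref{ssec:BSE_cl}, the leading term of $\tfrac{d\mathscr M}{dt}$ is nonnegative and dominates $2^{l_{2}}\|w(t)\tilde u(t)\|_{L^{2}_{x}(\R^{2})}^{2}$.

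Several integration by parts in $(x,y)\in\R^{4}$ yield a splitting
\[
\frac{d\mathscr M}{dt} = \mathrm{Main}(t) + \mathrm{Err}_{\mathcal E}(t) + \mathrm{Err}_{F}(t),
\]
with $\mathrm{Main}(t)\ge c\cdot 2^{l_{2}}\|w(t)\tilde u(t)\|_{L^{2}_{x}}^{2}$, $\mathrm{Err}_{\mathcal E}$ linear in $\mathcal E$, and $\mathrm{Err}_{F}$ quartic in $(u,w)$ and involving $P_{\xi(t),\le l_{2}}F(u)$. Integrating over $G_{\beta}^{l_{2}}$ and combining with the uniform bound on $\mathscr M$ gives
\[
2^{l_{2}}\|w\tilde u\|_{L^{2}_{t,x}(G_{\beta}^{l_{2}}\times\R^{2})}^{2} \lesssim_{u} 2^{k-l_{2}}\|v_{0}\|_{L^{2}}^{2} + \int_{G_{\beta}^{l_{2}}}\bigl(|\mathrm{Err}_{\mathcal E}|+|\mathrm{Err}_{F}|\bigr)dt.
\]
The commutator error is absorbed using the admissibility bound $|\xi'(t)|\lesssim\epsilon_{1}^{-1/2}N(t)^{3}$, the hypothesis $N(G_{\beta}^{l_{2}})\le\epsilon_{3}^{1/2}2^{l_{2}-5}$, and the rescaling $\int_{0}^{T}N^{3}dt=\epsilon_{3}2^{k_{0}}$, all of which together contribute only a factor $O_{u}(2^{k-l_{2}}\|v_{0}\|_{L^{2}}^{2})$.

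The genuine obstruction lies in $\mathrm{Err}_{F}$, where the nonlocal operator $\E$ appears. After a near/far Littlewood--Paley decomposition of $u$ relative to $\xi(t)$ reduces matters to a handful of model trilinear terms, any contribution with two near-frequency factors inside $\E$ is handled by the double frequency decomposition of subsection~\ref{ssec:LTSE_S4} coupled with the sparsification lemma \ref{lem:sparse}; the resulting summands are bounded via lemmas \ref{lem:lohi_embed} and \ref{lem:BT_embed} together with the $L^{p}$-boundedness of $\E$. Summing produces
\[
\int_{G_{\beta}^{l_{2}}}|\mathrm{Err}_{F}|\,dt \lesssim_{u} 2^{k-l_{2}}\|v_{0}\|_{L^{2}}^{2}\|u\|_{\tilde X_{i}(G_{\alpha}^{i}\times\R^{2})}^{4},
\]
and dividing by $2^{l_{2}}$ yields the claimed bound. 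The variant with $P_{\xi(t),l_{2}}$ in place of $P_{\xi(t),\le l_{2}}$ follows by subtraction. The most delicate step is the control of $\mathrm{Err}_{F}$: extracting the full gain $2^{k-l_{2}}$ from the nonlocal nonlinearity requires that the cube scale in the double frequency decomposition match the Morawetz weight scale $2^{-l_{2}}$, which is precisely what the hypothesis $k\le l_{2}+2$ guarantees.
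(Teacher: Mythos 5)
There is a genuine gap, on two counts. First, the numerology of your scheme does not close. You assert (i) $\sup_{t}|\mathscr{M}(t)|\lesssim_{u}2^{k-l_{2}}\|v_{0}\|_{L^{2}}^{2}$ for a weight truncated at scale $2^{-l_{2}}$ and (ii) a coercive term dominating $2^{l_{2}}\|w\tilde u\|_{L_{x}^{2}}^{2}$, but neither is substantiated, and together they would give $\|w\tilde u\|_{L_{t,x}^{2}}^{2}\lesssim 2^{k-2l_{2}}\|v_{0}\|_{L^{2}}^{2}$, which in the regime $l_{2}>i/2$ is \emph{stronger} than the sharp cube-localized bilinear estimate for free solutions (corollary \ref{prop:cube_BS} gives only the gain $2^{k-i}$); so at least one of (i),(ii) must fail. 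The obstruction is visible already in (i): the momentum density $\Im{\bar v\nabla v}$ has size $2^{i}\|v_{0}\|_{L^{2}}^{2}$, and no truncation of the weight alone converts that $2^{i}$ into $2^{k}$. In the paper the gain has a different source: the weight is the untruncated Planchon--Vega weight $\frac{(x-y)_{\omega}}{|(x-y)_{\omega}|}$, the boundary term is only bounded by $2^{i}\|v_{0}\|_{L^{2}}^{2}$, and the factor $2^{k-2i}$ in front of $\|wv\|_{L_{t,x}^{2}}^{2}$ is extracted from the coercive hyperplane term by (a) observing that $\bar w v$ has spatial Fourier support in $\{|\xi|\gtrsim 2^{i}\}$ (two Bernstein gains of $2^{-i}$) and (b) a one-dimensional Bernstein in the transverse Fourier variable on $\{x_{\omega}=y_{\omega}\}$, which is where the hypothesis that $\hat v_{0}$ lies in a single cube of side $2^{k}$ enters. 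Your proposal contains no mechanism playing this role.

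Second, your treatment of $\mathrm{Err}_{F}$ misses the key idea. The hardest contribution is the analogue of \eqref{eq:ibs2_err3}, the pairing of $\Im{\bar w\mathcal{N}}$ with $\Im{\bar v(\nabla-i\xi(t))v}$, and within it the quartic terms with two far and two near factors that are \emph{not} of commutator/null form (the piece called $\mathrm{Group}_{2}$ of $F_{2}$ in the paper). These have no usable frequency separation for bilinear Strichartz, so the double frequency decomposition plus lemma \ref{lem:sparse}, lemmas \ref{lem:lohi_embed}, \ref{lem:BT_embed}, and $L^{p}$-boundedness of $\E$ -- the tools you invoke -- do not recover the factor $2^{k-l_{2}}$; naive H\"older/Strichartz bounds lose powers of $2^{l_{2}}$ that cannot be absorbed. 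The paper closes this term by localizing the output to frequencies $\lesssim 2^{l_{2}-10}$, checking that the resulting Fourier support avoids the time-resonance set $\{\Phi(\ueta_{4})=0\}$, performing a normal form transformation (integration by parts in time), and controlling the resulting symbols by Coifman--Meyer estimates (lemmas \ref{lem:CM_1}, \ref{lem:CM_2}) with operator norm $\lesssim 2^{-2l_{2}}$, which is exactly the gain needed. Without this space--time resonance step (or a substitute), the error estimate $\int_{G_{\beta}^{l_{2}}}|\mathrm{Err}_{F}|\,dt\lesssim_{u}2^{k-l_{2}}\|v_{0}\|_{L^{2}}^{2}\|u\|_{\tilde X_{i}(G_{\alpha}^{i}\times\R^{2})}^{4}$ is not justified, and the bootstrap in Section \ref{sec:LTSE} cannot be closed from your version of the proposition.
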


We now apply proposition \ref{prop:ibs_2} to obtain an acceptable estimate for \eqref{eq:LTSE_b_h3}. We have by duality that for all $0\leq l_{2}\leq i-10$ and $G_{\beta}^{l_{2}}\subset G_{\alpha}^{i}$,
\begin{equation}
\begin{split}
&\|P_{\xi(G_{\alpha}^{i}), i-2\leq\cdot\leq i+2}\brak*{(P_{\xi(G_{\alpha}^{i}),i-5\leq\cdot\leq i+5}u)\E\paren*{(P_{\xi(t),l_{2}}u)(\ol{P_{\xi(t),\leq l_{2}}u})}}\|_{V_{\Delta}^{2}(G_{\beta}^{l_{2}}\times\R^{2})^{*}} \\
&\phantom{=}\leq \sup_{\|v_{\beta}^{l_{2}}\|_{V_{\Delta}^{2}(G_{\beta}^{l_{2}}\times\R^{2})}=1} \|(P_{\xi(G_{\alpha}^{i}),i-5\leq\cdot\leq i+5}u)\E((P_{\xi(t),l_{2}}u)\ol{(P_{\xi(t),\leq l_{2}}u)}) (P_{\xi(G_{\alpha}^{i}),i-2\leq\cdot\leq i+2}v_{\beta}^{l_{2}})\|_{L_{t,x}^{1}(G_{\beta}^{l_{2}}\times\R^{2})}
\end{split}
\end{equation}
Renormalizing and relabeling if necessary, we may may replace $P_{\xi(G_{\alpha}^{i}),i-2\leq\cdot\leq i+2}v_{\beta}^{l_{2}}$ with $v_{\beta}^{l_{2}}$ and assume that $v_{\beta}^{l_{2}}$ has spatial Fourier support in the dyadic annulus $A(\xi(G_{\alpha}^{i}),i-3,i+3)$. Moreover, by an approximation argument, we may assume that $v_{\beta}^{l_{2}}$ is spatially Schwartz. Using the double frequency decomposition and Cauchy-Schwarz, we obtain that
\begin{equation}
\begin{split}
&\|(P_{\xi(G_{\alpha}^{i}),i-5\leq\cdot\leq i+5}u)\E\paren*{(P_{\xi(t),l_{2}}u) (\ol{P_{\xi(t),\leq l_{2}}u})} (P_{\xi(G_{\alpha}^{i}),i-2\leq\cdot\leq i+2}v_{\beta}^{l_{2}})\|_{L_{t,x}^{1}(G_{\beta}^{l_{2}}\times\R^{2})} \\
&\phantom{=}\lesssim \sum_{k\leq l_{2}+2}\int_{\R^{2}}dy 2^{2k}\jp{2^{k}y}^{-10} \|v_{\beta}^{l_{2}}(\P_{Q_{a}^{k}}P_{\xi(t),l_{2}}\tau_{y}u)\|_{\ell_{a}^{2}L_{t,x}^{2}(\Z^{2}\times G_{\beta}^{l_{2}}\times\R^{2})}  \|(P_{\xi(G_{\alpha}^{i}),i-5\leq\cdot\leq i+5}u) (\ol{\P_{Q_{a}^{k}}P_{\xi(t),\leq l_{2}}\tau_{y}u})\|_{\ell_{a}^{2}L_{t,x}^{2}(\Z^{2}\times G_{\beta}^{l_{2}}\times\R^{2})}.
\end{split}
\end{equation}
Now applying Minkowski's inequality to interchange the $G_{\beta}^{l_{2}}$ summation and the $l$ summation and then grouping the intervals $G_{\beta}^{l_{2}}$ into $2^{i-10}$ larger intervals $G_{\gamma}^{i-10}$, we see that
\begin{align}
&\Bigg(\sum_{G_{\beta}^{l_{2}}\subset G_{\alpha}^{i}; N(G_{\beta}^{l_{2}})\leq \epsilon_{3}^{1/2}2^{l_{2}-5}} \Bigg(\sum_{k\leq l_{2}+2}\int_{\R^{2}}2^{2k}\langle{2^{k}y}\rangle^{-10} \|v_{\beta}^{l_{2}}(\P_{Q_{a}^{k}}P_{\xi(t),l_{2}}\tau_{y}u)\|_{\ell_{a}^{2}L_{t,x}^{2}(\Z^{2}\times G_{\beta}^{l_{2}}\times\R^{2})} \times \nonumber\\
&\phantom{=}\qquad \|(P_{\xi(G_{\alpha}^{i}),i-5\leq\cdot\leq i+5}u) (\ol{\P_{Q_{a}^{k}}P_{\xi(t),\leq l_{2}}\tau_{y}u})\|_{\ell_{a}^{2}L_{t,x}^{2}(\Z^{2}\times G_{\beta}^{l_{2}}\times\R^{2})}\Bigg)^{2} \Bigg)^{1/2} \nonumber\\
&\phantom{=}\leq \sum_{k\leq l_{2}+2}\int_{\R^{2}}dy 2^{2k}\jp{2^{k}y}^{-10} \Bigg(\sum_{G_{\beta}^{l_{2}}\subset G_{\alpha}^{i}; N(G_{\beta}^{l_{2}})\leq \epsilon_{3}^{1/2}2^{l_{2}-5}} \|v_{\beta}^{l_{2}}(\P_{Q_{a}^{k}}P_{\xi(t),l_{2}}\tau_{y}u)\|_{\ell_{a}^{2}L_{t,x}^{2}(\Z^{2}\times G_{\beta}^{l_{2}}\times\R^{2})}^{2} \times \nonumber\\
&\phantom{=}\qquad \|(P_{\xi(G_{\alpha}^{i}),i-5\leq\cdot\leq i+5}u) (\ol{\P_{Q_{a'}^{k}}P_{\xi(t),\leq l_{2}}\tau_{y}u})\|_{\ell_{a}^{2}L_{t,x}^{2}(\Z^{2}\times G_{\beta}^{l_{2}}\times\R^{2})}^{2} \Bigg)^{1/2} \nonumber\\
&\phantom{=}\leq \sum_{k\leq l_{2}+2}\int_{\R^{2}}dy 2^{2k}\jp{2^{k}y}^{-10} \paren*{\sup_{G_{\beta}^{l_{2}}\subset G_{\alpha}^{i}; N(G_{\beta}^{l_{2}}) \leq \epsilon_{3}^{1/2}2^{l_{2}-5}} \|v_{\beta}^{l_{2}}(\P_{Q_{a}^{k}}P_{\xi(t),l_{2}}\tau_{y}u)\|_{\ell_{a}^{2}L_{t,x}^{2}(\Z^{2}\times G_{\beta}^{l_{2}}\times\R^{2})}} \times \nonumber\\
&\phantom{=}\qquad \paren*{\sum_{G_{\gamma}^{i-10}\subset G_{\alpha}^{i}} \|(P_{\xi(G_{\alpha}^{i}),i-5\leq\cdot\leq i+5}u)(\ol{\P_{Q_{a}^{k}}P_{\xi(t),\leq l_{2}}\tau_{y}u})\|_{\ell_{a}^{2}L_{t,x}^{2}(\Z^{2} \times G_{\gamma}^{i-10}\times\R^{2})}^{2}}^{1/2}. \label{eq:LTSE_b_h3_app}
\end{align}
Using lemma \ref{lem:sparse}, the bilinear Strichartz estimate of proposition \ref{prop:bs_i}, the embedding $U_{\Delta}^{2} \subset X_{\Delta}^{2,k}$, and spatial translation invariance, we have that
\begin{align}
\sup_{y\in\R^{2}} \|v_{\beta}^{l_{2}}(\P_{Q_{a}^{k}}P_{\xi(t),l_{2}}\tau_{y}u)\|_{\ell_{a}^{2}L_{t,x}^{2}(\Z^{2}\times G_{\beta}^{l_{2}}\times\R^{2})} &\lesssim 2^{(k-i){\frac{1}{2}}^{-}} \|v_{\beta}^{l_{2}}\|_{V_{\Delta}^{2}(G_{\beta}^{l_{2}}\times\R^{2})} \|P_{\xi(G_{\beta}^{l_{2}}),l_{2}-2\leq\cdot\leq l_{2}+2}u\|_{U_{\Delta}^{2}(G_{\beta}^{l_{2}}\times\R^{2})} \nonumber\\
&\lesssim 2^{(k-i){\frac{1}{2}}^{-}} \|u\|_{\tilde{Y}_{i}(G_{\alpha}^{i}\times\R^{2})}, \label{eq:LTSE_ibs3_comm}
\end{align}
for all $G_{\beta}^{l_{2}}\subset G_{\alpha}^{i}$ with $N(G_{\beta}^{l_{2}}) \leq \epsilon_{3}^{1/2}2^{l_{2}-5}$. Using lemma \ref{lem:sparse} and proposition \ref{prop:ibs_2} applied to each atom of $P_{\xi(G_{\alpha}^{i}), i-5\leq\cdot\leq i+5}u$, we have that for every $G_{\gamma}^{i-10}\subset G_{\alpha}^{i}$,
\begin{equation}\label{eq:LTSE_ibs2_app}
\sup_{y\in\R^{2}}\|(P_{\xi(G_{\alpha}^{i}), i-5\leq\cdot\leq i+5}u) (\ol{\P_{Q_{a}^{k}}P_{\xi(t),\leq l_{2}}\tau_{y}u})\|_{\ell_{a}^{2}L_{t,x}^{2}(\Z^{2}\times G_{\gamma}^{i-10}\times\R^{2})} \lesssim_{u} \|P_{\xi(G_{\alpha}^{i}), i-5\leq\cdot\leq i+5}u\|_{U_{\Delta}^{2}(G_{\alpha}^{i}\times\R^{2})} \paren*{1+\|u\|_{\tilde{X}_{i}(G_{\alpha}^{i}\times\R^{2})}^{2}}.
\end{equation}
Combining these two estimates, integrating with respect to $y\in\R^{2}$, and then summing over $k\leq l_{2}+2$, we conclude that
\begin{equation}
\eqref{eq:LTSE_b_h3_app} \lesssim_{u} 2^{(l_{2}-i)\frac{1}{2}^{-}} \|P_{\xi(G_{\alpha}^{i}),i-5\leq\cdot\leq i+5}u\|_{U_{\Delta}^{2}(G_{\alpha}^{i}\times\R^{2})} \|u\|_{\tilde{Y}_{i}(G_{\alpha}^{i}\times\R^{2})} \paren*{1+\|u\|_{\tilde{X}_{i}(G_{\alpha}^{i}\times\R^{2})}^{2}}.
\end{equation}
Summing over $0\leq l_{2}\leq i-10$, we conclude that
\begin{equation}
\eqref{eq:LTSE_b_h3} \lesssim_{u} \|P_{\xi(G_{\alpha}^{i}),i-5\leq\cdot\leq i+5}u\|_{U_{\Delta}^{2}(G_{\alpha}^{i}\times\R^{2})} \|u\|_{\tilde{Y}_{i}(G_{\alpha}^{i}\times\R^{2})} \paren*{1+\|u\|_{\tilde{X}_{i}(G_{\alpha}^{i}\times\R^{2})}^{2}}.
\end{equation}

\item[Estimate for \eqref{eq:LTSE_b_h4}:]
Our strategy is similar to the argument for estimating \eqref{eq:LTSE_b_h3}, but now we improve the preceding bilinear Strichartz proposition \ref{prop:ibs_2} from an $\ell^{\infty}$ estimate over $0\leq l_{2}\leq i-10$ to an $\ell^{2}$ estimate over $0\leq l_{2}\leq i-10$. The precise result is the following proposition, the proof of which we defer to subsection \ref{ssec:BSE_3}.

\begin{restatable}[Improved BSE 3]{prop}{ibsIII}
\label{prop:ibs_3} 
There exists a constant $C(u)>0$ such that the following holds: for any integers $20\leq i\leq j\leq k_{0}$, any intervals $G_{\alpha}^{i}\subset G_{\kappa}^{j}\subset [0,T]$, and any $v_{0}\in\mathcal{S}(\R^{2})$ with Fourier support in the dyadic annulus $A(\xi(G_{\alpha}^{i}),i-6,i+6)$, we have the estimate
\begin{equation}
\sum_{0\leq l_{2}\leq i-10} \paren*{\sup_{k\leq l_{2}+2} \|(e^{it\Delta}P_{Q_{a}^{k}}v_{0})(P_{\xi(t),\leq l_{2}}u)\|_{\ell_{a}^{2}L_{t,x}^{2}(\Z^{2}\times G_{\alpha}^{i}\times\R^{2})}}^{2} \leq C(u)\|v_{0}\|_{L^{2}(\R^{2})}^{2} \paren*{1+\|u\|_{\tilde{X}_{i}(G_{\alpha}^{i}\times\R^{2})}^{6}}.
\end{equation}
The same estimate holds with $P_{\xi(t),\leq l_{2}}$ replaced by $P_{\xi(t),l_{2}}$.
\end{restatable}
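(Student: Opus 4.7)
The plan is to establish proposition \ref{prop:ibs_3} via an interaction Morawetz estimate in the spirit of Planchon-Vega and Dodson, designed so that the $\ell^{2}$ summation over the low-frequency index $l_{2}$ emerges directly from the Morawetz functional rather than being inserted through proposition \ref{prop:ibs_2} after the fact. The motivation is that naively summing the estimate from proposition \ref{prop:ibs_2} over $l_{2}$ after taking the supremum in $k$ diverges, so one must exploit the dispersive structure once and for all. The key observation is that summing bilinear norms of the form $\|w(P_{\xi(t),l_{2}}u)\|_{L^{2}_{t,x}}^{2}$ in $l_{2}$ reconstitutes, via Plancherel, a $\dot{H}^{1/2}$-type quantity, which is precisely what 2D interaction Morawetz estimates control.

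Concretely, I would first use lemma \ref{lem:sparse} and almost orthogonality in the cube index $a$ to reduce, for each fixed $l_{2}$ and each $k \leq l_{2}+2$, the cube-localized norm $\|(e^{it\Delta}P_{Q_{a}^{k}}v_{0})(P_{\xi(t),l_{2}}u)\|_{\ell_{a}^{2}L_{t,x}^{2}}$ to a spatially shifted global bilinear norm $\sup_{y}\|(e^{it\Delta}v_{0})(\tau_{y}P_{\xi(t),l_{2}}u)\|_{L_{t,x}^{2}}$, at the cost of an absolute constant. The interior supremum over $k \leq l_{2}+2$ then drops out, and the task becomes bounding $\sum_{l_{2}}\sup_{y}\|(e^{it\Delta}v_{0})(\tau_{y}P_{\xi(t),l_{2}}u)\|_{L_{t,x}^{2}}^{2}$. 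I would then set up a Morawetz action
\begin{equation*}
M(t) = 2\int_{\mathbb{S}^{1}}\int_{\R^{4}}\frac{(x-y)_{\omega}}{|(x-y)_{\omega}|}|w(t,y)|^{2}\Im\{\overline{\tilde{u}}\partial_{\omega}\tilde{u}\}(t,x)\,dx\,dy\,d\omega,
\end{equation*}
where $w = e^{it\Delta}v_{0}$ and $\tilde{u} = P_{\xi(t),\leq i-10}u$. Differentiating in time, using that $w$ satisfies the free Schr\"{o}dinger equation and $\tilde{u}$ solves a truncated version of \eqref{eq:DS}, and invoking the tensorial local conservation laws from subsection \ref{ssec:BSE_cl}, the principal nonnegative term provides an estimate on $\||\nabla - i\xi(t)|^{1/2}(w\overline{\tilde{u}})\|_{L^{2}_{t,x}(G_{\alpha}^{i}\times\R^{2})}^{2}$, which by a Littlewood-Paley decomposition adapted to $\xi(t)$ and Plancherel reconstitutes the desired $\ell^{2}$ sum in $l_{2}$ on the left-hand side.

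The hardest part will be estimating the error terms from the truncation and the nonlocality of the eeDS nonlinearity. First, the nonlocal term $\E(|u|^{2})u$ must be handled without destroying the divergence structure of the momentum tensor, via the decomposition $\E = \sum_{k}\E_{k}$ together with the pointwise kernel bounds $|\mathcal{K}_{k}(x)|\lesssim 2^{2k}\jp{2^{k}x}^{-M}$, following the double frequency decomposition employed throughout section \ref{sec:LTSE}. Second, the commutator $[\E, P_{\xi(t),\leq i-10}]$ together with the difference $F(u) - F(\tilde{u})$ generates frequency-localized errors that must be absorbed into the $\|u\|_{\tilde{X}_{i}(G_{\alpha}^{i}\times\R^{2})}^{6}$ factor, which accounts for the two extra powers of $\|u\|_{\tilde{X}_{i}}$ over proposition \ref{prop:ibs_2} and reflects the iterated application of bilinear Strichartz in the commutator bounds. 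Third, the time variation of $\xi(t)$ contributes terms controlled by $\epsilon_{1}^{-1/2}\int_{G_{\alpha}^{i}}N(t)^{3}dt$, which by the normalization $\int_{0}^{T}N(t)^{3}dt = \epsilon_{3}2^{k_{0}}$ and the fact that $G_{\alpha}^{i} \subset [0,T]$ remain $O(1)$ after combination with the $2^{-i}$ factor inherited from the frequency localization of $v_{0}$.
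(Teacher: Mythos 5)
The first gap is your reduction step. Lemma \ref{lem:sparse} does not let you delete the cube decomposition: it transfers the projector (up to translations $\tau_{y}$) onto the other factor while keeping the $\ell_{a}^{2}$ summation, so at best you arrive at $\sup_{y}\|(e^{it\Delta}v_{0})(\P_{Q_{a}^{k}}P_{\xi(t),l_{2}}\tau_{y}u)\|_{\ell_{a}^{2}L_{t,x}^{2}}$, not at $\sup_{y}\|(e^{it\Delta}v_{0})(\tau_{y}P_{\xi(t),l_{2}}u)\|_{L_{t,x}^{2}}$. Passing from the former to the latter with an absolute constant would need the pointwise almost-orthogonality $\sum_{a}|\P_{Q_{a}^{k}}g|^{2}\lesssim |g|^{2}$, which is false; orthogonality in $a$ is only available after integration, and here the pieces are weighted by $|e^{it\Delta}v_{0}|^{2}$. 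Worse, discarding the cube scale throws away exactly the gain the statement is built on: in the paper's proof one runs a separate interaction Morawetz functional $M_{l_{2},k,a}$ for each triple, with $v_{a,k}=e^{it\Delta}P_{Q_{a}^{k}}v_{0}$ and $w_{l_{2}}=P_{\xi(t),\leq l_{2}}u$, whose main (mass) term yields $\|v_{a,k}w_{l_{2}}\|_{L_{t,x}^{2}}^{2}\lesssim 2^{k-i}\|P_{Q_{a}^{k}}v_{0}\|_{L^{2}}^{2}$; Plancherel in $a$ and the supremum over $k\leq l_{2}+2$ give $2^{l_{2}-i}\|v_{0}\|_{L^{2}}^{2}$, and it is precisely the geometric series $\sum_{l_{2}\leq i-10}2^{l_{2}-i}\lesssim 1$ that closes the $l_{2}$-sum.

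The second gap is the claim that a single Morawetz action built from $w=e^{it\Delta}v_{0}$ and $\tilde{u}=P_{\xi(t),\leq i-10}u$ controls $\||\nabla-i\xi(t)|^{1/2}(w\overline{\tilde{u}})\|_{L_{t,x}^{2}}^{2}$ and that Littlewood--Paley plus Plancherel then reconstitutes $\sum_{l_{2}}\|w\,\overline{P_{\xi(t),l_{2}}u}\|_{L_{t,x}^{2}}^{2}$. For every $l_{2}\leq i-10$ the product $w\,\overline{P_{\xi(t),l_{2}}u}$ has Fourier support at distance $\sim 2^{i}$ from the origin (the high-frequency factor dominates), so a half-derivative on the product carries the same $2^{i/2}$ weight for every $l_{2}$, and the products for different $l_{2}$ are not frequency-orthogonal; no $\ell_{l_{2}}^{2}$ structure can be extracted from that single quantity, which only recovers, in essence, proposition \ref{prop:ibs_2} at the top scale. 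Moreover, the $l_{2}$-summability of the error terms is itself the heart of the paper's proof: the $\mathcal{N}_{2}$ contribution is summed by a Schur test in the pair of low-frequency indices, the near-frequency quartic terms are summed by exploiting the $2^{l_{2}-i}$ weights built into $\|u\|_{\tilde{X}_{i}(G_{\alpha}^{i}\times\R^{2})}$ together with an interchange of the dyadic summations, and the commutator term with both low factors inside $\E$ requires splitting the kernel of $\E$ into local and global pieces and an application of proposition \ref{prop:ibs_2} \emph{inside} the proof --- which is also where the sixth power of $\|u\|_{\tilde{X}_{i}}$ actually originates. Your proposal does not engage with these summations, and with the two reductions above invalidated there is no remaining route to the stated $\ell^{2}$-in-$l_{2}$ bound.
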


We now use proposition \ref{prop:ibs_3} to obtain an acceptable estimate for \eqref{eq:LTSE_b_h4}. For each interval $G_{\beta}^{l_{2}} \subset G_{\alpha}^{i}$, we have by duality that
\begin{equation}
\begin{split}
&\|\int_{G_{\beta}^{l_{2}}} e^{-it\Delta}P_{\xi(G_{\alpha}^{i}), i-2\leq\cdot\leq i+2}\brak*{(P_{\xi(G_{\alpha}^{i}),i-5\leq\cdot\leq i+5}u) \E\paren*{(P_{\xi(t),l_{2}}u)(\ol{P_{\xi(t),\leq l_{2}}u})}}dt\|_{L_{x}^{2}(\R^{2})} \\
&\phantom{=}= \sup_{\|v_{0}\|_{L^{2}(\R^{2})}=1} \int_{G_{\beta}^{l_{2}}} \ipp{e^{it\Delta}P_{\xi(G_{\alpha}^{i}), i-2\leq\cdot\leq i+2}v_{0}, (P_{\xi(G_{\alpha}^{i}), i-5\leq\cdot\leq i+5}u) \E\paren*{(P_{\xi(t), l_{2}}u)(\ol{P_{\xi(t), \leq l_{2}}u})}}dt. \label{eq:LTSE_b_h4_dual}
\end{split}
\end{equation}
For any $v_{0}\in\mathcal{S}(\R^{2})$ with $\|v_{0}\|_{L^{2}(\R^{2})}\leq 1$, we apply the double frequency decomposition, Cauchy-Schwarz, and the bilinear Strichartz estimate of proposition \ref{prop:bs_i} to obtain the estimate
\begin{equation}
\begin{split}
\eqref{eq:LTSE_b_h4_dual} &\lesssim \sum_{k\leq l_{2}+2} \int_{\R^{2}}dy 2^{2k}\jp{2^{k}y}^{-10} 2^{\frac{k-i}{2}}\|P_{\xi(G_{\beta}^{l_{2}}),l_{2}-2\leq\cdot\leq l_{2}+2}u\|_{U_{\Delta}^{2}(G_{\beta}^{l_{2}}\times\R^{2})} \times\\
&\phantom{=}\qquad \|(P_{\xi(G_{\alpha}^{i}), i-5\leq\cdot\leq i+5}u)(\ol{\P_{Q_{a}^{k}}P_{\xi(t),\leq l_{2}}\tau_{y}u})\|_{\ell_{a}^{2}L_{t,x}^{2}(\Z^{2}\times G_{\beta}^{l_{2}}\times\R^{2})}.
\end{split}
\end{equation}
For each $0\leq l_{2}\leq i-10$ fixed, we sum over the intervals $G_{\beta}^{l_{2}}$, then use Minkowski's inequality to interchange the order of the $y$ integration and $k$ summation with the $G_{\beta}^{l_{2}}$ summation, and then use Cauchy-Schwarz in $G_{\beta}^{l_{2}}$ to obtain
\begin{align}
\sum_{G_{\beta}^{l_{2}}\subset G_{\alpha}^{i}; N(G_{\beta}^{l_{2}}) \leq \epsilon_{3}^{1/2} 2^{l_{2}-5}} \eqref{eq:LTSE_b_h4_dual} &\lesssim \sum_{k\leq l_{2}+2}\int_{\R^{2}}dy 2^{2k}\jp{2^{k}y}^{-10} 2^{\frac{k-i}{2}} \paren*{\sum_{G_{\beta}^{l_{2}}\subset G_{\alpha}^{i}; N(G_{\beta}^{l_{2}}) \leq \epsilon_{3}^{1/2}2^{l_{2}-5}} \|P_{\xi(G_{\beta}^{l_{2}}),l_{2}-2\leq\cdot\leq l_{2}+2}u\|_{U_{\Delta}^{2}(G_{\beta}^{l_{2}}\times\R^{2})}^{2}}^{1/2} \nonumber\\
&\phantom{=}\qquad \times\paren*{\sum_{G_{\beta}^{l_{2}}\subset G_{\alpha}^{i}; N(G_{\beta}^{l_{2}})\leq \epsilon_{3}^{1/2}2^{l_{2}-5}} \|(P_{\xi(G_{\alpha}^{i}), i-5\leq\cdot\leq i+5}u)(\ol{\P_{Q_{a}^{k}}P_{\xi(t),\leq l_{2}}\tau_{y}u})\|_{\ell_{a}^{2}L_{t,x}^{2}(\Z^{2}\times G_{\beta}^{l_{2}}\times\R^{2})}^{2}}^{1/2} \nonumber\\
&\leq 2^{\frac{l_{2}-i}{2}}\paren*{\sum_{G_{\beta}^{l_{2}}\subset G_{\alpha}^{i}; N(G_{\beta}^{l_{2}}) \leq \epsilon_{3}^{1/2}2^{l_{2}-5}} \|P_{\xi(G_{\beta}^{l_{2}}), l_{2}-2\leq\cdot\leq l_{2}+2}u\|_{U_{\Delta}^{2}(G_{\beta}^{l_{2}}\times\R^{2})}^{2}}^{1/2}\times \nonumber\\
&\phantom{=}\qquad \sup_{y\in\R^{2}}\paren*{\sup_{k\leq l_{2}+2}\|(P_{\xi(G_{\alpha}^{i}), i-5\leq\cdot\leq i+5}u)(\ol{\P_{Q_{a}^{k}}P_{\xi(t),\leq l_{2}}\tau_{y}u})\|_{\ell_{a}^{2}L_{t,x}^{2}(\Z^{2}\times G_{\alpha}^{i}\times\R^{2})}}.\label{eq:LTSE_b_h4_l2}
\end{align}
Now summing over the range $0\leq l_{2}\leq i-10$, then using Cauchy-Schwarz in $l_{2}$, we obtain the estimate
\begin{equation}
\begin{split}
\sum_{0\leq l_{2}\leq i-10} \eqref{eq:LTSE_b_h4_l2} &\leq \paren*{\sum_{0\leq l_{2}\leq i-10}2^{l_{2}-i}\sum_{G_{\beta}^{l_{2}}\subset G_{\alpha}^{i}; N(G_{\beta}^{l_{2}}) \leq \epsilon_{3}^{1/2}2^{l_{2}-5}} \|P_{\xi(G_{\beta}^{l_{2}}), l_{2}-2\leq\cdot\leq l_{2}+2}u\|_{U_{\Delta}^{2}(G_{\beta}^{l_{2}}\times\R^{2})}^{2}}^{1/2}\times \nonumber\\
&\phantom{=}\quad \sup_{y\in\R^{2}}\paren*{\sum_{0\leq l_{2}\leq i-10} \paren*{\sup_{k\leq l_{2}+2} \|(P_{\xi(G_{\alpha}^{i}), i-5\leq\cdot\leq i+5}u)(\ol{\P_{Q_{a}^{k}}P_{\xi(t),\leq l_{2}}\tau_{y}u})\|_{\ell_{a}^{2}L_{t,x}^{2}(\Z^{2}\times G_{\alpha}^{i}\times\R^{2})}}^{2}}^{1/2} \label{eq:LTSE_b_h4_f}.
\end{split}
\end{equation}
The first factor in the product on the RHS of \eqref{eq:LTSE_b_h4_f} is $\lesssim \|u\|_{\tilde{Y}_{i}(G_{\alpha}^{i}\times\R^{2})}$ by definition of the $\tilde{Y}_{i}(G_{\alpha}^{i}\times\R^{2})$ norm. To estimate the second factor in the product, we use lemma \ref{lem:sparse} and then apply proposition \ref{prop:ibs_3} to the atoms of $\P_{Q_{a}^{k}}P_{\xi(G_{\alpha}^{i}), i-5\leq\cdot\leq i+5}u$ to conclude that
\begin{align}
\eqref{eq:LTSE_b_h4_f} &\lesssim_{u} \|u\|_{\tilde{Y}_{i}(G_{\alpha}^{i}\times\R^{2})} \|\P_{Q_{a}^{k}}P_{\xi(G_{\alpha}^{i}), i-5\leq\cdot\leq i+5}u\|_{\ell_{a}^{2}U_{\Delta}^{2}(\Z^{2}\times G_{\alpha}^{i}\times\R^{2})} \paren*{1+\|u\|_{\tilde{X}_{i}(G_{\alpha}^{i}\times\R^{2})}^{3}} \nonumber\\
&\lesssim \|u\|_{\tilde{Y}_{i}(G_{\alpha}^{i}\times\R^{2})} \|P_{\xi(G_{\alpha}^{i}), i-5\leq\cdot\leq i+5}u\|_{U_{\Delta}^{2}(G_{\alpha}^{i}\times\R^{2})} \paren*{1+\|u\|_{\tilde{X}_{i}(G_{\alpha}^{i}\times\R^{2})}^{3}}.
\end{align}
Hence, we have shown that
\begin{equation}
\eqref{eq:LTSE_b_h4} \lesssim_{u} \|u\|_{\tilde{Y}_{i}(G_{\alpha}^{i}\times\R^{2})} \|P_{\xi(G_{\alpha}^{i}), i-5\leq\cdot\leq i+5}u\|_{U_{\Delta}^{2}(G_{\alpha}^{i}\times\R^{2})} \paren*{1+\|u\|_{\tilde{X}_{i}(G_{\alpha}^{i}\times\R^{2})}^{3}}.
\end{equation}
\end{description}
After a bit of bookkeeping, we obtain the desired estimate in the statement of lemma \ref{lem:LTSE_b_hard}.
\end{proof}

Collecting the estimates of lemma \ref{lem:LTSE_b_easy} and corollary \ref{cor:LTSE_b_hard} completes the proof of lemma \ref{lem:LTSE_b}.

\subsection{Step 5: Conclusion of proof}\label{ssec:LTSE_S5} 
We now show how the conclusion of the proof of theorem \ref{thm:LTSE} follows from Steps 1-4. We define $\tilde{C}(u)$ to be the maximum of the finitely many implicit constants obtained so far in the proof. Applying lemma \ref{lem:LTSE_b} and taking the maximum over integers $20\leq j\leq k_{0}$ and intervals $G_{\kappa}^{j}\subset [0,T]$, we have shown the following lemma.

\begin{lemma}\label{lem:LTSE_b_f}
For any $20\leq k\leq k_{0}$, we have that
\begin{align}
&\|u\|_{\tilde{X}_{k}([0,T]\times\R^{2})}^{2} \nonumber\\
&\phantom{=}\leq \tilde{C}(u)\paren*{1+\|u\|_{\tilde{Y}_{k}([0,T]\times\R^{2})}^{2}\paren*{\epsilon_{2}\|u\|_{\tilde{X}_{k}([0,T]\times\R^{2})}^{3}+\epsilon_{2}^{1/3}\|u\|_{\tilde{X}_{k}([0,T]\times\R^{2})}^{5/3}+\paren*{\epsilon_{2}+\|u\|_{\tilde{Y}_{k}([0,T]\times\R^{2})}\paren*{1+\|u\|_{\tilde{X}_{k}([0,T]\times\R^{2})}^{4}}}^{2}} } \\
&\|u\|_{\tilde{Y}_{k}([0,T]\times\R^{2})}^{2} \nonumber\\
&\phantom{=}\leq \tilde{C}(u)\paren*{\epsilon_{2}^{3/2}+\|u\|_{\tilde{Y}_{k}([0,T]\times\R^{2})}^{2}\paren*{\epsilon_{2}\|u\|_{\tilde{X}_{k}([0,T]\times\R^{2})}^{3}+\epsilon_{2}^{1/3}\|u\|_{\tilde{X}_{k}([0,T]\times\R^{2})}^{5/3}+\paren*{\epsilon_{2}+\|u\|_{\tilde{Y}_{k}([0,T]\times\R^{2})}\paren*{1+\|u\|_{\tilde{X}_{k}([0,T]\times\R^{2})}^{4}}}^{2}}}.
\end{align}
\end{lemma}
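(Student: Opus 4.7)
The plan is to simply assemble the estimates obtained in Steps 2--4 (lemmas \ref{lem:lin_bnd}, \ref{lem:LTSE_NI_est}, and \ref{lem:LTSE_b}) and then take the supremum over admissible $j$ and $G_\kappa^j$ to pass from local $X,Y$-norms to the maximal $\tilde{X}_k,\tilde{Y}_k$-norms. Fix an integer $20\leq k\leq k_{0}$, and let $20\leq j\leq k$ and $G_{\kappa}^{j}\subset[0,T]$. The plan begins by Duhamel expanding $P_{\xi(G_{\alpha}^{i}),i-2\leq\cdot\leq i+2}u$ on each $G_{\alpha}^{i}\subset G_{\kappa}^{j}$ with $20\leq i<j$ about the minimizing time $t_{\alpha}^{i}\in G_{\alpha}^{i}$, and $P_{\xi(G_{\kappa}^{j}),i-2\leq\cdot\leq i+2}u$ for $i\geq j$ about $t_{\kappa}^{j}$, exactly as in the opening of \S\ref{ssec:LTSE_S2}. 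By proposition \ref{prop:Up_lp}, the $U^{2}_{\Delta}$-norm of each such piece controls the $X(G_{\kappa}^{j}\times\R^{2})$ and $Y(G_{\kappa}^{j}\times\R^{2})$ norms via their defining expressions.

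Next, I would bound the linear contribution using lemma \ref{lem:lin_bnd}, giving the reductions \eqref{eq:Xlin_red} and \eqref{eq:Ylin_red}. To the remaining Duhamel integrals I would apply lemma \ref{lem:LTSE_NI_est} to dispense with the near/intermediate frequency portions ($N(G_{\alpha}^{i})>\epsilon_{3}^{1/2}2^{i-10}$ and $N(G_{\kappa}^{j})>\epsilon_{3}^{1/2}2^{i-10}$), incurring an $O_{u}(1)$ error for the $X$-estimate and an $O_{u}(\epsilon_{2}^{3/2})$ error for the $Y$-estimate. What remains on each side after this reduction is exactly the far frequency contribution controlled by lemma \ref{lem:LTSE_b}, which bounds it by
\[
\|u\|_{\tilde{Y}_{j}([0,T]\times\R^{2})}^{2}\paren*{\epsilon_{2}\|u\|_{\tilde{X}_{j}}^{3}+\epsilon_{2}^{1/3}\|u\|_{\tilde{X}_{j}}^{5/3}+\paren*{\epsilon_{2}+\|u\|_{\tilde{Y}_{j}}\paren*{1+\|u\|_{\tilde{X}_{j}}^{4}}}^{2}}.
\]

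Combining the three groups of estimates yields, for each fixed $G_{\kappa}^{j}$,
\begin{align*}
\|u\|_{X(G_{\kappa}^{j}\times\R^{2})}^{2}&\lesssim_{u}1+\|u\|_{\tilde{Y}_{j}}^{2}\paren*{\epsilon_{2}\|u\|_{\tilde{X}_{j}}^{3}+\epsilon_{2}^{1/3}\|u\|_{\tilde{X}_{j}}^{5/3}+\paren*{\epsilon_{2}+\|u\|_{\tilde{Y}_{j}}(1+\|u\|_{\tilde{X}_{j}}^{4})}^{2}},\\
\|u\|_{Y(G_{\kappa}^{j}\times\R^{2})}^{2}&\lesssim_{u}\epsilon_{2}^{3/2}+\|u\|_{\tilde{Y}_{j}}^{2}\paren*{\epsilon_{2}\|u\|_{\tilde{X}_{j}}^{3}+\epsilon_{2}^{1/3}\|u\|_{\tilde{X}_{j}}^{5/3}+\paren*{\epsilon_{2}+\|u\|_{\tilde{Y}_{j}}(1+\|u\|_{\tilde{X}_{j}}^{4})}^{2}},
\end{align*}
where all $\tilde{X}_{j},\tilde{Y}_{j}$ norms above are taken over $[0,T]\times\R^{2}$. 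Since the right-hand sides are monotone in $j$ (as $\tilde{X}_{j}$ and $\tilde{Y}_{j}$ are), taking the supremum over $0\leq j\leq k$ and over all $G_{\kappa}^{j}\subset[0,T]$, using the base-case estimate of lemma \ref{lem:LTSE_base} to absorb the low-$j$ contributions ($j<20$) into the $O(1)$ and $O(\epsilon_{2}^{3/2})$ terms, produces the two claimed inequalities. The constant $\tilde{C}(u)$ is then taken as the maximum of the finitely many implicit constants aggregated from lemmas \ref{lem:LTSE_base}, \ref{lem:lin_bnd}, \ref{lem:LTSE_NI_est}, and \ref{lem:LTSE_b}. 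There is no real obstacle here, since all the hard analytic work has already been carried out; the only point of care is to verify that the replacement of $\|u\|_{X(G_{\kappa}^{j})}$ and $\|u\|_{Y(G_{\kappa}^{j})}$ by the maximal norms $\|u\|_{\tilde{X}_{k}([0,T]\times\R^{2})}$ and $\|u\|_{\tilde{Y}_{k}([0,T]\times\R^{2})}$ on the right-hand side is legitimate, which is immediate from the monotonicity $\|u\|_{\tilde{X}_{j}}\leq\|u\|_{\tilde{X}_{k}}$ and $\|u\|_{\tilde{Y}_{j}}\leq\|u\|_{\tilde{Y}_{k}}$ for $j\leq k$.
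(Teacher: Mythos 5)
Your proposal is correct and is essentially the paper's own argument: the paper obtains this lemma in one stroke by defining $\tilde{C}(u)$ as the maximum of the finitely many implicit constants from Steps 1--4 and then combining lemmas \ref{lem:LTSE_base}, \ref{lem:lin_bnd}, \ref{lem:LTSE_NI_est}, and \ref{lem:LTSE_b}, taking the maximum over $20\leq j\leq k_{0}$ and $G_{\kappa}^{j}\subset[0,T]$ exactly as you do. Your explicit remarks on absorbing the $j<20$ range via the base case and on the monotonicity $\|u\|_{\tilde{X}_{j}}\leq\|u\|_{\tilde{X}_{k}}$, $\|u\|_{\tilde{Y}_{j}}\leq\|u\|_{\tilde{Y}_{k}}$ are the only points of care, and you handle them correctly.
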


\begin{lemma}[Inductive step]\label{lem:LTSE_ind}
There exists $\epsilon_{2}(u)>0$ such that for all $u$-admissible tuples $(\epsilon_{1},\epsilon_{2},\epsilon_{3})$ with $0<\epsilon_{2}\leq \epsilon_{2}(u)$, the following holds: if for some integer $20\leq k_{*}\leq k_{0}$, we have that
\begin{align}
\|u\|_{\tilde{X}_{k_{*}}([0,T]\times\R^{2})}^{2} &\leq C_{0} \label{eq:LTSE_ind1}\\
\|u\|_{\tilde{Y}_{k_{*}}([0,T]\times\R^{2})}^{2} &\leq \epsilon_{2}, \label{eq:LTSE_ind2}
\end{align}
where $C_{0}\coloneqq 2^{-20}\tilde{C}(u)$, then
\begin{align}
\|u\|_{\tilde{X}_{k_{*}+1}([0,T]\times\R^{2})}^{2} &\leq C_{0}\\
\|u\|_{\tilde{Y}_{k_{*}+1}([0,T]\times\R^{2})}^{2} &\leq \epsilon_{2}.
\end{align}
\end{lemma}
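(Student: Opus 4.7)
The plan is a direct bootstrap combining the cheap inductive estimate of Lemma \ref{lem:chp_ie} with the master estimate of Lemma \ref{lem:LTSE_b_f}.

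First, I would apply Lemma \ref{lem:chp_ie} once to the induction hypotheses \eqref{eq:LTSE_ind1}--\eqref{eq:LTSE_ind2} to deduce the a priori bounds
\begin{equation}
\|u\|_{\tilde{X}_{k_*+1}([0,T]\times\R^{2})}^{2}\leq 2C_{0},\qquad \|u\|_{\tilde{Y}_{k_*+1}([0,T]\times\R^{2})}^{2}\leq 2\epsilon_{2}.
\end{equation}
These will play the role of the bootstrap input at level $k_{*}+1$; in particular, since they are finite, the quantities on the right-hand side of Lemma \ref{lem:LTSE_b_f} at level $k=k_{*}+1$ are all well-defined and controlled in terms of $C_{0}$, $\epsilon_{2}$, and $\tilde{C}(u)$.

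Second, I would apply Lemma \ref{lem:LTSE_b_f} with $k=k_{*}+1$ and substitute the a priori bounds into each occurrence of $\|u\|_{\tilde{X}_{k_{*}+1}}$ and $\|u\|_{\tilde{Y}_{k_{*}+1}}$ on the right-hand side. The decisive structural feature of that master estimate is that \emph{every} nonbaseline contribution appears multiplied by the factor $\|u\|_{\tilde{Y}_{k_{*}+1}}^{2}\leq 2\epsilon_{2}$, while the $\tilde{Y}$ estimate has leading order $\tilde{C}(u)\epsilon_{2}^{3/2}$. After substitution, the $\tilde{X}$ bound reads
\begin{equation}
\|u\|_{\tilde{X}_{k_*+1}}^{2}\leq \tilde{C}(u)\bigl(1+\epsilon_{2}\cdot P(C_{0},\epsilon_{2})\bigr),
\end{equation}
and the $\tilde{Y}$ bound reads
\begin{equation}
\|u\|_{\tilde{Y}_{k_*+1}}^{2}\leq \tilde{C}(u)\bigl(\epsilon_{2}^{3/2}+\epsilon_{2}\cdot P(C_{0},\epsilon_{2})\bigr),
\end{equation}
where $P$ is a fixed polynomial of degree at most a finite number (coming from the three explicit terms $\epsilon_{2}\|u\|_{\tilde{X}}^{3}$, $\epsilon_{2}^{1/3}\|u\|_{\tilde{X}}^{5/3}$, and $(\epsilon_{2}+\|u\|_{\tilde{Y}}(1+\|u\|_{\tilde{X}}^{4}))^{2}$ inside the bracket of Lemma \ref{lem:LTSE_b_f}).

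Third, I would choose $\epsilon_{2}(u)>0$ small enough --- depending only on $u$ through its mass and almost-periodicity parameters via $\tilde{C}(u)$ and $C_{0}$ --- so that the $\epsilon_{2}P(C_{0},\epsilon_{2})$ corrections are strictly subordinate to the leading terms. The $\tilde{X}$ bound then reduces to the size of the baseline contribution $\tilde{C}(u)$, which by the calibration of $C_{0}$ in terms of $\tilde{C}(u)$ fits inside the budget $C_{0}$; the $\tilde{Y}$ bound reduces to $O(\tilde{C}(u)\epsilon_{2}^{3/2})$, which beats $\epsilon_{2}$ by a positive half-power. Both improved bounds follow, closing the induction.

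There is no serious analytical obstacle remaining at this stage: the heavy lifting (long-time Strichartz estimate via concentration compactness, linear/nonlinear partitioning, far-frequency bootstrap, and the three improved bilinear Strichartz estimates) has already been encoded into Lemma \ref{lem:LTSE_b_f}. The only work left is the straightforward bookkeeping verification that the finitely many error terms absorb into the $C_{0}$ and $\epsilon_{2}$ budgets after $\epsilon_{2}$ is shrunk. The mild subtlety to keep track of is that one must verify the same $\epsilon_{2}(u)$ works uniformly in $k_{*}$ so that iterating the inductive step from $k_{*}=20$ up to $k_{*}=k_{0}-1$ produces a constant $C_{0}$ independent of $k_{0}$, which is automatic here since $P(C_{0},\epsilon_{2})$ and $\tilde{C}(u)$ depend only on $u$.
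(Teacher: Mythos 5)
Your proposal is correct and follows essentially the same route as the paper: apply Lemma \ref{lem:LTSE_b_f} at level $k_{*}+1$, control the norms appearing on its right-hand side via the cheap inductive estimate of Lemma \ref{lem:chp_ie} together with the induction hypotheses, and then shrink $\epsilon_{2}(u)$ (uniformly in $k_{*}$, since the constants depend only on $u$) so that the corrections absorb into the $C_{0}$ and $\epsilon_{2}$ budgets, with the $\tilde{Y}$ bound closing thanks to the extra half-power $\epsilon_{2}^{3/2}$. This matches the paper's proof of the inductive step in both structure and bookkeeping.
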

\begin{proof}
Using lemma \ref{lem:LTSE_b_f} and the cheap inductive estimate (lemma \ref{lem:chp_ie}) to crudely estimate $\|u\|_{\tilde{X}_{k*+1}([0,T]\times\R^{2})}^{2}$ and $\|u\|_{\tilde{Y}_{k_{*}+1}([0,T]\times\R^{2})}^{2}$ on the RHS, we obtain that
\begin{align}
\|u\|_{\tilde{X}_{k_{*}+1}([0,T]\times\R^{2})}^{2} &\leq \tilde{C}(u)\Bigg(1+2\|u\|_{\tilde{Y}_{k_{*}}([0,T]\times\R^{2})}^{2}\Bigg(\epsilon_{2}(2\|u\|_{\tilde{X}_{k_{*}}([0,T]\times\R^{2})}^{2})^{3/2} + \epsilon_{2}^{1/3}(2\|u\|_{\tilde{X}_{k_{*}}([0,T]\times\R^{2})}^{2})^{5/6} \nonumber\\
&\phantom{=}\qquad+ \paren*{\epsilon_{2}+(2\|u\|_{\tilde{Y}_{k_{*}}([0,T]\times\R^{2})}^{2})^{1/2}\paren*{1+4\|u\|_{\tilde{X}_{k_{*}}([0,T]\times\R^{2})}^{4}}}^{2}\Bigg)\Bigg) \\
\|u\|_{\tilde{Y}_{k_{*}+1}([0,T]\times\R^{2})}^{2} &\leq \tilde{C}(u)\Bigg(\epsilon_{2}^{3/2} + 2\|u\|_{\tilde{Y}_{k_{*}}([0,T]\times\R^{2})}^{2}\Bigg(\epsilon_{2}(2\|u\|_{\tilde{X}_{k_{*}}([0,T]\times\R^{2})}^{2})^{3/2} + \epsilon_{2}^{1/3}(2\|u\|_{\tilde{X}_{k_{*}}([0,T]\times\R^{2})}^{2})^{5/6} \nonumber\\
&\phantom{=}\qquad+ \paren*{\epsilon_{2}+(2\|u\|_{\tilde{Y}_{k_{*}}([0,T]\times\R^{2})}^{2})^{1/2}\paren*{1+4\|u\|_{\tilde{X}_{k_{*}}([0,T]\times\R^{2})}^{4}}}^{2}\Bigg)\Bigg).
\end{align}
Using the induction hypothesis to estimate $\|u\|_{\tilde{X}_{k_{*}}([0,T]\times\R^{2})}^{2}, \|u\|_{\tilde{Y}_{k_{*}}([0,T]\times\R^{2})}^{2}$ on the RHS above, we see that
\begin{align}
\|u\|_{\tilde{X}_{k_{*}+1}([0,T]\times\R^{2})}^{2} &\leq \tilde{C}(u)\paren*{1+2\epsilon_{2}\paren*{\epsilon_{2}(2C_{0})^{3/2}+\epsilon_{2}^{1/3}(2C_{0})^{5/6}+\paren*{\epsilon_{2}+(2\epsilon_{2})^{1/2}\paren*{1+4C_{0}^{2}}}^{2}}} \\
\|u\|_{\tilde{Y}_{k_{*}+1}([0,T]\times\R^{2})}^{2} &\leq \tilde{C}(u)\paren*{\epsilon_{2}^{3/2}+2\epsilon_{2}\paren*{\epsilon_{2}(2C_{0})^{3/2}+\epsilon_{2}^{1/3}(2C_{0})^{5/6}+\paren*{\epsilon_{2}+(2\epsilon_{2})^{1/2}\paren*{1+4C_{0}^{2}}}^{2}}}.
\end{align}
Now choose $\epsilon_{2}(u)>0$ sufficiently small so that
\begin{align}
&2^{-19}\epsilon_{2}(u)\paren*{\epsilon_{2}(u)(2C_{0})^{3/2}+\epsilon_{2}(u)^{1/3}(2C_{0})^{5/6}+\paren*{\epsilon_{2}(u)+(2\epsilon_{2}(u))^{1/2}\paren*{1+4C_{0}^{2}}}^{2}} \leq 1-2^{-20} \\
&\tilde{C}(u)\epsilon_{2}(u)^{1/2}+2\tilde{C}(u)\paren*{\epsilon_{2}(u)(2C_{0})^{3/2}+\epsilon_{2}(u)^{1/3}(2C_{0})^{5/6}+\paren*{\epsilon_{2}(u)+(2\epsilon_{2}(u))^{1/2}\paren*{1+4C_{0}^{2}}}^{2}} \leq 1.
\end{align}
Then, after recalling that $C_{0}=2^{-20}\tilde{C}(u)$, it follows that for $0<\epsilon_{2}\leq \epsilon_{2}(u)$,
\begin{align}
\|u\|_{\tilde{X}_{k_{*}+1}([0,T]\times\R^{2})}^{2} &\leq C_{0}\\
\|u\|_{\tilde{Y}_{k_{*}+1}([0,T]\times\R^{2})}^{2} &\leq \epsilon_{2}.
\end{align}
\end{proof}

Since we defined $\tilde{C}(u)$ to be the maximum of all the implicit constants obtained in Steps 1-4, lemma \ref{lem:LTSE_base} implies that \eqref{eq:LTSE_ind1}, \eqref{eq:LTSE_ind2} hold with $k_{*}=20$, provided that $0<\epsilon_{2}\leq \epsilon_{2}(u)$, where $\epsilon_{2}(u)>0$ is sufficiently small so that $2^{20}\tilde{C}(u)\epsilon_{2}(u)^{1/2}\leq 1$. Hence, the proof of theorem \ref{thm:LTSE} is complete by induction on $k_{*}$.

\section{Proofs of bilinear Strichartz estimates}\label{sec:BSE}

\subsection{Overview}\label{ssec:BSE_ov}
In this section, we pay our debt to the reader by proving the three bilinear Strichartz estimates (propositions \ref{prop:ibs_1}, \ref{prop:ibs_2}, and \ref{prop:ibs_3}) stated and used in subsection \ref{ssec:LTSE_S4}. Propositions \ref{prop:ibs_1}, \ref{prop:ibs_2}, and \ref{prop:ibs_3} are specific to our setting in two ways: first, the proofs use the specific structure of the eeDS equation, in particular its local conservation laws; second, the bilinear estimates are adapted to the frequency cube localization coming from the double frequency decomposition. The first specificity is superficial as any ``good" structure of the eeDS equation is shared by other semilinar, local dispersive equations (e.g. cubic NLS). Indeed, the real difficulty is that the eeDS lacks certain structure possessed by the NLS. The second specificity is nontrivial. As remarked in the introduction, the work \cite{Dodson2016} also heavily relied on bilinear estimates. If we were to prove exact analogues of the bilinear estimates in that work (theorems 4.5-4.7), then we would have estimates which we do not know how to use. Moreover, it is not clear to us how to obtain our propositions \ref{prop:ibs_1}, \ref{prop:ibs_2}, and \ref{prop:ibs_3} a posteriori from the eeDS analogues of the bilinear estimates in \cite{Dodson2016}. Thus, we go back to the drawing board.

\cite{Dodson2016} does provide us with an idea for proving our propositions \ref{prop:ibs_1}, \ref{prop:ibs_2}, and \ref{prop:ibs_3}. In caricature, we use the observation of \cite{Planchon2009} that one can prove bilinear Strichartz estimates by defining an appropriate interaction Morawetz functional and then proceeding by the standard fundamental theorem of calculus argument used to prove monotonicity formulae. We rely heavily on the local conservation laws of equation \eqref{eq:DS} and integration by parts together with the interplay between the spatial Radon and Fourier transforms. Additionally, we use the observation of \cite{Planchon2009} that the interaction Morawetz functional is Galilean invariant, which is useful for exploiting the Littlewood-Paley theory adapted to the frequency center $\xi(t)$.

Initially, one might think that proceeding by monotonicity formulae in the eeDS setting is ill-advised. Indeed, we have remarked in the introduction that we do not know how to prove an a priori interaction Morawetz estimate for suitably regular solutions of \eqref{eq:DS}. The fact saving our strategy is that at this stage, we can directly estimate the contribution of the nonlinear term via the Calder\'{o}n-Zygmund theorem; we do not need to rely on any sort of nonnegativity condition. This indifference to the sign or positive definiteness of the nonlinear contribution reflects that our arguments in this section are agnostic to the defocusing/focusing distinction. Moreover, we expect our arguments to apply to any nonlinearity containing good differential structure (e.g. Riesz transforms) to facilitate integration by parts.

\subsection{Frequency-localized conservation laws}\label{ssec:BSE_cl}
 As we will make heavy use of the local conservation laws for the the Schr\"{o}dinger and Davey-Stewartson equations, we first recall them for the readers' benefit (cf. subsection 12.1.3 of \cite{sulem1999nonlinear}). Let $v=e^{it\Delta}v_{0}$ solve the free Schr\"{o}dinger equation, and let $u$ solve the eeDS equation
\begin{equation}
(i\partial_{t}+\Delta) u = \frac{\mu_{1}\partial_{1}^{2}+\mu_{2}\partial_{2}^{2}}{\Delta}(|u|^{2})u,
\end{equation}
where $\mu_{1},\mu_{2}\in\R$ and $v_{0},u$ are sufficiently smooth. Then the tensors $\{T_{jk}^{v}\}_{j,k=0}^{2}, \{T_{jk}^{u}\}_{j,k=0}^{2}$ associated to $v$ and $u$, respectively, satisfy the equations (written using Einstein summation)
\begin{gather}
	\partial_{t}T_{00}^{v}+\partial_{j}T_{0j}^{v}=0\\
	\partial_{t}T_{00}^{u}+\partial_{j}T_{0j}^{u}=0\\
	\partial_{t}T_{0j}^{v}+\partial_{k}L_{jk}^{v}=0\\
	\partial_{t}T_{0j}^{u}+\partial_{k}L_{jk}^{u}+\partial_{k}T_{jk}^{u}=0,
\end{gather}
where for $1\leq j,k\leq 2$,
\begin{gather}
T_{00}^{v} \coloneqq |v|^{2}, \qquad T_{00}^{u} \coloneqq |u|^{2}\\
T_{0j}^{v} = T_{j0}^{v} \coloneqq 2\Im{\bar{v}\p_{j}v}, \qquad T_{0j}^{u} = T_{j0}^{u} \coloneqq 2\Im{\bar{u}\p_{j} u}\\
L_{jk}^{v} \coloneqq -\partial_{j}\partial_{k}(|v|^{2})+4\Re{\ol{\partial_{k}v}\partial_{j}v}, \qquad L_{jk}^{u} \coloneqq -\partial_{j}\partial_{k}(|u|^{2})+4\Re{\ol{\partial_{k}u}\partial_{j}u} \\
T_{jk}^{u} \coloneqq \mu_{2}\delta_{kj}|u|^{4}-(\mu_{2}-\mu_{1})\delta_{kj}|\frac{\nabla\partial_{1}}{\Delta}(|u|^{2})|^{2}-2(\mu_{2}-\mu_{1})\delta_{k1}|u|^{2} \frac{\partial_{k}\partial_{j}}{\Delta}(|u|^{2})+2(\mu_{2}-\mu_{1})\frac{\partial_{k}\partial_{1}}{\Delta}(|u|^{2})\frac{\partial_{j}\partial_{1}}{\Delta}(|u|^{2}).
\end{gather}
In the sequel, we will work with the frequency-localized solution $w \coloneqq P_{\xi(t), \leq l_{2}}u$, rather than $u$. Therefore we first derive local conservation laws for $w$ analogous to those obtained above for $u$. Observe that $w$ satisfies the equation
\begin{gather}
	(i\p_{t}+\Delta)w=F(w)+\mathcal{N}_{1}+\mathcal{N}_{2}\eqqcolon F(w)+\mathcal{N}\\
	\mathcal{N}_{1} \coloneqq P_{\xi(t),\leq l_{2}}F(u)-F(w)\\ 
	\mathcal{N}_{2} \coloneqq \paren*{\frac{d}{dt}P_{\xi(t)\leq l_{2}}}u,
\end{gather}
where $\frac{d}{dt}P_{\xi(t),\leq l_{2}}$ is the time-dependent Fourier multiplier with symbol
\begin{equation}
	2^{-l_{2}}(\nabla\phi)\paren*{\frac{\xi-\xi(t)}{2^{l_{2}}}}\cdot -\xi'(t).
\end{equation}
Define $L_{jk}^{w}, T_{jk}^{w}$ analogously to $L_{jk}^{u}, T_{jk}^{u}$. Then by the calculus and the equation satisfied by $w$, we have the \emph{frequency-localized local mass conservation identity}
\begin{align}
\p_{t}T_{00}^{w} = w\ol{\partial_{t}w} + (\partial_{t}w)\ol{w} &= w\ol{i^{-1}\paren*{-\Delta w+F(w)+\mathcal{N}}}  + i^{-1}\paren*{-\Delta w+F(w)+\mathcal{N}}\ol{w} \nonumber\\
&=w\ol{i^{-1}\paren*{-\Delta w+F(w)}} + i^{-1}\paren*{-\Delta w+F(w)}\ol{w} + w\ol{i^{-1}\mathcal{N}} + i^{-1}\mathcal{N}\ol{w}  \nonumber\\
&=-\partial_{k}T_{0k}^{w}+2\Re{i^{-1}\bar{w}\mathcal{N}} \nonumber\\
&=-\partial_{k}T_{0k}^{w}+2\Im{\bar{w}\mathcal{N}}
\end{align}	
and the \emph{frequency-localized local momentum conservation identity}
\begin{align}
\p_{t} T_{0j}^{w} &= 2\Im{\ol{\partial_{t}w}\partial_{j}w}+ 2\Im{\bar{w}\partial_{j}\partial_{t}w} \nonumber\\
&= 2\Im{\ol{i^{-1}(-\Delta w+F(w)+\mathcal{N})}\partial_{j}w} + 2\Im{\bar{w}\partial_{j}i^{-1}(-\Delta w+F(w)+\mathcal{N})} \nonumber\\
&= 2\Im{\ol{i^{-1}\paren*{-\Delta w+F(w)}}\partial_{j}w} + 2\Im{\bar{w}i^{-1}\partial_{j}\paren*{-\Delta w+F(w)}}+ 2\Im{\ol{i^{-1}\mathcal{N}}\partial_{j}w} + 2\Im{i^{-1}\bar{w}\partial_{j}\mathcal{N}} \nonumber \\
&= -\partial_{k}L_{jk}^{w}-\partial_{k}T_{jk}^{w}+2\Re{\bar{\mathcal{N}}\partial_{j}w}-2\Re{\bar{w}\partial_{j}\mathcal{N}}.
\end{align}

\subsection{Bilinear Strichartz estimate I}\label{ssec:BSE_1}
We recall the statement of proposition \ref{prop:ibs_1}:
\ibsI*

\begin{proof}
Let $v_{0}$ be as in the statement of the proposition. Define functions $v:= e^{it\Delta}v_{0}$ and $w := P_{\xi(t),\leq l_{2}}u$ so that $v$ and $w$ as above. Following \cite{Planchon2009} and \cite{Dodson2016}, for every $\omega\in \mathbb{S}^{1}$, define the interaction Morawetz functional
\begin{equation}
M_{\omega}(t) \coloneqq 2\int_{\R^{4}}\frac{(x-y)_{\omega}}{|(x-y)_{\omega}|} |w(t,y)|^{2}\Im{\bar{v}\partial_{w}v}(t,x)dxdy+2\int_{\R^{4}} \frac{(x-y)_{\omega}}{|(x-y)_{\omega}|} |v(t,y)|^{2}\Im{\bar{w}\partial_{w}w}(t,x)dxdy.
\end{equation}
and define the spherically averaged interaction Morawetz functional
\begin{equation}
M(t) \coloneqq \int_{\mathbb{S}^{1}} M_{\omega}(t)d\omega,
\end{equation}
where $d\omega$ denotes the unit-normalized surface measure on $\S^{1}$.

Differentiating $M_{\omega}$ with respect to time and using the local conservation laws for $v$ and $w$, we obtain that
\begin{equation}
\begin{split}
\frac{d}{dt}M_{\omega}(t) &= 2\int_{\R^{4}}\frac{(x-y)_{\omega}}{|(x-y)_{\omega}|} \paren*{-\partial_{k}T_{0k}^{w}+2\Im{\bar{w}\mathcal{N}}}(t,y)\Im{\bar{v}\partial_{\omega}v}(t,x)dxdy \\
&\phantom{=}+\int_{\R^{4}}\frac{(x-y)_{\omega}}{|(x-y)_{\omega}|} |w(t,y)|^{2} \paren*{-\partial_{k}L_{jk}^{v}}(t,x)\omega_{j} dxdy \\
&\phantom{=}+2\int_{\R^{4}}\frac{(x-y)_{\omega}}{|(x-y)_{\omega}|} \paren*{-\partial_{k}T_{0k}^{v}}(t,y)\Im{\bar{w}\partial_{\omega}w}(t,x) dxdy \\
&\phantom{=}+\int_{\R^{4}} \frac{(x-y)_{\omega}}{|(x-y)_{\omega}|} |v(t,y)|^{2}\paren*{-\partial_{k}L_{jk}^{w}-\partial_{k}T_{jk}^{w}+2\Re{\bar{\mathcal{N}}\partial_{j}w}-2\Re{\bar{w}\partial_{j}\mathcal{N}}}(t,x)\omega_{j} dxdy.
\end{split}
\end{equation}
Integrating by parts in $y$, we have that
\begin{align}
-2\int_{\R^{4}} \frac{(x-y)_{\omega}}{|(x-y)_{\omega}|}(\partial_{k}T_{0k}^{w})(t,y)\Im{\bar{v}\partial_{\omega}v}(t,x) dxdy &= -4\int_{\{x_{\omega}=y_{\omega}\}} T_{0k}^{w}(t,y)\omega_{k}\Im{\bar{v}\partial_{\omega}v}(t,x) d\mathcal{H}^{3}(x,y)\nonumber\\
&=-8\int_{\{x_{\omega}=y_{\omega}\}} \Im{\bar{w}\partial_{\omega}w}(t,y)\Im{\bar{v}\partial_{\omega}v}(t,x) d\mathcal{H}^{3}(x,y).
\end{align}
Integrating by parts in $x$, we have that
\begin{align}
\int_{\R^{4}} \frac{(x-y)_{\omega}}{|(x-y)_{\omega}|} |w(t,y)|^{2} (-\partial_{k}L_{jk}^{v})(t,x)\omega_{j} dxdy &=2\int_{\{x_{\omega}=y_{\omega}\}} |w(t,y)|^{2}\paren*{4\Re{\ol{\partial_{k}v}\partial_{j}v}-\partial_{j}\partial_{k}(|v|^{2})}(t,x)\omega_{j}\omega_{k} d\mathcal{H}^{3}(x,y) \nonumber\\
&=8\int_{\{x_{\omega}=y_{\omega}\}} |w(t,y)|^{2}|\partial_{\omega}v(t,x)|^{2} d\mathcal{H}^{3}(x,y) \nonumber\\
&\phantom{=} + 2\int_{\{x_{\omega}=y_{\omega}\}} \partial_{\omega}(|w|^{2})(t,y)\partial_{\omega}(|v|^{2})(t,x) d\mathcal{H}^{3}(x,y),
\end{align}
where the second term in the RHS of the ultimate equality follows from another integration by parts in $x_{\omega}$. Integrating by parts in $y$, we have that
\begin{align}
-2\int_{\R^{4}} \frac{(x-y)_{\omega}}{|(x-y)_{\omega}|} (\partial_{k}T_{0k}^{v})(t,y) \Im{\bar{w}\partial_{\omega}w}(t,x) dxdy &= -4\int_{\{x_{\omega}=y_{\omega}\}} T_{0k}^{v}(t,y)\omega_{k}\Im{\bar{w}\partial_{\omega}w}(t,x) d\mathcal{H}^{3}(x,y) \nonumber\\
&=-8\int_{\{x_{\omega}=y_{\omega}\}} \Im{\bar{v}\partial_{\omega}v}(t,y)\Im{\bar{w}\partial_{\omega}w}(t,x) d\mathcal{H}^{3}(x,y).
\end{align}
Integrating by parts in $x$, we have that
\begin{align}
&\int_{\R^{4}} \frac{(x-y)_{\omega}}{|(x-y)_{\omega}|}|v(t,y)|^{2} \paren*{-\partial_{k}L_{jk}^{w}-\partial_{k}T_{jk}^{w}}(t,x)\omega_{j} dxdy\nonumber \\
&\phantom{=} =2\int_{\{x_{\omega}=y_{\omega}\}} |v(t,y)|^{2} \paren*{4\Re{\ol{\partial_{k}w}\partial_{j}w}-\partial_{j}\partial_{k}(|w|^{2}) + T_{jk}^{w}}(t,x)\omega_{j}\omega_{k} d\mathcal{H}^{3}(x,y) \nonumber\\
&\phantom{=}= 8\int_{\{x_{\omega}=y_{\omega}\}} |v(t,y)|^{2} |\partial_{\omega}w(t,x)|^{2} d\mathcal{H}^{3}(x,y) + 2\int_{\{x_{\omega}=y_{\omega}\}} \partial_{\omega}(|v|^{2})(t,y)\partial_{\omega}(|w|^{2})(t,x) d\mathcal{H}^{3}(x,y)\nonumber\\
&\phantom{=}\qquad + 2\int_{\{x_{\omega}=y_{\omega}\}} |v(t,y)|^{2}T_{jk}^{w}(t,x)\omega_{j}\omega_{k}d\mathcal{H}^{3}(x,y),
\end{align}
where the second term in the RHS of the ultimate equality follows by another integration by parts in $x_{\omega}$. Noting that
\begin{equation}
|\partial_{\omega}\Tr_{x_{\omega}=y_{\omega}}(\bar{w}\otimes v)|^{2} = \Tr_{x_{\omega}=y_{\omega}}\paren*{|(\partial_{\omega}w)\otimes v|^{2}+|w\otimes (\partial_{\omega}v)|^{2} + \frac{1}{2} \partial_{\omega}(|w|^{2})\otimes \partial_{\omega}(|v|^{2}) - 2\Im{\bar{w}\partial_{\omega}w}\otimes \Im{\bar{v}\partial_{\omega}v}},
\end{equation}
we may rewrite as $\frac{d}{dt}M_{\omega}(t)$ as
\begin{equation}
\begin{split}
\frac{d}{dt}M_{\omega}(t) &= 8\int_{\{x_{\omega}=y_{\omega}\}} |(\partial_{\omega}\Tr_{x_{\omega}=y_{\omega}}(\bar{w}\otimes v))(t,x,y)|^{2} d\mathcal{H}^{3}(x,y) \\
&\phantom{=}+ 2\int_{\{x_{\omega}=y_{\omega}\}} |v(t,y)|^{2} T_{jk}^{w}(t,x)\omega_{j}\omega_{k} d\mathcal{H}^{3}(x,y) \\
&\phantom{=}+ 4\int_{\R^{4}} \frac{(x-y)_{\omega}}{|(x-y)_{\omega}|}\Im{\bar{w}\mathcal{N}}(t,y)\Im{\bar{v}\partial_{\omega}v}(t,x) dxdy \\
&\phantom{=}+2\int_{\R^{4}}\frac{(x-y)_{\omega}}{|(x-y)_{\omega}|} \Re{\bar{\mathcal{N}}\partial_{\omega}w}(t,x) dxdy \\
&\phantom{=}-2\int_{\R^{4}} \frac{(x-y)_{\omega}}{|(x-y)_{\omega}|} \Re{\bar{w}\partial_{\omega}\mathcal{N}}(t,x) dxdy.
\end{split}
\end{equation}
Above, $\Tr_{x_{\omega}=y_{\omega}}$ denotes the restriction an $\R^{4}\rightarrow\C$ function to the hyperplane $\{x_{\omega}=y_{\omega}\}\subset \R^{4}$.

For each $\omega\in\S^{1}$, let $\mathcal{O}_{\omega}:\R^{2}\rightarrow\R^{2}$ denote the rotation mapping the positively oriented basis $\{\omega,\omega^{\perp}\}$ onto the standard basis $\{e_{1},e_{2}\}$. Define the functions $v_{\omega},w_{\omega}:\R^{2}\rightarrow \C$ by $v_{\omega}(x)\coloneqq v(\mathcal{O}_{\omega}^{*}x)$ and $w_{\omega}(x)\coloneqq w(\mathcal{O}_{\omega}^{*}x)$ respectively. Let $\iota_{x_{1}=y_{1}}:\R^{3}\rightarrow\R^{4}$ denote the map $\iota_{x_{1}=y_{1}}(z_{1},z_{2},z_{3})\coloneqq (z_{1},z_{2},z_{1},z_{3})$. Then using the Fubini-Tonelli theorem, we see that
\begin{equation}
\begin{split}
\frac{d}{dt}M_{\omega}(t) &= 8\int_{\R}\int_{\R}\int_{\R}|\partial_{1}\paren*{\Tr_{x_{1}=y_{1}}(\bar{w}_{\omega}\otimes v_{\omega})\circ \iota_{x_{1}=y_{1}}}(t,x_{1},x_{2},y_{2})|^{2} dx_{1}dx_{2}dy_{2}\\
&\phantom{=}+2\int_{\{x_{\omega}=y_{\omega}\}} |v(t,y)|^{2} T_{jk}^{w}(t,x)\omega_{j}\omega_{k} d\mathcal{H}^{3}(x,y) \\
&\phantom{=}+ 4\int_{\R^{4}} \frac{(x-y)_{\omega}}{|(x-y)_{\omega}|}\Im{\bar{w}\mathcal{N}}(t,y)\Im{\bar{v}\partial_{\omega}v}(t,x) dxdy\\
&\phantom{=}+2\int_{\R^{4}}\frac{(x-y)_{\omega}}{|(x-y)_{\omega}|} |v(t,y)|^{2} \Re{\bar{\mathcal{N}}\partial_{\omega}w}(t,x) dxdy\\
&\phantom{=}-2\int_{\R^{4}} \frac{(x-y)_{\omega}}{|(x-y)_{\omega}|} |v(t,y)|^{2} \Re{\bar{w}\partial_{\omega}\mathcal{N}}(t,x) dxdy.
\end{split}
\end{equation}
Let $\iota_{x=y}:\R^{2}\rightarrow\R^{4}$ denote the map $\iota_{x=y}(z_{1},z_{2})\coloneqq (z_{1},z_{2},z_{1},z_{2})$, and consider the function $\partial_{1}\paren*{(\bar{w}_{\omega}\otimes v_{\omega})\circ \iota_{x=y}}:\R^{2}\rightarrow \mathbb{C}$. Observe that
\begin{equation}
\partial_{1}\paren*{(\bar{w}_{\omega}\otimes v_{\omega})\circ\iota_{x=y}}=\partial_{1}(\bar{w}_{\omega}v_{\omega}) = \paren*{\partial_{1}\paren*{(\bar{w}_{\omega}\otimes v_{\omega})\circ\iota_{x_{1}=y_{1}}}\circ\iota_{x_{2}=y_{2}}}.
\end{equation}
Let $\xi,\eta$ denote the Fourier variables conjugate to $x$ and $y$, respectively. We claim that $\mathcal{F}_{x_{1},x_{2},y_{2}}\paren*{(\bar{w}_{\omega}\otimes v_{\omega})\circ\iota_{x_{1}=y_{1}}}$ has $\eta_{2}$ support in an interval of side length $\sim 2^{k}$. Indeed, by the Fubini-Tonelli theorem,
\begin{equation}
\mathcal{F}_{x,y}(\bar{w}_{\omega}\otimes v_{\omega})(\xi,\eta) = \mathcal{F}\paren*{\bar{w}_{\omega}}(\xi)\mathcal{F}\paren*{v_{\omega}}(\eta) = \mathcal{F}\paren*{\bar{w}_{\omega}}(\xi)\mathcal{F}_{y_{1}}\paren*{\mathcal{F}_{y_{2}}\paren*{v_{\omega}}}(\eta), \qquad \forall(\xi,\eta)\in\R^{2}\times\R^{2},
\end{equation}
and by Fourier inversion,
\begin{equation}
\mathcal{F}_{y_{2}}(v_{\omega})(y_{1},\eta_{2}) = (2\pi)^{-1/2}\int_{\R}\mathcal{F}(v_{\omega})(\eta_{1},\eta_{2})e^{iy_{1}\cdot\eta_{1}}d\eta_{1} = (2\pi)^{-1/2}\int_{\R}\mathcal{F}(v)\paren*{\mathcal{O}_{\omega}^{*}(\eta_{1},\eta_{2})}e^{iy_{1}\cdot\eta_{1}}d\eta_{1}, \qquad (y_{1},\eta_{2})\in\R\times\R^{2}.
\end{equation}
Since $\hat{v}$ has support in a cube $Q$ of side length length $2^{k}$ centered at $c_{Q}\in\R^{2}$, it follows that $\mathcal{F}(v_{\omega})$ has support in a cube of side length $\sqrt{2}2^{k}$ centered at $\mathcal{O}_{\omega}^{}(c_{Q})$. It follows now that
\begin{equation}
\eta_{2}\notin [\mathcal{O}_{\omega}(c_{Q})_{2}-\sqrt{2}2^{k-1}, \mathcal{O}_{\omega}(c_{Q})_{2}+\sqrt{2}2^{k-1}] \Longrightarrow \mathcal{F}_{y_{2}}(v_{\omega})(y_{1},\eta_{2})=0.
\end{equation}
Observing that
\begin{align}
\mathcal{F}_{y_{2}}\paren*{\paren*{\bar{w}_{\omega}\otimes v_{\omega}}\circ\iota_{x_{1}=y_{1}}}(x_{1},x_{2},\eta_{2})=\int_{\R}\bar{w}_{\omega}(x)v_{\omega}(x_{1},y_{2})e^{-iy_{2}\cdot\eta_{2}}dy_{2} &= \bar{w}_{\omega}(x)\mathcal{F}_{y_{2}}(v_{\omega})(x_{1},\eta_{2})
\end{align}
completes the proof of the claim.

Therefore by Bernstein's inequality in the $\eta_{2}$ coordinate, we have that
\begin{align}
\int_{\R^{2}}|\paren*{\partial_{1}\paren*{(\bar{w}_{\omega}\otimes v_{\omega})\circ\iota_{x=y}}}(t,x)|^{2}dx \leq \|\partial_{1}\paren*{(\bar{w}_{\omega}\otimes v_{\omega})\circ\iota_{x_{1}=y_{1}}}\|_{L_{z_{1},z_{2}}^{2}L_{z_{3}}^{\infty}(\R^{2}\times \R)}^{2} &\lesssim 2^{k}\|\partial_{1}\paren*{(\bar{w}_{\omega}\otimes v_{\omega})\circ \iota_{x_{1}=y_{1}}} \|_{L_{z_{1},z_{2},z_{3}}^{2}(\R^{3})}^{2}.
\end{align}
Integrating both sides of the preceding inequality over $\mathbb{S}^{1}$ with respect to the measure $d\omega$ and using the invariance of the measure under rotation, we obtain
\begin{equation}
\int_{\R^{2}}|\nabla(\bar{w}v)(t,x)|^{2}dx \lesssim 2^{k}\int_{\mathbb{S}^{1}} \int_{\R^{3}}\left|\paren*{\partial_{1}\paren*{(\bar{w}_{\omega}\otimes v_{\omega})\circ\iota_{x_{1}=y_{1}}}}(t,\ul{z}_{3})\right|^{2}d\ul{z}_{3} d\omega.
\end{equation}

We next claim that $\bar{w}v$ has Fourier support in the region $\{|\xi| \geq 2^{i-7}\}$. Indeed, since $|\xi(t)-\xi(G_{\alpha}^{i})| \leq \epsilon_{3}^{1/2}2^{i-19}$, it follows that $\bar{w}$ has Fourier support in the dyadic ball $B(-\xi(G_{\alpha}^{i}), i-8)$. The claim then follows from the assumption that $v$ has Fourier support in the dyadic annulus $A(\xi(G_{\alpha}^{i}),i-6,i+6)$ and the reverse triangle inequality. Therefore by Bernstein's lemma,
\begin{equation}
\int_{\R^{2}}|(\bar{w}v)(t,x)|^{2}dx \lesssim 2^{-2i}\int_{\R^{2}}|\nabla(\bar{w}v)(t,x)|^{2}dx \lesssim 2^{k-2i}\int_{\S^{1}}\int_{\R^{3}}|\paren*{\partial_{1}\paren*{(\bar{w}_{\omega}\otimes v_{\omega})\circ\iota_{x_{1}=y_{1}}}}(t,\ul{z}_{3})|^{2}d\ul{z}_{3}d\omega.
\end{equation}
Integrating with respect to time over the interval $J_{l}$ and using the fundamental theorem of calculus, we finally obtain
\begin{align}
2^{2i-k}\int_{J_{l}}\int_{\R^{2}} |(\bar{w}v)(t,x)|^{2}dxdt &\lesssim M(t)\label{eq:ibs1_main_p}\\
&\phantom{=}-2\int_{J_{l}}\int_{\mathbb{S}^{1}}\int_{\{x_{\omega}=y_{\omega}\}} |v(t,y)|^{2} T_{jk}^{w}(t,x)\omega_{j}\omega_{k} d\mathcal{H}^{3}(x,y)dt \label{eq:ibs1_nl_p}\\
&\phantom{=}- 4\int_{J_{l}}\int_{\mathbb{S}^{1}}\int_{\R^{4}} \frac{(x-y)_{\omega}}{|(x-y)_{\omega}|}\Im{\bar{w}\mathcal{N}}(t,y)\Im{\bar{v}\partial_{\omega}v}(t,x) dxdydt \label{eq:ibs1_err1_p}\\
&\phantom{=}-2\int_{J_{l}}\int_{\mathbb{S}^{1}}\int_{\R^{4}}\frac{(x-y)_{\omega}}{|(x-y)_{\omega}|} |v(t,y)|^{2}\Re{\bar{\mathcal{N}}\partial_{\omega}w}(t,x) dxdydt \label{eq:ibs1_err2_p}\\
&\phantom{=}+2\int_{J_{l}}\int_{\mathbb{S}^{1}}\int_{\R^{4}} \frac{(x-y)_{\omega}}{|(x-y)_{\omega}|} |v(t,y)|^{2}\Re{\bar{w}\partial_{\omega}\mathcal{N}}(t,x) dxdydt.\label{eq:ibs1_err3_p}
\end{align}

To handle the integrations over $\omega\in\mathbb{S}^{1}$ and over $x,y\in \{x_{\omega}=y_{\omega}\}$, we use the following lemma, the proof of which we omit.
\begin{lemma}\label{lem:itrick} 
There exist constants $C,C'$ such that for $f\in C_{loc}^{1}(\R^{2})$ and $g\in L^{1}(\R^{4})\cap L^{\infty}(\R^{4})$,
\begin{equation}
\int_{\mathbb{S}^{1}} \frac{x_{\omega}}{|x_{\omega}|} (\p_{\omega}f)(x)d\omega = C\frac{x}{|x|}\cdot(\nabla f)(x), \qquad \forall x\in \R^{2}\setminus\{0\}
\end{equation}
and
\begin{equation}
\int_{\mathbb{S}^{1}}\int_{\{x_{w}=y_{\omega}\}}g(x,y)d\mathcal{H}^{3}(x,y)d\omega = C'\int_{\R^{4}} \frac{g(x,y)}{|x-y|} dxdy.
\end{equation}
\end{lemma}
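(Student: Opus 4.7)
The plan is to prove each identity by reducing it to a one-variable integral over $\S^1$ that can be evaluated by rotational symmetry or by the standard distributional identity for $\delta$ composed with an affine function. Neither identity has genuine analytic depth; the main care is in the normalization constants and in exchanging distributional and integral computations, so I will treat the two identities separately.

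For the first identity, I would rewrite $x_\omega = x\cdot\omega$ and $\partial_\omega f = \omega\cdot\nabla f$, so that the left-hand side becomes
\begin{equation}
\int_{\S^1}\sgn(\omega\cdot x)\,(\omega\cdot\nabla f(x))\,d\omega = \paren*{\int_{\S^1}\sgn(\omega\cdot x)\,\omega\,d\omega}\cdot\nabla f(x).
\end{equation}
Thus everything reduces to evaluating the vector $V(x)\coloneqq\int_{\S^1}\sgn(\omega\cdot x)\omega\,d\omega$. For any rotation $R$ of $\R^2$, the change of variables $\omega\mapsto R\omega$ (which preserves $d\omega$) yields $V(Rx)=R\,V(x)$, so $V$ is equivariant and hence of the form $V(x)=C\,x/|x|$ for some scalar $C\in\R$ which can be computed (but need not be, for the statement) by taking $x=e_1$ and integrating $|\cos\theta|$. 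This proves the first identity.

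For the second identity, the plan is to represent the Hausdorff measure on the hyperplane as a pullback of Lebesgue measure via a delta function. The hyperplane $\{(x,y)\in\R^4:x_\omega=y_\omega\}$ has unit normal $\tfrac{1}{\sqrt{2}}(\omega,-\omega)$, so by the standard co-area/slicing identity,
\begin{equation}
\int_{\{x_\omega=y_\omega\}}g(x,y)\,d\mathcal{H}^3(x,y) = \sqrt{2}\int_{\R^4}g(x,y)\,\delta(x_\omega-y_\omega)\,dx\,dy.
\end{equation}
Inserting this into the double integral, applying Fubini-Tonelli (using $g\in L^1\cap L^\infty$ to justify exchanging orders even with the distributional factor, via an approximation of $\delta$ by a standard mollifier), and making the substitution $z=x-y$ reduces the problem to computing $\int_{\S^1}\delta(z\cdot\omega)\,d\omega$ for $z\neq 0$. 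Writing $\omega=(\cos\theta,\sin\theta)$ and $z=|z|(\cos\phi,\sin\phi)$, one has $z\cdot\omega=|z|\cos(\theta-\phi)$, and the distributional identity $\delta(|z|\cos(\theta-\phi))=|z|^{-1}\bigl(\delta_{\theta=\phi+\pi/2}+\delta_{\theta=\phi-\pi/2}\bigr)$ gives $\int_{\S^1}\delta(z\cdot\omega)d\omega=c/|z|$ for an explicit constant $c$ depending only on the normalization of $d\omega$. Absorbing $\sqrt{2}c$ into $C'$ yields the second identity.

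The only real obstacle is the rigorous justification of the Fubini exchange in the second identity, since the factor $\delta(x_\omega-y_\omega)$ is distributional. I would handle this by the standard approximation argument: replace $\delta$ by $\varepsilon^{-1}\rho(\cdot/\varepsilon)$ for a smooth bump $\rho$, perform Fubini at the smooth level (where $g\in L^1$ and the kernel is bounded), and then pass to the limit $\varepsilon\to 0^+$. The limit on the right is controlled by $\|g\|_{L^\infty}$ together with the local integrability of $|x-y|^{-1}$ against bounded, integrable $g$ on $\R^4$, while the limit on the left follows from dominated convergence on each spherical slice using continuity properties of $g$; the $L^1\cap L^\infty$ hypothesis on $g$ is exactly what is needed to push both limits through.
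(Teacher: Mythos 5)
The paper does not actually contain a proof of this lemma --- it is stated with ``the proof of which we omit'' --- so there is no argument of the paper to compare yours against; your proposal supplies the standard argument one would expect. The first identity is handled correctly and completely: writing the left side as $V(x)\cdot\nabla f(x)$ with $V(x)=\int_{\S^{1}}\sgn(x\cdot\omega)\,\omega\,d\omega$, using rotation equivariance to get $V(x)=Cx/|x|$, is airtight. The second identity is also attacked in a reasonable way (co-area/slicing with $\delta(x_\omega-y_\omega)$, then $\int_{\S^{1}}\delta(z\cdot\omega)\,d\omega=c/|z|$), and the constants you indicate are consistent.

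Two caveats on the second identity. First, your dominated-convergence step on the right relies on the claim that $|x-y|^{-1}|g(x,y)|$ is integrable on $\R^{4}$ whenever $g\in L^{1}\cap L^{\infty}(\R^{4})$; this is false in general. For example, $g(x,y)=1_{\{|x-y|\le\phi(|y|)\}}$ with $\phi(s)=\min\paren*{1,(1+s)^{-1}\log^{-1}(2+s)}$ is bounded and integrable, but $\int_{\R^{4}}|x-y|^{-1}g\,dxdy=4\pi^{2}\int_{0}^{\infty}s\phi(s)\,ds=\infty$. So the domination fails exactly when the right-hand side is not absolutely convergent. The clean repair is to prove the identity for nonnegative $g$ as an equality in $[0,\infty]$ using only Tonelli --- the uniform bound $\int_{\S^{1}}\delta_{\varepsilon}(z\cdot\omega)\,d\omega\lesssim|z|^{-1}$ you derive, together with Fatou in both directions, or better still no mollifier at all: for fixed $\omega$ parametrize the hyperplane by $(s,a,b)\mapsto(s\omega+a\omega^{\perp},\,s\omega+b\omega^{\perp})$, whose surface measure is $\sqrt{2}\,ds\,da\,db$, substitute $x=s\omega+a\omega^{\perp}$, $y=x+t\omega^{\perp}$, and compare with polar coordinates in $x-y$ on the right; Tonelli then gives the identity with $C'=\sqrt{2}/\pi$ (for unit-normalized $d\omega$) with no limiting argument whatsoever. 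The signed case follows whenever one side converges absolutely, which is how the lemma is used. Second, restricting an a.e.-defined $g$ to the measure-zero hyperplanes $\{x_\omega=y_\omega\}$ is not meaningful for a bare $L^{1}\cap L^{\infty}$ class; you implicitly acknowledge this by invoking ``continuity properties of $g$'', which the stated hypotheses do not provide. That looseness is really in the lemma's formulation rather than in your argument --- in the paper's application the integrand is continuous in $(x,y)$ --- but you should either add continuity of $g$ as a hypothesis or fix a Borel representative and note that the left side is then well defined for a.e.\ $\omega$.
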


Using lemma \ref{lem:itrick}, we see that
\begin{align}
M(t) &= 2C\int_{\R^{4}}\frac{(x-y)}{|x-y|}\cdot |w(t,y)|^{2}\Im{\bar{v}\nabla v}(t,x)dxdy+2C\int_{\R^{4}}\frac{(x-y)}{|x-y|}\cdot |v(t,y)|^{2}\Im{\bar{w}\nabla w}(t,x) dxdy \nonumber\\
&=2C\int_{\R^{4}}\frac{(x-y)}{|x-y|}\cdot |w(t,y)|^{2}\Im{\bar{v}(\nabla-i\xi(t)) v}(t,x)dxdy + 2C\int_{\R^{4}}\frac{(x-y)}{|x-y|}\cdot |v(t,y)|^{2}\Im{\bar{w}(\nabla-i\xi(t)) w}(t,x) dxdy, \label{eq:ibs1_main}
\end{align}
where we use the trivial identity $\frac{x-y}{|x-y|}=-\frac{y-x}{|y-x|}$ to obtain the ultimate line. By the same argument,
\begin{align}
\eqref{eq:ibs1_err1_p} + \eqref{eq:ibs1_err2_p} + \eqref{eq:ibs1_err3_p} &= -4C\int_{J_{l}}\int_{\R^{4}}\frac{(x-y)}{|x-y|}\cdot \Im{\bar{w}\mathcal{N}}(t,y)\Im{\bar{v}(\nabla-i\xi(t))v}(t,x)dxdydt \label{eq:ibs1_err1}\\
&\phantom{=}-2C\int_{J_{l}}\int_{\R^{4}}\frac{(x-y)}{|x-y|}\cdot |v(t,y)|^{2}\Re{\bar{\mathcal{N}}(\nabla-i\xi(t))w}(t,x) dxdy dt \label{eq:ibs1_err2}\\
&\phantom{=}+2C\int_{J_{l}}\int_{\R^{4}}\frac{(x-y)}{|x-y|}\cdot |v(t,y)|^{2}\Re{\bar{w}(\nabla-i\xi(t))\mathcal{N}}(t,x) dxdy dt, \label{eq:ibs1_err3}
\end{align}
and lastly
\begin{equation}
\eqref{eq:ibs1_nl_p} = -2C'\int_{J_{l}}\int_{\R^{4}}\paren*{\frac{\delta_{jk}}{|x-y|}-\frac{(x-y)_{j}(x-y)_{k}}{|x-y|^{3}}} |v(t,y)|^{2} T_{jk}^{w}(t,x) dxdy dt. \label{eq:ibs1_nl}
\end{equation}
We now estimate each of the terms \eqref{eq:ibs1_main}-\eqref{eq:ibs1_nl} separately.

\begin{description}[leftmargin=*]
\item[Estimate for \eqref{eq:ibs1_main}:]
By Cauchy-Schwarz, Bernstein's lemma, and the Fourier support hypotheses on $v$ and $w$, we have that
\begin{equation}
\sup_{t\in J_{l}}|M(t)| \lesssim 2^{i}\|w\|_{L_{t}^{\infty}L_{x}^{2}(J_{l}\times\R^{2})}\|v\|_{L_{t}^{\infty}L_{x}^{2}(J_{l}\times\R^{2})}^{2} + 2^{l_{2}}\|w\|_{L_{t}^{\infty}L_{x}^{2}(J_{l}\times\R^{2})}\|v\|_{L_{t}^{\infty}L_{x}^{2}(J_{l}\times\R^{2})}^{2} \lesssim 2^{i}\|v_{0}\|_{L^{2}(\R^{2})}^{2},
\end{equation}
where we use mass conservation to obtain the ultimate inequality.

\item[Estimate for \eqref{eq:ibs1_nl}:]
Observe from the definition of $T_{jk}^{w}$ that
\begin{equation}
\begin{split}
\left|\int_{J_{l}}\int_{\R^{4}}\paren*{\frac{\delta_{jk}}{|x-y|}-\frac{(x-y)_{j}(x-y)_{k}}{|x-y|^{3}}} |v(t,y)|^{2} T_{jk}^{w}(t,x) dxdydt\right| \lesssim \int_{J_{l}}\int_{\R^{4}}\frac{1}{|x-y|} |v(t,y)|^{2} |\vec{R}^{2}(|w|^{2})(t,x)|^{2} dxdy dt,
\end{split}
\end{equation}
which by H\"{o}lder's inequality, Hardy-Littlewood-Sobolev lemma, Calder\'{o}n-Zygmund theorem, and Strichartz estimates is
\begin{align}
\lesssim \| |\nabla|^{-1}(|v|^{2})\|_{L_{t}^{3}L_{x}^{6}(J_{l}\times\R^{2})} \||\vec{R}^{2}(|w|^{2})|^{2}\|_{L_{t}^{3/2}L_{x}^{6/5}(J_{l}\times\R^{2})}  &\lesssim \|v\|_{L_{t}^{6}L_{x}^{3}(J_{l}\times\R^{2})}^{2} \|w\|_{L_{t}^{6}L_{x}^{24/5}(J_{l}\times\R^{2})}^{4}\nonumber\\
&\lesssim \|v_{0}\|_{L^{2}(\R^{2})}^{2} \|w\|_{L_{t}^{6}L_{x}^{24/5}(J_{l}\times\R^{2})}^{4} \nonumber\\
&\lesssim 2^{l_{2}}\|v_{0}\|_{L^{2}(\R^{2})}^{2} \|w\|_{L_{t}^{6}L_{x}^{3}(J_{l}\times\R^{2})}^{4} \nonumber\\
&\lesssim 2^{l_{2}}\|v_{0}\|_{L^{2}(\R^{2})}^{2},
\end{align}
where we use Bernstein's lemma to obtain the penultimate inequality and that $J_{l}$ is small to obtain the ultimate inequality.

\item[Estimate for \eqref{eq:ibs1_err2}:] 
Observe that by splitting $\mathcal{N}=\mathcal{N}_{1}+\mathcal{N}_{2}$ and using triangle and H\"{o}lder's inequalities
\begin{align}
|\eqref{eq:ibs1_err2}| &\lesssim \|v\|_{L_{t}^{\infty}L_{x}^{2}(J_{l}\times\R^{2})}^{2}\|\mathcal{N}_{1}\|_{L_{t,x}^{4/3}(J_{l}\times\R^{2})} \|(\nabla-i\xi(t)w\|_{L_{t,x}^{4}(J_{l}\times\R^{2})}\nonumber\\
&\phantom{=} +2^{-l_{2}}\|v\|_{L_{t}^{\infty}L_{x}^{2}(J_{l}\times\R^{2})}^{2}\int_{J_{l}}|\xi'(t)| \|P_{\xi(t),l_{2}-2\leq\cdot\leq l_{2}+2}u(t)\|_{L_{x}^{2}(\R^{2})} \|(\nabla-i\xi(t))P_{\xi(t),\leq l_{2}}u(t)\|_{L_{x}^{2}(\R^{2})}dt, 
\end{align}
By triangle and H\"{o}lder's inequalities, Bernstein's lemma, followed by Plancherel's theorem, the preceding expression is
\begin{equation}
\lesssim 2^{l_{2}}\|v_{0}\|_{L^{2}(\R^{2})}^{2} +2^{-l_{2}}\|v_{0}\|_{L^{2}(\R^{2})}^{2}\int_{J_{l}}|\xi'(t)| \|P_{\xi(t),l_{2}-2\leq\cdot\leq l_{2}+2}u(t)\|_{L_{x}^{2}(\R^{2})} \|(\nabla-i\xi(t))P_{\xi(t),\leq l_{2}}u(t)\|_{L_{x}^{2}(\R^{2})}dt,
\end{equation}
where we also use that $J_{l}$ is small.

\item[Estimate for \eqref{eq:ibs1_err3}:] 
Observe that after integrating by parts in $x$, we have that
\begin{equation}
\begin{split}
\int_{J_{l}}\int_{\R^{4}}\frac{x-y}{|x-y|}\cdot |v(t,y)|^{2}\Re{\bar{w}(\nabla-i\xi(t))\mathcal{N}}(t,x)dxdydt &= -\int_{J_{l}}\int_{\R^{4}}\frac{x-y}{|x-y|}\cdot |v(t,y)|^{2}\Re{\ol{(\nabla-i\xi(t))w}\mathcal{N}}(t,x) dxdydt \\
&\phantom{=} -\int_{J_{l}}\int_{\R^{4}}\frac{1}{|x-y|} |v(t,y)|^{2}\Re{\bar{w}\mathcal{N}}(t,x) dxdydt,
\end{split}
\end{equation}
which implies that
\begin{equation}
\eqref{eq:ibs1_err3} = \eqref{eq:ibs1_err2} -2C\int_{J_{l}}\int_{\R^{4}}\frac{1}{|x-y|} |v(t,y)|^{2}\Re{\bar{w}\mathcal{N}}(t,x) dxdydt
\end{equation}
To estimate the second term on the RHS of the preceding equation, we split $\mathcal{N}=\mathcal{N}_{1}+\mathcal{N}_{2}$.

First, consider the contribution of $\mathcal{N}_{1}$. We disregard the commutator structure in $\mathcal{N}_{1}$ and use H\"{o}lder's inequality, Hardy-Littlewood-Sobolev and Bernstein's lemmas, Calder\'{o}n-Zygmund theorem, and Strichartz estimates to estimate
\begin{align}
\int_{J_{l}}\int_{\R^{4}} \frac{1}{|x-y|}|v(t,y)|^{2} |(w\mathcal{N}_{1})(t,x)|dxdydt &\lesssim \||\nabla|^{-1}(|v|^{2})\|_{L_{t}^{3}L_{x}^{6}(J_{l}\times\R^{2})} \|w\|_{L_{t,x}^{\infty}(J_{l}\times\R^{2})} \|\mathcal{N}_{1}\|_{L_{t}^{3/2}L_{x}^{6/5}(J_{l}\times\R^{2})} \nonumber\\
&\lesssim 2^{l_{2}} \|v\|_{L_{t}^{6}L_{x}^{3}(J_{l}\times\R^{2})}^{2} \|w\|_{L_{t}^{9/2}L_{x}^{18/5}(J_{l}\times\R^{2})}^{3}&\nonumber\\
&\lesssim 2^{l_{2}}\|v_{0}\|_{L^{2}(\R^{2})}^{2}.
\end{align}
where we use mass conservation and that $J_{l}$ is small to obtain the ultimate inequality.

Next, consider the contribution of $\mathcal{N}_{2}$. Here, the structure of the eeDS equation does not play a role. By the Fubini-Tonelli theorem and Cauchy-Schwarz in $x$,
\begin{align}
&\int_{J_{l}}\int_{\R^{4}} \frac{1}{|x-y|}|v(t,y)|^{2} |w(t,x)| |\mathcal{N}_{2}(t,x)|dxdydt \nonumber\\
&\phantom{=}\leq \|v\|_{L_{t}^{\infty}L_{x}^{2}(J_{l}\times\R^{2})} \int_{J_{l}}\||x-y|^{-1/2}w\|_{L_{y}^{\infty}L_{x}^{2}(\R^{2}\times\R^{2})} \| |x-y|^{-1/2}\mathcal{N}_{2}\|_{L_{y}^{\infty}L_{x}^{2}(\R^{2}\times\R^{2})}dt \nonumber\\
&\phantom{=}\lesssim \|v_{0}\|_{L^{2}(\R^{2})}^{2} \int_{J_{l}} \||\nabla-i\xi(t)|^{1/2}P_{\xi(t),\leq l_{2}}u(t)\|_{L_{x}^{2}(\R^{2})} 2^{-l_{2}} \| |\nabla-i\xi(t)|^{1/2}P_{\xi(t),l_{2}-2\leq\cdot\leq l_{2}+2}u(t)\|_{L_{x}^{2}(\R^{2})}dt \nonumber\\
&\phantom{=}\lesssim 2^{-\frac{l_{2}}{2}} \|v_{0}\|_{L^{2}(\R^{2})}^{2} \int_{J_{l}}\|P_{\xi(t),l_{2}-2\leq\cdot\leq l_{2}+2}u(t)\|_{L_{x}^{2}(\R^{2})} \| |\nabla-i\xi(t)|^{1/2} P_{\xi(t),\leq l_{2}}u(t)\|_{L_{x}^{2}(\R^{2})} dt,\label{eq:ibs1_hardy}
\end{align}
where we use Hardy's inequality to obtain the penultimate inequality and Plancherel's theorem to obtain the ultimate inequality.

Collecting our estimates, we have shown that
\begin{equation}
\begin{split}
|\eqref{eq:ibs1_err3}| &\lesssim 2^{l_{2}}\|v_{0}\|_{L^{2}(\R^{2})}^{2} + 2^{-l_{2}}\|v_{0}\|_{L^{2}(\R^{2})}^{2}\int_{J_{l}}|\xi'(t)| \|P_{\xi(t),l_{2}-2\leq\cdot\leq l_{2}+2}u(t)\|_{L_{x}^{2}(\R^{2})} \|(\nabla-i\xi(t))P_{\xi(t),\leq l_{2}}u(t)\|_{L_{x}^{2}(\R^{2})}dt \\
&\phantom{=}+2^{-\frac{l_{2}}{2}} \|v_{0}\|_{L^{2}(\R^{2})}^{2} \int_{J_{l}}\|P_{\xi(t),l_{2}-2\leq\cdot\leq l_{2}+2}u(t)\|_{L_{x}^{2}(\R^{2})} \| |\nabla-i\xi(t)|^{1/2} P_{\xi(t),\leq l_{2}}u(t)\|_{L_{x}^{2}(\R^{2})} dt.
\end{split}
\end{equation}

\item[Estimate for \eqref{eq:ibs1_err1}:] 
As before, we split $\mathcal{N}=\mathcal{N}_{1}+\mathcal{N}_{2}$ and first consider $\mathcal{N}_{1}$. We disregard the commutator structure of $\mathcal{N}_{1}$ and crudely estimate with H\"{o}lder's inequality, Plancherel's theorem, Strichartz estimates, and mass conservation to obtain that
\begin{align}
\left|\int_{J_{l}}\int_{\R^{4}}\frac{x-y}{|x-y|}\cdot\Im{\bar{w}\mathcal{N}_{1}}(t,y)\Im{\bar{v}(\nabla-i\xi(t))v}(t,x)dxdy dt\right| \lesssim 2^{i}\|v_{0}\|_{L^{2}(\R^{2})}^{2}.
\end{align}

We next consider the contribution of $\mathcal{N}_{2}$. We split $w=P_{\xi(t), l_{2}-5\leq\cdot\leq l_{2}}u+ P_{\xi(t),<l_{2}-5}u$. Take $P_{\xi(t),l_{2}-5\leq\cdot\leq l_{2}}u$. By the Fubini-Tonelli theorem, Cauchy-Schwarz in $x$ and $y$, followed by Plancherel's theorem, we have that
\begin{equation}
\begin{split}
&\int_{J_{l}}\int_{\R^{4}}|v(t,y)| |(\nabla-i\xi(t))v(t,y)| |P_{\xi(t),l_{2}-5\leq\cdot\leq l_{2}}u(t,x)| |\mathcal{N}_{2}(t,x)| dxdydt \\
&\phantom{=}\lesssim 2^{i-2l_{2}}\|v_{0}\|_{L^{2}(\R^{2})}^{2}\int_{J_{l}}|\xi'(t)| \|P_{\xi(t),l_{2}-2\cdot\leq l_{2}+2}u(t)\|_{L_{x}^{2}(\R^{2})} \| (\nabla-i\xi(t))P_{\xi(t),\leq l_{2}}u(t)\|_{L_{x}^{2}(\R^{2})}dt.
\end{split}
\end{equation}
Finally, take $P_{\xi(t),<l_{2}-5}u$. Since $\mathcal{N}_{2}$ has Fourier support in the dyadic annulus $A(\xi(t), l_{2}-2, l_{2}+2)$, it follows that $(\ol{P_{\xi(t),<l_{2}-5}u})\mathcal{N}_{2}$ has Fourier support in the dyadic annulus $A(0, l_{2}-3, l_{2}+3)$. Since $\nabla_{x}|x-y|=\frac{x-y}{|x-y|}$, we have by the Fubini-Tonelli theorem, Plancherel's, and the characterization of homogeneous distributions $|x|^{z}$ (see theorem 2.4.6 in \cite{grafakos2014c}) that there is a constant $C$ such that
\begin{equation}\label{eq:ibs1_fs_obs}
\begin{split}
&\int_{J_{l}}\int_{\R^{4}}\frac{x-y}{|x-y|}\cdot\Im{(\ol{P_{\xi(t),<l_{2}-5}u})\mathcal{N}_{2}}(t,y) \Im{\bar{v}(\nabla-i\xi(t))v}(t,x)dxdydt \\
&\phantom{=} =\int_{J_{l}}\int_{\R^{2}}\frac{iC\xi}{|\xi|^{3}}\mathcal{F}\paren*{\Im{(\ol{P_{\xi(t),<l_{2}-5}u})\mathcal{N}_{2}}}(t,\xi)\cdot\ol{\mathcal{F}\paren*{\Im{\bar{v}(\nabla-i\xi(t))v}}(t,\xi)}d\xi dt.
\end{split}
\end{equation}
Therefore by Cauchy-Schwarz in $\xi$, followed by Plancherel's theorem, the modulus of the preceding line is
\begin{align}
&\lesssim 2^{-2l_{2}}\int_{J_{l}} \|(\ol{P_{\xi(t),< l_{2}-5}u(t)})\mathcal{N}_{2}(t)\|_{L_{x}^{2}(\R^{2})} \|\dot{P}_{l_{2}-3\leq\cdot\leq l_{2}+3}\paren*{\bar{v}(t)(\nabla-i\xi(t))v(t)}\|_{L_{x}^{2}(\R^{2})}.
\end{align}
By Bernstein's lemma,
\begin{equation}
\|\dot{P}_{l_{2}-3\leq\cdot\leq l_{2}+3}\paren*{\bar{v}(t)(\nabla-i\xi(t))v(t)}\|_{L_{x}^{2}(\R^{2})} \lesssim 2^{l_{2}} \|\bar{v}(t)(\nabla-i\xi(t))v(t)\|_{L_{x}^{1}(\R^{2})} \lesssim 2^{i+l_{2}}\|v_{0}\|_{L^{2}(\R^{2})}^{2},
\end{equation}
where we use Cauchy-Schwarz and Plancherel's theorem to obtain the ultimate inequality. By H\"{o}lder's inequality, followed by Sobolev embedding,
\begin{align}
\|(\ol{P_{\xi(t),<l_{2}-5}u(t)})\mathcal{N}_{2}(t)\|_{L_{x}^{2}(\R^{2})} &\leq \|P_{\xi(t),<l_{2}-5}u(t)\|_{L_{x}^{4}(\R^{2})} \|\mathcal{N}_{2}(t)\|_{L_{x}^{4}(\R^{2})} \nonumber\\
&\lesssim 2^{-l_{2}/2}|\xi'(t)| \||\nabla-i\xi(t)|^{1/2}P_{\xi(t),<l_{2}-5}u(t)\|_{L_{x}^{2}(\R^{2})} \|P_{\xi(t),l_{2}-2\leq\cdot\leq l_{2}+2}u(t)\|_{L_{x}^{2}(\R^{2})}.
\end{align}
Hence,
\begin{equation}
\begin{split}
&2^{-2l_{2}}\int_{J_{l}} \|(\ol{P_{\xi(t),< l_{2}-5}u(t)})\mathcal{N}_{2}(t)\|_{L_{x}^{2}(\R^{2})} \|\dot{P}_{l_{2}-3\leq\cdot\leq l_{2}+3}\paren*{\bar{v}(t)(\nabla-i\xi(t))v(t)}\|_{L_{x}^{2}(\R^{2})} \\
&\phantom{=} \lesssim 2^{i-\frac{3}{2}l_{2}}\|v_{0}\|_{L^{2}(\R^{2})}^{2}\int_{J_{l}}|\xi'(t)| \|P_{\xi(t),l_{2}-2\leq\cdot\leq l_{2}+2}u(t)\|_{L_{x}^{2}(\R^{2})} \||\nabla-i\xi(t)|^{1/2}P_{\xi(t),<l_{2}-5}u(t)\|_{L_{x}^{2}(\R^{2})}dt. \label{eq:ibs1_N2}
\end{split}
\end{equation}

With this last estimate, we have shown that
\begin{equation}
\begin{split}
|\eqref{eq:ibs1_err1}| &\lesssim 2^{i}\|v_{0}\|_{L^{2}(\R^{2})}^{2} + 2^{i-2l_{2}}\|v_{0}\|_{L^{2}(\R^{2})}^{2}\int_{J_{l}}|\xi'(t)| \|P_{\xi(t),l_{2}-2\leq\cdot\leq l_{2}+2}u(t)\|_{L_{x}^{2}(\R^{2})}^{2} \|(\nabla-i\xi(t))P_{\xi(t),\leq l_{2}}u(t)\|_{L_{x}^{2}(\R^{2})}dt \\
&\phantom{=} + 2^{i-\frac{3}{2}l_{2}}\|v_{0}\|_{L^{2}(\R^{2})}^{2}\int_{J_{l}}|\xi'(t)| \|P_{\xi(t),l_{2}-2\leq\cdot\leq l_{2}+2}u(t)\|_{L_{x}^{2}(\R^{2})} \||\nabla-i\xi(t)|^{1/2}P_{\xi(t),<l_{2}-5}u(t)\|_{L_{x}^{2}(\R^{2})}dt.
\end{split}
\end{equation}
\end{description}

After a bit of bookkeeping, we have shown that
\begin{align}
\|vw\|_{L_{t,x}^{2}(J_{l}\times\R^{2})} &\lesssim 2^{k-i}\|v_{0}\|_{L^{2}(\R^{2})}^{2} + 2^{k+l_{2}-2i}\|v_{0}\|_{L^{2}(\R^{2})}^{2} \nonumber\\
&\phantom{=} +2^{k-2i-l_{2}}\|v_{0}\|_{L^{2}(\R^{2})}^{2}\int_{J_{l}}|\xi'(t)| \|P_{\xi(t),l_{2}-2\leq\cdot\leq l_{2}+2}u(t)\|_{L_{x}^{2}(\R^{2})} \|(\nabla-i\xi(t))P_{\xi(t),\leq l_{2}}u(t)\|_{L_{x}^{2}(\R^{2})}dt \nonumber\\
&\phantom{=} +2^{k-2i-\frac{l_{2}}{2}}\|v_{0}\|_{L^{2}(\R^{2})}^{2}\int_{J_{l}}|\xi'(t)| \|P_{\xi(t),l_{2}-2\leq\cdot\leq l_{2}+2}u(t)\|_{L_{x}^{2}(\R^{2})} \||\nabla-i\xi(t)|^{1/2}P_{\xi(t),\leq l_{2}}u(t)\|_{L_{x}^{2}(\R^{2})}dt \nonumber\\
&\phantom{=} + 2^{k-i-2l_{2}}\|v_{0}\|_{L^{2}(\R^{2})}^{2}\int_{J_{l}}|\xi'(t)| \|P_{\xi(t),l_{2}-2\leq\cdot\leq l_{2}+2}u(t)\|_{L_{x}^{2}(\R^{2})} \|(\nabla-i\xi(t))P_{\xi(t),\leq l_{2}}u(t)\|_{L_{x}^{2}(\R^{2})}dt \nonumber\\
&\phantom{=} + 2^{k-i-\frac{3}{2}l_{2}}\|v_{0}\|_{L^{2}(\R^{2})}^{2}\int_{J_{l}}|\xi'(t)| \|P_{\xi(t),l_{2}-2\leq\cdot\leq l_{2}+2}u(t)\|_{L_{x}^{2}(\R^{2})} \||\nabla-i\xi(t)|^{1/2}P_{\xi(t),\leq l_{2}}u(t)\|_{L_{x}^{2}(\R^{2})}dt \nonumber\\
&\lesssim 2^{k-i}\|v_{0}\|_{L^{2}(\R^{2})}^{2} \nonumber\\
&\phantom{=} + 2^{k-l_{2}-i}\|v_{0}\|_{L^{2}(\R^{2})}^{2}\int_{J_{l}}|\xi'(t)| \|P_{\xi(t),l_{2}-2\leq\cdot\leq l_{2}+2}u(t)\|_{L_{x}^{2}(\R^{2})}\sum_{0\leq l_{1}\leq l_{2}}2^{\frac{l_{1}-l_{2}}{2}} \|P_{\xi(t),l_{1}}u(t)\|_{L_{x}^{2}(\R^{2})}dt,
\end{align}
which completes the proof of proposition \ref{prop:ibs_1}.
\end{proof}

\subsection{Bilinear Strichartz estimate II} \label{ssec:BSE_2}
We recall the statement of proposition \ref{prop:ibs_2}.
\ibsII*

\begin{proof}
Let $v_{0}$ satisfy the conditions in the statement of the proposition and define $v$ and $w$ as in the proof of proposition \ref{prop:ibs_1}. Defining $M_{\omega}(t)$ and $M(t)$ as in the proof of proposition \ref{prop:ibs_1} and following the same initial arguments, we have the estimate
\begin{align}
\int_{G_{\beta}^{l_{2}}}\int_{\R^{2}} |w(t,x)v(t,x)|^{2} &\lesssim 2^{k-2i}\sup_{t\in G_{\beta}^{l_{2}}} |M(t)|\label{eq:ibs2_main}\\ 
&\phantom{=}+2^{k-2i}\int_{G_{\beta}^{l_{2}}}\int_{\R^{4}}\frac{1}{|x-y|} |v(t,y)|^{2} |\vec{R}^{2}(|w|^{2})(t,x)|^{2}dxdydt\label{eq:ibs2_nl}\\
&\phantom{=}+2^{k-2i}\left|\int_{G_{\beta}^{l_{2}}}\int_{\R^{4}}\frac{x-y}{|x-y|}\cdot|v(t,y)|^{2}\Re{\bar{\mathcal{N}}(\nabla-i\xi(t))w}(t,x)dxdydt\right| \label{eq:ibs2_err1}\\
&\phantom{=}+2^{k-2i}\left|\int_{G_{\beta}^{l_{2}}}\int_{\R^{4}}\frac{x-y}{|x-y|}\cdot |v(t,y)|^{2}\Re{\bar{w}(\nabla-i\xi(t))\mathcal{N}}(t,x)dxdydt\right| \label{eq:ibs2_err2}\\
&\phantom{=}+2^{k-2i}\left|\int_{G_{\beta}^{l_{2}}}\int_{\R^{4}}\frac{x-y}{|x-y|}\cdot \Im{\bar{w}\mathcal{N}}(t,y)\Im{\bar{v}(\nabla-i\xi(t))v}(t,x)dxdydt\right|. \label{eq:ibs2_err3}
\end{align}
Hereafter, the mixed norm notation $L_{t}^{p}L_{x}^{q}$ is taken over the taken over the spacetime slab $G_{\beta}^{l_{2}}\times\R^{2}$.

We estimate each of the terms \eqref{eq:ibs2_main}-\eqref{eq:ibs2_err3} separetly.

\begin{description}[leftmargin=*]
\item[Estimate for \eqref{eq:ibs2_main}:]
As in the case of the estimate for \eqref{eq:ibs1_main} in the proof of proposition \ref{prop:ibs_1}, we see that $|M(t)|\lesssim 2^{i}\|v_{0}\|_{L^{2}(\R^{2})}^{2}$ for all $t\in G_{\beta}^{l_{2}}$, which implies that
\begin{equation}
\eqref{eq:ibs2_main} \lesssim 2^{k-i}\|v_{0}\|_{L^{2}(\R^{2})}^{2}.
\end{equation}

\item[Estimate for \eqref{eq:ibs2_nl}:]
By H\"{o}lder's inequality, Hardy-Littlewood-Sobolev lemma, Calder\'{o}n-Zygmund theorem, and Strichartz estimates, we have that
\begin{equation}
\eqref{eq:ibs2_nl} \lesssim 2^{k-2i} \|v\|_{L_{t}^{6}L_{x}^{3}}^{2}\|w\|_{L_{t}^{6}L_{x}^{24/5}}^{4} \lesssim 2^{k-2i}\|v_{0}\|_{L^{2}}^{2} \|w\|_{L_{t}^{6}L_{x}^{24/5}}^{4}.
\end{equation}
By lemma \ref{lem:BT_embed}, $\|w\|_{L_{t}^{6}L_{x}^{24/5}}\lesssim 2^{\frac{l_{2}}{4}}\|w\|_{\tilde{X}_{l_{2}}(G_{\beta}^{l_{2}}\times\R^{2})}$, which implies that
\begin{equation}
	\eqref{eq:ibs2_nl} \lesssim 2^{k+l_{2}-2i}\|v_{0}\|_{L^{2}}^{2}\|u\|_{\tilde{X}_{l_{2}}(G_{\beta}^{l_{2}}\times\R^{2})}^{4} \lesssim 2^{k+l_{2}-2i}\|v_{0}\|_{L^{2}}^{2} \|u\|_{\tilde{X}_{i}(G_{\alpha}^{i}\times\R^{2})}^{4}.
\end{equation}

\item[Estimate for \eqref{eq:ibs2_err1}:]
Proceeding as in the case of the estimate for \eqref{eq:ibs1_err2} in the proof of proposition \ref{prop:ibs_1}, we have that
\begin{align}
\eqref{eq:ibs2_err1} &\lesssim 2^{k-l_{2}-2i}\|v_{0}\|_{L^{2}}^{2}\int_{G_{\beta}^{l_{2}}} |\xi'(t)| \|P_{\xi(t),l_{2}-2\leq\cdot\leq l_{2}+2}u(t)\|_{L_{x}^{2}} \| (\nabla-i\xi(t))P_{\xi(t),\leq l_{2}}u(t)\|_{L_{x}^{2}}dt \nonumber\\
&\phantom{=}+2^{k-2i}\|v_{0}\|_{L^{2}}^{2}\|\mathcal{N}_{1}\|_{L_{t}^{3/2}L_{x}^{6/5}} \|(\nabla-i\xi(t))w\|_{L_{t}^{3}L_{x}^{6}} \nonumber\\
&\eqqcolon \mathrm{Term}_{1}+\mathrm{Term}_{2}.
\end{align}

To estimate $\mathrm{Term}_{1}$, we use Plancherel's theorem and mass conservation to obtain
\begin{align}
\mathrm{Term}_{1} \lesssim 2^{k-2i}\|v_{0}\|_{L^{2}}^{2} 2^{-20}\epsilon_{1}^{-1/2} \int_{G_{\beta}^{l_{2}}} N(t)^{3}dt \leq \epsilon_{3}^{1/2} 2^{k+l_{2}-2i}\|v_{0}\|_{L^{2}}^{2}.
\end{align}

To estimate $\mathrm{Term}_{2}$, we first note that by lemma \ref{lem:BT_embed}, we have the estimate
\begin{equation}
\|(\nabla-i\xi(t))w\|_{L_{t}^{3}L_{x}^{6}} \lesssim 2^{l_{2}}\|u\|_{\tilde{X}_{l_{2}}(G_{\beta}^{l_{2}}\times\R^{2})} \lesssim 2^{l_{2}} \|u\|_{\tilde{X}_{i}(G_{\alpha}^{i}\times\R^{2})}.
\end{equation}
Now unlike in the proof of proposition \ref{prop:ibs_1}, we no longer ignore the commutator structure in our estimation of $\mathcal{N}_{1}$. We perform a near-far decomposition of $u$, writing $u=u_{l}+u_{h}$ where $u_{l}:=P_{\xi(t), \leq l_{2}-5}u$. Substituting this decomposition into $\mathcal{N}_{1}$,we obtain
\begin{equation}
\mathcal{N}_{1} = \mathcal{N}_{1,0}+\mathcal{N}_{1,1}+\mathcal{N}_{1,2}+\mathcal{N}_{1,3},
\end{equation}
where $\mathcal{N}_{1,j}$ contains $j$ factors $u_{h}$ and $3-j$ factors $u_{l}$, for $j=0,\ldots,3$:
\begin{align}
\mathcal{N}_{1,0} &\coloneqq P_{\xi(t), \leq l_{2}}[\E(|u_{l}|^{2})u_{l}]-\E(|u_{l}|^{2})u_{l} \\
\mathcal{N}_{1,1} &\coloneqq \comm{P_{\xi(t), \leq l_{2}}}{\E(|u_{l}|^{2})}u_{h} + 2P_{\xi(t), \leq l_{2}}\left[\E\paren*{\Re{u_{l}\bar{u}_{h}}}u_{l}\right] - 2\E\paren*{\Re{u_{l}(\ol{P_{\xi(t), \leq l_{2}}u_{h}})}}u_{l} \\
\mathcal{N}_{1,2} &\coloneqq P_{\xi(t), \leq l_{2}}\brak*{\E(|u_{h}|^{2})u_{l}} +2P_{\xi(t),\leq l_{2}}\brak*{\E\paren*{\Re{u_{h}\bar{u_{l}}}}u_{h}} \nonumber\\
&\phantom{=}-\E(|P_{\xi(t),\leq l_{2}}u_{h}|^{2})u_{l} -2\E\paren*{Re{(P_{\xi(t), \leq l_{2}}u_{h})\bar{u_{l}}}}(P_{\xi(t), \leq l_{2}}u_{h})\\
\mathcal{N}_{1,3} &\coloneqq P_{\xi(t), \leq l_{2}}\brak*{\E(|u_{h}|^{2})u_{h}} - \E(|P_{\xi(t), \leq l_{2}}u_{h}|^{2})(P_{\xi(t),\leq l_{2}}u_{h}).
\end{align}

We can quickly dispense with $\mathcal{N}_{1,0}$, as Fourier support analysis shows that $P_{\xi(t),\leq l_{2}}[\E(|u_{l}|^{2})u_{l}]=\E(|u_{l}|^{2})u_{l}$, which implies that $\mathcal{N}_{1,0}=0$.

To estimate $\mathcal{N}_{1,2}$ and $\mathcal{N}_{1,3}$, we use triangle and H\"{o}lder's inequalities, Calder\'{o}n-Zygmund theorem, followed by lemma \ref{lem:lohi_embed} and mass conservation to obtain
\begin{equation}
\|\mathcal{N}_{1,2}\|_{L_{t}^{3/2}L_{x}^{6/5}} + \|\mathcal{N}_{1,3}\|_{L_{t}^{3/2}L_{x}^{6/5}} \lesssim \|u_{h}\|_{L_{t}^{3}L_{x}^{6}}^{2} \|u\|_{L_{t}^{\infty}L_{x}^{2}} \lesssim \|u\|_{\tilde{X}_{l_{2}}(G_{\beta}^{l_{2}}\times\R^{2})}^{2} \leq \|u\|_{\tilde{X}_{i}(G_{\alpha}^{i}\times\R^{2})}^{2}.
\end{equation}

To estimate $\mathcal{N}_{1,1}$, we divide into two cases: (1) the factor $u_{h}$ falls inside the argument of the operator $\E$ or (2) the factor $u_{h}$ falls outside the argument of the operator $\E$. In case (1), we observe from the fundamental theorem of calculus that
\begin{equation}
\left|\paren*{\comm{P_{\xi(t),\leq l_{2}}}{\E(|u_{l}|^{2})}u_{h}}(x)\right| \leq  \int_{0}^{1}\int_{\R^{2}} |2^{2l_{2}}\phi^{\vee}(2^{l_{2}}y)| |y| |u_{h}(x-y)| |(\nabla \E)(|u_{l}|^{2})(x-\theta y)|dyd\theta
\end{equation}
and from the product rule that
\begin{equation}
\nabla |u_{l}|^{2}=\nabla(e^{-ix\cdot\xi(t)}u_{l})\ol{(e^{-ix\cdot\xi(t)}u_{l})} + \ol{\nabla(e^{-ix\cdot\xi(t)}u_{l})}(e^{-ix\cdot\xi(t)}u_{l}) = 2\Re{\ol{(e^{-ix\cdot\xi(t)}u_{l})}\nabla(e^{-ix\cdot\xi(t)}u_{l})}.
\end{equation}
Therefore by Minkowski's and H\"{o}lder's inequalities, Calder\'{o}n-Zygmund theorem, and mass conservation followed by lemmas \ref{lem:BT_embed} and \ref{lem:lohi_embed}, we have that
\begin{align}
\|\comm{P_{\xi(t),\leq l_{2}}}{\E(|u_{l}|^{2})}u_{h}\|_{L_{t}^{3/2}L_{x}^{6/5}} \lesssim 2^{-l_{2}}\|u_{h}\|_{L_{t}^{3}L_{x}^{6}}\|u_{l}\|_{L_{t}^{\infty}L_{x}^{2}} \|\nabla(e^{-ix\cdot\xi(t)}u_{l})\|_{L_{t}^{3}L_{x}^{6}}\lesssim \|u\|_{\tilde{X}_{i}(G_{\alpha}^{i}\times\R^{2})}^{2}.
\end{align}
In case (2), we can add zero to write
\begin{equation}
P_{\xi(t), \leq l_{2}}[\E(u_{h}\bar{u_{l}})u_{l}] - \E((P_{\xi(t), \leq l_{2}}u_{h})\bar{u_{l}})u_{l} = \comm{P_{\xi(t),\leq l_{2}}}{u_{l}}\E(u_{h}\bar{u_{l}}) +\E\paren*{\comm{P_{\xi(t),\leq l_{2}}}{\bar{u_{l}}}u_{h}}u_{l}.
\end{equation}
We estimate the $L_{t}^{3/2}L_{x}^{6/5}$ norm of the first term on the RHS as in case (1). For the second term on the RHS, we use H\"{o}lder's inequality, Calder\'{o}n-Zygmund theorem, and mass conservation to obtain
\begin{equation}
\|\E\paren*{\comm{P_{\xi(t),\leq l_{2}}}{\bar{u_{l}}}u_{h}}u_{l}\|_{L_{t}^{3/2}L_{x}^{6/5}} \lesssim \|\comm{P_{\xi(t),\leq l_{2}}}{\bar{u_{l}}}u_{h}\|_{L_{t}^{3/2}L_{x}^{6}} \lesssim \|u\|_{\tilde{X}_{i}(G_{\beta}^{l_{2}})}^{2},
\end{equation}
where we proceed as in case (1) to obtain the ultimate inequality. Thus, $\|\mathcal{N}_{1}\|_{L_{t}^{3/2}L_{x}^{6/5}} \lesssim \|u\|_{\tilde{X}_{i}(G_{\alpha}^{i}\times\R^{2})}^{2}$.

Bookkeeping our estimates, we conclude that
\begin{equation}
|\eqref{eq:ibs2_err1}| \lesssim 2^{k+l_{2}-2i}\|v_{0}\|_{L^{2}}^{2}\left(1+\|u\|_{\tilde{X}_{i}(G_{\alpha}^{i}\times\R^{2})}^{3}\right).
\end{equation}

\item[Estimate for \eqref{eq:ibs2_err2}:]
Integrating by parts in $x$, we see that
\begin{equation}
\eqref{eq:ibs2_err2} \leq \eqref{eq:ibs2_err1} + 2^{k-2i}\left|\int_{G_{\beta}^{l_{2}}}\int_{\R^{4}}\frac{1}{|x-y|} |v(t,y)|^{2}\Re{\bar{w}\mathcal{N}}(t,x)dxdydt\right|.
\end{equation}
Using  triangle and H\"{o}lder's inequalities, Hardy-Littlewood-Sobolev lemma, Calder\'{o}n-Zygmund theorem, and lemma \ref{lem:BT_embed}, we see that
\begin{align}
&2^{k-2i}\left|\int_{G_{\beta}^{l_{2}}}\int_{\R^{4}}\frac{1}{|x-y|} |v(t,y)|^{2}\Re{\bar{w}\mathcal{N}}(t,x)dxdydt\right| \nonumber\\
&\lesssim 2^{k-2i}\|v\|_{L_{t}^{6}L_{x}^{3}}^{2}\|\mathcal{N}_{1}\|_{L_{t}^{3/2}L_{x}^{6/5}}\|w\|_{L_{t,x}^{\infty}}+ 2^{k-2i}\int_{G_{\beta}^{l_{2}}}\int_{\R^{4}}\frac{1}{|x-y|} |v(t,y)|^{2}|(\bar{w}\mathcal{N}_{2})(t,x)|dxdydt \nonumber\\
&\lesssim 2^{k+l_{2}-2i}\|v_{0}\|_{L^{2}}^{2}\left(1+\|u\|_{\tilde{X}_{i}(G_{\alpha}^{i}\times\R^{2})}^{3}\right) + 2^{k-2i}\int_{G_{\beta}^{l_{2}}}\int_{\R^{4}}\frac{1}{|x-y|} |v(t,y)|^{2}|(\bar{w}\mathcal{N}_{2})(t,x)dxdydt.
\end{align}
To estimate the second term on the RHS, we use Cauchy-Schwarz and Hardy's inequality as in the proof of estimate \eqref{eq:ibs1_hardy} to obtain it is $\lesssim 2^{k+l_{2}-2i}(1+\|u\|_{\tilde{X}_{i}(G_{\alpha}^{i}\times\R^{2})}^{3})$. Therefore
\begin{equation}
|\eqref{eq:ibs2_err2}| \lesssim 2^{k+l_{2}-2i}\|v_{0}\|_{L^{2}}^{2}\left(1+\|u\|_{\tilde{X}_{i}(G_{\alpha}^{i}\times\R^{2})}^{3}\right).
\end{equation}

\item[Estimate for \eqref{eq:ibs2_err3}:]
As the reader will see, this term is the most difficult to estimate. We split $\mathcal{N}=\mathcal{N}_{1}+\mathcal{N}_{2}$ and first estimate the contribution of $\mathcal{N}_{2}$. Using the observation \eqref{eq:ibs1_fs_obs} together with Cauchy-Schwarz, Plancherel's theorem, triangle inequality, and Bernstein's lemma, we see that
\begin{align}
&\left|2^{k-2i}\int_{G_{\beta}^{l_{2}}}\frac{x-y}{|x-y|}\cdot\Im{\bar{w}\mathcal{N}_{2}}(t,y)\Im{\bar{v}(\nabla-i\xi(t))v}(t,x)dxdydt\right| \nonumber\\
&\lesssim 2^{k-2i}2^{i-l_{2}}\|v_{0}\|_{L^{2}}^{2} \int_{G_{\beta}^{l_{2}}} |\xi'(t)| \|P_{\xi(t), l_{2}-3\leq\cdot\leq l_{2}+3}u(t)\|_{L_{x}^{2}} \sum_{l_{3}\leq l_{2}}2^{l_{3}-l_{2}}\|P_{\xi(t), l_{3}}u(t)\|_{L_{x}^{2}}dt \label{eq:ibs2_err3_N2'}\\
&\lesssim 2^{k-i}\|v_{0}\|_{L^{2}}^{2},\label{eq:ibs2_err3_N2}
\end{align}
where we use $|\xi'(t)| \leq \epsilon_{1}^{-1/2}2^{-20}N(t)^{3}$, Plancherel's theorem, and mass conservation to obtain the ultimate inequality.

We next estimate the contribution of $\mathcal{N}_{1}$. We perform a near-far decomposition $u=u_{l}+u_{h}$, where $u_{l}\coloneqq P_{\xi(t),\leq l_{2}-5}u$ and substitute this identity into $\Im{\bar{w}\mathcal{N}_{1}}$ to obtain the decomposition
\begin{equation}
\Im{\bar{w}\mathcal{N}_{1}} = F_{0}+F_{1}+F_{2}+F_{3}+F_{4},
\end{equation}
where $F_{j}$ contains $j$ factors $u_{h}$ and $4-j$ factors $u_{l}$, for $j=0,\ldots,3$:

\begin{align}
F_{0} &\coloneqq \Im{\bar{u}_{l}P_{\xi(t),\leq l_{2}}\left[\E(|u_{l}|^{2})u_{l}\right] - |u_{l}|^{2}\E(|u_{l}|^{2})} \label{eq:ibs2_F0}\\
F_{1} &\coloneqq \Im{\bar{u}_{l}P_{\xi(t), \leq l_{2}}\left[\E(|u_{l}|^{2})u_{h}\right] + 2\bar{u}_{l}P_{\xi(t), \leq l_{2}}\left[\E\paren*{\Re{u_{l}\bar{u}_{h}}}u_{l}\right] + (\ol{P_{\xi(t), \leq l_{2}}u_{h}})P_{\xi(t), \leq l_{2}}\left[\E(|u_{l}|^{2})u_{l}\right]} \nonumber\\
&\phantom{=}-\Im{\bar{u}_{l}\E(|u_{l}|^{2})(P_{\xi(t), \leq l_{2}}u_{h}) + 2\bar{u}_{l}\E\paren*{\Re{u_{l}(\ol{P_{\xi(t), \leq l_{2}}u_{h}})}}u_{l} + (\ol{P_{\xi(t), \leq l_{2}}u_{h}})\E(|u_{l}|^{2})u_{l}} \label{eq:ibs2_F1}\\
F_{2} &\coloneqq 2\Im{(\ol{P_{\xi(t), \leq l_{2}}u_{l}}) P_{\xi(t), \leq l_{2}}\left[\E\paren*{\Re{u_{l}\bar{u}_{h}}}u_{h}\right]} + 2\Im{(\ol{P_{\xi(t), \leq l_{2}}u_{h}}) P_{\xi(t), \leq l_{2}}\left[\E\paren*{\Re{u_{l}\bar{u}_{h}}}u_{l}\right]} \nonumber\\
&\phantom{=}+ \Im{(\ol{P_{\xi(t), \leq l_{2}}u_{l}}) P_{\xi(t), \leq l_{2}}\left[\E(|u_{h}|^{2})u_{l}\right] + (\ol{P_{\xi(t), \leq l_{2}}u_{h}}) P_{\xi(t), \leq l_{2}}\left[\E(|u_{l}|^{2})u_{h}\right]} \nonumber\\
&\phantom{=} -2\Im{\bar{u}_{l}\E\paren*{\Re{u_{l}(\ol{P_{\xi(t), \leq l_{2}}u_{h}})}}(P_{\xi(t), \leq l_{2}}u_{h})} -2\Im{(\ol{P_{\xi(t), \leq l_{2}}u_{h}})\E\paren*{\Re{u_{l}(\ol{P_{\xi(t), \leq l_{2}}u_{h}})}}u_{l}}\nonumber\\
&\phantom{=} -\Im{|u_{l}|^{2}\E(|P_{\xi(t), \leq l_{2}}u_{h}|^{2}) + |P_{\xi(t), \leq l_{2}}u_{h}|^{2}\E(|u_{l}|^{2})} \label{eq:ibs2_F2}\\
F_{3} &\coloneqq \Im{\bar{u}_{l}P_{\xi(t),\leq l_{2}}\left[\E(|u_{h}|^{2})u_{h}\right] + (\ol{P_{\xi(t), \leq l_{2}}u_{h}})P_{\xi(t),\leq l_{2}}\left[\E(|u_{h}|^{2})u_{l}\right]} \nonumber\\
&\phantom{=}+2\Im{(\ol{P_{\xi(t),\leq l_{2}}u_{h}})P_{\xi(t),\leq l_{2}}\brak*{\E\paren*{\Re{u_{h}\bar{u}_{l}}}u_{h}}} \label{eq:ibs2_F3}\\
F_{4} &\coloneqq \Im{(\ol{P_{\xi(t), \leq l_{2}}u_{h}}) P_{\xi(t), \leq l_{2}}\brak*{\E(|u_{h}|^{2})u_{h}} - |P_{\xi(t), \leq l_{2}}u_{h}|^{2}\E(|P_{\xi(t), \leq l_{2}}u_{h}|^{2})}. \label{eq:ibs2_F4}
\end{align}

We can quickly dispense with the contribution of $F_{0}$ to \eqref{eq:ibs2_err3}, as Fourier support analysis shows that $F_{0}=0$, hence there is no contribution.

To estimate the contributions of $F_{3}$ and $F_{4}$, we put one of the factors ($u_{l}$ in the case of $F_{3}$) in $L_{t}^{\infty}L_{x}^{2}$ and use H\"{o}lder's inequality, Calder\'{o}n-Zygmund theorem, Strichartz estimates, and mass conservation to obtain
\begin{align}
2^{k-2i}\left|\int_{G_{\beta}^{l_{2}}}\int_{\R^{4}} \frac{x-y}{|x-y|}\cdot (F_{3}+F_{4})(t,y)\Im{\bar{v}(\nabla-i\xi(t))v}(t,x)dxdydt\right| &\lesssim 2^{k-i}\paren*{\|F_{3}\|_{L_{t,x}^{1}}+\|F_{4}\|_{L_{t,x}^{1}}}\|v_{0}\|_{L^{2}}^{2} \nonumber\\
&\lesssim 2^{k-i}\paren*{\|u_{h}\|_{L_{t}^{\infty}L_{x}^{2}}+\|u_{l}\|_{L_{t}^{\infty}L_{x}^{2}}}\|u_{h}\|_{L_{t,x}^{4}}^{3} \|v_{0}\|_{L^{2}}^{2} \nonumber\\
&\lesssim 2^{k-i}\|v_{0}\|_{L^{2}}^{2}\|u\|_{\tilde{X}_{l_{2}}(G_{\beta}^{l_{2}}\times\R^{2})}^{3},
\end{align}
where we use lemma \ref{lem:lohi_embed} to obtain the ultimate inequality.

We next estimate the contribution of $F_{1}$ to \eqref{eq:ibs2_err3}. We claim that $F_{1}$ has Fourier support in the region $\{|\xi|\geq 2^{l_{2}-4}\}$. To see the claim, first observe that the second line in \eqref{eq:ibs2_F1} is zero since we are taking the imaginary part of a real expression. Since $\E(|u_{l}|^{2})u_{l}$ has Fourier support in the dyadic ball $B(\xi(t), l_{2}-2)$, we see that
\begin{equation}
F_{1} = \Im{\bar{u}_{l}P_{\xi(t), \leq l_{2}}\left[\E(|u_{l}|^{2})u_{h}\right] + 2\bar{u}_{l}P_{\xi(t), \leq l_{2}}\left[\E\paren*{\Re{u_{l}\bar{u}_{h}}}u_{l}\right] + (\ol{P_{\xi(t), \leq l_{2}}u_{h}})\E(|u_{l}|^{2})u_{l}}.
\end{equation}
Next, split $u_{h}=P_{\xi(t),l_{2}-5<\cdot\leq l_{2}-2}u+P_{\xi(t),>l_{2}-2}u$, substitute into $F_{1}$, and then expand. Fourier support analysis shows that
\begin{align}
2\Im{\bar{u_{l}}P_{\xi(t),\leq l_{2}}\brak*{\E\paren*{\Re{u_{l}(\ol{P_{\xi(t),l_{2}-5<\cdot\leq l_{2}-2}u})}}u_{l}}} &= 2\Im{|u_{l}|^{2}\E\paren*{\Re{\bar{u}_{l}(P_{\xi(t),l_{2}-5<\cdot\leq l_{2}-2}u)}}} =0 \\
\Im{\bar{u_{l}}P_{\xi(t),\leq l_{2}}\brak*{\E(|u_{l}|^{2})(P_{\xi(t),l_{2}-5<\cdot\leq l_{2}-2}u)}} &= \Im{\bar{u_{l}}(P_{\xi(t),l_{2}-5<\cdot\leq l_{2}-2}u)\E(|u_{l}|^{2})}, \\
\Im{(\ol{P_{\xi(t),\leq l_{2}}P_{\xi(t),l_{2}-5<\cdot\leq l_{2}-2}u})\E(|u_{l}|^{2})u_{l}} &= \Im{(\ol{P_{\xi(t),l_{2}-5<\cdot\leq l_{2}-2}u})u_{l}\E(|u_{l}|^{2})},
\end{align}
and therefore
\begin{equation}
\Im{\bar{u_{l}}P_{\xi(t),\leq l_{2}}\brak*{\E(|u_{l}|^{2})(P_{\xi(t),l_{2}-5<\cdot\leq l_{2}-2}u)}} + \Im{(\ol{P_{\xi(t),\leq l_{2}}P_{\xi(t),l_{2}-5<\cdot\leq l_{2}-2}u})\E(|u_{l}|^{2})u_{l}}=0.
\end{equation}
Hence,
\begin{equation}
\begin{split}
F_{1} &= \Im{\bar{u_{l}}P_{\xi(t),\leq l_{2}}\brak*{\E(|u_{l}|^{2})(P_{\xi(t),>l_{2}-2}u)} + 2\bar{u_{l}}P_{\xi(t),\leq l_{2}}\brak*{\E\paren*{\Re{u_{l}(\ol{P_{\xi(t),>l_{2}-2}u})}}u_{l}}} \\
&\phantom{=} - \Im{\bar{u_{l}}\E(|u_{l}|^{2})(P_{\xi(t),l_{2}-2<\cdot\leq l_{2}}u)},
\end{split}
\end{equation}
which has Fourier support in the region $\{|\xi|\geq 2^{l_{2}-4}\}$ as claimed. We can now make an observation similar to \eqref{eq:ibs1_fs_obs} and proceed by the accompanying argument to obtain that
\begin{align}
&2^{k-2i}\left|\int_{G_{\beta}^{l_{2}}}\int_{\R^{4}}\frac{x-y}{|x-y|}\cdot F_{1}(t,y)\Im[\bar{v}(\nabla-i\xi(t))v](t,x)dxdydt\right|\nonumber\\
&\lesssim 2^{k-2l_{2}-i}\|v_{0}\|_{L^{2}}^{2}\|F_{1}\|_{L_{t,x}^{2}} \nonumber\\
&\lesssim 2^{k-2l_{2}-i}\|v_{0}\|_{L^{2}}^{2} \left(\|\E(|u_{l}|^{2})\|_{L_{t}^{\infty}L_{x}^{2}}\|u_{l}P_{\xi(t), l_{2}-2<\cdot\leq l_{2}+2}u_{h}\|_{L_{t}^{2}L_{x}^{\infty}}+\|u_{l}\|_{L_{t}^{4}L_{x}^{\infty}}^{2}\|\E(\bar{u}_{l}P_{\xi(t),l_{2}-2<\cdot\leq l_{2}+2}u_{h})\|_{L_{t}^{\infty}L_{x}^{2}} \right)\nonumber\\
&\lesssim 2^{k-2l_{2}-i}\|v_{0}\|_{L^{2}}^{2} \left(2^{l_{2}} \|u_{l}\|_{L_{t}^{4}L_{x}^{\infty}}\|P_{\xi(t),\leq l_{2}+3}u_{h}\|_{L_{t}^{4}L_{x}^{\infty}} + 2^{2l_{2}}\|u\|_{\tilde{X}_{l_{2}}(G_{\beta}^{l_{2}}\times\R^{2})}^{2} \right)\nonumber\\
&\lesssim 2^{k-i}\|v_{0}\|_{L^{2}}^{2}\|u\|_{\tilde{X}_{l_{2}}(G_{\beta}^{l_{2}}\times\R^{2})}^{2}, \label{eq:ibs2_F1_est}
\end{align}
where we use Bernstein's lemma and lemmas \ref{lem:BT_embed} and \ref{lem:lohi_embed} to obtain the last two inequalities.

Lastly, we consider the contribution of $F_{2}$ to \eqref{eq:ibs2_err3}, which is the most involved step. We first observe that the terms with a minus sign in front in the definition of $F_{2}$ vanish identically (we end up having the imaginary part of a real expression), hence
\begin{equation} 
\begin{split}
F_{2} &= 2\Im{(\ol{P_{\xi(t), \leq l_{2}}u_{l}}) P_{\xi(t), \leq l_{2}}\left[\E\paren*{\Re{u_{l}\bar{u}_{h}}}u_{h}\right]} + 2\Im{(\ol{P_{\xi(t), \leq l_{2}}u_{h}}) P_{\xi(t), \leq l_{2}}\left[\E\paren*{\Re{u_{l}\bar{u}_{h}}}u_{l}\right]} \\
&\phantom{=}+ \Im{(\ol{P_{\xi(t), \leq l_{2}}u_{l}}) P_{\xi(t), \leq l_{2}}\left[\E(|u_{h}|^{2})u_{l}\right] + (\ol{P_{\xi(t), \leq l_{2}}u_{h}}) P_{\xi(t), \leq l_{2}}\left[\E(|u_{l}|^{2})u_{h}\right]}.
\end{split}
\end{equation}
Next, we reduce to the case where $F_{2}$ is Fourier supported on low frequencies $|\xi|\lesssim 2^{l_{2}}$. This additional frequency localization will later give that the two factors of $u_{h}$ with higher frequency (relative to $\xi(t)$) must be Fourier supported at frequencies of comparable distance from $\xi(t)$, which will be helpful in proving certain multilinear Fourier multipliers are of Coifman-Meyer type. Turning to the details, we repeat the argument in the estimate for the contribution for $F_{1}$ to obtain
\begin{align}
2^{k-2i}\left|\int_{G_{\beta}^{l_{2}}}\int_{\R^{4}}\frac{x-y}{|x-y|}\cdot (P_{>l_{2}-10}F_{2})(t,y) \Im{\bar{v}(\nabla-i\xi(t))v}(t,x)dxdydt\right| &\lesssim 2^{k-l_{2}-i} \|v_{0}\|_{L^{2}}^{2} \|F_{2}\|_{L_{t}^{3/2}L_{x}^{6/5}} \nonumber\\
&\lesssim 2^{k-l_{2}-i}\|v_{0}\|_{L^{2}}^{2} \|u_{h}\|_{L_{t}^{3}L_{x}^{6}}^{2} \|u_{l}\|_{L_{t}^{\infty}L_{x}^{4}}^{2} \nonumber\\
&\lesssim 2^{k-i} \|v_{0}\|_{L^{2}}^{2} \|u\|_{\tilde{X}_{l_{2}}(G_{\beta}^{l_{2}}\times\R^{2})}^{2}.
\end{align}

We now estimate the terms comprising $P_{\leq l_{2}-10}F_{2}$ in two groups, which, as the reader will see, correspond to when the Fourier support intersects the time resonance set $\mathcal{T}$ (see definition \eqref{eq:tr}). We write $P_{\leq l_{2}-10}F_{2} =\mathrm{Group}_{1}+\mathrm{Group}_{2}$, where
\begin{align}
\mathrm{Group}_{1} &\coloneqq P_{\leq l_{2}-10}\paren*{\Im{(\ol{P_{\xi(t),\leq l_{2}}u_{l}})P_{\xi(t),\leq l_{2}}\brak*{\E\paren*{u_{l}\bar{u_{h}}}u_{h}} + (\ol{P_{\xi(t),\leq l_{2}}u_{h}})P_{\xi(t),\leq l_{2}}\brak*{\E\paren*{\bar{u_{l}}u_{h}}u_{l}}}} \nonumber\\
&\phantom{=}+P_{\leq l_{2}-10}\paren*{\Im{(\ol{P_{\xi(t),\leq l_{2}}u_{l}}) P_{\xi(t),\leq l_{2}}\brak*{\E(|u_{h}|^{2})u_{l}} + (\ol{P_{\xi(t),\leq l_{2}}u_{h}}) P_{\xi(t),\leq l_{2}}\brak*{\E(|u_{l}|^{2})u_{h}}}} \nonumber\\
&\eqqcolon \mathrm{Group}_{1,1} + \mathrm{Group}_{1,2}\\
\mathrm{Group}_{2} &\coloneqq P_{\leq l_{2}-10}\paren*{\Im{(\ol{P_{\xi(t),\leq l_{2}}u_{l}}) P_{\xi(t),\leq l_{2}}\brak*{\E\paren*{\bar{u_{l}}u_{h}}u_{h}} + (\ol{P_{\xi(t),\leq l_{2}}u_{h}})P_{\xi(t),\leq l_{2}}\brak*{\E\paren*{u_{l}\bar{u_{h}}}u_{l}}}} \nonumber\\
&=P_{\leq l_{2}-10}\paren*{\Im{(\ol{P_{\xi(t),\leq l_{2}}u_{l}}) P_{\xi(t),\leq l_{2}}\brak*{\E\paren*{\bar{u_{l}}u_{h}}u_{h}} -(P_{\xi(t),\leq l_{2}}u_{h}) P_{-\xi(t),\leq l_{2}}\brak*{\E\paren*{\bar{u_{l}}u_{h})\bar{u}_{l}}}}}.
\end{align}

We first estimate the contribution of $\mathrm{Group}_{1}$ to \eqref{eq:ibs2_err3}. Adding zero, we can write
\begin{align}
\mathrm{Group}_{1,2} &= \Im{\bar{u}_{l}P_{\xi(t),\leq l_{2}}\brak*{\E(|u_{h}|^{2})u_{l}}+(\ol{P_{\xi(t),\leq l_{2}}u_{h}})P_{\xi(t),\leq l_{2}}\brak*{\E(|u_{l}|^{2})u_{h}}} \nonumber\\
&=\Im{\bar{u}_{l}\comm{P_{\xi(t),\leq l_{2}}}{u_{l}}\E(|u_{h}|^{2})}+\Im{(\ol{P_{\xi(t),\leq l_{2}}u_{h}}) \comm{P_{\xi(t),\leq l_{2}}}{\E(|u_{l}|^{2})}u_{h}}.
\end{align}
By the fundamental theorem of calculus and Minkowski's inequality,
\begin{align}
\|\bar{u}_{l}\comm{P_{\xi(t),\leq l_{2}}}{u_{l}}\E(|u_{h}|^{2})\|_{L_{t,x}^{1}} + \|(\ol{P_{\xi(t),\leq l_{2}}u_{h}}) \comm{P_{\xi(t),\leq l_{2}}}{\E(|u_{l}|^{2})}u_{h}\|_{L_{t,x}^{1}} &\lesssim 2^{-l_{2}}\|u\|_{L_{t}^{\infty}L_{x}^{2}} \|(\nabla-i\xi(t))u_{l}\|_{L_{t}^{3}L_{x}^{6}}\|u_{h}\|_{L_{t}^{3}L_{x}^{6}}^{2}\nonumber\\
&\lesssim\|u\|_{\tilde{X}_{l_{2}}(G_{\beta}^{l_{2}}\times\R^{2})}^{3},
\end{align}
where we also use H\"{o}lder's inequality, Calder\'{o}n-Zygmund theorem, Strichartz estimates to obtain the penultimate inequality and mass conservation and lemmas \ref{lem:lohi_embed} and \ref{lem:BT_embed} to obtain the ultimate inequality. Therefore
\begin{equation}
2^{k-2i}\left|\int_{G_{\beta}^{l_{2}}}\int_{\R^{4}}\frac{(x-y)}{|x-y|}\cdot \Im{\bar{v}(\nabla-i\xi(t))v}(t,x) \mathrm{Group}_{1,2}(t,y)dxdydt\right| \lesssim 2^{k-i}\|v_{0}\|_{L^{2}}^{2}\|u\|_{\tilde{X}_{l_{2}}(G_{\beta}^{l_{2}}\times\R^{2})}^{3}.
\end{equation}
 
If $\E$ were some constant multiple of the identity operator, then we could just repeat the same argument to estimate $\|\mathrm{Group}_{1,1}\|_{L_{t,x}^{1}}$, but the presence of a nonlocal operator $\E$ presents an obstacle to that line of reasoning. Instead, we argue as follows. For simplicity, we only consider the first term in $\mathrm{Group}_{1,1}$, as the second term may be similarly estimated. Using the estimates obtained for the contributions of $F_{3}$ and $F_{4}$ above, it suffices to estimate the contribution to \eqref{eq:ibs2_err3} of the quantity
\begin{equation}
\begin{split}
&P_{\leq l_{2}-10}\paren*{\Im{(\ol{P_{\xi(t), \leq l_{2}-10}u_{l}}) P_{\xi(t), \leq l_{2}}\brak*{\E\paren*{(P_{\xi(t), \leq l_{2}-10}u_{l})\bar{u}_{h}}u_{h}}}} = P_{\leq l_{2}-10}\paren*{\Im{(\ol{P_{\xi(t), \leq l_{2}-10}u_{l}})\E\paren*{(P_{\xi(t), \leq l_{2}-10}u_{l})\bar{u}_{h}}u_{h}}}.
\end{split}
\end{equation}
Note that $(P_{\xi(t),\leq l_{2}-10}u_{l})\bar{u}_{h}$ has Fourier support in the region $\{|\xi|\geq 2^{l_{2}-7}\}$. Now using the operator identity $Id=\frac{\p_{1}^{2}}{\Delta} + \frac{\p_{2}^{2}}{\Delta}$, we observe the cancellation
\begin{align}
P_{\leq l_{2}-10}\paren*{\Im{(\ol{P_{\xi(t),\leq l_{2}-10}u})\Rxx\paren*{(P_{\xi(t),\leq l_{2}-10}u)\bar{u}_{h}}u_{h}}} &= P_{\leq l_{2}-10}\paren*{\Im{\frac{\p_{1}^{2}}{\Delta}\paren*{(\ol{P_{\xi(t),\leq l_{2}-10}u})u_{h}}\frac{\p_{1}^{2}}{\Delta}\paren*{(P_{\xi(t),\leq l_{2}-10}u)\bar{u}_{h}}}} \nonumber\\
&\phantom{=}+P_{\leq l_{2}-10}\paren*{\Im{\frac{\p_{2}^{2}}{\Delta}\paren*{(\ol{P_{\xi(t),\leq l_{2}-10}u})u_{h}}\frac{\p_{1}^{2}}{\Delta}\paren*{(P_{\xi(t),\leq l_{2}-10}u)\bar{u}_{h}}}} \nonumber\\
&= P_{\leq l_{2}-10}\paren*{\Im{\frac{\p_{2}^{2}}{\Delta}\paren*{(\ol{P_{\xi(t),\leq l_{2}-10}u})u_{h}}\frac{\p_{1}^{2}}{\Delta}\paren*{(P_{\xi(t),\leq l_{2}-10}u)\bar{u}_{h}}}}.
\end{align}
Now integrating by parts in $y_{1}$ and then integrating by parts in $y_{2}$, we see that
\begin{align}
&\int_{G_{\beta}^{l_{2}}}\int_{\R^{4}}\frac{(x-y)}{|x-y|}\cdot P_{\leq l_{2}-10}\paren*{\Im{\frac{\p_{2}^{2}}{\Delta}\paren*{(\ol{P_{\xi(t),\leq l_{2}-10}u})u_{h}}\frac{\p_{1}^{2}}{\Delta}\paren*{(P_{\xi(t),\leq l_{2}-10}u)\bar{u}_{h}}}}(t,y) \Im{\bar{v}(\nabla-i\xi(t))v}(t,x) dxdydt \nonumber\\
&=-\int_{G_{\beta}^{l_{2}}}\int_{\R^{4}} \frac{(x-y)}{|x-y|}\cdot P_{\leq l_{2}-10}\paren*{\Im{\frac{\p_{1}\p_{2}^{2}}{\Delta}\paren*{(\ol{P_{\xi(t),\leq l_{2}-10}u})u_{h}}\frac{\p_{1}}{\Delta}\paren*{(P_{\xi(t),\leq l_{2}-10}u)\bar{u}_{h}}}}(t,y)\Im{\bar{v}(\nabla-i\xi(t))v}(t,x) dxdydt \nonumber\\
&\phantom{=}+\int_{G_{\beta}^{l_{2}}}\int_{\R^{4}} \left(\frac{1}{|x-y|}-\frac{(x-y)_{1}}{|x-y|^{3}}\right)\cdot P_{\leq l_{2}-10}\paren*{\Im{\frac{\p_{2}^{2}}{\Delta}\paren*{(\ol{P_{\xi(t),\leq l_{2}-10}u})u_{h}}\frac{\p_{1}}{\Delta}\paren*{(P_{\xi(t),\leq l_{2}-10}u)\bar{u}_{h}}}}(t,y) \nonumber\\
&\phantom{=}\qquad \Im{\bar{v}(\nabla-i\xi(t))v}(t,x) dxdydt\nonumber\\
&=-\int_{G_{\beta}^{l_{2}}}\int_{\R^{4}} \left(\frac{1}{|x-y|}-\frac{(x-y)_{2}}{|x-y|^{3}}\right)\cdot P_{\leq l_{2}-10}\paren*{\Im{\frac{\p_{1}\p_{2}}{\Delta}\paren*{(\ol{P_{\xi(t),\leq l_{2}-10}u})u_{h}}\frac{\p_{1}}{\Delta}\paren*{(P_{\xi(t),\leq l_{2}-10}u)\bar{u}_{h}}}}\nonumber\\
&\phantom{=} \qquad\Im{\bar{v}(\nabla-i\xi(t))v}(t,x)dxdydt\nonumber\\
&\phantom{=}+\int_{G_{\beta}^{l_{2}}}\int_{\R^{4}} \left(\frac{1}{|x-y|}-\frac{(x-y)_{1}}{|x-y|^{3}}\right)\cdot P_{\leq l_{2}-10}\paren*{\Im{\frac{\p_{2}^{2}}{\Delta}\paren*{(\ol{P_{\xi(t),\leq l_{2}-10}u})u_{h}}\frac{\p_{1}}{\Delta}\paren*{(P_{\xi(t),\leq l_{2}-10}u)\bar{u}_{h}}}}(t,y) \nonumber\\
&\phantom{=}\qquad \Im{\bar{v}(\nabla-i\xi(t))v}(t,x) dxdydt. \label{eq:ibs2_IBP_trick}
\end{align}
By H\"{o}lder's inequality, Hardy-Littlewood-Sobolev lemma, Strichartz estimates, followed by Bernstein's lemma, we have that
\begin{align}
2^{k-2i}|\eqref{eq:ibs2_IBP_trick}| &\sum_{\mu,\nu=1}^{2}\lesssim 2^{k-i}\| |\nabla|^{-1}(|(P_{\xi(t),\leq l_{2}-10}u)\bar{u}_{h}|) \frac{\partial_{\mu\nu}}{\Delta}(\ol{(P_{\xi(t),\leq l_{2}-10}u)}u_{h})\|_{L_{t}^{3/2}L_{x}^{6/5}} \|v_{0}\|_{L^{2}}^{2}\nonumber\\
&\lesssim 2^{k-l_{2}-i}\|v_{0}\|_{L^{2}}^{2} \|u_{h}\|_{L_{t}^{3}L_{x}^{6}}^{2}\|u_{l}\|_{L_{t}^{\infty}L_{x}^{4}}^{2} \nonumber\\
&\lesssim 2^{k-i} \|v_{0}\|_{L^{2}}^{2} \|u\|_{\tilde{X}_{l_{2}}(G_{\beta}^{l_{2}}\times\R^{2})}^{2},
\end{align}
where we also use the Calder\'{o}n-Zygmund theorem and Bernstein's lemma to obtain the penultimate inequality, and Bernstein's lemma, mass conservation, and lemmas \ref{lem:lohi_embed} and \ref{lem:BT_embed} to obtain the ultimate inequality. This estimate completes the analysis for the contribution of $\mathrm{Group}_{1}$.

We now turn to estimating the terms in $\mathrm{Group}_{2}$, which will occupy our attention for the remainder of the estimate for \eqref{eq:ibs2_err3}. By the standard commutator argument used before, we have that
\begin{equation}
\|P_{\xi(t),\leq l_{2}}\brak*{\E\paren*{u_{l}\bar{u}_{h}}u_{l}}-\E\paren*{u_{l}(\ol{P_{\xi(t),\leq l_{2}}u_{h}})}u_{l}\|_{L_{t}^{3/2}L_{x}^{6/5}} \lesssim \|u\|_{\tilde{X}_{l_{2}}(G_{\beta}^{l_{2}}\times\R^{2})}^{2},
\end{equation}
and therefore
\begin{align}
&2^{k-2i}\left|\int_{G_{\beta}^{l_{2}}}\int_{\R^{4}}\frac{(x-y)}{|x-y|}\cdot P_{\leq l_{2}-10}\paren*{\Im{(\ol{P_{\xi(t),\leq l_{2}}u_{h}})\paren*{P_{\xi(t),\leq l_{2}}\brak*{\E\paren*{u_{l}\bar{u}_{h}}u_{l}}-\E\paren*{u_{l}(\ol{P_{\xi(t),\leq l_{2}}u_{h}})}u_{l}}}}(t,y)\right. \nonumber\\
&\phantom{=}\qquad \left. \Im{\bar{v}(\nabla-i\xi(t))v}(t,x) dxdydt\right| \nonumber\\
&\lesssim 2^{k-i} \|v_{0}\|_{L^{2}}^{2} \|u\|_{\tilde{X}_{l_{2}}(G_{\beta}^{l_{2}}\times\R^{2})}^{2}.
\end{align}
Since 
\begin{equation}
P_{\leq l_{2}-10}\paren*{\Im{(\ol{P_{\xi(t),\leq l_{2}}u_{l}})P_{\xi(t),\leq l_{2}}\brak*{\E\paren*{\bar{u}_{l}u_{h}}u_{h}}}} = P_{\leq l_{2}-10}\paren*{\Im{\bar{u}_{l}u_{h}\E\paren*{\bar{u}_{l}u_{h}}}},
\end{equation}
we see that it suffices to estimate
\begin{equation}\label{eq:ibs2_term_NFT}
\begin{split}
&2^{k-2i}\left|\int_{G_{\beta}^{l_{2}}}\int_{\R^{4}}\frac{(x-y)}{|x-y|}\cdot P_{\leq l_{2}-10}\paren*{\Im{\E\paren*{\bar{u}_{l}u_{h}}\bar{u}_{l}u_{h}-\E\paren*{\bar{u}_{l}(P_{\xi(t),\leq l_{2}}u_{h})}\bar{u}_{l}(P_{\xi(t),\leq l_{2}}u_{h})}}(t,y) \right.\\
&\phantom{=} \left.\qquad\Im{\bar{v}(\nabla-i\xi(t))v}(t,x) dxdydt\right|.
\end{split}
\end{equation}
Furthermore, by using the estimates for the contributions of $F_{3}$ and $F_{4}$, we may assume that $\bar{u}_{l}=\ol{P_{\xi(t),\leq l_{2}-15}u}$. As a final simplification, we note that the preceding quantity is Galilean invariant and therefore we may assume without loss of generality that $\xi(G_{\beta}^{l_{2}})=0$. Thus,
\begin{equation}
\sup_{t\in G_{\beta}^{l_{2}}}|\xi(t)| \leq \epsilon_{1}^{-1/2}2^{-20}\int_{G_{\beta}^{l_{2}}}N(t)^{3}dt \leq 2^{l_{2}-19}\epsilon_{3}^{1/2} \ll 2^{l_{2}}.
\end{equation}

To estimate \eqref{eq:ibs2_term_NFT}, we use an idea of \cite{Dodson2016}, which is to use the method of space-time resonances, specifically the time resonances, introduced by Germain, Masmoudi, and Shatah (see \cite{germain2010space} for a brief survey). The idea is to write $u_{l}, u_{h}$ in terms of their profiles $w_{l} \coloneqq e^{-it\Delta}u_{l}$ and $w_{h} \coloneqq e^{-it\Delta}u_{h}$, respectively. Using Fourier inversion, we then write
\begin{align}
&P_{\leq l_{2}-10}\paren*{\brak*{\E\paren*{\bar{u}_{l}u_{h}}\bar{u}_{l}u_{h}-\E\paren*{\bar{u}_{l}(P_{\xi(t),\leq l_{2}}u_{h})}\bar{u}_{l}(P_{\xi(t),\leq l_{2}}u_{h})}}(t,y) \nonumber\\
&= \frac{1}{(2\pi)^{8}i}\int_{\R^{8}}e^{iy\cdot\eta}\frac{d}{dt}\paren*{e^{it\Phi(\ueta_{4})}}m_{1}(t,\ueta_{4})\hat{w}_{h}(t,\eta_{1})\hat{w}_{h}(t,\eta_{2})\hat{\bar{w}}_{l}(t,\eta_{3})\hat{\bar{w}}_{l}(t,\eta_{4}) d\ueta_{4},
\end{align}
where
\begin{equation}
m(t,\ueta_{4}) \coloneqq \frac{1}{\Phi(\ueta_{4})}\phi\paren*{\frac{\eta}{2^{l_{2}-10}}}\paren*{1-\phi\paren*{\frac{\eta_{1}-\xi(t)}{2^{l_{2}}}}\phi\paren*{\frac{\eta_{2}-\xi(t)}{2^{l_{2}}}}}\frac{(\eta_{2}+\eta_{3})_{1}^{2}}{|\eta_{2}+\eta_{3}|^{2}}, \qquad \ueta_{4}=(\eta_{1},\eta_{2},\eta_{3},\eta_{4})\in\R^{8},
\end{equation}
and we have used the notation $\eta\coloneqq\eta_{1}+\cdots+\eta_{4}$ and
\begin{equation}
\Phi(\ueta_{4}) \coloneqq |\eta_{1}|^{2}+|\eta_{2}|^{2}-|\eta_{3}|^{2}-|\eta_{4}|^{2}, \qquad \ueta_{4} \in\R^{8}
\end{equation}
and used the identity
\begin{equation}
e^{it\Phi(\ueta_{4})} = \frac{1}{i\Phi(\ueta_{4})} \frac{d}{dt}\left(e^{it\Phi(\ueta_{4})}\right), \qquad \Phi(\ueta_{4})\neq 0.
\end{equation}
Provided that the phase function $\Phi(\ueta_{4})$ is nondegenerate, we can make a normal form transformation (i.e. integrate by parts in time),  replacing the nonlinearity by a higher order expression. The normal form transformation is smoothing by two derivaties (i.e. we gain a factor of $2^{-2l_{2}}$), and we can put all of the factors in $L_{t}^{\infty}L_{x}^{2}(G_{\beta}^{l_{2}}\times\R^{2})$ or $\tilde{X}_{l_{2}}(G_{\beta}^{l_{2}}\times\R^{2})$ at a cost of two derivatives (i.e. we gain a factor of $2^{2l_{2}}$), which is precisely the amount we can afford to lose. The obstruction to the normal form transformation is that the Fourier support of the nonlinearity may intersect the set of \emph{time resonances}
\begin{equation}\label{eq:tr}
\mathcal{T} \coloneqq \{\ueta_{4}\in\R^{8} : \Phi(\ueta_{4})=0\}.
\end{equation}
However, as we shall see below, thanks to our simplifications in the preceding steps, our nonlinearity is localized away from $\mathcal{T}$.

Substituting the above inverse Fourier transform into \eqref{eq:ibs2_term_NFT}, we need to estimate the modulus of the expression
\begin{equation}
\begin{split}
&\frac{1}{(2\pi)^{8}i}\int_{G_{\beta}^{l_{2}}}dt\int_{\R^{4}}dxdy\int_{\R^{8}}d\ueta_{4} \frac{(x-y)}{|x-y|}\cdot\Im{\bar{v}(\nabla-i\xi(t))}(t,x) \frac{d}{dt}\paren*{e^{it\Phi(\ueta_{4})}} e^{iy\cdot\eta} m(t,\ueta_{4}) \hat{w}_{h}(t,\eta_{1})\hat{w}_{h}(t,\eta_{2})\hat{\bar{w}}_{l}(t,\eta_{3})\hat{\bar{w}}_{l}(t,\eta_{4}).
\end{split}
\end{equation}
Before proceeding to the temporal integration by parts computation, we further analyze the symbol $m(t,\ueta_{4})$. Observe that if $|\eta_{1}|\geq 8|\eta_{2}|$, then
\begin{equation}
1-\phi\left(\frac{\eta_{1}-\xi(t)}{2^{l_{2}}}\right)\phi\left(\frac{\eta_{2}-\xi(t)}{2^{l_{2}}}\right)\neq 0 \Longrightarrow |\eta_{1}-\xi(t)| \geq 2^{l_{2}}.
\end{equation}
Therefore by the reverse triangle inequality,
\begin{equation}
|\eta| = |\eta_{1}+\eta_{2}+\eta_{3}+\eta_{4}| \geq \frac{|\eta_{1}|}{2} \geq 2^{l_{2}-1} \Longrightarrow \phi\left(\frac{\eta}{2^{l_{2}-10}}\right)=0
\end{equation}
on the support of $m$. By symmetry, we have the same conclusion if $|\eta_{2}| \geq 8|\eta_{1}|$. Hence, $2^{-3} < \frac{|\eta_{1}|}{|\eta_{2}|} < 2^{3}$ on the support of the symbol $m$. Furthermore,
\begin{equation}
|\eta_{3}| \leq 2^{-5}|\eta_{1}| \enspace \text{and} \enspace |\eta_{4}| \leq 2^{-5}|\eta_{2}|, \qquad \forall \ueta_{4}\in \supp(m(t))\cap \supp\paren*{\hat{w}_{h}(t)\otimes \hat{w}_{h}(t)\otimes \hat{\bar{w}}_{l}(t)\otimes\hat{\bar{w}}_{l}(t)}, \enspace \forall t\in G_{\beta}^{l_{2}}.
\end{equation}
Thus, for $\ueta_{4}\in \supp(m(t))\cap \supp\paren*{\hat{w}_{h}(t)\otimes\hat{w}_{h}(t)\otimes\hat{\bar{w}}_{l}(t)\otimes\hat{\bar{w}}_{l}(t)}$ and $t\in G_{\beta}^{l_{2}}$, we may write
\begin{align}
m(t,\ueta_{4}) = m(t,\ueta_{4}) \phi\left(\frac{\eta_{3}}{2^{l_{2}-10}}\right)\phi\left(\frac{\eta_{4}}{2^{l_{2}-10}}\right)\vartheta\left(2^{5}\frac{|\eta_{3}|}{|\eta_{1}|}\right)\vartheta\left(2^{5}\frac{|\eta_{4}|}{|\eta_{2}|}\right)\varphi\left(\frac{|\eta_{1}|}{|\eta_{2}|}\right) \eqqcolon m_{1}(t,\ueta_{4}),
\end{align}
where $0\leq \vartheta, \varphi\leq 1$ are even bump functions respectively satisfying
\begin{equation}
\vartheta(r)=
\begin{cases}
1, & {|r|\leq 1} \\
0, & {|r|>\frac{3}{2}}
\end{cases}
\end{equation}
and
\begin{equation}
\varphi(r)=
\begin{cases}
1,& {2^{-3}\leq |r| \leq 2^{3}}\\ 
0, & {|r| > 2^{4}, |r| < 2^{-4}} 
\end{cases}.
\end{equation}
We now claim that $m_{1}(t)$ is a Coifman-Meyer multiplier with operator norm $\lesssim 2^{-2l_{2}}$ uniformly in $t\in G_{\beta}^{l_{2}}\subset [0,T]$. Indeed, the proof of this claim is the content of the next lemma.

\begin{lemma}[First C-M estimate]\label{lem:CM_1} 
For any real numbers $1<p_{1},\ldots,p_{4}\leq \infty$ and $0<p<\infty$ satisfying $\frac{1}{p_{1}}+\cdots+\frac{1}{p_{4}}=\frac{1}{p}$, there exists a constant $C_{p_{1},\ldots,p_{4}}$ such that the multilinear operator $m_{1}(t,\uD_{4}): \prod_{l=1}^{4}\mathcal{S}(\R^{2})\rightarrow\mathcal{S}'(\R^{2})$ with symbol $m_{1}(t)$ extends to a bounded operator
\begin{equation}
m_{1}(t,\uD_{4}):\prod_{l=1}^{4} L^{p_{l}}(\R^{2}) \rightarrow L^{p}(\R^{2}), \qquad \sup_{t\in G_{\beta}^{l_{2}}} \|m_{1}(t,\uD_{4})\|_{L^{p_{1}}\times\cdots\times L^{p_{4}}\rightarrow L^{p}} \leq C_{p_{1}\cdots p_{4}}2^{-2l_{2}}
\end{equation}
for all $20\leq i\leq j\leq k_{0}$, $0\leq l_{2}\leq i-10$, and $G_{\beta}^{l_{2}}\subset [0,T]$.
\end{lemma}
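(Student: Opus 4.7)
My plan is to apply the Coifman--Meyer multiplier theorem (proposition~\ref{prop:CM}) to a renormalized symbol. Specifically, set $\tilde{m}_{1}(t,\ueta_{4}) \coloneqq 2^{2l_{2}}m_{1}(t,\ueta_{4})$ and show the uniform estimates
\begin{equation*}
|\partial_{\eta_{1}}^{\alpha_{1}}\cdots\partial_{\eta_{4}}^{\alpha_{4}} \tilde{m}_{1}(t,\ueta_{4})| \lesssim_{\ul{\alpha}_{4}} (|\eta_{1}|+\cdots+|\eta_{4}|)^{-(|\alpha_{1}|+\cdots+|\alpha_{4}|)}, \qquad \forall \ueta_{4}\in\R^{8}, \enspace t\in G_{\beta}^{l_{2}},
\end{equation*}
uniformly in $l_{2},t$; proposition~\ref{prop:CM} then yields boundedness of $\tilde{m}_{1}(t,\uD_{4})$ with operator norm bounded independently of $l_{2}$ and $t$, which upon rescaling gives the claimed $2^{-2l_{2}}$ bound.

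The first step is a careful support analysis of $m_{1}(t,\ueta_{4})$. The cutoffs $\phi(\eta_{3}/2^{l_{2}-10}), \phi(\eta_{4}/2^{l_{2}-10})$ force $|\eta_{3}|,|\eta_{4}|\lesssim 2^{l_{2}}$; the bump $\varphi(|\eta_{1}|/|\eta_{2}|)$ gives $|\eta_{1}|\sim|\eta_{2}|$; and since $u_{h}$ has Fourier support in $\{|\cdot-\xi(t)|\geq 2^{l_{2}-5}\}$ combined with $|\xi(t)|\ll 2^{l_{2}}$, we get $|\eta_{1}|,|\eta_{2}|\gtrsim 2^{l_{2}}$. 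Together with $\vartheta(2^{5}|\eta_{3}|/|\eta_{1}|)\vartheta(2^{5}|\eta_{4}|/|\eta_{2}|)$, these yield $|\eta_{2}+\eta_{3}|\sim|\eta_{2}|\sim|\ueta_{4}|$ and the crucial lower bound
\begin{equation*}
|\Phi(\ueta_{4})| = \big||\eta_{1}|^{2}+|\eta_{2}|^{2}-|\eta_{3}|^{2}-|\eta_{4}|^{2}\big| \sim |\eta_{1}|^{2}+|\eta_{2}|^{2} \sim |\ueta_{4}|^{2} \gtrsim 2^{2l_{2}}
\end{equation*}
on the support of $m_{1}(t,\cdot)$. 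This immediately gives the pointwise bound $|m_{1}(t,\ueta_{4})| \lesssim |\ueta_{4}|^{-2} \leq 2^{-2l_{2}}$.

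The second step is to verify the Coifman--Meyer derivative estimates by a product rule analysis of the four factors in $m_{1}$. The factor $1/\Phi$ contributes $|\ueta_{4}|^{-2-|\alpha|}$ to the derivative of order $\alpha$ on the support; the factor $(\eta_{2}+\eta_{3})_{1}^{2}/|\eta_{2}+\eta_{3}|^{2}$, being homogeneous of degree zero in $\eta_{2}+\eta_{3}$ with $|\eta_{2}+\eta_{3}|\sim|\ueta_{4}|$, contributes $|\ueta_{4}|^{-|\alpha|}$; each bump factor $\phi(\eta_{j}/2^{l_{2}-10})$ and $(1-\phi\phi)$ contributes $2^{-l_{2}|\alpha|}\lesssim|\ueta_{4}|^{-|\alpha|}$ since $|\ueta_{4}|\gtrsim 2^{l_{2}}$ on the support; and the degree-zero cutoffs $\vartheta,\varphi$ are easily handled by Faà di Bruno. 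Combining all of these via Leibniz yields $|\partial^{\alpha}m_{1}(t,\ueta_{4})|\lesssim |\ueta_{4}|^{-2-|\alpha|} \leq 2^{-2l_{2}}|\ueta_{4}|^{-|\alpha|}$, which is exactly the desired Coifman--Meyer bound for $\tilde{m}_{1}$.

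The main technical care is in the support analysis in the first step---in particular ruling out the degeneracy $|\eta_{2}+\eta_{3}|\ll|\ueta_{4}|$ that would spoil the derivative bounds for the Riesz symbol, and extracting the nondegeneracy $|\Phi|\sim|\ueta_{4}|^{2}$ away from the time-resonant set. Once the support is correctly located, the derivative verification is a routine (if lengthy) Leibniz computation, and the boundedness conclusion follows directly from proposition~\ref{prop:CM}.
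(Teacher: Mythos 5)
Your proposal is correct and follows essentially the paper's own route: the paper verifies the Coifman--Meyer conditions for the frequency-rescaled symbol $m_{1,l_{2}}(\ueta_{4})=2^{2l_{2}}m_{1}(2^{l_{2}}\ueta_{4})$ and invokes proposition~\ref{prop:CM}, which is the same support-analysis-plus-Leibniz verification you carry out for $2^{2l_{2}}m_{1}$ directly, with the $2^{-2l_{2}}$ gain extracted from $|\Phi(\ueta_{4})|\sim|\ueta_{4}|^{2}\gtrsim 2^{2l_{2}}$ rather than from the rescaling. One small repair: since the claim is an operator-norm bound, the lower bound $|\eta_{1}|\sim|\eta_{2}|\gtrsim 2^{l_{2}}$ on the support must be read off from the symbol itself---namely the factor $1-\phi\paren*{\frac{\eta_{1}-\xi(t)}{2^{l_{2}}}}\phi\paren*{\frac{\eta_{2}-\xi(t)}{2^{l_{2}}}}$ together with $\sup_{t\in G_{\beta}^{l_{2}}}|\xi(t)|\ll 2^{l_{2}}$ and the cutoff $\varphi(|\eta_{1}|/|\eta_{2}|)$---not from the Fourier support of $u_{h}$, which is irrelevant to the multiplier norm; with that substitution your argument matches the paper's.
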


\begin{proof}
To prove the lemma, we use the Coifman-Meyer theorem. Throughout the proof, we suppress the dependence on time $t$ as all of our estimates will be uniform in $t\in G_{\beta}^{l_{2}}$. First, it is evident that the symbol $m_{1}$ is $C^{\infty}$ outside the origin. We next rescale $m_{1}$ in the spatial variable by defining
\begin{equation}
m_{1,l_{2}}(\ueta_{4}) \coloneqq 2^{2l_{2}}m_{1}(2^{l_{2}}\ueta_{4}).
\end{equation}
By scaling invariance, it suffices to prove that $m_{1,l_{2}}$ is a Coifman-Meyer multiplier with operator norm $\lesssim_{p_{1},\ldots,p_{4}} 1$. To accomplish this task, we need to verify the C-M condition derivative estimates
\begin{equation}
|(\nabla_{\eta_{1}}^{k_{1}}\nabla_{\eta_{2}}^{k_{2}}\nabla_{\eta_{3}}^{k_{3}}\nabla_{\eta_{4}}^{k_{4}}m_{1,l_{2}})(\ueta_{4})| \lesssim_{(k_{1},\ldots,k_{4})} (|\eta_{1}|+\cdots+|\eta_{4}|)^{-(k_{1}+\cdots+k_{4})}, \qquad \forall (k_{1},\ldots,k_{4})\in\mathbb{N}_{0}^{4}.
\end{equation}

By the chain and Leibnitz rules,
\begin{itemize}[leftmargin=*]
\item
\begin{align}
|\nabla_{\eta_{1}}^{k_{1}}\cdots\nabla_{\eta_{4}}^{k_{4}}\paren*{\frac{1}{\Phi(\eta)}}| &\lesssim_{k_{1},\ldots,k_{4}} \frac{1}{(|\eta_{1}|+\cdots+|\eta_{4}|)^{k_{1}+\cdots+k_{4}}}, \qquad \forall \ueta_{4}\in \supp(m_{1,l_{2}});
\end{align}
\item
\begin{align}
|\nabla_{\eta_{1}}^{k_{1}}\nabla_{\eta_{3}}^{k_{3}}\paren*{\frac{|\eta_{3}|}{|\eta_{1}|}}| \lesssim_{k_{1},k_{3}} \frac{|\eta_{3}|}{|\eta_{3}|^{k_{3}}|\eta_{1}|^{k_{1}+1}} \lesssim_{k_{1},k_{3}} \frac{1}{(|\eta_{1}|+\cdots+|\eta_{4}|)^{k_{1}+k_{3}}}, \qquad \forall \ueta_{4}\in \supp(m_{1,l_{2}}) \cap \supp\paren*{\nabla\phi \paren*{\frac{2^{5}|\eta_{3}|}{|\eta_{1}|}}};
\end{align}
\item
\begin{equation}
|\nabla_{\eta_{2}}^{k_{2}}\nabla_{\eta_{4}}^{k_{4}} \paren*{\frac{|\eta_{4}|}{|\eta_{2}|}}| \lesssim_{k_{2},k_{4}} \frac{|\eta_{4}|}{|\eta_{4}|^{k_{4}}|\eta_{2}|^{k_{2}+1}} \lesssim_{k_{2}, k_{4}} \frac{1}{(|\eta_{1}|+\cdots+|\eta_{4}|)^{k_{2}+k_{4}}}, \qquad \forall \ueta_{4}\in\supp(m_{1,l_{2}}) \cap \supp\paren*{\nabla\phi \paren*{\frac{2^{5}|\eta_{4}|}{|\eta_{2}|}}};
\end{equation}
\item
\begin{equation}
|\nabla_{\eta_{2}}^{k_{2}}\nabla_{\eta_{3}}^{k_{3}}\paren*{\frac{(\eta_{2}+\eta_{3})_{1}^{2}}{|\eta_{2}+\eta_{3}|^{2}}}| \lesssim_{k_{2},k_{3}} \frac{1}{|\eta_{2}+\eta_{3}|^{k_{2}+k_{3}}} \lesssim_{k_{2},k_{3}}  \frac{1}{(|\eta_{1}|+\cdots+|\eta_{4}|)^{k_{2}+k_{3}}}, \qquad \ueta_{4} \in\supp(m_{1,l_{2}});
\end{equation}
\item
\begin{equation}
|\nabla_{\eta_{1}}^{k_{1}}\nabla_{\eta_{2}}^{k_{2}}\paren*{\frac{|\eta_{1}|}{|\eta_{2}|}}| \lesssim_{k_{1},k_{2}} \frac{|\eta_{1}|}{|\eta_{1}|^{k_{1}}|\eta_{2}|^{k_{2}+1}} \lesssim_{k_{1},k_{2}} \frac{1}{(|\eta_{1}|+\cdots+|\eta_{4}|)^{k_{1}+k_{2}}}, \qquad \forall \ueta_{4}\in \supp(m_{1,l_{2}});
\end{equation}
and
\item
\begin{equation}
\begin{split}
&|\nabla_{\eta_{1}}^{k_{1}}\nabla_{\eta_{2}}^{k_{2}}\paren*{1-\phi\paren*{\eta_{1}-2^{-l_{2}}\xi(t)}\phi\paren*{\eta_{2}-2^{-l_{2}}\xi(t)}}| \\
&\phantom{=} \lesssim_{k_{1},k_{2}} |(\nabla^{k_{1}}\phi)\paren*{\eta_{1}-2^{-l_{2}}\xi(t)} \otimes (\nabla^{k_{2}}\phi)\paren*{\eta_{2}-2^{-l_{2}}\xi(t)}|, \qquad \forall \ueta_{4}\in \supp(m_{1,l_{2}}).
\end{split}
\end{equation}
Observe that $\nabla\phi(\xi_{1})\neq 0$ implies that $1\leq |\xi_{1}|\leq 2$. Since $|\xi(t)|\leq 2^{l_{2}-19}$ for all $t\in G_{\beta}^{l_{2}}$, it follows that $|\eta_{1}| \sim 1$ on the support of $\nabla\phi(\eta_{1}-2^{-l_{2}}\xi(t))$. By symmetry, $|\eta_{2}| \sim 1$ on the support of $\nabla\phi(\eta_{2}-2^{-l_{2}}\xi(t))$. Since $\vartheta \in C_{c}^{\infty}(\R)$, it follows that for any $k_{1},k_{2}\in\mathbb{N}$,
\begin{equation}
|(\nabla^{k_{1}}\vartheta)(\eta_{1}-2^{-l_{2}}\xi(t)) \otimes (\nabla^{k_{2}}\vartheta)(\eta_{2}-2^{-l_{2}}\xi(t))| \lesssim_{k_{1},k_{2}} \frac{1}{(|\eta_{1}|+\cdots+|\eta_{4}|)^{k_{1}+k_{2}}},  \qquad \forall \ueta_{4}\in\supp(m_{1,l_{2}}).
\end{equation}
\end{itemize}

Combining the above derivative estimates together with the fact that $\phi\in C_{c}^{\infty}(\R^{2})$, $\vartheta,\varphi\in C_{c}^{\infty}(\R)$, and the Leibnitz rule, we see that
\begin{equation}
\begin{split}
&\left|\nabla_{\eta_{1}}^{k_{1}^{1}}\nabla_{\eta_{2}}^{k_{2}^{1}}\nabla_{\eta_{3}}^{k_{3}^{1}}\nabla_{\eta_{4}}^{k_{4}^{1}}\left(\frac{1}{\Phi(\eta)}\right) \otimes \nabla_{\eta_{2}}^{k_{2}^{2}}\nabla_{\eta_{3}}^{k_{3}^{2}}\left(\frac{(\eta_{2}+\eta_{3})_{1}^{2}}{|\eta_{2}+\eta_{3}|^{2}}\right) \otimes \nabla_{\eta_{2}}^{k_{2}^{3}}\nabla_{\eta_{3}}^{k_{3}^{3}}\left(\vartheta\left(2^{5}\frac{|\eta_{3}|}{|\eta_{2}|}\right)\right) \otimes \nabla_{\eta_{1}}^{k_{1}^{2}}\nabla_{\eta_{4}}^{k_{4}^{2}}\left(\vartheta\left(2^{5}\frac{|\eta_{4}|}{|\eta_{1}|}\right)\right) \right.\\
&\phantom{=}\qquad \left.\otimes \nabla_{\eta_{1}}^{k_{1}^{3}}\nabla_{\eta_{2}}^{k_{2}^{4}}\left(\varphi\left(\frac{|\eta_{1}|}{|\eta_{2}|}\right)\right)\otimes \nabla_{\eta_{1}}^{k_{1}^{4}}\nabla_{\eta_{2}}^{k_{2}^{5}}\paren*{1-\phi\left(\eta_{1}-2^{-l_{2}}\xi(t)\right)\phi\left(\eta_{2}-2^{-l_{2}}\xi(t)\right)}\right| \\
&\phantom{=}\lesssim_{k_{j}^{i}} \frac{1}{(|\eta_{1}|+\cdots+|\eta_{4}|)^{(k_{1}^{1}+\cdots+k_{1}^{4})+(k_{2}^{1}+\cdots+k_{2}^{5}) + (k_{3}^{1}+\cdots+k_{3}^{3}) + (k_{4}^{1}+k_{4}^{2})}}, \qquad \forall k_{j}^{i}\in\mathbb{N}_{0}.
\end{split}
\end{equation}
Satisfaction of the C-M condition then follows from the definition of $m_{1,l_{2}}$ and the Leibnitz rule.
\end{proof}

Now performing a normal form transformation (i.e. integrate by parts in time), we obtain that
\begin{align}
&\int_{G_{\beta}^{l_{2}}}dt\int_{\R^{4}}dxdy\int_{\R^{8}}d\ueta_{4}\frac{(x-y)}{|x-y|}\cdot\Im{\bar{v}(\nabla-i\xi(t))v}(t,x) \frac{d}{dt}\left(e^{it\Phi(\ueta_{4})}\right) e^{iy\cdot \eta}m_{1}(t,\ueta_{4}) \hat{w}_{h}(t,\eta_{1})\hat{w}_{h}(t,\eta_{2})\hat{\bar{w}}_{l}(t,\eta_{3})\hat{\bar{w}}_{l}(t,\eta_{4})\nonumber\\
&=\int_{\R^{4}}dxdy\int_{\R^{8}}d\ueta_{4} \frac{(x-y)}{|x-y|}\cdot\Im{\bar{v}(\nabla-i\xi(t))v}(t,x) e^{it\Phi(\ueta_{4})}e^{iy\cdot\eta}m_{1}(t,\ueta_{4})\hat{w}_{h}(t,\eta_{1})\hat{w}_{h}(t,\eta_{2})\hat{\bar{w}}_{l}(t,\eta_{3})\hat{\bar{w}}_{l}(t,\eta_{4})|_{t=a}^{t=b}\\
&\phantom{=}-\int_{G_{\beta}^{l_{2}}}dt\int_{\R^{4}}dxdy\int_{\R^{8}}d\ueta_{4} \frac{(x-y)}{|x-y|}\cdot\frac{d}{dt}\left(\Im{\bar{v}(\nabla-i\xi(t))v}\right)(t,x) e^{it\Phi(\ueta_{4})}e^{iy\cdot \eta}m_{1}(t,\ueta_{4}) \hat{w}_{h}(t,\eta_{1})\hat{w}_{h}(t,\eta_{2})\hat{\bar{w}}_{l}(t,\eta_{3})\hat{\bar{w}}_{l}(t,\eta_{4})\\
&\phantom{=}-\int_{G_{\beta}^{l_{2}}}dt\int_{\R^{4}}dxdy\int_{\R^{8}}d\ueta_{4} \frac{(x-y)}{|x-y|}\cdot\Im{\bar{v}(\nabla-i\xi(t))v}(t,x) e^{it\Phi(\ueta_{4})}e^{iy\cdot \eta}\frac{d}{dt}(m_{1})(t,\ueta_{4}) \hat{w}_{h}(t,\eta_{1})\hat{w}_{h}(t,\eta_{2})\hat{\bar{w}}_{l}(t,\eta_{3})\hat{\bar{w}}_{l}(t,\eta_{4})\\
&\phantom{=}-\int_{G_{\beta}^{l_{2}}}dt\int_{\R^{4}}dxdy\int_{\R^{8}}d\ueta_{4}\frac{(x-y)}{|x-y|}\cdot\Im{\bar{v}(\nabla-i\xi(t))v}(t,x) e^{it\Phi(\ueta_{4})}e^{iy\cdot \eta} m_{1}(t,\ueta_{4})  \frac{d}{dt}\paren*{\hat{w}_{h}(t,\eta_{1})\hat{w}_{h}(t,\eta_{2})\hat{\bar{w}}_{l}(t,\eta_{3})\hat{\bar{w}}_{l}(t,\eta_{4})}\\
&\eqqcolon \mathrm{Term}_{\partial} + \mathrm{Term}_{v}+\mathrm{Term}_{m} + \mathrm{Term}_{w}.
\end{align}

\begin{description}[leftmargin=*]
\item[Estimate for $\mathrm{Term}_{\partial}$:]
By H\"{o}lder's inequality and Plancherel's theorem,
\begin{align}
2^{k-2i} |\mathrm{Term}_{\partial}| &\lesssim 2^{k-2i}2^{i}\|v_{0}\|_{L^{2}}^{2}\|m_{1}(t,\uD_{4})\paren*{u_{h}, u_{h}, \bar{u}_{l}, \bar{u}_{l}}\|_{L_{t}^{\infty}L_{x}^{1}} \nonumber\\
&\lesssim 2^{k-i}2^{-2l_{2}}\|v_{0}\|_{L^{2}}^{2}\|u_{h}\|_{L_{t}^{\infty}L_{x}^{2}}^{2}\|u_{l}\|_{L_{t,x}^{\infty}}^{2} \nonumber\\
&\lesssim 2^{k-i}\|v_{0}\|_{L^{2}}^{2}. 
\end{align}
where we use lemma \ref{lem:CM_1} with $(p_{1},p_{2},p_{3},p_{4})=(2,2,\infty,\infty)$ to obtain the penultimate inequality, and Bernstein's lemma and mass conservation to obtain the ultimate inequality.

\item[Estimate for $\mathrm{Term}_{v}$:]
Calculus shows that
\begin{equation}
\partial_{t}\Im{\bar{v}(\nabla-i\xi(t))_{\mu}v}(t,x) =  -\xi_{\mu}'(t)|v(t,x)|^{2}-\partial_{\nu}\Re{\partial_{\nu}\bar{v}(\partial_{\mu}-i\xi_{\mu}(t))v} + \partial_{\nu}\Re{\bar{v}(\partial_{\mu}-i\xi_{\mu}(t))\partial_{\nu}v}(t,x).
\end{equation}
Substituting this identity into $\mathrm{Term}_{v}$ and applying the triangle inequality, we have reduced to estimating three terms. First, by H\"{o}lder's inequality, Bernstein's lemma, lemma \ref{lem:CM_1}  with $(p_{1},p_{2},p_{3},p_{4})=(2,2,\infty,\infty)$, and mass conservation,
\begin{align}
&2^{k-2i}\left|\int_{G_{\beta}^{l_{2}}}\int_{\R^{4}}\frac{(x-y)}{|x-y|}\cdot \xi'(t)|v(t,x)|^{2}m_{1}(t,\uD_{4})\paren*{u_{h}, u_{h}, \bar{u}_{l}, \bar{u}_{l}}(t,y)dxdydt\right| \nonumber\\
&\phantom{=}\lesssim 2^{k-2i}2^{-2l_{2}}\int_{G_{\beta}^{l_{2}}}|\xi'(t)| \|v\|_{L_{t}^{\infty}L_{x}^{2}}^{2}\|u_{h}\|_{L_{t}^{\infty}L_{x}^{2}}^{2}\|u_{l}\|_{L_{t,x}^{\infty}}^{2} \nonumber\\
&\phantom{=}\lesssim 2^{k-2i}\|v_{0}\|_{L^{2}}^{2}\int_{G_{\beta}^{l_{2}}}\epsilon_{1}^{-1/2}N(t)^{3}dt \nonumber\\
&\phantom{=}\lesssim \epsilon_{3}^{1/2}2^{k+l_{2}-2i}\|v_{0}\|_{L^{2}}^{2}.
\end{align}
Next, integrating by parts in $x$ to move a $\partial_{\nu}$ onto the potential $\frac{(x-y)}{|x-y|}$, followed by H\"{o}lder's inequality and Hardy-Littlewood-Sobolev lemma, we see that
\begin{align}
&2^{k-2i}\left|\int_{G_{\beta}^{l_{2}}}\int_{\R^{4}}\frac{(x-y)_{\mu}}{|x-y|}\partial_{\nu}\Re{\partial_{\nu}\bar{v}(\nabla-i\xi(t))_{\mu}v}(t,x) m_{1}(t,\uD_{4})\paren*{u_{h}, u_{h}, \bar{u}_{l}, \bar{u}_{l}}(t,y) dxdydt\right| \nonumber \\
&\phantom{=} \lesssim 2^{k-2i}\| |\nabla|^{-1}(\partial_{\nu}\bar{v}(\nabla-i\xi(t))v)\|_{L_{t}^{3}L_{x}^{6}} \|m_{1}(t,\uD_{4})\paren*{u_{h}, u_{h}, \bar{u}_{l}, \bar{u}_{l}}\|_{L_{t}^{3/2}L_{x}^{6/5}} \nonumber\\
&\phantom{+} \lesssim 2^{k-2i}\|\partial_{\nu}v(\nabla-i\xi(t))v\|_{L_{t}^{3}L_{x}^{3/2}}\|m_{1}(t,\uD_{4})\paren*{u_{h}, u_{h}, \bar{u}_{l}, \bar{u}_{l}}\|_{L_{t}^{3/2}L_{x}^{6/5}}. 
\end{align}
Using H\"{o}lder's inequality, Bernstein's lemma, Strichartz estimates, and  lemma \ref{lem:CM_1} with $(p_{1},p_{2},p_{3},p_{4})=(6,6,4,4)$, we obtain that the preceding expression is
\begin{align}
\lesssim 2^{k-2l_{2}}\|v_{0}\|_{L^{2}}^{2} \left\| \|u_{h}(t)\|_{L_{x}^{6}} \|u_{h}(t)\|_{L_{x}^{6}} \|u_{l}(t)\|_{L_{x}^{4}} \|u_{l}(t)\|_{L_{x}^{4}} \right\|_{L_{t}^{3/2}} &\lesssim 2^{k-2l_{2}}\|v_{0}\|_{L^{2}}^{2}\|u_{h}\|_{L_{t}^{3}L_{x}^{6}}^{2}\|u_{l}\|_{L_{t}^{\infty}L_{x}^{4}}^{2} \nonumber\\
&\lesssim 2^{k-l_{2}}\|v_{0}\|_{L^{2}}^{2}\|u\|_{\tilde{X}_{l_{2}}(G_{\beta}^{l_{2}}\times\R^{2})}^{2},
\end{align}
where we use lemma \ref{lem:lohi_embed}, another application of Bernstein's lemma, and mass conservation to obtain the ultimate inequality. Repeating the same argument \emph{mutatis mutandis}, we also obtain the estimate
\begin{equation}
\begin{split}
&2^{k-2i}\left|\int_{G_{\beta}^{l_{2}}}\int_{\R^{4}} \frac{(x-y)}{|x-y|}\cdot\partial_{\nu}\Re{\bar{v}(\nabla-i\xi(t))\partial_{\nu}v}(t,x)m_{1}(t,\uD_{4})\paren*{u_{h}, u_{h}, \bar{u}_{l}, \bar{u}_{l}}(t,y) dxdydt\right| \nonumber\\
&\phantom{=}\lesssim 2^{k-l_{2}}\|v_{0}\|_{L^{2}}^{2}\|u\|_{\tilde{X}_{l_{2}}(G_{\beta}^{l_{2}}\times\R^{2})}^{2},
\end{split}
\end{equation}
which completes the proof of the estimate of $\mathrm{Term}_{v}$.

\item[Estimate for $\mathrm{Term}_{m}$:]
Calculus shows that
\begin{align}
(\partial_{t}m)(t,\ueta_{4}) &= \frac{1}{\Phi(\ueta_{4})}\phi\left(\frac{\eta}{2^{l_{2}-10}}\right)\left(\phi\left(\frac{\eta_{2}-\xi(t)}{2^{l_{2}}}\right)\nabla\phi\left(\frac{\eta_{1}-\xi(t)}{2^{l_{2}}}\right) + \phi\left(\frac{\eta_{1}-\xi(t)}{2^{l_{2}}}\right)\nabla\phi\left(\frac{\eta_{2}-\xi(t)}{2^{l_{2}}}\right)\right) \cdot 2^{-l_{2}}\xi'(t)\frac{(\eta_{2}+\eta_{3})_{1}^{2}}{|\eta_{2}+\eta_{3}|^{2}}.
\end{align}
Define the time-dependent $\C^{2}$-valued symbol $m_{2}$ by
\begin{equation}
\begin{split}
m_{2}(t,\ueta_{4}) &\coloneqq \frac{1}{\Phi(\ueta_{4})}\phi\left(\frac{\eta}{2^{l_{2}-10}}\right)\frac{(\eta_{2}+\eta_{3})_{1}^{2}}{|\eta_{2}+\eta_{3}|^{2}}\phi\left(\frac{\eta_{3}}{2^{l_{2}-10}}\right)\phi\left(\frac{\eta_{4}}{2^{l_{2}-10}}\right)\vartheta\left(2^{5}\frac{|\eta_{3}|}{|\eta_{1}|}\right)\vartheta\left(2^{5}\frac{|\eta_{4}|}{|\eta_{2}|}\right)\varphi\left(\frac{|\eta_{1}|}{|\eta_{2}|}\right) \times \\
&\phantom{=}\left(\phi\left(\frac{\eta_{2}-\xi(t)}{2^{l_{2}}}\right)\nabla\phi\left(\frac{\eta_{1}-\xi(t)}{2^{l_{2}}}\right) + \phi\left(\frac{\eta_{1}-\xi(t)}{2^{l_{2}}}\right)\nabla\phi\left(\frac{\eta_{2}-\xi(t)}{2^{l_{2}}}\right)\right), \qquad \forall (t,\ueta_{4})\in G_{\beta}^{l_{2}} \times \R^{8}.
\end{split}
\end{equation}
By repeating the analysis in the proof of lemma \ref{lem:CM_1}, one can show that the family of multilinear multiplier operators $\{m_{2}(t,\uD_{4})\}_{t\in G_{\beta}^{l_{2}}}$ extend to bounded operators $\prod_{l=1}^{4}L^{p_{l}}(\R^{2}) \rightarrow L^{p}(\R^{2};\C^{2})$ with operator norm $\lesssim_{p_{1},\ldots,p_{4}} 2^{-2l_{2}}$ for all $t\in G_{\beta}^{l_{2}}$. This is precisely the content of the next lemma, the proof of which we omit.

\begin{lemma}[Second C-M estimate]\label{lem:CM_2} 
For any real numbers $1<p_{1},\ldots,p_{4}\leq\infty$ and $0<p<\infty$ satisfying $\frac{1}{p}=\frac{1}{p_{1}}+\cdots+\frac{1}{p_{4}}$, there exists a constant $C_{p_{1},\ldots,p_{4}}>0$ such that the multilinear operator $m_{2}(t,\uD_{4}): \prod_{l=1}^{4}\mathcal{S}(\R^{2})\rightarrow\mathcal{S}'(\R^{2})$ with symbol $m_{2}(t)$ extends to a bounded operator
\begin{equation}
m_{2}(t,\uD_{4}):\prod_{l=1}^{4}L^{p_{l}}(\R^{2}) \rightarrow L^{p}(\R^{2};\C^{2}), \qquad \sup_{t\in G_{\beta}^{l_{2}}} \|m_{2}(t,\uD_{4})\|_{L^{p_{1}}\times\cdots\times L^{p_{4}}\rightarrow L^{p}} \leq C_{p_{1},\ldots,p_{4}}2^{-2l_{2}}
\end{equation}
for all $20\leq i\leq j\leq k_{0}$, $0\leq l_{2}\leq i-10$, and $G_{\beta}^{l_{2}}\subset [0,T]$.
\end{lemma}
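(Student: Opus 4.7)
The plan is to follow exactly the strategy used in the proof of Lemma~\ref{lem:CM_1}: rescale so the symbol lives at unit frequency scale, and then verify the Coifman--Meyer derivative conditions on the rescaled symbol componentwise. Since $m_2$ is $\C^2$-valued, I would interpret the conclusion as an application of Proposition~\ref{prop:CM} to each component separately.

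First, I would introduce the rescaled symbol $m_{2,l_2}(\underline{\eta}_4) := 2^{2l_2} m_2(t, 2^{l_2}\underline{\eta}_4)$. By the scaling invariance of the Coifman--Meyer theorem, it suffices to show that $\{m_{2,l_2}\}_{t\in G_\beta^{l_2}}$ is a family of $\C^2$-valued Coifman--Meyer multipliers with operator norm uniformly $\lesssim_{p_1,\ldots,p_4} 1$; undoing the rescaling then yields the claimed factor of $2^{-2l_2}$.

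Second, I would verify the C--M derivative estimates for $m_{2,l_2}$. The rescaled symbol factors as a product of six scalar pieces identical to those appearing in $m_{1,l_2}$, together with one new vector-valued piece, namely
\begin{equation}
\phi(\eta_2 - 2^{-l_2}\xi(t))\,\nabla\phi(\eta_1 - 2^{-l_2}\xi(t)) + \phi(\eta_1 - 2^{-l_2}\xi(t))\,\nabla\phi(\eta_2 - 2^{-l_2}\xi(t)),
\end{equation}
replacing the cutoff $1-\phi(\eta_1 - 2^{-l_2}\xi(t))\phi(\eta_2 - 2^{-l_2}\xi(t))$ from $m_{1,l_2}$. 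The six shared factors satisfy the required derivative estimates by the computation already carried out in the proof of Lemma~\ref{lem:CM_1}, so nothing new is needed for them. For the new vector-valued piece, observe that $\nabla\phi \in C_c^\infty(\R^2)$ is supported in the annulus $\{1\leq|\zeta|\leq 2\}$, and since $|\xi(t)| \leq 2^{l_2-19}$ for all $t\in G_\beta^{l_2}$, the factor $\nabla\phi(\eta_j - 2^{-l_2}\xi(t))$ is supported on a set where $|\eta_j|\sim 1$; arbitrary derivatives in $\eta_j$ are therefore uniformly bounded and, on the support, control by $(|\eta_1|+\cdots+|\eta_4|)^{-k}$ is automatic. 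The accompanying $\phi$-factor is handled in the same way, with no derivative loss.

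Third, combining the individual bounds via the Leibniz rule, exactly as in the final step of the proof of Lemma~\ref{lem:CM_1}, yields the full C--M derivative estimates
\begin{equation}
|\nabla_{\eta_1}^{k_1}\nabla_{\eta_2}^{k_2}\nabla_{\eta_3}^{k_3}\nabla_{\eta_4}^{k_4} m_{2,l_2}(t,\underline{\eta}_4)| \lesssim_{k_1,\ldots,k_4} (|\eta_1|+\cdots+|\eta_4|)^{-(k_1+\cdots+k_4)}
\end{equation}
uniformly in $t\in G_\beta^{l_2}$. Applying Proposition~\ref{prop:CM} componentwise and unscaling completes the proof. There is no genuine obstacle here; the only thing to check with care is that the support of $\nabla\phi(\eta_j - 2^{-l_2}\xi(t))$ is indeed contained in a $t$-independent annulus away from the origin, which is guaranteed by the smallness $|\xi(t)| \leq 2^{l_2-19}$ coming from the admissibility of the tuple and the choice of Galilean frame.
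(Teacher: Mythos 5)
Your proposal is correct and follows essentially the same route as the paper, which omits the proof of Lemma~\ref{lem:CM_2} precisely because it is the rescaling-plus-Coifman--Meyer verification of Lemma~\ref{lem:CM_1} repeated, with the factor $1-\phi\phi$ replaced by the vector-valued piece $\phi\,\nabla\phi+\phi\,\nabla\phi$ applied componentwise. Your observation that $|\xi(t)|\leq 2^{l_{2}-19}$ on $G_{\beta}^{l_{2}}$ keeps the supports of $\nabla\phi(\eta_{j}-2^{-l_{2}}\xi(t))$ in a fixed annulus where $|\eta_{j}|\sim 1$ (and, together with the remaining cutoffs, keeps $|\eta_{1}|+\cdots+|\eta_{4}|\lesssim 1$ there) is exactly the point needed to make the derivative estimates automatic.
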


Now using H\"{o}lder's inequality, lemma \ref{lem:CM_2} with $(p_{1},p_{2},p_{3},p_{4})=(2,2,\infty,\infty)$, Bernstein's lemma, and mass conservation, we obtain that
\begin{equation}
2^{k-2i} |\mathrm{Term}_{m}| \lesssim 2^{k-2i}2^{i-3l_{2}}\int_{G_{\beta}^{l_{2}}}|\xi'(t)| \|u_{h}\|_{L_{t}^{\infty}L_{x}^{2}}^{2} \|u_{l}\|_{L_{t,x}^{\infty}}^{2}dt \lesssim \epsilon_{3}^{1/2}2^{k-i}\|v_{0}\|_{L^{2}}^{2}.
\end{equation}

\item[Estimate for $\mathrm{Term}_{w}$:]
First observe that $w_{h}$ and $\bar{w}_{l}$ respectively solve the equations
\begin{align}
\partial_{t}w_{h} &= e^{-it\Delta}\paren*{\frac{d}{dt}P_{\xi(t),>l_{2}-5}}u -ie^{-it\Delta}P_{\xi(t),>l_{2-5}}F(u) \\
\partial_{t}\bar{w}_{l} &= e^{it\Delta}\ol{\paren*{\frac{d}{dt}P_{\xi(t),\leq l_{2}-15}}u} + ie^{it\Delta}\ol{P_{\xi(t),\leq l_{2}-15}F(u)}.
\end{align}
It therefore follows from the product rule, triangle inequality, Plancherel's theorem, and mass conservation that
\begin{align}
|\mathrm{Term}_{w}| &\lesssim 2^{i}\|v_{0}\|_{L^{2}}^{2}\|m_{1}(t,\uD_{4})\paren*{\paren*{\frac{d}{dt}P_{\xi(t),>l_{2}-5}}u, u_{h}, \bar{u}_{l},\bar{u}_{l}}\|_{L_{t,x}^{1}} \nonumber\\
&\phantom{=}+2^{i}\|v_{0}\|_{L^{2}}^{2}\|m_{1}(t,\uD_{4})\paren*{u_{h},\paren*{\frac{d}{dt}P_{\xi(t), >l_{2}-5}}u, \bar{u}_{l}, \bar{u}_{l}}\|_{L_{t,x}^{1}} \nonumber\\
&\phantom{=}+2^{i}\|v_{0}\|_{L^{2}}^{2}\|m_{1}(t,\uD_{4})\paren*{u_{h}, u_{h}, \ol{\paren*{\frac{d}{dt}P_{\xi(t), \leq l_{2}-15}}u}, \bar{u}_{l}}\|_{L_{t,x}^{1}} \nonumber\\
&\phantom{=}+2^{i}\|v_{0}\|_{L^{2}}^{2}\|m_{1}(t,\uD_{4})\paren*{u_{h}, u_{h}, \bar{u}_{l}, \ol{\paren*{\frac{d}{dt}P_{\xi(t), \leq l_{2}-15}}u}}\|_{L_{t,x}^{1}} \nonumber\\
&\phantom{=}+2^{i}\|v_{0}\|_{L^{2}}^{2}\|m_{1}(t,\uD_{4})\paren*{P_{\xi(t), >l_{2}-5}F(u), u_{h}, \bar{u}_{l}, \bar{u}_{l}}\|_{L_{t,x}^{1}} \nonumber\\
&\phantom{=}+2^{i}\|v_{0}\|_{L^{2}}^{2}\|m_{1}(t,\uD_{4})\paren*{u_{h}, P_{\xi(t), >l_{2}-5}F(u), \bar{u}_{l}, \bar{u}_{l}}\|_{L_{t,x}^{1}}\nonumber \\
&\phantom{=}+2^{i}\|v_{0}\|_{L^{2}}^{2}\|m_{1}(t,\uD_{4})\paren*{u_{h}, u_{h}, \ol{P_{\xi(t), \leq l_{2}-15}F(u)}, \bar{u}_{l}}\|_{L_{t,x}^{1}} \nonumber\\
&\phantom{=}+2^{i}\|v_{0}\|_{L^{2}}^{2}\|m_{1}(t,\uD_{4})\paren*{u_{h}, u_{h}, \bar{u}_{l}, \ol{P_{\xi(t), \leq l_{2}-15}F(u)}}\|_{L_{t,x}^{1}} \nonumber\\
&\eqqcolon \mathrm{Term}_{w,1}+\cdots+\mathrm{Term}_{w,8}.
\end{align}
By lemma \ref{lem:CM_1} with $(p_{1},p_{2},p_{3},p_{4})=(2,2,\infty,\infty)$, H\"{o}lder's inequality, Bernstein's lemma, and mass conservation,
\begin{align}
2^{k-2i}\paren*{\mathrm{Term}_{w,1}+\cdots+\mathrm{Term}_{w,4}} \lesssim 2^{k-2i}2^{i-2l_{2}}2^{-l_{2}}\|P_{\xi(t),\leq l_{2}+2}u\|_{L_{t,x}^{\infty}}^{2}\int_{G_{\beta}^{l_{2}}}|\xi'(t)|dt \lesssim 2^{k-i}\|v_{0}\|_{L^{2}}^{2}.
\end{align}

To estimate the remaining terms $\mathrm{Term}_{w,5},\ldots,\mathrm{Term}_{w,8}$, we first perform a near-far decomposition $u=u_{l}+u_{h}$, where $u_{l}\coloneqq P_{\xi(t),\leq l_{2}-5}u$, substitute this decomposition into $F(u)$, and then algebraically expand to obtain
\begin{align}
P_{\xi(t), > l_{2}-5}F(u) &= P_{\xi(t), > l_{2}-5} \brak*{\E(|u_{h}|^{2})u_{h}} +P_{\xi(t), >l_{2}-5}\brak*{2\E\paren*{\Re{u_{l}\bar{u}_{h}}}u_{h} + \E(|u_{h}|^{2})u_{l}} \nonumber\\
&\phantom{=} +P_{\xi(t), >l_{2}-5}\brak*{\E(|u_{l}|^{2})u_{h} + 2\E\paren*{\Re{u_{h}\bar{u}_{l}}}u_{l}} \nonumber\\
&\eqqcolon F_{h,3}+F_{h,2}+F_{h,1},
\end{align}
and
\begin{align}
P_{\xi(t),\leq l_{2}}F(u) &= P_{\xi(t), \leq l_{2}-15}\brak*{\E(|u_{h}|^{2})u_{h}} +P_{\xi(t), \leq l_{2}-15}\brak*{2\E\paren*{\Re{u_{h}\bar{u}_{l}}}u_{h} + \E(|u_{h}|^{2})u_{l}} \nonumber\\
&\phantom{=}+P_{\xi(t), \leq l_{2}-15}\brak*{\E(|u_{l}|^{2})u_{h} + 2\E\paren*{\Re{u_{l}\bar{u}_{h}}}u_{l}}+P_{\xi(t),\leq l_{2}-15}\brak*{\E(|u_{l}|^{2})u_{l}} \nonumber\\
&\eqqcolon F_{l,3}+F_{l,2}+F_{l,1}+F_{l,0}.
\end{align}
After substituting these decompositions into $\mathrm{Term}_{w,5},\ldots,\mathrm{Term}_{w,8}$, using the multilinearity of the multiplier $m_{1}$, then applying the triangle inequality, we proceed to consider each case separately. The arguments involved will vary based on the number of factors $u_{h}$ and factors $u_{l}$ present in the considered multilinear expressions. We include the details for estimating $\mathrm{Term}_{w,5}$ and $\mathrm{Term}_{w,7}$ and leave the remaining terms to the reader.

\begin{itemize}[leftmargin=*] 
\item
We estimate $\|m_{1}(t,\uD_{4})\paren*{F_{h,3}, u_{h}, \bar{u}_{l}, \bar{u}_{l}}\|_{L_{t,x}^{1}}$. By lemma \ref{lem:CM_1} with $(p_{1}, p_{2}, p_{3}, p_{4}) = (4,4/3,\infty,\infty)$, H\"{o}lder's inequality, Calder\'{o}n-Zygmund theorem, then Bernstein's lemma, lemma \ref{lem:lohi_embed}, and mass conservation, we have that
\begin{align}
\|m_{1}(t,\uD_{4})\paren*{F_{h,3}, u_{h}, \bar{u}_{l}, \bar{u}_{l}}\|_{L_{t,x}^{1}} \lesssim 2^{-2l_{2}}\|F_{h,3}\|_{L_{t,x}^{4/3}} \|u_{h}\|_{L_{t,x}^{4}}\|u_{l}\|_{L_{t,x}^{\infty}}^{2}\lesssim \|u_{h}\|_{L_{t,x}^{4}}^{4} \lesssim  \|u\|_{\tilde{X}_{l_{2}}(G_{\beta}^{l_{2}}\times\R^{2})}^{4}.
\end{align}

\item
We estimate $\|m_{1}(t,\uD_{4})\paren*{u_{h}, u_{h}, \bar{F}_{l,3}, \bar{u}_{l}}\|_{L_{t,x}^{1}}$. By lemma \ref{lem:CM_1} with $(p_{1}, p_{2}, p_{3}, p_{4}) = (4,4,2,\infty)$, Bernstein's lemma, H\"{o}lder's inequality, Calder\'{o}n-Zygmund theorem, mass conservation, interpolation, and lemma \ref{lem:lohi_embed}, we have that
\begin{align}
\|m_{1}(t,\uD_{4})\paren*{u_{h}, u_{h}, \bar{F}_{l,3}, \bar{u}_{l}}\|_{L_{t,x}^{1}} &\lesssim 2^{-2l_{2}}\|u_{h}\|_{L_{t,x}^{4}}^{2}\|P_{\xi(t), \leq l_{2}}\brak*{\E(|u_{h}|^{2})u_{h}}\|_{L_{t,x}^{2}}\|u_{l}\|_{L_{t,x}^{\infty}} \nonumber\\
&\lesssim \|u_{h}\|_{L_{t,x}^{4}}^{2} \|\E(|u_{h}|^{2})u_{h}\|_{L_{t}^{2}L_{x}^{1}} \nonumber\\
&\lesssim \|u_{h}\|_{L_{t,x}^{4}}^{4} \nonumber\\
&\lesssim \|u\|_{\tilde{X}_{l_{2}}(G_{\beta}^{l_{2}}\times\R^{2})}^{4}.
\end{align}

\item
We estimate $\|m_{1}(t,\uD_{4})\paren*{F_{h,2}, u_{h}, \bar{u}_{l}, \bar{u}_{l}}\|_{L_{t,x}^{1}}$. By the triangle inequality, lemma \ref{lem:CM_1} with $(p_{1}, p_{2}, p_{3}, p_{4}) = (6/5, 6, \infty,\infty)$, H\"{o}lder's inequality, Calder\'{o}n-Zygmund theorem, lemma \ref{lem:lohi_embed}, Bernstein's lemma, and mass conservation,
\begin{align}
\|m_{1}(t,\uD_{4})\paren*{F_{h,2}, u_{h}, \bar{u}_{l}, \bar{u}_{l}}\|_{L_{t,x}^{1}} &\lesssim 2^{-2l_{2}}\|F_{h,2}\|_{L_{t}^{3/2}L_{x}^{6/5}}\|u_{h}\|_{L_{t}^{3}L_{x}^{6}}\|u_{l}\|_{L_{t,x}^{\infty}}^{2} \nonumber\\
&\lesssim 2^{-2l_{2}}\|u_{h}\|_{L_{t}^{3}L_{x}^{6}}^{3}\|u_{l}\|_{L_{t}^{\infty}L_{x}^{2}}\|u_{l}\|_{L_{t,x}^{\infty}}^{2} \nonumber\\
&\lesssim \|u\|_{\tilde{X}_{l_{2}}(G_{\beta}^{l_{2}}\times\R^{2})}^{3}.
\end{align}

\item
We estimate $\|m_{1}(t,\uD_{4})\paren*{u_{h}, u_{h}, \bar{F}_{l,2}, \bar{u}_{l}}\|_{L_{t,x}^{1}}$. By the triangle inequality, lemma \ref{lem:CM_1} with $(p_{1}, p_{2}, p_{3}, p_{4}) = (4,4, 2,\infty)$, H\"{o}lder's inequality, Calder\'{o}n-Zygmund theorem, lemma \ref{lem:lohi_embed}, Bernstein's lemma, and mass conservation,
\begin{align}
\|m_{1}(t,\uD_{4})\paren*{u_{h}, u_{h}, \bar{F}_{l,2}, \bar{u}_{l}}\|_{L_{t,x}^{1}} &\lesssim 2^{-2l_{2}}\|u_{h}\|_{L_{t,x}^{4}}^{2}\|F_{l,2}\|_{L_{t,x}^{2}}\|u_{l}\|_{L_{t,x}^{\infty}} \nonumber\\
&\lesssim 2^{-2l_{2}}\|u\|_{\tilde{X}_{l_{2}}(G_{\beta}^{l_{2}}\times\R^{2})}^{2} \|u_{h}\|_{L_{t,x}^{4}}^{2}\|u_{l}\|_{L_{t,x}^{\infty}}^{2} \nonumber\\
&\lesssim \|u\|_{\tilde{X}_{l_{2}}(G_{\beta}^{l_{2}}\times\R^{2})}^{4}.
\end{align}

\item
We estimate $\|m_{1}(t,\uD_{4})\paren*{\bar{F}_{h,1}, u_{h}, \bar{u}_{l}, \bar{u}_{l}}\|_{L_{t,x}^{1}}$. By the triangle inequality, lemma \ref{lem:CM_1} with $(p_{1}, p_{2}, p_{3}, p_{4}) = (2,3,12,12)$, H\"{o}lder's inequality, Calder\'{o}n-Zygmund theorem, we have that
\begin{align}
\|m_{1}(t,\uD_{4})\paren*{\bar{F}_{h,1}, u_{h}, \bar{u}_{l}, \bar{u}_{l}}\|_{L_{t,x}^{1}} &\lesssim 2^{-2l_{2}}\|F_{h,1}\|_{L_{t,x}^{2}}\|u_{h}\|_{L_{t}^{6}L_{x}^{3}} \|u_{l}\|_{L_{t}^{6}L_{x}^{12}}^{2} \nonumber\\
&\lesssim 2^{-2l_{2}}\|u_{h}\|_{L_{t}^{6}L_{x}^{3}}^{2}\|u_{l}\|_{L_{t}^{6}L_{x}^{12}}^{4}\nonumber\\
&\lesssim 2^{-2l_{2}} \|u\|_{\tilde{X}_{l_{2}}(G_{\beta}^{l_{2}}\times\R^{2})}^{2} \| |\nabla-i\xi(t)|^{1/2} u_{l}\|_{L_{t}^{6}L_{x}^{3}}^{4}\nonumber\\
&\lesssim 2^{-l_{2}} \|u\|_{\tilde{X}_{l_{2}}(G_{\beta}^{l_{2}}\times\R^{2})}^{2}  \| |\nabla-i\xi(t)|^{1/2} u_{l}\|_{L_{t}^{3}L_{x}^{6}}^{2}\nonumber\\
&\lesssim \|u\|_{\tilde{X}_{l_{2}}(G_{\beta}^{l_{2}}\times\R^{2})}^{4},
\end{align}
where we use Sobolev embedding and lemma \ref{lem:lohi_embed} to obtain the antepenultimate inequality; interpolation, Bernstein's lemma, and mass conservation to obtain the penultimate inequality; and lemma \ref{lem:BT_embed} to obtain the ultimate inequality.

\item
We estimate $\|m_{1}(t,\uD_{4})\paren*{u_{h}, u_{h}, \bar{F}_{l,1}, \bar{u}_{l}}\|_{L_{t,x}^{1}}$. By the triangle inequality, lemma \ref{lem:CM_1} with $(p_{1}, p_{2}, p_{3}, p_{4}) = (6,6, 3/2, \infty)$, H\"{o}lder's inequality, Calder\'{o}n-Zygmund theorem, followed by lemma \ref{lem:lohi_embed}, Bernstein's lemma, and mass conservation, we have that
\begin{align}
\|m_{1}(t,\uD_{4})\paren*{u_{h}, u_{h}, \bar{F}_{l,1}, \bar{u}_{l}}\|_{L_{t,x}^{1}} &\lesssim 2^{-2l_{2}}\|u_{h}\|_{L_{t}^{3}L_{x}^{6}}^{2}\|F_{l,1}\|_{L_{t}^{3}L_{x}^{3/2}}\|u_{l}\|_{L_{t,x}^{\infty}} \nonumber\\
&\lesssim 2^{-l_{2}}\|u_{h}\|_{L_{t}^{3}L_{x}^{6}}^{3}\|u_{l}\|_{L_{t}^{\infty}L_{x}^{2}}\|u_{l}\|_{L_{t,x}^{\infty}} \nonumber\\
&\lesssim \|u\|_{\tilde{X}_{l_{2}}(G_{\beta}^{l_{2}}\times\R^{2})}^{3}.
\end{align}

\item
We estimate $\|m_{1}(t,\uD_{4})\paren*{u_{h}, u_{h}, \bar{F}_{l,0}, \bar{u}_{l}}\|_{L_{t,x}^{1}}$. By lemma \ref{lem:CM_1} with $(p_{1}, p_{2}, p_{3}, p_{4}) = (3,3, 4, 12)$, H\"{o}lder's inequality, Calder\'{o}n-Zygmund theorem, we have that
\begin{align}
\|m_{1}(t,\uD_{4})\paren*{u_{h}, u_{h}, \bar{F}_{l,0}, \bar{u}_{l}}\|_{L_{t,x}^{1}} &\lesssim 2^{-2l_{2}}\|u_{h}\|_{L_{t}^{6}L_{x}^{3}}^{2} \|F_{l,0}\|_{L_{t}^{2}L_{x}^{4}} \|u_{l}\|_{L_{t}^{6}L_{x}^{12}} \nonumber\\
&\lesssim 2^{-2l_{2}}\|u_{h}\|_{L_{t}^{6}L_{x}^{3}}^{2}\|u_{l}\|_{L_{t}^{6}L_{x}^{12}}^{4} \nonumber\\
&\lesssim 2^{-2l_{2}}\|u_{h}\|_{L_{t}^{3}L_{x}^{6}} \| |\nabla-i\xi(t)|^{1/2}u_{l}\|_{L_{t}^{6}L_{x}^{3}}^{4} \nonumber\\
&\lesssim 2^{-l_{2}}\|u_{h}\|_{L_{t}^{3}L_{x}^{6}}\||\nabla-i\xi(t)|^{1/2}u_{l}\|_{L_{t}^{3}L_{x}^{6}}^{2} \nonumber\\
&\lesssim \|u\|_{\tilde{X}_{l_{2}}(G_{\beta}^{l_{2}}\times\R^{2})}^{3},
\end{align}
where we use interpolation, mass conservation, and Sobolev embedding to obtain the antepenultimate inequality; interpolation, mass conservation, and Bernstein's lemma to obtain the penultimate inequality; and lemmas \ref{lem:lohi_embed} and \ref{lem:BT_embed} to obtain the ultimate inequality.
\end{itemize}
Combining the above estimates, we conclude that
\begin{equation}
2^{k-2i}\paren*{\mathrm{Term}_{w,5}+\cdots+\mathrm{Term}_{w,8}} \lesssim 2^{k-i}\|v_{0}\|_{L^{2}}^{2} \paren*{1+\|u\|_{\tilde{X}_{l_{2}}(G_{\beta}^{l_{2}}\times\R^{2})}^{4}},
\end{equation}
and therefore $|\mathrm{Term}_{w}| \leq 2^{k-i}\|v_{0}\|_{L^{2}}^{2}(1+\|u\|_{\tilde{X}_{l_{2}}(G_{\beta}^{l_{2}}\times\R^{2})}^{4})$.
\end{description}

Collecting our estimates, we have shown that
\begin{equation}
|\eqref{eq:ibs2_err3}| \lesssim 2^{k-l_{2}}\|v_{0}\|_{L^{2}}^{2}\paren*{1+\|u\|_{\tilde{X}_{l_{2}}(G_{\beta}^{l_{2}}\times\R^{2})}^{4}}.
\end{equation}
\end{description}

Bookkeeping the estimates for \eqref{eq:ibs2_main}-\eqref{eq:ibs2_err3} and using that $\|u\|_{\tilde{X}_{l_{2}}(G_{\beta}^{l_{2}}\times\R^{2})}\leq \|u\|_{\tilde{X}_{i}(G_{\alpha}^{i}\times\R^{2})}$ for all $l_{2}\leq i-10, G_{\beta}^{l_{2}}\subset G_{\alpha}^{i}$ completes the proof of proposition \ref{prop:ibs_2}.
\end{proof}

\subsection{Bilinear Strichartz estimate III}\label{ssec:BSE_3} 
For the reader's benefit, we reproduce the statement of proposition \ref{prop:ibs_3} below.

\ibsIII*

\begin{proof}
Let $v_{0}$ satisfy the conditions of the statement of the proposition, and for each integer $k\leq i-8$ and vector $a\in\mathbb{Z}^{2}$, define
\begin{equation}
v_{a,k} \coloneqq e^{it\Delta}P_{Q_{a}^{k}}v_{0}.
\end{equation}
For each integer $0\leq l_{2}\leq i-10$, define
\begin{equation}
w_{l_{2}} \coloneqq P_{\xi(t),\leq l_{2}}u.
\end{equation}
For each $(l_{2},k,a)\in \N_{\leq i-10}\times\Z_{\leq i-8}\times\Z^{2}$, define the interaction Morawetz functional
\begin{equation}
\begin{split}
M_{l_{2},k,a}(t) &\eqqcolon \int_{\mathbb{S}^{1}}\int_{\R^{4}} \frac{(x-y)_{\omega}}{|(x-y)_{\omega}|} |w_{l_{2}}(t,y)|^{2} \Im{\bar{v}_{a,k}\partial_{\omega}v_{a,k}}(t,x)dxdyd\omega\\
&\phantom{=}+\int_{\mathbb{S}^{1}}\int_{\R^{4}} \frac{(x-y)_{\omega}}{|(x-y)_{\omega}|} |v_{a,k}(t,y)|^{2} \Im{\bar{w}_{l_{2}}\partial_{\omega}w_{l_{2}}}(t,x)dxdyd\omega.
\end{split}
\end{equation}
Proceeding as in the proof of propositions \ref{prop:ibs_1} and \ref{prop:ibs_2}, we obtain the estimate
\begin{align}
&\sum_{0\leq l_{2}\leq i-10} \paren*{\sup_{k\leq  l_{2}+2}\paren*{\sum_{a\in\Z^{2}}\int_{G_{\alpha}^{i}}\int_{\R^{2}}|w_{l_{2}}(t,x)v_{a,k}(t,x)|^{2}dx dt}^{1/2}}^{2} \nonumber\\
&\phantom{=}\lesssim \sum_{0\leq l_{2}\leq i-10}\paren*{\sup_{k\leq l_{2}+2} \paren*{\sum_{a\in\Z^{2}} 2^{k-2i}\sup_{t\in G_{\alpha}^{i}} |M_{l_{2},k,a}(t)|}^{1/2}}^{2} \label{eq:ibs3_main}\\
&\phantom{=}\qquad +\sum_{0\leq l_{2}\leq i-10}\paren*{\sup_{k\leq l_{2}+2}\paren*{\sum_{a\in\Z^{2}} 2^{k-2i}\int_{G_{\alpha}^{i}}\int_{\R^{4}}\frac{1}{|x-y|} |v_{a,k}(t,y)|^{2} |\vec{R}^{2}(|w_{l_{2}}|^{2})(t,x)|^{2}dxdydt}^{1/2}}^{2}\label{eq:ibs3_nl}\\
&\phantom{=}\qquad +\sum_{0\leq l_{2}\leq i-10}\paren*{\sup_{k\leq  l_{2}+2} \paren*{\sum_{a\in\Z^{2}} 2^{k-2i}\left|\int_{G_{\alpha}^{i}}\int_{\R^{4}}\frac{(x-y)}{|x-y|}\cdot |v_{a,k}(t,y)|^{2} \Re{\bar{\mathcal{N}}_{l_{2}}(\nabla-i\xi(t))w_{l_{2}}}(t,x)dxdydt\right|}^{1/2}}^{2} \label{eq:ibs3_err1}\\
&\phantom{=}\qquad+\sum_{0\leq l_{2}\leq i-10}\paren*{\sup_{k\leq l_{2}+2} \paren*{\sum_{a\in\Z^{2}} 2^{k-2i}\left|\int_{G_{\alpha}^{i}}\int_{\R^{4}} \frac{(x-y)}{|x-y|}\cdot |v_{a,k}(t,y)|^{2}\Re{\bar{w}_{l_{2}}(\nabla-i\xi(t))\mathcal{N}_{l_{2}}}(t,x)dxdydt\right|}^{1/2}}^{2} \label{eq:ibs3_err2}\\
&\phantom{=}\qquad +\sum_{0\leq l_{2}\leq i-10}\paren*{\sup_{k\leq l_{2}+2} \paren*{\sum_{a\in\Z^{2}} 2^{k-2i}\left|\int_{G_{\alpha}^{i}}\int_{\R^{4}} \frac{(x-y)}{|x-y|}\cdot \Im{\bar{w}_{l_{2}}\mathcal{N}_{l_{2}}}(t,y)\Im{\bar{v}_{a,k}(\nabla-i\xi(t))v_{a,k}}(t,x) dxdydt\right|}^{1/2}}^{2}. \label{eq:ibs3_err3}
\end{align}
We now estimate each of the terms \eqref{eq:ibs3_main}-\eqref{eq:ibs3_err3} above separately.

\begin{description}[leftmargin=*]
\item[Estimate for \eqref{eq:ibs3_main}:]
By Cauchy-Schwarz, Plancherel's theorem, and mass conservation,
\begin{equation}
\sup_{t\in G_{\alpha}^{i}} |M_{l_{2},k,a}(t)| \lesssim 2^{i}\|P_{Q_{a}^{k}}v_{0}\|_{L^{2}}^{2}\|w_{l_{2}}\|_{L_{t}^{\infty}L_{x}^{2}(G_{\alpha}^{i}\times\R^{2})}^{2} \lesssim 2^{i}\|P_{Q_{a}^{k}}v_{0}\|_{L^{2}}^{2}.
\end{equation}
Now summing over the $a\in\mathbb{Z}^{2}$, we have by Plancherel's theorem that $\|P_{Q_{a}^{k}}v_{0}\|_{\ell_{a}^{2}L_{x}^{2}(\Z^{2}\times\R^{2})} \leq \|v_{0}\|_{L^{2}(\R^{2})}$. Therefore,
\begin{equation}
\sup_{k\leq l_{2}+2} \left(\sum_{a\in\Z^{2}} 2^{k-2i}2^{i}\|P_{Q_{a}^{k}}v_{0}\|_{L^{2}}^{2} \right)^{1/2} \lesssim 2^{(l_{2}-i)/2} \|v_{0}\|_{L^{2}},
\end{equation}
which implies that
\begin{equation}
|\eqref{eq:ibs3_main}| \lesssim \sum_{0\leq l_{2}\leq i-10} \left( 2^{(l_{2}-i)/2} \|v_{0}\|_{L^{2}}\right)^{2} \lesssim \|v_{0}\|_{L^{2}}^{2}.
\end{equation}

\item[Estimate for \eqref{eq:ibs3_nl}:]
Applying the estimate for \eqref{eq:ibs2_nl} in the proof of proposition \ref{prop:ibs_2} on each subinterval $G_{\beta}^{l_{2}}\subset G_{\alpha}^{i}$ (with $v_{0}$ replaced by $P_{Q_{a}^{k}}v_{0}$) together with the fact that there are $2^{i-l_{2}}$ subintervals $G_{\beta}^{l_{2}}\subset G_{\alpha}^{i}$, we see that
\begin{align}
\left|\int_{G_{\alpha}^{i}}\int_{\R^{4}}\frac{1}{|x-y|} |v_{a,k}(t,y)|^{2}|\vec{R}^{2}(|w_{l_{2}}|^{2})(t,x)|^{2}dxdydt\right|
&\leq \sum_{G_{\beta}^{l_{2}}\subset G_{\alpha}^{i}} \left|\int_{G_{\beta}^{l_{2}}}\int_{\R^{4}}\frac{1}{|x-y|} |v_{a,k}(t,y)|^{2}|\vec{R}^{2}(|w_{l_{2}}|^{2})(t,x)|^{2}dxdydt\right| \nonumber\\
&\lesssim 2^{i}\|P_{Q_{a}^{k}}v_{0}\|_{L^{2}}^{2}\|u\|_{\tilde{X}_{i}(G_{\alpha}^{i}\times\R^{2})}^{4}.
\end{align}
So by Plancherel's theorem,
\begin{equation}
\sup_{k\leq l_{2}+2}\left(\sum_{a\in\mathbb{Z}^{2}} 2^{k-2i}2^{i} \|P_{Q_{a}^{k}}v_{0}\|_{L^{2}}^{2}\|u\|_{\tilde{X}_{i}(G_{\alpha}^{i}\times\R^{2})}^{4}\right)^{1/2} \lesssim 2^{(l_{2}-i)/2}\|v_{0}\|_{L^{2}} \|u\|_{\tilde{X}_{i}(G_{\alpha}^{i}\times\R^{2})}^{2},
\end{equation}
which implies that
\begin{align}
|\eqref{eq:ibs3_nl}| \lesssim \sum_{0\leq l_{2}\leq i-10} \left(2^{(l_{2}-i)/2}\|v_{0}\|_{L^{2}} \|u\|_{\tilde{X}_{i}(G_{\alpha}^{i}\times\R^{2})}^{2}\right)^{2} \lesssim \|v_{0}\|_{L^{2}}^{2}\|u\|_{\tilde{X}_{i}(G_{\alpha}^{i}\times\R^{2})}^{4}.
\end{align}

\item[Estimate for \eqref{eq:ibs3_err1}:]
Using the estimate for \eqref{eq:ibs2_err1} in the proof of proposition \ref{prop:ibs_2} on each subinterval $G_{\beta}^{l_{2}}\subset G_{\alpha}^{i}$ (with $v_{0}$ replaced by $P_{Q_{a}^{k}}v_{0})$ together with the fact that there are $2^{i-l_{2}}$ subintervals $G_{\beta}^{l_{2}}\subset G_{\alpha}^{i}$, we see that
\begin{align}
&\left|\int_{G_{\alpha}^{i}}\int_{\R^{4}}\frac{(x-y)}{|x-y|}\cdot |v_{a,k}(t,y)|^{2} \Re{\bar{\mathcal{N}}_{l_{2}}(\nabla-i\xi(t))w_{l_{2}}}(t,x) dxdydt\right| \\
&\phantom{=}\leq \sum_{G_{\beta}^{l_{2}}\subset G_{\alpha}^{i}} \left|\int_{G_{\beta}^{l_{2}}}\int_{\R^{4}}\frac{(x-y)}{|x-y|}\cdot |v_{a,k}(t,y)|^{2} \Re{\bar{\mathcal{N}}_{l_{2}}(\nabla-i\xi(t))w_{l_{2}}}(t,x)dxdydt\right| \nonumber \\
&\phantom{=}\lesssim 2^{i}\|P_{Q_{a}^{k}}v_{0}\|_{L^{2}}^{2}  \paren*{1+\|u\|_{\tilde{X}_{i}(G_{\alpha}^{i}\times\R^{2})}^{3}}.
\end{align}
So by Plancherel's theorem,
\begin{align}
\sup_{k\leq l_{2}+2}\left(\sum_{a\in\Z^{2}} 2^{k-2i}2^{i}\|P_{Q_{a}^{k}}v_{0}\|_{L^{2}}^{2} \paren*{1+\|u\|_{\tilde{X}_{i}(G_{\alpha}^{i}\times\R^{2})}^{3}}\right)^{1/2 } \lesssim 2^{(l_{2}-i)/2} \|v_{0}\|_{L^{2}} \paren*{1+\|u\|_{\tilde{X}_{i}(G_{\alpha}^{i}\times\R^{2})}^{3}}^{1/2},
\end{align}
which implies that
\begin{align}
|\eqref{eq:ibs3_err1}| \lesssim \sum_{0\leq l_{2}\leq i-10} 2^{l_{2}-i} \|v_{0}\|_{L^{2}}^{2}\paren*{1+\|u\|_{\tilde{X}_{i}(G_{\alpha}^{i}\times\R^{2})}^{3}} \lesssim \|v_{0}\|_{L^{2}}^{2}\paren*{1+\|u\|_{\tilde{X}_{i}(G_{\alpha}^{i}\times\R^{2})}^{3}}.
\end{align}

\item[Estimate for \eqref{eq:ibs3_err2}:]
Using the estimate for \eqref{eq:ibs2_err2} in the proof of proposition \ref{prop:ibs_2} on each subinterval $G_{\beta}^{l_{2}}\subset G_{\alpha}^{i}$ (with $v_{0}$ replaced by $P_{Q_{a}^{k}}v_{0}$) together with the fact that there are $2^{i-l_{2}}$ subintervals $G_{\beta}^{l_{2}}\subset G_{\alpha}^{i}$, we see that
\begin{align}
&\left|\int_{G_{\alpha}^{i}}\int_{\R^{4}} \frac{(x-y)}{|x-y|}\cdot |v_{a,k}(t,y)|^{2}\Re{\bar{w}_{l_{2}}(\nabla-i\xi(t))\mathcal{N}_{l_{2}}}(t,x)dxdydt\right| \nonumber\\
&\phantom{=} \leq \sum_{G_{\beta}^{l_{2}}\subset G_{\alpha}^{i}} \left|\int_{G_{\beta}^{l_{2}}}\int_{\R^{4}} \frac{(x-y)}{|x-y|}\cdot |v_{a,k}(t,y)|^{2}\Re{\bar{w}_{l_{2}}(\nabla-i\xi(t))\mathcal{N}_{l_{2}}}(t,x)dxdydt\right| \nonumber\\
&\phantom{=} \lesssim 2^{i}\|P_{Q_{a}^{k}}v_{0}\|_{L^{2}}^{2} \paren*{1+\|u\|_{\tilde{X}_{i}(G_{\alpha}^{i}\times\R^{2})}^{3}}.
\end{align}
So by Plancherel's theorem
\begin{align}
\sup_{k\leq l_{2}+2}\left(\sum_{a\in\Z^{2}} 2^{k-2i} 2^{i}\|P_{Q_{a}^{k}}v_{0}\|_{L^{2}}^{2} \paren*{1+\|u\|_{\tilde{X}_{i}(G_{\alpha}^{i}\times\R^{2})}^{3}}\right)^{1/2 } \lesssim 2^{(l_{2}-i)/2} \|v_{0}\|_{L^{2}} \paren*{1+\|u\|_{\tilde{X}_{i}(G_{\alpha}^{i}\times\R^{2})}^{3}}^{1/2},
\end{align}
which implies that
\begin{align}
|\eqref{eq:ibs3_err2}| \lesssim \sum_{0\leq l_{2}\leq i-10} \left(2^{(l_{2}-i)/2} \|v_{0}\|_{L^{2}} \paren*{1+\|u\|_{\tilde{X}_{i}(G_{\alpha}^{i}\times\R^{2})}^{3}}^{1/2 }\right)^{2} \lesssim \|v_{0}\|_{L^{2}}^{2}\paren*{1+\|u\|_{\tilde{X}_{i}(G_{\alpha}^{i}\times\R^{2})}^{3}}.
\end{align}

\item[Estimate for \eqref{eq:ibs3_err3}:]
As in the proof of proposition \ref{prop:ibs_2}, we first split $\mathcal{N}_{l_{2}}=\mathcal{N}_{1,l_{2}}+\mathcal{N}_{2,l_{2}}$ and estimate the contribution of $\mathcal{N}_{2,l_{2}}$. Using the estimate \eqref{eq:ibs2_err3_N2'} in the proof of proposition \ref{prop:ibs_2} on each subinterval $G_{\beta}^{l_{2}}\subset G_{\alpha}^{i}$ (with $v_{0}$ replaced by $P_{Q_{a}^{k}}v_{0}$), we obtain that
\begin{align}
&2^{k-2i} \left|\int_{G_{\alpha}^{i}}\int_{\R^{4}}\frac{(x-y)}{|x-y|}\cdot \Im{\bar{w}_{l_{2}}\mathcal{N}_{2,l_{2}}}(t,y)\Im{\bar{v}_{a,k}(\nabla-i\xi(t))v_{a,k}}(t,x)dxdydt\right| \nonumber\\
&\phantom{=} \leq \sum_{G_{\beta}^{l_{2}}\subset G_{\alpha}^{i}} 2^{k-2i} \left|\int_{G_{\beta}^{l_{2}}}\int_{\R^{4}}\frac{(x-y)}{|x-y|}\cdot \Im{\bar{w}_{l_{2}}\mathcal{N}_{2,l_{2}}}(t,y)\Im{\bar{v}_{a,k}(\nabla-i\xi(t))v_{a,k}}(t,x) dxdydt\right| \nonumber\\
&\phantom{=} \lesssim \sum_{G_{\beta}^{l_{2}}\subset G_{\alpha}^{i}} 2^{k-l_{2}-i}\|P_{Q_{a}^{k}}v_{0}\|_{L^{2}}^{2}\int_{G_{\beta}^{l_{2}}}|\xi'(t)| \|P_{\xi(t),l_{2}-3\leq\cdot\leq l_{2}+3}u(t)\|_{L_{x}^{2}}\sum_{0\leq l_{3}\leq l_{2}}2^{l_{3}-l_{2}}\|P_{\xi(t),l_{3}}u(t)\|_{L_{x}^{2}(\R^{2})}dt \nonumber\\
&\phantom{=} =2^{k-l_{2}-i}\|P_{Q_{a}^{k}}v_{0}\|_{L^{2}}^{2}\int_{G_{\alpha}^{i}}|\xi'(t)| \|P_{\xi(t),l_{2}-3\leq\cdot\leq l_{2}+3}u(t)\|_{L_{x}^{2}} \sum_{0\leq l_{3}\leq l_{2}}2^{l_{3}-l_{2}}\|P_{\xi(t),l_{3}}u(t)\|_{L_{x}^{2}}dt.
\end{align}
Now summing over $a\in\mathbb{Z}^{2}$ and using Plancherel's theorem, then taking the supremum over $k\leq l_{2}+2$ of the square root, we obtain that
\begin{equation}
\begin{split}
&\sup_{k\leq l_{2}+2}\left(\sum_{a\in\Z^{2}}\sum_{0\leq l_{3}\leq l_{2}}2^{k+l_{3}-2l_{2}-i}\|P_{Q_{a}^{k}}v_{0}\|_{L^{2}}^{2}\int_{G_{\alpha}^{i}} |\xi'(t)| \|P_{\xi(t),l_{2}-3\leq\cdot\leq l_{2}+3}u(t)\|_{L_{x}^{2}} \|P_{\xi(t), l_{3}}u(t)\|_{L_{x}^{2}}dt\right)^{1/2} \\
&\lesssim \left(\sum_{0\leq l_{3}\leq l_{2}} 2^{l_{3}-l_{2}-i}\|v_{0}\|_{L^{2}}^{2}\int_{G_{\alpha}^{i}} |\xi'(t)| \|P_{\xi(t),l_{2}-3\leq\cdot\leq l_{2}+3}u(t)\|_{L_{x}^{2}} \|P_{\xi(t), l_{3}}u(t)\|_{L_{x}^{2}}dt\right)^{1/2}.
\end{split}
\end{equation}
Observe that
\begin{equation}
\sup_{0\leq l_{3}\leq i-10}\sum_{0\leq l_{2}\leq i-10} 2^{l_{3}-l_{2}}1_{l_{3}\leq l_{2}} \lesssim 1, \qquad \sup_{0\leq l_{2}\leq i-10}\sum_{0\leq l_{3}\leq i-10} 2^{l_{3}-l_{2}}1_{l_{3}\leq l_{2}} \lesssim 1.
\end{equation}
So by Schur's test, Cauchy-Schwarz, almost orthogonality, Plancherel's theorem, and mass conservation, we see that
\begin{align}
&\sum_{0\leq l_{2}\leq i-10}\left(\sum_{0\leq l_{3}\leq l_{2}} 2^{l_{3}-l_{2}-i}\|v_{0}\|_{L^{2}}^{2}\int_{G_{\alpha}^{i}} |\xi'(t)| \|P_{\xi(t),l_{2}-3\leq\cdot\leq l_{2}+3}u(t)\|_{L_{x}^{2}} \|P_{\xi(t), l_{3}}u(t)\|_{L_{x}^{2}}dt\right) \nonumber\\
&\phantom{=} \lesssim 2^{-i}\|v_{0}\|_{L^{2}}^{2}\int_{G_{\alpha}^{i}}|\xi'(t)|dt \nonumber\\
&\phantom{=} \leq 2^{-i}\|v_{0}\|_{L^{2}}^{2}\int_{G_{\alpha}^{i}} 2^{-20}\epsilon_{1}^{-1/2}N(t)^{3}dt \nonumber\\
&\phantom{=} \leq 2^{-19}\epsilon_{3}^{1/2}\|v_{0}\|_{L^{2}}^{2}.
\end{align}
Hence,
\begin{equation}
\begin{split}
&\sum_{0\leq l_{2}\leq i-10}\left(\sup_{k\leq l_{2}+2}\left(\sum_{a\in\Z^{2}}2^{k-2i}\left|\int_{G_{\alpha}^{i}}\int_{\R^{4}}\frac{(x-y)}{|x-y|}\cdot \Im{\bar{w}_{l_{2}}\mathcal{N}_{2,l_{2}}}(t,y)\Im{\bar{v}_{a,k}(\nabla-i\xi(t))v_{a,k}}(t,x)dxdydt\right|\right)^{1/2}\right)^{2}\\
&\phantom{=} \lesssim \|v_{0}\|_{L^{2}}^{2},
\end{split}
\end{equation}
which completes the estimate for the contribution of $\mathcal{N}_{2,l_{2}}$ to \eqref{eq:ibs3_err3}.

We now estimate the contribution of $\mathcal{N}_{1,l_{2}}$ to $\eqref{eq:ibs3_err3}$. We follow the steps in the proof of proposition \ref{prop:ibs_2}. For each $0\leq l_{2}\leq i-10$, we first perform a near-far frequency decomposition $u=u_{l}+u_{h}$, where $u_{l} \coloneqq P_{\xi(t),\leq l_{2}-5}u$, and then decompose
\begin{equation}
\Im{\bar{w}_{l_{2}}\mathcal{N}_{1,l_{2}}} = F_{0,l_{2}} + F_{1,l_{2}} + F_{2,l_{2}} + F_{3,l_{2}} + F_{4,l_{2}},
\end{equation}
where $F_{j,l_{2}}$ denotes the terms containing $j$ factors $u_{h}$ and $4-j$ factors $u_{l}$ for $j=0,\ldots,3$.

We first consider the contribution of $F_{0,l_{2}}$. Fourier support analysis shows that $F_{0,l_{2}}=0$, hence there is no contribution to \eqref{eq:ibs3_err3}.

We next estimate the contribution of $F_{1,l_{2}}$. Further frequency decomposing $u_{h}$ by
\begin{equation}
u_{h}=P_{\xi(t),l_{2}-5<\cdot\leq l_{2}-2}u+P_{\xi(t),>l_{2}-2}u_{h}
\end{equation}
and then substituting this decomposition into $F_{1,l_{2}}$, we see that
\begin{equation}
\begin{split}
F_{1,l_{2}} &= \Im{\bar{u}_{l}P_{\xi(t),\leq l_{2}}\brak*{\E(|u_{l}|^{2})(P_{\xi(t), >l_{2}-2}u_{h})} + 2\bar{u}_{l}P_{\xi(t), \leq l_{2}}\brak*{\E\paren*{\Re{u_{l}(\ol{P_{\xi(t), > l_{2}-2}u_{h}})}}u_{l}}}\\
&\phantom{=}-\Im{\bar{u}_{l}\E(|u_{l}|^{2})(P_{\xi(t), l_{2}-2<\cdot\leq l_{2}}u_{h})},
\end{split}
\end{equation}
which has Fourier support in the region $\{|\xi|\geq 2^{l_{2}-4}\}$. By arguing as for obtaining the estimate \eqref{eq:ibs2_F1_est} (with $v_{0}$ replaced by $P_{Q_{a}^{k}}v_{0}$), we see that
\begin{align}
&2^{k-2i}\left|\int_{G_{\alpha}^{i}}\int_{\R^{4}}\frac{(x-y)}{|x-y|}\cdot F_{1,l_{2}}(t,y)\Im{\bar{v}_{a,k}(\nabla-i\xi(t))v_{a,k}}(t,x)dxdydt\right| \lesssim 2^{k-2l_{2}-i}\|P_{Q_{a}^{k}}v_{0}\|_{L^{2}}^{2}\|F_{1,l_{2}}\|_{L_{t,x}^{2}(G_{\alpha}^{i}\times\R^{2})}.
\end{align}
Summing over $a\in\mathbb{Z}^{2}$ and using Plancherel's theorem, then taking the supremum over $k\leq l_{2}+2$ of the square root, we obtain that
\begin{equation}
\begin{split}
\sup_{k\leq l_{2}+2}\paren*{\sum_{a\in\Z^{2}}  2^{k-2l_{2}-i}\|P_{Q_{a}^{k}}v_{0}\|_{L^{2}}^{2}\|F_{1,l_{2}}\|_{L_{t,x}^{2}(G_{\alpha}^{i}\times\R^{2})}}^{1/2} &\lesssim \paren*{2^{-l_{2}-i}\|v_{0}\|_{L^{2}}^{2} \|F_{1,l_{2}}\|_{L_{t,x}^{2}(G_{\alpha}^{i}\times\R^{2})}}^{1/2}.
\end{split}
\end{equation}
Now observe from the triangle, Minkowski's, and H\"{o}lder's inequalities together with some elementary Fourier support analysis that
\begin{align}
&\|F_{1,l_{2}}\|_{L_{t,x}^{2}(G_{\alpha}^{i}\times\R^{2})} \nonumber\\
&\phantom{=} = \paren*{\sum_{G_{\beta}^{l_{2}}\subset G_{\alpha}^{i}} \|F_{1,l_{2}}\|_{L_{t,x}^{2}(G_{\beta}^{l_{2}}\times\R^{2})}^{2}}^{1/2} \nonumber\\
&\phantom{=} \lesssim \paren*{\sum_{G_{\beta}^{l_{2}}\subset G_{\alpha}^{i}} \paren*{\|\E(|u_{l}|^{2})\|_{L_{t}^{\infty}L_{x}^{2}(G_{\beta}^{l_{2}}\times\R^{2})} \|u_{l}\|_{L_{t}^{4}L_{x}^{\infty}(G_{\beta}^{l_{2}}\times\R^{2})} \|P_{\xi(t),l_{2}-2<\cdot\leq l_{2}+2}u_{h}\|_{L_{t}^{4}L_{x}^{\infty}(G_{\beta}^{l_{2}}\times\R^{2})}}^{2}}^{1/2} \nonumber\\
&\phantom{=}\qquad +\paren*{\sum_{G_{\beta}^{l_{2}}\subset G_{\alpha}^{i}} \paren*{\|u_{l}\|_{L_{t,x}^{\infty}(G_{\beta}^{l_{2}}\times\R^{2})}\|u_{l}\|_{L_{t}^{\infty}L_{x}^{2}(G_{\beta}^{l_{2}}\times\R^{2})}\|\E P_{l_{2}-4\leq\cdot\leq l_{2}+4}\paren*{\Re{u_{l}(\ol{P_{\xi(t),l_{2}-2<\cdot\leq l_{2}+2}u_{h}})}}\|_{L_{t}^{2}L_{x}^{\infty}(G_{\beta}^{l_{2}}\times\R^{2})}}^{2}}^{1/2} \nonumber\\
&\lesssim 2^{l_{2}}\paren*{\sum_{G_{\beta}^{l_{2}}\subset G_{\alpha}^{i}} \|u_{l}\|_{L_{t}^{4}L_{x}^{\infty}(G_{\beta}^{l_{2}}\times\R^{2})}^{2} \|P_{\xi(t),l_{2}-2\leq\cdot\leq l_{2}+2}u\|_{L_{t}^{4}L_{x}^{\infty}(G_{\beta}^{l_{2}}\times\R^{2})}^{2}}^{1/2},
\end{align}
where we use another application of H\"{o}lder's inequality, Bernstein's lemma, and mass conservation to obtain the ultimate inequality. By lemma \ref{lem:BT_embed}, triangle inequality, and Bernstein's lemma, we have that
\begin{align}
\|u_{l}\|_{L_{t}^{4}L_{x}^{\infty}(G_{\beta}^{l_{2}}\times\R^{2})} \|P_{\xi(t),l_{2}-2\leq\cdot\leq l_{2}+2}u\|_{L_{t}^{4}L_{x}^{\infty}(G_{\beta}^{l_{2}}\times\R^{2})} \lesssim 2^{l_{2}/2}\|u\|_{\tilde{X}_{l_{2}}(G_{\beta}^{l_{2}}\times\R^{2})} \paren*{\sum_{l_{2}-2\leq l\leq l_{2}+2} 2^{l/2}\|P_{\xi(t),l}u\|_{L_{t,x}^{4}(G_{\beta}^{l_{2}}\times\R^{2})}}.
\end{align}
Taking the $\ell_{G_{\beta}^{l_{2}}}^{2}$ norm of the RHS and using the embedding $U_{\Delta}^{2}\subset L_{t,x}^{4}$, we see that
\begin{align}
&\paren*{\sum_{G_{\beta}^{l_{2}}\subset G_{\alpha}^{i}} \paren*{2^{l_{2}/2}\|u\|_{\tilde{X}_{l_{2}}(G_{\beta}^{l_{2}}\times\R^{2})} \paren*{\sum_{l_{2}-2\leq l\leq l_{2}+2} 2^{l/2}\|P_{\xi(t),l}u\|_{L_{t,x}^{4}(G_{\beta}^{l_{2}}\times\R^{2})}}}^{2}}^{1/2} \nonumber\\
&\phantom{=}\lesssim 2^{l_{2}}\|u\|_{\tilde{X}_{i}(G_{\alpha}^{i}\times\R^{2})} \sum_{l_{2}-2\leq l\leq l_{2}+2}\paren*{\sum_{G_{\beta}^{l}\subset G_{\alpha}^{i}} \|P_{\xi(G_{\beta}^{l}),l-2\leq\cdot\leq l+2}u\|_{U_{\Delta}^{2}(G_{\beta}^{l}\times\R^{2})}^{2}}^{1/2}.
\end{align}
Hence,
\begin{equation}
\begin{split}
\|F_{1,l_{2}}\|_{L_{t,x}^{2}(G_{\beta}^{l_{2}}\times\R^{2})} \lesssim 2^{2l_{2}}\|u\|_{\tilde{X}_{i}(G_{\alpha}^{i}\times\R^{2})} \sum_{l_{2}-2\leq l\leq l_{2}+2} \paren*{\sum_{G_{\beta}^{l}\subset G_{\alpha}^{i}} \|P_{\xi(G_{\beta}^{l}), l-2\leq\cdot\leq l+2}u\|_{U_{\Delta}^{2}(G_{\beta}^{l}\times\R^{2})}^{2}}^{1/2},
\end{split}
\end{equation}
and therefore by Cauchy-Schwarz,
\begin{align}
&\sum_{0\leq l_{2}\leq i-10} \paren*{2^{-l_{2}-i}\|v_{0}\|_{L^{2}}^{2} \|F_{1,l_{2}}\|_{L_{t,x}^{2}(G_{\alpha}^{i}\times\R^{2})}} \nonumber\\
&\phantom{=} \lesssim \|v_{0}\|_{L^{2}}^{2}\|u\|_{\tilde{X}_{i}(G_{\alpha}^{i}\times\R^{2})} \sum_{0\leq l_{2}\leq i-10} 2^{l_{2}-i} \sum_{l_{2}-2\leq l\leq l_{2}+2} \paren*{\sum_{G_{\beta}^{l}\subset G_{\alpha}^{i}} \|P_{\xi(G_{\beta}^{l}), l-2\leq\cdot\leq l+2}u\|_{U_{\Delta}^{2}(G_{\beta}^{l}\times\R^{2})}^{2}}^{1/2} \nonumber\\
&\phantom{=} \lesssim \|v_{0}\|_{L^{2}}^{2}\|u\|_{\tilde{X}_{i}(G_{\alpha}^{i}\times\R^{2})} \paren*{\sum_{0\leq l_{2}\leq i-8} 2^{l_{2}-i}\sum_{G_{\beta}^{l_{2}}\subset G_{\alpha}^{i}} \|P_{\xi(G_{\beta}^{l_{2}}),l_{2}-2\leq\cdot\leq l_{2}+2}u\|_{U_{\Delta}^{2}(G_{\beta}^{l_{2}}\times\R^{2})}^{2}}^{1/2} \nonumber\\
&\leq \|v_{0}\|_{L^{2}}^{2} \|u\|_{\tilde{X}_{i}(G_{\alpha}^{i}\times\R^{2})}^{2},
\end{align} 
where the ultimate inequality follows from the definition of the $X(G_{\alpha}^{i}\times\R^{2})$ norm. Thus, we have shown that
\begin{equation}
\begin{split}
&\sum_{0\leq l_{2}\leq i-10} \left(\sup_{k\leq l_{2}+2}\left(\sum_{a\in\Z^{2}}2^{k-2i}\left|\int_{G_{\alpha}^{i}}\int_{\R^{4}}\frac{(x-y)}{|x-y|}\cdot F_{1,l_{2}}(t,y)\Im{\bar{v}_{a,k}(\nabla-i\xi(t))v_{a,k}}(t,x)dxydt\right|\right)^{1/2}\right)^{2}\\
&\phantom{=} \lesssim \|v_{0}\|_{L^{2}}^{2}\|u\|_{\tilde{X}_{i}(G_{\alpha}^{i}\times\R^{2})}^{2},
\end{split}
\end{equation}
which is an acceptable estimate for the contribution $F_{1,l_{2}}$.

We now estimate the contributions of the remaining terms $F_{2,l_{2}}, F_{3,l_{2}}, F_{4,l_{2}}$. Hereafter, the mixed norm notation $L_{t}^{p}L_{x}^{q}$ is always taken over the spacetime slab $G_{\alpha}^{i}\times\R^{2}$. Taking absolute values and using Cauchy-Schwarz and Plancherel's theorem, we see that
\begin{equation}
\begin{split}
&2^{k-2i} \left|\int_{G_{\alpha}^{i}}\int_{\R^{4}}\frac{(x-y)}{|x-y|}\cdot \paren*{F_{2,l_{2}}+F_{3,l_{2}}+F_{4,l_{2}}}(t,y)\Im{\bar{v}_{a,k}(\nabla-i\xi(t))v_{a,k}}(t,x)dxdydt\right|\\
&\phantom{=} \lesssim 2^{k-i}\|F_{2,l_{2}}+F_{3,l_{2}}+F_{4,l_{2}}\|_{L_{t,x}^{1}} \|P_{Q_{a}^{k}}v_{0}\|_{L^{2}}^{2}.
\end{split}
\end{equation}
By Plancherel's theorem,
\begin{align}
&\sum_{0\leq l_{2}\leq i-10}\paren*{\sup_{k\leq l_{2}+2}\left(\sum_{a\in\Z^{2}} 2^{k-i}\|F_{2,l_{2}}+F_{3,l_{2}}+F_{4,l_{2}}\|_{L_{t,x}^{1}} \|P_{Q_{a}^{k}}v_{0}\|_{L^{2}}^{2}\right)^{1/2}}^{2} \lesssim \sum_{0\leq l_{2}\leq i-10}\left(2^{l_{2}-i}\|F_{2,l_{2}}+F_{3,l_{2}}+F_{4,l_{2}}\|_{L_{t,x}^{1}} \|v_{0}\|_{L^{2}}^{2}\right).
\end{align}
We now use the triangle inequality to break up the preceding expression into three terms, each of which we estimate separately.

We first estimate the $F_{4,l_{2}}$ term. By H\"{o}lder's inequality and Calder\'{o}n-Zygmund theorem,
\begin{equation}
\sum_{0\leq l_{2}\leq i-10} 2^{l_{2}-i} \|F_{4,l_{2}}\|_{L_{t,x}^{1}} \lesssim \sum_{0\leq l_{2}\leq i-10}2^{l_{2}-i} \|P_{\xi(t),\l_{2}-5\leq\cdot\leq i}u\|_{L_{t,x}^{4}}^{4} + \sum_{0\leq l_{2}\leq i-10} 2^{l_{2}-i} \|P_{\xi(t),\geq i}u\|_{L_{t,x}^{4}}^{4} \eqqcolon \mathrm{Term}_{1}+\mathrm{Term}_{2}.
\end{equation}
Lemma \ref{lem:lohi_embed} implies the $\mathrm{Term}_{2}$ estimate
\begin{equation}
\mathrm{Term}_{2} \lesssim \|u\|_{\tilde{X}_{i}(G_{\alpha}^{i}\times\R^{2})}^{4}.
\end{equation}
To estimate $\mathrm{Term}_{1}$, we first write $P_{\xi(t),\leq l_{2}-5\leq\cdot\leq i}u=\sum_{l_{3}=l_{2}-5}^{i}P_{\xi(t),l_{3}}u$ and then algebraically expand the $L_{t,x}^{4}$ norm as a sum to exploit almost orthogonality between the Littlewood-Paley projections. We then apply the triangle inequality, Plancherel's theorem (which tells us that the two highest frequencies relative to $\xi(t)$ are comparable), followed by H\"{o}lder's inequality to obtain
\begin{align}
&\sum_{0\leq l_{2}\leq i-10} 2^{l_{2}-i} \|P_{\xi(t),l_{2}-5\leq\cdot\leq i}u\|_{L_{t,x}^{4}}^{4} \nonumber\\
&\phantom{=} \lesssim \sum_{0\leq l_{2}\leq i-10}2^{l_{2}-i}\sum_{{l_{2}-5\leq j_{4}\leq j_{3}\leq j_{2}\leq j_{1} \leq i}\atop {|j_{1}-j_{2}|\leq 3}} \|P_{\xi(t), j_{4}}u\|_{L_{t}^{\infty}L_{x}^{2}} \|P_{\xi(t), j_{3}}u\|_{L_{t}^{3}L_{x}^{6}} \|P_{\xi(t),j_{2}}u\|_{L_{t}^{3}L_{x}^{6}} \|P_{\xi(t), j_{1}}u\|_{L_{t}^{3}L_{x}^{6}} \nonumber\\
&\phantom{=} \lesssim \sum_{{0\leq j_{4}\leq j_{3}\leq j_{2}\leq j_{1}\leq i}\atop {|j_{1}-j_{2}|\leq 3}} 2^{j_{4}-i} \|P_{\xi(t), j_{4}}u\|_{L_{t}^{\infty}L_{x}^{2}} \|P_{\xi(t), j_{3}}u\|_{L_{t}^{3}L_{x}^{6}} \|P_{\xi(t),j_{2}}u\|_{L_{t}^{3}L_{x}^{6}} \|P_{\xi(t), j_{1}}u\|_{L_{t}^{3}L_{x}^{6}},
\end{align}
where the ultimate line follows from interchanging the order of the $l_{2}$ summation and the $j_{4}$ summation. By mass conservation, the above is
\begin{align}
&\lesssim \sum_{{0\leq j_{3}\leq j_{2}\leq j_{1}\leq i}\atop{|j_{1}-j_{2}|\leq 3}} 2^{j_{3}-i}\|P_{\xi(t), j_{3}}u\|_{L_{t}^{3}L_{x}^{6}} \|P_{\xi(t),j_{2}}u\|_{L_{t}^{3}L_{x}^{6}} \|P_{\xi(t), j_{1}}u\|_{L_{t}^{3}L_{x}^{6}} \nonumber\\
&=\sum_{-3\leq j\leq 3} \sum_{0\leq j_{3}\leq j_{2}\leq i} 2^{j_{3}-i}\|P_{\xi(t), j_{3}}u\|_{L_{t}^{3}L_{x}^{6}} \|P_{\xi(t),j_{2}}u\|_{L_{t}^{3}L_{x}^{6}} \|P_{\xi(t), j_{2}+j}u\|_{L_{t}^{3}L_{x}^{6}} \nonumber\\
&\leq \sum_{-3\leq j\leq 3}\left(\sum_{0\leq j_{2}\leq i}\left(\sum_{0\leq j_{3}\leq j_{2}\leq i} 2^{j_{3}-\frac{j_{2}}{3}-\frac{2i}{3}}\|P_{\xi(t), j_{3}}u\|_{L_{t}^{3}L_{x}^{6}} \|P_{\xi(t),j_{2}}u\|_{L_{t}^{3}L_{x}^{6}}\right)^{3/2}\right)^{2/3}\left(\sum_{0\leq j_{2}\leq i} 2^{j_{2}-i}\|P_{\xi(t),j_{2}+j}u\|_{L_{t}^{3}L_{x}^{6}}^{3}\right)^{1/3}, 
\end{align}
where we use Holder's inequality in $j_{2}$ to obtain the ultimate inequality. By Young's inequality on $\ell_{j_{2}}^{3}$ applied to the kernel $K(j_{3},j_{2}) := 2^{2(j_{3}-j_{2})/3}1_{\geq 0}(j_{2}-j_{3})$,
\begin{equation}
\left(\sum_{0\leq j_{2}\leq i}\left(\sum_{0\leq j_{3}\leq j_{2}}2^{j_{3}-\frac{2j_{2}}{3}-\frac{i}{3}}\|P_{\xi(t),j_{3}}u\|_{L_{t}^{3}L_{x}^{6}}\right)^{3}\right)^{1/3} \lesssim \left(\sum_{0\leq j_{2}\leq i} 2^{j_{2}-i}\|P_{\xi(t),j_{2}}u\|_{L_{t}^{3}L_{x}^{6}}^{3}\right)^{1/3},
\end{equation}
and so by H\"{o}lder's inequality applied to the $\ell_{j_{2}}^{3/2}$ norm,
\begin{equation}
\begin{split}
\left(\sum_{0\leq j_{2}\leq i}\left(\sum_{0\leq j_{3}\leq j_{2}} 2^{j_{3}-\frac{j_{2}}{3}-\frac{2i}{3}}\|P_{\xi(t), j_{3}}u\|_{L_{t}^{3}L_{x}^{6}} \|P_{\xi(t),j_{2}}u\|_{L_{t}^{3}L_{x}^{6}}\right)^{3/2}\right)^{2/3}\lesssim \left(\sum_{0\leq j_{2}\leq i} 2^{j_{2}-i} \|P_{\xi(t),j_{2}}u\|_{L_{t}^{3}L_{x}^{6}}^{3}\right)^{2/3}.
\end{split}
\end{equation}
Using the embedding $U_{\Delta}^{2}\subset L_{t}^{3}L_{x}^{6}$, we see that
\begin{align}
\sum_{0\leq j_{1}\leq i} 2^{j_{1}-i} \|P_{\xi(t),j_{1}}u\|_{L_{t}^{3}L_{x}^{6}(G_{\alpha}^{i}\times\R^{2})}^{3} &= \sum_{0\leq j_{1}\leq i}2^{j_{1}-i} \sum_{G_{\beta}^{j_{1}}\subset G_{\alpha}^{i}} \|P_{\xi(t),j_{1}}u\|_{L_{t}^{3}L_{x}^{6}(G_{\beta}^{j_{1}}\times\R^{2})}^{3} \nonumber\\
&\lesssim \sum_{0\leq j_{1}\leq i}\sum_{G_{\beta}^{j_{1}}\subset G_{\alpha}^{i}} \|P_{\xi(G_{\beta}^{j_{1}}), j_{1}-2\leq\cdot\leq j_{1}+2}u\|_{U_{\Delta}^{2}(G_{\beta}^{j_{1}}\times\R^{2})}^{3} \nonumber\\
&\lesssim \|u\|_{\tilde{X}_{i}(G_{\alpha}^{i}\times\R^{2})}\sum_{0\leq j_{1}\leq i}2^{j_{1}-i}\sum_{G_{\beta}^{j_{1}}\subset G_{\alpha}^{i}}  \|P_{\xi(G_{\beta}^{j_{1}}), j_{1}-2\leq\cdot\leq j_{1}+2}u\|_{U_{\Delta}^{2}(G_{\beta}^{j_{1}}\times\R^{2})}^{2} \nonumber\\
&\lesssim \|u\|_{\tilde{X}_{i}(G_{\alpha}^{i}\times\R^{2})}^{3},
\end{align}
where we use the definition of the $\tilde{X}_{i}(G_{\alpha}^{i}\times\R^{2})$ norm to obtain the last two inequalities. Hence, we have shown that $\mathrm{Term}_{1} \lesssim \|u\|_{\tilde{X}_{i}(G_{\alpha}^{i}\times\R^{2})}^{3}$, and we conclude that
\begin{equation}
\sum_{0\leq l_{2}\leq i-10}2^{l_{2}-i}\|F_{4,l_{2}}\|_{L_{t,x}^{1}}\|v_{0}\|_{L^{2}}^{2} \lesssim \paren*{1+\|u\|_{\tilde{X}_{i}(G_{\alpha}^{i}\times\R^{2})}}\|u\|_{\tilde{X}_{i}(G_{\alpha}^{i}\times\R^{2})}^{3}\|v_{0}\|_{L^{2}}^{2}.
\end{equation}

We next estimate the $F_{2,l_{2}}$ term. As all the other cases are strictly easier or follow by completely analogous arguments, we only present the details for the case where both factors $u_{l}$ fall inside the argument of the operator $\E$. Thus, to estimate $\|F_{2,l_{2}}\|_{L_{t,x}^{1}}$, it suffices for us to estimate the quantity
\begin{equation}
\|\Im{(\ol{P_{\xi(t),\leq l_{2}}u_{h}})P_{\xi(t),\leq l_{2}}\brak*{\E(|u_{l}|^{2})u_{h})}}\|_{L_{t,x}^{1}}.
\end{equation}

Recall from subsection \ref{ssec:prelim_ha} that the operator $\E$ admits the decomposition
\begin{equation}
\E = cId+(W_{\Omega}\ast \cdot),
\end{equation}
where $c\in\R$ and $W_{\Omega}$ is the principal value distribution
\begin{equation}
\ipp{W_{\Omega},f} = \lim_{\epsilon\rightarrow 0}\int_{|x|\geq \epsilon} \frac{\Omega(x/|x|)}{|x|^{2}}f(x)dx.
\end{equation}
By the triangle inequality,
\begin{align}
\|\Im{ (\ol{P_{\xi(t),\leq l_{2}}u_{h}})P_{\xi(t),\leq l_{2}}\paren*{\E(|u_{l}|^{2})u_{h}}}\|_{L_{t,x}^{1}} &\lesssim \|(\ol{P_{\xi(t),\leq l_{2}}u_{h}})P_{\xi(t),\leq l_{2}}\paren*{|u_{l}|^{2}u_{h}}\|_{L_{t,x}^{1}} \nonumber\\
&\phantom{=} +\|\Im{(\ol{P_{\xi(t),\leq l_{2}}u_{h}})P_{\xi(t),\leq l_{2}}\paren*{(W_{\Omega}\ast |u_{l}|^{2})u_{h}}}\|_{L_{t,x}^{1}} \nonumber\\
&\eqqcolon \mathrm{Term}_{1}+\mathrm{Term}_{2}.
\end{align}
Consider $\mathrm{Term}_{2}$. Let $0\leq\chi\leq 1$ be a $C^{\infty}$ bump function which is supported on the ball $B(0,2)$ and identically one on the ball $B(0,1)$, and write
\begin{align}
(W_{\Omega}\ast |u_{l}|^{2})(x) &= \lim_{\epsilon\rightarrow 0}\int_{|y|\geq\epsilon} \chi(2^{l_{2}}y)\mathcal{K}(y)|u_{l}(x-y)|^{2}dy + \int_{\R^{2}}\left(1-\chi(2^{l_{2}}y)\right) \mathcal{K}(y)|u_{l}(x-y)|^{2}dy \nonumber\\
&\eqqcolon (\mathcal{K}_{loc}\ast |u_{l}|^{2})(x) + (\mathcal{K}_{glob}\ast |u_{l}|^{2})(x), \qquad \forall x\in\R^{2},
\end{align}
where we have introduced the notation $\mathcal{K}(x) \coloneqq \frac{\Omega(x/|x|)}{|x|^{2}}$ above. Note that since $u_{l}$ is $C^{\infty}$, the limit defining $\K_{loc}\ast|u_{l}|^{2}$ exists pointwise everywhere. Now substituting this decomposition into $\mathrm{Term}_{2}$ and using that $\mathcal{K}$ is real-valued to add zero, we obtain the pointwise identity
\begin{equation}
\begin{split}
\Im{(\ol{P_{\xi(t),\leq l_{2}}u_{h}})P_{\xi(t),\leq l_{2}}\paren*{(W_{\Omega}\ast |u_{l}|^{2})u_{h}}} &= \Im{(\ol{(P_{\xi(t),\leq l_{2}}u_{h}}) \comm{P_{\xi(t),\leq l_{2}}}{(\mathcal{K}_{loc}\ast |u_{l}|^{2})}u_{h}} \\
&\phantom{=}+\Im{(\ol{P_{\xi(t),\leq l_{2}}u_{h}}) \comm{P_{\xi(t),\leq l_{2}}}{(\mathcal{K}_{glob}\ast |u_{l}|^{2})}u_{h}}.
\end{split}
\end{equation}
By the standard commutator argument,
\begin{align}
\paren*{\comm{P_{\xi(t),\leq l_{2}}}{(\mathcal{K}_{loc}\ast |u_{l}|^{2})}u_{h}}(x) &= \int_{0}^{1}\int_{\R^{2}}2^{2l_{2}}\phi^{\vee}(2^{l_{2}}y)(-y)\cdot e^{-i(x-y)\cdot\xi(t)} (\tau_{y}u_{h})(x) \paren*{\mathcal{K}_{loc}\ast \nabla (|u_{l}|^{2})}(x-\theta y)dy d\theta.
\end{align}
Since $\K_{loc}(z)=\K(z)$ for $|z|\leq 2^{-l_{2}-1}$, we can write
\begin{align}
\paren*{\K_{loc}\ast \nabla(|u_{l}|^{2})}(x-\theta y) &= \int_{|z|>2^{-l_{2}-1}} \chi(2^{l_{2}}z)\K(z)(\nabla |u_{l}|^{2})(x-\theta y-z)dz  \nonumber\\
&\phantom{=} + \lim_{\epsilon\rightarrow 0} \int_{\epsilon\leq |z|\leq 2^{-l_{2}-1}} \K(z) (\nabla |u_{l}|^{2})(x-\theta y-z)dz \nonumber\\
&\eqqcolon \paren*{\K_{loc,1} \ast \nabla(|u_{l}|^{2})}(x-\theta y) + \paren*{\K_{loc,2}\ast \nabla (|u_{l}|^{2})}(x-\theta y).
\end{align}
By dilation invariance, $\|1_{>2^{-l_{2}-1}}\K_{loc}\|_{L^{1}(\R^{2})} \lesssim 1$. Therefore by the reproducing identity $\nabla(|u_{l}|^{2}) = (\nabla P_{\leq l_{2}})(|u_{l}|^{2})$, two applications of Minkowski's inequality, followed by Cauchy-Schwarz, we see that
\begin{align}
\|\Im{(\ol{P_{\xi(t),\leq l_{2}}u_{h}}) \comm{P_{\xi(t),\leq l_{2}}}{(\mathcal{K}_{loc,1}\ast |u_{l}|^{2})}u_{h}}\|_{L_{t,x}^{1}} &\lesssim 2^{-l_{2}}\int_{0}^{1}d\theta \sup_{y\in\R^{2}} \|(P_{\xi(t),\leq l_{2}}u_{h}) (\tau_{y}P_{\xi(t),\leq l_{2}+1}u_{h}) \tau_{\theta y}(P_{\leq l_{2}}\nabla |u_{l}|^{2}) \|_{L_{t,x}^{1}} \nonumber\\
&\lesssim \sup_{y\in\R^{2}} \|(P_{\xi(t),\leq l_{2}+1}u_{h})(\tau_{y}u_{l})\|_{L_{t,x}^{2}}^{2}.
\end{align}
Since $\Omega$ has mean-value zero on $\S^{1}$, $\int_{r_{1}<|z|<r_{2}}\K(z)dz=0$ for any $0<r_{1}<r_{2}<\infty$. Hence, we can write
\begin{align}
\paren*{\K_{loc,2}\ast \nabla(|u_{l}|^{2})}(x-\theta y) &= \lim_{\epsilon\rightarrow 0}\int_{\epsilon\leq |z| \leq 2^{-l_{2}-1}} \K(z) \paren*{(\nabla |u_{l}|^{2})(x-\theta y-z) - (\nabla |u_{l}|^{2})(x-\theta y)}dz \nonumber\\
&= \lim_{\epsilon\rightarrow 0} \int_{\epsilon \leq |z| \leq 2^{-l_{2}-1}} \K(z)\paren*{\int_{0}^{1} (\nabla^{2} |u_{l}|^{2})(x-\theta y-\theta'z) \cdot (-z)d\theta'}dz,
\end{align}
where the ultimate equality follows from the fundamental theorem of calculus. Using that $|\K(z)| \lesssim |z|^{-2}$, we see from Minkowski's inequality and dominated convergence that
\begin{align}
&\left|\lim_{\epsilon\rightarrow 0} \int_{\epsilon \leq |z| \leq 2^{-l_{2}-1}} \K(z)\paren*{\int_{0}^{1} (\nabla^{2} |u_{l}|^{2})(x-\theta y-\theta' z)\cdot (-z)d\theta'}dz\right| \nonumber\\
&\phantom{=} \lesssim \int_{0}^{1}\int_{|z|\leq 2^{-l_{2}-1}} \frac{1}{|z|} |(P_{\leq l_{2}}\nabla^{2}|u_{l}|^{2})(x-\theta y-\theta' z)|dzd\theta
\end{align}
Since $\|1_{\leq 2^{-l_{2}-1}}|z|^{-1}\|_{L^{1}(\R^{2})} \lesssim 2^{-l_{2}}$, we see again from Minkowski's inequality together with Cauchy-Schwarz that
\begin{align}
&\|\Im{(\ol{P_{\xi(t),\leq l_{2}}u_{h}}) \comm{P_{\xi(t),\leq l_{2}}}{(\mathcal{K}_{loc,2}\ast |u_{l}|^{2})}u_{h}}\|_{L_{t,x}^{1}} \nonumber\\
&\phantom{=} \lesssim 2^{-2l_{2}}\int_{0}^{1}\int_{0}^{1}d\theta d\theta' \sup_{y,z\in\R^{2}} \|(P_{\xi(t),\leq l_{2}}u_{h})(\tau_{y}P_{\xi(t),\leq l_{2}+1}u_{h})\tau_{\theta y}\tau_{\theta' z} (P_{\leq l_{2}}\nabla^{2}|u_{l}|^{2})\|_{L_{t,x}^{1}} \nonumber\\
&\phantom{=} \lesssim \sup_{y\in\R^{2}} \|(P_{\xi(t),\leq l_{2}+1}u_{h})(\tau_{y}u_{l})\|_{L_{t,x}^{2}}^{2}
\end{align}
By the same commutator argument used above,
\begin{align}
\paren*{\comm{P_{\xi(t),\leq l_{2}}}{(\K_{glob}\ast |u_{l}|^{2})}u_{h}}(x) &= \int_{0}^{1}\int_{\R^{2}}2^{2l_{2}}\phi^{\vee}(2^{l_{2}}y)(-y)\cdot e^{-i(x-y)\cdot\xi(t)}u_{h}(x-y)(\nabla \K_{glob}\ast |u_{l}|^{2})(x-\theta y)dyd\theta.
\end{align}
Since $|(\nabla \K_{glob})(z)|\lesssim |z|^{-3}$ for $|z|\gtrsim 2^{-l_{2}}$, dilation invariance implies that $\|\nabla \K_{glob}\|_{L^{1}} \lesssim 2^{l_{2}}$. Hence by two applications of Minkowski's inequality followed by Cauchy-Schwarz, we see that
\begin{equation}
\| \Im{(\ol{P_{\xi(t),\leq l_{2}}u_{h}})\comm{P_{\xi(t),\leq l_{2}}}{(\mathcal{K}_{glob}\ast |u_{l}|^{2})}u_{h}}\|_{L_{t,x}^{1}} \lesssim \sup_{y\in\R^{2}} \|(P_{\xi(t),\leq l_{2}+1}u_{h})(\tau_{y}u_{l})\|_{L_{t,x}^{2}}^{2}.
\end{equation}
Therefore,
\begin{equation}
\mathrm{Term}_{1}+\mathrm{Term}_{2} \lesssim \sup_{y\in\R^{2}} \|(P_{\xi(t),\leq l_{2}+1}u_{h})(\tau_{y}u_{l})\|_{L_{t,x}^{2}}^{2}.
\end{equation}
In the sequel, we suppress the spatial translation $\tau_{y}$ in the preceding expression, as all of our estimates are uniform in the parameter $y$.

Next, observe that
\begin{align}
&\sum_{0\leq l_{2}\leq i-10} 2^{l_{2}-i}\|(P_{\xi(t),\leq l_{2}+1}u_{h})u_{l}\|_{L_{t,x}^{2}}^{2} \nonumber\\
&\phantom{=} \lesssim \sum_{0\leq l_{2}\leq i-10} 2^{l_{2}-i}\|(P_{\xi(t),\leq l_{2}+1}u_{h}) (P_{\xi(t),\leq l_{2}-15}u)\|_{L_{t,x}^{2}}^{2} + \sum_{0\leq l_{2}\leq i-10}2^{l_{2}-i}\|(P_{\xi(t),\leq l_{2}+1}u_{h})(P_{\xi(t), l_{2}-15<\cdot\leq l_{2}-5}u)\|_{L_{t,x}^{2}}^{2} \nonumber\\
&\phantom{=} \eqqcolon \mathrm{Term}_{1}+\mathrm{Term}_{2}.
\end{align}

To estimate $\mathrm{Term}_{2}$, we use triangle and H\"{o}lder's inequality to obtain
\begin{align}
\|(P_{\xi(t),\leq l_{2}+1}u_{h})(P_{\xi(t), l_{2}-15<\cdot\leq l_{2}-5}u)\|_{L_{t,x}^{2}(G_{\alpha}^{i}\times\R^{2})}^{2} &=\sum_{G_{\beta}^{l_{2}}\subset G_{\alpha}^{i}} \|(P_{\xi(t),\leq l_{2}+1}u_{h})(P_{\xi(t), l_{2}-15<\cdot\leq l_{2}-5}u)\|_{L_{t,x}^{2}(G_{\beta}^{l_{2}}\times\R^{2})}^{2} \nonumber\\
&\lesssim \sum_{{l_{2}-5\leq l_{3}\leq l_{2}+1}\atop {l_{2}-15\leq l_{4}\leq l_{2}-5}} \|(P_{\xi(t),l_{3}}u)(P_{\xi(t),l_{4}}u)\|_{L_{t,x}^{2}(G_{\beta}^{l_{2}}\times\R^{2})}^{2} \nonumber\\
&\leq \sum_{G_{\beta}^{l_{2}}\subset G_{\alpha}^{i}} \sum_{{l_{2}-5\leq l_{3}\leq l_{2}}\atop{l_{2}-15\leq l_{4}\leq l_{2}-5}} \|P_{\xi(t),l_{3}}u\|_{L_{t,x}^{4}(G_{\beta}^{l_{2}}\times\R^{2})}^{2} \|P_{\xi(t),l_{4}}u\|_{L_{t,x}^{4}(G_{\beta}^{l_{2}}\times\R^{2})}^{2} \nonumber\\
&\phantom{=} + \sum_{G_{\beta}^{l_{2}}\subset G_{\alpha}^{i}}\sum_{l_{2}-15\leq l_{4}\leq l_{2}-5} \|P_{\xi(t),l_{2}+1}u\|_{L_{t,x}^{4}(G_{\beta}^{l_{2}}\times\R^{2})}^{2} \|P_{\xi(t),l_{4}}u\|_{L_{t,x}^{4}(G_{\beta}^{l_{2}}\times\R^{2})}^{2}.
\end{align}
By lemma \ref{lem:lohi_embed} and that $2^{l_{4}}\sim 2^{l_{2}}$,
\begin{equation}
\begin{split}
&\sum_{G_{\beta}^{l_{2}}\subset G_{\alpha}^{i}}\sum_{l_{2}-15\leq l_{4}\leq l_{2}-5} \|P_{\xi(t),l_{2}+1}u\|_{L_{t,x}^{4}(G_{\beta}^{l_{2}}\times\R^{2})}^{2} \|P_{\xi(t),l_{4}}u\|_{L_{t,x}^{4}(G_{\beta}^{l_{2}}\times\R^{2})}^{2} \\
&\phantom{=} \lesssim \|u\|_{\tilde{X}_{i}(G_{\alpha}^{i}\times\R^{2})}^{2}\sum_{G_{\beta}^{l_{2}}\subset G_{\alpha}^{i}} \|P_{\xi(t),l_{2}+1}u\|_{L_{t,x}^{4}(G_{\beta}^{l_{2}}\times\R^{2})}^{2}.
\end{split}
\end{equation}
Now let $G_{\gamma(\beta)}^{l_{2}+1}$ be the unique parent of $G_{\beta}^{l_{2}}$ so that by Bernstein's lemma and the embedding $U_{\Delta}^{2}\subset L_{t,x}^{4}$,
\begin{equation}
\|P_{\xi(t),l_{2}+1}u\|_{L_{t,x}^{4}(G_{\beta}^{l_{2}}\times\R^{2})} \lesssim \|P_{\xi(G_{\gamma(\beta)}^{l_{2}}),l_{2}-1\leq\cdot\leq l_{2}+3}u\|_{U_{\Delta}^{2}(G_{\gamma(\beta)}^{l_{2}+1}\times\R^{2})}.
\end{equation}
Now by a change of variable, we conclude that
\begin{align}
\sum_{0\leq l_{2}\leq i-10}2^{l_{2}-i}  \|u\|_{\tilde{X}_{i}(G_{\alpha}^{i}\times\R^{2})}^{2}\sum_{G_{\beta}^{l_{2}}\subset G_{\alpha}^{i}} \|P_{\xi(t),l_{2}+1}u\|_{L_{t,x}^{4}(G_{\beta}^{l_{2}}\times\R^{2})}^{2}  \lesssim \|u\|_{\tilde{X}_{i}(G_{\alpha}^{i}\times\R^{2})}^{4}.
\end{align}
Next, again using lemma \ref{lem:lohi_embed} and that $2^{l_{4}}\sim 2^{l_{2}}$, we see that
\begin{equation}
\begin{split}
&\sum_{G_{\beta}^{l_{2}}\subset G_{\alpha}^{i}} \sum_{{l_{2}-5\leq l_{3}\leq l_{2}}\atop{l_{2}-15\leq l_{4}\leq l_{2}-5}} \|P_{\xi(t),l_{3}}u\|_{L_{t,x}^{4}(G_{\beta}^{l_{2}}\times\R^{2})}^{2} \|P_{\xi(t),l_{4}}u\|_{L_{t,x}^{4}(G_{\beta}^{l_{2}}\times\R^{2})}^{2} \\
&\phantom{=} \lesssim \|u\|_{\tilde{X}_{i}(G_{\alpha}^{i}\times\R^{2})}^{2}\sum_{G_{\beta}^{l_{2}}\subset G_{\alpha}^{i}} \sum_{l_{2}-5\leq l_{3}\leq l_{2}} \|P_{\xi(t),l_{3}}u\|_{L_{t,x}^{4}(G_{\beta}^{l_{2}}\times\R^{2})}^{2}.
\end{split}
\end{equation}
Using the embedding $\ell^{2}\subset \ell^{4}$, we see that
\begin{align}
\|P_{\xi(t),l_{3}}u\|_{L_{t,x}^{4}(G_{\beta}^{l_{2}}\times\R^{2})}^{2} = \paren*{\sum_{G_{\gamma}^{l_{3}}\subset G_{\beta}^{l_{2}}} \|P_{\xi(t),l_{3}}u\|_{L_{t,x}^{4}(G_{\gamma}^{l_{3}}\times\R^{2})}^{4}}^{1/2} &\leq \sum_{G_{\gamma}^{l_{3}}\subset G_{\beta}^{l_{2}}} \|P_{\xi(t),l_{3}}u\|_{L_{t,x}^{4}(G_{\gamma}^{l_{3}}\times\R^{2})}^{2} \nonumber\\
&\lesssim \sum_{G_{\gamma}^{l_{3}}\subset G_{\beta}^{l_{2}}} \|P_{\xi(G_{\gamma}^{l_{3}}),l_{3}-2\leq\cdot\leq l_{3}+2}u\|_{U_{\Delta}^{2}(G_{\gamma}^{l_{3}}\times\R^{2})}^{2},
\end{align}
where we use Bernstein's lemma followed by the embedding $U_{\Delta}^{2}\subset L_{t,x}^{4}$ to obtain the ultimate inequality. It now follows from $2^{l_{3}}\sim 2^{l_{2}}$ and a change of variable that
\begin{equation}
\sum_{0\leq l_{2}\leq i-10}2^{l_{2}-i}\|u\|_{\tilde{X}_{i}(G_{\alpha}^{i}\times\R^{2})}^{2}\sum_{G_{\beta}^{l_{2}}\subset G_{\alpha}^{i}} \sum_{l_{2}-5\leq l_{3}\leq l_{2}} \|P_{\xi(t),l_{3}}u\|_{L_{t,x}^{4}(G_{\beta}^{l_{2}}\times\R^{2})}^{2} \lesssim \|u\|_{\tilde{X}_{i}(G_{\alpha}^{i}\times\R^{2})}^{4}.
\end{equation}
Hence, we conclude that $\mathrm{Term}_{2}\lesssim \|u\|_{\tilde{X}_{i}(G_{\alpha}^{i}\times\R^{2})}^{4}$.

To estimate $\mathrm{Term}_{1}$, we proceed similarly to as above, obtaining that
\begin{align}
\mathrm{Term}_{1} &\lesssim \sum_{0\leq l_{2}\leq i-10}2^{l_{2}-i}\sum_{l_{2}-5<l_{3}\leq l_{2}+1}\sum_{G_{\beta}^{l_{3}}\subset G_{\alpha}^{i}} \|(P_{\xi(t), l_{3}}u)(P_{\xi(t),\leq l_{2}-18}u)\|_{L_{t,x}^{2}(G_{\beta}^{l_{3}}\times\R^{2})}^{2} \nonumber\\
&\lesssim \sum_{0\leq l_{2}< 25}2^{l_{2}-i}\sum_{l_{2}-5<l_{3}\leq l_{2}+1}\sum_{G_{\beta}^{l_{3}}\subset G_{\alpha}^{i}} \sup_{z\in\R^{2}} \|(P_{\xi(G_{\beta}^{l_{3}}), l_{3}-2\leq\cdot\leq l_{3}+2}\tau_{z}u)(P_{\xi(t), \leq l_{2}-18}u)\|_{L_{t,x}^{2}(G_{\beta}^{l_{3}}\times\R^{2})}^{2} \nonumber\\
&\phantom{=} +\sum_{25\leq l_{2}\leq i-10}2^{l_{2}-i}\sum_{l_{2}-5<l_{3}\leq l_{2}+1}\sum_{G_{\beta}^{l_{3}}\subset G_{\alpha}^{i}} \sup_{z\in\R^{2}} \|(P_{\xi(G_{\beta}^{l_{3}}), l_{3}-2\leq\cdot\leq l_{3}+2}\tau_{z}u)(P_{\xi(t), \leq l_{2}-18}u)\|_{L_{t,x}^{2}(G_{\beta}^{l_{3}}\times\R^{2})}^{2} \nonumber\\
&\eqqcolon \mathrm{Term}_{1,1} + \mathrm{Term}_{1,2}.
\end{align}
To estimate $\mathrm{Term}_{1,1}$, we use H\"{o}lder's inequality and lemma \ref{lem:lohi_embed} to obtain the estimate $\mathrm{Term}_{1,1} \lesssim \|u\|_{\tilde{X}_{i}(G_{\alpha}^{i}\times\R^{2})}^{4}$. To estimate $\mathrm{Term}_{1,2}$, we apply proposition \ref{prop:ibs_2}, with the indices $l_{2}$ and $i$ in the statement of the proposition replaced by $l_{2}-18$ and $l_{3}$, respectively, to the atoms of $P_{\xi(G_{\beta}^{l_{3}}),l_{3}-2\leq\cdot\leq l_{3}+2}\tau_{z}u$ together with the spatial translation invariance of the $U_{\Delta}^{2}$ norm, to obtain that
\begin{equation}
\sup_{z\in\R^{2}} \|(P_{\xi(G_{\beta}^{l_{3}}), l_{3}-2\leq\cdot\leq l_{3}+2}\tau_{z}u)(P_{\xi(t), \leq l_{2}-18}u)\|_{L_{t,x}^{2}(G_{\beta}^{l_{3}}\times\R^{2})}^{2} \lesssim_{u} \|P_{\xi(G_{\beta}^{l_{3}}),l_{3}-2\leq\cdot\leq l_{3}+2}u\|_{U_{\Delta}^{2}(G_{\beta}^{l_{3}}\times\R^{2})}^{2}\paren*{1+\|u\|_{\tilde{X}_{i}(G_{\alpha}^{i}\times\R^{2})}^{4}}.
\end{equation}
Hence by interchanging the order of the $l_{2}$ and $l_{3}$ summations, using that $2^{l_{2}}\sim 2^{l_{3}}$, and using the definition of the $X(G_{\alpha}^{i}\times\R^{2})$ norm, we see that
\begin{equation}
\begin{split}
&\sum_{0\leq l_{2}\leq i-10}\sum_{l_{2}-5\leq l_{3}\leq l_{2}+1} 2^{l_{3}-i} \sum_{G_{\beta}^{l_{3}}\subset G_{\alpha}^{i}} \|P_{\xi(G_{\beta}^{l_{3}}), l_{3}-2\leq\cdot\leq l_{3}+2}u\|_{U_{\Delta}^{2}(G_{\beta}^{l_{3}}\times\R^{2})}^{2} \paren*{1+\|u\|_{\tilde{X}_{i}(G_{\alpha}^{i}\times\R^{2})}^{4}}\\
&\phantom{=} \lesssim \|u\|_{\tilde{X}_{i}(G_{\alpha}^{i}\times\R^{2})}^{2} \paren*{1+\|u\|_{\tilde{X}_{i}(G_{\alpha}^{i}\times\R^{2})}^{4}}.
\end{split}
\end{equation}
Thus, we have shown that $\mathrm{Term}_{1,2}\lesssim 1+\|u\|_{\tilde{X}_{i}(G_{\alpha}^{i}\times\R^{2})}^{6}$, and therefore $\mathrm{Term}_{1} \lesssim 1+\|u\|_{\tilde{X}_{i}(G_{\alpha}^{i}\times\R^{2})}^{6}$.

Combining the estimates for $\mathrm{Term}_{1}$ and $\mathrm{Term}_{2}$, we have shown that
\begin{equation}
\sum_{0\leq l_{2}\leq i-10} 2^{l_{2}-i} \|v_{0}\|_{L^{2}}^{2} \|F_{2,l_{2}}\|_{L_{t,x}^{1}} \lesssim \|v_{0}\|_{L^{2}}^{2}\paren*{1+\|u\|_{\tilde{X}_{i}(G_{\alpha}^{i}\times\R^{2})}^{6}}.
\end{equation}

Lastly, by interpolating between the estimates for $\|F_{2,l_{2}}\|_{L_{t,x}^{1}}$ and $\|F_{4,l_{2}}\|_{L_{t,x}^{1}}$, it follows that
\begin{equation}
\sum_{0\leq l_{2}\leq i-10} 2^{l_{2}-i} \|v_{0}\|_{L^{2}}^{2} \|F_{3,l_{2}}\|_{L_{t,x}^{1}} \lesssim \|v_{0}\|_{L^{2}}^{2}\paren*{1+\|u\|_{\tilde{X}_{i}(G_{\alpha}^{i}\times\R^{2})}^{6}}.
\end{equation}
We omit the details.

Bookkeeping the estimates for the contributions of $F_{2,l_{2}},F_{3,l_{2}},F_{4,l_{2}}$, we finally conclude that
\begin{equation}
\begin{split}
&\sum_{0\leq l_{2}\leq i-10}\left(\sup_{k\leq l_{2}+2}\left(\sum_{a\in\Z^{2}}2^{k-2i} \left|\int_{G_{\alpha}^{i}}\int_{\R^{4}}\frac{(x-y)}{|x-y|}\cdot \sum_{m=2}^{4}F_{m,l_{2}}(t,y)\Im{\bar{v}_{a,k}(\nabla-i\xi(t))v_{a,k}}(t,x)dxdydt\right|\right)^{1/2}\right)^{2}\\
&\phantom{=} \lesssim \|v_{0}\|_{L^{2}}^{2}\paren*{1+\|u\|_{\tilde{X}_{i}(G_{\alpha}^{i}\times\R^{2})}^{6}}.
\end{split}
\end{equation}

With this last estimate we have shown that
\begin{equation}
|\eqref{eq:ibs3_err3}| \lesssim \|v_{0}\|_{L^{2}}^{2} \paren*{1+\|u\|_{\tilde{X}_{i}(G_{\alpha}^{i}\times\R^{2})}^{6}},
\end{equation}
which completes the proof of the proposition \ref{prop:ibs_3}.
\end{description}
\end{proof}

\section{Rigidity: Rapid frequency cascade}\label{sec:RFC}
In this section, we preclude the rapid frequency cascade scenario $\int_{0}^{\infty}N(t)^{3}dt<\infty$ in which the energy is migrating from high to low frequencies as time progresses. We follow the argument of \cite{Dodson2016} and the earlier works \cite{Tao2006}, \cite{Killip2008}, and \cite{Killip2009} by showing that the hypothesis $\int_{0}^{\infty}N(t)^{3}dt=K<\infty$ forces the solution $u$ to have additional regularity--specifically, $u\in C_{t}^{0}H_{x}^{3}([0,\infty)\times\R^{2})$. We then use this additional regularity to derive a contradiction from the conservation of energy for $H^{1}$-solutions, concluding that there the rapid frequency cascade scenario does not occur for admissible blowup solutions. We now turn to the details.

We first prove lemma \ref{lem:a_reg} stated in subsection \ref{ssec:in_out}. We reproduce the statement of the lemma below for the reader's benefit.
\areg*

\begin{proof}
Fix an admissible blowup solution $u:I\times\R^{2}\rightarrow \C$. Let $(\epsilon_{1},\epsilon_{2},\epsilon_{3})$ be an admissible tuple with $0<\epsilon_{j}<\epsilon_{j}(u)$, where $\epsilon_{j}(u)$ is the constant dictated by the long-time Strichartz estimate (theorem \ref{thm:LTSE}), for $j=1,2,3$. Let $T>0$ be such that
\begin{equation}
\int_{0}^{T}\int_{\R^{2}} |u(t,x)|^{4}dxdt = 2^{k_{0}} \enspace \text{for some} \enspace k_{0}\in\mathbb{N} \enspace \text{and} \enspace \int_{0}^{T}N(t)^{3}dt\geq \frac{K}{2}.
\end{equation}
Now define the parameter
\begin{equation}
\lambda \coloneqq \frac{\epsilon_{3}2^{k_{0}}}{\int_{0}^{T}N(t)^{3}dt}
\end{equation}
and rescale the solution $u$ by defining $u_{\lambda} \coloneqq \lambda u(\lambda^{2}\cdot,\lambda\cdot)$, so that $u_{\lambda}$ is in the form of theorem \ref{thm:LTSE}.  Applying theorem \ref{thm:LTSE}, we obtain the estimates
\begin{align}
\|u_{\lambda}\|_{\tilde{X}_{k_{0}}([0,\lambda^{-2}T]\times\R^{2})} &\leq C_{0} \label{eq:NRFC_est1}\\
\|u_{\lambda}\|_{\tilde{Y}_{k_{0}}([0,\lambda^{-2}T]\times\R^{2})} &\leq \epsilon_{2}^{1/2}. \label{eq:NRFC_est2}
\end{align}
where $C_{0}$ is the constant in the statement of theorem \ref{thm:LTSE}.

Next, we claim that $N(t)\rightarrow 0$ as $t\rightarrow\infty$. Indeed, by absolutely continuity of the Lebesgue integral, given $\varepsilon>0$, there exists $t_{0}>0$ such that $\int_{t_{0}}^{\infty}N(t)^{3}dt\leq \varepsilon\epsilon_{1}^{1/2}$. Moreover, since $N(t)$ cannot be bounded from below on $[0,\infty)$ under the assumption that $\int_{0}^{\infty}N(t)^{3}dt<\infty$ and taking $t_{0}$ larger if necessary, we can choose $t_{0}$ so that $N(t_{0})\leq \frac{\varepsilon}{2}$. Hence, for any $t_{1}\geq t_{0}$, we have by the fundamental theorem of calculus that
\begin{equation}
|N(t_{1})| \leq \frac{\varepsilon}{2}+|N(t_{1})-N(t_{0})| \leq \frac{\varepsilon}{2}+2^{-20}\epsilon_{1}^{-1/2}\int_{t_{0}}^{t_{1}}N(t)^{3}dt \leq \varepsilon,
\end{equation}
which implies the claim. Additionally, we observe that
\begin{equation}
|\xi(t)| \leq 2^{-20}\epsilon_{1}^{-1/2}\int_{0}^{t}N(\tau)^{3}d\tau\leq 2^{-20}\epsilon_{1}^{-1/2}K, \qquad \forall t\geq 0,
\end{equation}
which implies that $\sup_{t\geq 0}|\xi_{\lambda}(t)|\leq 2^{k_{0}-19}\epsilon_{3}^{1/2}$. Lastly, since $N(t)\leq 1$ by definition of admissible blowup solution, we observe that
\begin{equation}
N_{\lambda}(t) = \lambda N(\lambda^{2}t) \leq \lambda \leq \frac{\epsilon_{3}2^{k_{0}+1}}{K}.
\end{equation}

The estimate \eqref{eq:NRFC_est1} implies that
\begin{equation}
\sum_{i\geq k_{0}} \|P_{i}u_{\lambda}\|_{U_{\Delta}^{2}([0,\lambda^{-2}T]\times\R^{2})}^{2} \leq C_{1}^{2},
\end{equation}
where $C_{1}\lesssim C_{0}$. So by undoing the scaling and using the scale invariance of the $U_{\Delta}^{2}$ norm together with some elementary Littlewood-Paley theory, we obtain that
\begin{equation}
\sum_{i\geq \log_{2}(\frac{4K}{\epsilon_{3}})} \|P_{i}u\|_{U_{\Delta}^{2}([0,T]\times\R^{2})}^{2} \leq C_{2}^{2},
\end{equation}
where $C_{2}\lesssim C_{1}$. Since $T>0$ may be taken arbitrarily large, Fatou's lemma implies that
\begin{equation}\label{eq:areg_base}
\sum_{i\geq\log_{2}(\frac{4K}{\epsilon_{3}})} \|P_{i}u\|_{U_{\Delta}^{2}([0,\infty)\times\R^{2})}^{2} \leq C_{2}^{2}.
\end{equation}
By Duhamel's formula, duality, and the estimate $\|\xi_{\lambda}\|_{L_{t}^{\infty}([0,\infty)}\leq 2^{k_{0}-19}\epsilon_{3}^{1/2}$, we have that for every $j\geq k_{0}$,
\begin{align}
\sum_{i\geq j}\|P_{i}u_{\lambda}\|_{U_{\Delta}^{2}([0,\lambda^{-2}T]\times\R^{2})}^{2} &\lesssim \sum_{i\geq j} \paren*{\inf_{t\in [0,\lambda^{-2}T]} \brac*{\|P_{i}u_{\lambda}(t)\|_{L_{x}^{2}(\R^{2})}^{2}} + \|P_{i}F(u_{\lambda})\|_{U_{\Delta}^{2}([0,\lambda^{-2}T]\times\R^{2})}^{2}} \nonumber\\
&\lesssim \inf_{t\in [0,\lambda^{-2}T]} \brac*{\sum_{i\geq j} \|P_{i}u_{\lambda}(t)\|_{L_{x}^{2}(\R^{2})}^{2}} + \sum_{i\geq j}\|P_{\xi_{\lambda}([0,\lambda^{-2}T]), i-2\leq\cdot\leq i+2}F(u_{\lambda})\|_{U_{\Delta}^{2}([0,\lambda^{-2}T]\times\R^{2})}^{2}.
\end{align}
Using the estimates in the proof of lemma \ref{lem:LTSE_b_easy} together with the estimates of lemma \ref{lem:LTSE_b_hard}, it follows that for every $i\geq j$,
\begin{align}
&\|P_{\xi_{\lambda}([0,\lambda^{-2}T]),i-2\leq\cdot\leq i+2}F(u_{\lambda})\|_{U_{\Delta}^{2}([0,\lambda^{-2}T]\times\R^{2})}^{2}\nonumber\\
&\phantom{=} \lesssim_{u} \paren*{\epsilon_{2}\|u_{\lambda}\|_{\tilde{X}_{k_{0}}([0,\lambda^{-2}T]\times\R^{2})}^{3}+\epsilon_{2}^{1/3}\|u_{\lambda}\|_{\tilde{X}_{k_{0}}([0,\lambda^{-2}T]\times\R^{2})}^{5/3}} \|P_{\xi_{\lambda}([0,\lambda^{-2}T]),i-5\leq\cdot\leq i+5}u_{\lambda}\|_{U_{\Delta}^{2}([0,\lambda^{-2}T]\times\R^{2})}^{2} \nonumber\\
&\phantom{=}\quad + \paren*{\epsilon_{2}+\|u_{\lambda}\|_{\tilde{Y}_{k_{0}}([0,\lambda^{-2}T]\times\R^{2})}\paren*{1+\|u_{\lambda}\|_{\tilde{X}_{k_{0}}([0,\lambda^{-2}T]\times\R^{2})}^{4}}}^{2} \|P_{\xi_{\lambda}([0,\lambda^{-2}T]),i-5\leq\cdot\leq i+5}u_{\lambda}\|_{U_{\Delta}^{2}([0,\lambda^{-2}T]\times\R^{2})}^{2} \nonumber\\
&\phantom{=} \leq \paren*{\epsilon_{2}C_{0}^{3}+\epsilon_{2}^{1/3}C_{0}^{5/3}+\paren*{\epsilon_{2}+\epsilon_{2}^{1/2}\paren*{1+C_{0}^{4}}}^{2}}  \|P_{\xi_{\lambda}([0,\lambda^{-2}T]),i-5\leq\cdot\leq i+5}u_{\lambda}\|_{U_{\Delta}^{2}([0,\lambda^{-2}T]\times\R^{2})}^{2} \nonumber\\
&\phantom{=} \leq 2\epsilon_{2}^{1/3}C_{0}^{5/3}  \|P_{\xi_{\lambda}([0,\lambda^{-2}T]),i-5\leq\cdot\leq i+5}u_{\lambda}\|_{U_{\Delta}^{2}([0,\lambda^{-2}T]\times\R^{2})}^{2},
\end{align}
provided that $\epsilon_{2}\in (0,\epsilon_{2}(u))$ is sufficiently small depending on $C_{0}$. Using the estimate
\begin{equation}
\|P_{\xi_{\lambda}([0,\lambda^{-2}T]),i-5\leq\cdot\leq i+5}u_{\lambda}\|_{U_{\Delta}^{2}([0,\lambda^{-2}T]\times\R^{2})} \lesssim \|P_{i-7\leq\cdot\leq i+7}u_{\lambda}\|_{U_{\Delta}^{2}([0,\lambda^{-2}T]\times\R^{2})},
\end{equation}
we obtain that for every $j\geq k_{0}$,
\begin{equation}
\begin{split}
&\sum_{i\geq j} \|P_{\xi_{\lambda}([0,\lambda^{-2}T]),i-2\leq\cdot\leq i+2}F(u_{\lambda})\|_{U_{\Delta}^{2}([0,\lambda^{-2}T]\times\R^{2})}^{2} \\
&\phantom{=} \lesssim_{u} \epsilon_{2}^{1/3}C_{0}^{5/3}\sum_{i\geq j-7}\|P_{i}u_{\lambda}\|_{U_{\Delta}^{2}([0,\lambda^{-2}T]\times\R^{2})}^{2}.
\end{split}
\end{equation}
Undoing the scaling and using the scale invariance of the $L_{x}^{2}$ and $U_{\Delta}^{2}$ norms together with Plancherel's theorem, we have that
\begin{equation}
\sum_{i\geq\log_{2}(\frac{2^{j+2-k_{0}}K}{\epsilon_{3}})} \|P_{i}u\|_{U_{\Delta}^{2}([0,T]\times\R^{2})}^{2} \lesssim_{u} \inf_{t\in [0,T]} \brac*{\|P_{\geq \frac{2^{j-k_{0}-1}K}{\epsilon_{3}}}u(t)\|_{L_{x}^{2}(\R^{2})}^{2}} + \epsilon_{2}^{1/3}C_{0}^{5/3}\sum_{i\geq \log_{2}(\frac{2^{j-k_{0}-9}K}{\epsilon_{3}})} \|P_{i}u\|_{U_{\Delta}^{2}([0,T]\times\R^{2})}^{2}.
\end{equation}
Since $T>0$ can be taken arbitrarily large in the preceding inequality, the monotone convergence theorem implies that for every integer $j \geq \log_{2}(4\frac{K}{\epsilon_{3}})$,
\begin{equation}
\sum_{i\geq j} \|P_{i}u\|_{U_{\Delta}^{2}([0,\infty)\times\R^{2})}^{2} \lesssim \inf_{t\in [0,\infty)} \brac*{\|P_{\geq j-4}u(t)\|_{L_{x}^{2}(\R^{2})}^{2}} +\epsilon_{2}^{1/3}C_{0}^{5/3} \sum_{i\geq j-9} \|P_{i}u\|_{U_{\Delta}^{2}([0,\infty)\times\R^{2})}^{2}.
\end{equation}

Next, we claim that for any integer $j\geq \log_{2}(\frac{K}{\epsilon_{3}})-4$, 
\begin{equation}
\lim_{t\rightarrow\infty} \|P_{\geq j}u(t)\|_{L_{x}^{2}(\R^{2})} = 0,
\end{equation}
which implies that $\inf_{t\in[0,\infty)} \|P_{\geq j-4}u(t)\|_{L_{x}^{2}(\R^{2})}^{2}=0$ for $j\geq \log_{2}(\frac{4K}{\epsilon_{3}})$. Indeed, given any $\varepsilon>0$, we see from the bound $|\xi(t)| \leq 2^{-20}\epsilon_{1}^{-1/2}K$, $\lim_{t\rightarrow\infty}N(t)=0$, and the frequency localization property \eqref{eq:sp_frq_loc} that for fixed $j$, there exists $t_{0}(\varepsilon)>0$ such that for all $t\geq t_{0}(\varepsilon)$,
\begin{equation}
\|P_{\geq j}u(t)\|_{L_{x}^{2}(\R^{2})}^{2} \leq \int_{|\xi-\xi(t)| \geq C(\varepsilon)N(t)} |\hat{u}(t,\xi)|^{2}d\xi \leq \varepsilon,
\end{equation}
which proves the claim.

Therefore, we have shown that there exists a constant $C(u)$ such that for any integer $j\geq \log_{2}(\frac{4K}{\epsilon_{3}})$,
\begin{equation}\label{eq:areg_it_est}
\sum_{i\geq j} \|P_{i}u\|_{U_{\Delta}^{2}([0,\infty)\times\R^{2})}^{2} \leq \epsilon_{2}^{1/3}C_{0}^{5/3}C(u) \sum_{i\geq j-9} \|P_{i}u\|_{U_{\Delta}^{2}([0,\infty)\times\R^{2})}^{2}.
\end{equation}
Now choose $\epsilon_{2}\leq\epsilon_{2}(u)$ sufficiently small so that $\epsilon_{2}^{1/3}C_{0}^{5/3}C(u) \leq 2^{-100}$. Next, for every integer $j\geq \log_{2}(\frac{4K}{\epsilon_{3}})$, define the positive integer
\begin{equation}
j_{*} \coloneqq \left\lfloor \frac{1}{9}\log_{2}\left(\frac{\epsilon_{3}2^{j-2}}{K}\right)\right\rfloor.
\end{equation}
Therefore, by iterating the estimate \eqref{eq:areg_it_est} $j_{*}$ times, for each $j\geq \log_{2}(\frac{4K}{\epsilon_{3}})$, we see that
\begin{align}
\sum_{2^{j}\geq \epsilon_{3}^{-1}4K} 2^{6j} \|P_{j}u\|_{U_{\Delta}^{2}([0,\infty)\times\R^{2})}^{2} &\leq \sum_{2^{j}\geq \epsilon_{3}^{-1}4K} 2^{6j}\sum_{i\geq j} \|P_{i}u\|_{U_{\Delta}^{2}([0,\infty)\times\R^{2})}^{2} \nonumber\\
&\leq \sum_{2^{j}\geq \epsilon_{3}^{-1}4K} 2^{6j}2^{-100j_{*}} \sum_{2^{i}\geq \epsilon_{3}^{-1}4K} \|P_{i}u\|_{U_{\Delta}^{2}([0,\infty)\times\R^{2})}^{2} \nonumber\\
&\leq C_{2}^{2}\sum_{2^{j}\geq \epsilon_{3}^{-1}4K} \paren*{\frac{\epsilon_{3}}{4K}}^{-100/9}2^{6j}2^{-100j/9} \nonumber\\
&\leq \frac{C_{3}^{2}K^{6}}{\epsilon_{3}^{6}},
\end{align}
where $C_{3}\lesssim C_{2}$, where we use the estimate \eqref{eq:areg_base} to obtain the penultimate inequality. Since $U_{\Delta}^{2}\subset L_{t}^{\infty}L_{x}^{2}$, it follows from the triangle inequality, Plancherel's theorem, and mass conservation that
\begin{align}
\|u(t)\|_{H_{x}^{3}(\R^{2})} \lesssim \left(\sum_{2^{i}<\epsilon_{3}^{-1}4K}2^{6j}\|P_{i}u(t)\|_{L_{x}^{2}(\R^{2})}^{2}\right)^{1/2} + \frac{C_{3}K^{3}}{\epsilon_{3}^{3}} \lesssim \frac{K^{3}}{\epsilon_{3}^{3}},
\end{align}
for almost every $t\in [0,\infty)$. By a standard persistence of regularity argument, it follows that $u\in C_{t}^{0}H_{x}^{3}([0,\infty)\times\R^{2})$.
\end{proof}

Using lemma \ref{lem:a_reg}, we can now prove theorem \ref{thm:no_rfc}, the statement of which we recall below.
\norfc*

\begin{proof}
We first claim that $\xi_{\infty} \coloneqq \lim_{t\rightarrow\infty}\xi(t)$ exists. Indeed, for any $t_{2}\geq t_{1}\geq 0$, we have by the fundamental theorem of calculus that
\begin{equation}
|\xi(t_{1})-\xi(t_{2})| \leq 2^{-20}\epsilon_{1}^{-1/2}\int_{t_{1}}^{t_{2}}N(t)^{3}dt \xrightarrow{t_{1},t_{2}\rightarrow\infty} 0.
\end{equation}
Hence, $\{\xi(t)\}_{t\geq 0}$ is Cauchy, from which the claim follows. Moreover, $|\xi(t)| \leq 2^{-20}\epsilon_{1}^{-1/2}K$ implies that $|\xi_{\infty}|\leq 2^{-20}\epsilon_{1}^{-1/2}K$. Applying a Galilean transformation to $u$ which maps $\xi_{\infty}$ to the origin, we obtain another admissible blowup solution
\begin{equation}
v(t,x) \coloneqq e^{-ix\cdot\xi_{\infty}}e^{-it|x_{\infty}|^{2}}u(t,x+2\xi_{\infty}t)
\end{equation}
with parameters $x_{v}(t),\xi_{v}(t), N(t)$ and compactness modulus function $C$.

We now claim that $\lim_{t\rightarrow \infty} \|v(t)\|_{\dot{H}_{x}^{1}(\R^{2})} =0$. Indeed, observe that by lemma \ref{lem:a_reg} and mass conservation,
\begin{equation}
\|v(t)\|_{\dot{H}_{x}^{3}(\R^{2})} \lesssim |\xi_{\infty}|^{3}\|u(t)\|_{L_{x}^{2}(\R^{2})}+\||\nabla|^{3}u(t)\|_{L_{x}^{2}(\R^{2})} \lesssim_{u} \epsilon_{1}^{-3/2}K^{3}+K^{3}, \qquad \forall t\geq 0.
\end{equation}
We make the sub-claim that for every $\eta>0$,
\begin{equation}
\lim_{t\rightarrow\infty} \|P_{\xi_{v}(t), \leq C(\eta)N(t)} v(t)\|_{\dot{H}_{x}^{3}(\R^{2})} = 0.
\end{equation}
To see this, observe that since $\xi_{v}(t)\rightarrow 0$ and $N(t)\rightarrow 0$ as $t\rightarrow \infty$, it follows from mass conservation that
\begin{equation}
\lim_{t\rightarrow \infty} \|P_{\xi_{v}(t),\leq C(\eta)N(t)} v(t)\|_{\dot{H}_{x}^{3}(\R^{2})} \lesssim \lim_{t\rightarrow \infty} \paren*{|\xi_{v}(t)|+C(\eta)N(t)}^{3} \|v_{0}\|_{L^{2}(\R^{2})} \lesssim \lim_{t\rightarrow \infty} \paren*{|\xi_{v}(t)|+C(\eta)N(t)}^{3}=0.
\end{equation}
Interpolating with the $L^{2}$ norm and using the the frequency localization property \eqref{eq:sp_frq_loc}, we have that
\begin{align}
\|P_{\xi_{v}(t),\geq C(\eta)N(t)}v(t)\|_{\dot{H}_{x}^{1}(\R^{2})} \leq \|P_{\xi_{v}(t), \geq C(\eta)N(t)}v(t)\|_{L_{x}^{2}(\R^{2})}^{2/3} \|P_{\xi_{v}(t),\geq C(\eta)N(t)}v(t)\|_{\dot{H}_{x}^{3}(\R^{2})}^{1/3} \lesssim K\eta^{1/3}
\end{align}
for all $t\geq 0$. Hence, for any $\eta>0$, we have by the triangle inequality that
\begin{equation}
\limsup_{t\rightarrow\infty} \|v(t)\|_{\dot{H}_{x}^{1}(\R^{2})} \lesssim K\eta^{1/3}.
\end{equation}
Since $\eta>0$ may be taken arbitrarily small, we conclude the claim.

Now by the Gagliardo-Nirenberg inequality and mass conservation,
\begin{equation}
\lim_{t\rightarrow\infty}\|v(t)\|_{L_{x}^{4}(\R^{2})} \lesssim \lim_{t\rightarrow\infty}\|v(t)\|_{L_{x}^{2}(\R^{2})}^{1/2} \|v(t)\|_{\dot{H}_{x}^{1}(\R^{2})}^{1/2} = 0.
\end{equation}
Plancherel's theorem then implies that the quartic term in $E(v(t))$ tends to zero as $t\rightarrow \infty$. We therefore conclude that $E(v(t))\rightarrow 0$ as $t\rightarrow \infty$, which by energy conservation implies that $E(v(t))\equiv 0$. Since the energy is nonnegative under our assumptions on the initial data, we conclude that $v\equiv 0$, which trivially implies that $u\equiv 0$, a contradiction.
\end{proof}

\section{Rigidity: Quasi-soliton}\label{sec:QS}

In this section, we prove theorem \ref{thm:no_qs}, the statement of which we recall below, by constructing a frequency-localized interaction Morawetz type estimate, thus completing the rigidity step in the proof of theorem \ref{thm:main} and hence theorem \ref{thm:main} itself.

\noqs*

\subsection{Preliminaries}\label{ssec:QS_pre}
In this subsection, we record some preliminary lemmas which we will need to handle the various error terms arising when we attempt to preclude the quasi-soliton scenario. Throughout this section we use the notation $o_{A}(1)$ to denote a quantity satisfying
\begin{equation}
\lim_{A\rightarrow\infty} o_{A}(1)=0.
\end{equation}
Furthermore, $u$ always denotes an admissible blowup solution throughout this section. Additionally, we continue to use the notation $\lesssim_{u},\sim_{u},\gtrsim_{u}$ to denote implicit constants which depend on a given admissible blowup solution $u$ through its APMS parameters.

Lemmas \ref{lem:IM_pre_err1}, \ref{lem:IM_pre_err2}, \ref{lem:IM_pre_err3}, and \ref{lem:IM_pre_sm} are from \cite{Dodson2015}. We include proofs of the first three lemmas in order to obtain lemma \ref{lem:IM_pre_CZ}, which is a new extension for our nonlocal setting.

\begin{lemma}\label{lem:IM_pre_err1}
We have the estimate
\begin{equation}
\|P_{\xi(t), \geq RN(t)}u\|_{L_{t,x}^{4}(J\times\R^{2})}^{4} + \|\mathrm{1}_{x(t), \geq R/N(t)}u\|_{L_{t,x}^{4}(J\times\R^{2})}^{4} \leq o_{R}(1),
\end{equation}
uniformly in small intervals $J$.
\end{lemma}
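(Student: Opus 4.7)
The plan is to prove both bounds by the standard interpolation technique for almost periodic solutions: trade off the smallness coming from the frequency/spatial localization of \eqref{eq:sp_frq_loc} (measured in $L^{\infty}_{t}L^{2}_{x}$) against a Strichartz bound at a finite admissible exponent (which is $\lesssim_{u} 1$ on a small interval $J$ by Strichartz estimates and mass conservation). Recall the admissible triple $(\tfrac{8}{3},8)$ and that $L_{t,x}^{4}$ interpolates between $L_{t}^{\infty}L_{x}^{2}$ and $L_{t}^{8/3}L_{x}^{8}$ with exponents $\theta=\tfrac{1}{3}$ and $1-\theta=\tfrac{2}{3}$: indeed $\tfrac{1}{4}=\tfrac{1/3}{2}+\tfrac{2/3}{8}$. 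This will reduce both estimates to controlling an $L^{\infty}_{t}L^{2}_{x}$ piece that is small by almost periodicity.

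First I would handle the frequency-localized piece. Given $\eta>0$, choose $R\geq C(\eta)$, where $C(\cdot)$ is the compactness modulus function from the definition of almost periodicity. Then by \eqref{eq:sp_frq_loc}, $\|P_{\xi(t),\geq RN(t)}u(t)\|_{L_{x}^{2}(\R^{2})}^{2}\leq \eta$ for every $t$, hence
\begin{equation}
\|P_{\xi(t),\geq RN(t)}u\|_{L_{t}^{\infty}L_{x}^{2}(J\times\R^{2})}\leq \eta^{1/2}.
\end{equation}
Combining with the Strichartz bound $\|P_{\xi(t),\geq RN(t)}u\|_{L_{t}^{8/3}L_{x}^{8}(J\times\R^{2})}\lesssim_{u} 1$ (which follows from Plancherel, Bernstein on frequency-adapted projectors, and proposition \ref{prop:ls} together with mass conservation and the fact that $J$ is small), interpolation yields
\begin{equation}
\|P_{\xi(t),\geq RN(t)}u\|_{L_{t,x}^{4}(J\times\R^{2})}^{4}\lesssim_{u}\eta^{2/3},
\end{equation}
uniformly over small $J$. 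Letting $\eta\to 0$ as $R\to\infty$ gives the required $o_{R}(1)$ bound.

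For the spatial piece, the argument is completely analogous using the physical-side half of \eqref{eq:sp_frq_loc}. The pointwise inequality $|\mathrm{1}_{|x-x(t)|\geq R/N(t)}u(t,x)|\leq |u(t,x)|$ gives
\begin{equation}
\|\mathrm{1}_{x(t),\geq R/N(t)}u\|_{L_{t}^{8/3}L_{x}^{8}(J\times\R^{2})}\leq \|u\|_{L_{t}^{8/3}L_{x}^{8}(J\times\R^{2})}\lesssim_{u} 1,
\end{equation}
while the same choice $R\geq C(\eta)$ and the spatial half of \eqref{eq:sp_frq_loc} yield $\|\mathrm{1}_{x(t),\geq R/N(t)}u\|_{L_{t}^{\infty}L_{x}^{2}(J\times\R^{2})}\leq \eta^{1/2}$. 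The same interpolation produces the $o_{R}(1)$ bound.

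The only mild technical point, which I do not expect to be a real obstacle, is verifying that implicit constants depend only on $M(u)$ and the almost periodicity data and not on the particular small interval $J$ chosen. For the frequency factor this is immediate from the frequency-adapted Littlewood--Paley theory, since conjugation by $e^{ix\cdot\xi_{0}}$ is an $L^{p}$ isometry and Galilean invariance of the Strichartz estimates allows one to estimate $P_{\xi(t),\geq RN(t)}u$ as one would estimate $P_{\geq RN(t)}u$ after a Galilean change of frame. For the spatial factor there is nothing to verify beyond the pointwise domination. Together with the local constancy properties recorded in lemmas \ref{lem:lc} and \ref{lem:lqb} and corollary \ref{cor:lc_int}, this yields the claimed uniformity in $J$.
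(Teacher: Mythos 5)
Your proof is correct and follows essentially the same route as the paper: both arguments interpolate the $L_{t,x}^{4}$ norm between a Strichartz-controlled admissible norm on the small interval and an $L_{t}^{\infty}L_{x}^{2}$ piece made small by the compactness property \eqref{eq:sp_frq_loc}; the paper simply uses the pair $(3,6)$ where you use $(8/3,8)$, an immaterial difference.
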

\begin{proof}
Interpolating between the admissible pairs $(3,6)$ and $(\infty,2)$ to get $(4,4)$ and using Bernstein's lemma together with the fact that $J$ is small, we see that
\begin{align}
&\|P_{\xi(t), \geq RN(t)}u\|_{L_{t,x}^{4}(J\times\R^{2})}^{4} + \|\mathrm{1}_{x(t), \geq R/N(t)}u\|_{L_{t,x}^{4}(J\times\R^{2})}^{4} \nonumber\\
&\phantom{=} \lesssim \|P_{\xi(t),\geq RN(t)}u\|_{L_{t}^{3}L_{x}^{6}(J\times\R^{2})}^{3}\|P_{\xi(t),\geq RN(t)}u\|_{L_{t}^{\infty}L_{x}^{2}(J\times\R^{2})} + \|1_{x(t), \geq R/N(t)}u\|_{L_{t}^{3}L_{x}^{6}(J\times\R^{2})}^{3}\|1_{x(t), \geq R/N(t)}u\|_{L_{t}^{\infty}L_{x}^{2}(J\times\R^{2})} \nonumber\\
&\phantom{=} \lesssim \|P_{\xi(t),\geq RN(t)}u\|_{L_{t}^{\infty}L_{x}^{2}(J\times\R^{2})}+\|1_{x(t), \geq R/N(t)}u\|_{L_{t}^{\infty}L_{x}^{2}(J\times\R^{2})} \nonumber\\
&\phantom{=} =o_{R}(1),
\end{align}
where the ultimate equality follows from the frequency localization property \eqref{eq:sp_frq_loc}.
\end{proof}

\begin{lemma}\label{lem:IM_pre_err2}
Given $\eta\in (0,1)$, there exists an $R(\eta)>0$, such that for all $R\geq R(\eta)$,
\begin{equation}
\int_{0}^{T}N(t)\|P_{\leq RN(t)}u(t)\|_{L_{x}^{4}(\R^{2})}^{4}dt \gtrsim_{u} \eta K
\end{equation}
for all $T>0$ such that $\int_{0}^{T}N(t)^{3}dt=K$.
\end{lemma}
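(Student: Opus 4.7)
The plan is to reduce to a per-interval estimate via a partition of $[0,T]$ into small intervals, exploiting local constancy of the scaling parameter $N(t)$. First I would decompose $[0,T] = \bigcup_l J_l$ into consecutive small intervals, so that $\|u\|_{L^4_{t,x}(J_l \times \R^2)}^4 = 1$ on each. Corollary \ref{cor:lc_int} then furnishes the required properties: $N(t) \sim_u N(J_l)$ uniformly on $J_l$, $|J_l| \sim_u N(J_l)^{-2}$, and $\int_{J_l} N(t)^3 \, dt \sim_u N(J_l)$. Summing the last estimate gives the key partition identity $K = \int_0^T N(t)^3 \, dt \sim_u \sum_l N(J_l)$, so it suffices to establish the per-interval lower bound $\int_{J_l} N(t) \|P_{\leq RN(t)} u(t)\|_{L^4_x}^4 \, dt \gtrsim_u \eta N(J_l)$ uniformly in $l$ for all $R \geq R(\eta)$.

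Next, I would extract a fixed fraction of the $L^4_{t,x}$ mass by truncating in frequency. By lemma \ref{lem:IM_pre_err1} (with the projector understood as centered at $\xi(t)$, consistently with the preceding lemma), one has $\|P_{\xi(t), > RN(t)} u\|_{L^4_{t,x}(J_l \times \R^2)}^4 = o_R(1)$, where the $o_R(1)$ is uniform over the small intervals $J_l$ because it traces back, via the short-time Strichartz estimate used in that lemma's proof, to the $t$-uniform frequency localization \eqref{eq:sp_frq_loc}. The triangle inequality, combined with $\|u\|_{L^4_{t,x}(J_l \times \R^2)}^4 = 1$, then yields
\[
1 \lesssim \|P_{\xi(t), \leq RN(t)} u\|_{L^4_{t,x}(J_l \times \R^2)}^4 + o_R(1),
\]
so choosing $R(\eta)$ large enough to absorb the error into a fixed fraction $(1-\eta)$ of $1$ gives $\|P_{\xi(t), \leq RN(t)} u\|_{L^4_{t,x}(J_l \times \R^2)}^4 \gtrsim \eta$. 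Combining with the lower bound $\inf_{t \in J_l} N(t) \gtrsim_u N(J_l)$ from corollary \ref{cor:lc_int} yields
\[
\int_{J_l} N(t) \|P_{\leq RN(t)} u(t)\|_{L^4_x}^4 \, dt \geq \inf_{t \in J_l} N(t) \cdot \|P_{\leq RN(t)} u\|_{L^4_{t,x}(J_l \times \R^2)}^4 \gtrsim_u \eta N(J_l),
\]
and summation over $J_l \subset [0,T]$ combined with the partition identity completes the proof.

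The main obstacle I expect is ensuring the uniformity of the tail estimate $o_R(1)$ across the (for $T$ large) infinitely many small intervals $J_l$; this is resolved by observing that the conclusion of lemma \ref{lem:IM_pre_err1} depends only on $R$ and not on the particular interval, because its proof ultimately bounds the $L^4_{t,x}$ norm by the $L^\infty_t L^2_x$ norm controlled by \eqref{eq:sp_frq_loc}. A secondary technical point is the notational distinction between the unshifted projector $P_{\leq RN(t)}$ in the statement and the $\xi(t)$-shifted projector used in lemma \ref{lem:IM_pre_err1}; for the argument above to yield the lemma as written, one should either interpret the projector consistently with the preceding lemma, or absorb the $\xi(t)$-shift into the choice of $R(\eta)$ via the compactness modulus function.
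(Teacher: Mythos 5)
Your proposal is correct and follows essentially the same route as the paper's proof: partition $[0,T]$ into small intervals, use $N(t)\sim_{u}N(J_{l})$ and lemma \ref{lem:IM_pre_err1} to get the per-interval bound $\gtrsim_{u}\eta N(J_{l})$, then sum using $\sum_{l}N(J_{l})\sim_{u}K$. Your explicit triangle-inequality treatment of the frequency splitting and your remark about interpreting the projector as the $\xi(t)$-centered one are exactly how the paper's argument is meant to be read (its proof indeed works with $P_{\xi(t),\leq RN(t)}$), so there is nothing substantive to add.
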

\begin{proof}
Partition $[0,T]$ into finitely many consecutive small intervals $J_{l}$. Then for each $J_{l}$, we have that
\begin{align}
\int_{J_{l}}N(t)\|P_{\xi(t),\leq RN(t)}u(t)\|_{L_{x}^{4}}^{4}dt  &\sim_{u} N(J_{l})\int_{J_{l}}\|P_{\xi(t),\leq RN(t)}u(t)\|_{L_{x}^{4}}^{4}dt \nonumber\\
&= N(J_{l})\int_{J_{l}}\|u(t)\|_{L_{x}^{4}}^{4}dt - N(J_{l})\int_{J_{l}}\|P_{\xi(t),>RN(t)}u(t)\|_{L_{x}^{4}}^{4}dt \nonumber\\
&=N(J_{l}) - N(J_{l})\int_{J_{l}}\|P_{\xi(t),>RN(t)}u(t)\|_{L_{x}^{4}}^{4}dt \nonumber\\
&\gtrsim \eta N(J_{l}),
\end{align}
provided that $R\geq R(\eta)$ by lemma \ref{lem:IM_pre_err1}. Therefore, summing over $l$, we obtain the lower bound
\begin{equation}
\int_{0}^{T}N(t)\|P_{\xi(t),\leq RN(t)}u(t)\|_{L_{x}^{4}}^{4}dt \gtrsim_{u} \sum_{l}\eta N(J_{l}) \sim_{u} \eta\int_{0}^{T}N(t)^{3}dt = \eta K,
\end{equation}
which completes the proof.
\end{proof}

\begin{lemma}\label{lem:IM_pre_err3}
We have the estimate
\begin{equation}
\int_{J}\int_{|x-x(t)|\geq\frac{R}{N(t)}}|w(t,x)|^{4}dx=o_{R}(1),
\end{equation}
where $w:=P_{\leq K}u$, uniformly in small intervals $J$ and real numbers $K\geq 1$.
\end{lemma}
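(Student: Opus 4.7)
\medskip

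The plan is to decompose $u$ spatially according to whether a point lies near or far from the spatial center $x(t)$. Specifically, fix $R \gg 1$ and split $u = u_{\mathrm{near}}(t) + u_{\mathrm{far}}(t)$, where $u_{\mathrm{near}}(t,y) \coloneqq 1_{|y-x(t)|\leq R/(2N(t))} u(t,y)$ and $u_{\mathrm{far}}(t,y) \coloneqq 1_{|y-x(t)|>R/(2N(t))} u(t,y)$. Applying the linear Littlewood-Paley projector, we have the pointwise identity $w = P_{\leq K}u_{\mathrm{near}} + P_{\leq K}u_{\mathrm{far}}$, and by the triangle inequality it suffices to control the spacetime integral over $\{|x-x(t)|\geq R/N(t)\}\times J$ of the fourth power of each term.

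For the far piece, I would appeal to the $L^{4}$ boundedness of $P_{\leq K}$, uniform in $K\geq 1$, which is a consequence of the Calder\'on-Zygmund theorem (proposition \ref{prop:CZ}). This gives
\begin{equation*}
\int_{J}\int_{\R^{2}}|P_{\leq K}u_{\mathrm{far}}(t,x)|^{4}dx\, dt \lesssim \|1_{|x-x(t)|\geq R/(2N(t))}u\|_{L_{t,x}^{4}(J\times\R^{2})}^{4} = o_{R}(1),
\end{equation*}
where the ultimate equality is lemma \ref{lem:IM_pre_err1}. Note this estimate makes no use of the cutoff $|x-x(t)|\geq R/N(t)$ and is uniform in $K$.

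For the near piece, I would exploit the rapid decay of the Schwartz kernel $K_{\leq K}$ of $P_{\leq K}$, which satisfies $|K_{\leq K}(z)|\lesssim_{M} K^{2}\langle Kz\rangle^{-M}$ for every $M\in\N$. If $|x-x(t)|\geq R/N(t)$ and $|y-x(t)|\leq R/(2N(t))$, the reverse triangle inequality gives $|x-y|\geq R/(2N(t))$, hence $K|x-y|\geq KR/(2N(t))\geq R/2$ (using $K\geq 1$ and $N(t)\leq 1$, so that $K/N(t)\geq 1$). Factoring the decay as $\langle K(x-y)\rangle^{-M}\lesssim \langle R/2\rangle^{-(M-10)}\langle K(x-y)\rangle^{-10}$ lets one bound
\begin{equation*}
|P_{\leq K}u_{\mathrm{near}}(t,x)|\lesssim_{M} R^{-(M-10)}\int_{\R^{2}} K^{2}\langle K(x-y)\rangle^{-10}|u(t,y)|dy,
\end{equation*}
for $x$ in the region of interest. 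The integral on the right is controlled by the Hardy-Littlewood maximal function $\mathcal{M}(|u(t,\cdot)|)(x)$, which is $L^{4}$-bounded with constant independent of $K$. Therefore
\begin{equation*}
\int_{J}\int_{|x-x(t)|\geq R/N(t)}|P_{\leq K}u_{\mathrm{near}}(t,x)|^{4}dx\,dt \lesssim_{M} R^{-4(M-10)}\|u\|_{L_{t,x}^{4}(J\times\R^{2})}^{4} = R^{-4(M-10)},
\end{equation*}
where the ultimate equality uses that $J$ is small. Taking $M$ large, this is $o_{R}(1)$, uniformly in $K$.

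No real obstacle is anticipated; the only subtle point is the uniformity in $K\geq 1$ required by the statement, which rules out the use of Bernstein's lemma on the far piece (which would cost a power of $K$). Both $L^{4}$-boundedness of $P_{\leq K}$ and the maximal-function estimate circumvent this. Combining the two estimates completes the proof.
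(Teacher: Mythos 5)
Your proof is correct, and it is essentially a mirror image of the paper's argument rather than an identical one. The paper writes $w(t,x)$ as a convolution, applies Minkowski's inequality to pull the kernel $K^{2}\phi^{\vee}(K\cdot)$ outside the inner $L_{x}^{4}$ norm, and then splits the translation variable $z$: for $|z|\leq \frac{R}{4N(t)}$ the shifted cutoff forces $|x-z-x(t)|\geq \frac{R}{2N(t)}$, so that piece is $o_{R}(1)$ by the spatial part of lemma \ref{lem:IM_pre_err1}, while for $|z|\geq \frac{R}{4N(t)}$ the kernel tail $K^{2}\langle Kz\rangle^{-3}$ contributes a factor $\frac{N(t)}{RK}\|u(t)\|_{L_{x}^{4}}$, hence $\frac{N(J)}{RK}\lesssim \frac{1}{R}$ on a small interval. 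You instead decompose $u$ itself (rather than the convolution variable) into near/far pieces at scale $\frac{R}{2N(t)}$: the far piece is handled by the uniform-in-$K$ boundedness of $P_{\leq K}$ (really just Young's inequality with the $L^{1}$-normalized kernel) plus lemma \ref{lem:IM_pre_err1}, and the near piece by the support mismatch $K|x-y|\geq \frac{R}{2}$ together with a maximal-function domination. The ingredients are the same in both arguments — lemma \ref{lem:IM_pre_err1} and the rapid decay of the kernel of $P_{\leq K}$ at scale $1/K\leq 1\ll R/N(t)$, the latter resting on $K\geq 1$ and $N(t)\leq 1$ from admissibility, which you invoke explicitly and the paper uses implicitly through $N(J)\lesssim 1$ — but your arrangement trades the paper's single Minkowski step for the (equally standard) maximal-function and uniform $L^{4}$-boundedness inputs, and it yields the faster decay $R^{-4(M-10)}$ on the near piece versus the paper's $R^{-4}$, though both are only needed as $o_{R}(1)$.
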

\begin{proof}
Observe that by Minkowski's inequality,
\begin{align}
\left(\int_{|x-x(t)| \geq \frac{R}{N(t)})} |w(t,x)|^{4}dx\right)^{1/4} &\leq \int_{\R^{2}}K^{2}|\phi^{\vee}(Kz)| \left(\int_{\R^{2}} \mathrm{1}_{x(t), \geq \frac{R}{N(t)}}(x)|u(t,x-z)|^{4}dx\right)^{1/4 }dz \nonumber\\
&\lesssim \int_{|z| \leq \frac{R}{4N(t)}}K^{2}|\phi^{\vee}(Kz)| \left(\int_{|x-x(t)| \geq \frac{R}{2N(t)}}|u(t,x)|^{4}dx\right)^{1/4}dz \nonumber\\
&\phantom{=}+ \int_{|z| \geq \frac{R}{4N(t)}} K^{2}\jp{Kz}^{-3}\left(\int_{\R^{2}}|u(t,x)|^{4}dx\right)^{1/4}dz \nonumber\\
&\lesssim \left(\int_{|x-x(t)| \geq \frac{R}{N(t)}}|u(t,x)|^{4}dx\right)^{1/4} + \frac{N(t)}{RK}\|u(t)\|_{L_{x}^{4}(\R^{2})},
\end{align}
where we use that $\|\phi^{\vee}\|_{L^{1}(\R^{2})}\lesssim 1$ to obtain the ultimate inequality. Therefore, taking the $L_{t}^{4}(J)$ norm of both sides of the final inequality, we obtain that
\begin{equation}
\left(\int_{J}\int_{|x-x(t)|\geq \frac{R}{N(t)}}|w(t,x)|^{4}dxdt\right)^{1/4} \lesssim \left(\int_{J}\int_{|x-x(t)|\geq \frac{R}{2N(t)}}|u(t,x)|^{4}dxdt\right)^{1/4} + \frac{N(J)}{RK} = o_{R}(1).
\end{equation}
\end{proof}

We now use lemma \ref{lem:IM_pre_err3} to prove a more general $o_{R}(1)$-type estimate for expressions involving Calder\'{o}n-Zygmund operators. We shall make heavy use of this result in the sequel to handle the various error terms arising in our calculations. We expect that one could prove similar estimates for more general paraproducts, but we have no need for such generality in this work.

\begin{lemma}\label{lem:IM_pre_CZ}
Let $\E$ be the identity or any Calder\'{o}n-Zygmund operator of convolution type with kernel $\mathcal{K}$. Then
\begin{equation}
\int_{0}^{T}N(t)\int_{|x-y|\geq \frac{R}{N(t)}} |\E(|w|^{2})(t,x)|^{2} |w(t,y)|^{2}dxdydt = o_{R}(1)K,
\end{equation}
where $w \coloneqq P_{\leq K}u$, uniformly in intervals $[0,T]$ such that $\int_{0}^{T}N(t)^{3}dt=K$.
\end{lemma}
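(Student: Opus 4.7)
The plan is to split the outer region $\{|x-y|\geq R/N(t)\}$ into two pieces based on distance from the spatial center $x(t)$, and to handle each piece by a combination of kernel decay estimates and the frequency-localization property \eqref{eq:sp_frq_loc}. Observe that if $|x-y|\geq R/N(t)$, then at least one of $|x-x(t)|\geq R/(2N(t))$ or $|y-x(t)|\geq R/(2N(t))$ holds, so we may estimate the full integral by
\begin{align*}
&\int_{0}^{T}N(t)\int_{|x-x(t)|\geq R/(2N(t))}|\E(|w|^{2})(t,x)|^{2}\|w(t)\|_{L^{2}}^{2}\,dx\,dt\\
&\qquad+\int_{0}^{T}N(t)\|\E(|w|^{2})(t)\|_{L^{2}}^{2}\int_{|y-x(t)|\geq R/(2N(t))}|w(t,y)|^{2}\,dy\,dt\\
&\qquad=:\mathrm{I}+\mathrm{II}.
\end{align*}

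First I would establish the auxiliary estimate $\sup_{t\geq 0}\int_{|y-x(t)|\geq R/N(t)}|w(t,y)|^{2}dy=o_{R}(1)$ uniformly in $K\geq 1$. Since the Littlewood–Paley kernel $K^{2}\phi^{\vee}(K\cdot)$ is Schwartz with $L^{1}$ norm $\lesssim 1$, one writes $|w(t,y)|^{2}\lesssim \int K^{2}|\phi^{\vee}(K(y-z))||u(t,z)|^{2}dz$ via Cauchy–Schwarz, then splits the $z$-integral at $|z-x(t)|=R/(4N(t))$: the near-$x(t)$ part gains rapid decay $(N(t)/(RK))^{M}$ from $\phi^{\vee}$ while the far-$x(t)$ part is $o_{R}(1)$ by \eqref{eq:sp_frq_loc} applied to $u$. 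This handles $\mathrm{II}$: by the Calder\'on–Zygmund theorem $\|\E(|w|^{2})(t)\|_{L^{2}}^{2}\lesssim\|w(t)\|_{L^{4}}^{4}$, so partitioning $[0,T]$ into small intervals $J_{l}$ on which $N(t)\sim_{u} N(J_{l})$ and $\|w\|_{L^{4}_{t,x}(J_{l}\times\R^{2})}^{4}\lesssim 1$, we get $\mathrm{II}\lesssim_{u}o_{R}(1)\sum_{l}N(J_{l})\sim_{u}o_{R}(1)\int_{0}^{T}N(t)^{3}dt=o_{R}(1)K$.

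The main obstacle is $\mathrm{I}$, because $\E(|w|^{2})$ is globally defined even when $|w|^{2}$ is spatially localized, and the kernel $\mathcal{K}(z)\lesssim|z|^{-2}$ is not integrable at infinity in $\R^{2}$. To resolve this I would decompose $|w|^{2}=\mathbf{1}_{|z-x(t)|\leq R/(4N(t))}|w(t,z)|^{2}+\mathbf{1}_{|z-x(t)|> R/(4N(t))}|w(t,z)|^{2}=:f_{in}+f_{out}$. For the outer piece, $L^{2}$-boundedness of $\E$ gives $\|\E(f_{out})(t)\|_{L^{2}}^{2}\lesssim\int_{|z-x(t)|>R/(4N(t))}|w(t,z)|^{4}dz$, and, partitioning $[0,T]$ into small intervals as above, lemma \ref{lem:IM_pre_err3} yields an $o_{R}(1)K$ contribution. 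For the inner piece, on the region $|x-x(t)|\geq R/(2N(t))$ the reverse triangle inequality forces $|x-z|\geq|x-x(t)|/2$ whenever $z\in\supp(f_{in})$; combining this with the size bound $|\mathcal{K}(x-z)|\lesssim|x-z|^{-2}$ and mass conservation gives the pointwise estimate $|\E(f_{in})(t,x)|\lesssim|x-x(t)|^{-2}$, so
\[
\int_{|x-x(t)|\geq R/(2N(t))}|\E(f_{in})(t,x)|^{2}dx\lesssim\int_{|x-x(t)|\geq R/(2N(t))}|x-x(t)|^{-4}dx\lesssim \frac{N(t)^{2}}{R^{2}}.
\]
Multiplying by $N(t)\|w(t)\|_{L^{2}}^{2}\lesssim N(t)$ and integrating, this contributes at most $R^{-2}\int_{0}^{T}N(t)^{3}dt=o_{R}(1)K$. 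Summing the two pieces completes the estimate for $\mathrm{I}$, and hence the lemma. The identity case $\E=\mathrm{Id}$ is a direct simplification of the same argument, using lemma \ref{lem:IM_pre_err3} for $\mathrm{I}$ and the auxiliary $o_{R}(1)$ estimate for $\mathrm{II}$, with no pointwise kernel manipulation needed.
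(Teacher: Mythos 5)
Your proposal is correct and follows essentially the same route as the paper: split the interaction region according to distance from $x(t)$, decompose $|w|^{2}$ into pieces supported inside/outside a ball of radius $\sim R/N(t)$ around $x(t)$, use the Calder\'{o}n--Zygmund size bound $|\mathcal{K}(z)|\lesssim |z|^{-2}$ (with the principal value reducing to an absolutely convergent integral since $x$ lies far from $\supp f_{in}$) on the inner piece, lemma \ref{lem:IM_pre_err3} together with $L^{2}$/$L^{4}$ boundedness on the outer piece, and sum over small intervals via $\sum_{l}N(J_{l})\sim_{u}K$. The only differences are cosmetic (a pointwise kernel bound in place of the paper's Minkowski step, and an explicitly stated $o_{R}(1)$ mass-localization estimate for $w$), so no changes are needed.
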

\begin{proof}
First, partition $[0,T]$ into finitely many consecutive small intervals $J_{l}$. Next, observe from the Fubini-Tonelli theorem that
\begin{align}
\int_{|x-y| \geq \frac{R}{N(t)}}|\E(|w|^{2})(t,x)|^{2}|w(t,y)|^{2}dxdy &= \int_{\R^{2}}|\E(|w|^{2})(t,x)|^{2}\left(\int_{|x-y|\geq \frac{R}{N(t)}} |w(t,y)|^{2}dy\right)dx \nonumber\\
&\leq \int_{|x-x(t)| \geq \frac{R}{2N(t)}} |\E(|w|^{2})(t,x)|^{2}\left(\int_{|y-x(t)|\leq \frac{R}{2N(t)}}|w(t,y)|^{2}dy\right)dx \nonumber\\
&\phantom{=}+ \int_{\R^{2}}|\E(|w|^{2})(t,x)|^{2}\left(\int_{|y-x(t)|\geq \frac{R}{2N(t)}} |w(t,y)|^{2}dy\right)dx \nonumber\\
&\lesssim \int_{|x-x(t)| \geq \frac{R}{2N(t)}} |\E(|w|^{2})(t,x)|^{2}dx + o_{R}(1)\|u(t)\|_{L_{x}^{4}(\R^{2})}^{4},
\end{align}
where we use the reverse triangle inequality to obtain the penultimate inequality and we apply the Calder\'{o}n-Zygmund theorem to $\E$ together with mass conservation and lemma \ref{lem:IM_pre_err1} to obtain the ultimate inequality. To estimate the first term on the RHS of the ultimate inequality, first observe that
\begin{align}
\int_{\R^{2}}\mathrm{1}_{x(t), \geq \frac{R}{2N(t)}}(x) |\E(|w|^{2})(t,x)|^{2}dx &\lesssim\int_{\R^{2}}1_{x(t),\geq \frac{R}{2N(t)}}(x)|\E\paren*{1_{x(t),\leq\frac{R}{4N(t)}}|w|^{2}}(t,x)|^{2}dx \nonumber\\
&\phantom{=}+\int_{\R^{2}}1_{x(t),\geq \frac{R}{2N(t)}}(x)|\E\paren*{1_{x(t),>\frac{R}{4N(t)}} |w|^{2}}(t,x)|^{2}dx \nonumber\\
&\eqqcolon \mathrm{Term}_{1}(t)+\mathrm{Term}_{2}(t).
\end{align}

To estimate $\mathrm{Term}_{2}$, we use the Calder\'{o}n-Zygmund theorem together with lemma \ref{lem:IM_pre_err3} to obtain that 
\begin{equation}
\int_{J_{l}}N(t)\mathrm{Term}_{2}(t)dt \lesssim \int_{J_{l}}N(t)\|\mathrm{1}_{x(t), > \frac{R}{4N(t)}}w(t)\|_{L_{x}^{4}(\R^{2})}^{4}dt = o_{R}(1)N(J_{l}). 
\end{equation}

To estimate $\mathrm{Term}_{1}$, we observe that
\begin{equation}
\E\paren*{|w|^{2}1_{x(t),\leq \frac{R}{4N(t)}}}(x) = \PV \int_{\R^{2}}\mathcal{K}(z) (1_{x(t),\leq \frac{R}{4N(t)}}|w|^{2})(x-z)dz, \qquad \forall x\in\R^{2}.
\end{equation}
If $|x-z-x(t)|\leq \frac{R}{4N(t)}$ and $|x-x(t)|\geq \frac{R}{2N(t)}$, then by the reverse triangle inequality,
\begin{equation}
|z| \geq |z-(x-x(t)) + (x-x(t))| \geq -|z-(x-x(t))| +|x-x(t)| \geq \frac{R}{4N(t)}.
\end{equation}
So, we can write
\begin{equation}
\E\paren*{|w|^{2}1_{x(t),\leq\frac{R}{4N(t)}}}(x)=\int_{\R^{2}}\mathcal{K}(z)1_{\geq\frac{R}{4N(t)}}(z) (1_{x(t),\leq\frac{R}{4N(t)}}|w|^{2})(x-z)dz, \qquad \forall |x-x(t)| \geq \frac{R}{2N(t)},
\end{equation}
where the integral is absolutely convergent by the CZK size condition $|\mathcal{K}(z)|\lesssim_{\E} |z|^{-2}$. We now make a change of variable in the inner integral to write
\begin{equation}
\mathrm{Term}_{1}(t) = \int_{\R^{2}}\mathrm{1}_{x(t), \geq\frac{R}{2N(t)}}(x)\paren*{\int_{\R^{2}} (\mathrm{1}_{>\frac{R}{4N(t)}}\mathcal{K})(x-z) |(\mathrm{1}_{x(t), \leq \frac{R}{4N(t)}}w(t))(z)|^{2}dz}^{2}dx.
\end{equation}
Then using Minkowski's inequality together with the CZK size condition , we obtain that the RHS of the preceding equality is $\lesssim$
\begin{align}
\left(\int_{|z-x(t)|\leq \frac{R}{4N(t)}}\left(\int_{|x-z|\geq \frac{R}{4N(t)}}\frac{1}{|x-z|^{4}} |w(t,z)|^{4}dx\right)^{1/2}dz\right)^{2}&\lesssim \left(\int_{|z-x(t)| \leq \frac{R}{4N(t)}}|w(t,z)|^{2} \left(\int_{|x|\geq \frac{R}{4N(t)}} \frac{1}{|x|^{4}}dx\right)^{1/2}dz\right)^{2} \nonumber\\
&\lesssim \frac{N(t)^{2}\|u\|_{L_{t}^{\infty}L_{x}^{2}}^{4}}{R^{2}}, 
\end{align}
where the ultimate inequality follows from dilation invariance. It follows from mass conservation that
\begin{equation}
\int_{J_{l}}N(t)\mathrm{Term}_{1}(t)dt \lesssim o_{R}(1)\int_{J_{l}}N(t)^{3}dt.
\end{equation}

Now summing our final estimates over $l$ completes the proof.
\end{proof}

Lastly, given an admissible blowup solution $u$ with frequency scale function $N(\cdot)$, we need the existence of a sequence frequency scale functions $N_{m}:I\rightarrow [0,\infty)$ which are pointwise dominated by $N(t)$ and slowly varying compared to $N(t)$. The existence of such a sequence was proved in \cite{Dodson2015} by a ``smoothing algorithm" (see subsection 6.1 of that reference). Although that work considered the focusing mass-critical NLS in dimensions $d\geq 1$, the argument does not rely on any property of the equation which is not shared by the eeDS equation \eqref{eq:DS}.

\begin{lemma}[Smoothing algorithm]\label{lem:IM_pre_sm} 
Let $u:I\times\R^{2}\rightarrow\mathbb{C}$ be an admissible blowup solution to \eqref{eq:DS} with frequency scale function $N(\cdot)$. Then there exists a sequence of frequency scale functions $N_{m}:I\rightarrow [0,+\infty)$ with the following properties:
\begin{enumerate}[(i)]
\item
$N_{0}=N$;
\item
$N_{m}(t)\leq N(t)$ for all $t\in [0,\infty)$;
\item
If $C(u)>0$ is such that for any small interval $J$,
\begin{equation}
\frac{1}{C(u)}N(J) \leq N(t)\leq N(J), \qquad \forall t\in J,
\end{equation}
and $\int_{0}^{T}N(t)dt=K$, then
\begin{equation}
\int_{0}^{T}N_{m}(t)dt\gtrsim \frac{K}{C(u)};
\end{equation}
\item
\begin{equation}
\liminf_{T\rightarrow\infty} \dfrac{\int_{0}^{T} |N_{m}'(t)|dt}{\int_{0}^{T}N_{m}(t)\|u(t)\|_{L_{x}^{4}(\R^{2})}^{4}dt} \leq \frac{2}{m}, \qquad \forall m\in\N_{0}.
\end{equation}
\end{enumerate}
\end{lemma}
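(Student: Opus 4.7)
The statement is due to Dodson (\cite{Dodson2015}) and does not rely on any structural feature of the nonlinearity beyond the derivative bound $|N'(t)| \lesssim \epsilon_{1}^{-1/2}N(t)^{3}$ afforded by the admissibility hypothesis of definition \ref{def:adm_sol}, together with the local constancy properties of the APMS parameters established in subsection \ref{ssec:MMB_ref} (in particular corollary \ref{cor:lc_int}). The plan is therefore to reproduce Dodson's iterative smoothing algorithm and verify at each step that it transfers to admissible blowup solutions of \eqref{eq:DS}. In what follows, the $L_{x}^{4}$ density $\|u(t)\|_{L_{x}^{4}(\R^{2})}^{4}$ plays the role of a natural weight: on each small interval $J_{l}$ it integrates to $1$ by definition \ref{def:small_int}, while $N(t)$ itself is comparable to $N(J_{l})$ on $J_{l}$.

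The construction proceeds by induction on $m$. Set $N_{0}\coloneqq N$. Given $N_{m}$, one scans $t\in[0,\infty)$ in increasing order and identifies the maximal consecutive subintervals $J$ on which the local ratio
\begin{equation}
\frac{\int_{J}|N_{m}'(t)|dt}{\int_{J}N_{m}(t)\|u(t)\|_{L_{x}^{4}(\R^{2})}^{4}dt}
\end{equation}
exceeds the threshold $2/(m+1)$; call such $J$ \emph{bad} and all other maximal intervals \emph{good}. On bad intervals, define $N_{m+1}\coloneqq \essinf_{s\in J}N_{m}(s)$, i.e., flatten $N_{m}$ to a constant; on good intervals set $N_{m+1}\coloneqq N_{m}$. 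Properties (i) and (ii) follow by induction immediately. Property (iv) is essentially definitional: on good intervals the ratio is bounded by $2/(m+1)\leq 2/m$ by construction, while on bad intervals the numerator vanishes because the flattened $N_{m+1}$ is locally constant. Summing the contributions over a partition of $[0,T]$ into consecutive good/bad subintervals and using that the boundary corrections from partial intervals at the endpoints of $[0,T]$ are $O(1)$-uniform in $T$ gives the $\liminf$ bound on the global ratio.

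The main obstacle is property (iii), which must be established uniformly in $m$ despite the iterative truncation. The key ingredients are: (a) the slow-variation estimate $|N_{m}'(t)|\lesssim \epsilon_{1}^{-1/2}N_{m}(t)^{3}$ inherited from $N$, which prevents bad intervals from being arbitrarily short relative to the local scale; (b) the comparability $\frac{1}{C(u)}N(J_{l})\leq N(t)\leq N(J_{l})$ on small intervals from corollary \ref{cor:lc_int}, which guarantees that flattening $N_{m}$ to its infimum on a small interval loses at most a factor $C(u)^{-1}$ of its integral; and (c) the observation that once an interval has been flattened, it is automatically good at all subsequent stages (its derivative integral is zero), so the truncations at different stages $m$ do not compound on overlapping regions. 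Combining these with a direct counting of the total measure of flattened intervals through the $m$ stages yields the lower bound $\int_{0}^{T}N_{m}(t)dt\gtrsim K/C(u)$ uniformly in $m$, completing the construction. The technical heart of the argument is therefore a careful bookkeeping of bad intervals across stages, which is where the admissibility condition on $N'$ is genuinely needed.
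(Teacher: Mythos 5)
The paper itself contains no proof of this lemma: it is quoted from subsection 6.1 of \cite{Dodson2015}, with the remark (correct, and which you echo) that the construction uses nothing about the equation beyond the bounds on $N$, $N'$ and the small-interval structure. So the question is whether your reconstruction of the algorithm actually works, and as written it does not. The central problem is property (iii). Your ``bad'' intervals are defined by thresholding the ratio $\int_J|N_m'|/\int_J N_m\|u\|_{L_x^4}^4$, but the only a priori information is $|N_m'(t)|\lesssim \epsilon_1^{-1/2}N_m(t)^3$, so on a single small interval $J_l$ one typically has $\int_{J_l}|N_m'|\sim \epsilon_1^{-1/2}N(J_l)$ against $\int_{J_l}N_m\|u\|_{L_x^4}^4\sim N(J_l)$, i.e.\ a ratio of size $\epsilon_1^{-1/2}\gg 2/(m+1)$. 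Hence the maximal bad intervals (which, incidentally, are not a canonical decomposition, since the family of intervals with ratio above a threshold is not closed under unions) are in general long unions of many small intervals, across which $N$ can vary by arbitrarily large factors; in scenario II of lemma \ref{lem:APMS_class} one even has $\liminf_{t\to\infty}N(t)=0$. Flattening $N_m$ to $\inf_J N_m$ on such an interval can destroy essentially all of $\int_J N_m\,dt$; your ingredient (b) (loss of at most $C(u)^{-1}$) is only valid when $J$ is a small interval, and claim (c) fails because a stage-$(m+1)$ bad interval can strictly contain previously flattened intervals together with their neighbours, so the losses compound in $m$. But (iii) must hold with a constant independent of $m$ — this uniformity is exactly what is used when $m$ is sent to infinity in the bookkeeping of subsections \ref{ssec:QS_dfoc} and \ref{ssec:QS_foc} — and your counting argument gives no mechanism for it.

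The mechanism you offer for (iv) is also incorrect. Replacing $N_m$ by $\inf_J N_m$ on $J=[a,b]$ does not make the variation there vanish: it creates jumps at $a$ and $b$ (so $N_{m+1}$ is no longer absolutely continuous, while (iv) presupposes an $N_{m+1}'$), and after any smoothing of the junctions it still contributes $N_m(a)+N_m(b)-2\inf_J N_m$ to $\int|N_{m+1}'|$. If $N_m$ is monotone on $J$, this boundary contribution equals $\int_J|N_m'|$, so your step gains nothing at all on monotone stretches — a direct counterexample to ``the numerator vanishes because the flattened $N_{m+1}$ is locally constant.'' A workable argument must instead exploit that $\int_0^T(N')^+\,dt-\int_0^T(N')^-\,dt=N(T)-N(0)\in[-1,1]$, so only the upward excursions of $N$ need to be tamed, and should operate on the discrete sequence of values $N(J_l)$ over small intervals, reinserting smooth interpolations between modified values in the spirit of the construction of $N_2,\xi_2$ in the proof of corollary \ref{cor:adm_sol}; the $\liminf$ in (iv) is what absorbs the $O(1)$ boundary terms. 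As it stands, your proposal establishes neither (iii) nor (iv), so the lemma should either be proved along these corrected lines or simply cited, as the paper does.
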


\subsection{Construction of Morawetz functional}\label{ssec:QS_con}
In this subsection, we fix an admissible blowup solution $u$ satisfying $\int_{0}^{\infty}N(t)^{3}dt=\infty$ and use the sequence $N_{m}(\cdot)$ of frequency scale functions to construct a family of frequency-localized interaction Morawetz type functionals adapted to $u$.

Define parameters $0<\eta_{3}\ll \eta_{2}\ll \eta_{1}\ll 1$ and $R\gg 1$. We allow $\eta_{j}$ to depend on $\eta_{k}$ for $k>j$ in addition to any implicit constants and the critical mass $M(u)$. Similarly, we allow $R$ to depend on any of the implicit constants and the mass of the solution $M(u)$. We will state the precise relationship between all the parameters at the end of subsection \ref{ssec:QS_dfoc} for the defocusing case and subsection \ref{ssec:QS_foc} for the focusing case.

In order to construct our family of interaction Morawetz functionals, we seek an approximate bump function which is essentially localized to the set
\begin{equation}
\{(x,y)\in\R^{4} : |x-y|N_{m}(t)\sim_{R,\eta_{2}} 1\}.
\end{equation}
More precisely, let $\chi:\R\rightarrow [0,+\infty)$ be an even bump function satisfying $\|\chi\|_{L^{1}(\R)}=1$ and $\supp(\chi)\subset (-1,1)$. Define a function $\zeta:\R\rightarrow [0,1]$ by the formula
\begin{equation}
\zeta(r) \coloneqq \frac{1}{\eta_{2}R}\int_{\R} 1_{[-(1+2\eta_{2})R,(1+2\eta_{2})R]}(r-s)\chi\left(\frac{s}{\eta_{2}R}\right)ds.
\end{equation}
Observe that $\zeta$ is nonnegative, decreasing, and satisfies
\begin{equation}
\zeta(r) = \begin{cases} 1, & {|r| \leq R} \\ 0, & {|r|\geq (1+4\eta_{2})R}\end{cases}.
\end{equation}
Moreover, by Young's inequality, $\zeta$ satisfies the derivative estimates
\begin{equation}
|\zeta^{(n)}(r)| \lesssim_{n} \frac{1}{(\eta_{2}R)^{n}}, \qquad \forall n\in\N_{0}.
\end{equation}
Next, define a compactly supported function $\varphi: \R \rightarrow [0,+\infty)$ by the formula
\begin{equation}
\varphi(r) \coloneqq \frac{1}{\|\zeta(|\cdot|)\|_{L^{1}(\R^{2})}} \int_{\mathbb{S}^{1}}\int_{\R^{2}} \zeta(|r\omega-z|)\zeta(|z|)dzd\omega,
\end{equation}
where $d\omega$ denotes the unit normalized surface measure on the circle $\mathbb{S}^{1}$. Observe that by rotation invariance of the measure, for any $x,y\in\R^{2}$, we have that
\begin{align}
\varphi(|x-y|) &=  \frac{1}{\|\zeta(|\cdot|)\|_{L^{1}}}\int_{\mathbb{S}^{1}}\int_{\R^{2}}\zeta(|x-y|\omega-z)\zeta(|z|)dzd\omega \nonumber\\
&= \frac{1}{\|\zeta(|\cdot|)\|_{L^{1}}}\int_{\mathbb{S}^{1}}\int_{\R^{2}}\zeta(|\mathcal{O}_{\omega}(x-y)-z|)\zeta(|z|)dzd\omega\nonumber\\
&=\frac{1}{\|\zeta(|\cdot|)\|_{L^{1}}}\int_{\mathbb{S}^{1}}\int_{\R^{2}}\zeta(|x-y-\mathcal{O}_{\omega}^{*}z|)\zeta(|\mathcal{O}_{\omega}^{*}z|)dzd\omega\nonumber\\
&=\frac{1}{\|\zeta(|\cdot|)\|_{L^{1}}}\int_{\R^{2}}\zeta(|x-y-z|)\zeta(|z|)dz.
\end{align}
Moreover, by translation invariance,
\begin{equation}
\varphi(|x-y|) = \|\zeta(|\cdot|)\|_{L^{1}}^{-1}\int_{\R^{2}} \zeta(|x-z|)\zeta(|y-z|)dz.
\end{equation}
Additionally, $\varphi$ is decreasing, $\supp(\varphi) \subset [-(2+8\eta_{2})R,(2+8\eta_{2})R]$, and $\varphi$ satisfies the derivative estimates
\begin{equation}
|\varphi^{(n)}(r)| \lesssim_{n} \frac{1}{(\eta_{2}R)^{n}}, \qquad \forall n\in\N_{0}.
\end{equation}

For later use in the estimation of error terms in subsections \ref{ssec:QS_dfoc} and \ref{ssec:QS_foc}, we decompose $\varphi$ into three pieces $\varphi_{1}, \varphi_{2},\varphi_{3}$ respectively defined by the formulae
\begin{align}
\varphi_{1}(r) &\coloneqq \|\zeta(|\cdot|)\|_{L^{1}}^{-1}\paren*{\int_{\S^{1}}\int_{|z|\leq (1-4\eta_{2})R} \zeta(|r\omega-z|) \zeta(|z|) dzd\omega} 1_{[0,\eta_{2}R]}(r), \\
\varphi_{2}(r) &\coloneqq  \|\zeta(|\cdot|)\|_{L^{1}}^{-1}\paren*{\int_{\S^{1}}\int_{|z|>(1-4\eta_{2})R}\zeta(|r\omega-z|) \zeta(|z|) dzd\omega} 1_{[0,\eta_{2}R]}(r), \\
\varphi_{3}(r) &\coloneqq \|\zeta(|\cdot|)\|_{L^{1}}^{-1}\paren*{\int_{\R^{2}}\zeta(|r\omega-z|) \zeta(|z|) dzd\omega} 1_{(\eta_{2}R,\infty)}(r).
\end{align}

\begin{lemma}[$\varphi_{j}$ properties]\label{lem:vph_est}
The functions $\varphi_{1},\varphi_{2},\varphi_{3}$ satisfy the following properties.
\begin{enumerate}[(i)]
\item\label{item:vph_est_1}
There exists a constant $C_{\eta_{2}}\sim 1$ such that
\begin{equation}
\varphi_{1}(r) = C_{\eta_{2}} 1_{[-\eta_{2}R,\eta_{2}R]}(r).
\end{equation}
\item\label{item:vph_est_2}
We have that
\begin{equation}
\varphi_{2}(r) = \|\zeta(|\cdot|)\|_{L^{1}}^{-1}\paren*{\int_{\S^{1}}\int_{(1-4\eta_{2})R<|z|\leq (1+5\eta_{2})R} \zeta(|r\omega-z|)\zeta(|z|)dzd\omega}1_{[-\eta_{2}R,\eta_{2}R]}(r)
\end{equation}
and
\begin{equation}
|\varphi_{2}'(r)| \lesssim \frac{1}{R}, \qquad \forall |r| <\eta_{2}R.
\end{equation}
\item\label{item:vph_est_3}
We have that
\begin{equation}
\varphi_{3}(r) = \|\zeta(|\cdot|)\|_{L^{1}}^{-1}\paren*{\int_{\S^{1}}\int_{|z|\leq (1+4\eta_{2})R} \zeta(|r\omega-z|)\zeta(|z|)dzd\omega} 1_{(\eta_{2}R,(2+8\eta_{2})R)}(|r|).
\end{equation}
\end{enumerate}
\end{lemma}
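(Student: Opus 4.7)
The three assertions of the lemma are each direct consequences of the explicit support and derivative properties of $\zeta$, combined with the triangle inequality. The plan is to unfold each definition carefully and read off what the triangle inequality forces on the integrands.

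For assertion \ref{item:vph_est_1}, I would observe that whenever $|r|\leq \eta_{2}R$, $\omega\in\mathbb{S}^{1}$, and $|z|\leq(1-4\eta_{2})R$, the triangle inequality gives $|r\omega - z|\leq |r|+|z|\leq (1-3\eta_{2})R < R$, so that $\zeta(|r\omega-z|)=1$; the factor $\zeta(|z|)$ is already identically one on the domain of integration. Hence the $z$-integral simply computes the area $\pi(1-4\eta_{2})^{2} R^{2}$ of the disk, and the angular average over $\mathbb{S}^{1}$ with respect to the unit-normalized measure contributes a factor of one. The resulting constant, divided by $\|\zeta(|\cdot|)\|_{L^{1}(\R^{2})}$, is comparable to unity because the support properties of $\zeta$ yield $\pi R^{2}\leq \|\zeta(|\cdot|)\|_{L^{1}(\R^{2})}\leq \pi(1+4\eta_{2})^{2} R^{2}$.

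For assertion \ref{item:vph_est_2}, the first step is to truncate the $z$-integral. If $|r|<\eta_{2}R$ and $|z|>(1+5\eta_{2})R$, then $|r\omega - z|\geq|z|-|r|>(1+4\eta_{2})R$, so that $\zeta(|r\omega-z|)=0$. This gives the claimed alternative expression for $\varphi_{2}$. The derivative bound will be obtained by differentiating under the integral, producing an integrand involving $\zeta'(|r\omega-z|)(r-\omega\cdot z)/|r\omega-z|$; the latter geometric factor is bounded by one in absolute value. The derivative $\zeta'$ is supported where $R\leq|r\omega-z|\leq(1+4\eta_{2})R$ and satisfies $|\zeta'|\lesssim(\eta_{2}R)^{-1}$. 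For each fixed $\omega$ the measure of the $z$-set contributing to the integral is $O(\eta_{2}R^{2})$, and after dividing by $\|\zeta(|\cdot|)\|_{L^{1}(\R^{2})}\sim R^{2}$, the two factors of $\eta_{2}$ cancel and one reads off $|\varphi_{2}'(r)|\lesssim R^{-1}$.

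For assertion \ref{item:vph_est_3}, the inner integrand of $\varphi_{3}$ is automatically supported in $|z|\leq(1+4\eta_{2})R$ by the support of $\zeta$, which gives the alternative expression. The remaining statement is a support bound in $r$: if $|r|>(2+8\eta_{2})R$, then for every $z$ contributing to the $z$-integral (i.e.\ $|z|\leq (1+4\eta_{2})R$) the triangle inequality yields $|r\omega - z|\geq|r|-|z|>(1+4\eta_{2})R$, killing $\zeta(|r\omega-z|)$. Combined with the indicator $1_{(\eta_{2}R,\infty)}$ appearing in the definition, this yields the claimed support in $(\eta_{2}R,(2+8\eta_{2})R)$. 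There is no genuine obstacle in any of these steps; the lemma amounts to bookkeeping with the triangle inequality and the prescribed quantitative bounds on $\zeta$, with the only subtle point being the cancellation of the two $\eta_{2}$ factors in the derivative estimate of part \ref{item:vph_est_2}.
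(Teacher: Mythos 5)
Your proposal is correct and follows essentially the same route as the paper: triangle-inequality bookkeeping for the support statements, the explicit disk-area computation together with $\|\zeta(|\cdot|)\|_{L^{1}}\sim R^{2}$ for part (i), and differentiation under the integral with $|\zeta'|\lesssim(\eta_{2}R)^{-1}$ against a $z$-region of measure $O(\eta_{2}R^{2})$ for the derivative bound in part (ii). The only cosmetic difference is that you invoke the support of $\zeta'$ as an alternative source of the $O(\eta_{2}R^{2})$ measure, whereas the paper reads it off the truncated annulus $(1-4\eta_{2})R<|z|\leq(1+5\eta_{2})R$; both give the same cancellation of the $\eta_{2}$ factors.
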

\begin{proof}
For \ref{item:vph_est_1}, it is tautological that $\varphi_{1}\equiv 0$ on $(\eta_{2}R,\infty)$. We recall that $\zeta\equiv 1$ on the interval $[-R,R]$. By the triangle inequality,
\begin{equation}
|r\omega-z|\leq R, \qquad \forall (r,z)\in\R\times\R^{2} \enspace \text{such that} \enspace |r|\leq \eta_{2}R \enspace \text{and} \enspace |z|\leq (1-4\eta_{2})R.
\end{equation}
Therefore, for $|r|\leq \eta_{2}R$,
\begin{equation}
\varphi_{1}(r) = \|\zeta(|\cdot|)\|_{L^{1}}^{-1}\int_{\S^{1}}\int_{|z|\leq (1-4\eta_{2}R)}1 dzd\omega = \frac{\pi (1-4\eta_{2})^{2}R^{2}}{\|\zeta(|\cdot|)\|_{L^{1}}} \sim 1,
\end{equation}
since $\|\zeta(|\cdot|)\|_{L^{1}}\sim R^{2}$.

For \ref{item:vph_est_2}, the first assertion is immediate from the triangle inequality and the support properties of $\zeta$. For the second assertion, we use the chain rule to obtain
\begin{equation}
\varphi_{2}'(r) = \|\zeta(|\cdot|)\|_{L^{1}}^{-1}\int_{\S^{1}}\int_{(1-4\eta_{2})R<|z| \leq (1+5\eta_{2})R} \zeta'(|r\omega-z|) \frac{(r\omega-z)\cdot\omega}{|r\omega-z|} \zeta(|z|)dzd\omega, \qquad \forall |r| <\eta_{2}R.
\end{equation}
Since $|\zeta'(r)| \lesssim \frac{1}{\eta_{2}R}$ by Young's inequality and $|\zeta|\leq 1$, it follows that for $|r|<\eta_{2}R$,
\begin{align}
|\varphi_{2}'(r)| &\lesssim \|\zeta(|\cdot|)\|_{L^{1}}^{-1}\int_{\S^{1}}\int_{(1-4\eta_{2})R<|z|\leq (1+5\eta_{2})R} \frac{1}{\eta_{2}R}dzd\omega \lesssim \frac{R}{\|\zeta(|\cdot|)\|_{L^{1}}} \sim \frac{1}{R},
\end{align}
since $\|\zeta(|\cdot|)\|_{L^{1}} \sim R^{2}$.

\ref{item:vph_est_3} is immediate from the triangle inequality and the support properties of $\zeta$.
\end{proof}

Now define the function $\psi_{R,m}(t,\cdot): [0,\infty)\rightarrow [0,\infty)$ by the formula
\begin{equation}
\psi_{R,m}(t,r) \coloneqq \frac{1}{r}\int_{0}^{r}\varphi\paren*{\frac{N_{m}(t)}{R}s}ds.
\end{equation}
Observe that $\psi_{R,m}(t)\geq 0$ and $\p_{r}(r\psi_{R,m}(t,r))=\varphi(\frac{N_{m}(t)r}{R})\geq 0$. Additionally, since $\supp(\varphi)\subset (-(2+8\eta_{2})R,(2+8\eta_{2})R)$, we have that
\begin{equation}
\psi_{R,m}(t,r) = \frac{1}{r}\int_{0}^{\frac{(2+8\eta_{2})R^{2}}{N_{m}(t)}}\varphi\paren*{\frac{N_{m}(t)}{R}s}ds, \qquad \forall t\in [0,\infty), |x|\geq \frac{(2+8\eta_{2})R^{2}}{N_{m}(t)}.
\end{equation}
Moreover, by the chain rule, mean value theorem, and induction, we see that $\psi_{R,m}$ satisfies the derivative estimates
\begin{equation}\label{eq:psi_d_est} 
|\p_{r}^{n}\psi_{R,m}(t,r)| \lesssim_{n} \min\left\{\frac{1}{r^{n}}, \paren*{\frac{N_{m}(t)}{R}}^{n}\frac{1}{(\eta_{2}R)^{n}}\right\}, \qquad \forall n\in\N_{0}.
\end{equation}
We decompose $\psi_{R,m}$ into three pieces $\psi_{R,m,1}, \psi_{R,m,2}, \psi_{R,m,3}$ respectively defined by the formulae
\begin{align}
\psi_{R,m,1}(t,r) &\coloneqq \frac{1}{r}\int_{0}^{r}\varphi_{1}\paren*{\frac{N_{m}(t)s}{R}}ds, \\
\psi_{R,m,2}(t,r) &\coloneqq \frac{1}{r}\int_{0}^{r}\varphi_{2}\paren*{\frac{N_{m}(t)s}{R}}ds, \\
\psi_{R,m,3}(t,r) &\coloneqq \frac{1}{r}\int_{0}^{r}\varphi_{3}\paren*{\frac{N_{m}(t)s}{R}}ds.
\end{align}

\begin{lemma}[$\psi_{R,m,j}$ properties]\label{lem:psi_est}
The functions $\psi_{R,m,1},\psi_{R,m,2},\psi_{R,m,3}$ satisfy the following properties uniformly in $m$.
\begin{enumerate}[(i)]
\item\label{item:psi_est_1}
We have that
\begin{equation}
\psi_{R,m,1}(r) =
\begin{cases}
C_{\eta_{2}}, & {|r|\leq \frac{\eta_{2}R^{2}}{N_{m}(t)}} \\
\frac{1}{r}\int_{0}^{\frac{\eta_{2}R^{2}}{N_{m}(t)}} \varphi_{1}\paren*{\frac{N_{m}(t)s}{R}}ds, & {|r|>\frac{\eta_{2}R^{2}}{N_{m}(t)}}
\end{cases}
\end{equation}
and
\begin{equation}
|(\p_{r}\psi_{R,m,1})(r)| \lesssim \frac{\eta_{2}R^{2}}{r^{2}N_{m}(t)}1_{(\eta_{2}R^{2}/N_{m}(t),\infty)}(r), \qquad \forall r\neq \frac{\eta_{2}R^{2}}{N_{m}(t)},
\end{equation}
where $C_{\eta_{2}}$ is the same constant as in the statement of lemma \ref{lem:vph_est}.
\item\label{item:psi_est_2}
We have that
\begin{equation}
|\psi_{R,m,2}(r)| \lesssim \eta_{2}1_{[0,\eta_{2}R^{2}/N_{m}(t)]}(r) + \frac{\eta_{2}^{2}R^{2}}{rN_{m}(t)}1_{(\eta_{2}R^{2}/N_{m}(t),\infty)}(r)
\end{equation}
and
\begin{equation}
|(\p_{r}\psi_{R,m,2})(r)| \lesssim \frac{N_{m}(t)}{R^{2}}1_{[0,\eta_{2}R^{2}/N_{m}(t)]}(r) + \frac{\eta_{2}^{2}R^{2}}{r^{2}N_{m}(t)}1_{(\eta_{2}R^{2}/N_{m}(t),\infty)}(r), \qquad \forall r\neq \frac{\eta_{2}R^{2}}{N_{m}(t)}.
\end{equation}
\item\label{item:psi_est_3}
We have that
\begin{equation}
\psi_{R,m,3} = 
\begin{cases}
0, & {|r|\leq \frac{\eta_{2}R^{2}}{N_{m}(t)}} \\
\sim 1, & {\frac{\eta_{2}R^{2}}{N_{m}(t)}<r<\frac{(2+8\eta_{2})R^{2}}{N_{m}(t)}} \\
\frac{1}{r}\int_{0}^{\frac{(2+8\eta_{2})R^{2}}{N_{m}(t)}}\varphi_{3}\paren*{\frac{N_{m}(t)s}{R}}ds, & {|r|\geq \frac{(2+8\eta_{2})R^{2}}{N_{m}(t)}}
\end{cases}
\end{equation}
and
\begin{equation}
|(\p_{r}\psi_{R,m,3})(r)| \lesssim \frac{1}{r}1_{(\eta_{2}R^{2}/N_{m}(t),\infty)}(r), \qquad \forall r\neq \frac{\eta_{2}R^{2}}{N_{m}(t)}.
\end{equation}
\end{enumerate}
\end{lemma}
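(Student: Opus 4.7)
The plan is to prove each of the three assertions by direct computation from the definition $\psi_{R,m,j}(t,r) = \tfrac{1}{r}\int_{0}^{r}\varphi_{j}(N_{m}(t)s/R)ds$, combined with the explicit structural information about $\varphi_{j}$ supplied by Lemma \ref{lem:vph_est}. The key observation is that the support properties of each $\varphi_{j}$ make the defining integral essentially one of three types: (a) an integral of a constant, (b) an integral of a small-amplitude, slowly varying function, or (c) an integral that is entirely absorbed by its natural support interval as soon as $r$ exceeds a critical threshold. Throughout, the threshold in $r$ is $\eta_{2}R^{2}/N_{m}(t)$, corresponding to $N_{m}(t)s/R = \eta_{2}R$ in the change of variable $u=N_{m}(t)s/R$.

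For part \ref{item:psi_est_1}, I substitute $\varphi_{1}(r') = C_{\eta_{2}}1_{[-\eta_{2}R,\eta_{2}R]}(r')$ directly: for $r \leq \eta_{2}R^{2}/N_{m}(t)$ the integrand is identically $C_{\eta_{2}}$ on $(0,r)$, yielding $\psi_{R,m,1}(r) = C_{\eta_{2}}$; for $r > \eta_{2}R^{2}/N_{m}(t)$ only the subinterval $(0,\eta_{2}R^{2}/N_{m}(t))$ contributes, giving $\psi_{R,m,1}(r) = C_{\eta_{2}}\eta_{2}R^{2}/(rN_{m}(t))$. The derivative estimate is then immediate from direct differentiation of this explicit formula, as $\psi_{R,m,1}$ is constant below the threshold.

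For part \ref{item:psi_est_2}, I first extract from Lemma \ref{lem:vph_est}\ref{item:vph_est_2} the $L^{\infty}$ bound $\|\varphi_{2}\|_{\infty} \lesssim \eta_{2}$ (the $z$-integration region has area $\sim \eta_{2}R^{2}$ while the normalization $\|\zeta(|\cdot|)\|_{L^{1}}\sim R^{2}$) together with the stated derivative bound $\|\varphi_{2}'\|_{\infty} \lesssim 1/R$. The size bound on $\psi_{R,m,2}$ then follows by placing absolute values inside the integral, splitting into the two ranges $r \lessgtr \eta_{2}R^{2}/N_{m}(t)$. For the derivative bound on $r \leq \eta_{2}R^{2}/N_{m}(t)$, I would use the identity
\begin{equation}
\partial_{r}\psi_{R,m,2}(r) = \frac{1}{r}\Bigl(\varphi_{2}(N_{m}(t)r/R) - \tfrac{1}{r}\textstyle\int_{0}^{r}\varphi_{2}(N_{m}(t)s/R)ds\Bigr)
\end{equation}
and invoke the mean value theorem to bound the bracketed expression by $r(N_{m}(t)/R)\|\varphi_{2}'\|_{\infty} \lesssim rN_{m}(t)/R^{2}$. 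For $r > \eta_{2}R^{2}/N_{m}(t)$ the function $\psi_{R,m,2}$ reduces to a constant divided by $r$, so the derivative estimate is trivial, and the constant is bounded using $\|\varphi_{2}\|_{\infty}\lesssim \eta_{2}$ and the length $\eta_{2}R^{2}/N_{m}(t)$ of the domain of integration.

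For part \ref{item:psi_est_3}, the vanishing of $\psi_{R,m,3}$ on $r \leq \eta_{2}R^{2}/N_{m}(t)$ is immediate from the support of $\varphi_{3}\subset(\eta_{2}R,\infty)$. For $r$ beyond this threshold, the upper bound $\psi_{R,m,3}(r) \lesssim 1$ follows at once from $\|\varphi_{3}\|_{\infty}\lesssim 1$ (the $z$-integration region over $|z| \leq (1+4\eta_{2})R$ has area $\sim R^{2}$ while $\|\zeta(|\cdot|)\|_{L^{1}}\sim R^{2}$). The main obstacle, and the only nontrivial step in the whole lemma, is the matching lower bound $\psi_{R,m,3}(r)\gtrsim 1$ on $(\eta_{2}R^{2}/N_{m}(t),(2+8\eta_{2})R^{2}/N_{m}(t))$. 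I would establish this by exhibiting a concrete subinterval of $(\eta_{2}R,(2+8\eta_{2})R)$, say around $r'\sim R$, on which $\varphi_{3}(r')\gtrsim 1$: explicitly, for such $r'$ one can choose an $O(R^{2})$-measure set of $z$ in the unit annulus with $|r'\omega-z|\leq R$ for all $\omega\in\mathbb{S}^{1}$, so that both copies of $\zeta$ evaluate to $1$ and the integral defining $\varphi_{3}(r')$ produces a contribution $\sim R^{2}/\|\zeta(|\cdot|)\|_{L^{1}}\sim 1$. Integrating this lower bound over an interval of length $\sim R^{2}/N_{m}(t)$ and dividing by $r\sim R^{2}/N_{m}(t)$ gives the claimed lower bound. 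Finally, the derivative estimate $|\partial_{r}\psi_{R,m,3}|\lesssim 1/r$ on $r>\eta_{2}R^{2}/N_{m}(t)$ follows from the identity above together with the bound $\|\varphi_{3}\|_{\infty}\lesssim 1$, using the trivial estimate $|\frac{1}{r^{2}}\int_{0}^{r}\varphi_{3}|\lesssim 1/r$ and $|\varphi_{3}(N_{m}(t)r/R)/r|\lesssim 1/r$.
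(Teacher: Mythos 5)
Your proposal is correct and follows essentially the same route as the paper: direct computation from the definition of $\psi_{R,m,j}$ using the support, size, and derivative information on $\varphi_{j}$, and for part \ref{item:psi_est_2} exactly the same product-rule/FTC identity plus mean value theorem with $\|\varphi_{2}'\|_{L^{\infty}([0,\eta_{2}R])}\lesssim 1/R$ that the paper uses. The only point worth flagging is in part \ref{item:psi_est_3}: your lower-bound argument (integrating $\varphi_{3}\gtrsim 1$ over a subinterval with $s\sim R^{2}/N_{m}(t)$) only yields $\psi_{R,m,3}(r)\gtrsim 1$ for $r$ comparable to $R^{2}/N_{m}(t)$, not for $r$ just above the threshold $\eta_{2}R^{2}/N_{m}(t)$, where in fact no such bound can hold since $\psi_{R,m,3}$ is continuous and vanishes at the threshold; this is an imprecision in the lemma's ``$\sim 1$'' statement rather than in your argument, and it is harmless because the paper only ever uses the vanishing below the threshold together with the upper bound $\psi_{R,m,3}\leq 1$.
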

\begin{proof}
For \ref{item:psi_est_1}, the first assertion is immediate from $\varphi_{1}\equiv C_{\eta_{2}}$ on the interval $|r|\leq \eta_{2}R$ and $\varphi_{1}\equiv 0$ on the interval $|r|>\eta_{2}R$. The second assertion follows similarly.

For \ref{item:psi_est_2}, the first assertion follows from the fact $|\varphi_{2}| \lesssim \eta_{2}1_{[-\eta_{2}R,\eta_{2}R]}$. For the second assertion, we use the product rule and fundamental theorem of calculus to obtain that for $r<\frac{\eta_{2}R^{2}}{N_{m}(t)}$,
\begin{align}
(\p_{r}\psi_{R,m,2})(r) &= -\frac{1}{r^{2}}\int_{0}^{r}\varphi_{2}\paren*{\frac{N_{m}(t)s}{R}}ds + \frac{1}{r}\varphi_{2}\paren*{\frac{N_{m}(t)r}{R}} \nonumber\\
&=\frac{1}{r^{2}}\int_{0}^{r}\paren*{\varphi_{2}\paren*{\frac{N_{m}(t)r}{R}}-\varphi_{2}\paren*{\frac{N_{m}(t)s}{R}}}ds.
\end{align}
It now follows from mean value theorem that
\begin{equation}
|(\p_{r}\psi_{R,m,2})(r)| \lesssim \frac{1}{|r|}\|\varphi_{2}'(\frac{N_{m}(t)}{R}\cdot)\|_{L^{\infty}([0,\eta_{2}R^{2}/N_{m}])} \frac{N_{m}(t)|r|}{R} \lesssim \frac{N_{m}(t)}{R^{2}},
\end{equation}
where we use the bound $\|\varphi_{2}'\|_{L^{\infty}([0,\eta_{2}R])} \lesssim \frac{1}{R}$ to obtain the ultimate inequality. The bound for $|(\p_{r}\psi_{R,m,2}(r)|$ in the region $|r|>\frac{\eta_{2}R^{2}}{N_{m}(t)}$ follows from the same argument as in \ref{item:psi_est_1}.

For \ref{item:psi_est_3}, the first assertion is immediate from the support properties of $\varphi_{3}$ and the triangle inequality, and the second assertion follows by the same argument as in \ref{item:psi_est_1}.
\end{proof}

\begin{mydef}[Vector potential $a_{R,m}$]
We define the vector potential $a_{R,m}(t,\cdot)$ by the formula
\begin{equation}
a_{R,m}(t,x-y) \coloneqq \psi_{R,m}(t,|x-y|)N_{m}(t)(x-y), \qquad \forall (x,y,t)\in\R^{2}\times\R^{2}\times [0,\infty).
\end{equation}
We denote the $j^{th}$ component of $a_{R,m}$ by $a_{R,m,j}$. We note that $a_{R,m}$ is odd in the spatial variable and remark that this property is crucial for exploiting Galilean invariance, as observed in \cite{Dodson2015} and \cite{Dodson2016}.
\end{mydef}
We record some basic estimates for the potential $a_{R,m}$ in the next lemma, which will be used extensively in the sequel.

\begin{lemma}[$a_{R,m}$ properties]\label{lem:pot_prop} 
The maps $a_{R,m}$ satisfy the following properties.
\begin{enumerate}[(i)]
\item\label{item:pot_prop_1}
$\sup_{m\geq 1} \|a_{R,m}\|_{L_{t,x}^{\infty}([0,\infty)\times\R^{2})} \lesssim R^{2}$
\item\label{item:pot_prop_2}
\begin{equation}
\sup_{m\geq 1}\sup_{t\in [0,\infty)}|\nabla a_{R,m}(t,x-y)| \lesssim \frac{R^{2}}{|x-y|}, \qquad \forall (x,y,t) \in \R^{2}\times\R^{2}\times [0,\infty)
\end{equation}
\item\label{item:pot_prop_3}
\begin{equation}
(\nabla\cdot a_{R,m})(t,x-y) = N_{m}(t)\paren*{\varphi\left(\frac{N_{m}(t)|x-y|}{R}\right) + \psi_{R,m}(t,|x-y|)} \geq 0, \qquad \forall (x,y,t)\in\R^{2}\times\R^{2}\times[0,\infty).
\end{equation}
\end{enumerate}
\end{lemma}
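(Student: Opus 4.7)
The proof is essentially a direct verification exercise exploiting the explicit structure of $a_{R,m}$ and the derivative/support estimates for $\varphi$ and $\psi_{R,m}$ already recorded, so my plan is to organize the three claims (i)--(iii) in the order that makes each successive one short.

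For (i), the key observation is that the product $r\,\psi_{R,m}(t,r)$ admits a uniform bound of order $R^{2}/N_m(t)$. Indeed, by definition
\[
r\,\psi_{R,m}(t,r) = \int_{0}^{r} \varphi\!\paren*{\tfrac{N_m(t)s}{R}}\,ds,
\]
and since $\varphi$ is bounded by an absolute constant and supported in $[-(2+8\eta_2)R,(2+8\eta_2)R]$, changing variables yields $r\,\psi_{R,m}(t,r) \lesssim R^{2}/N_m(t)$. Multiplying by $N_m(t)$ gives $|a_{R,m}(t,x-y)| = N_m(t)\,\psi_{R,m}(t,|x-y|)\,|x-y| \lesssim R^{2}$, uniformly in $m$ and $t$.

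For (ii), I will compute $\partial_{x_k} a_{R,m,j}$ via the product rule, obtaining
\[
\partial_{x_k} a_{R,m,j}(t,x-y) = (\p_r \psi_{R,m})(t,|x-y|)\,\frac{(x-y)_k(x-y)_j}{|x-y|}\,N_m(t) + \psi_{R,m}(t,|x-y|)\,N_m(t)\,\delta_{jk},
\]
so that $|\nabla a_{R,m}(t,x-y)| \lesssim |\p_r \psi_{R,m}(t,r)|\,N_m(t)\,r + \psi_{R,m}(t,r)\,N_m(t)$ with $r=|x-y|$. Both terms are handled using the estimate $\psi_{R,m}(t,r)\leq C r^{-1}R^{2}/N_m(t)$ derived in step (i) together with the $n=1$ case of \eqref{eq:psi_d_est}, which gives $|\p_r\psi_{R,m}(t,r)|\lesssim 1/r$. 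I will split into the regimes $r\leq (2+8\eta_{2})R^{2}/N_m(t)$ and $r$ beyond this threshold: in the former, the bound $\psi_{R,m}(t,r)\,N_m(t)\lesssim N_m(t) \lesssim R^{2}/r$ follows because $r$ itself is $\lesssim R^{2}/N_m(t)$, while in the latter $\psi_{R,m}(t,r)$ is given by $r^{-1}$ times a fixed integral bounded by $R^{2}/N_m(t)$, so the same bound persists. The derivative term is handled analogously.

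For (iii), which is really the cleanest of the three, I will exploit the identity
\[
\p_r\bigl(r\,\psi_{R,m}(t,r)\bigr) = \varphi\!\paren*{\tfrac{N_m(t)r}{R}}
\]
recorded when $\psi_{R,m}$ was introduced. Computing directly in polar form,
\[
(\nabla_x \cdot a_{R,m})(t,x-y) = N_m(t)\bigl[r\,\p_r\psi_{R,m}(t,r) + 2\psi_{R,m}(t,r)\bigr] = N_m(t)\Bigl[\varphi\!\paren*{\tfrac{N_m(t)r}{R}} + \psi_{R,m}(t,r)\Bigr],
\]
where I used the identity above to rewrite $r\,\p_r\psi_{R,m} + \psi_{R,m} = \varphi$. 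Nonnegativity is then immediate from $\varphi \geq 0$ (clear from its definition as an integral of nonnegative $\zeta$'s) and $\psi_{R,m}\geq 0$ (again clear from the definition). There is no genuine obstacle here: the only thing to keep track of is the fact that $\R^{2}$ contributes the factor of $2$ in $\sum_j \delta_{jj}$, which is what makes the computation close. I do not expect surprises, and the whole lemma should be a short direct calculation using the pieces already assembled in subsection \ref{ssec:QS_con}.
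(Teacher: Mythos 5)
Your proposal is correct and follows essentially the same route as the paper: the explicit product-rule computation of $\nabla a_{R,m}$, the bounds $\psi_{R,m}\leq 1$ and $r\,\psi_{R,m}(t,r)\lesssim R^{2}/N_{m}(t)$ coming from the support and boundedness of $\varphi$, and the identity $\partial_{r}\bigl(r\,\psi_{R,m}(t,r)\bigr)=\varphi\bigl(\tfrac{N_{m}(t)r}{R}\bigr)$ for the divergence formula in (iii). The only point to spell out is that in the far regime $r>\tfrac{(2+8\eta_{2})R^{2}}{N_{m}(t)}$ the crude bound $|\partial_{r}\psi_{R,m}|\lesssim 1/r$ is not enough, and one must use (as your ``analogously'' indicates) that there $\psi_{R,m}(t,r)=C(t)/r$ with $C(t)\lesssim R^{2}/N_{m}(t)$, so $|\partial_{r}\psi_{R,m}|\lesssim R^{2}/(N_{m}(t)r^{2})$, which yields the stated $R^{2}/|x-y|$ bound.
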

\begin{proof}
\ref{item:pot_prop_1} is immediate from the bound $|\psi_{R,m}(t,r)| \leq 1$ in the region $|r|\leq \frac{(2+8\eta_{2})R^{2}}{N_{m}(t)}$ and the bound $|\psi_{R,m}(t,r)| \leq \frac{(2+8\eta_{2})R^{2}}{rN_{m}(t)}$ in the region $|r|>\frac{(2+8\eta_{2})R^{2}}{N_{m}(t)}$.

For \ref{item:pot_prop_2}, we note from the calculus that
\begin{equation}
\partial_{j} a_{R,m,k}(t,x-y) = \psi_{R,m}'(t,|x-y|)\frac{(x-y)_{j}}{|x-y|}N_{m}(t)(x-y)_{k} + \delta_{jk}\psi_{R,m}(t,|x-y|)N_{m}(t).
\end{equation}
The desired conclusion then follows by using the bounds for $\psi_{R,m}$ and $\p_{r}\psi_{R,m}$ given by lemma \ref{lem:psi_est}.

For \ref{item:pot_prop_3}, we note from the calculus that
\begin{align}
\p_{j}a_{R,m,j}(t,x-y) &= (\p_{r}\psi_{R,m})(t,|x-y|) \frac{(x-y)_{j}^{2}}{|x-y|}N_{m}(t) + \psi_{R,m}(t,|x-y|)N_{m}(t) \nonumber\\
&=\paren*{(\p_{r}\psi_{R,m})(t,r)r+2\psi_{R,m}(t,r)}N_{m}(t),
\end{align}
with $r\coloneqq |x-y|$. The desired conclusion then follows from the identity $\frac{d}{dr}(r\psi_{R,m}(t,r)) = \varphi(\frac{N_{m}(t)r}{R}) + \psi_{R,m}(t,r)$.
\end{proof}

Now set $w\coloneqq P_{\leq K}u$, which satisfies the equation
\begin{equation}\label{eq:IM_eq_w}
(i\partial_{t}+\Delta)w = F(w) + P_{\leq K}F(u)-F(w) =: F(w)+\mathcal{N}.
\end{equation}
We define the interaction Morawetz functional $M_{R,m}$ by the formula
\begin{equation}
M_{R,m}(t) \coloneqq 2\int_{\R^{4}}a_{R,m}(t,x-y)\cdot |w(t,y)|^{2}\Im{\bar{w}\nabla w}(t,x)dxdy, \qquad \forall t\geq 0.
\end{equation}
Differentiating $M_{R,m}$ with respect to time and using the local conservation laws of the eeDS equation as in the proofs of propositions \ref{prop:ibs_1}, \ref{prop:ibs_2}, and \ref{prop:ibs_3}, we see that
\begin{align}
\frac{d}{dt} M_{R,m}(t) &= -\int_{\R^{4}}a_{R,m,j}(t,x-y)|w(t,y)|^{2}\partial_{k}L_{jk}^{w}(t,x)dxdy\\
&\phantom{=}-2\int_{\R^{4}}a_{R,m,j}(t,x-y)\partial_{k}T_{0k}^{w}(t,y)\Im{\bar{w}\partial_{j} w}(t,x)dxdy\\
&\phantom{=}-\int_{\R^{4}}a_{R,m,j}(t,x-y)|w(t,y)|^{2}\partial_{k}T_{jk}^{w}(t,x)dx dy dt\\
&\phantom{=}+2\int_{\R^{4}}(\partial_{t}a_{R,m})(t,x-y)\cdot |w(t,y)|^{2}\Im{\bar{w}\nabla w}(t,x)dxdy\\
&\phantom{=}+2\int_{\R^{4}}a_{R,m,j}(t,x-y)|w(t,y)|^{2}\Re{+\bar{\mathcal{N}}\partial_{j}w - \bar{w}\partial_{j}\mathcal{N}}(t,x)dxdy\\
&\phantom{=}+4\int_{\R^{4}}a_{R,m,j}(t,x-y) \Im{\bar{w}\mathcal{N}}(t,y) \Im{\bar{w}\partial_{j}w}(t,x)dx dy.
\end{align}
Unpackaging the definition of $L_{jk}^{w}$ and then integrating with respect to time over the interval $[0,T]$ and using the fundamental theorem of calculus, we obtain that
\begin{align}
M_{R,m}(t) &= -4\int_{0}^{T}\int_{\R^{4}}a_{R,m,j}(t,x-y)|w(t,y)|^{2}\partial_{k}\Re{\ol{\partial_{k}w}\partial_{j}w}(t,x)dxdy dt\label{eq:IM_main1}\\
&\phantom{=}-2\int_{0}^{T}\int_{\R^{4}}a_{R,m,j}(t,x-y)\partial_{k}T_{0k}^{w}(t,y)\Im{\bar{w}\partial_{j} w}(t,x)dxdy dt\label{eq:IM__main2}\\
&\phantom{=}-\int_{0}^{T}\int_{\R^{4}}a_{R,m,j}(t,x-y)|w(t,y)|^{2}\partial_{k}T_{jk}^{w}(t,x)dxdy dt\label{eq:IM_nl}\\
&\phantom{=}+\int_{0}^{T}\int_{\R^{4}}a_{R,m,j}(t,x-y)|w(t,y)|^{2}\partial_{j}\partial_{k}^{2}(|w|^{2})(t,x)dxdy dt\label{eq:IM_lerr}\\
&\phantom{=}+2\int_{0}^{T}\int_{\R^{4}}(\partial_{t}a_{R,m})(t,x-y)\cdot |w(t,y)|^{2}\Im{\bar{w}\nabla w}(t,x)dx dy dt\label{eq:IM_terr}\\
&\phantom{=}+2\int_{0}^{T}\int_{\R^{4}}a_{R,m,j}(t,x-y)|w(t,y)|^{2}\Re{+\bar{\mathcal{N}}\partial_{j}w - \bar{w}\partial_{j}\mathcal{N}}(t,x)dx dy dt\label{eq:IM_err1}\\
&\phantom{=}+4\int_{0}^{T}\int_{\R^{4}}a_{R,m,j}(t,x-y) \Im{\bar{w}\mathcal{N}}(t,y) \Im{\bar{w}\partial_{j}w}(t,x)dxdy dt.\label{eq:IM_err2}
\end{align}

\subsection{Frequency localization error estimate}\label{ssec:QS_fl_err}
In this subsection, we prove an estimate which we will use to bound the quantity $|\eqref{eq:IM_err1}+\eqref{eq:IM_err2}|$, which arises because the frequency-localized solution $w$ satisfies the approximate eeDS equation \eqref{eq:IM_eq_w}.

Given an admissible blowup solution $u$ and a  $u$-admissible tuple $(\epsilon_{1},\epsilon_{2},\epsilon_{3})$, we set $\lambda \coloneqq \frac{\epsilon_{3}2^{k_{0}}}{K}$, where $K=\int_{0}^{T}N(t)^{3}dt$ and rescale the solution $u$ by defining $u_{\lambda}\coloneqq \lambda u(\lambda^{2}\cdot, \lambda\cdot)$. We denote the frequency truncation of the rescaled solution $u_{\lambda}$ by $w\coloneqq P_{\leq k_{0}}u_{\lambda}$, where we suppress the dependence of $w$ on $\lambda$ for convenience.

\begin{prop}[Frequency truncation error estimate]\label{prop:IM_frq_err} 
Let $a : [0, \infty)\times\R^{2}\rightarrow\R^{2}$ be a map which is odd in space (i.e. $a(t,x)=-a(t,-x)$) and for which there exists a constant $C(a)>0$ such that
\begin{equation}
\|a\|_{L_{t,x}^{\infty}([0,\infty)\times\R^{2})} \leq C(a), \quad \sup_{x\in\R^{2}} |x|\|\nabla a(x)\|_{L_{t}^{\infty}([0,\infty)}\leq C(a).
\end{equation}
Let $u$ be an admissible blowup solution. Then there exist constants $C(u)>0$ and $\epsilon_{1}(u)\geq \epsilon_{2}(u)\geq \epsilon_{3}(u)>0$ such that the following holds: for all $0<\eta\ll 1$, there exists a constant $T_{0}(\eta)>0$, such that for all admissible tuples $(\epsilon_{1},\epsilon_{2},\epsilon_{3})$, with $\epsilon_{j}\leq \epsilon_{j}(u)$ for $j=1,2,3$, and $T\geq T_{0}(\eta)$ satisfying
\begin{equation}
\int_{0}^{T}N(t)^{3}dt=K \enspace \text{and} \enspace \int_{0}^{T}\int_{\R^{2}}|u(t,x)|^{4}dxdt=2^{k_{0}} \enspace \text{for some} \enspace k_{0}\in N,
\end{equation}
we have the estimate
\begin{align}
&2\left|\int_{0}^{T/\lambda^{2}}\int_{\R^{4}}a(t,x-y)\cdot\Im{\bar{w}\mathcal{N}}(t,y)\Im{\bar{w}(\nabla-i\xi_{\lambda}(t))w}(t,x)dxdydt\right| \label{eq:IM_frq_err1}\\
&\phantom{=}+\left|\int_{0}^{T/\lambda^{2}}\int_{\R^{4}}a(t,x-y)\cdot |w(t,y)|^{2}\Re{\bar{\mathcal{N}}(\nabla-i\xi_{\lambda}(t))w}(t,x)dxdy dt\right| \label{eq:IM_frq_err2}\\
&\phantom{=}+\left|\int_{0}^{T/\lambda^{2}}\int_{\R^{4}}a(t,x-y)\cdot |w(t,y)|^{2}\Re{\bar{w}(\nabla-i\xi_{\lambda}(t))\mathcal{N}}(t,x)dxdy dt\right| \label{eq:IM_frq_err3}\\
&\leq C(a)C(u)\left(C(\eta)\frac{\epsilon_{3}}{K}+\eta\right)^{1/6}2^{k_{0}}.
\end{align}
\end{prop}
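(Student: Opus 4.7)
The plan is to reduce the three error terms to bounds on the $\tilde{X}_{k_{0}}$ and $\tilde{Y}_{k_{0}}$ norms of the rescaled solution $u_{\lambda}$ via Theorem \ref{thm:LTSE}. After invoking the theorem (which gives $\|u_{\lambda}\|_{\tilde{X}_{k_{0}}} \leq C(u)$ and $\|u_{\lambda}\|_{\tilde{Y}_{k_{0}}} \leq \epsilon_{2}^{1/2}$), I would first use the hypothesis $\|a\|_{L^{\infty}} \leq C(a)$ to pull $a$ out pointwise from \eqref{eq:IM_frq_err1} and \eqref{eq:IM_frq_err2}, reducing them to $C(a)$ times $L_{t,x}^{1}$ bounds on integrands of the form $|w|^{2}|\mathcal{N}||(\nabla-i\xi_{\lambda})w|$. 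For \eqref{eq:IM_frq_err3}, where the gradient falls on $\mathcal{N}$, I would integrate by parts in $x$ and distribute the derivative onto $\bar{w}$ (which recovers a term of the same form as in \eqref{eq:IM_frq_err2}), onto $|w(t,y)|^{2}$ (which vanishes after symmetrizing using that $a$ is odd in space), or onto $a$ itself (using $|x-y|\|\nabla a\|_{\infty} \lesssim C(a)$ combined with Hardy's inequality and the Hardy--Littlewood--Sobolev lemma to absorb the resulting $|x-y|^{-1}$ kernel).

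Having reduced matters to bilinear-type estimates involving $w$ and $\mathcal{N}$, I would then decompose $\mathcal{N}$ via a near-far frequency splitting $u_{\lambda} = u_{l} + u_{h}$ with $u_{l} \coloneqq P_{\xi_{\lambda}(t),\leq k_{0}-10}u_{\lambda}$. Algebraic expansion yields $\mathcal{N} = \mathcal{N}_{0} + \mathcal{N}_{1} + \mathcal{N}_{2} + \mathcal{N}_{3}$ according to the number of $u_{h}$ factors, and Fourier support analysis immediately gives $\mathcal{N}_{0} = 0$ (exactly as in the estimate for $F_{0,l_{2}}$ in the proof of Proposition \ref{prop:ibs_2}). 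The terms $\mathcal{N}_{2}$ and $\mathcal{N}_{3}$ contain at least two high-frequency factors; applying H\"older's inequality, the Calder\'on--Zygmund theorem, and Lemma \ref{lem:lohi_embed} bounds their contribution by a power of $\|u_{\lambda}\|_{\tilde{X}_{k_{0}}}$, and then interpolating one factor of $\|u_{h}\|_{L_{t,x}^{4}}$ against the frequency localization property \eqref{eq:sp_frq_loc} (using $L_{t}^{\infty}L_{x}^{2}$-smallness once $t \geq T_{0}(\eta)$) converts this into a factor of $\eta^{1/6}$. For $\mathcal{N}_{1}$, the commutator structure $[P_{\leq k_{0}},\mathcal{E}(|u_{l}|^{2})]u_{h}$ (and its variants) is exploited via the standard fundamental-theorem-of-calculus argument used in the estimates for $F_{1,l_{2}}$ in Proposition \ref{prop:ibs_2} and for $\mathrm{Hard}_{G_{\alpha}^{i},1}$ in Lemma \ref{lem:LTSE_b_hard}: this produces a factor $2^{-k_{0}}$ against a gradient of $u_{l}$ which, after Bernstein's lemma and the rescaling bound $N_{\lambda}(t) \lesssim \epsilon_{3}2^{k_{0}}/K$, contributes at worst a multiple of $\epsilon_{3}/K$.

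The main obstacle, as I see it, is the treatment of \eqref{eq:IM_frq_err3}: not only does the gradient act on $\mathcal{N}$ (which forces a delicate integration by parts), but the nonlocality of $\mathcal{E}$ forbids any pointwise bound by $|\mathcal{N}|$ and compels us to track cancellation through the commutator rather than directly bounding $|\mathcal{N}|$. I expect the final exponent $1/6$ in $(C(\eta)\epsilon_{3}/K + \eta)^{1/6}$ to emerge from interpolating between a crude $L_{t,x}^{1}$ bound on $\mathcal{N}$ (independent of both $\eta$ and $\epsilon_{3}/K$) and a quantitatively small $L_{t}^{\infty}L_{x}^{2}$-type bound drawn from either \eqref{eq:sp_frq_loc} or the $\tilde{Y}_{k_{0}}$-smallness of $u_{\lambda}$; the specific $1/6$ power matches the interpolation $L_{t}^{9}L_{x}^{18/7} \subset (L_{t}^{\infty}L_{x}^{2})^{3/4}(L_{t}^{9/4}L_{x}^{18})^{1/4}$ used in the base case of the long-time Strichartz estimate (Lemma \ref{lem:LTSE_base}). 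Finally, the $2^{k_{0}}$ factor on the right-hand side is the natural scale of the problem after rescaling, since $\int_{0}^{\lambda^{-2}T}\|u_{\lambda}\|_{L_{x}^{4}}^{4}dt = 2^{k_{0}}$.
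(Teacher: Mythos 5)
Your skeleton matches the paper's: rescale so that Theorem \ref{thm:LTSE} controls $\|u_{\lambda}\|_{\tilde{X}_{k_{0}}}$, split $u_{\lambda}=u_{\lambda,l}+u_{\lambda,h}$, expand $\mathcal{N}=\mathcal{N}_{0}+\cdots+\mathcal{N}_{3}$ with $\mathcal{N}_{0}=0$, treat $\mathcal{N}_{1}$ by the fundamental-theorem-of-calculus commutator argument, and integrate by parts in $x$ in \eqref{eq:IM_frq_err3} using the decay of $\nabla a$ and Hardy--Littlewood--Sobolev. The genuine gap is in where the smallness comes from. The paper's central mechanism, which you never invoke, is that the truncated solution itself has a small Galilean-corrected derivative: writing $w=P_{\xi_{\lambda}(t),\leq C(\eta)N_{\lambda}(t)}w+P_{\xi_{\lambda}(t),>C(\eta)N_{\lambda}(t)}w$ and using \eqref{eq:sp_frq_loc} together with $N_{\lambda}(t)\leq \epsilon_{3}2^{k_{0}}/K$ gives $\|(\nabla-i\xi_{\lambda}(t))w\|_{L_{t}^{\infty}L_{x}^{2}}\lesssim C(\eta)\epsilon_{3}2^{k_{0}}/K+\eta 2^{k_{0}}$, and interpolating this against $\|(\nabla-i\xi_{\lambda}(t))w\|_{L_{t}^{5/2}L_{x}^{10}}\lesssim 2^{k_{0}}\|u_{\lambda}\|_{\tilde{X}_{k_{0}}}$ is exactly what produces the exponent $1/6$ (your attribution of $1/6$ to the $L_{t}^{9}L_{x}^{18/7}$ interpolation of Lemma \ref{lem:LTSE_base} is off: that interpolation carries the power $3/4$). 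Without this, your plan must extract smallness from $\mathcal{N}$ alone, and that fails precisely for the pieces with a single far-frequency factor. Your proposed mechanism for $\mathcal{N}_{1}$ --- the commutator yields $2^{-k_{0}}$ against a gradient of $u_{\lambda,l}$, which ``after Bernstein and $N_{\lambda}\lesssim\epsilon_{3}2^{k_{0}}/K$'' gives $\epsilon_{3}/K$ --- does not work: $u_{\lambda,l}$ lives at frequencies up to $\sim 2^{k_{0}}$, so Bernstein returns a factor $2^{k_{0}}$, not $N_{\lambda}(t)$; the $N_{\lambda}$-scale gain is only available for $(\nabla-i\xi_{\lambda}(t))u_{\lambda,l}$, and since $|\xi_{\lambda}(t)|$ can be of size $\epsilon_{3}^{1/2}2^{k_{0}-19}$ the uncorrected gradient is not small. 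In the paper, $\mathcal{N}_{1}$ contributes no smallness at all (one only proves $\|\mathcal{N}_{1}\|_{L_{t}^{3/2}L_{x}^{6/5}}\lesssim_{u}1$); the small factor in \eqref{eq:IM_frq_err2} and \eqref{eq:IM_frq_err3} is supplied by the $(\nabla-i\xi_{\lambda}(t))w$ factor and by $\|u_{\lambda,h}\|_{L_{t}^{\infty}L_{x}^{2}}$.

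The second, related gap is \eqref{eq:IM_frq_err1}. Pulling $a$ out in $L^{\infty}$ reduces it to $C(a)\,\|\Im{\bar{w}\mathcal{N}}\|_{L_{t,x}^{1}}\sup_{t}\|\Im{\bar{w}(\nabla-i\xi_{\lambda})w}(t)\|_{L_{x}^{1}}$ (or a H\"older-in-time variant), and for the piece of $\Im{\bar{w}\mathcal{N}}$ with exactly one far factor (the paper's $F_{1}$) the first norm is of size roughly $(C(\eta)\epsilon_{3}/K+\eta)^{1/4}2^{3k_{0}/4}$ at best (e.g.\ $\|u_{\lambda,h}\|_{L_{t,x}^{4}}\|u_{\lambda,l}\|_{L_{t,x}^{4}}^{2}\|w\|_{L_{t,x}^{4}}$), so no pairing of Strichartz norms over the long interval closes to $O(2^{k_{0}})$ times a small factor independent of $k_{0}$. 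The fix, which your proposal misses, is structural: $F_{1}$ has spatial Fourier support at frequencies $\gtrsim 2^{k_{0}}$, so one inserts $\Delta\Delta^{-1}$, integrates by parts in $y$ so that one derivative lands on $a$ --- this is where the hypothesis $\sup_{x}|x|\,\|\nabla a(x)\|_{L_{t}^{\infty}}\leq C(a)$ is actually needed for \eqref{eq:IM_frq_err1}, not only for \eqref{eq:IM_frq_err3} --- and then applies Hardy--Littlewood--Sobolev in $x$; the resulting gain of $2^{-k_{0}}$, combined with $\|(\nabla-i\xi_{\lambda}(t))w\|_{L_{t,x}^{4}}\lesssim_{u}2^{k_{0}}(C(\eta)\epsilon_{3}/K+\eta)^{1/4}$, closes this case. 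A smaller point: in \eqref{eq:IM_frq_err3} the integration by parts is in $x$ only, so no derivative ever falls on $|w(t,y)|^{2}$; there is no term to kill by oddness of $a$, which plays no role in this particular estimate.
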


Before diving into the proof of proposition \ref{prop:IM_frq_err}, we briefly comment on the strategy. The idea is to perform a similar analysis of the error terms \eqref{eq:IM_frq_err1}, \eqref{eq:IM_frq_err2}, and \eqref{eq:IM_frq_err3} as in the proofs of propositions \ref{prop:ibs_1}, \ref{prop:ibs_2}, and \ref{prop:ibs_3}, except now the RHSs of our estimates will contain the norms $\|u_{\lambda}\|_{\tilde{X}_{k_{0}}([0,\lambda^{-2}T]\times\R^{2})}$. Fortunately, $u_{\lambda}$ is in the form of our long-time Strichartz estimate (theorem \ref{thm:LTSE}). Therefore, we can estimate $\|u_{\lambda}\|_{\tilde{X}_{k_{0}}([0,\lambda^{-2}T]\times\R^{2})}\lesssim_{u}1$ uniformly in the data $([0,T],k_{0},\epsilon_{1},\epsilon_{2},\epsilon_{3})$, provided that $T>0$ is sufficiently large depending on $\eta$ and $\epsilon_{j}\leq \epsilon_{j}(u)$, where the constants $\epsilon_{j}(u)$ are as in the statement of theorem \ref{thm:LTSE}.

\begin{proof}
We estimate the terms \eqref{eq:IM_frq_err1}, \eqref{eq:IM_frq_err2}, and \eqref{eq:IM_frq_err3} separately.

\begin{description}[leftmargin=*]
\item[Estimate for \eqref{eq:IM_frq_err1}:]
We first perform a frequency decomposition $u_{\lambda} = u_{\lambda,l} + u_{\lambda, h}$, where $u_{\lambda, l} \coloneqq P_{\leq k_{0}-5}u_{\lambda}$, and write
\begin{equation}
\Im{\bar{w}\mathcal{N}} = F_{0} + F_{1} + F_{2} + F_{3} + F_{4},
\end{equation}
where $F_{j}$ consists of $4-j$ factors $u_{\lambda, l}$ and $j$ factors $u_{\lambda, h}$, for $j=0,1,\ldots,4$ (see the proof of proposition \ref{prop:ibs_2} for the explicit formulae for the $F_{j}$).

Quick Fourier support analysis shows that $F_{0}=0$, hence there is no contribution.

Now note that by scaling invariance,
\begin{equation}
\int_{0}^{\lambda^{-2}T}\paren*{N_{\lambda}(t)^{3}+\epsilon_{3}\|u_{\lambda}(t)\|_{L_{x}^{4}(\R^{2})}^{4}}dt=2^{k_{0}+1}\epsilon_{3},
\end{equation}
and so $[0,\lambda^{-2}T]=G_{0}^{k_{0}}$. Hence, by the estimates for $F_{2},F_{3},F_{4}$ obtained in the proof of proposition \ref{prop:ibs_2}, we have that
\begin{equation}
\|F_{2}+F_{3}+F_{4}\|_{L_{t,x}^{1}([0,\lambda^{-2}T]\times\R^{2})} \lesssim \paren*{1+\|u_{\lambda}\|_{\tilde{X}_{k_{0}}([0,\lambda^{-2}T]\times\R^{2})}^{6}} \lesssim_{u} 1.
\end{equation}
Therefore by Cauchy-Schwarz, mass conservation, and the estimates for the map $a$, we obtain that
\begin{align}
&\left|\int_{0}^{T/\lambda^{2}}\int_{\R^{4}}a(t,x-y)\cdot\left(F_{1}+F_{2}+F_{3}\right)(y)\Im{\bar{w}(\nabla-i\xi_{\lambda}(t))w}(t,x)dxdydt\right| \nonumber\\
&\leq \|F_{2}+F_{3}+F_{4}\|_{L_{t,x}^{1}([0,\lambda^{-2}T]\times\R^{2})}]\|a\|_{L_{t,x}^{\infty}([0,\lambda^{-2}T]\times\R^{2})}\|\bar{w}(\nabla-i\xi_{\lambda}(t))w\|_{L_{t}^{\infty}L_{x}^{1}([0,\lambda^{-2}T]\times\R^{2})} \nonumber\\
&\lesssim_{u} C(a) \|(\nabla-i\xi_{\lambda}(t))w\|_{L_{t}^{\infty}L_{x}^{2}([0,\lambda^{-2}T]\times\R^{2})}.
\end{align}
To show that the RHS of the preceding inequality can be made small as measured by the parameter $\eta$, we first decompose $w$ by
\begin{equation}
w = P_{\xi_{\lambda}(t),\leq C(\eta)N_{\lambda}(t)}w + P_{\xi_{\lambda}(t),>C(\eta)N_{\lambda}(t)}w.
\end{equation}
Since $N(t) \leq 1$ by assumption that $u$ is an admissible blowup solution, it follows from scaling invariance that $N_{\lambda}(t)\leq \frac{\epsilon_{3}2^{k_{0}}}{K}$. Therefore by the triangle inequality, the frequency localization property \eqref{eq:sp_frq_loc}, Bernstein's lemma, and mass conservation,
\begin{align}
\|(\nabla-i\xi_{\lambda}(t))w\|_{L_{t}^{\infty}L_{x}^{2}([0,\lambda^{-2}T]\times\R^{2})}&\leq \|(\nabla-i\xi_{\lambda}(t))P_{\xi_{\lambda}(t),\leq C(\eta)N_{\lambda}(t)}w\|_{L_{t}^{\infty}L_{x}^{2}([0,\lambda^{-2}T]\times\R^{2})} \nonumber\\
&\phantom{=}+ \|(\nabla-i\xi_{\lambda}(t))P_{\xi_{\lambda}(t), >C(\eta)N_{\lambda}(t)}w\|_{L_{t}^{\infty}L_{x}^{2}([0,\lambda^{-2}T]\times\R^{2})} \nonumber\\
&\lesssim C(\eta)\|N_{\lambda}\|_{L_{t}^{\infty}} + \eta 2^{k_{0}} \nonumber\\
&\leq C(\eta)\frac{\epsilon_{3}2^{k_{0}}}{K} + \eta 2^{k_{0}} \label{eq:IM_err_d_est}
\end{align}

It remains to estimate the contribution from $F_{1}$. Hereafter, the mixed norm notation $L_{t}^{p}L_{x}^{q}$ will be taken over the spacetime slab $[0,\lambda^{-2}T]\times\R^{2}$. Recall from the proof of proposition \ref{prop:ibs_2} that $F_{1}$ has Fourier support in the region $\{|\xi| \geq 2^{k_{0}-4}\}$. Therefore the Fourier multiplier $\Delta^{-1}$ is well-defined on the Fourier support of $F_{1}$, and we may integrate by parts in the $y$-variable to obtain that
\begin{equation}
\begin{split}
\int_{0}^{T/\lambda^{2}}&\int_{\R^{4}}\frac{\Delta_{y}}{\Delta_{y}}F_{1}(t,y)a(t,x-y)\cdot\Im{\bar{w}(\nabla-i\xi_{\lambda}(t))w}(t,x)dxdydt\\
&= \int_{0}^{T/\lambda^{2}}\int_{\R^{4}}\left(\frac{\partial_{k}}{\Delta} F_{1}\right)(t,y)(\partial_{k}a)(t,x-y)\cdot\Im{\bar{w}(\nabla-i\xi_{\lambda}(t))w}(t,x)dxdydt.
\end{split}
\end{equation}
Using the estimate for $\nabla a$ together with mass conservation, H\"{o}lder's inequality, Hardy-Littlewood-Sobolev and Bernstein's lemmas, we see that
\begin{align}
&\left|\int_{0}^{T/\lambda^{2}}\int_{\R^{4}}\left(\frac{\partial_{k}}{\Delta}F_{1}\right)(t,y)(\partial_{k}a)(t,x-y)\cdot\Im{\bar{w}(\nabla-i\xi_{\lambda}(t))w}(t,x)dxdydt\right| \nonumber\\
&\lesssim C(a)\|\frac{\nabla}{\Delta}F_{1}\|_{L_{t,x}^{4/3}} \||\nabla|^{-1}[\bar{w}(\nabla-i\xi_{\lambda}(t))w]\|_{L_{t,x}^{4}} \nonumber\\
&\lesssim C(a) 2^{-k_{0}}\|F_{1}\|_{L_{t,x}^{4/3}} \|\bar{w}(\nabla-i\xi_{\lambda}(t))w\|_{L_{t}^{4}L_{x}^{4/3}} \nonumber\\
&\lesssim C(a) 2^{-k_{0}} \|F_{1}\|_{L_{t,x}^{4/3}} \|(\nabla-i\xi_{\lambda}(t))w\|_{L_{t,x}^{4}}.
\end{align}
By interpolation, the frequency localization property \eqref{eq:sp_frq_loc}, lemma \ref{lem:BT_embed}, and the estimate \eqref{eq:IM_err_d_est}, we see that
\begin{equation}
\begin{split}
\|(\nabla-i\xi_{\lambda}(t))w\|_{L_{t,x}^{4}} \leq \|(\nabla-i\xi_{\lambda}(t))w\|_{L_{t}^{3}L_{x}^{6}}^{3/4} \|(\nabla-i\xi_{\lambda}(t))w\|_{L_{t}^{\infty}L_{x}^{2}}^{1/4} &\lesssim 2^{\frac{3k_{0}}{4}} \|u_{\lambda}\|_{\tilde{X}_{k_{0}}([0,\lambda^{-2}T]\times\R^{2})}^{3/4} \left(C(\eta)\frac{\epsilon_{3}2^{k_{0}}}{K} + \eta 2^{k_{0}}\right)^{1/4}\\
&\lesssim_{u} 2^{k_{0}} \left(C(\eta)\frac{\epsilon_{3}}{K} + \eta\right)^{1/4}.
\end{split}
\end{equation}
Therefore,
\begin{equation}
C(a)2^{-k_{0}} \|F_{1}\|_{L_{t,x}^{4/3}} \|(\nabla-i\xi_{\lambda}(t))w\|_{L_{t,x}^{4}} \lesssim_{u} C(a)\|F_{1}\|_{L_{t,x}^{4/3}}\left(C(\eta)\frac{\epsilon_{3}}{K} + \eta\right)^{1/4},
\end{equation}
so it remains to estimate $\|F_{1}\|_{L_{t,x}^{4/3}}$. Observe from the triangle inequality that
\begin{align}
\|F_{1}\|_{L_{t,x}^{4/3}} &\leq 2\|u_{\lambda,l}P_{\leq k_{0}}\left[\E\left(\Re {(P_{>k_{0}-2}u_{\lambda,h})\bar{u}_{\lambda,l}}\right)\right]\|_{L_{t,x}^{4/3}}+\|\bar{u}_{\lambda,l}P_{\leq k_{0}}[\E(|u_{\lambda,l}|^{2})(P_{>k_{0}-2}u_{\lambda,h})]\|_{L_{t,x}^{4/3}} \nonumber\\
&\phantom{=}+\|\ol{(P_{k_{0}-2\leq\cdot\leq k_{0}}u_{\lambda,h})} \E(|u_{\lambda,l}|^{2})u_{\lambda,l}\|_{L_{t,x}^{4/3}}.
\end{align}
So by H\"{o}lder's inequality, Calder\'{o}n-Zygmund theorem, and lemma \ref{lem:BT_embed}, it follows that
\begin{equation}
\|F_{1}\|_{L_{t,x}^{4/3}} \lesssim \|P_{>k_{0}-2}u_{\lambda}\|_{L_{t,x}^{4}} \|u_{\lambda,l}\|_{L_{t,x}^{6}}^{3} \lesssim 2^{k_{0}} \|u\|_{\tilde{X}_{k_{0}}([0,\lambda^{-2}T]\times\R^{2})}^{4} \lesssim_{u} 2^{k_{0}}.
\end{equation}

Bookkeeping our estimates, we conclude that
\begin{equation}
\eqref{eq:IM_frq_err1} \lesssim_{u} C(a)2^{k_{0}}\left(C(\eta)\frac{\epsilon_{3}}{K}+\eta\right)^{1/4}.
\end{equation}

\item[Estimate for \eqref{eq:IM_frq_err2}:]
By H\"{o}lder's inequality, Plancherel's theorem, and mass conservation,
\begin{equation}
|\eqref{eq:IM_frq_err2}| \lesssim \|a\|_{L_{t,x}^{\infty}} \|w\|_{L_{t}^{\infty}L_{x}^{2}}^{2} \|\mathcal{N}(\nabla-i\xi_{\lambda}(t))w\|_{L_{t,x}^{1}} \lesssim C(a)\|\mathcal{N}\|_{L_{t}^{3/2}L_{x}^{6/5}} \|(\nabla-i\xi_{\lambda}(t))w\|_{L_{t}^{3}L_{x}^{6}}.
\end{equation}
Interpolating between the admissible pairs $(5/2,10)$ and $(\infty,2)$ to get $(3,6)$ and using mass conservation, we see that
\begin{equation}
\|(\nabla-i\xi_{\lambda}(t))w\|_{L_{t}^{3}L_{x}^{6}} \lesssim 2^{k_{0}}\left(C(\eta)\frac{\epsilon_{3}}{K}+\eta\right)^{1/6},
\end{equation}
which implies that
\begin{equation}
|\eqref{eq:IM_frq_err2}| \lesssim C(a)2^{k_{0}}\left(C(\eta)\frac{\epsilon_{3}}{K}+\eta\right)^{1/6}\|\mathcal{N}\|_{L_{t}^{3/2}L_{x}^{6/5}}.
\end{equation}

We next claim that $\|\mathcal{N}\|_{L_{t}^{3/2}L_{x}^{6/5}} \lesssim_{u} 1$. Indeed, recall that $\mathcal{N}=P_{\leq k_{0}}F(u_{\lambda})-F(w)$. Decomposing $u_{\lambda}=u_{\lambda,l}+u_{\lambda,h}$ as above, expanding the commutator $\mathcal{N}$, and grouping like terms, we have that
\begin{equation}
\mathcal{N}=\mathcal{N}_{0}+\mathcal{N}_{1}+\mathcal{N}_{2}+\mathcal{N}_{3},
\end{equation}
where $\mathcal{N}_{j}$ contains $j$ factors $u_{\lambda,h}$ and $4-j$ factors $u_{\lambda,l}$.

Fourier support analysis shows that $\mathcal{N}_{0}=0$.

We have that
\begin{align}
\mathcal{N}_{1} &= \comm{P_{\leq k_{0}}}{\E(|u_{\lambda,l}|^{2})}(u_{\lambda,h})+2\paren*{P_{\leq k_{0}}[\E\paren*{\Re{u_{\lambda,l}\bar{u}_{\lambda,h}}}u_{\lambda,l}] - \E\paren*{\Re{u_{\lambda,l}\ol{(P_{\leq k_{0}}u_{\lambda,h})}}}u_{\lambda,l}},
\end{align}
so by the triangle inequality,
\begin{align}
\|\mathcal{N}_{1}\|_{L_{t}^{3/2}L_{x}^{6/5}} &\lesssim \|\comm{P_{\leq k_{0}}}{\E(|u_{\lambda,l}|^{2})}(u_{\lambda,h})\|_{L_{t}^{3/2}L_{x}^{6/5}} + \|P_{\leq k_{0}}[\E(\Re{u_{\lambda,l}\bar{u}_{\lambda,h}})u_{\lambda,l}] - \E(\Re{u_{\lambda,l}\ol{(P_{\leq k_{0}}u_{\lambda,h})}})u_{\lambda,l}\|_{L_{t}^{3/2}L_{x}^{6/5}} \nonumber\\
&\eqqcolon \mathrm{Term}_{1}+\mathrm{Term}_{2}.
\end{align}
By the fundamental theorem of calculus, Minkowski's inequality, H\"{o}lder's inequality, Bernstein's lemma, Calder\'{o}n-Zygmund theorem, and mass conservation, we have that
\begin{align}
\|\mathrm{Term}_{1}\|_{L_{t}^{3/2}L_{x}^{6/5}} \lesssim 2^{-k_{0}} \|\nabla \E(|u_{\lambda,l}|^{2})\|_{L_{t}^{3}L_{x}^{3/2}}\|u_{\lambda,h}\|_{L_{t}^{3}L_{x}^{6}}\lesssim 2^{-k_{0}}2^{k_{0}}\|u_{\lambda}\|_{\tilde{X}_{k_{0}}([0,\lambda^{-2}T]\times\R^{2})}^{3}\lesssim_{u} 1,
\end{align}
where we use lemmas \ref{lem:lohi_embed} and \ref{lem:BT_embed} to obtain the penultimate inequality. By the triangle inequality,
\begin{align}
\|\mathrm{Term}_{2}\|_{L_{t}^{3/2}L_{x}^{6/5}}& \leq \|\comm{P_{\leq k_{0}}}{u_{\lambda,l}}\E\paren*{\Re{u_{\lambda,l}\bar{u}_{\lambda,h}}}\|_{L_{t}^{3/2}L_{x}^{6/5}} + \|\E\paren*{\comm{P_{\leq k_{0}}}{u_{\lambda,l}}(\bar{u}_{\lambda,h})}u_{\lambda,l}\|_{L_{t}^{3/2}L_{x}^{6/5}}\nonumber\\
&\eqqcolon \mathrm{Term}_{2,1}+\mathrm{Term}_{2,2}.&
\end{align}
The estimate $\mathrm{Term}_{2,1}\lesssim_{u} 1$ follows from the argument for the estimate for $\mathrm{Term}_{1}$. To estimate $\mathrm{Term}_{2,2}$, we argue similarly as to in section \ref{sec:BSE} to obtain that
\begin{align}
\mathrm{Term}_{1,2} &= \|u_{\lambda,l}\E\left(\int_{\R^{2}} 2^{2k_{0}}\phi^{\vee}(2^{k_{0}}y)(\tau_{y}\bar{u}_{\lambda,h})\left(\int_{0}^{1} (\tau_{\theta y}\nabla u_{\lambda,l})\cdot (-y) d\theta\right)dy\right)\|_{L_{t}^{3/2}L_{x}^{6/5}} \nonumber\\
&\lesssim 2^{-k_{0}}\int_{0}^{1}\int_{\R^{2}} 2^{2k_{0}}|\phi^{\vee}(2^{k_{0}}y)| |2^{k_{0}}y| \|(\tau_{\theta y}\nabla u_{\lambda,l})(\tau_{y}\bar{u}_{\lambda,h})\|_{L_{t}^{3/2}L_{x}^{3}}dyd\theta \nonumber\\
&\lesssim_{u} 1.
\end{align}
We have the final estimate $\|\mathcal{N}_{1}\|_{L_{t}^{3/2}L_{x}^{6/5}} \lesssim_{u} 1$.

To estimate $\|\mathcal{N}_{2}+\mathcal{N}_{3}\|_{L_{t}^{3/2}L_{x}^{6/5}}$, we use triangle and H\"{o}lder's inequalities, Calder\'{o}n-Zygmund theorem, followed by applications of lemma \ref{lem:BT_embed} on two of the factors $u_{\lambda,h}$ and mass conservation on the remaining factors to obtain the final estimate
\begin{equation}
\|\mathcal{N}_{2}+\mathcal{N}_{3}\|_{L_{t}^{3/2}L_{x}^{6/5}} \lesssim_{u} 1. 
\end{equation}

Bookkeeping our estimates, we have shown that
\begin{equation}
\eqref{eq:IM_frq_err2} \lesssim_{u} C(a)2^{k_{0}}\left(C(\eta)\frac{\epsilon_{3}}{K}+\eta\right)^{1/6}.
\end{equation}

\item[Estimate for \eqref{eq:IM_frq_err3}:]
We integrate by parts in $x$ and use with the product rule to obtain
\begin{align}
\eqref{eq:IM_frq_err3} &\leq \left|\int_{0}^{T/\lambda^{2}}\int_{\R^{4}}(\nabla \cdot a)(t,x-y) |w(t,y)|^{2} \Re{\bar{w}\mathcal{N}}(t,x)dxdydt\right| \nonumber\\
&\phantom{=}+\left|\int_{0}^{T/\lambda^{2}}\int_{\R^{4}} a(t,x-y)\cdot |w(t,y)|^{2} \Re{(\nabla-i\xi_{\lambda}(t))w\bar{\mathcal{N}}}(t,x)dxdydt\right| \nonumber\\
&= \eqref{eq:IM_frq_err2} +\left|\int_{0}^{T/\lambda^{2}}\int_{\R^{4}}(\nabla \cdot a)(t,x-y) |w(t,y)|^{2} \Re{\bar{w}\mathcal{N}}(t,x)dxdydt\right| \label{eq:IM_err3_red}
\end{align}
Now once again decompose $\mathcal{N}=\mathcal{N}_{0}+\cdots+\mathcal{N}_{3}$, and use the triangle inequality to to reduce to estimating the contributions of each of the $\mathcal{N}_{j}$	separately.

Fourier support analysis shows that $\mathcal{N}_{0}=0$, and hence there is no corresponding contribution to the second term in \eqref{eq:IM_err3_red}.

We next estimate the contribution of $\mathcal{N}_{3}$. By the estimate for $\nabla a$ together with H\"{o}lder's inequality in space, Hardy-Littlewood-Sobolev lemma, Calder\'{o}n-Zygmund theorem, followed by interpolation and mass conservation, we have that
\begin{align}
&\left|\int_{0}^{T/\lambda^{2}}\int_{\R^{4}}(\nabla \cdot a)(t,x-y) |w(t,y)|^{2} \Re{\bar{w}\mathcal{N}_{3}}(t,x)dxdydt\right|\nonumber\\
&\lesssim C(a)\int_{0}^{T/\lambda^{2}} \| |w(t)|^{2}\|_{L_{x}^{3/2}} \| |\nabla|^{-1}\left( \left(w\E(|u_{\lambda,h}|^{2}u_{\lambda,h}\right)(t)\right)\|_{L_{x}^{3}}dt \nonumber\\
&\lesssim  C(a)\int_{0}^{T/\lambda^{2}} \|w(t)\|_{L_{x}^{3}}^{2} \|(w\E(|u_{\lambda,h}|^{2})u_{\lambda,h})(t)\|_{L_{x}^{6/5}}dt \nonumber\\
&\lesssim  C(a)\int_{0}^{T/\lambda^{2}} \|w(t)\|_{L_{x}^{3}}^{2} \|w(t)\|_{L_{x}^{12}}\|u_{\lambda,h}(t)\|_{L_{x}^{4}}^{3}dt \nonumber\\
&\lesssim C(a)\int_{0}^{T/\lambda^{2}} \|w(t)\|_{L_{x}^{12}}^{9/5} \|u_{\lambda,h}(t)\|_{L_{x}^{4}}^{3}dt.
\end{align}
Now by H\"{o}lder's inequality in time together with the estimate $\|w\|_{L_{t}^{36/5}L_{x}^{12}} \lesssim  2^{5k_{0}/9}\|u_{\lambda,h}\|_{\tilde{X}_{k_{0}}([0,T/\lambda^{2}]\times\R^{2})}$ given by lemma \ref{lem:BT_embed} (here we use the admissibility of the pair $(36/5,36/13)$) and the estimate
\begin{align}
\|u_{\lambda,h}\|_{L_{t,x}^{4}} \lesssim_{u} \|u_{\lambda,h}\|_{L_{t}^{\infty}L_{x}^{2}}^{1/4} &\lesssim \left(2^{-k_{0}}\| (\nabla-i\xi_{\lambda}(t)) P_{\xi_{\lambda}(t),\leq C(\eta)N_{\lambda}(t)} u_{\lambda,h}\|_{L_{t}^{\infty}L_{x}^{2}}+ \|P_{\xi_{\lambda}(t), >C(\eta)N_{\lambda}(t)}u_{\lambda,h}\|_{L_{t}^{\infty}L_{x}^{2}}\right)^{1/4} \nonumber\\
&\lesssim \left(C(\eta)\frac{\epsilon_{3}}{K}+\eta\right)^{1/4}
\end{align}
given by the frequency localization property \ref{eq:sp_frq_loc} and $|\xi_{\lambda}(t)\| \leq \epsilon_{3}^{1/2}2^{-19+k_{0}}$ for $t\in [0,T]$, it follows that
\begin{equation}
\int_{0}^{T/\lambda^{2}} \|w(t)\|_{L_{x}^{12}}^{9/5} \|u_{\lambda,h}(t)\|_{L_{x}^{4}}^{3}dt \lesssim \|w\|_{L_{t}^{36/5}L_{x}^{12}}^{9/5}\|u_{\lambda,h}\|_{L_{t,x}^{4}}^{3} \lesssim_{u} 2^{k_{0}}\left(C(\eta)\frac{\epsilon_{3}}{K}+\eta\right)^{3/4}.
\end{equation}
Hence, we have shown that
\begin{equation}
\left|\int_{0}^{T/\lambda^{2}}\int_{\R^{4}}(\nabla \cdot a)(t,x-y) |w(t,y)|^{2} \Re{\bar{w}\mathcal{N}_{3}}(t,x)dxdydt\right| \lesssim_{u} 2^{k_{0}}C(a)\left(C(\eta)\frac{\epsilon_{3}}{K}+\eta\right)^{3/4},
\end{equation}
which completes the estimate for the contribution of $\mathcal{N}_{3}$.

We next estimate the contribution of $\mathcal{N}_{2}$. Since the contributions of the other terms in $\mathcal{N}_{2}$ may be estimated by similar arguments, without loss of generality we may assume that $\mathcal{N}_{2}=\E(\bar{u}_{\lambda,l}u_{\lambda,h})u_{\lambda,h}$. By similar arguments as in the case of $\mathcal{N}_{3}$, we have that
\begin{align}
&\left|\int_{0}^{T/\lambda^{2}}\int_{\R^{4}} |w(t,y)|^{2} (\nabla\cdot a)(t,x-y)\Re{\bar{w}\E(\bar{u}_{\lambda,l}u_{\lambda,h})u_{\lambda,h}}(t,x)dxdydt\right| \nonumber\\
&\lesssim C(a)\int_{0}^{T/\lambda^{2}} \| |\nabla|^{-1}(|w(t)|^{2})\|_{L_{x}^{6}} \|(w\E(\bar{u}_{\lambda,l}u_{\lambda,h})u_{\lambda,h})(t)\|_{L_{x}^{6/5}}dt \nonumber\\
&\lesssim C(a)\int_{0}^{T/\lambda^{2}} \|w(t)\|_{L_{x}^{3}}^{2}\|w(t)\|_{L_{x}^{4}}^{2}\|u_{\lambda,h}(t)\|_{L_{x}^{6}}^{2}dt \nonumber\\
&\lesssim  C(a)\|w\|_{L_{t}^{10}L_{x}^{4}}^{10/3} \|u_{\lambda,h}\|_{L_{t}^{3}L_{x}^{6}}^{2} \nonumber\\
&\lesssim C(a)\left(2^{\frac{3k_{0}}{10}}\|u_{\lambda}\|_{\tilde{X}_{k_{0}}([0,\lambda^{-2}T]\times\R^{2})}\right)^{10/3} \left(\|u_{\lambda}\|_{\tilde{X}_{k_{0}}([0,\lambda^{-2}T]\times\R^{2})}^{4/5} \left(C(\eta)\frac{\epsilon_{3}}{K}+\eta\right)^{1/5}\right)^{2} \nonumber\\
&\lesssim_{u} 2^{k_{0}}C(a)\left(C(\eta)\frac{\epsilon_{3}}{K}+\eta\right)^{2/5}.
\end{align}

Lastly, we estimate the contribution of $\mathcal{N}_{1}$. Since the contributions of the other terms in $\mathcal{N}_{1}$ may be estimated by similar argument, we may assume with without loss of generality that $\mathcal{N}_{1}=\E(\bar{u}_{\lambda,l}u_{\lambda,h})u_{\lambda,l}$. From the same combination of inequalities, Calder\'{o}n-Zygmund theorem, interpolation, and mass conservation as before, it follows that
\begin{align}
&\left|\int_{0}^{T/\lambda^{2}}\int_{\R^{4}}(\nabla\cdot a)(t,x-y) |w(t,y)|^{2} \Re{\bar{w}\E(\bar{u}_{\lambda,l}u_{\lambda,h})u_{\lambda,l}}(t,x)dxdydt\right|\nonumber\\
&\lesssim C(a)\int_{0}^{T/\lambda^{2}} \|w(t)\|_{L_{x}^{3}}^{2}\|(wE(\bar{u}_{\lambda,l}u_{\lambda,h})u_{\lambda,l})(t)\|_{L_{x}^{6/5}}dt \nonumber\\
&\lesssim C(a)\int_{0}^{T/\lambda^{2}} \|w(t)\|_{L_{x}^{3}}^{2}\|w(t)\|_{L_{x}^{9/2}}^{3}\|u_{\lambda,h}(t)\|_{L_{x}^{6}}dt \nonumber\\
&\lesssim C(a)\int_{0}^{T/\lambda^{2}} \|w(t)\|_{L_{x}^{9/2}}^{21/5}\|u_{\lambda,h}(t)\|_{L_{x}^{6}}dt \nonumber\\
&\lesssim C(a)\|w\|_{L_{t}^{63/10}L_{x}^{9/2}}^{21/5}\|u_{\lambda,h}\|_{L_{t}^{3}L_{x}^{6}} \nonumber\\
&\lesssim C(a)\left(2^{\frac{15k_{0}}{63}}\|u_{\lambda}\|_{\tilde{X}_{k_{0}}([0,\lambda^{-2}T]\times\R^{2})}\right)^{21/5}\left(C(\eta)\frac{\epsilon_{3}}{K}+\eta\right)^{1/5} \nonumber\\
&\lesssim_{u} 2^{k_{0}}C(a)\left(C(\eta)\frac{\epsilon_{3}}{K}+\eta\right)^{1/5}.
\end{align}

Bookkeeping our estimates, we have shown that
\begin{equation}
\eqref{eq:IM_frq_err3} \lesssim 2^{k_{0}}C(a)\left(C(\eta)\frac{\epsilon_{3}}{K}+\eta\right)^{1/6},
\end{equation}
which completes the proof of the proposition.
\end{description}
\end{proof}

\subsection{Defocusing case}\label{ssec:QS_dfoc}
In this subsection, we preclude the quasi-soliton scenario for the defocusing eeDS equation, which we remind the reader is
\begin{equation}
(i\partial_{t}+\Delta) u =|u|^{2}u-\E(|u|^{2})u = -\mathcal{L}(|u|^{2})u, \qquad \mathcal{L} \coloneqq -\frac{\partial_{2}^{2}}{\Delta}.
\end{equation}
Our goal is to prove an inequality of the form
\begin{equation}
K\lesssim |M_{R,m}| \lesssim o(K),
\end{equation}
by carefully balancing the parameters $m,R,\eta_{1},\eta_{2},\eta_{3},K$. Since $K$ may be taken arbitrarily large in the quasi-soliton scenario by taking $T$ arbitrarily large, we obtain a contradiction.

Recall that our Morawetz functional satisfies the identity
\begin{align}
M_{R,m}(T) &= -4\int_{0}^{T}\int_{\R^{4}}a_{R,m,j}(t,x-y)|w(t,y)|^{2}\partial_{k}\Re{\ol{\partial_{k}w}\partial_{j}w}(t,x)dxdydt \label{eq:IM_d_main1}\\
&\phantom{=}-2\int_{0}^{T}\int_{\R^{4}}a_{R,m,j}(t,x-y)\partial_{k}T_{0k}^{w}(t,y)\Im{\bar{w}\partial_{j}w}(t,x)dxdydt &\label{eq:IM_d_main2}\\
&\phantom{=}-\int_{0}^{T}\int_{\R^{4}}a_{R,m,j}(t,x-y)|w(t,y)|^{2}\partial_{k}T_{jk}^{w}(t,x)dxdydt\label{eq:IM_d_nl}\\
&\phantom{=}+\int_{0}^{T}\int_{\R^{4}}a_{R,m,j}(t,x-y)|w(t,y)|^{2}\partial_{j}\partial_{k}^{2}(|w|^{2})(t,x)dxdy dt\label{eq:IM_d_lerr}\\
&\phantom{=}+2\int_{0}^{T}\int_{\R^{4}}(\partial_{t}a_{R,m})(t,x-y)\cdot |w(t,y)|^{2}\Im{\bar{w}\nabla w}(t,x)dxdydt\label{eq:IM_d_terr}\\
&\phantom{=}+2\int_{0}^{T}\int_{\R^{4}}a_{R,m,j}(t,x-y)|w(t,y)|^{2}\Re{\bar{\mathcal{N}}\partial_{j}w -\bar{w}\partial_{j}\mathcal{N}}(t,x)dxdydt\label{eq:IM_d_err1}\\
&\phantom{=}+4\int_{0}^{T}\int_{\R^{4}}a_{R,m,j}(t,x-y) \Im{\bar{w}\mathcal{N}}(t,y) \Im{\bar{w}\partial_{j}w}(t,x)dxdydt.\label{eq:IM_d_err2}
\end{align}
The quantity \eqref{eq:IM_d_main1}+\eqref{eq:IM_d_main2} provides the crucial lower bound $\gtrsim_{K}$. The remaining terms on the RHS of the identity are error terms relative to \eqref{eq:IM_d_main1}+\eqref{eq:IM_d_main2} and can be made of size $\sim o_{R}(1)K$ or $\sim \eta_{2}K$ through delicate analysis involving a combination of the preliminary lemmas from subsection \ref{ssec:QS_pre} and proposition \ref{prop:IM_frq_err}.

We now proceed to estimating the terms \eqref{eq:IM_d_main1}-\eqref{eq:IM_d_err2} in several steps.

\begin{description}[leftmargin=*]
\item[Estimate for $\eqref{eq:IM_d_err1}+\eqref{eq:IM_d_err2}$:]
We first estimate the error terms arising from the fact that the frequency-localized solution $w$ satisfies the approximate eeDS equation
\begin{equation}
(i\partial_{t}+\Delta)w=F(w) + (P_{\leq K}F(u)-F(w)) = F(w) + \mathcal{N}
\end{equation}
by using proposition \ref{prop:IM_frq_err}. In order to do so, we rescale the frequency-localized solution $w$ by setting $\lambda:=\frac{\epsilon_{3}2^{k_{0}}}{K}$ and defining
\begin{equation}
w_{\lambda}(t,x) \coloneqq \lambda w(\lambda^{2}t, \lambda x) = P_{\leq \epsilon_{3}2^{k_{0}}}(u_{\lambda}), \qquad u_{\lambda}(t,x) \coloneqq \lambda u(\lambda^{2}t,\lambda x).
\end{equation}
Now observe that
\begin{align}
\eqref{eq:IM_d_err1} = 2\lambda^{-7}\int_{0}^{T}\int_{\R^{4}}a_{R,m,\lambda,j}\left(\lambda^{-2}t,\lambda^{-1}(x-y)\right)|w_{\lambda} (\lambda^{-2}	t, \lambda^{-1}	y)|^{2}\Re{\bar{\mathcal{N}}_{\lambda}\partial_{j}w_{\lambda}-\bar{w}_{\lambda}\partial_{j}\mathcal{N}_{\lambda}}(\lambda^{-2} t, \lambda^{-1}x) dxdy dt,
\end{align}
and
\begin{align}
\eqref{eq:IM_d_err2} = 4\lambda^{-7}\int_{0}^{T}\int_{\R^{4}}a_{R,m,\lambda,j}\left(\lambda^{-2}t,\lambda^{-1}(x-y)\right)\Im{\bar{w}_{\lambda}\mathcal{N}_{\lambda}}(\lambda^{-2}t,\lambda^{-1}y)\Im{\bar{w}_{\lambda}\partial_{j}w_{\lambda}}(\lambda^{-2}t,\lambda^{-1}x)dxdy dt,
\end{align}
where
\begin{equation}
a_{R,m,\lambda}(t,x-y) \coloneqq a_{R,m}(\lambda^{2}t,\lambda(x-y)).
\end{equation}
Then by dilation invariance,
\begin{equation}
\eqref{eq:IM_d_err1} = \frac{2}{\lambda}\int_{0}^{T/\lambda^{2}}\int_{\R^{4}}a_{R,m,\lambda,j}(t,x-y)|w_{\lambda}(t,y)|^{2}\Re{\bar{\mathcal{N}}_{\lambda}\partial_{j}w_{\lambda}-\bar{w}_{\lambda}\partial_{j}\mathcal{N}_{\lambda}}(t,x)dxdy dt, \label{eq:IM_d_err1'}
\end{equation}
and
\begin{equation}
\eqref{eq:IM_d_err2} = \frac{4}{\lambda}\int_{0}^{T/\lambda^{2}}\int_{\R^{4}}a_{R,m,\lambda,j}(t,x-y) \Im{\bar{w}_{\lambda}\mathcal{N}_{\lambda}}(t,y)\Im{\bar{w}_{\lambda}\partial_{j}w_{\lambda}}(t,x)dxdy dt.
\label{eq:IM_d_err2'}
\end{equation}

We claim that the potential $a_{R,m,\lambda}$ satisfies the conditions of proposition \ref{prop:IM_frq_err} with constant $C(a)\lesssim R^{2}$. Indeed, it is evident that $\|a_{R,m,\lambda}\|_{L_{t,x}^{\infty}} \lesssim R^{2}$ and by the chain rule,
\begin{equation}
|\nabla a_{R,m,\lambda}(t,x-y)| = \lambda |(\nabla a_{R,m,j})(\lambda	^{2}t,\lambda(x-y))| \lesssim \frac{R^{2}\lambda}{|\lambda(x-y)|} = \frac{R^{2}}{|x-y|}. 
\end{equation}

Using that \eqref{eq:IM_d_err1'} + \eqref{eq:IM_d_err2'} is Galilean invariant, we apply proposition \ref{prop:IM_frq_err} with the potential $a_{R,m,\lambda}$ to obtain that
\begin{equation}
\frac{\lambda}{2}|\eqref{eq:IM_d_err1}+\eqref{eq:IM_d_err2}| \lesssim_{u} 2^{k_{0}}R^{2}\left(\frac{C(\eta)\epsilon_{3}}{K}+\eta\right)^{1/6}, \qquad \forall 0<\eta<1, \enspace T\geq T_{0}(\eta).
\end{equation}
which implies that
\begin{equation}
|\eqref{eq:IM_d_err1}+\eqref{eq:IM_d_err2}| \lesssim_{u} K \frac{R^{2}}{\epsilon_{3}}\left(\frac{C(\eta)\epsilon_{3}}{K}+\eta\right)^{1/6}, \qquad \forall 0<\eta<1, \enspace K\geq K(\eta).
\end{equation}
Since given $\epsilon_{3},R$, we have the freedom to take $\eta$ arbitrarily small and $K(\eta)$ arbitrarily large, we see that the RHS is $o(K)$.

\item[Estimate for $M_{R,m}(t)$:] 
We next estimate the magnitude of the Morawetz functional $M_{R,m}(t)$ over the time interval $[0,T]$. Since $a_{R,m}$ is odd, it follows that $M_{R,m}(t)$ is invariant under the Galilean transformation $w\mapsto e^{-ix\cdot\xi(t)}w$. Therefore
\begin{align}
|M_{R,m}(t)| &\lesssim \|a_{R,m}\|_{L_{t,x}^{\infty}} \int_{\R^{4}} |\Im{\bar{w}(\nabla-i\xi(t))w}(t,x)| |w(t,y)|^{2}dxdy\nonumber\\
&\lesssim R^{2}\int_{\R^{4}}  |\Im{\bar{w}(\nabla-i\xi(t))w}(t,x)| |w(t,y)|^{2}dxdy.
\end{align}
Now decompose $w$ by
\begin{equation}
w = P_{\xi(t), \leq C(\eta_{3}R^{-2})N_{m}(t)}w + P_{\xi(t), >C(\eta_{3}R^{-2})N_{m}(t)}w,
\end{equation}
where we remind the reader that $C:I\rightarrow [0,\infty)$ is the compactness modulus function for $u$. Using the estimate for $|\xi'(t)|$ and the fundamental theorem of calculus, we have that $|\xi(t)| \leq 2^{-19}\epsilon_{3}^{1/2}K$ for $t\in [0,T]$, so that
\begin{equation}
P_{\xi(t),\leq C(\eta_{3}R^{-2})N_{m}(t)}w=P_{\xi(t),\leq C(\eta_{3}R^{-2})N_{m}(t)}u,
\end{equation}
provided that $K=K(\eta_{3},R)>0$ is sufficiently large. Therefore by Cauchy-Schwarz, Plancherel's theorem, the frequency-localization property \ref{eq:sp_frq_loc}, and mass conservation, we have that
\begin{align}
\sup_{m\in\N}\sup_{t\in [0,T]} M_{R,m}(t) &\lesssim \sup_{m\in\N}\sup_{t\in [0,T]} R^{2}\|u_{0}\|_{L^{2}}^{2} \|w(\nabla-i\xi(t))P_{\xi(t),\leq C(\eta_{3}R^{-2})N_{m}(t)}w\|_{L_{t}^{\infty}L_{x}^{1}}&\nonumber\\
&\phantom{=}+\sup_{t\in [0,T]}\|w(\nabla-i\xi(t))P_{\xi(t),>C(\eta_{3}R^{-2})N_{m}(t)}w\|_{L_{t}^{\infty}L_{x}^{1}}&\nonumber\\
&\lesssim (1+M(u))\paren*{C(\eta_{3}R^{-2})R^{2}+\eta_{3}K}.
\end{align}
Since given $R$, the parameter $\eta_{3}$ may be taken arbitrarily small and $K(\eta_{3})$ may be taken arbitrarily large, we see that the RHS is $o(K)$.

\item[Estimate for \eqref{eq:IM_d_lerr}:]
Integrating by parts three times in $x$, we see that
\begin{align}
\eqref{eq:IM_d_lerr} = -\int_{0}^{T}\int_{\R^{4}}(\Delta\nabla\cdot a_{R,m})(t,x-y) |w(t,y)|^{2} |w(t,x)|^{2}dxdydt.
\end{align}
Using the identity for $\nabla\cdot a_{R,m}$ in lemma \ref{lem:pot_prop}, we see that
\begin{align}
(\Delta \nabla\cdot a_{R,m})(t,x-y) = N_{m}(t)\Delta_{x}\left(\varphi\left(\frac{N_{m}(t)|x-y|}{R}\right) + \psi_{R,m}(t,|x-y|)\right).
\end{align}
It follows from the chain rule and mean value theorem together with the derivative estimates for $\psi$ and $\psi_{R,m}$ that
\begin{equation}
\left|\Delta_{x}\left(\varphi\left(\frac{N_{m}(t)|x-y|}{R}\right) + \psi_{R,m}(t,|x-y|)\right)\right| \lesssim \frac{N_{m}(t)^{2}}{(\eta_{2}R)^{4}}, \qquad \forall (x,y)\in\R^{2}\times\R^{2}.
\end{equation}
Hence by mass conservation,
\begin{align}
\left|\int_{0}^{T}\int_{\R^{4}}(\Delta\nabla\cdot a_{R,m})(t,x-y) |w(t,y)|^{2}|w(t,x)|^{2}dxdy dt\right| &\lesssim \frac{1}{(\eta_{2}R)^{4}}\int_{0}^{T}N_{m}(t)^{3}dt= \frac{K}{(\eta_{2}R)^{4}}.
\end{align}

\item[Estimate for \eqref{eq:IM_d_nl}:]
We next estimate the contribution of the nonlinear and nonlocal part of the eeDS equation to the $M_{R,m}(T)$, which is given by the term \eqref{eq:IM_d_nl}. Integrating by parts in $x$ to move the $\partial_{k}$ onto $a_{R,m,j}$, we see that
\begin{equation}
\eqref{eq:IM_d_nl} = \int_{0}^{T}\int_{\R^{4}}\partial_{k}a_{R,m,j}(t,x-y) |w(t,y)|^{2}T_{jk}^{w}(t,x)dxdy dt.
\end{equation}

\begin{remark} 
In the setting of the cubic NLS, $T_{jk}^{w}=\p_{k}a_{R,m,j}\delta_{jk}|w|^{4}$. Therefore
\begin{equation}
\int_{0}^{T}\int_{\R^{4}}\p_{k}a_{R,m,j}(t,x-y)|w(t,y)|^{2}T_{jk}^{w}(t,x)dxdydt = \int_{0}^{T}\int_{\R^{4}}(\nabla\cdot a_{R,m})(t,x-y) |w(t,y)|^{2} |w(t,x)|^{4}dxdydt \geq 0,
\end{equation}
and so this term may be simply discarded. This, however, is \emph{not} the case in the setting of the eeDS as the reader will see below.
\end{remark}

Unpackaging the definition the definition of $T_{jk}^{w}$, using the operator identity $Id=\frac{\p_{1}^{2}}{\Delta}+\frac{\p_{2}^{2}}{\Delta}$, and proceeding by direct algebraic manipulation, we see that
\begin{align}
\partial_{k}a_{R,m,j}T_{jk}^{w} &= \partial_{j}a_{R,m,j}\left(\frac{\partial_{2}^{2}}{\Delta}(|w|^{2})\right)^{2}\nonumber\\
&\phantom{=}+2\partial_{2}a_{R,m,2}\frac{\partial_{1}^{2}}{\Delta}(|w|^{2})\frac{\partial_{2}^{2}}{\Delta}(|w|^{2})\nonumber\\
&\phantom{=}-2\partial_{1}a_{R,m,2}\frac{\partial_{2}^{2}}{\Delta}(|w|^{2})\frac{\partial_{1}\partial_{2}}{\Delta}(|w|^{2})\nonumber\\
&\phantom{=}+2\partial_{2}a_{R,m,1}\frac{\partial_{1}\partial_{2}}{\Delta}(|w|^{2})\frac{\partial_{1}^{2}}{\Delta}(|w|^{2})\nonumber\\
&\phantom{=}+\left(\partial_{2}a_{R,m,2}-\partial_{1}a_{R,m,1}\right)\left(\frac{\partial_{1}\partial_{2}}{\Delta}(|w|^{2})\right)^{2}\nonumber\\
&\eqqcolon \mathrm{Term}_{1}+\cdots+\mathrm{Term}_{5}.\label{eq:IM_d_ten_dc}
\end{align}
We now consider the contributions of the $\mathrm{Term}_{j}$ in groupings.

\begin{itemize}[leftmargin=*]
\item 
We first consider $\mathrm{Term}_{1}$. First, recall from lemma \ref{lem:pot_prop} that $\nabla\cdot a_{R}\geq 0$. Therefore
\begin{align}
\int_{0}^{T}\int_{\R^{4}}(\nabla\cdot a_{R})(t,x-y)|w(t,y)|^{2}\left(\frac{\partial_{2}^{2}}{\Delta}(|w|^{2})(t,x)\right)^{2}dxdy dt\geq 0,
\end{align}
so we may harmlessly discard this term.

\item 
We next consider $\mathrm{Term}_{3}+\mathrm{Term}_{4}$. Observe that
\begin{align}
\partial_{1}a_{R,m,2}(t,x-y) =\partial_{2}a_{R,m,1}(t,x-y) = (\p_{r}\psi_{R,m})(t,|x-y|)N_{m}(t)\frac{(x-y)_{1}(x-y)_{2}}{|x-y|}.
\end{align}
Decomposing $\psi_{R,m}=\psi_{R,m,1}+\psi_{R,m,2}+\psi_{R,m,3}$, substituting into above, it suffices to estimate the modulus of the quantity
\begin{equation}
\sum_{j=1}^{3}\int_{0}^{T}\int_{\R^{4}}(\p_{r}\psi_{R,m,j})(t,|x-y|) N_{m}(t)\frac{(x-y)_{1}(x-y)_{2}}{|x-y|} |w(t,y)|^{2} \frac{\p_{2}^{2}}{\Delta}(|w|^{2})(t,x)\frac{\p_{1}\p_{2}}{\Delta}(|w|^{2})(t,x)dxdydt.
\end{equation}
We consider the contributions of the $\p_{r}\psi_{R,m,j}$ separately.

For the contribution of $\p_{r}\psi_{R,m,1}$, observe that since $\p_{r}\psi_{R,m,1} \equiv 0$ on $|r| <\frac{\eta_{2}R^{2}}{2N_{m}(t)}$ and $|\p_{r}\psi_{R,m,1}| \lesssim \frac{1}{r}$ on $|r|\geq \frac{\eta_{2}R^{2}}{2N_{m}(t)}$, we have that
\begin{align}
&\left|\int_{0}^{T}\int_{\R^{4}}(\p_{r}\psi_{R,m,1})(t,|x-y|)N_{m}(t)\frac{(x-y)_{1}(x-y)_{2}}{|x-y|} |w(t,y)|^{2}\frac{\p_{2}^{2}}{\Delta}(|w|^{2})(t,x)\frac{\p_{1}\p_{2}}{\Delta}(|w|^{2})(t,x)dxdydt\right| \nonumber\\
&\lesssim \int_{0}^{T}N_{m}(t)\int_{|x-y|>\frac{\eta_{2}R^{2}}{2N_{m}(t)}} |w(t,y)|^{2}\left|\frac{\p_{2}^{2}}{\Delta}(|w|^{2})(t,x)|^{2}\frac{\p_{1}\p_{2}}{\Delta}(|w|^{2})(t,x)\right| dxdydt \nonumber\\
&=o_{R}(1)K
\end{align}
by Cauchy-Schwarz and lemma \ref{lem:IM_pre_CZ}.

For the contribution of $\p_{r}\psi_{R,m,2}$, observe that since $|\p_{r}\psi_{R,m,2}| \lesssim \frac{N_{m}(t)}{R^{2}}$ on $|r|\leq \frac{\eta_{2}R^{2}}{2N_{m}(t)}$ and $|(\p_{r}\psi_{R,m,2})(r)| \lesssim \frac{1}{r}$ on $|r|>\frac{\eta_{2}R^{2}}{2N_{m}(t)}$, we have that
\begin{align}
&\left|\int_{0}^{T}\int_{\R^{4}}(\p_{r}\psi_{R,m,2})(t,|x-y|)N_{m}(t)\frac{(x-y)_{1}(x-y)_{2}}{|x-y|} |w(t,y)|^{2}\frac{\p_{2}^{2}}{\Delta}(|w|^{2})(t,x) \frac{\p_{1}\p_{2}}{\Delta}(|w|^{2})(t,x)dxdydt\right| \nonumber\\
&\lesssim \int_{0}^{T}\int_{|x-y|\leq\frac{\eta_{2}R^{2}}{2N_{m}(t)}} \paren*{\frac{N_{m}(t)}{R^{2}}} N_{m}(t)|x-y| |w(t,y)|^{2} \left|\frac{\p_{2}^{2}}{\Delta}(|w|^{2})(t,x) \frac{\p_{1}\p_{2}}{\Delta}(|w|^{2})(t,x)\right|dxdydt \nonumber\\
&\phantom{=}+\int_{0}^{T}\int_{|x-y|>\frac{\eta_{2}R^{2}}{2N_{m}(t)}} N_{m}(t) |w(t,y)|^{2}\left|\frac{\p_{2}^{2}}{\Delta}(|w|^{2})(t,x) \frac{\p_{1}\p_{2}}{\Delta}(|w|^{2})(t,x)\right|dxdydt \nonumber\\
&\lesssim \eta_{2} \int_{0}^{T}N_{m}(t)\int_{\R^{4}} |w(t,y)|^{2}\left|\frac{\p_{2}^{2}}{\Delta}(|w|^{2})(t,x) \frac{\p_{1}\p_{2}}{\Delta}(|w|^{2})(t,x)\right|dxdydt \nonumber\\
&\phantom{=}\int_{0}^{T}\int_{|x-y|>\frac{\eta_{2}R^{2}}{2N_{m}(t)}} N_{m}(t) |w(t,y)|^{2}\left|\frac{\p_{2}^{2}}{\Delta}(|w|^{2})(t,x) \frac{\p_{1}\p_{2}}{\Delta}(|w|^{2})(t,x)\right|dxdydt.
\end{align}
The first term in the RHS of the ultimate inequality is $\lesssim \eta_{2}M(u)K$ by Cauchy-Schwarz and Plancherel's theorem, and the second term is $o_{R}(1)K$ by lemma \ref{lem:IM_pre_CZ}.

For the contribution of $\p_{r}\psi_{R,m,3}$, observe that since $\p_{r}\psi_{R,m,3}$ is supported in the region $\{|r|>\frac{\eta_{2}R^{2}}{2N_{m}(t)}\}$ and $|(\p_{r}\psi_{R,m,3})(r)| \lesssim \frac{1}{r}$, we can repeat the argument for the contribution of $\p_{r}\psi_{R,m,1}$ to obtain the estimate
\begin{equation}
\left|\int_{0}^{T}\int_{\R^{4}}(\p_{r}\psi_{R,m,3})(t,|x-y|)N_{m}(t)\frac{(x-y)_{1}(x-y)_{2}}{|x-y|}|w(t,y)|^{2}\frac{\p_{2}^{2}}{\Delta}(|w|^{2})(t,x) \frac{\p_{1}\p_{2}}{\Delta}(|w|^{2})(t,x)dxdydt\right| = o_{R}(1)K.
\end{equation}

Therefore, we have shown that
\begin{equation}
|\mathrm{Term}_{3}+\mathrm{Term}_{4}| \lesssim \paren*{\eta_{2}+o_{R}(1)}K.
\end{equation}

\item 
Lastly, we consider $\mathrm{Term}_{2}+\mathrm{Term}_{5}$. We claim that
\begin{equation}
\int_{0}^{T}\int_{\R^{4}}|w(t,y)|^{2}\paren*{\mathrm{Term}_{2}+\mathrm{Term}_{5}}(t,x,y)dxdydt + o_{R}(1)K+O(\eta_{2}K) \geq 0.
\end{equation}
Indeed, observe that
\begin{equation}
\partial_{j}a_{R,m,j}(t,x-y) = N_{m}(t)\paren*{\p_{r}\psi_{R,m})(t,|x-y|)\frac{(x-y)_{j}^{2}}{|x-y|} + \psi_{R,m}(t,|x-y|)}, \qquad j=1,2.
\end{equation}
We can estimate the contribution of the potential $(\p_{r}\psi_{R,m})(t,|x-y|)N_{m}(t)\frac{(x-y)_{j}^{2}}{|x-y|}$ as in the case of $\mathrm{Term}_{3}+\mathrm{Term}_{4}$ to obtain that
\begin{equation}
\begin{split}
&\int_{0}^{T}\int_{\R^{4}}(\p_{r}\psi_{R,m})(t,|x-y|)N_{m}(t)\frac{(x-y)_{2}^{2}}{|x-y|}|w(t,y)^{2}\frac{\partial_{1}^{2}}{\Delta}(|w|^{2})(t,x)\frac{\partial_{2}^{2}}{\Delta}(|w|^{2})(t,x)dxdy dt\\
&\phantom{=}+\int_{0}^{T}\int_{\R^{4}}(\p_{r}\psi_{R,m})(t,|x-y|)N_{m}(t)\frac{(x-y)_{2}^{2}-(x-y)_{1}^{2}}{|x-y|} |w(t,y)|^{2}\left(\frac{\partial_{1}\partial_{2}}{\Delta}(|w|^{2})(t,x)\right)^{2}dxdy dt\\
&\lesssim o_{R}(1)K+\eta_{2}K.
\end{split}
\end{equation}
Therefore it suffices to show that the quantity
\begin{equation}
\int_{0}^{T}N_{m}(t)\int_{\R^{4}}\psi_{R,m}(t,|x-y|)|w(t,y)|^{2}\frac{\partial_{1}^{2}}{\Delta}(|w|^{2})(t,x)\frac{\partial_{2}^{2}}{\Delta}(|w|^{2})(t,x)dxdydt
\end{equation}
is nonnegative up to an error of size $o_{R}(1)K+\eta_{2}K$, which we do with the next lemma.

\begin{lemma}[Cheap lower bound]\label{lem:cheap_LB}
Let $m_{1}(D),m_{2}(D)$ be two Fourier multipliers with nonnegative symbols $m_{1},m_{2}$, respectively, which are of the form $m_{j}(D)=(\mathcal{K}_{j}\ast\cdot) + c_{j}Id$, where $\mathcal{K}_{j}$ is a real-valued CZK which defines a CZO of convolution type and $c_{j}$ is a real constant. Then there exists a constant $C>0$ such that
\begin{equation}
\int_{0}^{T}N_{m}(t)\int_{\R^{4}}\psi_{R,m}(t,x-y)|w(t,y)|^{2} m_{1}(D)(|w|^{2})(t,x)m_{2}(D)(|w|^{2})(t,x)dxdy dt + o_{R}(1)K+C\eta_{2}K\geq 0.
\end{equation}
\end{lemma}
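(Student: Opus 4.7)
\smallskip

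The plan is to exploit the decomposition $\psi_{R,m} = \psi_{R,m,1} + \psi_{R,m,2} + \psi_{R,m,3}$ from lemma \ref{lem:psi_est}, treating the contribution of $\psi_{R,m,1}$ as the ``main'' nonnegative term (modulo harmless errors) and absorbing the contributions of $\psi_{R,m,2}$ and $\psi_{R,m,3}$ into the $o_R(1)K$ and $\eta_2 K$ error budgets, respectively. Throughout, I will write $\Phi(t,x) := m_1(D)(|w|^2)(t,x)\, m_2(D)(|w|^2)(t,x)$ and $r_0(t) := \eta_2 R^2/N_m(t)$, and I will use without further mention that $\|m_j(D)f\|_{L^p} \lesssim \|f\|_{L^p}$ for $1<p<\infty$ (Calder\'on--Zygmund theorem together with the $c_j\,Id$ piece), that $\|w\|_{L^p_x} \lesssim \|u\|_{L^p_x}$ for $1<p<\infty$, and that $\int_0^T N_m(t)\|u(t)\|_{L^4_x}^4\,dt \lesssim_u K$, the latter following by partitioning $[0,T]$ into small intervals $J_l$, using $N_m \leq N \sim_u N(J_l)$ on $J_l$, and invoking $N(J_l) \sim_u \int_{J_l} N(t)^3 dt$.

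For the contribution from $\psi_{R,m,1}$, lemma \ref{lem:psi_est} gives the explicit pointwise identity
\begin{equation}
\psi_{R,m,1}(t,r) = C_{\eta_2}\mathbf{1}_{\{r \le r_0(t)\}} + \tfrac{C_{\eta_2}\eta_2 R^2}{rN_m(t)}\mathbf{1}_{\{r>r_0(t)\}}.
\end{equation}
Writing $\mathbf{1}_{\{|x-y|\le r_0(t)\}} = 1 - \mathbf{1}_{\{|x-y|>r_0(t)\}}$ and integrating $|w(t,y)|^2$ in $y$ gives $M(u(t)) = M(u_0)$, so the constant piece yields
\begin{equation}
C_{\eta_2} M(u_0) \int_0^T N_m(t)\int_{\R^2} \Phi(t,x)\,dx\,dt.
\end{equation}
By Plancherel, $\int \Phi(t,x)\,dx = \int m_1(\xi)m_2(\xi)|\widehat{|w|^2}(t,\xi)|^2\,d\xi \ge 0$ since $m_1,m_2 \ge 0$, so this contribution is nonnegative. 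The remaining outer-region piece of $\psi_{R,m,1}$, whose integrand is pointwise bounded by $C_{\eta_2}\cdot N_m(t)\mathbf{1}_{\{|x-y|>r_0(t)\}}$ (using $\eta_2 R^2/(rN_m(t))\le 1$ on that region), will be absorbed into the $o_R(1)K$ error below.

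The contribution from $\psi_{R,m,2}$ is controlled crudely: lemma \ref{lem:psi_est} yields $|\psi_{R,m,2}(t,r)| \lesssim \eta_2$ \emph{uniformly} in $r$, so H\"older, mass conservation, and Cauchy--Schwarz give
\begin{equation}
\Bigl|\int_0^T N_m(t)\!\!\int_{\R^4}\!\psi_{R,m,2}(t,|x-y|)|w|^2\Phi\,dx\,dy\,dt\Bigr| \lesssim \eta_2 M(u_0)\int_0^T N_m(t)\|w(t)\|_{L^4_x}^4\,dt \lesssim_u \eta_2 K,
\end{equation}
which accounts for the $C\eta_2 K$ term in the claim.

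Finally, for the outer region --- both the $\psi_{R,m,3}$ piece, where $|\psi_{R,m,3}|\lesssim 1$ is supported on $\{r>r_0(t)\}$, and the leftover tail from $\psi_{R,m,1}$ --- I will reduce to lemma \ref{lem:IM_pre_CZ}. Applying $|m_1(D)(|w|^2)\,m_2(D)(|w|^2)| \le \tfrac{1}{2}(|m_1(D)(|w|^2)|^2 + |m_2(D)(|w|^2)|^2)$ and using $\{|x-y|>\eta_2 R^2/N_m(t)\}\subset\{|x-y|>\eta_2 R^2/N(t)\}$ together with $N_m \le N$ bounds the error by
\begin{equation}
\sum_{j=1}^{2}\int_0^T N(t)\!\!\int_{|x-y|>\eta_2 R^2/N(t)}\!\!|m_j(D)(|w|^2)(t,x)|^2|w(t,y)|^2\,dx\,dy\,dt = o_R(1)K,
\end{equation}
where the equality uses lemma \ref{lem:IM_pre_CZ} with $R' = \eta_2 R^2 \to \infty$ (applied to the CZO part $\K_j\ast\cdot$ and the identity part $c_jId$ separately). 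Combining the three contributions gives $I \ge -o_R(1)K - C\eta_2 K$, which is the claim. The only subtle point is ensuring that the $c_j\,Id$ pieces of $m_j(D)$ do not destroy the structure: they produce pointwise $c_j|w|^2$ contributions which are trivially handled by the same $L^p$ estimates, so no real obstacle arises. The main technical content is recognizing that once one extracts the constant value $C_{\eta_2}$ of $\psi_{R,m,1}$ on the ``diagonal'' region $\{|x-y|\le r_0(t)\}$, the remaining coupling between $|w(y)|^2$ and $\Phi(x)$ decouples into a product of mass and a manifestly nonnegative Fourier integral.
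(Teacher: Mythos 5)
Your proof is correct and follows essentially the same route as the paper: the same decomposition $\psi_{R,m}=\psi_{R,m,1}+\psi_{R,m,2}+\psi_{R,m,3}$, extraction of the constant value $C_{\eta_{2}}$ so that Plancherel yields the manifestly nonnegative main term, the uniform bound $|\psi_{R,m,2}|\lesssim \eta_{2}$ for the $\eta_{2}K$ error, and Cauchy--Schwarz plus lemma \ref{lem:IM_pre_CZ} for every piece supported in $\{|x-y|>\eta_{2}R^{2}/N_{m}(t)\}$. The only cosmetic slip is that the $y$-integral of $|w(t,y)|^{2}$ equals $M(w(t))\leq M(u_{0})$ rather than $M(u_{0})$ itself, which affects nothing since only the nonnegativity of the main term is used.
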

\begin{proof}
We decompose $\psi_{R,m}=\psi_{R,m,1}+\psi_{R,m,2}+\psi_{R,m,3}$ and consider the contribution of each term separetely.

For the contribution of $\psi_{R,m,1}$, decompose
\begin{equation}
\psi_{R,m,1}(t,|x-y|) = C_{\eta_{2}} - \paren*{C_{\eta_{2}}-\psi_{R,m,1}(t,|x-y|)},
\end{equation}
where $C_{\eta_{2}}$ is the constant in lemma \ref{lem:psi_est}, so that
\begin{align}
&\int_{\R^{4}}\psi_{R,m,1}(t,|x-y|)|w(t,y)|^{2}m_{1}(D)(|w|^{2})(t,x)m_{2}(D)(|w|^{2})(t,x)dxdy \nonumber\\
&=C_{\eta_{2}}\int_{\R^{4}}|w(t,y)|^{2}m_{1}(D)(|w|^{2})(t,x)m_{2}(D)(|w|^{2})(t,x)dxdy \nonumber\\
&\phantom{=}+\int_{\R^{4}}\paren*{\psi_{R,m,1}(t,|x-y|)-C_{\eta_{2}}}|w(t,y)|^{2}m_{1}(D)(|w|^{2})(t,x)m_{2}(D)(|w|^{2})(t,x)dxdy \nonumber\\
&\eqqcolon \mathrm{Main}(t)+\mathrm{Error}(t).
\end{align}
By Plancherel's theorem in $x$, we have that
\begin{equation}
\mathrm{Main}(t) = M(w(t)) \int_{\R^{2}}m_{1}(\xi)m_{2}(\xi)\mathcal{F}(|w|^{2})(t,\xi)\ol{\mathcal{F}(|w|^{2})(t,\xi)}d\xi \geq 0.
\end{equation}
Hence,
\begin{equation}
\int_{0}^{T}N_{m}(t)\mathrm{Main}(t)dt \geq 0.
\end{equation}
We claim that $|\mathrm{Error}(t)|\lesssim o_{R}(1)$. Indeed, since $\psi_{R,m,1}\equiv C_{\eta_{2}}$ on the interval $\{|r|\leq \frac{\eta_{2}R^{2}}{N_{m}(t)}\}$, we have the pointwise bound
\begin{equation}
|\psi_{R,m,1}(t,|x-y|)-C_{\eta_{2}}| \lesssim \cdot 1_{\geq \frac{\eta_{2}R^{2}}{N_{m}(t)}}(|x-y|).
\end{equation}
Cauchy-Schwarz and lemma \ref{lem:IM_pre_CZ} then imply the claim.

For the contribution of $\psi_{R,m,2}$, recall that $|\psi_{R,m,2}| \lesssim \eta_{2}$. Hence,
\begin{align}
&\left|\int_{\R^{4}}\psi_{R,m,2}(t,|x-y|) |w(t,y)|^{2} m_{1}(D)(|w|^{2})(t,x) m_{2}(D)(|w|^{2})(t,x)dxdy\right| \nonumber\\
&\lesssim \eta_{2}\int_{\R^{4}} |w(t,y)|^{2}\left| m_{1}(D)(|w|^{2})(t,x) m_{2}(D)(|w|^{2})(t,x)\right|dxdy \nonumber\\
&\lesssim \eta_{2}M(w(t))\|w(t)\|_{L_{x}^{4}(\R^{2})}^{4}
\end{align}
by Cauchy-Schwarz and Plancherel's theorem. Multiplying the RHS of the ultimate inequality by $N_{m}(t)$ and then integrating with respect to time over $[0,T]$, we conclude that the contribution of $\psi_{R,m,2}$ is $\lesssim \eta_{2}K$.

For the contribution of $\psi_{R,m,3}$, we use that $\psi_{R,m,3}\equiv 0$ on the interval $\{|r|\leq \frac{\eta_{2}R^{2}}{N_{m}(t)}\}$ together with $\psi_{R,m,3}\leq 1$ to obtain that
\begin{equation}
\left|\int_{0}^{T}\int_{\R^{4}}N_{m}(t)\int_{\R^{4}}\psi_{R,m,3}(t,|x-y|) |w(t,y)|^{2}m_{1}(D)(|w|^{2})(t,x) m_{2}(D)(|w|^{2})(t,x)dxdydt\right| = o_{R}(1)K
\end{equation}
by Cauchy-Schwarz and lemma \ref{lem:IM_pre_CZ}.

Bookkeeping our analysis for the contributions of the $\psi_{R,m,j}$ completes the proof of the lemma.
\end{proof}

\end{itemize}

\item[Lower bound for $\eqref{eq:IM_d_main1}+\eqref{eq:IM_d_main2}$:]
We next prove a lower bound of size $K$ for the sum $\eqref{eq:IM_d_main1}+\eqref{eq:IM_d_main2}$, which is the crucial ingredient in obtaining a contradiction in the quasi-soliton scenario. Integrating by parts in the $x$ variable in \eqref{eq:IM_d_main1} and in the $y$ variable in \eqref{eq:IM_d_main2} together with unpackaging the definition of $T_{0k}^{w}$, we see that
\begin{align}
\eqref{eq:IM_d_main1} + \eqref{eq:IM_d_main2} &= 4\int_{0}^{T}\int_{\R^{4}}(\p_{k}a_{R,m,j})(t,x-y) |w(t,y)|^{2}\Re{\ol{\p_{k}w}\partial_{j}w}(t,x)dxdy dt\nonumber\\
&\phantom{=}-4\int_{0}^{T}\int_{\R^{4}}(\p_{k}a_{R,m,j}(t,x-y)) \Im{\bar{w}\p_{k}w}(t,y) \Im{\bar{w}\p_{j}w}(t,x)dxdy dt\nonumber\\
&=:\mathrm{Term}_{1}+\mathrm{Term}_{2}.
\end{align}
Since
\begin{align}
(\partial_{k}a_{R,m,j})(t,x-y) &= N_{m}(t)\left((\p_{r}\psi_{R,m})(t,|x-y|)\frac{(x-y)_{j}(x-y)_{k}}{|x-y|}+\delta_{jk}\psi_{R,m}(t,|x-y|)\right),\nonumber\\
\end{align}
it follows by direct computation that
\begin{align}
\mathrm{Term}_{1} &= 4\int_{0}^{T}N_{m}(t)\int_{\R^{4}}\varphi\left(\frac{N_{m}(t)|x-y|}{R}\right)N_{m}(t) |(\nabla w)(t,x)|^{2} |w(t,y)|^{2}dxdy dt\nonumber\\
&\phantom{=}+4\int_{0}^{T}N_{m}(t)\int_{\R^{4}}\left(\psi_{R,m}(t,|x-y|)-\varphi\left(\frac{N_{m}(t)|x-y|}{R}\right)\right) |(\nabla_{rad,y}^{\perp}w)(t,x)|^{2} |w(t,y)|^{2}dxdy dt\nonumber\\
&=:\mathrm{Term}_{1,1}+\mathrm{Term}_{1,2}.
\end{align}
Similarly,
\begin{align}
\mathrm{Term}_{2} &=-4\int_{0}^{T}N_{m}(t)\int_{\R^{4}}\varphi\left(\frac{N_{m}(t)|x-y|}{R}\right)\Im{\bar{w}\nabla w}(t,y)\cdot\Im{\bar{w}\nabla w}(t,x)dxdy dt\nonumber\\
&\phantom{=}-4\int_{0}^{T}N_{m}(t)\int_{\R^{4}}\left(\psi_{R,m}(t,|x-y|)-\varphi\left(\frac{N_{m}(t)|x-y|}{R}\right)\right)\Im{\bar{w}\nabla_{rad,y}^{\perp}w}(t,y)\cdot\Im{\bar{w}\nabla_{rad,y}^{\perp}w}(t,x)dxdy dt\nonumber\\
&=:\mathrm{Term}_{2,1}+\mathrm{Term}_{2,2}.
\end{align}
Above, we have introduced the notation $\nabla_{rad,y}f(x) \coloneqq (\nabla f(x)\cdot \frac{x-y}{|x-y|})\frac{x-y}{|x-y|}$ and $\nabla_{rad,y}^{\perp}f \coloneqq \nabla f - \nabla_{rad,y}f$.

We first claim that $\mathrm{Term}_{1,2}+\mathrm{Term}_{2,2}\geq 0$. Indeed, since $\varphi$ is decreasing, $\psi_{R,m}(t,|x-y|)-\varphi(\frac{N_{m}(t)|x-y|}{R})\geq 0$. So it follows from Cauchy-Schwarz and symmetry in $x,y$ that
\begin{equation}
|\mathrm{Term}_{2,2}| \leq 4\int_{0}^{T}N_{m}(t)\int_{\R^{4}}\left(\psi_{R,m}(t,|x-y|)-\varphi\left(\frac{N_{m}(t)|x-y|}{R}\right)\right) |w(t,y)|^{2} |(\nabla_{rad,y}^{\perp}w)(t,x)|^{2} dxdy dt,
\end{equation}
which implies the claim.

It remains to analyze the quantity $\mathrm{Term}_{1,1}+\mathrm{Term}_{2,1}$. To do so, we use the observation of \cite{Dodson2015} that $\mathrm{Term}_{1,1}+\mathrm{Term}_{2,1}$ is Galilean invariant. Therefore we can make $(t,z)$-dependent Galilean transformation $w\mapsto v_{t,z}$ such that $v_{t,z}$ has zero momentum. More precisely, for $\xi(t,z)\in\R^{2}$, we see from the definition of $\varphi$ and the Fubini-Tonelli theorem that under the Galilean transformation,
\begin{equation}
\begin{split}
\mathrm{Term}_{2,1} &\mapsto \frac{4N_{m}(t)}{\|\zeta(|\cdot|)\|_{L^{1}}}\int_{0}^{T}dt\int_{\R^{2}}dz\int_{\R^{4}}dxdy\zeta\left(\left|\frac{N_{m}(t)}{R}x-z\right|\right)\zeta\left(\left|\frac{N_{m}(t)}{R}y-z\right|\right)\times\\
&\phantom{=} \Im{\ol{e^{-iy\cdot\xi(t,z)}w}\nabla_{y}(e^{-iy\cdot\xi(t,z)}y)} \cdot \Im{\ol{e^{-ix\cdot\xi(t,z)}w}\nabla_{x}(e^{-ix\cdot\xi(t,z)}w)}(t,x).
\end{split}
\end{equation}
We choose $\xi(t,z)$ by the formula
\begin{equation}
\xi(t,z) \coloneqq \dfrac{\int_{\R^{2}} \zeta\left(|\frac{N_{m}(t)}{R}x-z|\right)\Im{\bar{w}\nabla w}(t,x)dx}{\int_{\R^{2}}\zeta\left(|\frac{N_{m}(t)}{R}x-z|\right) |w(t,x)|^{2}dx}.
\end{equation}
The integrands in the RHS above are continuous in $(t,z)$, so in particular, $\xi$ is a measurable function of $(t,z)$. With this choice of $\xi(t,z)$, we now have that
\begin{equation}
\mathrm{Term}_{1,1}+\mathrm{Term}_{2,1} = 4\int_{0}^{T}\int_{\R^{4}}\varphi\left(\frac{N_{m}(t)|x-y|}{R}\right)N_{m}(t) |\nabla_{x}(e^{-ix\cdot\xi(t,z)}w)(t,x)|^{2} |w(t,y)|^{2}dxdy dt.
\end{equation}

\begin{remark} 
If our eeDS equation were instead of the form
\begin{equation}
(i\p_{t}+\Delta)u= \mu |u|^{2}u-\E(|u|^{2})u,
\end{equation}
for $\mu>1$, then this last step of making a Galilean transformation would be unnecessary. Indeed, $\mathrm{Term}_{1,1}+\mathrm{Term}_{2,1}\geq 0$ by Cauchy-Schwarz. One can then obtain the crucial $\gtrsim K$ lower bound from $\mathrm{Term}_{1}$ in \eqref{eq:IM_d_ten_dc}.
\end{remark}

Next, we define a bump function $\vartheta:\R\rightarrow [0,+\infty)$ by the formula
\begin{equation}
\vartheta(r) \coloneqq \frac{2}{\eta_{2}R}\int_{\R}1_{[-(1-4\eta_{2})R,(1-4\eta_{2})R]}(r-s)\chi\left(\frac{2s}{\eta_{2}R}\right)ds.
\end{equation}
Observe that $\vartheta$ is $C^{\infty}$, satisfies $0\leq \vartheta\leq 1$ and
\begin{equation}
\vartheta(r) = \begin{cases} 1, & {|r|\leq (1-6\eta_{2})R}\\ 0, & {|r| \geq (1-2\eta_{2})R} \end{cases},
\end{equation}
and by Young's inequality $\|\vartheta^{(n)}\|_{L^{\infty}} \lesssim_{n} (\eta_{2}R)^{-n}$ for all $n\in\N$.

Since $\zeta$ is identically one on the support of $\vartheta$, we have the lower bound
\begin{equation}
\int_{\R^{2}}\zeta\left(\left|\frac{N_{m}(t)}{R}x-z\right|\right)|\nabla_{x}(e^{-ix\cdot\xi(t,z)}w)(t,x)|^{2}dx \geq \int_{\R^{2}}\vartheta^{2}\left(\left|\frac{N_{m}(t)}{R}x-z\right|\right) |\nabla_{x}(e^{-ix\cdot\xi(t,z)}w)(t,x)|^{2}dx.\label{eq:IM_d_lb}
\end{equation}
We now massage the RHS of \eqref{eq:IM_d_lb} into a quantity consisting of a main term and error terms, which upon integrating with respect to $t\in [0,T]$, will have size $\sim K$ and size $\ll K$, respectively. For notational convenience, define
\begin{align}
v(t,x,z) &\coloneqq e^{-ix\cdot\xi(t,z)}w(t,x)\\
\vartheta_{R,m,z}(t,x) &\coloneqq \vartheta\left(\left|\frac{N_{m}(t)}{R}x-z\right|\right).
\end{align}
Using the elementary identity $\vartheta\nabla v = \nabla(\vartheta v)-v\nabla \vartheta$, we may write
\begin{equation}
\vartheta_{R,m,z}^{2}|\nabla_{x} v|^{2} = |\nabla_{x}(\vartheta_{R,m,z} v)|^{2} - 2\Re{\nabla_{x}(\vartheta_{R,m,z}v)\cdot\ol{(\nabla_{x}\vartheta_{R,m,z})v}} + |\nabla_{x} \vartheta_{R,m,z}|^{2}|v|^{2}.
\end{equation}
Substituting this identity into \eqref{eq:IM_d_lb}, we obtain
\begin{equation*}
\eqref{eq:IM_d_lb} = \int_{\R^{2}} |\nabla_{x}(\vartheta_{R,m,z}v)(x)|^{2}dx - \int_{\R^{2}}2\Re{\nabla_{x}(\vartheta_{R,m,z}v)\cdot \ol{(\nabla_{x} \vartheta_{R,m,z})v}}(x)dx + \int_{\R^{2}}|(\nabla_{x} \vartheta_{R,m,z})(x)v(x)|^{2}dx.
\end{equation*}
Now integrating by parts and using the property that $\vartheta$ is a real-valued, we see that
\begin{align}
\eqref{eq:IM_d_lb} &= \int_{\R^{2}} |\nabla_{x}(\vartheta_{R,m,z}v)(x)|^{2}dx  +2\int_{\R^{2}}\nabla_{x}\cdot(\vartheta_{R,m,z}(\nabla_{x}\vartheta_{R,m,z}))(x) |v(x)|^{2}dx-\int_{\R^{2}}|(\nabla_{x}\vartheta_{R,m,z})(x)|^{2}|v(x)|^{2}dx\nonumber\\
&\eqqcolon \mathrm{Main}(t,z)+\mathrm{Error}_{1}(t,z) + \mathrm{Error}_{2}(t,z).
\end{align}

We first consider the contribution of $\mathrm{Error}_{1}+\mathrm{Error}_{2}$. By the chain rule and the derivative estimates for $\vartheta$, we see that
\begin{equation}
\sup_{x\in\R^{2}}\max\{|\Delta_{x}\vartheta_{R,m,z}(x)|, |\nabla_{x}\vartheta_{R,m,z}(x)|^{2}\} \lesssim \frac{N_{m}(t)^{2}}{(\eta_{2}R)^{2}R^{2}}.
\end{equation}
Therefore by the Fubini-Tonelli theorem and mass conservation,
\begin{align}
\int_{0}^{T}\frac{N_{m}(t)}{\|\zeta(|\cdot|)\|_{L^{1}}}\int_{\R^{4}}|\mathrm{Error}_{1}(t,z)+\mathrm{Error}_{2}(t,z)| \zeta\left(\left|\frac{N_{m}(t)}{R}y-z\right|\right) |w(t,y)|^{2} dydz dt &\lesssim \frac{M(u)^{2}}{(\eta_{2}R)^{2}R^{2}}\int_{0}^{T}N_{m}(t)^{3}dt\nonumber\\
&\lesssim \frac{M(u)^{2}K}{\eta_{2}^{2}R^{4}}.
\end{align}

We now consider the contribution of $\mathrm{Main}(t,z)$. Using the ordinary Gagliardo-Nirenberg inequality (i.e. where $\mathcal{L}$ in theorem \ref{thm:GN} is replaced by the identity), we obtain the lower bound
\begin{align}
\mathrm{Main}(t,z) &\geq \frac{1}{C_{opt}\|(\vartheta_{R,m,z}v)(t)\|_{L_{x}^{2}}^{2}}\int_{\R^{2}} \vartheta_{R,m,z}(t,x)^{4} |v(t,x,z)|^{4}dx\nonumber\\
&\geq \eta_{1}\int_{\R^{2}} \vartheta_{R,m,z}(x)^{4} |w(t,x)|^{4}dx,
\end{align}
provided that $\eta_{1}\leq \frac{2}{C_{opt}M(u)}$ and $R,K>0$ are sufficiently large so that $M(\vartheta_{R,m,z}v)\geq M(u)/2$. By construction of $\vartheta$ and $\zeta$, we have that
\begin{equation}
\begin{split}
&\frac{\eta_{1}}{\|\zeta(|\cdot|)\|_{L^{1}}}\int_{\R^{2}}\int_{\R^{4}}\vartheta^{4}\left(\left|\frac{N_{m}(t)}{R}x-z\right|\right)\zeta\left(\left|\frac{N_{m}(t)}{R}y-z\right|\right) |w(t,x)|^{4} |w(t,y)|^{2}dxdy dz\\
&\geq \eta_{1}\int_{|x-y|\leq \frac{R^{2}}{10N_{m}(t)}} |w(t,x)|^{4} |w(t,y)|^{2} dxdy,
\end{split}
\end{equation}
where we use the Fubini-Tonelli theorem to integrate with respect to $z$ first. Therefore,
\begin{align}
\int_{0}^{T}\frac{N_{m}(t)}{\|\zeta(|\cdot|)\|_{L^{1}}}\int_{\R^{4}}\mathrm{Main}(t,z)\zeta\left(\left|\frac{N_{m}(t)}{R}y-z\right|\right) |w(t,y)|^{2}dydzdt &\geq \eta_{1}\int_{0}^{T}N_{m}(t)\int_{|x-y|\leq \frac{R^{2}}{10N_{m}(t)}}|w(t,y)|^{2}|w(t,x)|^{4}dxdy dt\nonumber\\
&\geq\frac{\eta_{1}M(u)}{2}\int_{0}^{T}N_{m}(t)|u(t,x)|^{4}dx dt - o_{R}(1)K,
\end{align}
provided that $R=R(M(u)),T=T(M(u))>0$ are sufficiently large by lemma \ref{lem:IM_pre_err3}. Bookkeeping our estimates, we have shown that
\begin{equation}
\mathrm{Term}_{1,1}+\mathrm{Term}_{2,1} \geq \frac{\eta_{1}M(u)}{2}\int_{0}^{T}N_{m}(t) \|u(t)\|_{L_{x}^{4}}^{4}dt -\frac{C(u)K}{\eta_{2}^{2}R^{4}}-C(u)o_{R}(1)K,
\end{equation}
which implies that
\begin{equation}
\eqref{eq:IM_d_main1}+\eqref{eq:IM_d_main2} \geq \frac{\eta_{1}M(u)}{2}\int_{0}^{T}N_{m}(t)\|u(t)\|_{L_{x}^{4}}^{4} dt -\frac{C(u)K}{\eta_{2}^{2}R^{4}}-C(u)o_{R}(1)K.
\end{equation}

\item[Estimate for \eqref{eq:IM_d_terr}:]
We next prove an estimate for the term \eqref{eq:IM_d_terr}, where the time derivative hits the potential $a_{R}$. This step is precisely the motivation for introducing the $m$-regular frequency scale functions $N_{m}(t)$. We observe from dilation invariance, the chain rule, and the fundamental theorem of calculus that
\begin{align}
\partial_{t}a_{R,m}(t,x-y) &=\partial_{t}\left(\frac{R(x-y)}{|x-y|}\int_{0}^{\frac{N_{m}(t)|x-y|}{R}} \varphi(s)ds\right) \nonumber\\
&=\varphi\left(\frac{N_{m}(t)|x-y|}{R}\right)N_{m}'(t)(x-y) \nonumber\\
&=\frac{N_{m}'(t)}{\|\zeta(|\cdot|)\|_{L^{1}}} \int_{\R^{2}}\zeta\left(\left|\frac{N_{m}(t)}{R}x-z\right|\right)\zeta\left(\left|\frac{N_{m}(t)}{R}y-z\right|\right)(x-y)dz,
\end{align}
where the ultimate equality follows from the definition of $\varphi$. So by the Fubini-Tonelli theorem,
\begin{equation}
\eqref{eq:IM_d_terr} = \int_{0}^{T}\int_{\R^{2}}\paren*{\int_{\R^{4}}\frac{N_{m}'(t)}{\|\zeta(|\cdot|)\|_{L^{1}}} \zeta\left(\left|\frac{N_{m}(t)}{R}x-z\right|\right)\zeta\left(\left|\frac{N_{m}(t)}{R}y-z\right|\right) (x-y)\cdot\Im{\bar{w}\nabla w}(t,x) |w(t,y)|^{2} dxdy}dz dt.
\end{equation}
Observe that the inner integral is invariant under the Galilean transformation $w\mapsto e^{-i\xi(t,z)\cdot x}w$, where $\xi(t,z)$ is chosen as above. Then using the elementary inequality
\begin{equation*}
ab \leq \frac{\delta a^{2}}{2}+\frac{b^{2}}{2\delta}, \qquad a,b\geq 0, \delta>0,
\end{equation*}
together with the bound $|x-y|\lesssim \frac{R^{2}}{N_{m}(t)}$ on the support of $\varphi$, we see that
\begin{align}
|\eqref{eq:IM_d_terr}| &\leq \frac{\delta}{\|\zeta(|\cdot|)\|_{L^{1}}} \int_{0}^{T}N_{m}(t)\int_{\R^{2}}\int_{\R^{4}} \zeta\left(\left|\frac{N_{m}(t)}{R}x-z\right|\right)\zeta\left(\left|\frac{N_{m}(t)}{R}y-z\right|\right) |\nabla_{x}(e^{-i\xi(t,z)\cdot x}w)(t,x)|^{2} |w(t,y)|^{2}dxdy dz dt\nonumber\\
&\phantom{=}+\frac{C_{0}R^{4}}{\delta\|\zeta(|\cdot|)\|_{L^{1}}} \int_{0}^{T}\frac{N_{m}'(t)^{2}}{N_{m}(t)^{3}}\int_{\R^{2}}\int_{\R^{4}}  \zeta\left(\left|\frac{N_{m}(t)}{R}x-z\right|\right)\zeta\left(\left|\frac{N_{m}(t)}{R}y-z\right|\right) |w(t,x)|^{2} |w(t,y)|^{2} dxdy dz dt\nonumber\\
&\eqqcolon \mathrm{Term}_{1}+\mathrm{Term}_{2},
\end{align}
for any $\delta>0$,	where $C_{0}>0$ is some absolute constant. Noting that
\begin{equation}
\mathrm{Term}_{1} = \delta\int_{0}^{T}\int_{\R^{4}}N_{m}(t)\varphi\left(\frac{N_{m}(t)|x-y|}{R}\right) |\nabla(e^{-ix\cdot\xi(t,z)}w)(t,x)|^{2}|w(t,y)|^{2}dxdy dt,
\end{equation}
we see that for $\delta>0$ sufficiently small, $\mathrm{Term}_{1}$ may be absorbed into $\mathrm{Term}_{1,1}+\mathrm{Term}_{1,2}$ in the computation of a lower bound for \eqref{eq:IM_d_main1}+\eqref{eq:IM_d_main2}.

To deal with $\mathrm{Term}_{2}$, we simply estimate it from above. Observe that by mass conservation and the estimate $|N_{m}'(t)| \lesssim N_{m}(t)^{3}$, uniformly in $t,m$, we have that
\begin{equation}
\mathrm{Term}_{2} \leq C(u)\frac{R^{4}M(u)^{2}}{\delta}\int_{0}^{T}N_{m}'(t)dt \leq 2C(u)\frac{M(u)^{2}R^{4}}{\delta m}\int_{0}^{T(m)}N_{m}(t)\|u(t)\|_{L_{x}^{4}}^{4}dt, \qquad \forall m\in N
\end{equation}
by the smoothing algorithm (lemma \ref{lem:IM_pre_sm}).

\item[Bookkeeping:]
We now bookkeep our estimates to obtain a contradiction for $T>0$ sufficiently large. Observe that we have shown that there exists a constant $C(u)>0$ such that
\begin{equation}
\begin{split}
C(u)\left(C(\eta_{3}R^{-2})R^{2}+\eta_{3}K\right)\geq M_{R,m}(T) &\geq \left(\frac{\eta_{1}M(u)}{2}-\frac{2C(u)R^{4}}{\delta m}\right)\int_{0}^{T}N_{m}(t)\|u(t)\|_{L_{x}^{4}}^{4}dt\\
&\phantom{=}- C(u)K\frac{R^{2}}{\epsilon_{3}}\left(\frac{C(\eta_{3})\epsilon_{3}}{K}+\eta_{3}\right)^{1/6}\\
&\phantom{=}-\frac{K}{(\eta_{2}R)^{4}}-C(u)o_{R}(1)K - \frac{C(u)}{\eta_{2}^{2}R^{4}}K - C(u)\eta_{2}K
\end{split}
\end{equation}
for all $T,R>0$ sufficiently large. Since $\int_{0}^{T}N_{m}(t)\|u(t)\|_{L_{x}^{4}}^{4}dt \geq \frac{1}{C(u)}K$, taking $C(u)$ larger if necessary, dividing both sides by $K$, we have shown that
\begin{equation}
\begin{split}
C(u)\left(\frac{C(\eta_{3}R^{-2})R^{2}}{K}+\eta_{3}\right) &\geq \frac{1}{C(u)}\left(\frac{\eta_{1}M(u)}{2}-\frac{2C(u)R^{4}}{\delta m}\right)\\
&\phantom{=}- C(u)\frac{R^{2}}{\epsilon_{3}}\left(\left(\frac{C(\eta_{3})\epsilon_{3}}{K}\right)^{1/6}+\eta_{3}^{1/6}\right)\\
&\phantom{=}-\frac{1}{(\eta_{2}R)^{4}}-C(u)o_{R}(1) - \frac{C(u)}{\eta_{2}^{2}R^{4}}-C(u)\eta_{2}
\end{split}
\end{equation}
for all $T,R>0$ sufficiently large. First, choose $\eta_{1}>0$ so that $\eta_{1}\leq \frac{2}{C_{opt}M(u)}$. Define the quantity $\epsilon$ implicitly by
\begin{equation}
\frac{\eta_{1}M(u)}{2C(u)} = 100\epsilon.
\end{equation}
Next, choose $\eta_{2}>0$ sufficiently small so that
\begin{equation}
C(u)\eta_{2}\leq \epsilon.
\end{equation}
Next, choose $R=R(\eta_{2})>0$ sufficiently large so that
\begin{equation}
\frac{1}{(\eta_{2}R)^{4}}+C(u)o_{R}(1) + \frac{C(u)}{\eta_{2}^{2}R^{4}} \leq \epsilon.
\end{equation}
Next, choose $\eta_{3}=\eta_{3}(R,\epsilon_{3})>0$ sufficiently small so that
\begin{equation}
C(u)\eta_{3}+\frac{C(u)R^{2}}{\epsilon_{3}}\eta_{3}^{1/6}\leq \epsilon.
\end{equation}
Next, choose $m=m(\delta,R)\in\mathbb{N}$ sufficiently large so that
\begin{equation}
\frac{2R^{4}}{\delta m}\leq \epsilon.
\end{equation}
Finally, choose $T=T(R,\eta_{3},\epsilon_{3})>0$ sufficiently large so that
\begin{equation}
\frac{C(u)C(\eta_{3}R^{-2})R^{2}}{K}+\frac{C(u)R^{2}}{\epsilon_{3}}\left(\frac{C(\eta_{3})\epsilon_{3}}{K}\right)^{1/6}\leq \epsilon.
\end{equation}
Therefore, we conclude that
\begin{equation}
2\epsilon\geq 100\epsilon-10\epsilon=90\epsilon,
\end{equation}
which is a contradiction.
\end{description}

\subsection{Focusing case}\label{ssec:QS_foc}
We now preclude the quasi-soliton scenario $\int_{0}^{\infty}N(t)^{3}dt=\infty$ in the case of the focusing eeDS equation, which we remind the reader is
\begin{equation}
(i\partial_{t}+\Delta)u = -|u|^{2}u-\E(|u|^{2})u=-\paren*{2\frac{\p_{1}^{2}}{\Delta}+\frac{\p_{2}^{2}}{\Delta}}(|u|^{2})u=-\mathcal{L}(|u|^{2})u.
\end{equation}
Let $C_{opt,\mathcal{L}}$ denote the optimal constant in the inequality
\begin{equation}
\|\mathcal{L}(|f|^{2})|f|^{2}\|_{L^{1}(\R^{2})} \leq C_{opt,\mathcal{L}}\|f\|_{L^{2}(\R^{2})}^{2} \|\nabla f\|_{L^{2}(\R^{2})}^{2}.
\end{equation}
We recall the characterization of $C_{opt}$ due Papanicolaou, Sulem, Sulem, and Wang (see theorem \ref{thm:GN}), which says that
\begin{equation}
C_{opt,\mathcal{L}}=\frac{2}{M(Q)}, \qquad \Delta Q-Q+\mathcal{L}(Q^{2})Q=0, \qquad Q>0.
\end{equation}
Note that in the focusing case, the critical mass $M_{c}$ satisfies the additional condition $M_{c}<M(Q)$ and hence the initial data of the solution also satisfies $M(u_{0})<M(Q)$ by definition of admissible blowup solution.

We repeat the integration by parts and fundamental theorem of calculus computations from subsection \ref{ssec:QS_con} to obtain that
\begin{align}
M_{R,m}(T) &= -4\int_{0}^{T}\int_{\R^{4}}a_{R,m,j}(t,x-y)|w(t,y)|^{2}\p_{k}\Re{\ol{\p_{k}w}\p_{j}w}(t,x)dxdy dt\label{eq:IM_f_main1}\\
&\phantom{=}-2\int_{0}^{T}\int_{\R^{4}}a_{R,m,j}(t,x-y)\partial_{k}T_{0k}^{w}(t,y)\Im{\bar{w}\p_{j} w}(t,x)dxdy dt\label{eq:IM_f_main2}\\
&\phantom{=}-\int_{0}^{T}\int_{\R^{4}}a_{R,m,j}(t,x-y)|w(t,y)|^{2}\p_{k}T_{jk}^{w}(t,x)dxdy dt\label{eq:IM_f_nl}\\
&\phantom{=}+\int_{0}^{T}\int_{\R^{4}}a_{R,m,j}(t,x-y)|w(t,y)|^{2}\p_{j}\p_{k}^{2}(|w|^{2})(t,x)dxdy dt\label{eq:IM_f_lerr}\\
&\phantom{=}+2\int_{0}^{T}\int_{\R^{4}}(\p_{t}a_{R,m})(t,x-y)\cdot |w(t,y)|^{2}\Im{\bar{w}\nabla w}(t,x)dxdy dt\label{eq:IM_f_terr}\\
&\phantom{=}+2\int_{0}^{T}\int_{\R^{4}}a_{R,m,j}(t,x-y)|w(t,y)|^{2}\Re{\bar{\mathcal{N}}\p_{j}w - \bar{w}\p_{j}\mathcal{N}}(t,x)dxdy dt\label{eq:IM_f_err1}\\
&\phantom{=}+4\int_{0}^{T}\int_{\R^{4}}a_{R,m,j}(t,x-y) \Im{\bar{w}\mathcal{N}}(t,y) \Im{\bar{w}\p_{j}w}(t,x)dxdy dt.\label{eq:IM_f_err2}
\end{align}
The only steps from the defocusing case which we need to reconsider now in the focusing case are the lower bound for \eqref{eq:IM_f_main1}+\eqref{eq:IM_f_main2} and the asymptotic for \eqref{eq:IM_f_nl}.

\begin{description}[leftmargin=*]
\item[Asymptotic for \eqref{eq:IM_f_nl}:]
We first rewrite \eqref{eq:IM_f_nl} in a convenient form. As before, we integrate by parts in $x$ to move the $\partial_{k}$ onto $a_{R,m,j}$, obtaining
\begin{equation*}
\eqref{eq:IM_f_nl} = \int_{0}^{T}\int_{\R^{4}}\partial_{k}a_{R,m,j}(t,x-y) |w(t,y)|^{2} T_{jk}^{w}(t,x)dxdy dt.
\end{equation*}
Unpackaging the definition of $T_{jk}^{w}$, using the operator identity $Id=\frac{\p_{1}^{2}}{\Delta}+\frac{\p_{2}^{2}}{\Delta}$, and proceeding by direct algebraic manipulation, we see that
\begin{align}
\p_{k}a_{R,m,j} T_{jk}^{w} &= -\partial_{j}a_{R,m,j}\left(2\frac{\p_{1}^{2}}{\Delta}(|w|^{2})+\frac{\p_{2}^{2}}{\Delta}(|w|^{2})\right)|w|^{2} \nonumber\\
&\phantom{=}+\left(\p_{2}a_{R,m,2}-\p_{1}a_{R,m,1}\right)\left(\frac{\p_{1}^{2}}{\Delta}(|w|^{2})\frac{\p_{2}^{2}}{\Delta}(|w|^{2})+\left(\frac{\p_{1}\partial_{2}}{\Delta}(|w|^{2})\right)^{2}\right) \nonumber\\
&\phantom{=}+2\partial_{2}a_{R,m,1}\frac{\p_{1}\p_{2}}{\Delta}(|w|^{2})\frac{\p_{1}^{2}}{\Delta}(|w|^{2})-2\p_{1}a_{R,m,2}\frac{\p_{2}^{2}}{\Delta}(|w|^{2})\frac{\p_{1}\p_{2}}{\Delta}(|w|^{2}) \nonumber\\
&\eqqcolon -\partial_{j}a_{R,m,j}\mathcal{L}(|w|^{2})|w|^{2} + \mathrm{Term}_{1}+\mathrm{Term}_{2}.
\end{align}

We claim that $\mathrm{Term}_{1}+\mathrm{Term}_{2}$ give a contribution to \eqref{eq:IM_f_nl} which is of size $o_{R}(1)K + \eta_{2}K$. Indeed, the estimate
\begin{equation}
\left|\int_{0}^{T}\int_{\R^{4}}\paren*{\mathrm{Term}_{1}+\mathrm{Term}_{2}}(t,x,y)|w(t,y)|^{2}dxdy dt\right| \lesssim o_{R}(1)K+\eta_{2}K,
\end{equation}
follows readily from the same analysis of the contributions of the error terms $\mathrm{Term}_{j}$ in \eqref{eq:IM_d_ten_dc}. Therefore, we have shown that
\begin{equation}
\left|\eqref{eq:IM_f_nl} + \int_{0}^{T}\int_{\R^{4}}(\partial_{j}a_{R,m,j})(t,x-y) |w(t,y)|^{2}\mathcal{L}(|w|^{2})(t,x) |w(t,x)|^{2}dxdy dt\right| \lesssim o_{R}(1)K +\eta_{2}K.
\end{equation}

\item[Lower bound for $\eqref{eq:IM_f_main1}+\eqref{eq:IM_f_main2}$:]
We now proceed to obtaining a lower bound for \eqref{eq:IM_f_main1}+\eqref{eq:IM_f_main2}. As in the defocusing case,
\begin{equation}
\begin{split}
\eqref{eq:IM_f_main1} + \eqref{eq:IM_f_main2} &= \int_{0}^{T}\frac{4N_{m}(t)}{\|\zeta(|\cdot|)\|_{L^{1}}}\int_{\R^{4}}\int_{\R^{2}} |\nabla(\vartheta_{R,m,z}v)(t,x,z)|^{2}\zeta\left(\left|\frac{N_{m}(t)}{R}y-z\right|\right)|w(t,y)|^{2}dx dydz dt\\
&\phantom{=}+O\left(\frac{K}{\eta_{2}^{2}R^{4}}\right) .
\end{split}
\end{equation}
Now decompose
\begin{equation}
\begin{split}
\int_{\R^{2}} |\nabla(\vartheta_{R,m,z}v_{z})(t,x)|^{2}dx &= \eta_{1}\int_{\R^{2}} |\nabla(\vartheta_{R,m,z}v_{z})(t,x)|^{2}dx +(1-\eta_{1})\int_{\R^{2}} |\nabla(\vartheta_{R,m,z}v_{z})(t,x)|^{2}dx\\
&\eqqcolon \mathrm{Term}_{1}+\mathrm{Term}_{2}.
\end{split}
\end{equation}

To obtain a lower bound for $\mathrm{Term}_{1}$, we use the ordinary Gagliardo-Nirenberg inequality (i.e. where $\mathcal{L}$ is replaced by $Id$) to obtain that
\begin{equation}
\mathrm{Term}_{1}(t,z) \geq \frac{\eta_{1}}{C_{opt}M(\vartheta_{R,m,z}w)} \int_{\R^{2}} |(\vartheta_{R,m,z}w)(t,x)|^{4}dx.
\end{equation}
It follows from our work in the defocusing case that
\begin{equation}
\int_{0}^{T}\frac{4N_{m}(t)}{\|\zeta(|\cdot|)\|_{L^{1}}}\int_{\R^{4}} \mathrm{Term}_{1}(t,z)\zeta\left(\left|\frac{N_{m}(t)}{R}y-z\right|\right)|w(t,y)|^{2}dydz dt \geq \frac{2\eta_{1}}{C_{opt}M(u)}K,
\end{equation}
provided that $R,T>0$ are sufficiently sufficiently large so that $M(\vartheta_{R,m,z}w)\geq \frac{M(u)}{2}$.

To obtain a lower bound for $\mathrm{Term}_{2}$, we use the sharp Gagliardo-Nirenberg ineqality for $\mathcal{L}$ (theorem \ref{thm:GN}) to obtain that
\begin{equation}
\mathrm{Term}_{2}(t,z) \geq (1-\eta_{1})\frac{M(Q)}{2M(\vartheta_{R,m,z}w)} \left|\int_{\R^{2}}\mathcal{L}(|\vartheta_{R,m,z}w|^{2})(t,x) |(\vartheta_{R,m,z}w)(t,x)|^{2} dx\right|.
\end{equation}
Therefore
\begin{equation}
\begin{split}
&\int_{0}^{T}\frac{4N_{m}(t)}{\|\zeta(|\cdot|)\|_{L^{1}}}\int_{\R^{4}}\mathrm{Term}_{2}(t,z)\zeta\left(\left|\frac{N_{m}(t)}{R}y-z\right|\right)|w(t,y)|^{2}dydz dt\\
&\geq \frac{2(1-\eta_{1})M(Q)}{M(\vartheta_{R,m,z}w)} \int_{0}^{T}dt\frac{N_{m}(t)}{\|\zeta(|\cdot|)\|_{L^{1}}}\int_{\R^{4}}dydz\zeta\left(\left|\frac{N_{m}(t)}{R}y-z\right|\right)|w(t,y)|^{2}\times\\
&\phantom{=}\left|\int_{\R^{2}}\mathcal{L}(|\vartheta_{R,m,z}w|^{2})(t,x) |(\vartheta_{R,m,z}w)(t,x)|^{2}dx\right|.
\end{split}
\end{equation}

\item[Lower bound for $\eqref{eq:IM_f_main1}+\eqref{eq:IM_f_main2}+\eqref{eq:IM_f_nl}$:]
We now claim that
\begin{equation}
\begin{split}
&-\int_{0}^{T}\int_{\R^{4}}(\nabla\cdot a_{R,m})(t,x-y) |w(t,y)|^{2}\mathcal{L}(|w|^{2})(t,x) |w(t,x)|^{2}dxdy dt\\
&=-2\int_{0}^{T}\frac{N_{m}(t)}{\|\zeta(|\cdot|)\|_{L^{1}}}\int_{\R^{4}}\zeta\left(\left|\frac{N_{m}(t)}{R}y-z\right|\right)|w(t,y)|^{2}\left|\int_{\R^{2}}\mathcal{L}(|\vartheta_{R,m,z}w|^{2})(t,x) |(\vartheta_{R,m,z}w)(t,x)|^{2}dx\right|dydzdt\\
&\phantom{=}+o_{R}(1)K+\eta_{2}O(K).
\end{split}
\end{equation}

To prove the claim, we first observe that since $\vartheta, \zeta\equiv 1$ on the interval $\{|r|\leq (1-6\eta_{2})R\}$, it follows that
\begin{align}
&\frac{1}{\|\zeta(|\cdot|)\|_{L^{1}}} \int_{\R^{2}} \left|\zeta\left(\left|\frac{N_{m}(t)}{R}x-z\right|\right) -\vartheta^{2}\left(\left|\frac{N_{m}(t)}{R}x-z\right|\right)\right|\zeta\left(\left|\frac{N_{m}(t)}{R}y-z\right|\right)dz \nonumber\\
&\phantom{=}\lesssim \frac{1}{\|\zeta(|\cdot|)\|_{L^{1}}}\int_{(1-16\eta_{2})R\leq |z|\leq (1+8\eta_{2})R} dz \nonumber\\
&\phantom{=}\lesssim \eta_{2}.\label{eq:IM_f_diff_e}
\end{align}
As a consequence of the identity $\p_{r}(r\psi_{R,m}(r))=\varphi(\frac{N_{m}(t)r}{R})$, we have that
\begin{equation}
(\nabla\cdot a_{R,m})(t,x-y) = 2N_{m}(t)\varphi\left(\frac{N_{m}(t)|x-y|}{R}\right) - (\p_{r}\psi_{R,m})(t,|x-y|)N_{m}(t)|x-y|.
\end{equation}
Hence, it follows from our analysis of the error terms in subsection \ref{ssec:QS_dfoc} that
\begin{equation}
\begin{split}
&\int_{0}^{T}\int_{\R^{4}}(\nabla\cdot a_{R,m})(t,x-y) |w(t,y)|^{2}\mathcal{L}(|w|^{2})(t,x)|w(t,x)|^{2}dxdy dt\\
&=o_{R}(1)K +\eta_{2}O(K)+ 2\int_{0}^{T}N_{m}(t)\int_{\R^{4}}\varphi\left(\frac{N_{m}(t)|x-y|}{R}\right) |w(t,y)|^{2}\mathcal{L}(|w|^{2})(t,x)|w(t,x)|^{2}dxdydt.
\end{split}
\end{equation}
Hence by the Fubini-Tonelli theorem and the triangle inequality,
\begin{align}
&\left|\int_{0}^{T}dtN_{m}(t)\int_{\R^{4}}dxdy\paren*{\varphi\left(\frac{N_{m}(t)|x-y|}{R}\right)-\|\zeta(|\cdot|)\|_{L^{1}}^{-1}\int_{\R^{2}}\vartheta^{2}\left(\left|\frac{N_{m}(t)}{R}x-z\right|\right) \zeta\left(\left|\frac{N_{m}(t)}{R}y-z\right|\right)dz} \right.\nonumber\\
&\phantom{=}\qquad \left.|w(t,y)|^{2}\mathcal{L}(|w|^{2})(t,x)|w(t,x)|^{2}\right|\nonumber\\
&\lesssim \eta_{2}\int_{0}^{T}N_{m}(t)\int_{\R^{4}} |w(t,y)|^{2} |\mathcal{L}(|w|^{2})(t,x)| |w(t,x)|^{2}dxdy dt\nonumber\\
&\lesssim \eta_{2}K.
\end{align}
where we use the estimate \eqref{eq:IM_f_diff_e}, H\"{o}lder's inequality, and Plancherel's theorem to obtain the ultimate inequality.

It suffices now for us to estimate the quantity
\begin{equation}
\|\zeta(|\cdot|)\|_{L^{1}}^{-1}\int_{0}^{T}N_{m}(t)\int_{\R^{6}}\vartheta^{2}\left(\left|\frac{N_{m}(t)}{R}x-z\right|\right)\zeta\left(\left|\frac{N_{m}(t)}{R}y-z\right|\right) |w(t,y)|^{2} |\mathcal{L}\paren*{|w|^{2}(1-\vartheta_{R,m,z}^{2})}(t,x)|^{2} |w(t,x)|^{2}dxdydzdt,\label{eq:f_claim}
\end{equation}
which we claim is of size $\sim o_{R}(1)K+\eta_{2}K$. Assuming the claim, let us conclude the lower bound for \eqref{eq:IM_f_main1}+\eqref{eq:IM_f_main2}+\eqref{eq:IM_f_nl}. Bookkeeping our estimates, we have shown that there exists a constant $C(u)>0$ such that
\begin{equation}\label{eq:IM_f_lb_bk}
\begin{split}
&\eqref{eq:IM_f_main1}+\eqref{eq:IM_f_main2}+\eqref{eq:IM_f_nl}\\
&\geq \frac{2\eta_{1}}{C_{opt}M(u)}K \\
&\phantom{=}+\frac{2(1-\eta_{1})M(Q)}{M(u)}\int_{0}^{T}\frac{N_{m}(t)}{\|\zeta(|\cdot|)\|_{L^{1}}}\int_{\R^{4}}\zeta\paren*{\left|\frac{N_{m}(t)}{R}y-z\right|}|w(t,y)|^{2}|\ipp{\mathcal{L}(|\vartheta_{R,m,z}w(t)|^{2}),|\vartheta_{R,m,z}w(t)|^{2}}|dydzdt \\
&\phantom{=}-2\int_{0}^{T}\frac{N_{m}(t)}{\|\zeta(|\cdot|)\|_{L^{1}}}\int_{\R^{4}}\zeta\paren*{\left|\frac{N_{m}(t)}{R}y-z\right|}|w(t,y)|^{2}|\ipp{\mathcal{L}(|\vartheta_{R,m,z}w(t)|^{2}),|\vartheta_{R,m,z}w(t)|^{2}}|dydzdt \\
&\phantom{=}-o_{R}(1)K-C(u)\eta_{2}K,
\end{split}
\end{equation}
provided that $\eta_{1}=\eta_{1}(u)>0$ is sufficiently small and $T,R>0$ are sufficiently large. Since $M(u)<M(Q)$, we can choose $\eta_{1}>0$ sufficiently small so that
\begin{equation}
(1-\eta_{1})\frac{M(Q)}{M(u)}>1,
\end{equation}
which implies that the sum of the second and third lines in the RHS of \eqref{eq:IM_f_lb_bk} is nonnegative. Hence, we conclude the lower bound
\begin{equation}
\eqref{eq:IM_f_main1}+\eqref{eq:IM_f_main2}+\eqref{eq:IM_f_nl} \geq \frac{2\eta_{1}}{C_{opt}M(u)}K-o_{R}(1)K-C(u)\eta_{2}K.
\end{equation}

We now estimate \eqref{eq:f_claim} by dividing the spatial integration into cases as follows. Let $\mathcal{K}$ denote the Schwartz kernel of $\mathcal{L}$, and let $\chi$ be the bump function defined in subsection \ref{ssec:QS_con}. We decompose $\mathcal{L}$ into a ``local" piece in a ball of radius $\sim \frac{\eta_{2}R}{N_{m}(t)}$ around the origin and a ``global" piece outside this ball:
\begin{align}
\mathcal{L}_{loc} &\coloneqq \left(\mathcal{K}\chi\left(\frac{4N_{m}(t)}{\eta_{2}R^{2}}\cdot\right)\right) \ast  \eqqcolon \mathcal{K}_{loc}\ast\\
\mathcal{L}_{glob} &\coloneqq \left(\mathcal{K}\left(1-\chi\left(\frac{4N_{m}(t)}{\eta_{2}R^{2}}\cdot\right)\right)\right)\ast \eqqcolon \mathcal{K}_{glob}\ast.
\end{align}
Now decompose \eqref{eq:f_claim} by
\begin{equation}
\eqref{eq:f_claim} = \mathrm{Term}_{1}+\mathrm{Term}_{2}+\mathrm{Term}_{3},
\end{equation}
where
\begin{equation}
\begin{split}
\mathrm{Term}_{1} &\coloneqq \|\zeta(|\cdot|)\|_{L^{1}}^{-1}\int_{0}^{T}dtN_{m}(t)\int_{|x-y|\leq \frac{\eta_{2}R^{2}}{4N_{m}(t)}}dxdydz \vartheta^{2}\left(\left|\frac{N_{m}(t)}{R}x-z\right|\right)\zeta\left(\left|\frac{N_{m}(t)}{R}y-z\right|\right)\\
&\phantom{=}\qquad |w(t,y)|^{2}\mathcal{L}_{loc}\left(|w|^{2}(1-\vartheta_{R,m,z}^{2})\right)(t,x) |w(t,x)|^{2},
\end{split}
\end{equation}
\begin{equation}
\begin{split}
\mathrm{Term}_{2} &\coloneqq \|\zeta(|\cdot|)\|_{L^{1}}^{-1}\int_{0}^{T}dtN_{m}(t)\int_{|x-y|\leq \frac{\eta_{2}R^{2}}{4N_{m}(t)}}dxdydz \vartheta^{2}\left(\left|\frac{N_{m}(t)}{R}x-z\right|\right)\zeta\left(\left|\frac{N_{m}(t)}{R}y-z\right|\right) \\
&\phantom{=}\qquad |w(t,y)|^{2}\mathcal{L}_{glob}\left(|w|^{2}(1-\vartheta_{R,m,z}^{2})\right)(t,x) |w(t,x)|^{2},
\end{split}
\end{equation}
and
\begin{equation}
\begin{split}
\mathrm{Term}_{3} &\coloneqq \|\zeta(|\cdot|)\|_{L^{1}}^{-1}\int_{0}^{T}dtN_{m}(t)\int_{|x-y|\geq \frac{\eta_{2}R^{2}}{4N_{m}(t)}}dxdydz \vartheta^{2}\left(\left|\frac{N_{m}(t)}{R}x-z\right|\right)\zeta\left(\left|\frac{N_{m}(t)}{R}y-z\right|\right)\\
&\phantom{=}\qquad |w(t,y)|^{2}\mathcal{L}\left(|w|^{2}(1-\vartheta_{R,m,z}^{2})\right)(t,x) |w(t,x)|^{2}.
\end{split}
\end{equation}
We now estimate each of the $\mathrm{Term}_{j}$ separately.

\begin{itemize}[leftmargin=*]
\item
We first estimate $\mathrm{Term}_{1}$. By using the decomposition of the Schwartz kernel $\mathcal{K}$ given by proposition \ref{prop:dist_class} and considering each piece separately, we may pretend that $\mathcal{K}$ is actually a measurable function.

We claim that 
\begin{equation}
1_{\leq \frac{\eta_{2}R^{2}}{4N_{m}(t)}}(|x-y|)\mathcal{L}_{loc}(|w|^{2}(1-\vartheta_{R,m,z}^{2}))(t,x) \neq 0 \Longrightarrow \left|\frac{N_{m}(t)}{R}y-z\right| > (1-7\eta_{2})R.
\end{equation}
Indeed, observe that
\begin{align}
&\mathcal{L}_{loc}(|w|^{2}(1-\vartheta_{R,m,z}^{2}))(t,x) \nonumber\\
&\phantom{=} =\lim_{\varepsilon\rightarrow 0^{+}}\int_{\varepsilon\leq |z'|\leq \frac{\eta_{2}R^{2}}{4N_{m}(t)}} \K_{loc}(z') |w(t,x-z')|^{2}|w(t,x-z')|^{2}\paren*{1-\vartheta^{2}\paren*{\left|\frac{N_{m}(t)}{R}(x-z')-z\right|}}dz'.
\end{align}
Since $\supp(1-\vartheta^{2})\subset \{|r| \geq (1-6\eta_{2})R\}$ and by the triangle inequality,
\begin{equation}
|z'|\leq \frac{\eta_{2}R^{2}}{4N_{m}(t)} \enspace \text{and} \enspace \left|\frac{N_{m}(t)}{R}x-z\right|\leq (1-6.5\eta_{2})R \Longrightarrow \left|\frac{N_{m}(t)}{R}(x-z')-z\right| \leq (1-6.25\eta_{2})R,
\end{equation}
it follows that $\L_{loc}(|w|^{2}(1-\vartheta_{R,m,z}^{2}))(t,x)=0$. By the reverse triangle inequality together with the condition $|x-y|\leq \frac{\eta_{2}R^{2}}{4N_{m}(t)}$, we then obtain the claim.

Therefore by Cauchy-Schwarz in $x$, followed by mass conservation, we have that
\begin{equation}
\begin{split}
|\mathrm{Term}_{1}| &\lesssim \frac{1}{\|\zeta(|\cdot|)\|_{L^{1}}}\int_{0}^{T}dtN_{m}(t)\int_{\R^{4}}dydz 1_{\geq (1-7\eta_{2})R}\paren*{\left|\frac{N_{m}(t)}{R}y-z\right|} \zeta\left(\left|\frac{N_{m}(t)}{R}y-z\right|\right) |w(t,y)|^{2} \\
&\phantom{=}\qquad \left(\int_{\R^{2}} |w(t,x)|^{4}dx\right)^{1/2}\left(\int_{\R^{2}}|\mathcal{L}_{loc}\left(|w|^{2}(1-\vartheta_{R,m,z}^{2})\right)(t,x)|^{2}dx\right)^{1/2}.
\end{split}
\end{equation}
Since $\|\widehat{\mathcal{K}_{loc}}\|_{L^{\infty}(\R^{2})} \lesssim \|\hat{\mathcal{K}}\|_{L^{\infty}}\lesssim 1$ by Young's inequality, we can use Plancherel's theorem to obtain the estimate
\begin{equation}
\sup_{y,z\in\R^{2}}\left(\int_{\R^{2}}|\mathcal{L}_{loc}\left(|w|^{2}(1-\vartheta_{R,m,z}^{2})\right)(t,x)|^{2}dx\right)^{1/2} \lesssim \|w(t)\|_{L_{x}^{4}(\R^{2})}^{2}.
\end{equation}
Hence by the Fubini-Tonelli theorem, the support of $\zeta$, and mass conservation,
\begin{align}
|\mathrm{Term}_{1}| &\lesssim \|\zeta(|\cdot|)\|_{L^{1}}^{-1}\int_{0}^{T}dtN_{m}(t) \|w(t)\|_{L_{x}^{4}(\R^{2})}^{4}\int_{\R^{2}}dy|w(t,y)|^{2}\int_{\R^{2}}dz 1_{(1-7\eta_{2})R\leq \cdot \leq (1+4\eta_{2})R}\paren*{\left|\frac{N_{m}(t)}{R}y-z\right|} \nonumber\\
&\lesssim \eta_{2}\int_{0}^{T}N_{m}(t)\|w(t)\|_{L_{x}^{4}(\R^{2})}^{4}dt\nonumber\\
&\lesssim_{u} \eta_{2}K.
\end{align}

\item
We next estimate $\mathrm{Term}_{2}$.	By Cauchy-Schwarz in $z'$ together with the pointwise estimate
\begin{equation}
|\mathcal{K}_{glob}(z')| \lesssim |z'|^{-2}1_{|z'| \geq \frac{\eta_{2}R^{2}}{4N_{m}(t)}},
\end{equation}
we have that
\begin{align}
\left|\int_{\R^{2}} \mathcal{K}_{glob}(z') |w(t,x-z')|^{2}\left(1-\vartheta^{2}\left(\left|\frac{N_{m}(t)}{R}(x-z')-z\right|\right)\right)dz\right| &\lesssim \left(\int_{\R^{2}} |w(t,x-z')|^{4}dz'\right)^{1/2}\left(\int_{|z'|\geq\frac{\eta_{2}R^{2}}{4N_{m}(t)}} |z'|^{-4}dz'\right)^{1/2} \nonumber\\
&\phantom{=}\lesssim \frac{N_{m}(t)}{\eta_{2}R^{2}} \|w(t)\|_{L_{x}^{4}(\R^{2})}^{2},
\end{align}
by translation and dilation invariance. Hence,
\begin{align}
|\mathrm{Term}_{2}| &\lesssim \int_{0}^{T}\frac{N_{m}(t)^{2}\|w(t)\|_{L_{x}^{4}(\R^{2})}^{2}}{\eta_{2}R^{2}\|\zeta(|\cdot|)\|_{L^{1}}}\int_{\R^{6}} \zeta\left(\left|\frac{N_{m}(t)}{R}y-z\right|\right)\vartheta^{2}\left(\left|\frac{N_{m}(t)}{R}x-z\right|\right) |w(t,x)|^{2} |w(t,y)|^{2}dxdydz dt\nonumber\\
&\lesssim \int_{0}^{T}\frac{N_{m}(t)^{2}\|w(t)\|_{L_{x}^{4}(\R^{2})}^{2}}{\eta_{2}R^{2}}dt,
\end{align}
where we use mass conservation to obtain the ultimate line. Now by Cauchy-Schwarz in time, we obtain that
\begin{align}
|\mathrm{Term}_{2}| &\lesssim \frac{1}{\eta_{2}R^{2}}\left(\int_{0}^{T}N_{m}(t)^{3}dt\right)^{1/2}\left(\int_{0}^{T}N_{m}(t)\|w(t)\|_{L_{x}^{4}(\R^{2})}^{4}dt\right)^{1/2}\nonumber\\
&\lesssim_{u} \frac{K}{\eta_{2}R^{2}}.
\end{align}

\item
Lastly, we estimate $\mathrm{Term}_{3}$. Since $(1-\vartheta_{R,m,z}^{2}) \lesssim 1$ pointwise, we can apply lemma \ref{lem:IM_pre_CZ} to obtain that
\begin{equation}
|\mathrm{Term}_{3}|=\frac{o_{R}(1)}{\eta_{2}}K.
\end{equation}
\end{itemize}

\item[Bookkeeping:]
We now have all the necessary estimates to obtain a contradiction. After a bit of bookkeeping, we have shown that there exists a constant $C(u)>0$ such that
\begin{equation}
\begin{split}
C(u)\paren*{C(\eta_{3}R^{-2})R^{2}+\eta_{3}K} \geq M_{R,m}(T)&\geq \frac{2\eta_{1}}{C_{opt}M(u)}K -\frac{2C(u)R^{4}}{\delta m}K-\frac{C(u)R^{2}K}{\epsilon_{3}}\paren*{\frac{C(\eta_{3})\epsilon_{3}}{K}+\eta_{3}}^{1/6} \\
&\phantom{=}-\frac{K}{(\eta_{2}R)^{4}} - C(u)o_{R}(1)K-\frac{C(u)K}{\eta_{2}^{2}R^{2}}-C(u)\eta_{2}K.
\end{split}
\end{equation}
for $\eta_{1}=\eta_{1}(u,M(u))>0$ sufficiently small and $R,T>0$ sufficiently large. Dividing both sides by $K$, our goal is to obtain a contradiction from the inequality
\begin{equation}
\begin{split}
C(u)\paren*{\frac{C(\eta_{3}R^{-2})R^{2}}{K}+\eta_{3}} &\geq \frac{2\eta_{1}}{C_{opt}M(u)}-\frac{2C(u)R^{4}}{\delta m} - \frac{C(u)R^{2}}{\epsilon_{3}}\paren*{\frac{C(\eta_{3})\epsilon_{3}}{K}+\eta_{3}}^{1/6} \\
&\phantom{=}-\frac{1}{(\eta_{2}R)^{4}}-C(u)o_{R}(1)-\frac{C(u)}{\eta_{2}^{2}R^{4}}-C(u)\eta_{2}
\end{split}
\end{equation}
by now judiciously choosing the parameters $\eta_{2},\eta_{3},R,m$.

Define the quantity $\epsilon>0$ implicitly by
\begin{equation}
\frac{2\eta_{1}}{C_{opt}M(u)} = 100\epsilon.
\end{equation}
Next, choose $\eta_{2}>0$ sufficiently small so that
\begin{equation}
C(u)\eta_{2}\leq \epsilon.
\end{equation}
Next, choose $R=R(\eta_{2})>0$ sufficiently large so that
\begin{equation}
\frac{1}{(\eta_{2}R)^{4}} + \frac{C(u)}{\eta_{2}^{2}R^{4}} + C(u)o_{R}(1) \leq \epsilon.
\end{equation}
Next, choose $m=m(R)\in\N$ sufficiently large so that
\begin{equation}
\frac{2C(u)R^{4}}{\delta m} \leq \epsilon.
\end{equation}
Next, choose $\eta_{3}=\eta_{3}(R,\epsilon_{3})>0$ sufficiently small so that
\begin{equation}
C(u)\eta_{3}+\frac{C(u)R^{2}}{\epsilon_{3}}\eta_{3}^{1/6} \leq \epsilon.
\end{equation}
Lastly, by choose $T=T(\epsilon_{3},m,\eta_{3},R)>0$ be sufficiently large so that
\begin{equation}
\frac{C(u)C(\eta_{3}R^{-2})R^{2}}{K}+\frac{C(u)C(\eta_{3})R^{2}}{\epsilon_{3}^{5/6}K} \leq \epsilon.
\end{equation}
With these choices of parameters, we obtain the inequality $2\epsilon\geq 90\epsilon$, which is a contradiction. This last step completes the proof.
\end{description}

\bibliographystyle{siam}
\bibliography{DaveyStewartson}

\begin{thebibliography}{10}

\bibitem{ablowitz1991solitons}
{\sc M.~J. Ablowitz and P.~A. Clarkson}, {\em Solitons, nonlinear evolution
  equations and inverse scattering}, vol.~149, Cambridge university press,
  1991.

\bibitem{ablowitz_segur_1979}
{\sc M.~J. Ablowitz and H.~Segur}, {\em On the evolution of packets of water
  waves}, Journal of Fluid Mechanics, 92 (1979), p.~691–715.

\bibitem{Bourgain1998}
{\sc J.~Bourgain}, {\em Refinements of {Strichartz'} inequality and
  applications to {$2D$}-{NLS} with critical nonlinearity}, International
  Mathematics Research Notices, 1998 (1998), pp.~253--283.

\bibitem{Bourgain1999}
{\sc J.~Bourgain}, {\em {Global wellposedness of defocusing critical nonlinear
  Schr{\"{o}}dinger equation in the radial case}}, J. Am. Math. Soc., 12
  (1999), pp.~145--171.

\bibitem{cazenave2003semilinear}
{\sc T.~Cazenave}, {\em Semilinear Schr{\"{o}}dinger Equations}, vol.~10,
  American Mathematical Soc., 2003.

\bibitem{Cazenave1990}
{\sc T.~Cazenave and F.~B. Weissler}, {\em The cauchy problem for the critical
  nonlinear {Schr{\"{o}}dinger} equation in {$H^{s}$}}, Nonlinear Analysis:
  Theory, Methods \& Applications, 14 (1990), pp.~807 -- 836.

\bibitem{Chae2010}
{\sc M.~Chae, Y.~Cho, and S.~Lee}, {\em {Mixed norm estimates of
  Schr{\"{o}}dinger waves and their applications}}, Commun. Partial Differ.
  Equations, 35 (2010), pp.~906--943.

\bibitem{Cipolatti1992}
{\sc R.~Cipolatti}, {\em {On the existence of standing waves for a
  {Davey-Stewartson} system}}, Communications in Partial Differential
  Equations, 17 (1992), pp.~967--988.

\bibitem{Cipolatti1993}
{\sc R.~Cipolatti}, {\em On the instability of ground states for a
  {Davey-Stewartson} system}, Annales de l'I.H.P. Physique théorique, 58
  (1993), pp.~85--104.

\bibitem{coifman1978ast}
{\sc R.~R. Coifman and Y.~Meyer}, {\em Au dela des op{\'e}rateurs
  pseudo-diff{\'e}rentiels}, no.~57 in Ast{\'e}risque, Soci{\'e}t{\'e}
  Math{\'e}matique de France, 1978.

\bibitem{coifman1978commutateurs}
{\sc R.~R. Coifman and Y.~Meyer}, {\em Commutateurs d'int{\'e}grales
  singulieres et op{\'e}rateurs multilin{\'e}aires}, Ann. Inst. Fourier
  (Grenoble), 28 (1978), pp.~177--202.

\bibitem{Colliander2007}
{\sc J.~Colliander, M.~Grillakis, and N.~Tzirakis}, {\em Improved interaction
  {Morawetz} inequalities for the cubic nonlinear {Schr{\"{o}}dinger} equation
  on {$\mathbb{R}^{2}$}}, International Mathematics Research Notices, 2007
  (2007), p.~rnm090.

\bibitem{colliander2002almost}
{\sc J.~Colliander, M.~Keel, G.~Staffilani, H.~Takaoka, and T.~Tao}, {\em
  Almost conservation laws and global rough solutions to a nonlinear
  {Schr{\"{o}}dinger} equation}, Mathematical Research Letters, 9 (2002),
  pp.~659--682.

\bibitem{Colliander2008}
{\sc J.~Colliander, M.~Keel, G.~Staffilani, H.~Takaoka, and T.~Tao}, {\em
  Global well-posedness and scattering for the energy-critical nonlinear
  {Schr{\"{o}}dinger} equation in {$\mathbb{R}^{3}$}}, Annals of Mathematics,
  167 (2008), pp.~767--865.

\bibitem{Colliander2009}
{\sc J.~Colliander, N.~Tzirakis, and M.~G. Grillakis}, {\em {Tensor products
  and correlation estimates with applications to nonlinear {Schr{\"{o}}dinger}
  equations}}, Commun. Pure Appl. Math., 62 (2009), pp.~920--968.

\bibitem{Craig1997}
{\sc W.~Craig, U.~Schanz, and C.~Sulem}, {\em The modulational regime of
  three-dimensional water waves and the {Davey-Stewartson} system}, Annales de
  l'I.H.P. Analyse non linéaire, 14 (1997), pp.~615--667.

\bibitem{Davey1974}
{\sc A.~Davey and K.~Stewartson}, {\em {On Three-Dimensional Packets of Surface
  Waves}}, Proc. R. Soc. A Math. Phys. Eng. Sci., 338 (1974), pp.~101--110.

\bibitem{Djordjevic1977}
{\sc V.~D. Djordjevic and L.~G. Redekopp}, {\em {On two-dimensional packets of
  capillary-gravity waves}}, J. Fluid Mech., 79 (1977), p.~703.

\bibitem{dodson2009almost}
{\sc B.~Dodson}, {\em Almost {Morawetz} estimates and global well-posedness for
  the defocusing {$L^{2}$}-critical nonlinear {Schr{\"{o}}dinger} equation in
  higher dimensions}, arXiv preprint arXiv:0909.4332,  (2009).

\bibitem{dodson2011improved}
\leavevmode\vrule height 2pt depth -1.6pt width 23pt, {\em Improved almost
  {Morawetz} estimates for the cubic nonlinear {Schr{\"{o}}dinger} equation},
  Communications on Pure \& Applied Analysis, 10 (2011), pp.~127--140.

\bibitem{Dodson2012}
\leavevmode\vrule height 2pt depth -1.6pt width 23pt, {\em {Global
  well-posedness and scattering for the defocusing, {$L^{2}$}-critical
  nonlinear Schr{\"{o}}dinger equation when {$d\geq 3$}}}, J. Am. Math. Soc.,
  25 (2012), pp.~429--463.

\bibitem{Dodson2015}
\leavevmode\vrule height 2pt depth -1.6pt width 23pt, {\em {Global
  well-posedness and scattering for the mass critical nonlinear
  Schr{\"{o}}dinger equation with mass below the mass of the ground state}},
  Adv. Math. (N. Y)., 285 (2015), pp.~1589--1618.

\bibitem{dodson2016global}
\leavevmode\vrule height 2pt depth -1.6pt width 23pt, {\em Global
  well-posedness and scattering for the defocusing, {$L^{2}$}-critical,
  nonlinear schr{\"{o}}dinger equation when {$d=1$}}, American Journal of
  Mathematics, 138 (2016), pp.~531--569.

\bibitem{Dodson2016}
\leavevmode\vrule height 2pt depth -1.6pt width 23pt, {\em {Global
  well-posedness and scattering for the defocusing, {$L^{2}$}-critical,
  nonlinear Schr{\"{o}}dinger equation when {$d=2$}}}, Duke Math. J., 165
  (2016), pp.~3435--3516.

\bibitem{dodson2017profile}
{\sc B.~Dodson, J.~L. Marzuola, B.~Pausader, and D.~Spirn}, {\em The profile
  decomposition for the hyperbolic {Schr{\"{o}}dinger} equation}, arXiv
  preprint arXiv:1708.08014,  (2017).

\bibitem{germain2010space}
{\sc P.~Germain}, {\em Space-time resonances}, Journ{\'e}es {\'E}quations aux
  d{\'e}riv{\'e}es partielles,  (2010), pp.~1--10.

\bibitem{Ghidaglia1990}
{\sc J.~M. Ghidaglia and J.~C. Saut}, {\em {On the initial value problem for
  the {Davey-Stewartson} systems}}, Nonlinearity, 3 (1990), pp.~475--506.

\bibitem{Ginibre1992}
{\sc J.~Ginibre and G.~Velo}, {\em Smoothing properties and retarded estimates
  for some dispersive evolution equations}, Communications in Mathematical
  Physics, 144 (1992), pp.~163--188.

\bibitem{Glassey1977}
{\sc R.~T. Glassey}, {\em On the blowing up of solutions to the cauchy problem
  for nonlinear {Schr{\"{o}}dinger} equations}, Journal of Mathematical
  Physics, 18 (1977), pp.~1794--1797.

\bibitem{grafakos2014c}
{\sc L.~Grafakos}, {\em Classical Fourier Analysis}, no.~249 in Graduate Texts
  in Mathematics, Springer, third~ed., 2014.

\bibitem{grafakos2014m}
\leavevmode\vrule height 2pt depth -1.6pt width 23pt, {\em Modern Fourier
  Analysis}, no.~250 in Graduate Texts in Mathematics, Springer, third~ed.,
  2014.

\bibitem{Grafakos2002}
{\sc L.~Grafakos and R.~H. Torres}, {\em Multilinear {Calder{\'{o}}n--Zygmund}
  theory}, Advances in Mathematics, 165 (2002), pp.~124 -- 164.

\bibitem{Grillakis2000}
{\sc M.~G. Grillakis}, {\em On nonlinear {Schr{\"{o}}dinger} equations},
  Communications in Partial Differential Equations, 25 (2000), pp.~1827--1844.

\bibitem{Hadac2009}
{\sc M.~Hadac, S.~Herr, and H.~Koch}, {\em Well-posedness and scattering for
  the {KP-II} equation in a critical space}, Annales de l'Institut Henri
  Poincare (C) Non Linear Analysis, 26 (2009), pp.~917 -- 941.

\bibitem{hadac2010erratum}
{\sc M.~Hadac, S.~Herr, and H.~Koch}, {\em Erratum to "well-posedness and
  scattering for the {KP-II} equation in a critical space"{[Ann. IH
  Poincar{\'{e}}--AN 26 (3)(2009) 917--941]}}, in Annales de l'Institut Henri
  Poincare (C) Non Linear Analysis, vol.~27, Elsevier Masson, 2010,
  pp.~971--972.

\bibitem{Keel1998}
{\sc M.~Keel and T.~Tao}, {\em Endpoint {Strichartz} estimates}, American
  Journal of Mathematics, 120 (1998), pp.~955--980.

\bibitem{Kenig2006}
{\sc C.~E. Kenig and F.~Merle}, {\em {Global well-posedness, scattering and
  blow-up for the energy-critical, focusing, non-linear Schr{\"{o}}dinger
  equation in the radial case}}, Invent. Math., 166 (2006), pp.~645--675.

\bibitem{kenig1999multilinear}
{\sc C.~E. Kenig and E.~M. Stein}, {\em Multilinear estimates and fractional
  integration}, Math. Res. Lett., 6 (1999), pp.~1--15.

\bibitem{Keraani2001}
{\sc S.~Keraani}, {\em On the defect of compactness for the {Strichartz}
  estimates of the {Schr{\"{o}}dinger} equations}, Journal of Differential
  Equations, 175 (2001), pp.~353 -- 392.

\bibitem{Keraani2006}
\leavevmode\vrule height 2pt depth -1.6pt width 23pt, {\em On the blow up
  phenomenon of the critical nonlinear {Schr{\"{o}}dinger} equation}, Journal
  of Functional Analysis, 235 (2006), pp.~171 -- 192.

\bibitem{killip2009cubic}
{\sc R.~Killip, T.~Tao, and M.~Visan}, {\em The cubic nonlinear
  {Schr{\"{o}}dinger} equation in two dimensions with radial data}, Journal of
  the European Mathematical Society, 11 (2009), pp.~1203--1258.

\bibitem{Killip2009}
\leavevmode\vrule height 2pt depth -1.6pt width 23pt, {\em {The cubic nonlinear
  {Schr{\"{o}}dinger} equation in two dimensions with radial data}}, J. Eur.
  Math. Soc., 11 (2009), pp.~1203--1258.

\bibitem{Killip2010}
{\sc R.~Killip and M.~Visan}, {\em The focusing energy-critical nonlinear
  {Schr{\"{o}}dinger} equation in dimensions five and higher}, American Journal
  of Mathematics, 132 (2010), pp.~361--424.

\bibitem{killip2012global}
{\sc R.~Killip and M.~Vi{\c{s}}an}, {\em Global well-posedness and scattering
  for the defocusing quintic {NLS} in three dimensions}, Analysis \& PDE, 5
  (2012), pp.~855--885.

\bibitem{KillipClay}
{\sc R.~Killip and M.~Visan}, {\em Nonlinear {Schr{\"{o}}dinger} equations at
  critical regularity}, Clay Mathematics Proceedings, 17 (2013), pp.~325--437.

\bibitem{killip2007unpub}
{\sc R.~Killip, M.~Visan, and X.~Zhang}, {\em The focusing energy-critical
  nonlinear {Schr{\"{o}}dinger} equation with radial data}, Unpublished,
  (2007).

\bibitem{Killip2008}
{\sc R.~Killip, M.~Visan, and X.~Zhang}, {\em {The mass-critical nonlinear
  {Schr{\"{o}}dinger} equation with radial data in dimensions three and
  higher}}, Anal. PDE, 1 (2008), pp.~229--266.

\bibitem{Koch2004}
{\sc H.~Koch and D.~Tataru}, {\em Dispersive estimates for principally normal
  pseudodifferential operators}, Communications on Pure and Applied
  Mathematics, 58 (2004), pp.~217--284.

\bibitem{koch2016conserved}
\leavevmode\vrule height 2pt depth -1.6pt width 23pt, {\em Conserved energies
  for the cubic {NLS} in {$1-d$}}, arXiv preprint arXiv:1607.02534,  (2016).

\bibitem{koch2014dispersive}
{\sc H.~Koch, D.~Tataru, and M.~Visan}, {\em Dispersive equations and nonlinear
  waves}, in Oberwolfach Seminars, vol.~45, Springer, 2014.

\bibitem{Merle1998}
{\sc F.~Merle and L.~Vega}, {\em Compactness at blow-up time for {$L^{2}$}
  solutions of the critical nonlinear {Schr{\"{o}}dinger} equation in {$2D$}},
  International Mathematics Research Notices, 1998 (1998), pp.~399--425.

\bibitem{montgomery-smith1998}
{\sc S.~J. Montgomery-Smith}, {\em Time decay for the bounded mean oscillation
  of solutions of the {Schr{\"{o}}dinger} and wave equations}, Duke Math. J.,
  91 (1998), pp.~393--408.

\bibitem{Nachman2017}
{\sc A.~I. Nachman, I.~Regev, and D.~I. Tataru}, {\em {A Nonlinear Plancherel
  Theorem with Applications to Global Well-Posedness for the Defocusing
  {Davey-Stewartson} Equation and to the Inverse Boundary Value Problem of
  Calderon}},  (2017).

\bibitem{papanicolaou1994focusing}
{\sc G.~Papanicolaou, C.~Sulem, P.~L. Sulem, and X.~P. Wang}, {\em The focusing
  singularity of the {Davey-Stewartson} equations for gravity-capillary surface
  waves}, Physica D: Nonlinear Phenomena, 72 (1994), pp.~61--86.

\bibitem{Planchon2009}
{\sc F.~Planchon and L.~Vega}, {\em {Bilinear virial identities and
  applications}}, Ann. Sci. l'Ecole Norm. Super., 42 (2009), pp.~261--290.

\bibitem{Rogers2006}
{\sc K.~M. Rogers and A.~Vargas}, {\em A refinement of the {Strichartz}
  inequality on the saddle and applications}, Journal of Functional Analysis,
  241 (2006), pp.~212 -- 231.

\bibitem{RosenzweigHartree}
{\sc M.~Rosenzweig}, {\em Global well-posedness and scattering for the
  mass-critical {Hartree} equation on {$\mathbb{R}^{3}$} at {$L^{2}$}-critical
  regularity},  (In preparation).

\bibitem{ryckman2007global}
{\sc E.~Ryckman and M.~Visan}, {\em Global well-posedness and scattering for
  the defocusing energy-critical nonlinear {Schr{\"{o}}dinger} equation in
  {$\mathbb{R}^{1+4}$}}, American journal of mathematics, 129 (2007),
  pp.~1--60.

\bibitem{Shen2006}
{\sc C.~Shen and B.~Guo}, {\em Almost conservation law and global rough
  solutions to a nonlinear {Davey-Stewartson} equation}, J. Math. Anal. Appl.,
  318 (2006), pp.~365--379.

\bibitem{Shulman1983}
{\sc E.~I. Shul'man}, {\em {On the integrability of equations of
  {Davey-Stewartson} type}}, Theor. Math. Phys., 56 (1983), pp.~720--724.

\bibitem{Sohinger2011}
{\sc V.~Sohinger}, {\em Bounds on the growth of high {Sobolev} norms of
  solutions to nonlinear {Sch{\"{o}}dinger} equations}, PhD thesis,
  Massachusetts Institute of Technology, 2011.

\bibitem{Stein1993}
{\sc E.~M. Stein and T.~S. Murphy}, {\em Harmonic Analysis (PMS-43):
  Real-Variable Methods, Orthogonality, and Oscillatory Integrals. (PMS-43)},
  Princeton University Press, 1993.

\bibitem{strichartz1977}
{\sc R.~S. Strichartz}, {\em Restrictions of {Fourier} transforms to quadratic
  surfaces and decay of solutions of wave equations}, Duke Math. J., 44 (1977),
  pp.~705--714.

\bibitem{sulem1999nonlinear}
{\sc C.~Sulem and P.~Sulem}, {\em The Nonlinear {Schr{\"{o}}dinger} Equation:
  Self-Focusing and Wave Collapse}, Applied Mathematical Sciences, Springer New
  York, 1999.

\bibitem{Tao2003}
{\sc T.~Tao}, {\em A sharp bilinear restriction estimate for paraboloids},
  Geometric {\&} Functional Analysis GAFA, 13 (2003), pp.~1359--1384.

\bibitem{Tao2005}
\leavevmode\vrule height 2pt depth -1.6pt width 23pt, {\em Global
  well-posedness and scattering for the higher-dimensional energy-critical
  nonlinear {Schr{\"{o}}dinger} equation for radial data.}, The New York
  Journal of Mathematics [electronic only], 11 (2005), pp.~57--80.

\bibitem{tao2006nonlinear}
\leavevmode\vrule height 2pt depth -1.6pt width 23pt, {\em Nonlinear dispersive
  equations: local and global analysis}, no.~106, American Mathematical Soc.,
  2006.

\bibitem{Tao2006}
{\sc T.~Tao, M.~Visan, and X.~Zhang}, {\em {Global well-posedness and
  scattering for the mass-critical nonlinear {Schr{\"{o}}dinger} equation for
  radial data in high dimensions}}, Duke Math. J, 2 (2006), pp.~1--28.

\bibitem{Tao2008}
\leavevmode\vrule height 2pt depth -1.6pt width 23pt, {\em {Minimal-mass blowup
  solutions of the mass-critical {NLS}}}, Forum Math., 20 (2008), pp.~881--919.

\bibitem{Taylor2007}
{\sc M.~Taylor}, {\em Tools for PDE: Pseudodifferential Operators,
  Paradifferential Operators, and Layer Potentials}, Mathematical surveys and
  monographs, American Mathematical Society, 2007.

\bibitem{totz2015justification}
{\sc N.~Totz}, {\em A justification of the modulation approximation to the 3d
  full water wave problem}, Communications in Mathematical Physics, 335 (2015),
  pp.~369--443.

\bibitem{Tzirakis2017}
{\sc N.~Tzirakis}.
\newblock Personal Communication, June 2017.

\bibitem{visan2006thesis}
{\sc M.~Visan}, {\em The defocusing energy-critical nonlinear
  {Schr{\"{o}}dinger} equation in dimensions five and higher}, PhD thesis,
  UCLA, 2006.

\bibitem{visan2007}
{\sc M.~Visan}, {\em The defocusing energy-critical nonlinear
  {Schr{\"{o}}dinger} equation in higher dimensions}, Duke Math. J., 138
  (2007), pp.~281--374.

\bibitem{Visan2012}
\leavevmode\vrule height 2pt depth -1.6pt width 23pt, {\em Global
  well-posedness and scattering for the defocusing cubic nonlinear
  {Schr{\"{o}}dinger} equation in four dimensions}, International Mathematics
  Research Notices, 2012 (2012), pp.~1037--1067.

\bibitem{weinstein1982}
{\sc M.~I. Weinstein}, {\em Nonlinear {Schr{\"{o}}dinger} equations and sharp
  interpolation estimates}, Comm. Math. Phys., 87 (1982), pp.~567--576.

\bibitem{Yajima1987}
{\sc K.~Yajima}, {\em Existence of solutions for {Schr{\"{o}}dinger} evolution
  equations}, Communications in Mathematical Physics, 110 (1987), pp.~415--426.

\end{thebibliography}

\end{document}